\theoremstyle{plain}
\newtheorem{theorem}{Theorem}[section]
\newtheorem{proposition}[theorem]{Proposition}
\newtheorem{lemma}[theorem]{Lemma}
\newtheorem{corollary}[theorem]{Corollary}
\newtheorem{conjecture}[theorem]{Conjecture}
\theoremstyle{definition}
\newtheorem{definition}[theorem]{Definition}
\newtheorem{remark}[theorem]{Remark}
\newcommand{\nc}{\newcommand}
\nc{\on}{\operatorname}
\nc{\Q}{\mathbb{Q}}
\nc{\Z}{\mathbb{Z}}
\nc{\cl}{\mathrm{cl}}
\nc{\fraka}{{\mathfrak a}} \nc{\bba}{{\mathbf a}}
\nc{\frakb}{{\mathfrak b}}
\nc{\frakc}{{\mathfrak c}}
\nc{\frakd}{{\mathfrak d}}
\nc{\frake}{{\mathfrak e}}
\nc{\frakf}{{\mathfrak f}}
\nc{\frakg}{{\mathfrak g}}
\nc{\frakh}{{\mathfrak h}}
\nc{\fraki}{{\mathfrak i}}
\nc{\frakj}{{\mathfrak j}}
\nc{\frakk}{{\mathfrak k}}
\nc{\frakl}{{\mathfrak l}}
\nc{\frakm}{{\mathfrak m}}
\nc{\frakn}{{\mathfrak n}}
\nc{\frako}{{\mathfrak o}}
\nc{\frakp}{{\mathfrak p}}
\nc{\frakq}{{\mathfrak q}}
\nc{\frakr}{{\mathfrak r}}
\nc{\fraks}{{\mathfrak s}}
\nc{\frakt}{{\mathfrak t}}
\nc{\fraku}{{\mathfrak u}}
\nc{\frakv}{{\mathfrak v}}
\nc{\frakw}{{\mathfrak w}}
\nc{\frakx}{{\mathfrak x}}
\nc{\fraky}{{\mathfrak y}}
\nc{\frakz}{{\mathfrak z}}
\nc{\frakA}{{\mathfrak A}}
\nc{\frakB}{{\mathfrak B}}
\nc{\frakC}{{\mathfrak C}}
\nc{\frakD}{{\mathfrak D}}
\nc{\frakE}{{\mathfrak E}}
\nc{\frakF}{{\mathfrak F}}
\nc{\frakG}{{\mathfrak G}}
\nc{\frakH}{{\mathfrak H}}
\nc{\frakI}{{\mathfrak I}}
\nc{\frakJ}{{\mathfrak J}}
\nc{\frakK}{{\mathfrak K}}
\nc{\frakL}{{\mathfrak L}}
\nc{\frakM}{{\mathfrak M}}
\nc{\frakN}{{\mathfrak N}}
\nc{\frakO}{{\mathfrak O}}
\nc{\frakP}{{\mathfrak P}}
\nc{\frakQ}{{\mathfrak Q}}
\nc{\frakR}{{\mathfrak R}}
\nc{\frakS}{{\mathfrak S}}
\nc{\frakT}{{\mathfrak T}}
\nc{\frakU}{{\mathfrak U}}
\nc{\frakV}{{\mathfrak V}}
\nc{\frakW}{{\mathfrak W}}
\nc{\frakX}{{\mathfrak X}}
\nc{\frakY}{{\mathfrak Y}}
\nc{\frakZ}{{\mathfrak Z}}
\nc{\bbA}{{\mathbb A}}
\nc{\bbB}{{\mathbb B}}
\nc{\bbC}{{\mathbb C}}
\nc{\bbD}{{\mathbb D}}
\nc{\bbE}{{\mathbb E}}
\nc{\bbF}{{\mathbb F}} \nc{\bbf}{{\mathbf f}}
\nc{\bbG}{{\mathbb G}}
\nc{\bbH}{{\mathbb H}}
\nc{\bbI}{{\mathbb I}}
\nc{\bbJ}{{\mathbb J}}
\nc{\bbK}{{\mathbb K}}
\nc{\bbL}{{\mathbb L}}
\nc{\bbM}{{\mathbb M}}
\nc{\bbN}{{\mathbb N}}
\nc{\bbO}{{\mathbb O}}
\nc{\bbP}{{\mathbb P}}
\nc{\bbQ}{{\mathbb Q}}
\nc{\bbR}{{\mathbb R}}
\nc{\bbS}{{\mathbb S}}
\nc{\bbT}{{\mathbb T}}
\nc{\bbU}{{\mathbb U}}
\nc{\bbV}{{\mathbb V}}
\nc{\bbW}{{\mathbb W}}
\nc{\bbX}{{\mathbb X}}
\nc{\bbY}{{\mathbb Y}}
\nc{\bbZ}{{\mathbb Z}}
\nc{\calA}{{\mathcal A}}
\nc{\calB}{{\mathcal B}}
\nc{\calC}{{\mathcal C}}
\nc{\calD}{{\mathcal D}}
\nc{\calE}{{\mathcal E}}
\nc{\calF}{{\mathcal F}}
\nc{\calG}{{\mathcal G}}
\nc{\calH}{{\mathcal H}}
\nc{\calI}{{\mathcal I}}
\nc{\calJ}{{\mathcal J}}
\nc{\calK}{{\mathcal K}}
\nc{\calL}{{\mathcal L}}
\nc{\calM}{{\mathcal M}}
\nc{\calN}{{\mathcal N}}
\nc{\calO}{{\mathcal O}}
\nc{\calP}{{\mathcal P}}
\nc{\calQ}{{\mathcal Q}}
\nc{\calR}{{\mathcal R}}
\nc{\calS}{{\mathcal S}}
\nc{\calT}{{\mathcal T}}
\nc{\calU}{{\mathcal U}}
\nc{\calV}{{\mathcal V}}
\nc{\calW}{{\mathcal W}}
\nc{\calX}{{\mathcal X}}
\nc{\calY}{{\mathcal Y}}
\nc{\calZ}{{\mathcal Z}}
\nc{\scrA}{{\mathscr A}}
\nc{\scrB}{{\mathscr B}}
\nc{\scrC}{{\mathscr C}}
\nc{\scrD}{{\mathscr{D} }}
\nc{\scrE}{{\mathscr E}}
\nc{\scrF}{{\mathscr F}}
\nc{\scrG}{{\mathscr G}}
\nc{\scrH}{{\mathscr H}}
\nc{\scrI}{{\mathscr J}}
\nc{\scrJ}{{\mathscr I}}
\nc{\scrK}{{\mathscr K}}
\nc{\scrL}{{\mathscr L}}
\nc{\scrM}{{\mathscr M}}
\nc{\scrN}{{\mathscr N}}
\nc{\scrO}{{\mathscr O}}
\nc{\scrP}{{\mathscr P}}
\nc{\scrQ}{{\mathscr Q}}
\nc{\scrR}{{\mathscr R}}
\nc{\scrZ}{{\mathscr{Z}}}
\nc{\D}{{\on{D}}}
\nc{\Div}{{\on{Div}}}
\nc{\bnu}{{\bar{ \nu}}}
\nc{\olO}{\bar{\calO}}
\nc{\al}{{\alpha}} 
\nc{\be}{{\beta}}
\nc{\ga}{{\gamma}} \nc{\Ga}{{\Gamma}}
\nc{\hGa}{\hat{\Gamma}}
\nc{\ve}{{\varepsilon}} 
\nc{\la}{{\lambda}} \nc{\La}{{\Lambda}}
\nc{\om}{\omega} \nc{\Om}{\Omega} 
\nc{\sig}{{\sigma}} \nc{\Sig}{{\Sigma}}
\nc{\dR}{{\mathrm{dR}}}
\nc{\Perf}{{\mathrm{Perf}}}
\nc{\Gm}{{\mathbb{G}_m}}
\nc{\colim}{{\on{colim}}}
\nc{\iwcentral}{\scrZ_{{\calI\calW}}}
\newcommand{\minus}{\scalebox{0.75}[1.0]{$-$}}
\nc{\pIserre}{\mathrm{Perv}_I^0}
\DeclareMathAlphabet{\rhomalpha}{LS1}{stixscr}{m}{n}
\nc{\Spa}{\on{{Spa}}}
\nc{\Spd}{\on{{Spd}}}
\nc{\tnb}{\psi_{\rm tame}}
\nc{\oM}{\overline{{M}}}
\nc{\op}{{\on{op}}}
\nc{\ad}{{\on{ad}}}
\nc{\alg}{{\on{alg}}}
\nc{\Ad}{{\on{Ad}}}
\nc{\Adm}{{\on{Adm}}} \nc{\aff}{{\on{af}}}
\nc{\Aut}{{\on{Aut}}}
\nc{\Bun}{{\on{Bun}}}
\nc{\cha}{{\on{char}}}
\nc{\der}{{\on{der}}}
\nc{\Der}{{\on{Der}}}
\nc{\diag}{{\on{diag}}}
\nc{\End}{{\on{End}}}
\nc{\Fl}{{\mathrm{Fl}}}
\nc{\Tr}{{\on{Transp}}}
\nc{\TR}{{\calT\!\calR}}
\nc{\Gal}{{\on{Gal}}}
\nc{\Gr}{{\on{Gr}}}
\nc{\Hk}{{\on{Hk}}}
\nc{\rH}{{\on{H}}}
\nc{\Hom}{{\on{Hom}}}
\nc{\IC}{{\on{IC}}}
\nc{\id}{{\on{id}}}
\nc{\Id}{{\on{Id}}}
\nc{\ind}{{\on{ind}}}
\nc{\Ind}{{\on{Ind}}}
\nc{\Lie}{{\on{Lie}}}
\nc{\Pic}{{\on{Pic}}}
\nc{\pr}{{\on{pr}}}
\nc{\Res}{{\on{Res}}}
\nc{\res}{{\on{res}}} \nc{\Sat}{{\on{Sat}}}
\nc{\spc}{{\on{sc}}}
\nc{\drv}{{\on{der}}}
\nc{\sgn}{{\on{sgn}}}
\nc{\Spec}{{\on{Spec}}}\nc{\Spf}{\on{Spf}} 
\nc{\Sph}{\on{Sph}}
\nc{\St}{{\on{St}}}
\nc{\tr}{{\on{tr}}}
\nc{\Mod}{{\mathrm{-Mod}}}
\nc{\Hilb}{{\on{Hilb}}} 
\nc{\Ext}{{\on{Ext}}} 
\nc{\vs}{{\on{Vec}}}
\nc{\ev}{{\on{ev}}}
\nc{\nO}{{\breve{\calO}}}
\nc{\tS}{{\tilde{S}}}
\nc{\spe}{{\on{sp}}}
\nc{\loc}{{\on{loc}}}
\nc{\pre}{{\on{pre}}}
\nc{\del}{\Delta}
\nc{\nab}{\nabla}
\nc{\ql}{\bar{\mathbb{Q}}_\ell}
\nc{\co}{\colon}
\nc{\dia}{{\diamondsuit}}
\nc{\nscrR}{{\mathscr{R}^{\on{nr}}}}
\nc{\GL}{{\on{GL}}}
\nc{\Gl}{\on{Gl}} 
\nc{\GSp}{{\on{GSp}}}
\nc{\gl}{{\frakg\frakl}}
\nc{\SL}{{\on{SL}}} 
\nc{\SU}{{\on{SU}}} 
\nc{\SO}{{\on{SO}}}
\nc{\PGL}{{\on{PGL}}}
\nc{\Conv}{{\on{Conv}}}
\nc{\Rep}{{\on{Rep}}}
\nc{\Dom}{{\on{Dom}}}
\nc{\red}{{\on{red}}}
\nc{\act}{{\on{act}}}
\nc{\nr}{{\on{nr}}}
\nc{\ctf}{{\on{ctf}}}
\nc{\str}{{\on{-}}} 
\nc{\os}{{\bar{s}}}
\nc{\oeta}{{\bar{\eta}}}
\nc{\et}{\textup{\'et}}
\nc{\hookto}{\hookrightarrow}
\nc{\longto}{\longrightarrow}
\nc{\leftto}{\leftarrow}
\nc{\onto}{\twoheadrightarrow}
\nc{\lonto}{\twoheadleftarrow}
\nc{\pot}[1]{ [\hspace{-0,5mm}[ {#1} ]\hspace{-0,5mm}] }
\nc{\rpot}[1]{ (\hspace{-0,7mm}( {#1} )\hspace{-0,7mm}) }
\nc{\CT}{\rm{CT}}
\numberwithin{equation}{section}
\begin{document}
	
	\title[Gaitsgory and Arkhipov--Bezrukavnikov in mixed characteristic]{Gaitsgory's central functor and the Arkhipov--Bezrukavnikov equivalence in mixed characteristic}
	
	\author[J. Ansch\"utz, J. Louren\c{c}o, Z. Wu, J. Yu]{Johannes Ansch\"utz, Jo\~ao Louren\c{c}o, Zhiyou Wu, Jize Yu}

	\address{Département de Mathématiques, Université Paris-Saclay, Bâtiment 307, 91405 Orsay Cedex, France}
	\email{janschuetz@math.cnrs.fr}

	\address{Mathematisches Institut, Universität Münster, Einsteinstrasse 62, Münster, Germany}
	\email{j.lourenco@uni-muenster.de}
	
	\address{Morningside Center of Mathematics, Chinese Academy of Sciences, No. 55, Zhongguancun East Road, Haidian District, Beijing, China}
	\email{wuzhiyou@amss.ac.cn}
	
	\address{Department of Mathematics, Rice University, Houston, TX 77005, U.S.A.}
	\email{jy120@rice.edu}
	
	\begin{abstract}
		We show that the nearby cycles functor for the $p$-adic Hecke stack at parahoric level is perverse t-exact, by developing a theory of Wakimoto filtrations at Iwahori level, and that it lifts to the $\bbE_1$-center. We apply these tools to construct the Arkhipov--Bezrukavnikov functor for $p$-adic affine flag varieties at Iwahori level, and prove that it is an equivalence for all classical groups and also exceptional groups of type $E_6$ and $E_7$. 
	\end{abstract}
	
	\maketitle
	\tableofcontents

	\section{Introduction} 
	\label{sec:introduction}
	
	Let $F$ be a $p$-adic field with ring of integers $O$ and residue field $k$, and let $G$ be a connected reductive $F$-group with a parahoric $O$-model $\calG$. The first goal of this paper is to define a $p$-adic analogue of the Gaitsgory central functor \cite{Gai01} sending perverse sheaves on the Hecke stack $\Hk_{G,C}$ over the completed algebraic closure $C$ of $F$ to central perverse sheaves on the Hecke stack $\Hk_{\calG,\bar k}$. The second goal of the paper is, when $\calG=\calI$ is Iwahori, to also define a $p$-adic analogue of the Arkhipov--Bezrukavnikov functor \cite{AB09} relating coherent sheaves on the dual Springer variety $\hat{\calN}_{\mathrm{Spr}}$ to constructible étale sheaves on $\Hk_{\calI,\bar k}$. During the introduction, we will assume for simplicity that $G$ is split and that the coefficients of our sheaves equal $\bar{\mathbb{Q}}_\ell$, as these hypotheses hold for most of the paper. We begin by recalling some of the representation-theoretic aspects at the level of Grothendieck groups.
	
	\subsection{Hecke algebras}
	We assume $G$ is a pinned split connected reductive $F$-group, i.e. equipped with a choice of Borel subgroup $B$, a maximal split torus $T\subset B\subset G$, and pinning isomorphisms for the root groups attached to positive simple roots with respect to $B$. The corresponding Iwahori--Weyl group $W=N(F)/T(O)$, where $N$ is the normalizer of $T \subset G$, admits a length function $\ell$ which makes $(W,\ell)$ into a quasi-Coxeter group, see \cite[page 7]{HR08} or \cite[Section 4.1.1]{AR}.

	Let $\bbH$ denote the affine Hecke algebra. Recall that the Iwahori-Matsumoto presentation, see \cite{IM65}, defines $\bbH$ as the $\mathbb{Z}[q^{\pm 1/2}]$-algebra generated by a basis $\{T_w \vert w\in W\}$ modulo relations $T_wT_{w'}=T_{ww'}$ if $\ell(w)+\ell(w')=\ell(ww')$, and $(T_{s}+q^{\minus 1/2})(T_s-q^{1/2})=0$ for all length $1$ elements $s$. Let $I:=\mathcal{I}(O)\subset G(F)$ be the Iwahori subgroup associated with $B$. Then, the Iwahori-Hecke algebra $\calH:=C_c(I\backslash G(F)/I)$ is the space of compactly
	supported smooth functions on $I\backslash G(F)/I$. Fixing a
	Haar measure on $G(F)$ so that $I$ has measure $1$, convolution of functions equips $\calH$ with the structure of a unital associative algebra. The affine Hecke algebra specializes to the Iwahori-Hecke algebra via the isomorphism
	$$
	\bbH\otimes_{\bbZ[q^{\pm 1}]}\bar\bbQ_\ell\simeq \calH,
	$$
	where $\bar \bbQ_\ell$ is regarded as a $\bbZ[q^{\pm 1/2}]$-algebra by mapping $q$ to the cardinality of $k$ (and thus choosing a square root of this integer in $\bar \bbQ_\ell$). Let $\bbH_\mathrm{f}\subset \bbH$ be the finite Hecke algebra associated with the finite Weyl group and the set of simple reflections. The \textit{anti-spherical module} is defined as $$
	\bbM^\mathrm{as}:=\bbZ[q^{\pm 1/2}]^\mathrm{sgn}\otimes_{\bbH_\mathrm{f}}\bbH,
	$$
	where $T_w\in \bbH_\mathrm{f}$ acts by multiplication by $(\minus 1)^{\ell({w})}q^{1/2}$ on $\bbZ[q^{\pm 1/2}]^\mathrm{sgn}$. We also define the anti-spherical module for $\calH$ to be $\calM^\mathrm{as}:=\bbM\otimes_{\bbZ[q^{\pm 1/2}]}\bar\bbQ_\ell $.
	
	According to Grothendieck's sheaf-function dictionary, the space of functions on the set of $\bbF_q$-points of a scheme has the category of complexes of coherent or constructible sheaves as its geometric counterpart. Let $\hat{G}$ denote the Langlands dual group of $G$ over $\bar{\bbQ}_\ell$, 
	and $\hat{U}\subset \hat{B}\subset \hat{G}$ be a Borel subgroup and its unipotent radical. Recall the Springer resolution of the nilpotent cone $\hat{\calN}\subset \mathrm{Lie}(\hat{G})$
	$$
	p_\mathrm{Spr}:\hat{\calN}_{\mathrm{Spr}}=\hat{G} \times^{\hat{B}} \mathrm{Lie}\,\hat{U}\rightarrow \hat{\calN}.
	$$
	The Steinberg variety is defined as $\hat{\mathrm{St}}:=\hat{\calN}_{\mathrm{Spr}}\times_{\hat{\calN}}\hat{\calN}_{\mathrm{Spr}}$.
	Kazhdan-Lusztig \cite{KL87} showed that the affine Hecke algebra is isomorphic to the Grothendieck group $ K_0([\hat{G}\times\bbG_m\backslash \hat{\mathrm{St}}])$, where the latter has an algebra structure induced by convolution.
	In particular, the anti-spherical module $\bbM^\mathrm{as}$ is identified with $K_0([\hat{G}\times\bbG_m\backslash\hat{\calN}_\mathrm{Spr}])$. If we forget the $\bbG_m$-equivariance, then we recover both the Iwahori-Hecke algebra $\calH$ and its anti-spherical module $\calM^{\mathrm{as}}$.
	

	
	On the other hand, it follows from the work of Iwahori--Matsumoto \cite{IM65} that the Iwahori-Hecke algebra coincides with $K_0(\scrP(\Hk_{\calI}))$ where $\scrP$ denotes the category of perverse sheaves on the Hecke stack $\Hk_{\calI}=L^+\calI\backslash LG/L^+\calI$. The natural action of $\calH$ on $\calM^\mathrm{as}$ induces a surjective map $\calH\rightarrow \calM^\mathrm{as}$ with kernel generated by the Kazhdan-Lusztig basis elements indexed by the $w\in W$, which are not minimal in their left $W_{\mathrm{fin}}$-coset. This leads us to consider the \textit{anti-spherical category} of perverse sheaves $\scrP_{\mathrm{as}}(\Hk_{\calI})$ given as the Serre quotient by the $\IC$-sheaves index by those $w$.
	Another approach to realize $\calM^{\mathrm{as}}$ is as the $I$-invariants of the compact induction to $G(F)$ of a generic character $\chi$ of the unipotent radical $I_u^\mathrm{op}$ of the opposite Iwahori subgroup. This is the so-called Iwahori--Whittaker model and its categorification plays an important role later in our arguments.
	
	We have observed that there is an abundance of spaces and sheaves that seem related to affine Hecke algebras and their anti-spherical modules. In the next sections, we will explain how to upgrade these isomorphisms to equivalences of stable $\infty$-categories. A guiding principle for this is the fact that there are certain objects which serve as building blocks for the various categories and we must track down where they get sent to. This is motivated by Bernstein's construction of translation elements $\theta_\nu$ in the affine Hecke algebra whose trace along the finite Weyl orbits are central, and also of an isomorphism between the spherical Hecke algebra and the center of the Iwahori-Hecke algebra.
	
	
	
	\subsection{The central functor}
	The first goal of this paper is to fully develop the central functor $\scrZ$ for $p$-adic groups, in analogy with Gaitsgory's central functor from \cite{Gai01} in the function field case. The correct geometric setup for this construction is naturally the world of diamonds or more generally v-sheaves on perfectoids of characteristic $p$. Indeed, for any parahoric $O$-model $\calG$, we have a Hecke stack $\mathrm{Hk}_{\calG}$ defined over $\mathrm{Spd}(O)$ and we can define a nearby cycles functor $R\Psi$ by push-pull along the geometric fibers over a complete algebraic closure $C$ of $F$ and the residue field $k$. Indeed, this was already partially exploited in \cite{AGLR22} to define certain complexes of sheaves $\scrZ(V)=R\Psi(\mathrm{Sat}(V))$, where $\mathrm{Sat}$ denotes the geometric Satake equivalence of Fargues--Scholze \cite{FS21}. It was proved in \cite{AGLR22} that these complexes are algebraic and constructible, that they carry certain centrality isomorphisms, and finally that they are supported at the $V$-admissible locus $ \calA_{\calG,V}$.
	
	Two very important properties of the functor $\scrZ$ remained however elusive in \cite{AGLR22}, namely verifying that the centrality isomorphisms of $\scrZ$ satisfy various expected compatibilities that make it into a central functor, and that it lands in the category of perverse sheaves. Our most important results in this direction can be resumed as follows:
	
	\begin{theorem}[\Cref{thm_E2_mor,thm_perversity_new}, \Cref{cor_Z_conv_exact,unipotent_new}]
		The functor $\scrZ\colon \mathrm{Rep}(\hat{G}) \to \scrD_{\mathrm{ula}}(\Hk_{\calG, k})$ lifts to an $\bbE_2$-monoidal functor towards the $\bbE_1$-center of the right side. Moreover, each $\scrZ(V)$ is perverse, and has unipotent monodromy. If $\calG=\calI$ is Iwahori, then $\scrZ(V)$ is convolution-exact and admits a Wakimoto filtration whose associated graded equals $\scrJ(V_{|\hat{T}})$.
	\end{theorem}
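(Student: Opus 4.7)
My plan is to prove the four assertions in stages, with a theory of Wakimoto sheaves at Iwahori level serving as the main technical engine. The $\bbE_2$-monoidal lift is established first, by extending $\scrZ$ to a two-parameter Beilinson--Drinfeld/Fargues--Scholze family over a product $\Spd(O)\times\Spd(O)$: one leg is allowed to stay generic (living over $C$) while the other specializes to the special fibre over $k$. The generic leg carries the fusion $\bbE_\infty$-structure from geometric Satake, and commutation of $R\Psi$ with fusion when one leg stays generic produces the centrality constraints $\scrZ(V)\star A\simeq A\star\scrZ(V)$ for $A\in\scrD_\mathrm{ula}(\Hk_{\calG,k})$. The two independent $\bbE_1$-structures (fusion on the generic leg, convolution on the special leg) then assemble, by Dunn--Lurie additivity, into an $\bbE_2$-monoidal lift of $\scrZ$ through the $\bbE_1$-center of $\scrD_\mathrm{ula}(\Hk_{\calG,k})$.

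Specialising to $\calG=\calI$, I would introduce Wakimoto sheaves $\scrJ_\lambda := j_{!,\mu}\star j_{*,\nu}$ for any presentation $\lambda=\mu-\nu$ as a difference of dominant cocharacters, where $j_{!,\mu},j_{*,\nu}$ are the $!$- and $*$-extensions of the constant perverse sheaves from the Iwahori orbits of the translations $t_\mu,t_\nu$ inside the $p$-adic affine flag variety. Independence of presentation reduces to $j_{!,\mu}\star j_{*,\mu}\simeq\delta_e$, and the multiplicativity $\scrJ_\lambda\star\scrJ_\mu\simeq\scrJ_{\lambda+\mu}$ is then formal. Next, to build the Wakimoto filtration on $\scrZ(V)$, I would filter $V$ by the dominance order on its weights and show that the corresponding filtration on the BD-family specializes to a filtration on $\scrZ(V)$ with subquotients $V_\lambda\otimes\scrJ_\lambda$. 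Concretely, the minuscule case is verified directly via an explicit analysis of the $V$-admissible locus and the MV filtration of $\Sat(V)$, using the Iwahori--Whittaker averaging as a conservative probe (since Wakimoto sheaves are separated there). One then propagates to arbitrary $V\in\Rep(\hat G)$ using the $\bbE_2$-tensor structure of Step~1 and the Wakimoto convolution identity.

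Granted the Wakimoto filtration, perversity, convolution-exactness and unipotent monodromy drop out quickly. Each graded piece $V_\lambda\otimes\scrJ_\lambda$ is perverse and convolution-exact because $!$- and $*$-extensions of constant sheaves along affine Schubert cells are so, and their convolution remains perverse and convolution-exact by the usual dimension count. The monodromy operator on $\scrZ(V)$ respects the filtration and must act trivially on each $V_\lambda\otimes\scrJ_\lambda$, since standard and costandard sheaves on affine Schubert cells carry no monodromy, so the total monodromy is nilpotent. The passage from Iwahori to an arbitrary parahoric $\calG$ for perversity and unipotence is performed by applying the proper smooth pushforward along $\Fl_\calI\to\Fl_\calG$, using that $R\Psi$ is compatible with this map.

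The principal obstacle is Step~3, the identification of the associated graded of $\scrZ(V)$ with $\bigoplus_\lambda V_\lambda\otimes\scrJ_\lambda$ in the mixed-characteristic v-sheaf setting. Classical function-field proofs rely on ind-scheme-theoretic nearby cycles and Beilinson's gluing formalism, neither of which is directly available for diamonds; one must adapt the specialization map of Fargues--Scholze and carefully deploy the ULA property to obtain the semi-continuity and base-change inputs needed to pin down the subquotients. Controlling the non-split extensions between Wakimoto layers—and ensuring that the Iwahori--Whittaker averaging really detects them—is the core technical challenge, and this is where the theory of Wakimoto filtrations at Iwahori level developed elsewhere in the paper does the essential work.
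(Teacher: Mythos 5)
Your Step~1 (the $\bbE_2$-lift via a two-leg family over $\Spd O_C\times \Spd k$, with fusion on the generic leg and convolution on the special leg) is the same idea as the paper's \Cref{thm_assoc_center} and \Cref{thm_E2_mor}, which carry it out in Mann's $\mathrm{Corr}(\mathrm{vSt})$ framework so that the full faithfulness of $j^*$ away from the diagonal (\Cref{lem_fully_faithful_generic_pullback}, proved via the kimberlite computation in \Cref{lemma_nearby_cycles_prod_points_new}) can be invoked to glue the two commuting $\bbE_1$-structures. Your Step~2 (Wakimoto sheaves as $j_{!,\mu}\star j_{*,\nu}$ and their multiplicativity) matches \Cref{sec:wakim-funct-calj-1-definition-wakimoto-functor-for-nu} and \Cref{lem_wak_basic_prop}.

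Step~3 is where you diverge, and this is the crux. You propose to filter $V$ by dominance on its weights, specialize a filtration on a Beilinson--Drinfeld family, check the minuscule case explicitly, and propagate via the tensor structure with Iwahori--Whittaker averaging as a conservative probe. The paper does something structurally quite different and substantially lighter: it first proves a \emph{derived} support criterion (\Cref{prop_wak_category_central}) showing that any $\calF$ with $\scrJ_{\bar\nu}\ast\calF\simeq \calF\ast\scrJ_{\bar\nu}$ — in particular any $\scrZ(V)$, by centrality — is Wakimoto filtered, without any reference to perversity or to a filtration of $V$; this rests only on the finiteness of the support sets $A_\calF$ of \Cref{prop_support_conv_standard} and the double-coset identity $\bar\bbX_\bullet^+\cong W_{\mathrm{fin}}\backslash W/W_{\mathrm{fin}}$. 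Then \Cref{coro_wakimoto_graded_ct} identifies the Wakimoto gradeds with stalks of $\mathrm{CT}_{B^-}(\scrZ(V))$, which are computed in the generic fiber via compatibility of nearby cycles with constant terms (\cite[Proposition IV.6.12]{FS21}) and geometric Satake. The gradeds turn out perverse a posteriori, which is what establishes perversity of $\scrZ(V)$. This order of logic — Wakimoto-filtered first as a purely derived statement, perversity last — is the essential simplification; a remark in the paper explicitly notes that the older \cite{AB09,AR} arguments in $\scrP(\mathrm{Wak})$ (closer in spirit to your filtration-of-$V$ approach) are considerably more involved. Your proposal leaves this core step at the level of a program, whereas the paper's support argument fills it in directly.

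There is also a concrete gap in your passage from Iwahori to a general parahoric: ``applying the proper smooth pushforward along $\Fl_\calI\to\Fl_\calG$'' is not an available move. The projection $\pi\colon\Fl_\calI\to\Fl_\calG$ is ind-proper but not smooth, and $R\pi_*$ does not preserve perversity in general. What the paper actually does (following Achar's suggestion via \cite{CvdHS24}) is to choose a Borel $B$ adapted to the facet $\bbf$ so that each translation $t_{\bar\mu}$ with $\bar\mu$ $B$-dominant is right $W_\bbf$-minimal; then $R\pi_*\scrJ_{\bar\nu}^B$ is recognized as a convolution $\Delta_{\calI,t_{\bar\nu_2}}\ast\nabla_{(\calI,\calG),t_{\bar\nu_1}}$ which is semiperverse by the affineness of Schubert-cell inclusions (\Cref{fn:1-convolution-of-standard-costandard-objects-perverse}), and repeating with the opposite Borel gives the other inequality. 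Without this Borel choice and the identification of the pushed-forward Wakimoto gradeds, the parahoric perversity does not follow.

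Finally, your unipotence argument is essentially correct but the justification should not rest on ``standard and costandard sheaves carry no monodromy'': rather, the inertia acts on $\mathrm{Grad}_{\bar\nu}(\scrZ(V))\simeq V(w_0\bar\nu)$ through geometric Satake, and for split $G$ this action is trivial by definition of the reflex field, which is how \Cref{unipotent_new} argues it.
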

	
	Let us explain a bit of the notions and ingredients that go into the above theorem. Our treatment of the centrality of $\scrZ$ is to our best knowledge the only one that uses the machinery of stable $\infty$-categories, which entails additional higher coherent homotopies. An important technical tool is the notion of an abstract six-functor formalism in the sense of Mann \cite{Man22,Man22b}, which allows us to work at the level of the category of correspondences. Once we are there, we perform the usual fusion trick of looking at the disjoint locus of $(\mathrm{Spd}(O_C))^2$ and conclude the desired monoidality via full faithfulness of pullback away from the diagonal for those sheaves which are perverse over $(\mathrm{Spd}C)^2$. To obtain this full faithfulness, we apply a certain calculation of nearby cycles of kimberlites from \cite{GL24}.
	
	Trying to prove perversity of $\scrZ(V)$ was the genesis of this project. Contrary to the function field case, we cannot rely on Artin vanishing to provide us with this crucial fact. Instead, we first consider an Iwahori $\calI$ and look at the Wakimoto functor $\scrJ\colon \Rep(\hat{T}) \to \scrP(\Hk_\calI)$ following \cite{AB09}, but defined instead at the level of complexes as in \cite{AR}. The centrality of $\scrZ(V)$ implies that it lies in the full subcategory generated by the essential image of $\scrJ$ under extensions. Each graded piece can then be recovered by invoking geometric Satake and a certain orthogonality with respect to the constant terms $\mathrm{CT}_{B^{\mathrm{op}}}$, which proves perversity and the existence of a Wakimoto filtration all at once. This differs considerably from the strategy in \cite{AB09}, which exploits both perversity and convolution-exactness (known in equicharacteristic by Artin vanishing). 
	Perversity in the general parahoric case can be deduced from the Iwahori one, based on a suggestion of Achar, which we learned from Cass--Scholbach--van den Hove \cite{CvdHS24}, who adapted our argument for Iwahori models to their setting. 
	From the Wakimoto filtration, we can also deduce the convolution-exactness and unipotency of the monodromy operator induced by the Galois group (note that $\mathrm{Sat}(V)$ descends to $\mathrm{Spd}(F)$ with trivial inertia action).
	In the meantime, the Wakimoto filtration has been decisively used in \cite{GL24} to give a new proof of unibranchness (i.e., topological normality) of local models in complete generality.
	
	\subsection{The AB functor and Iwahori--Whittaker sheaves}
	
	The next part of our paper carries out the construction of the various functors from \cite{AB09} and, except for treating $\infty$-categorical questions carefully, does not significantly diverge from it. Recall that we work here with the Springer stack $[\hat{G}\backslash\hat{\calN}_{\mathrm{Spr}}]$ that resolves the corresponding nilpotent stack of $\hat{G}$. We have the following result:
	
	\begin{theorem}[\Cref{prop_constr_AB_funct_F}]
		There is a monoidal functor $\scrF\colon\mathrm{Perf}([\hat{G}\backslash \hat{\calN}_{\mathrm{Spr}}]) \to \scrD_{\mathrm{cons}}(\Hk_{\calI})$ extending $\scrZ\times\scrJ$.
	\end{theorem}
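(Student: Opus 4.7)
The plan is to build $\scrF$ by first defining it on a generating set of $\Perf([\hat{G}\backslash \hat{\calN}_{\mathrm{Spr}}])$ using $\scrZ$ and $\scrJ$, then verifying monoidality via the centrality of $\scrZ$ together with the Wakimoto filtration. Using the standard isomorphism $\hat{\calN}_{\mathrm{Spr}} = \hat{G}\times^{\hat{B}} \hat{\fraku}$ with $\hat{\fraku} = \Lie(\hat{U})$, we have $[\hat{G}\backslash \hat{\calN}_{\mathrm{Spr}}] \simeq [\hat{B}\backslash \hat{\fraku}]$, and since the projection $[\hat{B}\backslash \hat{\fraku}] \to B\hat{B}$ is affine with relative algebra $\mathrm{Sym}^\bullet(\hat{\fraku}^\vee)$, pushforward identifies $\Perf([\hat{B}\backslash \hat{\fraku}])$ monoidally with perfect $\mathrm{Sym}^\bullet(\hat{\fraku}^\vee)$-modules in $\Rep(\hat{B})$. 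This exhibits the target as the universal symmetric monoidal category receiving an action of $\Rep(\hat{B})$ together with a central $\hat{B}$-equivariant class $\hat{\fraku}^\vee \to \End^1(\mathrm{id})$, which clarifies what data is needed to build a monoidal functor out of it.

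Second, on generators I would define $V \otimes \calO(\lambda) \mapsto \scrZ(V) * \scrJ(\lambda)$; by construction this extends $\scrZ \times \scrJ$, i.e.\ it sends $V \otimes \calO$ to $\scrZ(V)$ and $\calO(\lambda)$ to $\scrJ(\lambda)$. Monoidality on generators amounts to the identification
\[
\scrZ(V_1) * \scrJ(\lambda_1) * \scrZ(V_2) * \scrJ(\lambda_2) \simeq \scrZ(V_1 \otimes V_2) * \scrJ(\lambda_1 + \lambda_2),
\]
which is obtained by transporting $\scrJ(\lambda_1)$ past $\scrZ(V_2)$ using the centrality constraint of $\scrZ$, and then applying monoidality of $\scrZ$ and $\scrJ$ separately. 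The $\bbE_2$-monoidality of $\scrZ$ from Theorem 1.1 promotes this to a coherent $\infty$-categorical monoidal structure of the underlying $\Rep(\hat{B})$-linear action.

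Third, to extend beyond the generators, the successive extensions in the Wakimoto filtration of $\scrZ(V)$ produce, for each positive root $\alpha$, natural boundary classes $\scrJ(\lambda) \to \scrJ(\lambda + \alpha)[1]$ of $\hat{T}$-weight $\alpha$; these assemble into an $\hat{B}$-equivariant morphism $\hat{\fraku}^\vee \to \End^1(\mathrm{id})$ that is central by the $\bbE_2$-structure on $\scrZ$, thereby providing the data required by the universal property of the first step and producing the sought monoidal $\scrF$. The main obstacle will be handling the $\infty$-categorical coherences, namely assembling the Koszul class with all its higher homotopies and verifying its compatibility with the full monoidal structure; this requires a careful combination of the $\bbE_2$-structure on $\scrZ$ with the functoriality of the Wakimoto filtration, following the strategy of \cite{AB09,AR} but substantially adapted to the $\infty$-categorical setting using Mann's six-functor formalism.
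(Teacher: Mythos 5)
Your proposed route via the identification $[\hat{G}\backslash \hat{\calN}_{\mathrm{Spr}}]\simeq[\hat{B}\backslash \hat{\fraku}]$ and a universal property of its perfect complexes is an appealing framing, but it has two serious gaps that make the argument incomplete as written.

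First, and most critically, the Koszul class in $\mathrm{Perf}([\hat{B}\backslash\hat{\fraku}])$ is a \emph{degree-zero} action of $\hat{\fraku}^\vee$: a $\mathrm{Sym}\,\hat{\fraku}^\vee$-module structure is a map $\hat{\fraku}^\vee\to\End^0(\mathrm{id})$ with nilpotent image, not a map into $\End^1(\mathrm{id})$. Correspondingly, the source of this structure on the constructible side is not the connecting maps of the Wakimoto filtration, but the nilpotent logarithm of monodromy $\mathbf{n}_V\colon \scrZ(V)\to\scrZ(V)$ coming from the inertia action on nearby cycles (\Cref{unipotent_new}, \Cref{D_P_for_n_V}). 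Your proposal never mentions the monodromy operator, so the $\hat{\fraku}^\vee$-direction is not actually produced by your construction. The extension classes $\scrJ(\lambda)\to\scrJ(\lambda+\alpha)[1]$ you extract from the Wakimoto filtration are not the right input and do not assemble into a $\mathrm{Sym}\,\hat{\fraku}^\vee$-action; moreover, in the paper's construction the Wakimoto filtration serves a different purpose, namely the \emph{highest weight arrows} $\mathfrak{f}_V\colon\scrZ(V)\to\on{gr}_{\bar\mu}\scrZ(V)$ (which are degree-zero surjections, not extension classes) give the $\hat{\calX}$-factor.

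Second, the universal property you invoke requires a symmetric monoidal functor $\Rep(\hat{B})\to\scrD_{\mathrm{cons}}(\Hk_\calI)$, but $\scrZ$ and $\scrJ$ only provide functors out of $\Rep(\hat{G})$ and $\Rep(\hat{T})$, and these do not combine into a $\Rep(\hat{B})$-action. The paper sidesteps this by working instead with $\hat{G}'=\hat{G}\times\hat{T}$ and the affine enlargement $\hat{\calN}_{\mathrm{Spr}}^{\mathrm{af}}\subset\hat{\frakg}\times\hat{\calX}$, constructing a $\hat{G}'$-equivariant algebra map $\calO(\hat{\frakg}\times\hat{\calX})\to A$ (with $\hat{\frakg}$ from monodromy and $\hat{\calX}$ from highest weight arrows), verifying that it factors through $\calO(\hat{\calN}_{\mathrm{Spr}}^{\mathrm{af}})$ via the Pl\"ucker-type compatibility $\mathfrak{f}_V\circ\mathbf{n}_V=0$ (\Cref{sec:high-weight-arrows-1-monodromy-is-zero-on-highest-weight-vector}), and then passing to $[\hat{G}\backslash\hat{\calN}_{\mathrm{Spr}}]$ via Thomason's localization theorem (\Cref{prop_thomason_quotient}) together with the vanishing of the Wakimoto grading on complexes supported away from the Springer variety (\Cref{lem_gr_circ_F_restr}). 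Even granting the $\bbE_2$-structure on $\scrZ$, your proposal does not address these final factorization and localization steps.
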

	
	Let us briefly describe the construction of $\scrF$ following \cite{AB09}. The main idea consists in defining an analogous functor starting from the quotient stack $[\hat{G}\times \hat{T}\backslash \hat{\calN}_{\mathrm{Spr}}^{\mathrm{af}}]$ containing $[\hat{G}\backslash \hat{\calN}_{\mathrm{Spr}}]$ as a dense open stack, where $\hat{\calN}_{\mathrm{Spr}}^{\mathrm{af}} $ is the affine hull of the canonical $\hat{T}$-torsor over $\hat{\calN}_{\mathrm{Spr}}$. The projective objects of the category of coherent sheaves on the enlarged stack can be mapped to $\scrP(\Hk_{{\calI}})$ by using the functors $\scrZ$, $\scrJ$, and the nilpotent monodromy endomorphism of the former, after verifying the Plücker relations. After deriving this functor to perfect complexes on the affine stack, we are reduced to showing vanishing on complexes supported at the complement of $[\hat{G}\backslash \hat{\calN}_{\mathrm{Spr}}]$.
	
	Next, we study the category $\scrD_{\mathrm{cons}}(\Hk_{\calI\calW})$ of Iwahori--Whittaker sheaves. This is the stable $\infty$-category of $(L^+\calI_{\mathrm{u}}^{\mathrm{op}},\calL)$-twisted equivariant constructible sheaves on $\Fl_{ \calI}$, where $\calL$ is a certain character sheaf obtained via the Artin--Schreier cover. It also carries a perverse t-structure and the category $\scrP(\Hk_{\calI\calW})$ is a highest weight category in the sense of Beilinson--Ginzburg--Soergel \cite{BGS96}, with simple and tilting objects indexed by $\bbX_\ast(T)$. We get a perverse t-exact averaging functor $\mathrm{av}_{\calI\calW}\colon \scrD_{\mathrm{cons}}(\Hk_{\calI}) \to \scrD_{\mathrm{cons}}(\Hk_{\calI\calW})$ given by left convolution against the simple object attached to $0\in \bbX_\ast(T)$ and we denote by $\scrZ_{\calI\calW}$, resp.~$\scrF_{\calI\calW}$, the composition $\mathrm{av}_{\calI\calW}\circ \scrZ$, resp.~$\mathrm{av}_{\calI\calW}\circ \scrF$.
	
	\begin{theorem}[\Cref{thm_tilting}]
		If $G$ has enough minuscules, then $\scrZ_{\calI\calW}(V)$ is tilting.
	\end{theorem}
	
	Notice that in \cite{AB09} there is no assumption on $G$. Unfortunately, we are missing a crucial ingredient replacing the $\mathbb{G}_{m,k}$-action given by rotating the uniformizer, which is impossible in the $p$-adic setting. Contrary to what was asserted in an initial version of this paper, the impact of this gap is very mild, and we get the result for all groups with enough minuscules, see \Cref{def_enough_minusc}, a class comprised of all classical groups, and also exceptional ones of types $E_6$ and $E_7$. This oversight on our side was brought to our attention by the work of Dhillon--Taylor \cite{DT25}. 
	
	Let us explain how the proof works, so the reader can better grasp the gap above. First of all, the tilting property propagates under convolution and can be verified on adjoint quotients, so we may assume $V$ is either minuscule or quasi-minuscule by a lemma of Ngô--Polo \cite{NP01}. In the minuscule case, all the weights are comprised in a finite Weyl orbit, so one can easily verify the given property. In the quasi-minuscule case, we must still handle the (co)restriction of $\scrZ_{\calI\calW}(V)$ to the weight $0$ Iwahori--Whittaker cell. Here, the vanishing can be achieved by calculating the alternating sum of the $\mathrm{Ext}$ groups via an argument on Grothendieck groups and finally bound the dimension of the $\bar{\bbQ}_\ell$-vector space $\mathrm{Hom}(\scrZ_{\calI\calW}(1),\scrZ_{\calI\calW}(V))$ accordingly. Now, this bound is achieved in \cite{AB09} via the theory of the regular quotient described below together with the fact that the monodromy operator is defined for every sheaf in $\scrP(\Hk_{\calI})$ as it is induced by the $\bbG_{m,k}$-action given by rotation. For groups with enough minuscules, we can realise every representation up to central isogeny as a direct summand of a tensor product of minuscule representations, so we still get the tilting property.
	
	The last step in proving that $\scrF_{\calI\calW}$ is an equivalence (now, necessarily assuming enough minuscules) revolves around the regular orbit $\calO_{\mathrm{r}} \subset \hat{\calN}$ inside the nilpotent cone. The Springer resolution is an isomorphism above this $\hat{G}$-orbit, and hence we should find a category of étale sheaves that plays a similar role. For this, we look at the Serre quotient $\scrP_0(\Hk_{\calI})$ of perverse sheaves on the Hecke stack obtained by modding out IC sheaves with positive-dimensional support.
	
	\begin{theorem}[\Cref{prop_tannakian_for_perv_I0}, \Cref{H_equals_the_full_centralizer}]
		If $G$ has enough minuscules, then there is a symmetric monoidal equivalence $\scrP_0(\Hk_{\calI}) \xrightarrow{\sim} \mathrm{Rep}(Z_{\hat{G}}(n_0))$, where $n_0$ is a regular nilpotent element.
	\end{theorem}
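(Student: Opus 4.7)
The plan is to proceed by Tannakian reconstruction: first endow $\scrP_0(\Hk_{\calI})$ with a neutral symmetric monoidal abelian structure, producing $\scrP_0(\Hk_{\calI})\simeq\Rep(H)$ for an affine group scheme $H$ over $\ql$, and then identify $H$ with $Z_{\hat{G}}(n_0)$ for a regular nilpotent $n_0$.

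For the monoidal structure, I would verify that the Serre subcategory $\scrN\subset\scrP(\Hk_{\calI})$ generated by IC sheaves of positive-dimensional support is a two-sided convolution ideal, so that $\ast$ descends to $\scrP_0$. The convolution-exactness of central sheaves from \Cref{cor_Z_conv_exact} together with a dimension estimate on convolutions of IC sheaves reduces this to the evident statement for point-supported IC. Although $\ast$ fails to be symmetric on $\scrP(\Hk_{\calI})$, in the Serre quotient every object is a subquotient of the image of some $\scrZ(V)$, and the $\bbE_2$-monoidality of $\scrZ$ from \Cref{thm_E2_mor} supplies a swap isomorphism that descends to a symmetry on $\scrP_0$. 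For a fiber functor I would take the perverse fiber $\omega$ at the base point $e\in\Fl_{\calI}$: on the quotient every object is detected by its stalk at $e$, so $\omega$ is exact, faithful, and symmetric monoidal, convolution of objects concentrated at $e$ reducing to tensor product of vector spaces. Deligne's Tannakian reconstruction then yields $\scrP_0(\Hk_{\calI})\simeq\Rep(H)$.

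Composition with the central functor gives a symmetric monoidal $\scrZ_0\colon\Rep(\hat{G})\to\scrP_0(\Hk_{\calI})\simeq\Rep(H)$, corresponding to a homomorphism $\phi\colon H\to\hat{G}$. The unipotent monodromy of \Cref{unipotent_new} provides a nilpotent natural endomorphism of $\scrZ$ which descends to one of $\scrZ_0$; transported through $\phi$ it becomes an $H$-fixed element $n_0\in\Lie(\hat{G})$, forcing $\phi(H)\subset Z_{\hat{G}}(n_0)$. That $n_0$ is regular — hence, in type $A$, determined up to conjugation — is visible from the Wakimoto filtration: the monodromy shifts the $\scrJ(V_{|\hat{T}})$-graded pieces of $\scrZ(V)$ by a single weight step, realising a principal nilpotent on each highest-weight representation.

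The main obstacle is upgrading $\phi(H)\subset Z_{\hat{G}}(n_0)$ to an equivalence. Here I would invoke the AB equivalence $\scrF$ from the preceding theorem, which genuinely requires the type $A$ hypothesis. The Springer resolution is an isomorphism above the regular orbit $\calO_{\mathrm r}\subset\hat{\calN}$, so the open substack $[\hat{G}\backslash p_{\mathrm{Spr}}^{-1}(\calO_{\mathrm r})]$ of $[\hat{G}\backslash\hat{\calN}_{\mathrm{Spr}}]$ is canonically $BZ_{\hat{G}}(n_0)$, whose heart of perfect complexes recovers $\Rep(Z_{\hat{G}}(n_0))$. I would then check: (i) $\scrF$ sends perfect complexes supported on the closed complement of this open substack into $\scrN$, by resolving by projectives built from $\scrZ(V)$ and tracking supports on $\Fl_{\calI}$ through the Wakimoto grading; (ii) every object of $\scrP_0(\Hk_{\calI})$ lies in the essential image of $\scrF$ restricted to the regular locus, because $\scrP_0$ is generated by the $\scrZ_0(V)$. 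Together these induce a symmetric monoidal equivalence $\Rep(Z_{\hat{G}}(n_0))\simeq\scrP_0(\Hk_{\calI})$, and Tannakian uniqueness identifies $H$ with $Z_{\hat{G}}(n_0)$. The hardest point will be (i): cleanly matching the coherent-side orbit stratification with the dimension stratification of supports on $\Fl_{\calI}$, which is precisely where the AB machinery and the explicit Wakimoto filtration become indispensable.
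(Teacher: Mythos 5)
Your overall blueprint (Tannakian reconstruction, construct $n_0$ from the monodromy, then use the AB equivalence on the regular orbit to identify the group) matches the paper's architecture. But there are three genuine gaps.

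First, the proposed fiber functor is wrong. Taking the stalk at the base point $e \in \Fl_{\calI}$ fails to be faithful whenever $\Omega_{\bba}$ is non-trivial (e.g.\ $G=\PGL_n$, still type $A$): the simple objects of $\scrP_0(\Hk_\calI)$ are the $\IC_\tau$ for $\tau \in \Omega_{\bba}$, and $\IC_\tau$ has zero stalk at $e$ for $\tau \neq e$. Moreover $\scrZ_0(V)$ is supported on the connected component of $\Fl_{\calI}$ determined by $\mu$ in $\pi_1(G)$, which need not contain $e$, so stalk-at-$e$ kills nonzero objects in the image of $\scrZ_0$. The paper instead builds the fiber functor internally: it forms the ring object $\scrZ_0(\calO(\hat{G}))$ in $\mathrm{Ind}(\scrP_0^c)$, passes to the quotient by a maximal left ideal $\scrI$ to get $\calO(H)$, shows $\End(\calO(H))\cong\ql$ by a countability argument, and sets $\Phi_G(A)=\Hom(\calO(H),\calO(H)\circledast A)$.

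Second, the regularity of $n_0$ is not a visible consequence of the Wakimoto filtration ``shifting by a single weight step''; the nilpotent operator $\mathbf{n}_V$ lowers the filtration but there is no \emph{a priori} control on its rank in each degree. The paper proves regularity via weight theory: it shows $\scrZ^{\mathrm{mix}}(V)$ is monodromy-pure of weight $0$ for minuscule $V$ (\Cref{prop_wtmon_minuscule}, a mixed-characteristic analogue of Gabber's theorem, using representability of local models and Hansen--Zavyalov), propagates this to $\hat{\frakg}$ by tensor-generation of minuscule representations in type $A$, and then reads off $\dim\hat{\frakg}^{n_0}=\dim\hat{\frakg}(0)=\mathrm{rk}(G)$. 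This is the step where the type $A$ restriction enters already for regularity, not only for the AB equivalence, and your proposal skips it entirely.

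Third, you implicitly assume at the start that every object of $\scrP_0(\Hk_\calI)$ is a subquotient of some $\scrZ_0(V)$ (to transfer the fusion braiding). This equality $\scrP_0^c=\scrP_0$ is precisely what \Cref{H_equals_the_full_centralizer} proves, and it is obtained only \emph{after} the AB equivalence is available, using the exotic $t$-structure and the Verdier quotient description of $\mathrm{Perf}([\hat{G}\backslash\calO_\mathrm{r}])$. The paper's order is: Tannakian on the full subcategory $\scrP_0^c$ with only $H\subset Z_{\hat{G}}(n_0)$; then AB equivalence; then upgrade $\subset$ to $=$ on both the group and the category. Your two final checks (i) and (ii) are the right shape for this upgrade, but they need to come \emph{after} rather than feed the Tannakian setup.
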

	
	Together with the tilting property, this result is key in order to prove the Arkhipov--Bezrukavnikov equivalence, as it induces certain injections of $\mathrm{Hom}$ maps. Let us remark that the most delicate point in the above theorem consists in showing that $n_0$ is regular. For this, we use the theory of weights by descending $\scrZ(V)$ to a mixed sheaf and calculating its monodromy filtration. In \cite{AB09}, one applies Gabber's local weight-monodromy theorem, see \cite{BB93}, stating that the weight filtration equals the monodromy filtration, and then calculates the former via the affine Hecke algebra. This is not available for our nearby cycles, unless $\mu$ is minuscule, by work of Hansen--Zavyalov \cite{HZ23} combined with the representability theorem in \cite{AGLR22}. Again, we can only easily reproduce this argument for groups with enough minuscules. It would be possible to adapt a different argument due to Bezrukavnikov--Riche--Rider \cite{BRR20}, but this would lead us into some detours that seem unnecessary, as we do not have the tilting property for other groups.
	
	Let us finish by stating the second main result of this paper, i.e. the AB equivalence for $p$-adic groups with enough minuscules. 
	
	\begin{theorem}[\Cref{thm_A-B}]
		If $G$ has enough minuscules, then the functor $\scrF_{\calI\calW}\colon \mathrm{Perf}([\hat{G}\backslash \hat{\calN}_{\mathrm{Spr}}])\to \scrD_{\mathrm{cons}}(\Hk_{\calI\calW})$ is an equivalence.
	\end{theorem}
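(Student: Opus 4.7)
The plan is to follow the strategy of \cite{AB09}, leveraging the two type-$A$ inputs already established: the tilting property of $\scrZ_{\calI\calW}(V)$ from \Cref{zIW_is _tilting_sum_of_minuscule} and the Tannakian identification $\scrP_0(\Hk_{\calI}) \simeq \Rep(Z_{\hat G}(n_0))$ from \Cref{prop_tannakian_for_perv_I0} and \Cref{H_equals_the_full_centralizer}. Both sides of $\scrF_{\calI\calW}$ are compactly generated stable $\infty$-categories, and $\scrF_{\calI\calW}$ is monoidal and exact. The first step is to reduce the theorem to full faithfulness on a convenient set of compact generators. The source $\Perf([\hat G \backslash \hat{\calN}_{\mathrm{Spr}}])$ is generated as a thick tensor-triangulated category by objects of the form $V \otimes \calL_\lambda$ with $V \in \Rep(\hat G)$ and $\lambda \in \bbX_\ast(T)$, where $\calL_\lambda$ is the line bundle pulled back from $\hat G/\hat B$; these map under $\scrF_{\calI\calW}$ to $\scrZ_{\calI\calW}(V) \star \scrJ_{\calI\calW}(\lambda)$. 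A standard thick-subcategory argument then promotes full faithfulness on generators to an equivalence, provided the images generate the target.

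Next I would collapse the $\Hom$-computation to the abelian level via the tilting property. By \Cref{zIW_is _tilting_sum_of_minuscule}, each $\scrZ_{\calI\calW}(V)$ is tilting in the highest weight category $\scrP(\Hk_{\calI\calW})$, and the Wakimoto filtration furnished by \Cref{thm_perversity_new} shows that $\scrZ_{\calI\calW}(V) \star \scrJ_{\calI\calW}(\lambda)$ remains tilting; in particular, $\Ext^{i}$ between any two of these vanishes for $i > 0$. The mirroring collapse on the coherent side is the Koszul / Bott--Borel--Weil calculation of $\Ext$s over $\hat{\calN}_{\mathrm{Spr}}$ in representation-theoretic terms, which transports verbatim from \cite{AB09}. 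The theorem is thereby reduced to matching $\Hom^0$ between generators on the two sides.

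The third step is to perform this matching via the regular orbit. Projecting to the regular quotient $\scrP_0(\Hk_\calI)$, \Cref{H_equals_the_full_centralizer} identifies $\Hom^0$ between our tilting sheaves with morphisms in $\Rep(Z_{\hat G}(n_0))$. On the coherent side, restriction along the open embedding of the regular orbit $\calO_{\mathrm{r}} \subset \hat{\calN}$ — where $p_{\mathrm{Spr}}$ is an isomorphism with fibre $Z_{\hat G}(n_0)$ above $n_0$ — identifies the $\Hom^0$s of the corresponding coherent generators with the same representation-theoretic data. Monoidality of both $\scrF_{\calI\calW}$ and the Tannakian equivalence then forces the two identifications to coincide on tensor products of generators.

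The main obstacle will be showing that passing to the regular locus loses no $\Hom^0$ information on either side. On the constructible side, one must verify that $\Hom$s between the tilting generators inject into their images in $\scrP_0$, which in \cite{AB09} is deduced from the $\bbG_{m,k}$-rotation action responsible for the monodromy. That action is unavailable in mixed characteristic, which is precisely why the type $A$ hypothesis is needed here — the replacement is the weight-monodromy input of Hansen--Zavyalov combined with the representability theorem of \cite{AGLR22}, the same package that forces $n_0$ to be regular. On the coherent side, the parallel injectivity is a torsion-freeness statement following from the flatness of $p_{\mathrm{Spr}}$ over $\calO_{\mathrm{r}}$ and normality of $\hat{\calN}_{\mathrm{Spr}}$. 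Assembling the two one-sided injections and their compatibility under the Tannakian equivalence yields full faithfulness on generators, and hence the desired equivalence.
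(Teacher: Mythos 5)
Your overall strategy closely tracks the paper's: reduce to full faithfulness on the generators $V \otimes \calO(\nu)$, use the tilting property of $\scrZ_{\calI\calW}(V)$ (which you correctly note survives convolution with Wakimoto sheaves) to collapse higher $\mathrm{Ext}$s, and route through the regular quotient $\scrP_0(\Hk_\calI)$ to control the remaining $\Hom^0$. The injectivity half of your argument is exactly what the paper proves, and your diagnosis of where the type $A$ hypothesis enters (via Hansen--Zavyalov forcing $n_0$ to be regular) is accurate.

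However, your final assembly step has a genuine gap. You finish with: "Assembling the two one-sided injections and their compatibility under the Tannakian equivalence yields full faithfulness on generators." This does not follow. If you have a commuting square in which both vertical arrows (restriction to the regular locus on the coherent side; projection to $\scrP_0$ on the constructible side) are merely injective, and the bottom arrow is an isomorphism, you can only conclude that the top arrow is injective — not that it is surjective. To upgrade to bijectivity you need to know that the images of both injections coincide inside the common receptacle, and nothing in your argument supplies this. The paper closes this gap by a direct dimension count: both $\Hom$-spaces in \eqref{eq_FIW_shift_hom} are finite dimensional, the injectivity is already established, and one computes that both have dimension $\dim V(\nu)$ — the left via Bott--Borel--Weil cohomology vanishing for $\calO(\nu)$ on $\hat{\calN}_{\mathrm{Spr}}$, the right via the tilting multiplicity formula of \Cref{coro_mult_support_of_Z_IW}. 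Bijectivity then follows, and monoidality and generation finish the proof.

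A second, related issue is your invocation of \Cref{H_equals_the_full_centralizer} to identify the bottom arrow with an isomorphism of $\Hom$s in $\Rep(Z_{\hat G}(n_0))$. As the paper flags explicitly in \Cref{remark_regular_quotient}, the equality $H = Z_{\hat G}(n_0)$ is \emph{not} used in the proof of \Cref{thm_A-B}; it is deduced afterward, in Section 9, \emph{from} the equivalence. Only the containment $H \subset Z_{\hat G}(n_0)$ from \Cref{prop_tannakian_for_perv_I0} is available at this stage, which yields the map $V^{Z_{\hat G}(n_0)} \to V^H$ and hence injectivity, but not the isomorphism your diagram requires. Relying on $H = Z_{\hat G}(n_0)$ here would be circular. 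The dimension-count argument sidesteps this cleanly, since it never needs the bottom arrow to be an isomorphism — only the injectivity of the composite and the separate count on each side.
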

	
	Let us remark that in \Cref{coro_av_IWas_is_an_equiv} we identify the anti-spherical Serre quotient $\mathscr{P}_{\mathrm{as}}(\mathrm{Hk}_{\mathcal{I}})$ with the category $\mathscr{P}(\mathrm{Hk}_{\mathcal{IW}})$ of Iwahori--Whittaker perverse sheaves, so upon passing to Grothendieck groups, the equivalence above really recovers the classical setup explained in the beginning of this introduction.
	We strongly believe that the theorem above must also hold for general split connected reductive groups $G$. Yun--Zhu have announced in conference talks regarding work of preparation of Hemo--Zhu, see also \cite{Zhu20}, a proof of the full Bezrukavnikov equivalence \cite{Bez16} for $p$-adic groups, that builds on a colimit presentation in terms of double quotients of parahoric jet groups due to Tao--Travkin \cite{TT20}. Recently, Bando \cite{Ban23} also gave a distinct proof of the Bezrukavnikov equivalence for $p$-adic groups by comparing constructible-étale sheaves in equi- and mixed characteristic via an ingenious geometric construction. However, these previous methods do not yield concrete knowledge about the central functor, whereas our paper places $\scrZ$ right at the center of it all. We also think that our functor $\scrZ$ will naturally appear in the picture if one studies étale sheaves on $p$-adic Hecke stacks, see, e.g., the unibranchness theorem of \cite{GL24}, and thus it must play a role in comparing the Zhu \cite{Zhu20} and the Fargues--Scholze \cite{FS21} variants of a categorical $p$-adic local Langlands correspondence. A natural task for the future will be to explain if and how all of the previous approaches fit together, namely by comparing a priori different central functors.
	
	\subsection{Acknowledgements}
	This paper owes a lot to the machinery developed in \cite{AGLR22}, so we are first of all thankful to Ian Gleason and Timo Richarz for their contribution to that project, and also further conversations involving this project. We benefited from a workshop on the Bezrukavnikov equivalence in Essen--Münster--Bonn organized by the first two authors together with Ulrich Görtz, Eugen Hellmann, and Konrad Zou, and we are thankful to all of the participants. This project has received funding from the Excellence Cluster of the Universität Münster, and the ERC Consolidator Grant 770936 via Eva Viehmann. The fourth author benefited from participating in the special academic year at the Institute for Advanced Study. He thanks Zurich Insurance Company for the support and organizers and all participants of the special year program. Part of the project was conducted when the fourth author was visiting the Max-Planck Institute for Mathematics, he is grateful to the institute for host. We would also like to acknowledge the relevant discussions we had or feedback we got from Pramod Achar, Robert Cass, Harrison Chen, Jean-François Dat, Gurbir Dhillon, Arnaud Étève, Thomas Haines, Linus Hamann, David Hansen, Xuhua He, Eugen Hellmann, Claudius Heyer, Tasho Kaletha, Arthur-César Le Bras, Lucas Mann, Cédric Pépin, Simon Riche, Peter Scholze, Jeremy Taylor, Thibaud van den Hove, Eva Viehmann, Yifei Zhao, Xinwen Zhu, Konrad Zou. We also thank the anonymous referee for carefully reading the paper and suggesting several improvements.
	
	\subsection{Notation}
	\label{section-notation}
	
	Unfortunately, we will have to use a lot of notations. Thus, let us address this once and for all and define the following objects, which will occur in the whole text.
	
	First, let us discuss scheme-theoretic notations.
	\begin{itemize}
		\item $p$ a prime,
		\item $F$ a finite extension of either $\mathbb{Q}_p$ or $\breve{\mathbb{Q}}_p$ with ring of integers $O$, and (perfect) residue field $k$.
		\item $\overline{F}$ an algebraic closure of $F$ and $\Gamma:=\mathrm{Gal}(\overline{F}/F)$ the absolute Galois group,
		\item $\breve{\Gamma}\subseteq \Gamma$ the inertia subgroup and $\Gamma^{\mathrm{un}}:=\Gamma/\breve{\Gamma}$ the unramified quotient,
		\item $\breve{F}$ the completion of the maximal unramified extension of $F$ in $\overline{F}$, $\breve{O}\subseteq \breve{F}$ its ring of integers, and $\overline{k}$ the residue field of $\breve{O}$,
		\item $G$ a quasi-split reductive group over $F$, 
		\item $S\subseteq T\subseteq B\subseteq G$ a maximal split torus $S\subseteq G$, $T$ its centralizer (a maximal torus in $G$ as $G$ is quasi-split), and a Borel $B\subseteq G$ containing $T$,
		\item $B^-\subseteq G$ is the opposite Borel of $B$,
		\item $N:=N_G(T)$ denotes the normalizer of $T$ in $G$,
		\item $\calS$ the connected  N\'eron model of $S$ over $O$,
		\item $\calT$ the connected N\'eron model of $T$ over $O$,
		\item If $H/F$ is a torus, then $\bbX_\bullet(H)$, resp.~$\bbX^\bullet(H)$ denote the groups of (geometric) cocharacters, resp.~characters of $H$,
		\item $\bbX_\bullet:=\bbX_\bullet(T),\ \bbX^\bullet:=\bbX_\bullet(T)$,
		\item $\bar{\bbX}_\bullet:=\bbX_{\bullet,\breve{\Gamma}}$, where the subscript $\breve{\Gamma}$ denotes the coinvariants,
		\item $\bbX_\bullet^+,\ \bbX^{\bullet,+}$ denote the dominant cocharacters resp.\ dominant characters of $T$ with respect to $B$,
		\item $\bbX_\bullet(S)^+,\ \bbX^{\bullet,+}(S)$ denote the dominant cocharacters resp.\ dominant characters of $S$ with respect to $B$, 
	\end{itemize}
	
	Next, let us introduce combinatorial notations.
	
	\begin{itemize}
		\item $W:=N(\breve{F})/\calT(\breve{O})$ the Iwahori-Weyl group of $T$, also called extended affine Weyl group,
		\item $\mathcal{A}(G,S)$ the appartment associated with $S$, identified with $\bbX_\bullet(S)_{\bbR}$ for pinned $G$,
		\item $\bba\subseteq \mathcal{A}(G,S)$ a fixed alcove,
		\item $\bbf \subseteq \mathcal{A}(G,S)$ a facet contained in the closure of $\bba$,
		\item $\bbS\subseteq W$ the set of reflections at the walls of $\bba$, also called the set of simple reflections,
		\item $W_\aff\subseteq W$ the affine Weyl group, which is the Coxeter group generated by the simple reflections,
		\item $\Omega_{\bba}$ the stabilizer of $\bba$, which yields an isomorphism
		\begin{equation}
			W\cong W_\aff\rtimes \Omega_{\bba}.
		\end{equation}
		\item $\ell\colon W\to \bbN_{\geq 0}$ the length function on $W$, i.e., the unique function $\ell(-)\colon W\to \mathbb{N}_{\geq 0}$, which extends the length function $\ell(-)\colon W_\aff\to \mathbb{N}_{\geq 0}$ on the Coxeter group $W_\aff$, such that $\ell(\tau)=0$ for $\tau\in \Omega_{\bba}$.
		\item $\leq$ is the Bruhat order on $W$, i.e., $w\leq w^\prime$ for $w=(w_\aff,\tau), w^\prime=(w^\prime_\aff,\tau^\prime)\in W\cong W_\aff\rtimes \pi_1(G)_{\breve{\Gamma}}$ if and only if $\tau=\tau^\prime$ and $w_\aff\leq w^\prime_\aff$ for the Bruhat order $\leq$ on $W_\aff$ coming from its Coxeter structure,
		\item $W_{\mathrm{fin}}=N(\breve{F})/T(\breve{F})$ the finite Weyl group, which sits in an exact sequence
		\begin{equation}
			1\to \bar{\bbX}_\bullet\to W\to W_{\mathrm{fin}}\to 1.
		\end{equation}
		\item $t_{\bar{\nu}}\in W$ is the translation element associated with $\bar{\nu}\in \bar{\bbX}_\bullet$. 
		\item $w_{\bar{\nu}}$ denotes the minimal length element in the coset $W_\mathrm{fin}t_{\bar{\nu}}$ for $\bar{\nu}\in\bar{\bbX}_\bullet$, upon choosing an origin for $\calA(G,S)$.
	\end{itemize}
	
	Now let us define notations related to affine flag varieties and perfect geometry.
	
	\begin{itemize}
		\item $\mathrm{Alg}^{\mathrm{perf}}_k$ the category of perfect $k$-algebras,
		\item for a scheme $X$ over $k$ we denote by $X^{\rm{pf}}$ its perfection, 
		\item for $R\in \mathrm{Alg}^{\mathrm{perf}}_k$ we set $W_O(R):=O\otimes_{W(k)}W(-)$, the ring of $O$-Witt vectors,
		\item if $X/O$ is an affine scheme of finite type, then $L^+X\colon (\mathrm{Alg}^{\mathrm{perf}})\to (\mathrm{Sets}),\ R\mapsto X(W_O(R))$ is the positive loop functor for $X$,
		\item if $Z/F$ is an affine scheme of finite type, then $LZ\colon (\mathrm{Alg}^{\mathrm{perf}})\to (\mathrm{Sets}),\ R\mapsto Z(W_O(R)[1/p])$ is the loop functor for $Z$,
		\item $\calI/O$ the Iwahori group scheme for $G$ associated with the alcove $\mathbf{a}$,
		\item $\calG/O$ the parahoric model of $G$ associated with the facet $\bbf$,
		\item the quotient of \'etale sheaves $\Fl_\calG:=LG/L^+\calG$ is the (partial) affine flag variety for $\calG$,
		\item the quotient stack $\Hk_{\calG}=[L^+\calG\backslash \Fl_{ \calG}]$ in the étale topology is the Hecke stack for $\calG$,
		\item the quotient stack $\Hk_{(\calI,\calG)}=[L^+\calI\backslash \Fl_{ \calG}]$ in the étale topology is the Hecke stack for the pair $(\calI,\calG)$.		
	\end{itemize}
	Next, let us introduce some cohomological notations.
	
	\begin{itemize}
		\item $\ell\neq p$ a prime,
		\item $\Lambda$ an algebraic extension of $\bbF_\ell$ or $\bbQ_\ell$.  
		\item $\scrD_\et(-):=\scrD_\et(-,\Lambda)$ denotes the (left-completed) $\infty$-derived category of ``\'etale sheaves of $\Lambda$-modules'' on a small v-stack on either perfect schemes or perfectoid spaces of characteristic $p$, compare with \cite[Definition 1.7]{Sch17} for $\Lambda=\bbZ/\ell^n \bbZ$. For $\Lambda=\bbQ_\ell$, we invert $\ell$ as in Section 26 in \textit{loc.cit}.
		\item $\scrD_\et(\Hk_{(\calI,\calG)}):=\scrD_\et(\Hk_{(\calI,\calG)},\Lambda)^{\mathrm{bd}}$ denotes the $\infty$-category of ``\'etale sheaves of $\Lambda$-modules'' on $\Hk_{(\calI,\calG)}$, whose support is a finite subset of the underlying topological space of $\Hk_{(\calI,\calG)}$.
	\end{itemize}
	Finally, we collect our notations for the ``coherent'' side. Note that we consider these objects usually under the assumption that $\Lambda$ is a field extension of $\bbQ_\ell$.
	\begin{itemize}
		\item $\hat{G}$ denotes the dual group of $G$ over $\Lambda$,
		\item $\hat{T}\subseteq \hat{G}$ denotes the dual torus to $T$, and we identify $\bbX^\bullet(\hat{T})\cong \bbX_\bullet(T)$,
		\item $\hat{G}^\prime:=\hat{G}\times\hat{T}$,
		\item $\hat{T}\subseteq \hat{B}\subseteq \hat{G}$ denotes the Borel subgroup with dominant characters identifying with $\bbX_\bullet(T)^+$,
		\item $\hat{U}\subseteq \hat{B}$ is the unipotent radical of $\hat{B}$ with Lie algebra $\mathrm{Lie}(\hat{U})$,
		\item $\hat{\mathfrak{g}}:=\mathrm{Lie}(\hat{G})$ denotes the Lie algebra of $\hat{G}$,
		\item $\hat{\calN}\subseteq \hat{\mathfrak{g}}$ is the nilpotent cone, i.e., the closed subscheme of nilpotent elements,
		\item $p_{\mathrm{Spr}}\colon \hat{\calN}_{\mathrm{Spr}}:=\hat{G}\times^{\hat{B}}\mathrm{Lie}(\hat{U})\to \hat{\calN}$ denotes the Springer resolution of the nilpotent cone,
		\item $\hat{\calN}_{\mathrm{Spr}}^{\mathrm{qaf}}:=\hat{G}\times^{\hat{U}} \mathrm{Lie}(\hat{U})\to \hat{\calN}_{\mathrm{Spr}}$ denotes the canonical $\hat{T}$-torsor over $\hat{\calN}_{\mathrm{Spr}}$,
		\item $\hat{\calX}:=\mathrm{Spec}(\mathcal{O}(\hat{G}/\hat{U}))$ is the affine closure of the quasi-affine scheme $\hat{G}/\hat{U}$,
	\end{itemize}
	
	\section{Geometry of the affine flag variety}
	\label{sec:geometry-affine-flag}
	
	In this section, we want to recall the geometry of the (Witt vector) partial affine flag variety $\Fl_\calG$, which was first considered as an algebraic space in \cite[Section 1.4]{Zhu17}. Its representability by an ind-(perfected projective $k$-scheme) was then proven in \cite[Corollary 9.6]{BS17} via reduction to $G=\GL_n$ and the construction of the determinant line bundle there.
	Let us note that the base change $\Fl_{\calG,\bar k}$ is the affine flag variety of the parahoric group $\calG \otimes_O \breve{O}$.
	Hence, geometric questions for $\Fl_\calG$ often reduce to the case $F=\breve{F}$. Our treatment will focus especially on $L^+\calI$-equivariant subvarieties of $\Fl_\calG$.
	
	\subsection{Schubert varieties and convolution}
	\label{sec:schub-vari-conv}
	
	During the entire paper, we will assume that the group $G$ is {\it residually split}. In fact, almost all of our arguments with sheaves take place when $F=\breve{F}$, except for a brief appearance of mixed sheaves, for which residual splitness is a lax enough assumption. This simplifies the Galois action on the Iwahori--Weyl group.
	
	\begin{lemma}
		\label{sec:geometry-affine-flag-1-residually-split}
		The following conditions are equivalent:
		\begin{enumerate}
			\item The $\Gamma$-action on $W$ is trivial.
			\item $\Gamma$ acts trivially on $\bar{\bbX}_\bullet$ (equivalently, $\Gamma^{\mathrm{un}}$ acts trivially on $\bar{\bbX}_\bullet$).
			\item $G$ is residually split, i.e., the reductive quotient $\calG_k^{\mathrm{red}}$ of the special fiber of every parahoric $O$-model of $G$ is split over $k$.
		\end{enumerate}
	\end{lemma}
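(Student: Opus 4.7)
The plan is to proceed through the cycle $(1) \Rightarrow (2) \Rightarrow (3) \Rightarrow (1)$. As a preliminary remark, the $\Gamma$-action on each of $W$, $\bar{\bbX}_\bullet$, and the root datum of $\calG_{\overline{k}}^{\mathrm{red}}$ automatically factors through $\Gamma^{\mathrm{un}} = \mathrm{Gal}(\overline{k}/k)$, since $\breve\Gamma$ acts trivially on each object by construction (on $W$ because it is a quotient involving $\breve{F}$-rational points, on $\bar{\bbX}_\bullet$ because it is $\breve\Gamma$-coinvariants, and on $\calG_{\overline{k}}^{\mathrm{red}}$ because this is a $k$-group).

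The implication $(1) \Rightarrow (2)$ is immediate: $\bar{\bbX}_\bullet$ sits in $W$ as the $\Gamma$-stable translation subgroup, i.e.\ the kernel of $W \twoheadrightarrow W_{\mathrm{fin}}$, so the restriction of a trivial $\Gamma$-action remains trivial.

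For $(2) \Rightarrow (3)$, I would invoke Bruhat--Tits theory to describe the based root datum of the reductive quotient $\calG_{\overline{k}}^{\mathrm{red}}$ purely in terms of apartment data: the cocharacter lattice of its maximal torus is canonically identified (up to torsion) with $\bar{\bbX}_\bullet$, and the set of roots is precisely the set of relative roots of $(G, S)$ vanishing on the facet $\bbf$. These identifications are Galois-equivariant, so triviality of the $\Gamma$-action on $\bar{\bbX}_\bullet$ propagates to triviality on the root datum of each $\calG_{\overline{k}}^{\mathrm{red}}$, which is exactly the criterion for splitness over $k$.

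For $(3) \Rightarrow (1)$, I would specialize (3) to the Iwahori $\calG = \calI$. Then $\calI_{\overline{k}}^{\mathrm{red}} = \calT_{\overline{k}}^{\mathrm{red}}$ is itself a torus, and splitness over $k$ yields trivial $\Gamma^{\mathrm{un}}$-action on its cocharacter lattice, hence on the real apartment $\calA(G, S)_{\breve{F}} \cong \bar{\bbX}_\bullet \otimes_{\bbZ} \bbR$. Since the affine action of $W$ on this apartment is faithful and $\Gamma$-equivariant, the computation $(\gamma w)\cdot p = \gamma(w \cdot p) = w \cdot p$ for every $p \in \calA$ forces $\gamma w = w$. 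The main technical subtlety I anticipate is bookkeeping the torsion part of $\bar{\bbX}_\bullet$, which is invisible on the free cocharacter lattice of $\calT_{\overline{k}}^{\mathrm{red}}$; however, the apartment/faithfulness argument already handles the free part, and the torsion contribution can be controlled either by the standard Bruhat--Tits identifications in the residually split case, or via the explicit classification by Tits indices, both of which match (2) with (3) integrally rather than only rationally.
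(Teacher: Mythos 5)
Your cyclic route $(1)\Rightarrow(2)\Rightarrow(3)\Rightarrow(1)$ differs from the paper's, which cites \cite[Proposition 7.10.10]{KP23} for the equivalence $(2)\Leftrightarrow(3)$ and handles $(1)\Leftrightarrow(2)$ directly. Your $(1)\Rightarrow(2)$ agrees with the paper, and your $(2)\Rightarrow(3)$ is essentially an uncited sketch of the content of the KP23 reference.

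The genuine gap is in $(3)\Rightarrow(1)$, and you correctly locate it but do not close it. The action of $W$ on $\calA\cong\bar{\bbX}_{\bullet,\bbR}$ is \emph{not} faithful: its kernel is exactly the torsion subgroup of $\bar{\bbX}_\bullet$, since the translation part of $W$ acts through $\bar{\bbX}_\bullet\to\bar{\bbX}_{\bullet,\bbR}$, which kills torsion, and $\bar{\bbX}_\bullet=\bbX_{\bullet,\breve\Gamma}$ is a group of \emph{co}invariants and so typically has torsion even though $\bbX_\bullet$ is free. Thus triviality of the $\Gamma$-action on $\calA$ only gives $(\gamma w)w^{-1}\in\mathrm{tors}(\bar{\bbX}_\bullet)$, not $\gamma w=w$. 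The fallbacks you propose do not repair this: there is no integral identification of $\bar{\bbX}_\bullet$ with $\bbX_\bullet(\calT_{\bar{k}}^{\mathrm{red}})$, since the latter is free and identifies with the \emph{invariants} $\bbX_\bullet^{\breve\Gamma}$, whose natural map to the coinvariants $\bar{\bbX}_\bullet$ has finite kernel and cokernel but is not an isomorphism in general; and an appeal to classification by Tits indices is not a proof but a change of proof strategy entirely. Compare how the paper's $(2)\Rightarrow(1)$ avoids the issue: it writes $W=\langle W_{\aff},\bar{\bbX}_\bullet\rangle$, notes that $W_{\aff}\cong Q^\vee\rtimes W_{\mathrm{fin}}$ has torsion-free translation part and hence embeds faithfully in $\mathrm{Aff}(\bar{\bbX}_{\bullet,\bbR})$ (your apartment argument then kills the $\Gamma$-action on $W_{\aff}$ verbatim), and then triviality of the $\Gamma$-action on the torsion-carrying factor $\bar{\bbX}_\bullet$ is supplied \emph{directly} by hypothesis (2) rather than being squeezed out of the $\bbR$-linear picture. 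Your $(3)\Rightarrow(1)$ has no analogue of this second, integral input, which is precisely why the torsion escapes.
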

	\begin{proof}
		By \cite[Proposition 9.10.10]{KP23} the group $G$ is residually split if and only if $\Gamma$ acts trivially on $\bar{\bbX}_\bullet$. If $\Gamma$ acts  trivially on $W$, then as well on $\bar{\bbX}_\bullet\subseteq W$. Assume now that $\Gamma$ acts trivially on $\bar{\bbX}_\bullet$. As $W$ is generated by $W_\aff$ and $\bar{\bbX}_\bullet$, it suffices to show that $\Gamma$ acts trivially on $W_\aff$. But $W_\aff$ embeds $\Gamma$-equivariantly into the group of affine transformations on $\bar{\bbX}_{\bullet,\bbR}$, and the $\Gamma$-action on the latter is trivial. 
	\end{proof}
	
	The geometry of the Iwahori orbits on the affine flag variety is summarized in the next lemma.
	
	\begin{lemma}
		\label{sec:geometry-affine-flag-1-stratification-of-affine-flag-variety}
		\begin{enumerate}
			\item The map $N(\breve{F})\to \Fl_{\calG},\ n\mapsto n\cdot L^+\calG$ induces a bijection
			\begin{equation}
				W/W_\bbf=\calT(\breve{O})\backslash N(\breve{F})/(N(\breve{F})\cap \calG(\breve{O}))\to \Hk_{(\calI,\calG)}
			\end{equation}
			on underlying topological spaces, i.e, the $L^+\calI$-orbits $\Fl_{(\calI,\calG),w}:=L^+\calI\cdot w\subset \Fl_{\calG}$ are indexed by $W/W_\bbf$.
			\item The $L^+\calI$-orbits on $\Fl_{\calG}$ form a stratification of $\Fl_{\calG}$, i.e., the closure $\Fl_{(\calI,\calG),\leq w}$ of a Schubert cell $\Fl_{(\calI,\calG),w}$ is a union of Schubert cells. More precisely, it is the unique closed perfect subscheme such that
			\begin{equation}
				\lvert \Fl_{(\calI,\calG),\leq w}\rvert=\bigcup\limits_{w^\prime\leq w}\lvert \Fl_{(\calI,\calG),w^\prime}\rvert
			\end{equation}
			for the Bruhat order on $W$.
		\end{enumerate}
	\end{lemma}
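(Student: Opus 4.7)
The approach is standard and splits naturally into the two indexing/stratification statements, reducing everything to the Iwahori case $\bbf = \bba$ (so $\calG = \calI$) and then propagating via the smooth projection $\Fl_\calI \to \Fl_\calG$.

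For (1), the plan is to invoke the Iwahori--Bruhat decomposition of Bruhat--Tits: on $\breve{F}$-points we have
\[
G(\breve F) \;=\; \bigsqcup_{w \in W/W_\bbf} \calI(\breve O)\,\dot w\,\calG(\breve O),
\]
where $\dot w \in N(\breve F)$ is any lift of $w$, and this decomposition identifies $\calI(\breve O)\backslash G(\breve F)/\calG(\breve O)$ with $\calT(\breve O)\backslash N(\breve F)/(N(\breve F)\cap\calG(\breve O)) = W/W_\bbf$. Since $\Fl_\calG$ is ind-(perfectly of finite type) and its $\bar k$-valued points compute the underlying topological space up to specialization (after verifying that each orbit $L^+\calI\cdot w$ is locally closed), this yields the bijection on $|\Hk_{(\calI,\calG)}|$. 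To promote this to the claim that the $\Fl_{(\calI,\calG),w}$ form a stratification, I would observe that each orbit is a single $L^+\calI$-orbit on a quasi-compact piece of $\Fl_\calG$, hence locally closed and smooth (in the perfect sense), and that the $\bar k$-points of $\Fl_\calG$ are exactly the disjoint union of the $\bar k$-points of the orbits by the Bruhat decomposition.

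For (2), I would first handle the Iwahori case $\calG = \calI$ via Demazure--Bott--Samelson resolutions. Writing $w = \tau\cdot s_1\cdots s_n$ with $\tau \in \Omega_\bba$ and $s_1\cdots s_n$ a reduced expression in $W_\aff$, the twisted product
\[
D(w) := \Fl_{(\calI,\calI),\le\tau}\;\widetilde\times\;\Fl_{(\calI,\calI),\le s_1}\;\widetilde\times\;\cdots\;\widetilde\times\;\Fl_{(\calI,\calI),\le s_n}
\]
is a perfectly proper iterated $\bbP^{1,\mathrm{pf}}$-bundle, each factor $\Fl_{(\calI,\calI),\le s_i}\cong \bbP^{1,\mathrm{pf}}$, and the convolution morphism $m\colon D(w)\to\Fl_\calI$ is $L^+\calI$-equivariant with image a closed $L^+\calI$-stable subvariety containing $w$. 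An inductive argument on $\ell(w)$, using the elementary identity $\Fl_{\le s_i}\cdot \Fl_{\le w'} = \Fl_{\le s_i w'} \cup \Fl_{\le w'}$ and the standard combinatorics of the Bruhat order on a quasi-Coxeter group, then identifies $|m(D(w))|$ with $\bigcup_{w'\le w}|\Fl_{(\calI,\calI),w'}|$. Since $m(D(w))$ is closed and contains the orbit of $w$, this forces $\Fl_{(\calI,\calI),\le w}$ (defined as the reduced closure) to coincide with $m(D(w))$ and realizes the asserted closure relation.

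To pass to a general parahoric $\calG$ associated to $\bbf\subseteq\bar\bba$, I would use the natural projection $\pi\colon \Fl_\calI \to \Fl_\calG$, which is an $L^+\calG/L^+\calI$-torsor locally on the target, with typical fiber the partial flag variety $\calG_k^{\mathrm{red}}/B_k^{\mathrm{red}}$ (a perfected smooth proper scheme). This map is $L^+\calI$-equivariant and takes $\Fl_{(\calI,\calI),w}$ onto $\Fl_{(\calI,\calG),w W_\bbf}$; in particular it is open with image-closed behaviour compatible with Bruhat order, because the Bruhat order on $W/W_\bbf$ is induced from that on $W$ via minimal coset representatives. Pushing the Iwahori-level statement through $\pi$ therefore yields the claim for general $\calG$.

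The main obstacle I expect is the combinatorial bookkeeping: verifying that Demazure-style induction gives exactly the Bruhat interval, and that the descent through $\pi$ respects the order on cosets. Both are handled in the literature (e.g.\ \cite{Zhu17,BS17,AGLR22}), and once granted, the rest is formal perfect-scheme geometry.
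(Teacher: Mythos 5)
Your overall route matches the paper's: part (1) by the Bruhat--Tits/Iwahori decomposition identifying $W/W_\bbf$ with $\calI(\breve O)\backslash G(\breve F)/\calG(\breve O)$, part (2) by Demazure--Bott--Samelson resolutions. Two points deserve comment.

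First, a genuine (if small) gap in part (1). The Bruhat decomposition as you write it produces a parametrization of the $\bar k$-points, i.e., of orbits in $\Fl_{\calG,\bar k}$. But the lemma is stated for $\Fl_{\calG}$ over $k$, and the underlying topological space of $\Hk_{(\calI,\calG)}$ over $k$ is a \emph{quotient} of the one over $\bar k$ by the Galois action, which in general permutes the strata. To conclude that the map $W/W_\bbf\to |\Hk_{(\calI,\calG)}|$ is already a bijection over $k$, you need to know each $L^+\calI$-orbit is defined over $k$ and the Galois action on $W/W_\bbf$ is trivial; this is exactly where the standing residual-splitness hypothesis enters (cf.\ Lemma \ref{sec:geometry-affine-flag-1-residually-split}), and the paper's own proof invokes it together with étale descent. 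Your argument as written never appeals to that hypothesis, so it silently proves the statement only over $\bar k$. The fix is immediate once you notice it, but it should be said.

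Second, two minor points on part (2). You call $\pi\colon\Fl_\calI\to\Fl_\calG$ an ``$L^+\calG/L^+\calI$-torsor locally on the target'' while in the same breath describing its fiber as the partial flag variety $\calG_k^{\mathrm{red}}/B_k^{\mathrm{red}}$. These are incompatible: $L^+\calG/L^+\calI$ is not a group, and $\pi$ is a (perfectly proper) Zariski-locally trivial fibration with flag-variety fibers, not a torsor. The second description is the one you actually use, so this is only a terminology slip. More substantively, the paper (following \cite{Zhu17}) builds the Demazure resolution $\Fl_{\calI,\le\tau}\widetilde\times\Fl_{\calI,\le s_1}\widetilde\times\cdots\widetilde\times\Fl_{\calI,\le s_n}\to\Fl_{(\calI,\calG),\le w}$ landing directly in the partial affine flag variety, whereas you first prove the closure relation at Iwahori level and then push forward along $\pi$. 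Both are valid; your version is a slight detour but is arguably cleaner in that it isolates the Iwahori combinatorics, at the cost of one extra lemma (that $\pi$ is closed and induces the quotient Bruhat order on cosets).
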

	\begin{proof}
		Statement (1) is essentially the Bruhat decomposition, see \cite[Section 6.5]{BT72} or \cite[Theorem 7.8.1]{KP23},
		\begin{equation}
			W/W_\bbf\cong \calI(\breve{O})\backslash G(\breve{F})/\calG(\breve{O})
		\end{equation}
		(applied over all formally unramified extensions $\breve{O}\subseteq \mathcal{O}'$ of discrete valuation rings). By étale descent, we have a natural map $W\to \Hk_{(\calI,\calG)}$, where we used the fact that $G$ is residually split to compute the left side. In particular, all the points of $\Hk_{(\calI,\calG)}$ are $k$ rational and are enumerated by $W/W_\bbf$.
		Then (2) follows by considering convolution and the Demazure resolutions, cf.\ \cite[Section 1.4]{Zhu17} for details.
	\end{proof}
	
	\begin{definition}
		\label{sec:geometry-affine-flag-5-definition-schubert-varieties}
		The (perfect) projective schemes $\Fl_{(\calI,\calG),\leq w}$ are called Schubert varieties, while their open and dense subschemes $\Fl_{(\calI,\calG), w}$ are called (Iwahori) Schubert cells.
	\end{definition}
	

	If $n\geq 1$, the contracted product
	\begin{equation}
		\underset{n-\textrm{factors}}{ \Fl_\calG\tilde{\times}\cdots \tilde{\times} \Fl_\calG}:=LG\times^{L^+\calG}\cdots \times^{L^+\calG} LG/L^+\calG
	\end{equation}
	is called the $n$-fold convolution product of $\Fl_\calG$. The convolution morphism
	\begin{equation}
		m:=m_{\Fl_{\calG}}\colon \Fl_\calG\tilde{\times} \cdots \tilde{\times} \Fl_\calG\to \Fl_\calG,\ \overline{(g_1,\ldots, g_n)}\mapsto g_1\cdots g_nL^+\calG
	\end{equation}
	has interesting geometric properties. If $X_1,\ldots, X_n\subseteq \Fl_{\calG}$ are (locally) closed $L^+\calG$-stable subschemes and $Y_1,\ldots, Y_n\subseteq LG$ their preimages, then we set
	\begin{equation}
		X_1\tilde{\times}\ldots \tilde{\times} X_n:= Y_1\times^{L^+\calG}\ldots \times^{L^+\calG} Y_n/L^+\calG,
	\end{equation}
	which is a (locally) closed subscheme of $\Fl_{\calG}\tilde{\times}\ldots \tilde{\times} \Fl_{\calG}$. A similar discussion applies the convolution product $\Fl_{\calI} \tilde{\times}\ldots \tilde{\times} \Fl_{\calI}\tilde{\times}\Fl_{ \calG}$, where the parahoric $\calG$ appears only in the last factor and all the other intermediate terms are given by the Iwahori $\calI$. We denote by $m$ the convolution morphism
	$$
	m: \Fl_{\calI} \tilde{\times}\ldots \tilde{\times} \Fl_{\calI}\tilde{\times}\Fl_{ \calG}\rightarrow \Fl_\calG
	$$
	by abuse of notation. 
	
	\begin{lemma}
		\label{sec:geometry-affine-flag-2-properties-of-convolution}
		Let $w_1,\ldots, w_n\in W$. Assume that $w_1\cdots w_n$ are right $W_\bbf$-minimal, i.e., it has minimal length in the right $W_\bbf$-coset, and $w_1\cdots w_n$ is a reduced expression.
		\begin{enumerate}
			\item The map
			\begin{equation}
				m: \Fl_{\calI, w_1}\tilde{\times}\cdots \tilde{\times} \Fl_{(\calI,\calG),w_n}\to \Fl_{\calG}
			\end{equation}
			has image in $\Fl_{(\calI,\calG), w_1\cdots w_n}$ and induces an isomorphism
			\begin{equation}
				\Fl_{\calI, w_1}\tilde{\times}\cdots \tilde{\times} \Fl_{\calI,w_n}\cong \Fl_{(\calI,\calG), w_1\cdots w_n}.
			\end{equation}
			\item We have
			\begin{equation}
				m(\Fl_{\calI,\leq w_1}\tilde{\times}\cdots \tilde{\times} \Fl_{(\calI,\calG),\leq w_n})\subseteq \Fl_{(\calI,\calG), \leq w_1\cdots w_n}
			\end{equation}
			and the map
			\begin{equation}
				m:\Fl_{\calI, \leq w_1}\tilde{\times}\cdots \tilde{\times} \Fl_{(\calI,\calG), \leq w_n}\to \Fl_{(\calI,\calG), \leq w_1\cdots w_n}
			\end{equation}
			is (perfectly) proper and birational.
			\item If $w\in W$ and $\tau\in \Omega_{\bba}$, then
			\begin{equation}
				\Fl_{(\calI,\calG), w}\to \Fl_{(\calI,\calG), \tau w},\ gL^+\calG\mapsto \tau g L^+\calG 
			\end{equation}
			is an isomorphism.
		\end{enumerate}
	\end{lemma}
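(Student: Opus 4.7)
The plan is to dispose of (3) quickly, reduce (1) to an induction on $n$ (and ultimately on total length $\sum \ell(w_i)$ via simple reflections), and then deduce (2) as a formal consequence of (1) plus a projectivity/properness bookkeeping.

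For (3), the key fact is that elements $\tau \in \Omega_{\bba}$ stabilize the alcove $\bba$, hence the corresponding lift $\dot\tau \in N(\breve{F})$ normalizes the Iwahori subgroup $\calI(\breve{O})$. Therefore left multiplication by $\dot\tau$ defines an $L^+\calI$-equivariant automorphism of $\Fl_\calG$, which maps $L^+\calI \cdot w L^+\calG$ to $L^+\calI \cdot \tau w L^+\calG$, as required.

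For (1), I would argue by induction on $n$. The case $n=1$ amounts to saying that when $w_1$ is right $W_\bbf$-minimal, the $L^+\calI$-orbit map $\Fl_{\calI, w_1} \to \Fl_{(\calI,\calG), w_1}$ induced by projection $\Fl_\calI \to \Fl_\calG$ is an isomorphism; this is \Cref{sec:geometry-affine-flag-1-stratification-of-affine-flag-variety}(1), since right $W_\bbf$-minimality means the map $W/\{e\} \to W/W_\bbf$ is injective on the $L^+\calI$-orbit of $w_1$. For the inductive step, assume $w_1\cdots w_n$ is reduced and right $W_\bbf$-minimal, and set $u := w_1\cdots w_{n-1}$; then $u$ is itself reduced, and I would reduce further to the case where $w_n = s$ is a single simple reflection by iterating. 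The essential geometric input is then the Demazure-style statement: if $\ell(us) = \ell(u)+1$ (and $us$ is right $W_\bbf$-minimal), multiplication gives an isomorphism $\Fl_{\calI,u} \tilde\times \Fl_{\calI,s} \xrightarrow{\sim} \Fl_{(\calI,\calG),us}$. This in turn follows from the standard fact that $\Fl_{\calI,s} \cong \bbA^{1,\mathrm{pf}}$ via the affine root subgroup $U_s$, together with the BN-pair relation $I s I \cdot I w I = I s w I$ when $\ell(sw) > \ell(w)$ (applied on the right), which translates into the desired bijection on $\overline{k}$-points, upgraded to an isomorphism by comparing dimensions.

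For (2), set $X := \Fl_{\calI,\leq w_1}\tilde\times\cdots\tilde\times \Fl_{\calI,\leq w_n}$ and $Y:=\Fl_{(\calI,\calG),\leq w_1\cdots w_n}$. The inclusion $m(X) \subseteq Y$ is a closedness check, so I can test it on $\overline{k}$-points, where it reduces to the combinatorial fact that $w_1'\cdots w_n' \leq w_1\cdots w_n$ in the Bruhat order whenever $w_i' \leq w_i$, which is standard (subwords of a reduced expression for a reduced product remain bounded by it). Properness of $X \to Y$ is formal: each $\Fl_{\calI,\leq w_i}$ is perfectly projective and $L^+\calI$-stable, and the $L^+\calI$-action factors through a finite-type quotient on each factor, so the twisted product $X$ is perfectly projective too; its map to the perfectly projective $Y$ is then automatically proper. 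Finally, birationality follows from (1) applied to the open dense cells $\Fl_{\calI,w_i}\subseteq \Fl_{\calI,\leq w_i}$, whose twisted product is dense in $X$ and maps isomorphically onto the dense open $\Fl_{(\calI,\calG),w_1\cdots w_n} \subseteq Y$.

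The main obstacle I expect is the inductive step in (1): one must use the reduced and right $W_\bbf$-minimal hypotheses exactly to ensure that the Demazure multiplication map, which a priori is only proper and surjective onto the closure, does not collapse dimensions at the generic point, and this is where the BN-pair relations for affine root subgroups intervene most sharply. Once this is in place, the rest is formal manipulation of Bruhat combinatorics and properness of twisted products.
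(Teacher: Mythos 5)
Your proposal is correct and follows essentially the same strategy as the paper: induction on $n$ for part (1), with the key input being the BN-pair/Tits-system identity $\calI(\breve O)\,w_1\,\calI(\breve O)\cdot\calI(\breve O)\,w_2\,\calI(\breve O) = \calI(\breve O)\,w_1w_2\,\calI(\breve O)$ when the product is reduced; parts (2) and (3) as consequences. The paper stops the induction at $n=2$ and invokes the Tits-system relation for arbitrary $w_1,w_2$ (via \cite{BT72,BT84}), whereas you push further to a single simple reflection $w_n=s$; this is a valid but somewhat finer induction, and the final Demazure step $\Fl_{\calI,u}\tilde\times\Fl_{\calI,s}\xrightarrow{\sim}\Fl_{(\calI,\calG),us}$ still requires the same Tits-system input for both surjectivity and injectivity, so nothing is actually saved. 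One small overreach to watch: you cite \Cref{sec:geometry-affine-flag-1-stratification-of-affine-flag-variety}(1) for the $n=1$ base case, but that statement only gives the orbit bijection $W/W_\bbf\cong\Hk_{(\calI,\calG)}$ on topological spaces; promoting the map $\Fl_{\calI,w_1}\to\Fl_{(\calI,\calG),w_1}$ to an isomorphism of perfect schemes needs the same stabilizer comparison from the Tits-system axioms that drives the $n=2$ case, which is why the paper handles $n=1$ and $n=2$ together. Your treatment of (3) via $\dot\tau$ normalizing $\calI(\breve O)$ is the same underlying fact as the paper's $\calI(\breve O)\tau\calI(\breve O)=\calI(\breve O)\tau$, just phrased as a global automorphism of $\Fl_\calG$, and your unpacking of (2) (Bruhat monotonicity for the inclusion, properness of twisted products, birationality from (1)) is exactly what the paper's terse ``follows from the first'' compresses.
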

	\begin{proof}
		It suffices to check the statements in the case that $F=\breve{F}$.
		Using induction on $n$, one reduces the first statement to the case that $n=2$. Now, 
		\begin{equation}
			\calI(\breve{O})w_1\calI(\breve{O})\cdot \calI(\breve{O})w_2\calG(\breve{O})\cong \calI(\breve{O})w_1w_2\calG(\breve{O})
		\end{equation}
		by the right $W_\bbf$-minimality and reducedness assumptions on $w_1w_2$: indeed, this follows from the theory of Tits systems, which we can apply by \cite[Section 6.5]{BT72} and \cite[Corollaire 4.6.7]{BT84}. This implies the claim when also applied over all formally unramified extensions of $\breve{O}$.
		The second statement follows from the first, and the third follows from the first and second as $\calI(\breve{O})\tau \calI(\breve{O})= \calI(\breve{O})\tau$ for $\tau\in \Omega_{\bba}$.
	\end{proof}
	
	For example, if $w=\tau s_1 \dots s_n $ is a right $W_\bbf$-minimal reduced word, with $s_i$ being simple reflections and $\tau$ stabilizing $\bba$, then
	$ \Fl_{\calI,\leq \tau}\tilde{\times}\Fl_{\calI,\leq s_1} \tilde{\times} \dots \tilde{\times} \Fl_{\calI,\leq s_n}  \to \Fl_{(\calI,\calG),\leq w}$ defines the Demazure resolution.
	Studying Demazure resolutions yields the following important geometric consequences.
	
	\begin{lemma}
		\label{sec:geometry-affine-flag-3-geometric-consequences-of-demazure-resolution}
		Let $w\in W/W_\bbf$ and denote by $w_{\mathrm{min}} \in W$ its right $W_\bbf$-minimal representative.
		\begin{enumerate}
			\item $\Fl_{(\breve{\calI},\breve{\calG}),w}\cong \bbA^{\ell(w_{\mathrm{min}}),\rm{pf}}_{k}$, in particular $\dim \Fl_{(\calI,\calG),w}=\ell(w_{\mathrm{min}})$.
			\item If $\ell(w_{\mathrm{min}})=0$, i.e., $w_{\mathrm{min}}=\tau\in \Omega_{\bba}$, then $\Fl_{(\calI,\calG), w}=\Fl_{(\calI,\calG),\leq w}$ is isomorphic to $\Spec(k)$.
			\item If $\ell(w_{\mathrm{min}})=1$, i.e., $w_{\mathrm{min}}=\tau s$ with $s$ a simple reflection and $\tau\in \Omega_{\bba}$, then
			\begin{equation}
				\Fl_{(\calI,\calG),\leq w}\cong \bbP^{1,\rm{pf}}_{k}.
			\end{equation}
		\end{enumerate}
	\end{lemma}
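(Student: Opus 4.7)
For (1), the plan is to argue by induction on $\ell(w_{\min})$ using the convolution isomorphism of \Cref{sec:geometry-affine-flag-2-properties-of-convolution}(1). Fix a reduced expression $w_{\min}=\tau s_1\cdots s_n$ with $\tau\in \Omega_{\bba}$ and $s_i\in \bbS$, and note that right $W_{\bbf}$-minimality ensures the natural map $\Fl_{\calI,w_{\min}}\to \Fl_{(\calI,\calG),w}$ is an isomorphism. By \Cref{sec:geometry-affine-flag-2-properties-of-convolution}(1) this convolution product decomposes as $\Fl_{\calI,\tau}\tilde\times \Fl_{\calI,s_1}\tilde\times\cdots\tilde\times \Fl_{\calI,s_n}$. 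Now $\Fl_{\calI,\tau}=\Spec(k)$ because $\tau$ normalizes $\calI$, while each factor $\Fl_{\calI,s_i}$ is identified with the perfection of the affine root subgroup $U_{\alpha_i}\cong \bbA^1_k$ attached to the affine simple root corresponding to $s_i$ (Bruhat decomposition for the Tits system). Twisted products of one-dimensional affine spaces over a point trivialize since $L^+\calI$ acts on each simple Schubert cell by a linear contraction, yielding $\bbA^{n,\mathrm{pf}}_k$.

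For (2), if $\ell(w_{\min})=0$ then $w_{\min}=\tau\in \Omega_{\bba}$ and $\tau$ stabilizes $\bba$, hence normalizes $L^+\calI$. Thus $L^+\calI\cdot \tau L^+\calG=\tau L^+\calG$ is a single point. Since $\tau$ is minimal in its Bruhat component, there is no $w'<\tau$, and \Cref{sec:geometry-affine-flag-1-stratification-of-affine-flag-variety}(2) gives $\Fl_{(\calI,\calG),\tau}=\Fl_{(\calI,\calG),\leq \tau}\cong\Spec(k)$.

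For (3), by \Cref{sec:geometry-affine-flag-2-properties-of-convolution}(3) I can translate by $\tau^{-1}$ and assume $\tau=1$, so $w_{\min}=s$ with $s\notin W_{\bbf}$ (otherwise $e=s\cdot s$ would be a smaller coset representative). The preferred approach is to identify $\Fl_{(\calI,\calG),\leq s}$ directly: let $\calG_s\supsetneq \calI$ be the minimal parahoric model associated with the facet in $\ol{\bba}$ obtained by adjoining the wall fixed by $s$. The reductive quotient of $(\calG_s)_k$ has semisimple rank one, so $L^+\calG_s/L^+\calI\cong \bbP^{1,\mathrm{pf}}_k$. Since $s\notin W_{\bbf}$, one has $\calG_s\cap \calG=\calI$ as group schemes over $O$, hence the orbit map $L^+\calG_s\to \Fl_{\calG}$ induces a closed immersion $\bbP^{1,\mathrm{pf}}_k\hookrightarrow \Fl_{\calG}$ whose image is exactly $\Fl_{(\calI,\calG),\leq s}$ (compatibility with (1) and (2) identifies the two strata).

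The step I expect to be most delicate is the identification $\calG_s\cap \calG=\calI$ and the resulting closed embedding in part (3); equivalently one can instead observe that the Demazure map $\Fl_{\calI,\leq s}=\bbP^{1,\mathrm{pf}}_k\to \Fl_{(\calI,\calG),\leq s}$ from \Cref{sec:geometry-affine-flag-2-properties-of-convolution}(2) is proper, birational, and a bijection on the two strata (by (1) and (2) above), and the target is the image of the smooth proper source, whose coordinate ring must agree with that of $\bbP^{1,\mathrm{pf}}_k$ by comparing stalks at the unique fixed point and along the open cell. Either route identifies $\Fl_{(\calI,\calG),\leq s}$ with $\bbP^{1,\mathrm{pf}}_k$.
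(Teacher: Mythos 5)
Your proposal is correct and follows essentially the same architecture as the paper: reduce to the rank~$1$ case $w=s$ via the convolution isomorphisms of \Cref{sec:geometry-affine-flag-2-properties-of-convolution}, and check the $\bbP^1$ identification there. The paper delegates the rank~$1$ step to \cite[Section 1.4]{Zhu17}; you spell it out, which is a useful service even if it is not a genuinely different method. Your Route~1 (via the minimal parahoric $\calG_s$ and its reductive quotient of semisimple rank one) is in fact what Zhu's cited argument does, and your observation that $\calG_s(\breve O)\cap \calG(\breve O)=\calI(\breve O)$, by Bruhat decomposition and $s\notin W_\bbf$, is the correct group-theoretic content; one should just be a little careful that the corresponding identity of positive loop groups $L^+\calG_s\cap L^+\calG = L^+\calI$ requires the functorial (concave-function or dilatation) description, not just the equality of $\breve O$-points.

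Two small refinements you should make. First, in Route~2 the decisive step is not really ``comparing stalks'' but rather the perfect-scheme fact that a proper morphism of finite type perfect $k$-schemes which is bijective on points is an isomorphism (this is why the statement is formulated in terms of $\mathrm{pf}$ and why it would fail without perfection: over a non-perfect base a proper birational bijection onto a non-normal target need not be an isomorphism). The phrase ``whose coordinate ring must agree by comparing stalks'' hides exactly this input. Second, in (1) the justification ``$L^+\calI$ acts on each simple Schubert cell by a linear contraction'' is informal; the cleaner route is either the big-cell description $\Fl_{\calI,w}\cong L^+\calU_w$ as a product of affine root groups, or, if you insist on the twisted-product induction, to observe that each step is an $\bbA^1$-bundle with unipotent structure group over an affine base and is hence trivial. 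Either way, the conclusion $\bbA^{\ell(w_{\min}),\mathrm{pf}}_k$ holds, in agreement with the paper.
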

	\begin{proof}
		Using \Cref{sec:geometry-affine-flag-2-properties-of-convolution} one reduces to the case that $w=s$ is a simple reflection. In this case, one checks $\Fl_{\breve{\calI}, \leq s}\cong \bbP^{1,\rm{pf}}_{\overline{k}}$ by hand, cf.\ \cite[Section 1.4]{Zhu17}. The remaining assertions follow.
	\end{proof}
	
	Let us note that the morphism
	\begin{equation}
		(\mathrm{pr},m)\colon \Fl_{\calG}\tilde{\times} \Fl_{\calG}\to \Fl_{\calG} \times \Fl_{\calG},\ \overline{(g_1,g_2)}\mapsto (\overline{g_1}, \overline{g_1g_2}) 
	\end{equation}
	is an isomorphism, i.e., convolution products are secretly just products. Given now $L^+\calG$-stable locally closed perfect subschemes $X_1, X_2,Y\subseteq \Fl_{\calG}$ such that $m(X_1\tilde{\times} X_2)\subseteq Y$, we can factor $	m\colon X_1\tilde{\times} X_2\to Y$ as
	\begin{equation}
		X_1\tilde{\times} X_2\xrightarrow{(\mathrm{pr},m)} X_1\times Y\xrightarrow{\mathrm{projection}} Y.
	\end{equation}
	A similar discussion holds if we replace the $L^+\calG$-equivariant convolution by the $L^+\calI$-equivariant version. For the Iwahori convolutions in $\Fl_{\calG}$, we get the following important affineness statement.
	
	\begin{lemma}
		\label{sec:geometry-affine-flag-4-convolution-is-affine}
		Let $w\in W$ and $X\subset \Fl_{\calG}$ be a closed $L^+\calI$-stable perfect subscheme. Then the map
		\begin{equation}
			m\colon  \Fl_{\calI,w}\tilde{\times} X\to \Fl_{\calG}
		\end{equation}
		is affine. If $\calG=\calI$ is Iwahori, then the map
		\begin{equation}
			m\colon X \tilde{\times}\Fl_{\calI, w}\to \Fl_{\calI}
		\end{equation}
		is also affine. In particular, the inclusion $j_w\colon \Fl_{\calI, w}\to \Fl_{\calI}$ is affine for any $w\in W$.
	\end{lemma}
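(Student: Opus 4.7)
The plan is to prove the two affineness statements and then deduce the ``in particular'' claim, using as the central tool the product-multiplication isomorphism
\begin{equation*}
(\mathrm{pr},m)\colon \Fl_{\calI}\tilde\times \Fl_\calG\xrightarrow{\sim}\Fl_{\calI}\times \Fl_\calG,\qquad \overline{(g_1,g_2)}\mapsto (\overline{g_1},\overline{g_1g_2}),
\end{equation*}
which extends verbatim from the case $\Fl_\calG\tilde\times \Fl_\calG$ discussed just above. For the first claim, under this isomorphism the convolution $\Fl_{\calI,w}\tilde\times X$ is carried to the ``incidence'' locus $\{(u,v)\in \Fl_{\calI,w}\times \Fl_\calG \mid u^{-1}v\in X\}$. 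This condition is well-defined, i.e., independent of the lifts of $u,v$ to $LG$, precisely because $X$ is $L^+\calI$-stable; and it cuts out a closed subscheme because $X$ is closed in $\Fl_\calG$. Hence $m$ factors as the closed immersion $\Fl_{\calI,w}\tilde\times X\hookrightarrow \Fl_{\calI,w}\times \Fl_\calG$ followed by projection onto the second factor. Since $\Fl_{\calI,w}\cong \bbA^{\ell(w),\mathrm{pf}}_k$ is affine by Lemma \ref{sec:geometry-affine-flag-3-geometric-consequences-of-demazure-resolution}, this projection is affine, and composition with the closed immersion is again affine.

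For the second claim, where $\calG=\calI$, the strategy is to reduce to the first claim by the anti-involution $\iota\colon LG\to LG$, $g\mapsto g^{-1}$. This is an isomorphism of perfect ind-schemes that swaps left and right $L^+\calI$-actions and identifies $\Fl_\calI=LG/L^+\calI$ with its opposite $\Fl_\calI^{\mathrm{op}}:=L^+\calI\backslash LG$. Under $\iota$, the closed $L^+\calI$-stable subscheme $X$ becomes a closed right-$L^+\calI$-stable subscheme $X^\iota\subseteq \Fl_\calI^{\mathrm{op}}$, and the Schubert cell $\Fl_{\calI,w}$ is carried to the mirror Schubert cell of $\Fl_\calI^{\mathrm{op}}$ indexed by $w^{-1}$. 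Because $\iota$ reverses the order of multiplication, the convolution $X\tilde\times \Fl_{\calI,w}$ is identified with the right-handed convolution of this mirror Schubert cell with $X^\iota$, and $\iota$ intertwines both instances of $m$. A verbatim mirror of the argument for the first claim, applied in $\Fl_\calI^{\mathrm{op}}$ and using the right-handed analogue of $(\mathrm{pr},m)$, yields the desired affineness.

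For the ``in particular'' statement about $j_w$, apply the first claim with $X=\{\overline{e}\}\subseteq \Fl_\calG$ (the identity coset, which is closed and $L^+\calI$-stable): then $\Fl_{\calI,w}\tilde\times \{\overline{e}\}$ collapses to $\Fl_{\calI,w}$, and the multiplication map becomes the inclusion $j_w$, which is thus affine. The most delicate point in the argument is the inversion-based symmetry underlying the second claim: one must verify carefully that $\iota$ transports the given convolution and its multiplication map to the mirror setup, despite being an anti-homomorphism, but this is essentially forced by unwinding the defining formulas.
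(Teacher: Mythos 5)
Your argument for the first affineness claim coincides with the paper's: both factor $m$ through the isomorphism $(\mathrm{pr},m)\colon\Fl_{\calI,w}\tilde\times\Fl_\calG\cong\Fl_{\calI,w}\times\Fl_\calG$, the closed immersion $\Fl_{\calI,w}\tilde\times X\hookrightarrow\Fl_{\calI,w}\tilde\times\Fl_\calG$ (which is closed because $X$ is), and the affine second projection (affine because $\Fl_{\calI,w}$ is a perfected affine space by \Cref{sec:geometry-affine-flag-3-geometric-consequences-of-demazure-resolution}). For the second claim you take a different, though closely related, route: the paper descends to quotient Hecke stacks $[L^+\calI\backslash X\times^{L^+\calI}\Fl_{\calI,w}]\to\Hk_\calI$ and observes that after ``forgetting the $L^+\calI$-equivariance on the right'' the question reduces to the first case, whereas you directly apply the anti-involution $\iota\colon g\mapsto g^{-1}$ of $LG$ to carry $X\tilde\times\Fl_{\calI,w}$ to a left-handed convolution inside the opposite flag variety $L^+\calI\backslash LG$ and then run a mirror version of the first-claim argument there. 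Both exploit the same left-right symmetry available only when $\calG=\calI$; the Hecke-stack route is slicker because it sidesteps the order-reversal bookkeeping, which you rightly flag as the delicate point and handle correctly (the key verification being that $\iota$, combined with a factor swap, carries $Y\times^{L^+\calI}Z/L^+\calI$ to $L^+\calI\backslash Z^{-1}\times^{L^+\calI}Y^{-1}$ and intertwines the two multiplication maps). Your deduction of the ``in particular'' statement by taking $X=\{\overline{e}\}$ is a correct and clean consequence of the first claim; the paper instead proves the stronger affineness of $j_w\colon\Fl_{(\calI,\calG),w}\to\Fl_\calG$ for arbitrary parahoric $\calG$ and $w\in W/W_\bbf$ in the paragraph immediately following the lemma, by factoring $j_w$ through the closed immersion of the Schubert variety and using separatedness.
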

	More generally,	the affineness of $j_w \colon \Fl_{(\calI,\calG), w}\to \Fl_{\calG} $ for $w \in W/W_\bbf$ follows from \Cref{sec:geometry-affine-flag-3-geometric-consequences-of-demazure-resolution} because $\Fl_{(\calI,\calG),\leq w}\to \Fl_{\calG}$ is a closed immersion and $j_w\colon \Fl_{(\calI,\calG),w}\to \Fl_{(\calI,\calG), \leq w}$ is affine because the target is separated.
	\begin{proof}
		We may assume that $F=\breve{F}$. The proof of \cite[Lemma 4.1.6]{AR} applies, and we recall its argument. Using the above remarks, we can write $m\colon \Fl_{\calI, w}\tilde{\times} X\to \Fl_{\calG}$ as the composition
		\begin{equation}
			\Fl_{\calI, w}\tilde{\times} X\to \Fl_{\calI,w}\tilde{\times} \Fl_{\calG}\overset{(\mathrm{pr},m)}{\cong} \Fl_{\calI,w}\times \Fl_{\calG}\xrightarrow{\textrm{projection}} \Fl_{\calG}   
		\end{equation}
		of morphisms of ind-perfect schemes. The first morphism is affine as $X\subset \Fl_{\calG}$ is a closed immersion, and the second is affine by \Cref{sec:geometry-affine-flag-3-geometric-consequences-of-demazure-resolution}. 
		
		If $\calG$ is Iwahori, then the affineness of $m\colon X\tilde{\times}\Fl_{ \calI,w} \to \Fl_{ \calI}$ can be checked after passing to the quotient Hecke stacks
		\begin{equation}
			[L^+\calI \backslash \tilde{X} \times^{L^+\calI} \Fl_{ \calI,w}] \to \Hk_{\calI}
		\end{equation}
		where $\tilde{X}\subset LG$ is the natural $L^+\calI$-torsor over $X \subset \Fl_{ \calI}$.
		Forgetting about $L^+ \calI$-equivariance on the {\it right} and applying the inverse map of $LG$ and its subgroup $L^+\calI$, we arrive at the convolution map
		\begin{equation}
			Y\tilde{\times}\Fl_{ \calI,w^{-1}}\to \Fl_{ \calI}
		\end{equation}
		where $Y=\tilde{Y}/L^+\calI$ and $\tilde{Y}=\tilde{X}^{-1} \subset LG$. It suffices to prove this map is affine, but we already treated it in the first paragraph.
	\end{proof}
	
	We need the following result later on, but only at Iwahori level.
	
	\begin{lemma} \label{importantlemma}
		Let $X\subset \Fl_\calI$ be a locally closed $L^+\calI$-stable perfect subscheme of finite type. Then there exists a finite subset $S_X \subset W$ such that for any $w\in W$ we have
		\begin{equation}
			m(X\tilde{\times} \Fl_{\calI,w})\subset \cup_{x\in S_X}\Fl_{\calI,xw}, \quad m(\Fl_{\calI,w}\tilde{\times} X)\subset \cup_{x\in S_X} \Fl_{\calI,wx},
		\end{equation}
		where the union of certain $L^+\calI$-orbits stands for the unique locally closed perfect subscheme of $\Fl_{\calI}$ with those exact $L^+\calI$-orbits.
	\end{lemma}
	\begin{proof}
		The proof is via an induction argument on $\ell(w)$ which is similar to the equal characteristic setting, \cite[Lemma 4.4.2]{AR}. We sketch the proof here. Of course, it suffices to treat the case $X=\Fl_{\calI,w}$ for some $w\in W$. Let first $X=\Fl_{\calI,w}$ for some $w\in W$ with $\ell(w)=0$. Take $S_X=\{w\}$, then the statement holds by noting that $\ell(xw)=\ell(wx)=\ell(x)$ for any $x\in W$.

		Assume now $X=\Fl_{\calI,w}$ for some $w\in W$ with $\ell(w)>0$. Write $w=w_1s_1=s_2w_2$, where $\ell(w_1)=\ell(w_2)=\ell(w)-1$. The induction hypothesis implies that there exist finite subsets $S_{X_1}, S_{X_2}\subset W$ such that 
		\begin{equation}
			m(\Fl_{\calI,w_1}\tilde{\times} \Fl_{\calI,w'})\subset \cup_{x\in S_{X_1}} \Fl_{\calI,xw'}, \quad
			m(\Fl_{\calI,w'}\tilde{\times}\Fl_{\calI,w_2})\subset \cup_{x \in S_{X_2}}\Fl_{\calI,w'x},
		\end{equation}
		for any $w'\in W$.
		Note that for any $w''\in W$,
		\begin{equation}
			m(\Fl_{\calI,s_1}\tilde{\times}\Fl_{\calI,w''})\subset \Fl_{\calI,w''}\cup  \Fl_{\calI,s_1w''},\quad m(\Fl_{\calI,w''}\tilde{\times}\Fl_{\calI,s_2})\subset \Fl_{\calI,w''}\cup  \Fl_{\calI,w''s_2},
		\end{equation}
		and
		\begin{equation}
			\Fl_{\calI,w_1}\tilde{\times}\Fl_{\calI,s_1}\simeq \Fl_{\calI,s_2}\tilde{\times}\Fl_{\calI,w_2}\simeq \Fl_{\calI,w}
		\end{equation}
		by Lemma \ref{sec:geometry-affine-flag-2-properties-of-convolution}.
		We conclude that $S_X:=S_{X_1}\cup S_{X_1}s_1\cup S_{X_2}\cup s_2 S_{X_2}$ is the desired finite subset, thereby concluding the proof.
	\end{proof}
	
	\subsection{Constant terms and semi-infinite orbits}
	\label{sec:semi-infinite-orbits}
	
	Throughout this section, we assume $\calG=\calI$ is Iwahori. Let $U\subset B$ be the unipotent radical. Then we get the Iwasawa decompositions (\cite[Theorem 5.3.3]{KP23})
	\begin{equation}
		\label{eq:1-iwasawa-decomposition-for-f-breve}
		W=N(\breve{F})/\calT(\breve{O}) \simeq U(\breve{F})\backslash G(\breve{F})/\calI(\breve{O}),\ w\mapsto U(\breve{F})w\calI(\breve{O}).
	\end{equation}
	
	Geometrically, this yields the semi-infinite orbits.
	
	\begin{definition}
		\label{sec:constant-terms-semi-definition-semi-infinite-orbit}
		For $w\in W$ we set $\calS_w:=LU\cdot w\subset \Fl_{\calI}$.
	\end{definition}
	
	By \cite[Section 5]{AGLR22} the $\calS_w, w\in W,$ are represented by locally closed ind-(perfect schemes) and coincide with the connected components of the attractor $\Fl_{\calI}^+$ for a regular action by $\bbG_m$. More precisely, take a regular coweight $\chi\colon \bbG_{m,F}\to S$, i.e., such that the centralizer of $\chi$ in $G$ is $T$ (for example, the sum of all positive coroots). Then $B$ is the attractor locus 
	\begin{equation}
		G^+=\{g\in G\ |\ t\mapsto \chi(t)g\chi(t)^{-1} \textrm{ extends to } \bbA^1_F\}
	\end{equation}
	for the conjugation action of $\bbG_{m,F}$ on $G$. More generally, if $B\subseteq P\subseteq G$ is any parabolic subgroup, then there exists a character $\psi\colon \bbG_{m,F}\to S$ such that $P=G^+$ is the attractor locus for the $\bbG_{m,F}$-action on $G$ by conjugation. The centralizer $M$ of $\psi$ is then a Levi subgroup of $P$.
	
	The cocharacter $\chi$ extends to a group homomorphism $\bbG_{m,O} \to \calS$ by the universal property of connected Néron models, cf.\ \cite[Proposition 8.4]{KP23}. By conjugation, we deduce a $L^+\bbG_{m,O}$-action on $\Fl_{\calI}$ and we restrict it to $\bbG_{m,k}$ along the Teichmüller lift map. We get the decomposition
	\begin{equation}
		\Fl_{\calI}^+=\coprod\limits_{w\in W} \calS_w,\text{ where } \calS_w:=LU\cdot w\subset \Fl_{\calI},
	\end{equation}
	of the attractor locus, cf.\ \cite[Section 5]{AGLR22}. Similarly, the repeller locus $\Fl_{\calI}^-$, i.e., the attractor locus for the inverted $\Gm$-action, decomposes as
	\begin{equation}\label{equation_repellers}
		\Fl_{\calI}^-=\coprod\limits_{w\in W} \calS^-_w,\text{ where } \calS^-_w:=LU^-\cdot w\subset \Fl_{\calI},
	\end{equation}
	where $U^-$ denotes the unipotent radical of the opposite Borel $B^-$ of $B$. The semi-infinite orbits are relevant for computing constant term functors.
	Let
	\begin{equation}
		i^+\colon \Fl_{\calI}^+\to \Fl_{\calI},\ i^-\colon \Fl_{\calI}^-\to \Fl_{\calI}
	\end{equation}
	be the inclusions (=disjoint union of  locally closed immersions).
	Let $\Fl_{\calI}^0\subseteq \Fl_{\calI}$ be the fixed point locus of $\Gm$ and let
	\begin{equation}
		q^+\colon \Fl_{\calI}^+\to \Fl_{\calI}^0,\ q^-\colon \Fl_{\calI}^-\to \Fl_{\calI}^0
	\end{equation}
	be the natural morphism given by evaluating at $0\in \bbA^1_{k}$ resp.\ $\infty\in \bbA^1_{k}$.
	
	\begin{remark}
		\label{sec:constant-terms-semi-fixed-point-locus-not-equal-to-affine-flag-variety-for-levi}
		The natural inclusion $\Fl_{\mathcal{T}}\to \Fl_{\calI}^0$ induced by the $\Gm$-equivariant morphism $\calT\to \calI$, with $\Gm$ acting trivially on $\mathcal{T}$, is not an isomorphism. Namely, $\Fl_{\calT}$ is the perfect constant $k$-scheme associated with the set $\bar{\bbX}_\bullet$, while $\Fl_{\calI}^0$ is associated with the set $W$. 
	\end{remark}
	
	Following \cite[Section 6.3]{AGLR22} we can now define the constant term functor (associated with $B$ and $\calI$).
	
	\begin{definition}
		\label{sec:constant-terms-semi-definition-constant-term}
		We set
		\begin{equation}
			\mathrm{CT}_B:=Rq^+_!\circ i^{+,\ast}\colon \scrD_\et(\Hk_{\calI})\to \scrD_\et(L^+\calT\backslash\Fl_{ \calI}^0)
		\end{equation}
	\end{definition}
	
	By Braden's theorem the natural map
	$Rq^+_!\circ i^{+,\ast}\to Rq^-_\ast\circ Ri^{-,!}$
	is an isomorphism, see \cite[Theorem IV.6.5]{FS21} and \cite[Section 6]{AGLR22} in the analytic setting. We also refer the reader to \cite[Theorem B]{Ric19a} in the scheme-theoretic setting.  This implies excellent formal properties of the constant term functor.
	
	\begin{remark}
		\label{sec:constant-terms-semi-fibers-of-constant-terms}
		Let $A\in \scrD_\et(\Hk_{\calI})$ and $w\in W=\Hk_{\calI}(k)$. By proper base change the fiber of $\mathrm{CT}_B(A)$ over $w$ is calculated by $R\Gamma_c(\calS_w, A_{|\calS_w})$.
	\end{remark}
	
	To use the formula in \Cref{sec:constant-terms-semi-fibers-of-constant-terms} we establish the following lemma.
	
	\begin{lemma}\label{lem_schubert_cell_in_semi_infinite_orbit}
		Let $w\in W$ be such that $w(b)-b \in \bar{\bbX}_\bullet^+$, where $b$ denotes the barycenter of the standard alcove $\bba$.
		\begin{enumerate}
			\item $\Fl_{\calI,w}=L^+\calU\cdot w$, where $\calU\subseteq \calI$ denotes the scheme-theoretic closure of $U$ in $\calI$. 
			\item $\Fl_{\calI,w}=\calS_{w}\cap \Fl_{\calI, \leq w}$.
		\end{enumerate}
		
	\end{lemma}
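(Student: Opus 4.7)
For (1), the plan is to exploit the Iwahori decomposition $L^+\calI = L^+\calU \cdot L^+\calT \cdot L^+\calU^-$, where $\calU^-$ denotes the scheme-theoretic closure of the opposite unipotent radical $U^-$ in $\calI$. Since $L^+\calT \subseteq L^+\calI$ is normalised by $N(\breve{F})$, one has $L^+\calT \cdot w L^+\calI = w L^+\calI$ in $\Fl_{\calI}$, so it suffices to verify that $w^{-1} L^+\calU^- w \subseteq L^+\calI$; combined with $L^+\calU \subseteq L^+\calI$ this immediately yields $\Fl_{\calI,w} = L^+\calI \cdot w = L^+\calU \cdot w$. Decomposing $w = t_\nu v$ with $\nu \in \bar{\bbX}_\bullet$ and $v \in W_{\mathrm{fin}}$, direct conjugation yields
\[
w^{-1} U_\delta(\varpi^n) w = U_{v^{-1}\delta}(\varpi^{n - \langle \delta, \nu\rangle})
\]
for $\delta \in \Phi^-$ and $n \geq 1$, so the required containment becomes, in the critical case $n=1$ with $\delta = -\beta$ and $\beta \in \Phi^+$, the pair of conditions $\langle \beta,\nu\rangle \geq 0$ if $v^{-1}\beta \in \Phi^+$ and $\langle \beta,\nu\rangle \geq -1$ if $v^{-1}\beta \in \Phi^-$. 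To extract these from the hypothesis $w(b)-b \in \bar{\bbX}_\bullet^+$, one expands
\[
\langle \beta, w(b)-b\rangle = \langle \beta,\nu\rangle + \langle v^{-1}\beta, b\rangle - \langle \beta, b\rangle \geq 0,
\]
and uses that the alcove barycenter satisfies $\langle \gamma, b\rangle \in (0,1)$ for $\gamma \in \Phi^+$ and $\langle \gamma, b\rangle \in (-1,0)$ for $\gamma \in \Phi^-$: since $\langle \beta,\nu\rangle$ is an integer, the resulting real lower bound forces the desired integer inequality in each case.

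For (2), the inclusion $\Fl_{\calI,w} \subseteq \calS_w \cap \Fl_{\calI, \leq w}$ follows at once from (1) and the containment $L^+\calU \subseteq LU$. For the reverse direction, I would invoke the $\bbG_m$-action on $\Fl_\calI$ via the regular dominant cocharacter $\chi$ introduced in \Cref{sec:semi-infinite-orbits}. Let $p \in \calS_w \cap \Fl_{\calI,\leq w}$, say $p \in \Fl_{\calI,w''}$ for some $w'' \leq w$. Because $L^+\calI$ is $\bbG_m$-stable and every point $[x] \in \Fl_\calI$ with $x \in W$ is $\bbG_m$-fixed, the cell $\Fl_{\calI,w''} = L^+\calI \cdot w''$ is $\bbG_m$-stable, so the orbit of $p$ stays inside it. Since $p \in \calS_w$ belongs to the attractor of the fixed point $w$, we have $\lim_{t\to 0} t\cdot p = w$, and this limit must lie in the closure $\overline{\Fl_{\calI,w''}} = \Fl_{\calI,\leq w''}$, whose $\bbG_m$-fixed points are exactly the Weyl elements $x \leq w''$. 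Therefore $w \leq w''$; combined with $w'' \leq w$ this forces $w'' = w$, and hence $p \in \Fl_{\calI,w}$.

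The delicate step I anticipate is the bookkeeping in (1): the real dominance of $w(b)-b$ has to be converted into sharp integer inequalities on $\langle \beta, \nu\rangle$ via the non-integer correction $\langle v^{-1}\beta, b\rangle - \langle \beta, b\rangle$, and this relies crucially on the barycenter lying strictly in the open alcove so that the correction is strictly smaller than $1$ in absolute value in the borderline case. Part (2) is then essentially a clean attractor–closure argument, requiring only the $\bbG_m$-equivariance of Schubert cells and the known fixed-point structure of Schubert varieties.
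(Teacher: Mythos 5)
Your proof is correct, and for part (1) you take a genuinely different route than the paper: whereas the paper simply cites equation (5.11) in the proof of \cite[Lemma 5.3]{AGLR22}, you reprove the claim from scratch by combining the Iwahori decomposition $L^+\calI = L^+\calU \cdot L^+\calT \cdot L^+\calU^-$ with an explicit conjugation of the negative affine root subgroups across $w=t_\nu v$, then converting the dominance of $w(b)-b$ into the required integer inequalities on $\langle\beta,\nu\rangle$ via the pinching $\langle\gamma,b\rangle\in(0,1)$ for every positive euclidean root $\gamma$ (valid because all positive roots are $\leq\theta$). Your version is entirely self-contained and makes the geometric content transparent, at the cost of some length; the paper's version off-loads the computation to \cite{AGLR22}. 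For part (2), your attractor-limit argument using $\bbG_m$-stability of the Schubert cells is essentially the paper's own, merely phrased as a direct inclusion ($p\in\calS_w\cap\Fl_{\calI,\leq w}$ implies $p\in\Fl_{\calI,w}$) rather than by contraposition ($x\in\Fl_{\calI,\leq w}\setminus\Fl_{\calI,w}$ implies $x\in\calS_{w'}$ for some $w'<w$). One small remark on the hypothesis: as literally written, $w(b)-b\in\bar{\bbX}_\bullet^+$ with $b$ the barycenter cannot hold in the coinvariant lattice unless the finite part $v$ of $w$ is trivial, since $v(b)-b$ is generally irrational; the intended (and your implicit) reading is dominance of $w(b)-b$ in $\bar{\bbX}_{\bullet,\bbR}$, which is indeed exactly what your case analysis exploits.
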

	
	\begin{proof}
		The first claim is equation (5.11) in the proof of \cite[Lemma 5.3]{AGLR22}. Indeed, the root groups $L^+_k\mathcal{U}_{\alpha_i}$ of {\it loc.~cit.~} are contained in the $L^+\mathcal{U}$ here by the positivity condition imposed upon $w(b)-b$, matching the one in {\it loc.~cit.~}. The first claim implies $\Fl_{\calI,w}\subseteq \calS_w\cap \Fl_{\calI,\leq w}$ as $\calS_w=LU\cdot w$. Let $x\in \Fl_{\calI,\leq w}\setminus \Fl_{\calI,w}$. By (perfect) properness of $\Fl_{\calI,\leq w}$ the orbit map $\gamma\colon \bbG_{m}^\mathrm{pf}\to \Fl_{\calI,\leq w},\ t\mapsto \chi(t)x$ extends to a $\bbG_m^\mathrm{pf}$-equivariant map $\tilde{\gamma}\colon \bbA^{1,\mathrm{pf}}_{k}\to \Fl_{\calI,\leq w}$. As $\Fl_{\calI,w}$ is open in $\Fl_{\calI,\leq w}$ and $W\subseteq \Fl_{\calI}$ is exactly the set of $\bbG_{m}^\mathrm{pf}$-fixed points, \Cref{sec:geometry-affine-flag-1-stratification-of-affine-flag-variety} implies that $x\in \calS_{w^\prime}$ for $w^\prime:=\tilde{\gamma}(0)<w$ because $\calS_{w^\prime}$ is exactly the subscheme of points contracting to $w^\prime$ under the $\bbG_{m}^\mathrm{pf}$-action.
	\end{proof}
	
	Next we describe the closure relations for the stratification of $\Fl_{\calI}$ by the $\calS_w, w\in W$.
	As this is a geometric question, we may assume $F=\breve{F}$ for this. Then we have to define the dominant cocharacters $\bar{\bbX}_{\bullet}^+$ in $\bar{\bbX}_\bullet$. Recall that $\bbX_\bullet(S)^+$ denotes the ($B$-)dominant cocharacters for $S$. As we assumed $F=\breve{F}$, we get that
	\begin{equation}
		\bbX_\bullet(S)_\bbQ\cong \bar{\bbX}_{\bullet,\bbQ},
	\end{equation}
	and thus we can define $\bar{\bbX}_\bullet^+$ as the preimage of $\bbX_\bullet(S)_\bbQ^+$ under the map $\bar{\bbX}_\bullet\to \bar{\bbX}_{\bullet,\bbQ}$. 	Given $\bar{\bbX}_\bullet$ we can now define the semi-infinite Bruhat order $\preceq$ on $W$, which depends on $B$. Namely, set $w\preceq w^\prime$ if and only if for the Bruhat order $t_{n\bar{\nu}}w\leq t_{n\bar{\nu}}w^\prime$ for all $\bar{\nu}\in \bar{\bbX}_\bullet^+$ and $n\gg 0$.
	
	\begin{lemma}
		\label{closure-relations-of-semi-infinite-orbits}
		For $w,w^\prime\in W$ we have $\calS_w\subseteq \overline{\calS_{w^\prime}}$ if and only if $w\preceq w^\prime$.
	\end{lemma}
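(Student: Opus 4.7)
The plan is to reduce both implications to the case where $w$ and $w'$ are ``dominant'' in the sense of \Cref{lem_schubert_cell_in_semi_infinite_orbit}, i.e.\ $wb-b,w'b-b\in\bar\bbX_\bullet^+$, and then exploit the identification $\Fl_{\calI,v}=\calS_v\cap\Fl_{\calI,\leq v}$ provided there. Since $T$ normalizes $U$, left multiplication by any $t_{n\bar\nu}\in N(\breve F)$ is an automorphism of $\Fl_{\calI}$ carrying $\calS_w$ isomorphically to $\calS_{t_{n\bar\nu}w}$. Consequently both the geometric property $\calS_w\subseteq\overline{\calS_{w'}}$ and the combinatorial property $w\preceq w'$ are invariant under the simultaneous replacement $(w,w')\mapsto(t_{n\bar\nu}w,t_{n\bar\nu}w')$, and for $\bar\nu$ in the interior of $\bar\bbX_\bullet^+$ with $n\gg 0$ both elements become dominant.

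For the forward direction, after this reduction we may, by the very definition of $\preceq$, absorb yet another translation and assume $w\leq w'$ in the Bruhat order. Then \Cref{lem_schubert_cell_in_semi_infinite_orbit}(1) gives $\Fl_{\calI,w'}=L^+\calU\cdot w'\subseteq LU\cdot w'=\calS_{w'}$; passing to closures, $\Fl_{\calI,\leq w'}\subseteq\overline{\calS_{w'}}$. The fixed point $w$ thus lies in $\overline{\calS_{w'}}$. As the ind-connected group $LU$ acts by automorphisms of $\Fl_{\calI}$ preserving the orbit $\calS_{w'}$, its closure $\overline{\calS_{w'}}$ is $LU$-stable, whence $\calS_w=LU\cdot w\subseteq\overline{\calS_{w'}}$.

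For the reverse direction, the translation reduction once more yields $w\in\overline{\calS_{w'}}$ with both $w,w'$ dominant, and by the translation invariance noted above it suffices to show $w\leq w'$. Equivalently, I must prove that every $T$-fixed point of $\overline{\calS_{w'}}$ lies in the Bruhat interval below $w'$. The plan is to intersect with the opposite semi-infinite stratum $\calS^-_w=LU^-\cdot w$: Braden's theorem (invoked in the discussion preceding \Cref{sec:constant-terms-semi-fibers-of-constant-terms}), applied to the structure sheaf of $\overline{\calS_{w'}}$, tells us that non-vanishing of the attractor stalk at $w$ is equivalent to non-vanishing of the repeller stalk at $w$, so $\calS^-_w\cap\overline{\calS_{w'}}\neq\emptyset$. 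Since $\calS_{w'}$ is open dense in $\overline{\calS_{w'}}$, one then produces a point of $\calS^-_w$ in some Schubert cell $\Fl_{\calI,u}$ with $u\leq w'$ via \Cref{sec:geometry-affine-flag-1-stratification-of-affine-flag-variety}, and the $B^-$-analog of \Cref{lem_schubert_cell_in_semi_infinite_orbit} for the repeller stratum through $w$ pins down $w\leq u\leq w'$.

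The main obstacle is the reverse direction. The forward implication is immediate from \Cref{lem_schubert_cell_in_semi_infinite_orbit}(1) and $LU$-invariance of orbit closures; the reverse genuinely requires controlling the infinite-dimensional $T$-fixed locus of $\overline{\calS_{w'}}$, and the Braden-style transversality argument outlined above is the central technical input. A workable fallback is to transfer the closure relations from the affine Grassmannian, where the analogue is classical by Mirković--Vilonen, via the projection $\Fl_{\calI}\to\Gr_G$ with partial flag variety fibers, and then induct over the fiber strata using \Cref{sec:geometry-affine-flag-2-properties-of-convolution}.
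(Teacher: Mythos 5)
The forward direction of your argument is sound: the translation reduction (using that $T$ normalizes $U$ and that the $\Gm$-action commutes with $N(\breve F)$-translation) is correct, and once both elements are ``dominant'' in the sense of \Cref{lem_schubert_cell_in_semi_infinite_orbit}, the chain $\Fl_{\calI,w'}\subseteq\calS_{w'}$, hence $\Fl_{\calI,\leq w'}\subseteq\overline{\calS_{w'}}$, hence $w\in\overline{\calS_{w'}}$, hence $\calS_w=LU\cdot w\subseteq\overline{\calS_{w'}}$ is clean. Note, however, that the paper does not give an argument at all: it simply cites \cite[Proposition 5.4]{AGLR22}, so even the easy half is new work on your part.

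The reverse direction, as written, has a genuine gap, and it lies precisely where you predicted: the Braden step. First, the constant sheaf on $\overline{\calS_{w'}}$ (you write ``structure sheaf,'' but you mean the \'etale constant sheaf) extended by zero is \emph{not} an admissible input to the Braden isomorphism as invoked in this paper. That version of Braden applies to constructible sheaves on $\Hk_\calI$ with bounded support, and $\overline{\calS_{w'}}$ is an infinite-dimensional closed ind-subscheme not contained in any Schubert variety; the resulting sheaf is neither $L^+\calI$-equivariant nor of bounded support. Second, even formally, if $\calS_w\subseteq\overline{\calS_{w'}}$ then the ``attractor stalk'' at $w$ is $R\Gamma_c(\calS_w,\La)$, the compactly supported cohomology of an infinite-dimensional ind-affine space, which is not a usable invariant — it does not detect non-vanishing in the way your argument needs. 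Third, even granting the conclusion, $\calS^-_w\cap\overline{\calS_{w'}}\neq\emptyset$ is \emph{trivially} true (it contains $w$ itself) and yields no information; what one would actually need is $\calS^-_w\cap\calS_{w'}\neq\emptyset$, which is not what your set-up produces. Finally, the claim that you can then ``produce a point of $\calS^-_w$ in some Schubert cell $\Fl_{\calI,u}$ with $u\leq w'$'' is unjustified: $\calS_{w'}$ is infinite-dimensional and meets Schubert cells $\Fl_{\calI,u}$ with $u$ arbitrarily large, so the bound $u\leq w'$ does not come for free, and if you take the trivial point $x=w$ you are exactly assuming the conclusion $w\leq w'$. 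The fallback via $\Fl_\calI\to\Gr_G$ is a reasonable sketch of a strategy but would itself need a nontrivial comparison between $\preceq$ on $W$ and the dominance order on $\bar\bbX_\bullet$, and the Witt-vector affine Grassmannian case is not literally Mirković--Vilonen. In short: your forward direction stands; the reverse direction needs a different argument, e.g.\ the one in \cite[Proposition 5.4]{AGLR22}.
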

	\begin{proof}
		This is \cite[Proposition 5.4]{AGLR22}, where $\preceq$ is denoted by $\leq^{\frac{\infty}{2}}$.
	\end{proof}
	
	If $w=t_{\bar{\mu}}, w^\prime=t_{\bar{\nu}}$, then $w\preceq w^\prime$ if and only if $\bar{\nu}-\bar{\mu}$ lies in $\bar{\bbX}_\bullet^+$. 
	In particular, on $\bar{\bbX}_{\bullet}^+$ the two orders $\leq, \preceq$ agree. We will constantly use the equality $\ell(t_{\bar\nu})=\langle 2\bar\rho,\bar\nu\rangle$ for $\bar\nu$ dominant, see \cite[Lemma 9.1]{Zhu14}.

	\section{Cohomology of the affine flag variety}
	\label{sec:cohom-affine-flag}
	
	In this section, we want to study cohomology of constructible sheaves on the Hecke stack $\Hk_\calI$. In particular, we will
	\begin{enumerate}
		\item introduce Wakimoto-filtered complexes in mixed characteristic, following \cite{AB09}, \cite{AR} and \cite{Zhu14},
		\item calculate the constant terms of Wakimoto sheaves,
		\item show that central objects for convolution are Wakimoto-filtered.
	\end{enumerate}
	In this section we assume that $F=\breve{F}$, and thus in particular that $k$ is algebraically closed.
	
	\subsection{(Co)standard functors}
	\label{sec:costandard-functors}
	
	The considerations that are going to come have an easy shadow on Grothendieck groups. Recall that we always assume our ring of coefficients $\La$ to be a field.
	
	For $w\in W/W_\bbf$ let $j_w\colon \Fl_{(\calI,\calG),w}\to \Fl_{\calG}$ be the locally closed affine immersion. Note that $j_w$ is $L^+\calI$-equivariant, and hence descends to a morphism
	\begin{equation}\label{sheaf_standard}
		j_w\colon \Hk_{(\calI,\calG),w}\to \Hk_{(\calI,\calG)} 
	\end{equation}
	of stacks, where on the left side $\Hk_{(\calI,\calG),w}:=[L^+\calI\backslash \Fl_{(\calI,\calG),w}]$, and that we will usually denote in the same way.	
	Define the standard object
	\begin{equation}\label{sheaf_standard}
		\Delta_{(\calI,\calG),w}:=j_{w,!}(\Lambda)[\ell(w_{\mathrm{min}})] \in \scrD_{\mathrm{\acute{e}t}}(\Hk_{(\calI,\calG)})
	\end{equation}
	and the costandard object
	\begin{equation}\label{sheaf_costand}
		\nab_{(\calI,\calG),w}:=Rj_{w,\ast}(\Lambda)[\ell(w_{\mathrm{min}})] \in \scrD_{\mathrm{\acute{e}t}}(\Hk_{(\calI,\calG)})
	\end{equation}
	associated with $w\in W/W_\bbf$. When $\calI=\calG$, we use abbreviations $\Delta_{\calI,w}$ (resp.~ $\nab_{\calI,w}$) for $\Delta_{(\calI,\calI),w}$ (resp.~$\nab_{(\calI,\calI),w}$).
	
	Let
	\begin{equation}
		\scrD_{\mathrm{cons}}(\Hk_{(\calI,\calG)})\subset \scrD_{\et}(\Hk_{(\calI,\calG)})
	\end{equation}
	be the full subcategory consisting of objects with perfect stalks and bounded support (the last condition is a running assumption, as explained when we introduced the main notation of the paper).
	Let	$K_0(\Hk_{(\calI,\calG)})$ be the Grothendieck group of $\scrD_{\mathrm{cons}}(\Hk_{(\calI,\calG)})$. Since points in $\Fl_{ \calG}$ have connected stabilizers under the $L^+\calI$-action, $K_0(\Hk_{(\calI,\calG)})$ identifies with the Grothendieck group of the category of $L^+\calI$-equivariant sheaves on $\Fl_{\calG}$.
	Consequently, $K_0(\Hk_{(\calI,\calG)})$ is a free abelian group on the classes of the intersection complexes $\IC_{(\calI,\calG),w}:=j_{w,!\ast}(\Lambda)$ of $\Fl_{(\calI,\calG),\leq w}$.
	
	Via convolution $K_0(\Hk_{\calI})$ is naturally a ring, cf.\ \cite[Section 5.2]{AR}, and $K_0(\Hk_{(\calI,\calG)})$ is a left $K_0(\Hk_{\calI})$-module. In fact, this ring identifies with the integral group ring of $W$ as we recall now.
	
	\begin{lemma}
		\label{sec:k_0-i-equivariant-1-description-of-k-0-of-i-equivariant-sheaves}
		The maps
		\begin{equation}
			\theta\colon K_0(\Hk_{\calI})\to \bbZ[W],\ [\calF]\mapsto \sum\limits_{w\in W}(-1)^{\ell(w)} \chi(\Fl_{\calI,w}, j_w^\ast \mathcal{F})w  
		\end{equation}
		and
		\begin{equation}
			\vartheta\colon \bbZ[W]\to K_0(\Hk_{\calI}),\ w\mapsto (-1)^{\ell(w)}[\nab_{\calI,w}] 
		\end{equation}
		are inverse ring isomorphisms.
	\end{lemma}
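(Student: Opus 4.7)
\emph{Strategy.} My approach is first to show that $\vartheta$ is a bijection of abelian groups respecting convolution, and then to identify its two-sided inverse with $\theta$ through a stalk-by-stalk calculation.

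\emph{Basis and bijectivity.} I would first establish that $\{[\nab_{\calI,w}]\}_{w \in W}$ is a $\bbZ$-basis of $K_0(\Hk_\calI)$. By \Cref{sec:geometry-affine-flag-1-stratification-of-affine-flag-variety}, the stack $\Hk_\calI$ is stratified by the cells $\Hk_{\calI,w} = [L^+\calI \backslash \Fl_{\calI,w}]$. The stabilizer of the base point of $\Fl_{\calI,w}$ is $L^+\calI \cap w L^+\calI w^{-1}$, which is pro-connected (a product of the toric $L^+\calT$ with a pro-unipotent part). Hence the Grothendieck group of $L^+\calI$-equivariant constructible sheaves on $\Fl_{\calI,w}$ is freely generated by $[\Lambda]$, and dévissage along the stratification yields the asserted basis. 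Since $(-1)^{\ell(w)}$ is a unit in $\bbZ$, $\vartheta$ is then a bijection.

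\emph{Multiplicativity of $\vartheta$.} Using the presentation $W \cong W_\aff \rtimes \Omega_{\bba}$ with $W_\aff$ Coxeter on $\bbS$, I would reduce to finitely many relations. The length-additive cases $\vartheta(ww')=\vartheta(w)\vartheta(w')$ follow from the isomorphism $\nab_{\calI,w}*\nab_{\calI,w'}\cong \nab_{\calI,ww'}$ given by \Cref{sec:geometry-affine-flag-2-properties-of-convolution}(1), while the compatibilities $\vartheta(\tau w)=\vartheta(\tau)\vartheta(w)$ for $\tau \in \Omega_{\bba}$ come from part (3) of the same lemma. The quadratic relation $s^2=e$ for a simple reflection reduces to $[\nab_{\calI,s}]^{*2} = [\nab_{\calI,e}]$ in $K_0$: here I would combine the cancellation $\nab_{\calI,s}*\Delta_{\calI,s}\cong \nab_{\calI,e}$ (visible on the $\bbP^{1,\mathrm{pf}}$-Demazure from \Cref{sec:geometry-affine-flag-3-geometric-consequences-of-demazure-resolution}) with the equality $[\nab_{\calI,s}] = [\Delta_{\calI,s}]$ in $K_0$. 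The latter follows from the distinguished triangle relating $\Delta_{\calI,s}$, $\nab_{\calI,s}$, and the costalk at $e$, together with the vanishing Euler characteristic of a punctured formal disk over $k$ with $\ell$-adic coefficients ($\ell \neq p$), which by Kummer theory equals $1-1=0$.

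\emph{Inversion and main obstacle.} Finally I would verify $\theta\vartheta = \mathrm{id}$ by computing $\theta([\nab_{\calI,w}])$ directly. Only cells $\Fl_{\calI,w'}$ with $w' \leq w$ contribute; the diagonal $w' = w$ yields $w$ once the shift $[\ell(w)]$ is combined with the signs in $\theta$ and $\vartheta$; and the main technical input is the off-diagonal vanishing $\chi(\Fl_{\calI,w'}, j_{w'}^\ast \nab_{\calI,w})=0$ for $w' < w$. This Euler characteristic equals, up to a sign coming from the shift, that of the stalk of $Rj_{w,\ast}\Lambda$ at a point of $\Fl_{\calI,w'}$, which in turn computes the cohomology of a punctured étale tubular neighborhood of $\Fl_{\calI,w'}$ inside $\Fl_{\calI,\leq w}$ intersected with the open cell. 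Using the regular cocharacter from \Cref{sec:semi-infinite-orbits}, this neighborhood $\bbG_m$-equivariantly retracts onto an iterated fibration of punctured formal disks (one for each normal direction), each of $\ell$-adic Euler characteristic $1-1=0$, and multiplicativity in the fibration yields the vanishing. This off-diagonal vanishing is the hardest step; a robust alternative is to induct on $\ell(w)$ using the Demazure resolution $\tilde{Z}_{\bar w} \to \Fl_{\calI,\leq w}$ and proper base change along fibers over $\Fl_{\calI,w'}$.
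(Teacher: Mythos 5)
Your strategy differs from the paper's, and the inversion step has a genuine gap. The paper computes $\theta([\nab_{\calI,s}])$ for $\ell(s)\leq 1$ directly on the $\mathbb{P}^{1,\mathrm{pf}}$ geometry, deduces $\theta([\nab_{\calI,w}])=w$ for general $w$ via multiplicativity and \Cref{djfejkjd;kjkel;}, and concludes by surjectivity of $\vartheta$. Your dévissage and the multiplicativity of $\vartheta$ via the quasi-Coxeter presentation are in the same vein, and spelling out the quadratic relation via $\nab_{\calI,s}*\del_{\calI,s}\simeq\delta_{\calI,e}$ and $[\nab_{\calI,s}]=[\del_{\calI,s}]$ is a helpful explication of what the paper leaves implicit (the cancellation isomorphism is \Cref{lem_invertible_conv_stand}, whose proof uses the $\mathbb{P}^{1,\mathrm{pf}}$-geometry of \Cref{sec:geometry-affine-flag-3-geometric-consequences-of-demazure-resolution}). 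Where you diverge is the computation of $\theta\vartheta$.

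The claim $\chi(\Fl_{\calI,w'}, j_{w'}^*\nab_{\calI,w})=0$ for $w'<w$ is correct, but your justification by retraction to an ``iterated fibration of punctured formal disks'' is not: the Schubert varieties $\Fl_{\calI,\leq w}$ are singular in general, and the local Milnor-type structure at a boundary point $\dot w'$ encodes $R$-polynomial combinatorics rather than normal-crossings Kummer theory. The honest version of your idea is that a $\bbG_m$-stable transverse slice $N$ to $\Fl_{\calI,w'}$ at $\dot w'$ is contracted onto $\dot w'$ by the regular cocharacter, and $N\cap\Fl_{\calI,w}$ is $\bbG_m$-fixed-point-free, so its $\ell$-adic Euler characteristic vanishes by localization to fixed points; but relating the stalk of $Rj_{w,*}\Lambda$ at $\dot w'$ to $R\Gamma(N\cap\Fl_{\calI,w},\Lambda)$ requires a Bialynicki--Birula contraction lemma and a local product structure, neither of which you invoke. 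Your Demazure ``robust alternative'' is the right fix, and it is essentially what the paper does: the isomorphism $\nab_{\calI,w_1}*\nab_{\calI,w_2}\simeq\nab_{\calI,w_1w_2}$ for $\ell$-additive pairs (\Cref{djfejkjd;kjkel;}), combined with multiplicativity of $\theta$, collapses the general off-diagonal vanishing to the $\ell\leq 1$ case, where it is just the punctured-disk calculation you already carried out.
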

	Here, $\chi(\Fl_{\calI,w},j_w^*\mathcal{F})$ denotes the Euler characteristic. In the equal characteristic case, a proof is given in \cite[Lemma 5.2.1]{AR}.
	\begin{proof}
		If $s\in W$ and $\ell(s)=1$, then by \Cref{sec:geometry-affine-flag-3-geometric-consequences-of-demazure-resolution}, we have $\Fl_{\calI, \leq s}\cong \bbP^{1,\rm{pf}}_{k}$ and we get distinguished triangles
		$$
		\IC_{\calI,e}\rightarrow \del_{\calI,s}\rightarrow\IC_{\calI,s}\xrightarrow{+1},\quad \IC_{\calI,s}\rightarrow \nab_{\calI,s}\rightarrow \IC_{\calI,e}\xrightarrow{+1}
		$$
		because $\IC_{\calI,s}$ identifies with the underived pushforward $R^0j_{s,\ast}(\Lambda)$.
		Thus, $\theta([\nab_{\calI,s}])=\theta([\del_{\calI,s}])=s$ and $\theta(\vartheta(s))=s$. It then follows from \Cref{djfejkjd;kjkel;} below that $\theta, \vartheta$ are ring homomorphisms and in fact $\theta([\del_{\calI,w}])=\theta([\nab_{\calI,w}])=w$ for any $w\in W$. Now, $\vartheta$ is surjective because the $[\del_{\calI,w}]=[\nab_{\calI,w}]$ generate $K_0(\Hk_{\calI})$. This finishes the proof.
	\end{proof}

	We will now study convolutions of standard and costandard sheaves. Before proceeding, we upgrade these objects to actual functors.
	Recall that $\scrD_{\mathrm{\acute{e}t}}(\Hk_\calI)$ has a natural monoidal structure in the sense of \cite[Definition 4.1.1.10]{Lur17} as we will see later on, see \Cref{sec:conv_fusion}. For now, it suffices to construct the underlying bifunctor as follows. First, consider the diagram
	\begin{equation}
		\begin{tikzcd}
			& {\Hk_{\calI}}\times {\Hk_{\calI}} \arrow[dl,"\mathrm{pr}_1"] \arrow[dr,"\mathrm{pr}_2",swap] &  {[L^+\calI\backslash LG\times^{L^+\calI} LG/L^+\calI]} \arrow[l,"p",swap] \arrow[r,"m"] & {\Hk_{\calI}}\\ 
			{\Hk_{\calI}} & & {\Hk_{\calI}}
		\end{tikzcd}
	\end{equation}
	of ind-$v$-stacks on perfect schemes, with $\rm{pr}_1,\rm{pr}_2$ the two projections onto the first and second factors, respectively, $p$ the natural  morphism, and $m$ the (quotient by the left $L^+\calI$-action of the) convolution morphism discussed in \Cref{sec:schub-vari-conv}. Now, for any $\mathcal{F}_1,\calF_2\in \scrD_\et(\Hk_{\calI})$,  define
	\begin{equation}
		\calF_1\ast \calF_2:=Rm_!(p^*(\calF_1{\boxtimes}^\mathbb{L}\calF_2)).
	\end{equation}
	Note that $m$ is (perfectly) ind-proper and thus $Rm_!=Rm_\ast$. The full subcategory $\scrD_{\mathrm{cons}}(\Hk_{\calI})$ is stable under convolution. A similar diagram can be used to define a convolution product $\ast$ that realizes $\scrD_{\mathrm{cons}}(\Hk_{(\calI,\calG)})$ as a left module of $\scrD_{\mathrm{cons}}(\Hk_{\calI})$.
	
	Note that the full subcategory of $\scrD_{\mathrm{cons}}(\Hk_{(\calI,\calG)})$ whose objects are supported at the origin identifies with $\scrD_{\mathrm{cons}}([*/L^+\calI])$. Convolution restricts to the usual tensor product on these sheaves which is symmetric monoidal. Furthermore, note that $L^+\calI$ is an extension of its reductive quotient, which is naturally isomorphic to the special fiber $S_k$ of the connected Néron model $\calS$ of $S$, by a connected pro-unipotent group. By \cite[Proposition VI.4.1]{FS21}, we can identify $\scrD_{\mathrm{cons}}([*/L^+\calI])$ with $\scrD_{\mathrm{cons}}([*/S_k])$ via pullback along $[\ast/L^+\calI]\to [\ast/S_k]$. This is convenient because we also have identification
	\begin{equation}\label{equation_identification}
		\scrD_{\mathrm{cons}}(\Hk_{ (\calI,\calG),w})\simeq    \scrD_{\mathrm{cons}}([*/S_k])     
	\end{equation}
	for any $w \in W/W_\bbf$, since $S_k$ maps isomorphically to the reductive quotient of the stabilizer group $L^+\calI \cap wL^+\calG w^{-1}$. In order to explain this assertion, we argue as follows: by the proof of \cite[Corollary 1.3]{Ric16}, the stabilizer identifies with the positive loop group $L^+\calG_{\bba \cup w\bbf}$ of the Bruhat--Tits $O$-group attached to the bounded subset $\Omega_w:=\bba \cup w\bbf$ of the apartment. For the reader's convenience, this relies on the combinatorial \cite[Lemma 7.7.4]{KP23} at the level of geometric points together with a Lie algebra calculation. The reductive quotient of $L^+\calG_{\Omega_w}$ identifies with that of its special fiber $\calG_{\Omega_w,k}$ and now \cite[Proposition 8.4.8]{KP23} tells us that its root system with respect to the image of $S_k$ is empty (as non-zero affine functionals cannot vanish on an entire alcove). In particular, the reductive quotient of the $w$-stabilizer identifies canonically with $S_k$.
	
	Moreover, the above abstract nonsense allows us to regard standard and costandard objects as functors by tensoring. Indeed, we identify $\scrD_{\mathrm{cons}}([\ast/S_k])$ with sheaves on the stratum $\Hk_{(\calI,\calI),1}$ and define  using convolution the \textit{standard} and \textit{costandard} functors:
	\begin{equation}
		\del_{(\calI,\calG),w}\colon \scrD_{\mathrm{cons}}([*/S_k])\to \scrD_{\mathrm{cons}}(\Hk_{(\calI,\calG)}),\ M \mapsto M\ast \del_{(\calI,\calG),w} ,
	\end{equation}
	\begin{equation}\nab_{(\calI,\calG),w} \colon \scrD_{\mathrm{cons}}([*/S_k])\to \scrD_{\mathrm{cons}}(\Hk_{(\calI,\calG)}),\ M \mapsto M\ast \nab_{(\calI,\calG),w} ,
	\end{equation}
	and one checks easily that there are isomorphisms $\del_{(\calI,\calG),w}(M)\simeq j_{w!}M[\ell(w)]$ and $\nab_{(\calI,\calG),w}(M)\simeq Rj_{w*}M[\ell(w)]$ of functors.  For $\calI=\calG$, we use abbreviations $\Delta_{\calI,w}$ (resp.~ $\nab_{\calI,w}$) for $\Delta_{(\calI,\calI),w}$ (resp.~$\nab_{(\calI,\calI),w}$).
	
	Since $j_w$ is an affine morphism by \Cref{sec:geometry-affine-flag-4-convolution-is-affine}, both functors are t-exact by \cite[Corollaire 4.1.3]{BBDG18} for the natural t-structure on $\scrD_{\mathrm{cons}}([\ast/S_k])$ and the perverse t-structure on $\scrD_{\mathrm{cons}}(\Hk_{(\calI,\calG)})$, cf.\ \cite[Definition 6.8]{AGLR22} for the latter.

	We start with the following lemma on the convolution of standard and costandard objects. 
	
	\begin{lemma} \label{djfejkjd;kjkel;}
		For any $w_1\in W$ and $w_2 \in W/W_\bbf$ such that $\ell(w_1)+\ell(w_{2,\mathrm{min}})=\ell((w_1w_2)_{\mathrm{min}})$, there exist canonical isomorphisms
		\begin{align*}
			&  \del_{\calI,w_1}*\del_{(\calI,\calG),w_2}\simeq \del_{(\calI,\calG),w_1w_2}\\
			& \nab_{\calI,w_1}*\nab_{(\calI,\calG),w_2}\simeq \nab_{(\calI,\calG),w_1w_2},
		\end{align*}
		satisfying the obvious associativity constraint.
	\end{lemma}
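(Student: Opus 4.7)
The plan is to reduce both isomorphisms to the geometric input \Cref{sec:geometry-affine-flag-2-properties-of-convolution}(1), which under the length-additivity hypothesis identifies the twisted product of Schubert cells with the single Schubert cell $\Fl_{(\calI,\calG),w_1w_2}$ via the multiplication map $m$. I would first handle the standard case, then deduce the costandard case by Verdier duality, and finally obtain the associativity constraint from the corresponding triple-product identification.

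For the standard case, unfold the convolution as $\del_{\calI,w_1}\ast \del_{(\calI,\calG),w_2}=Rm_! p^\ast(\del_{\calI,w_1}\boxtimes \del_{(\calI,\calG),w_2})$, with $p$ the smooth projection from the convolution space $[L^+\calI\backslash LG\times^{L^+\calI}\Fl_\calG]$ to $\Hk_\calI\times \Hk_{(\calI,\calG)}$ and $m$ the multiplication map into $\Hk_{(\calI,\calG)}$. Since $p$ is smooth, pullback commutes with the $!$-extension along the locally closed immersion $j_{w_1}\times j_{w_2}$, so $p^\ast(\del_{\calI,w_1}\boxtimes \del_{(\calI,\calG),w_2})$ identifies with $\tilde j_!\Lambda[\ell(w_1)+\ell(w_{2,\min})]$, where $\tilde j$ is the locally closed immersion of the twisted product of Schubert cells into the full convolution space. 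Applying $Rm_!$ and invoking \Cref{sec:geometry-affine-flag-2-properties-of-convolution}(1), the composite $m\circ \tilde j$ identifies with the affine immersion $j_{w_1w_2}$; together with the length-additivity assumption $\ell((w_1w_2)_{\min})=\ell(w_1)+\ell(w_{2,\min})$, this produces the desired isomorphism with $\del_{(\calI,\calG),w_1w_2}$.

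The costandard case follows from Verdier duality. Each Schubert cell is a perfected affine space by \Cref{sec:geometry-affine-flag-3-geometric-consequences-of-demazure-resolution}(1), so dualization swaps $\del_{(\calI,\calG),w}$ and $\nab_{(\calI,\calG),w}$ up to a canonical Tate twist. Because $m$ is perfectly proper (hence $Rm_!=Rm_\ast$) and $p$ is smooth, the convolution bifunctor intertwines Verdier duality, and dualizing the standard identity yields the costandard one after the Tate twists cancel. For the associativity constraint, a direct repetition of the same argument applied to triple convolutions produces an isomorphism $\Fl_{\calI,w_1}\tilde{\times} \Fl_{\calI,w_1'}\tilde{\times} \Fl_{(\calI,\calG),w_{2,\min}}\cong \Fl_{(\calI,\calG),w_1w_1'w_2}$ via another application of \Cref{sec:geometry-affine-flag-2-properties-of-convolution}(1); both bracketings of the ternary convolution recover this common geometric model, so the standard associativity data of the monoidal structure on $\ast$ supplies the required coherence.

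The main obstacle is careful bookkeeping rather than conceptual difficulty. One must verify that in the Witt-vector perfect-geometric setting the morphism $p$ is smooth enough for $p^\ast$ to commute with $!$-pushforwards of locally closed immersions, and track all Tate twists when passing through Verdier duality. All the nontrivial geometric input is concentrated in the twisted-product identification of \Cref{sec:geometry-affine-flag-2-properties-of-convolution}(1), and everything else reduces to formal manipulations with the six-functor formalism.
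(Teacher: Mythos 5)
Your proof is correct and reaches the same geometric core as the paper — the isomorphism of the twisted Schubert cell product from \Cref{sec:geometry-affine-flag-2-properties-of-convolution}(1) — but the route for the costandard case differs. The paper treats both cases symmetrically and simultaneously: since the multiplication map $m$ is perfectly proper, $Rm_!=Rm_\ast$, so once one knows $m$ restricts to an isomorphism over the open cell, one reads off directly that $Rm_!(j_{w_1,!}\La\,\tilde{\boxtimes}\,j_{w_2,!}\La)\cong j_{w_1w_2,!}\La$ and $Rm_\ast(Rj_{w_1,\ast}\La\,\tilde{\boxtimes}\,Rj_{w_2,\ast}\La)\cong Rj_{w_1w_2,\ast}\La$. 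You instead derive the costandard identity from the standard one via Verdier duality. That is also valid, but it requires additional bookkeeping: one must track the Tate twists $(\ell(w))$ that duality introduces when swapping $\del$ and $\nab$ and check they cancel via the length-additivity hypothesis, and one must confirm that duality commutes with the convolution bifunctor in the pro-smooth equivariant setting, where $p$ is a torsor for an infinite-dimensional group $L^+\calI$ rather than a finite-dimensional smooth map. These are manageable but introduce friction that the paper sidesteps entirely by using $Rm_!=Rm_\ast$ rather than $\mathbb{D}$. Your treatment of associativity via the triple twisted product is fine and matches the spirit, if not the letter, of the paper's terse remark on the associativity constraint.
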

	
	\begin{proof}
		In equicharacteristic, this statement can be found in \cite[Lemma 8(a)]{AB09} with a proof given in \cite[Lemma 4.1.4 (1),(2)]{AR}. The same proof applies here. For any $w_1 \in W$ and $w_2\in W$ such that $w_1w_2$ is right $W_\bbf$-minimal and reduced, the convolution morphism $\Fl_{\calI, w_1}\tilde{\times}\Fl_{(\calI,\calG), w_2}\rightarrow \Fl_{(\calI,\calG), w_1w_2}$ is an isomorphism by \Cref{sec:geometry-affine-flag-2-properties-of-convolution}, so the constant complex $m_!\La$ identifies with $\underline{\La}$. This yields the desired isomorphisms after $!$- or $*$-extension and shifts. Indeed, $Rm_!(j_{w_1,!}\La\tilde{\boxtimes} j_{w_2,!}\La)\cong j_{w_1w_2,!}\La$ and $Rm_\ast(Rj_{w_1,\ast}\La\tilde{\boxtimes} Rj_{w_2,\ast}\La)\cong Rj_{w_1w_2,\ast}\La$. 
	\end{proof}
	
	Recall that in a monoidal category, an object is called left-invertible (resp. right-invertible) if multiplication on the left (resp. right) is an equivalence.
	
	\begin{lemma}\label{lem_invertible_conv_stand}
		For any $w\in W$, the objects $\nab_{\calI,w}$ and $\del_{\calI,w}$ are both left- and right-invertible in the monoidal category $\scrD_{\mathrm{cons}}(\Hk_\calI)$. More concretely, there exist canonical isomorphisms
		$$
		\nab_{\calI,w}*\del_{\calI,w^{-1}}\cong \delta_{\calI,e} \cong \del_{\calI,w}*\nab_{\calI,w^{-1}}.
		$$
		where $\delta_{\calI,e} : =\del_{\calI,e} = \nab_{\calI,e} $ with $e \in W$ is the identity element and $\delta_{\calI,e}$ is the unit object for convolution. 
	\end{lemma}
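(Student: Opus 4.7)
The plan is to reduce to the length-$\leq 1$ case and then settle those directly. For any $w \in W$, fix a reduced expression $w = \tau s_1 \cdots s_n$ with $\tau \in \Omega_{\bba}$ and $s_i \in \bbS$; then $w^{-1} = s_n \cdots s_1 \tau^{-1}$ is also reduced. Iterating \Cref{djfejkjd;kjkel;} gives
\begin{equation*}
\nabla_{\calI,w} * \Delta_{\calI,w^{-1}} \simeq \nabla_{\calI,\tau} * \nabla_{\calI,s_1} * \cdots * \nabla_{\calI,s_n} * \Delta_{\calI,s_n} * \cdots * \Delta_{\calI,s_1} * \Delta_{\calI,\tau^{-1}},
\end{equation*}
and the analogous identity with $\Delta$ and $\nabla$ swapped. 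Cancelling from the middle reduces the problem to proving $\nabla_{\calI,s} * \Delta_{\calI,s} \simeq \delta_{\calI,e} \simeq \Delta_{\calI,s} * \nabla_{\calI,s}$ for each simple reflection $s \in \bbS$, together with $\nabla_{\calI,\tau} * \Delta_{\calI,\tau^{-1}} \simeq \delta_{\calI,e}$ for each $\tau \in \Omega_{\bba}$.

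The length-$0$ case is immediate from \Cref{sec:geometry-affine-flag-3-geometric-consequences-of-demazure-resolution}(2): the orbit $\Fl_{\calI,\tau}$ is already closed, so $\Delta_{\calI,\tau} \simeq \nabla_{\calI,\tau}$, and applying \Cref{djfejkjd;kjkel;} with $w_1 = \tau$, $w_2 = \tau^{-1}$ yields $\Delta_{\calI,\tau} * \Delta_{\calI,\tau^{-1}} \simeq \delta_{\calI,e}$.

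For the main step, $w = s$ a simple reflection, I would follow the strategy of \cite[Lemma 4.1.4(3)]{AR}. By \Cref{sec:geometry-affine-flag-3-geometric-consequences-of-demazure-resolution}(3), $\Fl_{\calI,\leq s} \simeq \mathbb{P}^{1,\mathrm{pf}}_k$, and the isomorphism $(\mathrm{pr}_1, m)$ from \S\ref{sec:schub-vari-conv} identifies $\Fl_{\calI,\leq s} \tilde{\times} \Fl_{\calI,\leq s}$ with $\mathbb{P}^1 \times \mathbb{P}^1$ in such a way that $m$ becomes the second projection. Combining the short exact sequences of perverse sheaves
\begin{equation*}
\delta_{\calI,e} \hookrightarrow \Delta_{\calI,s} \twoheadrightarrow \IC_{\calI,s}, \qquad \IC_{\calI,s} \hookrightarrow \nabla_{\calI,s} \twoheadrightarrow \delta_{\calI,e}
\end{equation*}
extracted from the proof of \Cref{sec:k_0-i-equivariant-1-description-of-k-0-of-i-equivariant-sheaves} with proper base change along the $\mathbb{P}^1$-fibration reduces the computation of $\nabla_{\calI,s} * \Delta_{\calI,s}$ to the three auxiliary convolutions $\IC_{\calI,s} * \IC_{\calI,s}$, $\nabla_{\calI,s} * \IC_{\calI,s}$ and $\IC_{\calI,s} * \Delta_{\calI,s}$, each of which can be assembled from $R\Gamma(\mathbb{P}^{1,\mathrm{pf}}_k, \Lambda) \simeq \Lambda \oplus \Lambda(-1)[-2]$ via a further application of proper base change. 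Piecing together the resulting octahedra would then yield $\nabla_{\calI,s} * \Delta_{\calI,s} \simeq \delta_{\calI,e}$, and the reverse composition is handled symmetrically.

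The hard part will be tracking the $L^+\calI$-equivariant twisted-product structure carefully through proper base change: unlike $\IC \tilde{\boxtimes} \IC$, the sheaves $\nabla_{\calI,s}$ and $\Delta_{\calI,s}$ are not constant on $\mathbb{P}^1$, so one must verify that the $\tilde{\boxtimes}$-twist corresponds under $(\mathrm{pr}_1, m)$ to the expected pullback-tensor diagram. Once this geometric identification is firmly in place, the remaining manipulations are purely formal, and all six-functor operations needed are available in the diamond setting via \cite{FS21, AGLR22}.
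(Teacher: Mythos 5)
Your proposal follows the same overall strategy as the paper and \cite[Lemma 4.1.4(3)]{AR}: reduce by induction (via \Cref{djfejkjd;kjkel;}) to elements of length $\leq 1$, dispose of $\tau\in\Omega_\bba$ using $\Fl_{\calI,\tau}=\Fl_{\calI,\leq\tau}$, and for a simple reflection $s$ pass through the identification $\Fl_{\calI,\leq s}\tilde\times\Fl_{\calI,\leq s}\simeq\bbP^{1,\mathrm{pf}}_k\times\bbP^{1,\mathrm{pf}}_k$ furnished by $(\pr_1,m)$.

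Where you diverge is in how the $\ell(w)=1$ case is finished. You propose to filter $\nabla_s$ and $\Delta_s$ by the short exact sequences involving $\IC_s$ and $\delta_e$, compute $\IC_s\ast\IC_s$ (and neighbours) from $R\Gamma(\bbP^1,\Lambda)$, and then splice via octahedra. This can be made to work, but note that the triangles alone do \emph{not} determine $\nabla_s\ast\Delta_s$: one also has to identify the relevant connecting morphisms (e.g.\ the boundary map $\Delta_s\to(\IC_s\ast\Delta_s)[1]$), which is exactly the sort of bookkeeping your approach defers. The paper, following \cite{AR}, avoids this by noting where the distinguished strata land under $(\pr_1,m)$ — namely $\{e\}\tilde\times\Fl_{\leq s}$ becomes $\{\infty\}\times\bbP^1$ and $\Fl_{\leq s}\tilde\times\{e\}$ becomes the diagonal — and then computing the stalks of $Rm_!(\nabla_s\tilde\boxtimes\Delta_s)=R\pr_{2,*}\bigl(\pr_1^*Rj_*\Lambda\otimes j'_!\Lambda\bigr)[2]$ directly at each $y\in\bbP^1$ (where $j'$ is the inclusion of the complement of the diagonal): one gets $\Lambda$ at $y=\infty$ and $0$ otherwise, so the result is $\delta_e$ by $L^+\calI$-equivariance. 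Since this direct stalk computation is essentially what you correctly flag as ``the hard part'' (tracking the twist), I would recommend carrying it out that way rather than through the octahedra; it sidesteps the connecting-map issue. One last nitpick: this lemma lives entirely on the perfect scheme $\Fl_{\calI}$ over $\bar k$, so the six-functor machinery needed is the classical one for (perfections of) $k$-schemes, not the diamond formalism of \cite{FS21,AGLR22}.
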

	
	\begin{proof}
		It is clear that $\delta_{\calI,e}$ is the unit for convolution.
		The existence of the desired isomorphisms is stated over a Laurent series field in \cite[Lemma 8(a)]{AB09} and proved in \cite[Lemma 4.1.4(3)]{AR}. Again the same proof applies. The desired canonical isomorphisms can be obtained by induction on $\ell(w)$ using \Cref{djfejkjd;kjkel;} provided we construct them for all simple reflections $s$ and $\tau\in \Omega_{\bba}$. The case for $\tau\in \Omega_\bba$ is clear as $\Fl_{\calI,\tau}=\Fl_{\calI,\leq \tau}$ by \Cref{sec:geometry-affine-flag-3-geometric-consequences-of-demazure-resolution}.
		
		Before discussing the the case for simple reflections, we construct a canonical isomorphism $(\Delta_{\calI,w}\ast\nab_{\calI,w^{-1}})_{eI}\cong \Lambda$ which gives rise to the desired canonical isomorphisms after proving that $\Delta_{\calI,w}$ is invertible.   Note that we have the following commutative diagram
		$$
		\begin{tikzcd}[row sep=huge]
			\Fl_\calI\tilde{\times}\Fl_\calI \arrow[r,"\mathrm{pr}_1\times m"]\arrow [d,"m",swap] & \Fl_\calI\times\Fl_\calI \arrow[dl,"\mathrm{pr}_2"]\\
			\Fl_\calI.
		\end{tikzcd}
		$$   
		The above commutative diagram implies that $m^{-1}(zI)$ can  be  identified  with $\Fl_\calI$ via $[g,g^{-1}zI]\mapsto gI $. Thus for any $y\in W$, we have isomorphisms
		\begin{align}  \label{equation_proof_lemma_3.3}
			\begin{split}        (\Delta_{\calI,w}\ast\nabla_{\calI,y})_{zI} & \simeq R\Gamma_c(m^{-1}(zI),\Delta_{\calI,w}\tilde{\boxtimes}\nabla_{\calI,y}) \\
				& \simeq R\Gamma_c(\Fl_{\calI},\Delta_{\calI,w}\otimes^{\bbL}j_*\Lambda[\ell(y)])\\
				&\simeq R\Gamma_c(\Fl_{\calI,w},j_{w}^*j_*\Lambda)[\ell(x)+\ell(y)].
			\end{split}
		\end{align}  
		The first isomorphism is obtained by proper base change. The map $j$ in the second isomorphism is the embedding $\{gI\in\Fl_\calI\vert zI\in gI yI\}\hookrightarrow \Fl_\calI$. The third isomorphism is derived from the projection formula. Let $y=w^{-1}$ and $z=e$, (\ref{equation_proof_lemma_3.3}) yields canonical isomorphisms
		\begin{align}\label{equation_canonical_isom}
			(\Delta_{\calI,w}\ast\nabla_{\calI,w^{-1}})_{eI}\cong \Lambda
		\end{align}
		since $\Fl_{\calI,w}$ is the perfection of an affine space.
		
		Let $s:=w=y=z\in\bbS$, we have an isomorphism $\Fl_{\calI,s}\simeq \bbP^{1,\mathrm{pf}}_k$ with $eI$ identified with $0\in\bbP^{1,\mathrm{pf}}$. Thus, arguments in \cite[Lemma 4.1.4.(3)]{AR}   apply here and we conclude the desired canonical isomorphisms for $w\in\bbS$. By the above discussion, we conclude \textit{non-canonical} isomorphisms $\nab_{\calI,w}*\del_{\calI,w^{-1}}\cong \delta_{\calI,e} \cong \del_{\calI,w}*\nab_{\calI,w^{-1}}$ for any $w\in W$. The canonical isomorphism (\ref{equation_canonical_isom}) provides us the desired canonical isomorphisms.
		
	\end{proof}
	
	\begin{remark}\label{isoms-are-canonical}
		Similar to the equicharacteristic setting as noted by \cite[Remark 4.1.5]{AR}, \Cref{Lemma_standatnd_convo_costand} induces more isomorphism of convolutions of standard sheaves and costandard sheaves. For example, there is a canonical isomorphism
		$$
		\Delta_{\calI,w}\ast \nabla_{\calI,v}\cong \Delta_{\calI,wv}
		$$
		if $\ell(w)-\ell(v)=\ell(wv)$.
	\end{remark}
	
	As usual one is interested in understanding what happens on the abelian subcategory $\scrP(\Hk_\calI)$ arising as the heart of the perverse t-structure. While it is not stable under the monoidal structure of $\scrD_{\mathrm{cons}}(\Hk_\calI)$, we can still benefit from the semiperversity below. We formulate it also for general parahorics $\calG$, because it plays a key role in the perversity of central sheaves.
	
	\begin{lemma}\label{Lemma_standatnd_convo_costand}
		\label{fn:1-convolution-of-standard-costandard-objects-perverse}
		For any $w \in W$, left convolution with $\Delta_{\calI,w} $ (resp.~$\nabla_{\calI,w} $) defines a left (resp.~right) exact endofunctor of $\scrD_\mathrm{cons}(\Hk_{(\calI,\calG)})$. If $\calG=\calI$ is Iwahori, the same holds for right convolution. In particular, for any other $v\in W$, we have $\del_{\calI,w}*\nab_{\calI,v}, \nab_{\calI,w}*\del_{\calI,v}\in \scrP(\Hk_\calI)$.
	\end{lemma}
	\begin{proof}
		Our proof parallels \cite[Lemma 4.1.7]{AR}. Consider the perverse $t$-structure 
		$$
		({}^p\scrD_{\mathrm{cons}}^{\leq 0}(\Hk_{(\calI,\calG)}),{}^p\scrD_{\mathrm{cons}}^{\geq 0}(\Hk_{(\calI,\calG)}) )
		$$ of $\scrD_{\mathrm{cons}}(\Hk_{(\calI,\calG)})$.
		Note that by definition ${}^p\scrD_{\mathrm{cons}}^{\leq 0}(\Hk_{(\calI,\calG)}) $ (resp.~${}^p\scrD_{\mathrm{cons}}^{\geq 0}(\Hk_{(\calI,\calG)}) $) is spanned by the non-negative (resp.~non-positive) shifts of the $\Delta_{(\calI,\calG),v}$ (resp.~$\nabla_{(\calI,\calG),v}$) for $v \in W/W_\bbf$. By \Cref{sec:geometry-affine-flag-4-convolution-is-affine}, the convolution map $m\colon \Fl_{\calI,w}\tilde{\times}\Fl_{(\calI,\calG),\leq v}\rightarrow \Fl_\calI$ is affine and proper. We have 
		\begin{equation}\label{equation_conv_stand_costand_is_perverse}
			\del_{\calI,w}\ast \nab_{(\calI,\calG),v}=Rm_!(\La[\ell(w)]\tilde{\boxtimes} \nab_{(\calI,\calG),v}).
		\end{equation}
		We refer to \cite[(2.2.2)]{AR} for the definition of this convolution and  \cite[Lemma 4.1.7]{AR} for elaboration on (\ref{equation_conv_stand_costand_is_perverse}). The sheaf $\La[\ell(w)]\tilde{\boxtimes} \nab_{(\calI,\calG),v}$ is perverse by our assumption and \Cref{sec:geometry-affine-flag-4-convolution-is-affine}. Thus $\del_{\calI,w}\ast \nab_{(\calI,\calG),v}$ is concentrated in non-negative perverse degrees because $!$-pushforward of affine morphisms is left exact for the perverse $t$-structure, cf.\ \cite[Corollaire 4.1.2]{BBDG18}.
		On the other hand, by a similar consideration for (\ref{equation_conv_stand_costand_is_perverse}),
		\begin{equation}
			\nab_{\calI,w}\ast \del_{(\calI,\calG),v}=Rm_\ast(\La[(\ell(w))]\tilde{\boxtimes}\Delta_{(\calI,\calG),v}),
		\end{equation}
		and is concentrated in non-positive perverse degrees by \cite[Th\'eorème 4.1.1]{BBDG18}. If $\calG=\calI$ is Iwahori, then by symmetry we can run the same arguments for the right convolution. This finishes the proof.
	\end{proof}
	
	During the remainder of this section, we will no longer need the general parahoric case. So we assume that $\calG=\calI$ is Iwahori and suppress it from the index of the standard and costandard sheaves.
	
	\begin{lemma} \label{wersd;kfdl;}
		For any $w_1,w_2\in W$, the perverse sheaves $\del_{w_1}*\nab_{w_2}, \nab_{w_1}*\del_{w_2}$ are both supported on $\Fl_{\calI,\leq w_1w_2}$, and restrict to $\Lambda[\ell(w_1w_2)]$ on $\Fl_{\calI,w_1w_2}$.
	\end{lemma}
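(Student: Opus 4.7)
My plan is to prove both assertions simultaneously by induction on $\ell(w_1)+\ell(w_2)$. I treat $\Delta_{w_1}\ast\nabla_{w_2}$; the argument for $\nabla_{w_1}\ast\Delta_{w_2}$ is entirely symmetric, using $\nabla_s\ast\Delta_s=\delta_{\calI,e}$ in place of $\Delta_s\ast\nabla_s=\delta_{\calI,e}$ from Lemma \ref{lem_invertible_conv_stand}. The base case $\ell(w_1)=0$ is immediate: $w_1=\tau\in\Omega_{\bba}$ gives $\Delta_\tau=\nabla_\tau$, so Lemma \ref{djfejkjd;kjkel;} produces $\Delta_\tau\ast\nabla_{w_2}=\nabla_{\tau w_2}$, which has the desired support and top restriction. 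The case $\ell(w_2)=0$ is handled in the same way.

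For the inductive step I distinguish two scenarios according to the defect $d:=\ell(w_1)+\ell(w_2)-\ell(w_1w_2)$. If $d>0$, I plan to cancel by a common simple reflection. The standard Coxeter-theoretic criterion (the right descent set of $w_1$ and the left descent set of $w_2$ must meet whenever $d>0$) provides $s\in\bbS$ with $w_1s<w_1$ and $sw_2<w_2$. Setting $w_1'=w_1s$, $w_2'=sw_2$, both $w_1=w_1's$ and $w_2=sw_2'$ are reduced, so Lemma \ref{djfejkjd;kjkel;} gives $\Delta_{w_1}=\Delta_{w_1'}\ast\Delta_s$ and $\nabla_{w_2}=\nabla_s\ast\nabla_{w_2'}$. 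The invertibility in Lemma \ref{lem_invertible_conv_stand} then collapses the middle $\Delta_s\ast\nabla_s$ to $\delta_{\calI,e}$, yielding
\[
\Delta_{w_1}\ast\nabla_{w_2}\simeq \Delta_{w_1'}\ast\nabla_{w_2'}.
\]
Since $w_1'w_2'=w_1w_2$ and $\ell(w_1')+\ell(w_2')=\ell(w_1)+\ell(w_2)-2$, the inductive hypothesis finishes this case.

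If $d=0$, both conclusions follow from Lemma \ref{sec:geometry-affine-flag-2-properties-of-convolution} directly. Part (2) gives a proper map $m\colon \Fl_{\calI,\leq w_1}\tilde{\times}\Fl_{\calI,\leq w_2}\to \Fl_\calI$ with image contained in $\Fl_{\calI,\leq w_1w_2}$, confining the support of $\Delta_{w_1}\ast\nabla_{w_2}=Rm_\ast(\Delta_{w_1}\tilde{\boxtimes}\nabla_{w_2})$ accordingly. For the restriction to the open cell, I apply proper base change: Bruhat-monotonicity of multiplication combined with length additivity in the reduced case (any $v_1\leq w_1$ and $v_2\leq w_2$ with $v_1v_2=w_1w_2$ force $v_i=w_i$) identifies the preimage $m^{-1}(\Fl_{\calI,w_1w_2})$ with $\Fl_{\calI,w_1}\tilde{\times}\Fl_{\calI,w_2}$; on this locus the box product $\Delta_{w_1}\tilde{\boxtimes}\nabla_{w_2}$ restricts to $\Lambda[\ell(w_1)+\ell(w_2)]$, and part (1) of the lemma identifies $m$ there with the isomorphism onto $\Fl_{\calI,w_1w_2}$, giving the restriction $\Lambda[\ell(w_1w_2)]$.

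The most delicate pieces are the Coxeter-theoretic cancellation lemma (which is standard but deserves a citation or a brief length-deletion argument) and the Bruhat-plus-length verification of the preimage identification in the reduced case; once these are in place, the rest is bookkeeping with Lemmas \ref{djfejkjd;kjkel;}, \ref{lem_invertible_conv_stand} and \ref{sec:geometry-affine-flag-2-properties-of-convolution}.
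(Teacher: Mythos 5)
Your proposal is correct but takes a genuinely different route from the paper's. The paper (following Achar--Riche) is a short global argument: one applies the ring homomorphism $\theta\colon K_0(\Hk_{\calI})\to \bbZ[W]$ and uses that $\Delta_{w_1}\ast\nabla_{w_2}$ is perverse and $\calI$-equivariant, so that any cell $\Fl_{\calI,w}$ open in the support contributes a nonzero term to $\theta([\Delta_{w_1}\ast\nabla_{w_2}])=w_1w_2$; this forces $w=w_1w_2$ and pins the multiplicity to~$1$. You instead induct on $\ell(w_1)+\ell(w_2)$, cancel a common simple reflection using $\Delta_s\ast\nabla_s\simeq\delta_{\calI,e}$ when the defect is positive, and settle the reduced case by identifying the fiber of the convolution map over the open cell and applying proper base change. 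Your approach is more elementary and hands-on: it avoids the $K_0$ bookkeeping and never needs the perversity statement of \Cref{fn:1-convolution-of-standard-costandard-objects-perverse} (it draws only on \Cref{djfejkjd;kjkel;}, \Cref{lem_invertible_conv_stand}, \Cref{sec:geometry-affine-flag-2-properties-of-convolution} and Coxeter combinatorics). The paper's argument is shorter once $\theta$ and perversity are in place.

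One small imprecision in your reduced case deserves fixing. You say the preimage identification follows from ``any $v_1\leq w_1$, $v_2\leq w_2$ with $v_1v_2=w_1w_2$ force $v_i=w_i$.'' But the image $m(\Fl_{\calI,v_1}\tilde{\times}\Fl_{\calI,v_2})$ is not contained in $\Fl_{\calI,v_1v_2}$; by BN-pair relations it lies in the union of cells $\leq v_1\star v_2$ where $\star$ denotes the Demazure product. What you actually need is that $\Fl_{\calI,w_1w_2}$ can only meet $m(\Fl_{\calI,v_1}\tilde{\times}\Fl_{\calI,v_2})$ when $v_1=w_1$ and $v_2=w_2$, and this does follow from your length argument via $\ell(v_1\star v_2)\leq\ell(v_1)+\ell(v_2)$: if $w_1w_2\leq v_1\star v_2$ then $\ell(w_1w_2)\leq\ell(v_1)+\ell(v_2)\leq\ell(w_1)+\ell(w_2)=\ell(w_1w_2)$, forcing equality throughout. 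So the conclusion is right, but the justification should be phrased in terms of the Demazure product rather than the ordinary one.
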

	
	\begin{proof}
		The proof is similar to the equal characteristic case, cf.\ \cite[Lemma 4.1.10]{AR}, and we sketch it here. 
		The Euler characteristic
		\begin{equation}\theta\colon K_0(\Hk_\calI)\longrightarrow \mathbb{Z}[W],\quad [\mathcal{F}]\longmapsto \sum_{w\in W}(-1)^{\ell(w)} \chi(\Fl_{ \calI,w},j_w^*\mathcal{F})w
		\end{equation}
		defines a ring homomorphism. By the proof of \Cref{sec:k_0-i-equivariant-1-description-of-k-0-of-i-equivariant-sheaves} we know that $\theta([\del_w])=\theta([\nab_w])=w$ for any $w\in W$. Now let $w\in W$ be any element such that $\Fl_{\calI,w}$ is open in the support of $\del_{w_1}*\nab_{w_2}$. Then the coefficient of $w$ in $\theta([\del_{w_1}\ast \nab_{w_2}])\in \bbZ[W]$ does not vanish. By perversity, see \Cref{fn:1-convolution-of-standard-costandard-objects-perverse}, and $\calI$-equivariance it is a non-zero multiple of the Euler characteristic of the cohomology of $\Lambda[\ell(w)]$ on $\Fl_{\calI,w }\cong \bbA^{\ell(w),\mathrm{pf}}_k$.
		Now,
		\begin{equation}
			\theta([\del_{w_1}\ast \nab_{w_2}])=\theta([\del_{w_1}])\theta([\nab_{w_2}])=w_1w_2,
		\end{equation}
		and thus $w=w_1w_2$ and $\del_{w_1}\ast \nab_{w_2}$ is supported on $\Fl_{\calI, \leq w_1w_2}$. By perversity and $I$-equivariance, we must have that
		\begin{equation}
			j_{w_1w_2}^*(\del_{w_1}\ast \nab_{w_2})\cong \Lambda^{\oplus m}[\ell(w_1w_2)]
		\end{equation}
		for some $m\geq 1$. But the coefficient of $w_1w_2$ is $1$, so $m=1$ as desired. The statement for $\nab_{w_1}*\del_{w_2}$ follows similarly.
	\end{proof}
	
	If $\mathcal{F}\in \scrD_\et(\Hk_{\calI})$, then
	\begin{equation}
		\on{supp}(\mathcal{F})=\{w\in W\ |\ j^\ast_w\mathcal{F}\neq 0\}
	\end{equation}
	is the support of $\mathcal{F}$, and
	\begin{equation}
		\on{cosupp}(\mathcal{F})=\{w\in W\ |\ j^!_w\mathcal{F}\neq 0\}
	\end{equation}
	its cosupport.
	
	We need the following geometric consequence of  \Cref{importantlemma}. In equal characteristic this is \cite[Proposition 4.4.4.]{AR}.
	
	\begin{proposition} \label{prop_support_conv_standard}
		For any $\calF\in \scrD_{\mathrm{cons}}(\Hk_\calI)$, there exists a finite subset $A_\calF\subset W$ such that for any $w\in W$,
		\begin{enumerate}
			\item $ \on{supp}(\del_w*\calF) \subseteq w\cdot A_{\calF}$, and dually $\on{cosupp}(\nab_w*\calF)\subseteq w\cdot A_{\calF}$,
			\item $ \on{supp}(\calF*\del_w)\subseteq A_{\calF}\cdot w$, and dually $\on{cosupp}(\calF\ast \nab_w)\subseteq A_\calF\cdot w$.
		\end{enumerate}
	\end{proposition}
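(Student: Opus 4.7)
The plan is to adapt the equicharacteristic argument of \cite[Proposition 4.4.4]{AR}, reducing all four containments to a single application of \Cref{importantlemma} on the $L^+\calI$-stable locally closed subscheme carrying the stalk-support of $\calF$, and bridging from support to cosupport via Verdier duality.

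First, I would set $Y_\calF := \bigcup_{v \in \on{supp}(\calF)} \Fl_{\calI,v} \subseteq \Fl_\calI$. Since $\calF \in \scrD_{\mathrm{cons}}(\Hk_\calI)$ has finite support in the underlying topological space by definition, $Y_\calF$ is a finite union of cells and hence an $L^+\calI$-stable locally closed perfect subscheme. Then \Cref{importantlemma} yields a finite subset $S_\calF \subset W$ satisfying, for every $w\in W$,
\begin{equation*}
m(\Fl_{\calI,w}\tilde{\times} Y_\calF)\subseteq \bigcup_{x\in S_\calF}\Fl_{\calI,wx}, \qquad m(Y_\calF \tilde{\times}\Fl_{\calI,w})\subseteq \bigcup_{x\in S_\calF}\Fl_{\calI,xw}.
\end{equation*}

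For the support bounds, writing $\del_w * \calF = Rm_!(\del_w \tilde{\boxtimes}\calF)$ and noting that the stalk-support of $\del_w \tilde{\boxtimes} \calF$ equals $\Fl_{\calI,w}\tilde{\times} Y_\calF$, proper base change forces $j_v^*(\del_w * \calF)=0$ whenever $\Fl_{\calI,v}$ fails to meet $m(\Fl_{\calI,w}\tilde{\times} Y_\calF)$. The first containment above then delivers $\on{supp}(\del_w * \calF) \subseteq w\cdot S_\calF$, and the symmetric argument (using the second containment) gives $\on{supp}(\calF*\del_w)\subseteq S_\calF\cdot w$.

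For the cosupport bounds I would invoke Verdier duality. The convolution morphism $m$ is perfectly proper and the projection $p$ is a smooth $L^+\calI$-torsor, so $\bbD$ commutes with convolution up to a uniform shift and twist, and $\bbD\nab_w\simeq \del_w$ up to the same. Since shifts and Tate twists do not alter support or cosupport, this yields
\begin{equation*}
\on{cosupp}(\nab_w * \calF) = \on{supp}(\del_w * \bbD\calF), \qquad \on{cosupp}(\calF*\nab_w) = \on{supp}(\bbD\calF*\del_w).
\end{equation*}
Running the support argument for $\bbD\calF$ produces a finite subset $S_{\bbD\calF}\subset W$ with the corresponding bounds. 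Setting $A_\calF := S_\calF \cup S_{\bbD\calF}$ now yields the uniform containments required by the proposition.

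The main technical point is the Verdier-duality compatibility with convolution in the $v$-stack setting, but this is standard in the six-functor formalism once perfect properness of convolution morphisms on bounded Schubert varieties is granted (cf.\ \Cref{sec:geometry-affine-flag-2-properties-of-convolution}). A duality-free alternative would exploit the cosupport identity $\on{cosupp}(\nab_w) = \{w\}$ together with the base change formula $j_v^! Rm_* \simeq Rm'_*\circ {j'_v}^!$ for proper $m$, at the cost of an étale-local trivialization of the twisted exterior product $\nab_w \tilde{\boxtimes}\calF$.
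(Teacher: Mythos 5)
Your argument is correct and reaches the desired statement, but it diverges from the paper in how it handles the cosupport half. For the support bounds, both you and the paper apply \Cref{importantlemma} to a finite union of $L^+\calI$-orbits covering $\on{supp}(\calF)$ and then use proper base change, so that part is essentially the same. For the cosupport bounds, the paper chooses $X$ to be a \emph{closed} finite union of $I$-orbits containing $\on{supp}(\calF)$; since $X$ is closed it automatically contains $\on{cosupp}(\calF)$ as well, so a single finite set $S_X$ serves both halves, and one then invokes the adjoint base change isomorphism $j_v^!\circ Rm_*\simeq Rm_*\circ (j'_v)^!$ for the proper map $m$. You instead convert cosupport of $\nab_w*\calF$ into support of $\del_w*\bbD\calF$ via Verdier duality, rerun the support argument for $\bbD\calF$, and take the union of two finite sets. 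Both routes work, and you even flag the paper's route as the ``duality-free alternative'' at the end; the paper's is marginally leaner in that it needs only one application of \Cref{importantlemma}, while yours isolates the duality compatibility of $*$ as the key technical input.

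One small technical slip worth flagging: $Y_\calF=\bigcup_{v\in\on{supp}(\calF)}\Fl_{\calI,v}$ need not be locally closed, because $\on{supp}(\calF)$ is not in general a lower set for the Bruhat order, so $Y_\calF$ does not literally satisfy the hypothesis of \Cref{importantlemma} as stated. The fix is harmless: either replace $Y_\calF$ by a \emph{closed} finite union of orbits containing $\on{supp}(\calF)$ (as the paper does), or observe that the proof of \Cref{importantlemma} reduces to a single orbit and hence applies orbit by orbit. With that adjustment your proof is complete.
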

	\begin{proof}
		Let $X\subseteq \Fl_{\calI}$ be a closed finite union of $I$-orbits such that $\on{supp}(\calF)\subseteq X$. Set $A_\calF:=S_X$ with $S_X$ as in \Cref{importantlemma}, i.e.,
		\begin{equation}
			m(X\tilde{\times} \Fl_{\calI, w})\subseteq A_\calF\cdot w
		\end{equation}
		and
		\begin{equation}
			m(\Fl_{\calI, w}\tilde{\times} X)\subseteq w\cdot A_\calF
		\end{equation}
		for all $w\in W$. Now the proper base change theorem implies that $\on{supp}(\calF\ast \del_w)\subseteq A_\calF\cdot w$ and $\on{supp}(\del_w\ast \calF)\subseteq w\cdot A_\calF$ for any $w\in W$. Because we assumed that $X$ is closed, we can also use that $Rm_\ast$ commutes with $!$-restrictions (by the adjoint version of the proper base change theorem) to see that $\on{cosupp}(\calF\ast \nab_w)\subseteq A_\calF\cdot w$ and $\on{cosupp}(\nab_w\ast \calF)\subseteq w\cdot A_\calF$ for any $w\in W$. This finishes the proof.
	\end{proof}
	
	Regarding the products $\del_{w_1}\ast \del_{w_2},\ \nab_{w_1}\ast \nab_{w_2}$ for $w_1,w_2\in W$ we note the following. 
	
	\begin{lemma}\label{lemma_convolution_stds_and_costds}
		Let $w_1,w_2\in W$, then
		\begin{itemize}
			\item [(1)] $\Delta_{w_1}*\Delta_{w_2}$ lies in the smallest full subcategory of $\scrD_{\mathrm{cons}}(\Hk_{\calI})$, which is closed under extensions, and contains $\Delta_w[n]$ for $w\in W$ and $n\in\mathbb{Z}_{\leq 0}$.
			\item[(2)]$\nabla_{w_1}*\nabla_{w_2}$ lies in the smallest full subcategory of $\scrD_{\mathrm{cons}}(\Hk_{\calI})$, which is closed under extensions, and contains $\nab_w[n]$ for $w\in W$ and $n\in\mathbb{Z}_{\geq 0}$.
		\end{itemize}

	\end{lemma}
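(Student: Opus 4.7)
The plan is to prove (1) by induction on $\ell(w_1)$, reducing everything to the key computation of $\Delta_s*\Delta_s$ for a simple reflection $s$; part (2) will follow from a symmetric computation of $\nabla_s*\nabla_s$. The base case $\ell(w_1)=0$ puts $w_1\in\Omega_{\bba}$, so that $\ell(w_1)+\ell(w_2)=\ell(w_1w_2)$ and Lemma~\ref{djfejkjd;kjkel;} gives $\Delta_{w_1}*\Delta_{w_2}\cong\Delta_{w_1w_2}$, a generator. For the inductive step, write $w_1=s\cdot w_1''$ with $s$ a simple reflection and $\ell(w_1'')=\ell(w_1)-1$; by the same lemma $\Delta_{w_1}\cong\Delta_s*\Delta_{w_1''}$, and since $\Delta_s*(-)$ is triangulated and the target subcategory is closed under non-positive shifts, the induction hypothesis reduces the problem to showing that $\Delta_s*\Delta_v$ lies in the subcategory for every $v\in W$.

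If $\ell(sv)=\ell(v)+1$, Lemma~\ref{djfejkjd;kjkel;} directly gives $\Delta_s*\Delta_v\cong\Delta_{sv}$. Otherwise $\ell(sv)=\ell(v)-1$, and writing $v=sv'$ with $\ell(v')=\ell(v)-1$ yields $\Delta_s*\Delta_v\cong\Delta_s*\Delta_s*\Delta_{v'}$, so everything reduces to understanding the structure of $\Delta_s*\Delta_s$.

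The central step exploits the canonical morphism $\Delta_s\to\nabla_s$ with cofiber $C_s$, which is supported at $e\in\Fl_{\calI}$ because $\Delta_s$ and $\nabla_s$ agree on $\Fl_{\calI,s}$. Because $j_s$ is affine and $\Fl_{\calI,\leq s}\cong\bbP^{1,\mathrm{pf}}_k$, the stalk of $C_s$ at $e$ has cohomology in degrees $-1$ and $0$---the degree-$1$ part being the nontrivial tame étale $H^1$ of the punctured formal disk, which survives over $\bar k$ since $\ell\neq p$. Working over the field $\La$ and the algebraically closed base $\bar k$ (so Tate twists may be discarded), $C_s$ then splits as $\Delta_e^a[1]\oplus\Delta_e^b$ for some $a,b\geq 0$. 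Convolving the triangle $\Delta_s\to\nabla_s\to C_s\to$ with $\Delta_s$ on the left and using $\Delta_s*\nabla_s\cong\Delta_e$ from Lemma~\ref{lem_invertible_conv_stand}, rotation produces the triangle
\[
	\Delta_s^a\oplus\Delta_s^b[-1]\longrightarrow\Delta_s*\Delta_s\longrightarrow\Delta_e\longrightarrow,
\]
whose outer terms are direct sums of generators $\Delta_w[n]$ with $n\in\{-1,0\}\subset\mathbb{Z}_{\leq 0}$. Convolving this further on the right with $\Delta_{v'}$ (and using $\Delta_s*\Delta_{v'}\cong\Delta_v$) expresses $\Delta_s*\Delta_v$ as an extension of $\Delta_{v'}$ by $\Delta_v^a\oplus\Delta_v^b[-1]$, closing the induction.

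Part (2) is obtained by a symmetric argument: convolving the same triangle $\Delta_s\to\nabla_s\to C_s$ on the right with $\nabla_s$ and using $\nabla_s*\Delta_s\cong\nabla_e$ produces $\nabla_e\to\nabla_s*\nabla_s\to\nabla_s^a[1]\oplus\nabla_s^b\to$, expressing $\nabla_s*\nabla_s$ via shifts in $\{0,1\}\subset\mathbb{Z}_{\geq 0}$ of costandards. The main technical subtlety throughout is the bookkeeping of cohomological shifts: the tame degree-$1$ component of $C_s$ is exactly why the statement must allow arbitrary $n\leq 0$ (resp.~$n\geq 0$) rather than only $n=0$, but the $[-1]$-rotation of the triangle in (1) (resp.~the lack thereof in (2)) is precisely what ensures that all shifts land in the correct half-line.
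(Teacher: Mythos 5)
Your proof is correct in its overall structure and follows essentially the same route as the paper, which simply defers to the argument of \cite[Lemma 6.5.8]{AR}: reduce by induction to the single nontrivial computation $\Delta_s*\Delta_s$ (resp.\ $\nabla_s*\nabla_s$) for a simple reflection $s$, and analyse the triangle $\Delta_s\to\nabla_s\to C_s\to$ using the invertibility of $\Delta_s$ and $\nabla_s$.

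One point deserves a caveat. You assert that $C_s$ \emph{splits} as $\Delta_e^a[1]\oplus\Delta_e^b$ ``since Tate twists may be discarded over $\bar k$''. Over $\bar k$ the twist is indeed harmless, but the splitting itself is not automatic: $C_s$ lives in the full subcategory of objects supported at the base point, which identifies with $\scrD_{\mathrm{cons}}([\ast/L^+\calI])\simeq\scrD_{\mathrm{cons}}([\ast/S_k])$, and the obstruction to splitting a two-step complex with cohomology in degrees $-1,0$ is a class in $\mathrm{Ext}^2_{\scrD_{\mathrm{cons}}([\ast/S_k])}(\Lambda,\Lambda)\cong H^2(BS_k,\Lambda)$. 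This group is nonzero whenever $S$ is a nontrivial torus, and the class in question is essentially the Euler class of the $S_k$-weight on the tangent space of $\Fl_{\calI,\leq s}$ at $e$, which one should not expect to vanish. Fortunately the splitting is inessential: what you actually use is only that $C_s$ has a two-step filtration with graded pieces $\Delta_e[1]$ and $\Delta_e$. Convolution with $\Delta_s$ (resp.\ $\nabla_s$) is exact and the target subcategories are closed under extensions, so running your argument with this filtration in place of a direct sum yields the same conclusion without change.
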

	\begin{proof}
		Given the results of this section the argument of \cite[Lemma 6.5.8]{AR} applies.
	\end{proof}
	
	Let $\Omega_{\bba} \subset W$ be the stabilizer of the fundamental alcove $\bba$, i.e., the subset of length $0$ elements.
	
	\begin{lemma}\label{lem_socle_top_stand_costand}
		Given $w \in W$, let $\tau \in \Omega_{\bba}$ be the unique element contained in $\Fl_{ \calI,\leq w}$. Then, the sheaf $\mathrm{IC}_\tau$ appears with multiplicity $1$ in the Jordan--Hölder series of $\nabla_w$ and equals its top. Dually, $\mathrm{IC}_\tau$ appears with multiplicity $1$ inside $\Delta_w$ as its socle.
	\end{lemma}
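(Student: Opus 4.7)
The two claims are Verdier dual, via $\mathbb{D}(\Delta_w) \simeq \nabla_w(\ell(w))$ and $\mathbb{D}(\IC_\tau) \simeq \IC_\tau$, so it suffices to establish the socle statement for $\Delta_w$. By \Cref{lem_invertible_conv_stand}, $\IC_\tau = \Delta_\tau$ is invertible in $\scrD_{\mathrm{cons}}(\Hk_\calI)$, and left convolution by it is an autoequivalence mapping $\IC_v \mapsto \IC_{\tau v}$ and $\Delta_{w_0} \mapsto \Delta_{\tau w_0}$. Writing $w = \tau w_0$ with $w_0 \in W_\aff$, I reduce to showing that $\IC_e$ is the socle of $\Delta_{w_0}$ with multiplicity one.

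I would then induct on $\ell(w_0)$, with the base $w_0 = e$ trivial. For $\ell(w_0) \geq 1$, write $w_0 = sw'$ reduced, with $s$ a simple reflection and $w' \in W_\aff$, so that $\Delta_{w_0} = \Delta_s * \Delta_{w'}$. Applying the left-exact functor $(-) * \Delta_{w'}$ (\Cref{fn:1-convolution-of-standard-costandard-objects-perverse}) to the short exact sequence $0 \to \IC_e \to \Delta_s \to \IC_s \to 0$ on $\Fl_{\calI, \leq s} \cong \bbP^{1,\mathrm{pf}}_k$, I obtain a monomorphism $\Delta_{w'} \hookrightarrow \Delta_{w_0}$ whose cokernel $Q$ embeds into $\IC_s * \Delta_{w'}$. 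Using the invertibility rewriting $\Hom(\IC_v, \IC_s * \Delta_{w'}) \simeq \Hom(\IC_v * \nabla_{w'^{-1}}, \IC_s)$, the task becomes controlling the $\IC_s$-multiplicity in the perverse top of $\IC_v * \nabla_{w'^{-1}}$. For $v = e$ this simplifies to $\Hom(\nabla_{w'^{-1}}, \IC_s) = 0$ by the Verdier-dual inductive hypothesis (the unique top of $\nabla_{w'^{-1}}$ is $\IC_e$, since $w'^{-1} \in W_\aff$), yielding $\Hom(\IC_e, \Delta_{w_0}) = \Lambda$. For $v \neq e$, the cases of disjoint supports (different $\Omega_\bba$-component or $v \not\leq w_0$) are immediate, and $v = w_0$ is ruled out because the stalk $j_{w_0}^*\Delta_{w_0} = \Lambda[\ell(w_0)]$ forces $[\Delta_{w_0}:\IC_{w_0}] = 1$ with $\IC_{w_0}$ placed in the top, not the socle.

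The main obstacle is the remaining subcase $v \in W_\aff$ with $e < v < w_0$, where the vanishing of $\Hom(\IC_v * \nabla_{w'^{-1}}, \IC_s)$ is not immediate. My plan is to dominate by $\Delta_v * \nabla_{w'^{-1}}$, which by \Cref{wersd;kfdl;} is perverse, supported on $\Fl_{\calI, \leq vw'^{-1}}$, with open-stratum restriction $\Lambda[\ell(vw'^{-1})]$; combined with the cosupport bound of \Cref{prop_support_conv_standard}, this ought to exclude $\IC_s$ from the perverse head, since $vw'^{-1} = s$ would force $v = w_0$, contradicting $v < w_0$. Transferring this head vanishing to the right-exact quotient $\IC_v * \nabla_{w'^{-1}}$, via the surjection $\Delta_v \twoheadrightarrow \IC_v$ and the right-exactness of $(-) * \nabla_{w'^{-1}}$, is the most delicate step and the point on which I would focus the most effort.
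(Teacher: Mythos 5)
Your reduction to $\tau=e$ via invertibility and Verdier duality, and the short exact sequence $0\to\Delta_{w'}\to\Delta_{w_0}\to Q\to 0$ obtained by convolving $\IC_e\to\Delta_s\to\IC_s$ with $\Delta_{w'}$, are in the same spirit as the paper's proof, which cites \cite[Lemma 2.1]{BBM04} and likewise inducts on length using the simple reflection $s$. However, there are two genuine problems with the way you propose to finish.

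First, the support-based argument in your ``main obstacle'' paragraph does not close. Ruling out $vw'^{-1}=s$ only forbids the support of $\Delta_v\ast\nabla_{w'^{-1}}$ from being \emph{equal} to $\Fl_{\calI,\le s}$; it does not forbid $s < vw'^{-1}$, which is the situation that actually has to be excluded for $\IC_s$ to fail to be a quotient. So the crucial vanishing $\Hom(\IC_v\ast\nabla_{w'^{-1}},\IC_s)=0$ is not established. Second, even with the socle statement in hand, your argument never touches the claim that $\IC_e$ has Jordan--Hölder multiplicity exactly $1$ in $\Delta_{w_0}$: knowing $\Hom(\IC_e,\Delta_{w_0})=\Lambda$ only controls the socle, not the total multiplicity.

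Both issues are resolved at once if you identify the cokernel $Q$ geometrically, which is the point of the cited proof. Since $\Fl_{\calI,\le s}=\calJ_s/\calI\cong\bbP^{1,\mathrm{pf}}_k$ and $\ell(sw')=\ell(s)+\ell(w')$, one computes directly that
\[
Q\;=\;\IC_s\ast\Delta_{w'}\;\cong\;a_{!}\,\Lambda[\ell(w_0)],
\]
where $a\colon \Fl_{\calI,w_0}\sqcup\Fl_{\calI,w'}=\calJ_s w'\calI/\calI\hookrightarrow\Fl_{\calI}$ is the locally closed immersion. Equivalently, $Q\cong\pi^{\ast}\Delta_{\overline{w_0}}[1]$ for the $\bbP^{1}$-bundle $\pi\colon\Hk_{\calI}\to[L^{+}\calJ_s\backslash\Fl_{\calI}]$ and the standard object $\Delta_{\overline{w_0}}$ on the target attached to the image stratum. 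The functor $\pi^{\ast}[1]$ is perverse $t$-exact and fully faithful, and it sends simple objects to simple objects; its essential image consists precisely of the perverse sheaves supported on $\calJ_s$-stable closed subsets, i.e.\ those whose composition factors are the $\IC_u$ with $su<u$. Hence $Q$ is perverse and \emph{all} of its Jordan--Hölder constituents are of the form $\IC_u$ with $su<u$. In particular $\IC_e$ (and any length-$0$ $\IC_\tau$) is not a constituent of $Q$, giving the multiplicity-one claim by induction. For the socle claim: if $\IC_v\hookrightarrow\Delta_{w_0}$ with image not contained in $\Delta_{w'}$, then $\IC_v\hookrightarrow Q$, forcing $sv<v$; but then $\IC_v\cong\pi^{\ast}\IC_{\bar v}[1]$ and $\pi_{!}\Delta_{w_0}\cong\Delta_{\overline{w_0}}[-1]$, so
\[
\Hom(\IC_v,\Delta_{w_0})\cong\Hom(\IC_{\bar v}[1],\Delta_{\overline{w_0}}[-1])=\Hom^{-2}(\IC_{\bar v},\Delta_{\overline{w_0}})=0,
\]
a contradiction. (One uses the affine analogue of the $\bbP^1$-bundle trivialisation over Schubert cells, exactly as the paper indicates.) This is also why the nonvanishing $\Hom(\IC_{w_0},Q)\neq 0$ is not a problem: one does not need $\Hom(\IC_v,Q)=0$ for \emph{all} $v$, which is impossible since $Q\neq 0$, but only that any $v$ contributing to the socle of $Q$ satisfies $sv<v$, which is then killed by the adjunction. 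I would replace the support-heuristic paragraph by this identification of $Q$, and add the resulting multiplicity-one statement, which the lemma asserts but your write-up does not yet prove.
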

	
	\begin{proof}
		This follows from the same proof of \cite[Lemma 2.1]{BBM04}. The idea is to argue by induction on the length of $w$. Besides the combinatorics of Coxeter groups, one only has to know that $\Fl_{ \calI} \to \Fl_{ \calJ_s}$ is a $\bbP_k^{1,\mathrm{pf}}$-bundle locally for the étale topology that actually splits over Schubert cells (use root groups to see this latter property). Here, $\calI \to \calJ_s$ is the minimal parahoric fixing the wall of the fundamental alcove $\bba$ fixed by $s$.
	\end{proof}
	
	\subsection{Wakimoto sheaves}
	\label{sec:wakim-funct-calj}
	
	Let $w_1,w_2\in W$. In general,
	\begin{equation}
		\del_{w_1}\ast \del_{w_2}\ncong \del_{w_1w_2},\ \nab_{w_1}\ast \nab_{w_2}\ncong \nab_{w_1w_2}
	\end{equation}
	unless $\ell(w_1)+\ell(w_2)=\ell(w_1w_2)$, cf.\ \Cref{djfejkjd;kjkel;}. In this subsection, we want to remedy this fact by introducing objects
	\begin{equation}
		\scrJ_{\bar{\nu}}\in \scrD_{\et}(\Hk_{\calI})
	\end{equation}
	for $\bar{\nu}\in \bar{\bbX}_\bullet$ (recall the embedding $\bar{\bbX}_\bullet\to W,\ \bar{\nu}\mapsto t_{\bar{\nu}}$) such that
	\begin{equation}
		\scrJ_{\bar{\nu}_1}\ast \scrJ_{\bar{\nu}_2}\cong \scrJ_{\bar\nu_1+\bar\nu_2}
	\end{equation}
	for all $\bar\nu_1,\bar\nu_2\in \bar{\bbX}_\bullet$, and
	\begin{equation}
		\scrJ_{\bar{\nu}}\cong \del_{t_{\bar{\nu}}}
	\end{equation}
	if $\bar{\nu}\in -\bar{\bbX}_\bullet^+$. Note that by \Cref{lem_invertible_conv_stand} this already forces
	\begin{equation}
		\scrJ_{\bar{\nu}}\cong \nab_{t_{\bar{\nu}}}
	\end{equation}
	if $\bar{\nu}\in \bar{\bbX}_\bullet^+$. In fact, we must have
	\begin{equation}
		\scrJ_{\bar{\nu}}\cong \del_{t_{\bar\nu_2}}\ast \nab_{t_{\bar\nu_1}}
	\end{equation}
	if we write $\bar{\nu}=\bar\nu_1-\bar\nu_2$ with $\bar\nu_1,\bar\nu_2\in \bar{\bbX}_\bullet^+$ (which is always possible).
	Note that $\ell(t_{\bar\nu_1})+\ell(t_{\bar\nu_2})=\ell(t_{\bar\nu_1}t_{\bar{\nu}})$ if $\bar\nu_1,\bar\nu_2\in \bar{\bbX}_\bullet^+$. Hence, \Cref{djfejkjd;kjkel;} implies that the above formula for $\scrJ_{\bar{\nu}}$ is independent (up to isomorphism) of $\bar\nu_1,\bar\nu_2$.
	To get a more canonical construction of $\scrJ_{\bar{\nu}}$, we will adopt the definition from \cite[Section 4.2.1]{AR}.

	\begin{definition}
		\label{sec:wakim-funct-calj-1-definition-wakimoto-functor-for-nu}
		Let $\bar \nu\in \bar \bbX_\bullet$. The {\it Wakimoto sheaf }  $\scrJ_{\bar\nu} $ is the object in $\scrD_{\mathrm{cons}}(\Hk_\calI)$ corepresenting the functor
		\begin{equation}
			\calF \mapsto \on{colim}\Hom(\nab_{t_{\bar\nu_1}},\calF \ast \nab_{t_{\bar\nu_2}}),
		\end{equation}
		with the (filtered) colimit running over all pairs of $\bar{\nu}_1,\bar{\nu}_2 \in \bar \bbX_\bullet^+$ such that $\bar{\nu}=\bar{\nu}_1-\bar{\nu}_2$, where the order is given by $(\bar{\nu}_1,\bar{\nu}_2)\leq (\bar{\nu}_3,\bar{\nu}_4)$ if $\bar{\nu}_3-\bar{\nu}_1=\bar{\nu}_4-\bar{\nu}_2\in \bar \bbX_\bullet^+$. The transition morphisms in the colimit are given by convolution with $\nab_{t_{\bar{\nu}_3-\bar{\nu}_1}}=\nab_{t_{\bar{\nu_4}-\bar{\nu}_2}}$ (and using the canonical isomorphisms in  \Cref{djfejkjd;kjkel;}).
	\end{definition}
	
	Note that all the transition morphisms in the colimit are isomorphisms. In particular, we can conclude (by invertibility of $\nab_{t_{\bar{\nu_2}}}$, cf.\ \Cref{lem_invertible_conv_stand}) that
	\begin{equation}
		\scrJ_{\bar{\nu}}\cong \del_{t_{\bar\nu_2}}\ast \nab_{t_{\bar\nu_1}}
	\end{equation}
	as desired. More generally, we can use the fact that $\scrD_{\mathrm{cons}}([*/S_k])$ acts on $\scrD_{\mathrm{cons}}(\Hk_{\calI})$ to construct a functor 
	\begin{equation}
		\scrJ_{\bar\nu} \colon \scrD_{\mathrm{cons}}([*/S_k]) \to \scrD_{\mathrm{cons}}(\Hk_{\calI}), \, M \mapsto \scrJ_{\bar{\nu}} \ast M
	\end{equation}
	between the two categories via evaluation at the Wakimoto sheaf. This will be called the Wakimoto functor and still be denoted by $\scrJ_{\bar \nu}$ by abuse of notation.
	
	\begin{remark}
		\label{sec:wakim-funct-calj-1-remark-on-wakimoto-sheaves}
		The Wakimoto sheaves $\scrJ_{\bar\nu}$ were introduced by Mirkovi\'c for geometrizing Bernstein elements in the affine Hecke algebra, see \cite[Section 5.1]{AR}. 
	\end{remark}
	
	Given a subset $\Omega \subset \bar \bbX_\bullet$, it will also be convenient to define the $\Omega$-Wakimoto functor
	\begin{equation}
		\scrJ_\Omega=\oplus_{\bar \nu\in \Omega} \scrJ_{\bar \nu} \colon \scrD_{\mathrm{cons}}([\underline{\Omega}/S_k]) \to \scrD_{\mathrm{cons}}(\Hk_\calI)
	\end{equation}as the direct sum of the $\scrJ_{\bar\nu}$ for $\bar \nu \in \Omega$, where $\underline{\Omega}=\bigsqcup_{\Omega} \Spec\, k$ regarded as an ind-scheme, so that complexes of étale sheaves have compact support. If $\Omega=\bar \bbX_\bullet$ is the total set, then we simply write $\scrJ$ for $\scrJ_{\bar \bbX_\bullet}$, which is monoidal by \Cref{djfejkjd;kjkel;}. Indeed, we can identify $\scrD_{\mathrm{cons}}([\underline{\bar \bbX_\bullet}/S_k])$ with the full subcategory of compact objects of the product taken in $\mathrm{Cat}_\infty$ of the monoidal $1$-category $\bar \bbX_{\bullet}$ with the stable $\infty$-category $\scrD_{\mathrm{cons}}([*/S_k])$. We see that the first category maps monoidally to the abelian category $\scrP(\Hk_{\calI})$ via the Wakimoto sheaves $\scrJ_{\bar{\nu}}$, see \cite[Section 4.2.3]{AR}, whereas the second maps monoidally to the $\bbE_1$-center of $\scrD_{\mathrm{cons}}(\Hk_{\calI})$. This implies the claim by the universal property of centers, see \cite[Proposition 5.3.1.8]{Lur17}.

	\begin{lemma} \label{lem_wak_basic_prop}
		The Wakimoto functors satisfy the following properties:
		\begin{itemize}
			\item [(1)] For any $\bar \nu\in\bar\bbX_{\bullet}$, $\scrJ_{\bar \nu}$ is t-exact for the perverse t-structure.
			\item [(2)] For any $\bar \nu\in\bar\bbX_{\bullet}$, $\scrJ_{\bar \nu}$ is supported on $\Fl_{\leq \bar \nu}$ and $j_{\bar \nu}^*\scrJ_{\bar \nu}\simeq \La[\langle 2\bar\rho, \bar\nu\rangle]$.
			\item[(3)] For any $\bar\mu,\bar\nu\in\bar \bbX_{\bullet}$, there exists a canonical isomorphism $\scrJ_{\bar\mu} *\scrJ_{\bar\nu}\simeq \scrJ_{\bar\mu+\bar\nu}$.
			\item [(4)] For any $\bar\mu,\bar\nu\in\bar \bbX_{\bullet}$ with $t_{\bar \nu}\npreceq t_{\bar \mu}$, we have $R\mathrm{Hom}_{\scrD_\et(\Hk_{\calI})}(\scrJ_{\bar\mu},\scrJ_{\bar\nu})= 0$. 
		\end{itemize}
	\end{lemma}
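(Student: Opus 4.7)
The approach is to use the concrete presentation $\scrJ_{\bar\nu}\cong\nabla_{t_{\bar\nu_1}}\ast\Delta_{t_{-\bar\nu_2}}$ valid for any decomposition $\bar\nu=\bar\nu_1-\bar\nu_2$ with $\bar\nu_1,\bar\nu_2\in\bar\bbX_\bullet^+$. This identification follows by Yoneda from \Cref{sec:wakim-funct-calj-1-definition-wakimoto-functor-for-nu}: the right-invertibility of $\nabla_{t_{\bar\nu_2}}$ granted by \Cref{lem_invertible_conv_stand} converts the corepresented functor $\calF\mapsto \Hom(\nabla_{t_{\bar\nu_1}},\calF\ast\nabla_{t_{\bar\nu_2}})$ into $\calF\mapsto\Hom(\nabla_{t_{\bar\nu_1}}\ast\Delta_{t_{-\bar\nu_2}},\calF)$, and the transition maps in the defining colimit are isomorphisms by \Cref{djfejkjd;kjkel;}, so the isomorphism class is independent of the chosen decomposition.

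From this presentation, \textbf{(1)} is immediate: the convolution $\nabla_{t_{\bar\nu_1}}\ast\Delta_{t_{-\bar\nu_2}}$ is perverse by \Cref{fn:1-convolution-of-standard-costandard-objects-perverse}, and t-exactness of the induced functor $\scrJ_{\bar\nu}\colon\scrD_{\mathrm{cons}}([\ast/S_k])\to\scrD_{\mathrm{cons}}(\Hk_\calI)$ is then formal because the $S_k$-equivariance only tracks the underlying complex. For \textbf{(2)}, \Cref{wersd;kfdl;} applied to $w_1=t_{\bar\nu_1},\, w_2=t_{-\bar\nu_2}$ places the support inside $\Fl_{\calI,\leq t_{\bar\nu_1}\cdot t_{-\bar\nu_2}}=\Fl_{\calI,\leq t_{\bar\nu}}$ and identifies the open stalk with $\Lambda$ in degree $-\ell(t_{\bar\nu})$, which matches $\langle 2\bar\rho,\bar\nu\rangle$ in the dominant case via the length formula recalled in \Cref{sec:semi-infinite-orbits}.

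For \textbf{(3)}, a direct verification suffices: given $\bar\mu=\bar\mu_1-\bar\mu_2$ and $\bar\nu=\bar\nu_1-\bar\nu_2$, write $\scrJ_{\bar\mu}\ast\scrJ_{\bar\nu}=\nabla_{t_{\bar\mu_1}}\ast\Delta_{t_{-\bar\mu_2}}\ast\nabla_{t_{\bar\nu_1}}\ast\Delta_{t_{-\bar\nu_2}}$, insert copies of $\delta_{\calI,e}=\nabla_{t_{\bar\lambda}}\ast\Delta_{t_{-\bar\lambda}}$ between the middle two factors for $\bar\lambda$ so dominant that $\bar\nu_1-\bar\mu_2+\bar\lambda$ becomes dominant, and collapse adjacent same-sign translations via \Cref{djfejkjd;kjkel;}. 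What remains is a presentation of $\scrJ_{\bar\mu+\bar\nu}$. This monoidality of $\scrJ$ also follows abstractly from the $\bbE_1$-centrality argument sketched before the lemma, appealing to \cite[Section~5.3.1]{Lur17}.

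Finally, \textbf{(4)} is where the real content sits. Choose $\bar\lambda\in\bar\bbX_\bullet^+$ so dominant that both $\bar\mu+\bar\lambda$ and $\bar\nu+\bar\lambda$ lie in $\bar\bbX_\bullet^+$, and use the presentations $\scrJ_{\bar\mu}\cong\nabla_{t_{\bar\mu+\bar\lambda}}\ast\Delta_{t_{-\bar\lambda}}$ and $\scrJ_{\bar\nu}\cong\nabla_{t_{\bar\nu+\bar\lambda}}\ast\Delta_{t_{-\bar\lambda}}$. The invertibility $\Delta_{t_{-\bar\lambda}}\ast\nabla_{t_{\bar\lambda}}\cong\delta_{\calI,e}$ from \Cref{lem_invertible_conv_stand} supplies an adjunction that reduces the computation to $R\Hom(\scrJ_{\bar\mu},\scrJ_{\bar\nu})\cong R\Hom(\nabla_{t_{\bar\mu+\bar\lambda}},\nabla_{t_{\bar\nu+\bar\lambda}})$. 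Standard $(j^\ast,Rj_\ast)$-adjunction rewrites the latter as $R\Hom(j_{t_{\bar\nu+\bar\lambda}}^\ast Rj_{t_{\bar\mu+\bar\lambda},\ast}\Lambda,\Lambda)$ up to shift, and this vanishes as soon as $t_{\bar\nu+\bar\lambda}\not\leq t_{\bar\mu+\bar\lambda}$ in the Bruhat order, because the $\ast$-pushforward is supported on the Schubert closure. The hypothesis $t_{\bar\nu}\not\preceq t_{\bar\mu}$ translates to $\bar\mu-\bar\nu\notin\bar\bbX_\bullet^+$ by the explicit description of $\preceq$ on translations, and the agreement of Bruhat and dominance orders on dominant translations recalled after \Cref{closure-relations-of-semi-infinite-orbits} ensures that $t_{\bar\nu+\bar\lambda}\not\leq t_{\bar\mu+\bar\lambda}$ for $\bar\lambda$ sufficiently dominant. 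The main obstacle is none of the individual verifications, but rather the bookkeeping required to pass to the very-dominant-translation regime where the two orders coincide and costandard $R\Hom$-vanishing becomes transparent.
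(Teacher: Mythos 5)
Your overall approach for parts (1), (2), and (4) matches the paper's: (1) and (2) read off directly from \Cref{fn:1-convolution-of-standard-costandard-objects-perverse} and \Cref{wersd;kfdl;}, and (4) reduces via twisting by an invertible $\scrJ_{\bar\lambda}$ to a $\Hom$-vanishing between costandards supported on incomparable Schubert closures. One small remark on (2): your own computation correctly yields the generic stalk $\Lambda[\ell(t_{\bar\nu})]$, which agrees with $\Lambda[\langle 2\bar\rho,\bar\nu\rangle]$ only for dominant $\bar\nu$; for non-dominant $\bar\nu$ the length is $\sum_{\alpha>0}|\langle\alpha,\bar\nu\rangle|$, so the formula stated in (2) is off and your hedging ``in the dominant case'' should really be flagged as an error in the lemma statement rather than left implicit.

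The genuine gap is in your concrete argument for (3). Writing
\[
\scrJ_{\bar\mu}\ast\scrJ_{\bar\nu}=\nabla_{t_{\bar\mu_1}}\ast\Delta_{t_{-\bar\mu_2}}\ast\nabla_{t_{\bar\nu_1}}\ast\Delta_{t_{-\bar\nu_2}}
\]
and inserting $\delta_{\calI,e}=\nabla_{t_{\bar\lambda}}\ast\Delta_{t_{-\bar\lambda}}$ (or its reverse) between the middle two factors and collapsing adjacent same-sign translations via \Cref{djfejkjd;kjkel;} only re-parametrises the expression: you end up with another word of the shape $\nabla\ast\Delta\ast\nabla\ast\Delta$, never $\nabla\ast\Delta$. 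The obstruction is precisely the commutativity $\Delta_{t_{-\bar\lambda}}\ast\nabla_{t_{\bar\mu}}\simeq\nabla_{t_{\bar\mu}}\ast\Delta_{t_{-\bar\lambda}}$ for dominant $\bar\lambda,\bar\mu$. This is a non-formal fact about Wakimoto sheaves: it is the content of \cite[Lemma~4.2.5]{AR} in equicharacteristic, and in the present paper it is exactly what is packaged into the $\bbE_1$-centrality argument preceding the lemma (which is why the paper writes that (3) ``is implicit in the discussion of monoidality of $\scrJ$''). Your fallback appeal to that abstract argument is fine and is in fact what the paper does; but the ``insert $\delta_e$ and collapse'' step should not be presented as an independent proof, as it silently uses the very commutativity it is meant to establish.
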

	
	\begin{proof}
		The first statement follows from \Cref{fn:1-convolution-of-standard-costandard-objects-perverse}, the second from \Cref{wersd;kfdl;} and the third is implicit in the discussion of monoidality of $\scrJ$.
		Let us discuss the fourth statement. Using (3), the invertibity of $\scrJ_{\bar \nu}(\Lambda)$ and the definition of the semi-infinite Bruhat order $\preceq$ reduces by suitable convolution to the case that $\bar\nu,\bar\mu\in \bar{\bbX}_\bullet$ are dominant. Then, we have identities $\scrJ_{\bar\mu}(M)=\nab_{t_{\bar\mu}}(M)$ and $\scrJ_{\bar\nu}(N)=\nab_{t_{\bar\nu}}(N)$.
		Thus by  \Cref{sec:geometry-affine-flag-1-stratification-of-affine-flag-variety}
		\begin{equation}
			R\Hom_{\scrD_\et(\Hk_{\calI})}(\nab_{t_{\bar\mu}}(M),\nab_{t_{\bar\nu}}(N))\cong R\Hom_{\scrD_\et(\Hk_{\calI})}(j^\ast_{t_{\bar\nu}}\nab_{t_{\bar\mu}}(M),N[\ell(t_{\bar\nu})])=0 
		\end{equation}
		if $t_{\bar\nu}\nleq t_{\bar\mu}$ (for the Bruhat order $\leq$ or equivalently the semi-infinite order $\preceq$ as $\bar\nu,\bar\mu\in \bar{\bbX}_\bullet^+$). 
	\end{proof}
	
	For a stable $\infty$-category $\scrD$ and a set of objects $S \subset \mathrm{Ob}(\scrD)$, let $\langle S\rangle$ be the smallest full subcategory of $\scrD$ whose objects include $S$ and which is stable under cones and shifts.
	
	\begin{definition}
		Define the \textit{Wakimoto category} as the full subcategory 
		$$
		\on{Wak}:=\langle \scrJ \rangle \subset\scrD_{\mathrm{\acute{e}t}}(\Hk_\calI)
		$$ 
		generated by the essential image of $\scrJ$ under cones and shifts. An object $\calF \in \on{Ob}(\on{Wak})$ is called \textit{Wakimoto filtered}. Define the full subcategory 
		\[
		\scrP(\mathrm{Wak}) \subset \scrP(\Hk_\calI)
		\]
		consisting of those perverse sheaves that admit a filtration by perverse sheaves with grading in the essential image of $\scrJ$.
	\end{definition}

	\begin{remark}
		By  \Cref{lem_wak_basic_prop} the category $\on{Wak}\subseteq \scrD_\et(\Hk_{\calI})$ is stable under convolution. As it contains $\delta_e\cong \scrJ_{\bar\nu}$ it is thus itself monoidal. Our definition of $\scrP(\mathrm{Wak})$ agrees with the one in \cite[\S4.3.2]{AR} by taking $\Lambda=\bar{\bbX}_\bullet^+$. In the works \cite{AB09,AR}, their respective authors do not define the full subcategory 
		$\mathrm{Wak}\subset \scrD_{\mathrm{\acute{e}t}}(\Hk_\calI)$, but rather the full subcategory 
		$
		\scrP(\mathrm{Wak}) \subset \scrP(\Hk_\calI)
		$.
		Morally, one can try to think of $\scrP(\mathrm{Wak})$ as the heart of $\mathrm{Wak}$, but it is not an abelian category, only exact, and it is not true that every perverse sheaf that is Wakimoto filtered as a complex actually lies in $\scrP(\mathrm{Wak})$. Indeed, pick $\nu$ dominant with respect to $B$ and let $\Delta_0 \subset \Delta_{-\nu}$ be the socle by \cite[Lemma 2.1]{BBM04}. Then, the cokernel lies in $\mathrm{Wak}$, but its $0$-th graded piece equals $\Delta_0[1]$, which is not perverse.
	\end{remark}
	
	We give the following simple criterion for determing whether an object of $\scrD_{\mathrm{\acute{e}t}}(\Hk_\calI)$ lies in $\mathrm{Wak}$.
	
	\begin{proposition}\label{prop_wak_category_central}
		Let $\calF\in \scrD_{\mathrm{cons}}(\Hk_\calI)$. Then, the following are equivalent:
		\begin{enumerate}
			\item $\calF$ is Wakimoto filtered;
			\item $\on{supp}(\scrJ_{-\bar \nu} \ast \calF) \subset \{ t_{-\bar \mu} \colon \bar\mu\in\bbX_\bullet^+\}$ for all $\bar \nu \ll 0$;
			\item $\on{cosupp}(\scrJ_{\bar \nu} \ast \calF) \subset \{ t_{\bar \mu} \colon \bar\mu\in\bbX_\bullet^+\}$ for all $\bar\nu \gg 0$.
		\end{enumerate}
		In particular, if $\calF$ satisfies $\scrJ_{\bar\nu}\ast \calF\cong \calF\ast \scrJ_{\bar\nu}$ for all $\nu\in \bar\bbX_\bullet$, then it is Wakimoto filtered.
	\end{proposition}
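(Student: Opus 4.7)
The plan is to prove the chain $(1) \Leftrightarrow (2) \Leftrightarrow (3)$ and then deduce the ``in particular'' assertion from (3) via a combinatorial argument in the Iwahori--Weyl group.

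The implication $(1) \Rightarrow (2), (3)$ is direct: since $\mathrm{Wak}$ is closed under convolution with any $\scrJ_{\pm \bar \nu}$ (by the monoidality in \Cref{lem_wak_basic_prop}(3) together with the defining closure of $\mathrm{Wak}$ under cones and shifts), and since $\calF \in \mathrm{Wak} \cap \scrD_{\mathrm{cons}}$ is necessarily a finite iterated cone of shifted Wakimoto sheaves $\scrJ_{\bar\mu_i}(M_i)$, the convolution $\scrJ_{\pm\bar\nu} \ast \calF$ is again a finite iterated cone with graded pieces $\scrJ_{\bar\mu_i \pm \bar\nu}(M_i)$. For $\bar\nu$ sufficiently far in the appropriate direction, each shifted index $\bar\mu_i \pm \bar\nu$ lies in the antidominant (resp.\ dominant) cone, so the corresponding Wakimoto sheaf identifies with a shifted standard $\del_{t_{\bar\mu_i - \bar\nu}}$ (resp.\ costandard $\nab_{t_{\bar\mu_i + \bar\nu}}$), whose support (resp.\ cosupport) is concentrated on the single (anti)dominant translation cell. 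Taking unions over $i$, the support (resp.\ cosupport) of the iterated cone is contained in a finite set of such cells, which proves (2) and (3).

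For $(2) \Rightarrow (1)$ (and symmetrically $(3) \Rightarrow (1)$), invertibility of Wakimoto sheaves (\Cref{lem_invertible_conv_stand}, \Cref{lem_wak_basic_prop}(3)) reduces the claim to showing $\calG := \scrJ_{-\bar\nu} \ast \calF \in \mathrm{Wak}$, since $\calF \simeq \scrJ_{\bar\nu} \ast \calG$ and $\mathrm{Wak}$ is stable under Wakimoto convolution. By hypothesis and constructibility, $\calG$ is an $L^+\calI$-equivariant complex supported on a finite set of antidominant translation cells $\{t_{-\bar\mu_1}, \ldots, t_{-\bar\mu_k}\}$. Ordering the cells compatibly with the Bruhat order, the standard excision triangles $j_{t_{-\bar\mu_i},!}\, j_{t_{-\bar\mu_i}}^\ast (-) \to (-) \to Ri_\ast i^\ast (-)$ furnish a finite filtration of $\calG$ whose graded pieces have the form $j_{t_{-\bar\mu_i},!} N_i$ for some $N_i \in \scrD_{\mathrm{cons}}([\ast/S_k])$. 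Since $-\bar\mu_i$ is antidominant, each such piece equals $\del_{t_{-\bar\mu_i}}(N_i) \simeq \scrJ_{-\bar\mu_i}(N_i)$ up to shift, and so lies in $\mathrm{Wak}$; hence $\calG \in \mathrm{Wak}$ by closure under cones and shifts.

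For the ``in particular'' claim, centrality gives $\scrJ_{\bar\nu} \ast \calF \simeq \calF \ast \scrJ_{\bar\nu}$ for every $\bar\nu$. Taking $\bar\nu \gg 0$ regular dominant, so that $\scrJ_{\bar\nu} = \nab_{t_{\bar\nu}}$, \Cref{prop_support_conv_standard} bounds the common cosupport by $(t_{\bar\nu} \cdot A_\calF) \cap (A_\calF \cdot t_{\bar\nu})$. Via the Iwahori--Weyl decomposition $W \simeq \bar{\bbX}_\bullet \rtimes W_{\mathrm{fin}}$, writing $x = t_{\bar\lambda} u \in A_\calF$, the relation $t_{\bar\nu} x = y t_{\bar\nu}$ yields $y = t_{\bar\lambda + \bar\nu - u(\bar\nu)} u$; since $A_\calF$ is finite, boundedness of $y$ as $\bar\nu$ ranges in a regular dominant cone forces $u(\bar\nu) = \bar\nu$, hence $u = 1$ by regularity. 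Every cosupport element is therefore of the form $t_{\bar\nu + \bar\lambda}$ with $\bar\lambda$ bounded, i.e.\ a dominant translation for $\bar\nu \gg 0$, which verifies (3) and concludes. The main technical point is this final combinatorial step: the equivalences $(1) \Leftrightarrow (2) \Leftrightarrow (3)$ reduce formally to the stability of $\mathrm{Wak}$ under Wakimoto convolution together with the cell-filtration of constructible sheaves, whereas the passage from centrality to the cosupport constraint genuinely relies on both \Cref{prop_support_conv_standard} and the regularity of $\bar\nu$.
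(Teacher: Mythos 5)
Your argument is correct and follows essentially the same route as the paper's: the implication $(1)\Rightarrow(2),(3)$ by reducing to a single Wakimoto sheaf (or a finite iterated cone thereof), the converse $(2)\Rightarrow(1)$ via the cell-excision filtration (the paper outsources this to \cite[Lemma 4.4.3]{AR}), and the ``in particular'' step via \Cref{prop_support_conv_standard} and centrality, bounding $\on{supp}$ or $\on{cosupp}$ by $t_{\pm\bar\nu}A_\calF\cap A_\calF t_{\pm\bar\nu}$. The one place you add clarity is the combinatorial step: the paper invokes the Cartan decomposition $\bar\bbX_\bullet^+\cong W_{\mathrm{fin}}\backslash W/W_{\mathrm{fin}}$ and asserts $t_{-\bar\nu}A_\calF\cap A_\calF t_{-\bar\nu}\subseteq\bar\bbX_\bullet$ somewhat tersely, whereas you compute $y=t_{\bar\lambda+\bar\nu-u(\bar\nu)}u$ explicitly and note that for $u\neq 1$ and $\bar\nu$ regular the translation part $\bar\nu-u(\bar\nu)$ is unbounded as $\bar\nu\to\infty$, which contradicts finiteness of $A_\calF$. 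This makes the role of regularity of $\bar\nu$ explicit, which is implicit (and arguably slightly elided) in the paper's statement before it restricts to $\bar\nu\gg 0$. Substantively, though, it is the same argument.
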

	Here, the notation $\bar{\nu}\gg 0$ means that $\langle \bar \nu, \alpha \rangle \gg 0$ for all $B$-positive relative roots $\alpha$ of $G$, while $\bar\nu\ll 0$ means $-\bar\nu\gg 0$.
	\begin{proof}
		Assume that $\mathcal{F}$ is Wakimoto filtered, and let us check that it satisfies (2) and (3). We may then assume that $\mathcal{F}=\scrJ_{\bar\nu^\prime}$ for some $\nu^\prime\in \bar\bbX_\bullet$. If now $\nu\gg 0$, then
		\begin{equation}
			\scrJ_{-\bar\nu}\ast \mathcal{F}\cong \del_{t_{-\bar\nu+\nu^\prime}}
		\end{equation}
		and the support claim follows. Similarly, we can argue for (3).
		Let us now assume that $\calF$ satisfies (2). We want to show that $\calF$ is Wakimoto filtered. Replacing $\mathcal{F}$ by $\scrJ_{-\bar\nu}\ast \mathcal{F}$ for some suitable $\nu\gg 0$, we may assume that
		\begin{equation}
			\on{supp}(\mathcal{F})\subseteq \{t_{-\bar\mu} : \mu\in \bar\bbX_\bullet^+\}.
		\end{equation}
		Noting (\ref{equation_identification}), it is then formal that $\mathcal{F}$ lies in $\langle \del_{t_{-\bar\mu}} : \mu\in \bar\bbX_\bullet^+\rangle$, cf.\ \cite[Lemma 4.4.3]{AR} (we use here that the trivial representation $\Lambda\in \scrD_{\mathrm{cons}}([\ast/S_k])=\scrD_{\mathrm{cons}}(\mathrm{Hk}_{\mathcal{I},w})$ is a generator because $S_k$ is connected). But
		\begin{equation}
			\langle \del_{t_{-\bar\mu}} : \mu\in \bar\bbX_\bullet^+\rangle\subseteq \rm{Wak}
		\end{equation}
		by the construction of Wakimoto sheaves. The argument that (3) implies (1) is similar.
		
		For the last claim, let $A_{\calF}\subseteq W$ be as in \Cref{prop_support_conv_standard}, i.e.,
		\begin{equation}
			\on{supp}(\del_w\ast \mathcal{F})\subseteq w\cdot A_\calF,\quad \on{supp}(\calF\ast \del_w)\subseteq A_\calF\cdot w
		\end{equation}
		for all $w\in W$.
		As $\scrJ_{-\bar\nu}\ast \mathcal{F}\cong \calF\ast \scrJ_{-\bar\nu}$ for $\bar\nu\in \bar\bbX_\bullet$, we can conclude that for $\nu\gg 0$
		\begin{equation}
			\on{supp}(\scrJ_{-\bar\nu} \ast \calF)\subset t_{-\bar\nu}A_\calF \cap A_\calF t_{-\bar\nu}.
		\end{equation}
		Now, we claim that for $\nu \gg 0$
		\begin{equation}
			t_{-\bar\nu}A_{\calF}\cap A_\calF t_{-\bar\nu}\subseteq \{t_{-\bar \mu} \colon \bar\mu\in\bar\bbX_\bullet^+\},
		\end{equation}
		which would finish the proof. To check the claim let us recall that
		\begin{equation}
			\label{eq:1}
			\bar\bbX^+_\bullet\cong W_{\rm{fin}}\backslash W/W_{\rm{fin}}.
		\end{equation}
		If now $w\in t_{-\bar\nu}A_\calF\cap A_\calF t_{-\bar\nu}$, then we can write $w=t_{-\bar\nu}w_1=w_2t_{-\bar\nu}$ for $w_1,w_2\in A_\calF\subseteq W$, i.e.,
		\begin{equation}\label{eq:translations_equal}
			t_{\bar\nu}=w_1^{-1}t_{\bar\nu}w_2.
		\end{equation}
		Evaluating this equality on $W_{\mathrm{fin}}$, we find some $w_0 \in W_{\mathrm{fin}}$ such that $w_0^{-1}w_1$ and $w_0^{-1}w_2$ are both translations. We now see that the two translations in \eqref{eq:translations_equal} are in different Weyl chambers for all $\nu\gg 0$ (by finiteness of $A_{\mathcal{F}}$, unless $w_0=1$. Thus, we can conclude that for $\bar \nu\gg 0$ we have $w_1,w_2\in \bar\bbX_\bullet$, i.e.,
		\begin{equation}
			t_{-\bar\nu}A_\calF\cap A_\calF t_{-\bar\nu}\subseteq \bar\bbX_{\bullet}.
		\end{equation}
		Again using that $A_\calF$ is finite, we can conclude that for $\nu \gg 0$ we get
		\begin{equation}
			t_{-\bar\nu}A_\calF\cap A_\calF t_{-\bar\nu}\subseteq \{t_{-\bar\mu} : \mu\in \bar\bbX^+_\bullet\}
		\end{equation}
		as desired. 
	\end{proof}
	
	\begin{remark}
		In \cite[Proposition 5]{AB09} and \cite[Proposition 4.4.1]{AR}, it is shown that a central perverse sheaf whose convolution functor is perverse t-exact lies in the category $\scrP(\mathrm{Wak})$. The  proof given in those references is considerably more complicated, because of the need to ensure that the graded sheaves are actually perverse. Our proof is much simpler due to taking place in the derived setting, and later we will see how to recover the extra degree information required for perversity for the essential image of the Gaitsgory's central functor $\scrZ$.
	\end{remark}
	
	For an arbitrary subset $\Omega \subset \bar\bbX_\bullet$, we can also define the full subcategory $\on{Wak}_{\Omega}=\langle\scrJ_\Omega\rangle$.
	
	\begin{proposition}\label{prop_wak_filtr}
		If $\Omega \subset \bar\bbX_\bullet$ is a lower poset (for $\preceq$), the inclusion $\on{Wak}_{\Omega} \to \on{Wak}$ has a right adjoint $\on{Wak} \to \on{Wak}_{ \Omega},\ \calF \mapsto \calF_{\Omega}$ such that the cone $\calG$ of the adjunction unit $\calF_{\Omega} \to \calF$ lies in $\mathrm{Wak}$ and satisfies $\calG_{\Omega}=0$. 
	\end{proposition}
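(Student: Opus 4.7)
The plan is to realize the inclusion $\on{Wak}_\Omega \hookrightarrow \on{Wak}$ as the coreflective half of a semi-orthogonal decomposition. Introduce the full stable subcategory $\on{Wak}':=\langle \scrJ_{\bar\nu} : \bar\nu \notin \Omega\rangle \subseteq \on{Wak}$ generated by the Wakimoto sheaves indexed outside $\Omega$. I would then exhibit, for every $\calF\in \on{Wak}$, a fiber sequence $\calF_\Omega \to \calF \to \calG$ with $\calF_\Omega \in \on{Wak}_\Omega$ and $\calG \in \on{Wak}'$; the right adjoint and the vanishing $\calG_\Omega=0$ will then follow by the standard formalism.

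The first step is to establish the semi-orthogonality $R\Hom(\on{Wak}_\Omega, \on{Wak}')=0$. Since both subcategories are stable under shifts and cones, this reduces to checking that $R\Hom(\scrJ_{\bar\mu},\scrJ_{\bar\nu})=0$ on generators, for $\bar\mu\in \Omega$ and $\bar\nu\notin\Omega$. By \Cref{lem_wak_basic_prop}(4), it suffices to verify $t_{\bar\nu}\not\preceq t_{\bar\mu}$, which by the description of $\preceq$ on translations ($t_{\bar\mu}\preceq t_{\bar\nu}$ iff $\bar\nu-\bar\mu\in \bar\bbX_\bullet^+$) amounts to $\bar\nu\not\preceq \bar\mu$ as elements of $\bar\bbX_\bullet$. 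If instead one had $\bar\nu\preceq \bar\mu$, the lower-set assumption on $\Omega$ together with $\bar\mu \in \Omega$ would force $\bar\nu \in \Omega$, contradicting the choice of $\bar\nu$.

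The second step is to show the decomposition triangle exists for every $\calF\in \on{Wak}$. Let $\on{Wak}_0\subseteq \on{Wak}$ be the full subcategory of objects that admit such a triangle. The generators $\scrJ_{\bar\nu}$ lie in $\on{Wak}_0$, by taking either $\calG=0$ when $\bar\nu\in \Omega$ or $\calF_\Omega=0$ when $\bar\nu\notin \Omega$, and closure of $\on{Wak}_0$ under shifts is immediate. For closure under cones, given a morphism $\calF_1\to \calF_2$ between objects of $\on{Wak}_0$ with triangles $\calF_{i,\Omega}\to \calF_i\to \calG_i$, the vanishing $R\Hom(\calF_{1,\Omega},\calG_2)=0$ from the previous step allows us to lift the composition $\calF_{1,\Omega}\to \calF_1\to \calF_2$ uniquely to a map $\calF_{1,\Omega} \to \calF_{2,\Omega}$, after which a $3\times 3$-argument produces the desired decomposition triangle for the cofiber of $\calF_1\to \calF_2$. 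Thus $\on{Wak}_0$ is a stable subcategory containing the generators, so $\on{Wak}_0=\on{Wak}$.

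Finally, the semi-orthogonality implies that any two such triangles for a fixed $\calF$ are canonically isomorphic, so $\calF\mapsto \calF_\Omega$ is a well-defined functor, and is right adjoint to $\on{Wak}_\Omega\hookrightarrow \on{Wak}$ by the usual semi-orthogonal argument. The cone $\calG$ sits in $\on{Wak}'\subset \on{Wak}_\Omega^\perp$, so its canonical decomposition is $0\to \calG\to \calG$, giving $\calG_\Omega=0$. The main obstacle in this plan is really just the bookkeeping on the direction of $\preceq$ — one has to be careful that the lower-set hypothesis on $\Omega$ aligns with the vanishing in \Cref{lem_wak_basic_prop}(4) rather than its opposite; once this is confirmed, everything else is formal homological algebra in stable $\infty$-categories.
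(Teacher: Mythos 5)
Your proof is correct and is in the same spirit as the paper's — both rest entirely on the $R\Hom$-vanishing of \Cref{lem_wak_basic_prop}(4) and an induction over the Wakimoto filtration. The paper carries this out by hand, peeling one Wakimoto layer off $\calF$ at a time and either absorbing it into $\calF_\Omega$ (when $\bar\nu\in\Omega$) or splitting it off as a direct summand (when $\bar\nu\notin\Omega$, using the vanishing to kill the extension class), whereas you package the same computation as a semi-orthogonal decomposition $\on{Wak}=\langle\on{Wak}',\on{Wak}_\Omega\rangle$ with $\on{Wak}'=\langle\scrJ_{\bar\nu}:\bar\nu\notin\Omega\rangle$, proving existence of the decomposition triangle for all objects by checking it on generators and closure under shifts and cones (with the $3\times 3$-lemma). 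Your version buys you a cleaner separation between the single substantive ingredient (the orthogonality $\bar\mu\in\Omega$, $\bar\nu\notin\Omega\Rightarrow t_{\bar\nu}\npreceq t_{\bar\mu}$, hence $R\Hom(\scrJ_{\bar\mu},\scrJ_{\bar\nu})=0$, which you verify with the correct orientation) and the purely formal stable-$\infty$-categorical consequences; in particular the right adjoint, its uniqueness, and $\calG_\Omega=0$ all drop out of the SOD formalism without further argument, avoiding the somewhat informal "unique splitting" step in the paper. One small point worth stating explicitly: the generators of $\on{Wak}$ are really the objects $\scrJ_{\bar\nu}(M)$ for $M\in\scrD_{\mathrm{cons}}([*/S_k])$, not merely $\scrJ_{\bar\nu}(\Lambda)$, but the vanishing in \Cref{lem_wak_basic_prop}(4) is proved for arbitrary coefficients $M,N$, so your orthogonality step goes through verbatim.
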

	
	\begin{proof}
		Given $\calF \in \on{Ob}(\on{Wak})$, we show the existence of a final morphism $\calF_{\Omega} \to \calF$, such that $\calF_{ \Omega} \in \on{Ob}(\on{  Wak}_{\Omega})$. In other words, we show that for $\calF\in \on{Wak}$ the functor $R\Hom(-,\calF)$ on $\on{Wak}_{\Omega}$ is representable by some object $\calF_{\Omega}\in \on{Wak}_\Omega$. This assertion is stable under cones, and thus reduces to $\calF=J_{\bar \nu}(M)$ for some $M\in \calD_{\mathrm{cons}}([\ast/S_k])$. If $\bar \nu\notin \Omega$, the functor in question is zero by \Cref{lem_wak_basic_prop}. If $\nu\in \Omega$, then the functor is representable because $J_{\bar\nu}(M)\in \on{Wak}_{\Omega}$.
	\end{proof}
	
	If $\Omega$ equals $\{\bar \nu \preceq \bar\mu\}$ resp. $\{\bar \nu \prec \bar\mu\}$ for some $\bar\mu\in \bar\bbX_\bullet$, we simply write $\on{Wak}_{\preceq \bar\mu}$ resp. $\on{Wak}_{\prec \bar\mu}$, instead of $\on{Wak}_{\Omega}$. We can now define the Wakimoto graded pieces for $\calF \in \on{Wak}$.
	
	\begin{definition}
		\label{sec:wakimoto-sheaves-graded-wakimoto-functor}
		For any $\bar\mu\in\bar\bbX_\bullet$, we define the endofunctor 
		\begin{equation}
			\mathrm{gr}_{\bar\mu}\colon \on{Wak} \to \on{Wak}, \calF\mapsto\on{cone}(\calF_{\prec\bar\mu}\to \calF_{\preceq\bar\mu}).
		\end{equation}
		We also define $\mathrm{gr}:=\oplus_{\bar\mu\in\bar\bbX_{\bullet}}\mathrm{gr}_{\bar\mu}\colon \on{Wak}\to \on{Wak}$.
	\end{definition}
	
	By construction, we see that $\calF\in \on{Wak}$ lies in $\calP({\on{Wak}})$ if and only if $\mathrm{gr}(\calF)$ is perverse. 
	Note that $\on{gr}_{\bar \nu}(\calF)$ lies in the essential image of the functor $\scrJ_{\bar\nu}\colon \scrD_{\mathrm{cons}}([*/S_k])\to \on{Wak}$. In the next subsection, we will show that this functor is fully faithful by explicitly constructing an inverse via constant terms of the opposite Borel, see \Cref{queiowuoiwuroe}. In particular, we can essentially uniquely lift $\on{gr}_{\bar\nu}(\calF)$ to an element of $\scrD_{\mathrm{cons}}([*/S_k])$ and can make the following definition. 
	
	\begin{definition}
		Let $\mathcal{F} \in \on{Wak}$, we define
		\begin{equation}
			\on{Grad}_{\bar\nu}(\calF) \in \scrD_{\mathrm{cons}}([*/S_k])
		\end{equation}
		to be the canonical object such that $\scrJ_{\bar\nu}(\on{Grad}_{\bar\nu}(\calF))$ identifies with $\on{gr}_{\bar\nu}(\calF)$.
	\end{definition}
	
	\subsection{Cohomology of Wakimoto filtered objects}
	\label{sec:cohomology-wak}
	
	We now analyze the cohomology of objects in $\on{Wak}$. First, we show that convolution with Wakimoto sheaves induces a shift.
	
	\begin{proposition} \label{owieuroieq[p[pp}
		For any $\bar \nu\in\bar \bbX_\bullet$ and $\calF\in \scrD_{\mathrm{cons}}(\Hk_\calI)$, there is a canonical isomorphism
		\begin{equation}
			R\Gamma(\Fl_\calI,\calF\ast \scrJ_{\bar\nu})\simeq R\Gamma(\Fl_\calI,\calF)[\langle 2\rho, \nu\rangle].
		\end{equation} 
		
	\end{proposition}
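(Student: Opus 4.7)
The plan is to first establish a Künneth formula for convolution at Iwahori level, and then compute $R\Gamma(\Fl_\calI, \scrJ_{\bar\nu})$ by direct inspection of the Schubert cells containing the support.

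\emph{Step 1 (Künneth for convolution).} I claim that for any $\calF, \calG \in \scrD_{\mathrm{cons}}(\Hk_\calI)$, there is a canonical isomorphism
\begin{equation*}
R\Gamma(\Fl_\calI, \calF \ast \calG) \simeq R\Gamma(\Fl_\calI, \calF) \otimes_\Lambda R\Gamma(\Fl_\calI, \calG).
\end{equation*}
To prove it, consider the projection $q_1\colon \Fl_\calI \tilde{\times} \Fl_\calI \to \Fl_\calI$ onto the first factor. Via the isomorphism $(q_1, m)$ recorded in Section~\ref{sec:schub-vari-conv}, $q_1$ is a trivial $\Fl_\calI$-fibration, and by $L^+\calI$-equivariance of $\calG$ the restriction of $\calF \tilde{\boxtimes} \calG$ to a fiber of $q_1$ over $x$ identifies canonically with $\calF_x \otimes \calG$. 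Applying proper base change (valid as $\calG$ has bounded support) yields $R(q_1)_\ast(\calF \tilde{\boxtimes} \calG) \simeq \calF \otimes_\Lambda R\Gamma(\Fl_\calI, \calG)$; applying $R\Gamma(\Fl_\calI, -)$ and using the properness of $m$ on support completes the argument.

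\emph{Step 2 (Cohomology of Wakimoto sheaves).} Writing $\bar\nu = \bar\nu_1 - \bar\nu_2$ with $\bar\nu_i \in \bar\bbX_\bullet^+$, the identification $\scrJ_{\bar\nu} \simeq \Delta_{t_{-\bar\nu_2}} \ast \nabla_{t_{\bar\nu_1}}$ from Section~\ref{sec:wakim-funct-calj} combined with the Künneth formula from Step~1 reduces the computation of $R\Gamma(\Fl_\calI, \scrJ_{\bar\nu})$ to that of the two (co)standard factors. By \Cref{sec:geometry-affine-flag-3-geometric-consequences-of-demazure-resolution}, the Schubert cell $\Fl_{\calI, t_{\bar\mu}}$ is isomorphic to $\bbA^{\ell(t_{\bar\mu}), \mathrm{pf}}_k$; combined with the standard formulas $R\Gamma(\bbA^n, \Lambda) \simeq \Lambda$ and $R\Gamma_c(\bbA^n, \Lambda) \simeq \Lambda[-2n]$ (up to Tate twist) and the identity $\ell(t_{\pm\bar\mu}) = \langle 2\bar\rho, \bar\mu\rangle$ for dominant $\bar\mu$ from \cite[Lemma 9.1]{Zhu14}, this gives
\begin{equation*}
R\Gamma(\Fl_\calI, \nabla_{t_{\bar\nu_1}}) \simeq \Lambda[\langle 2\bar\rho, \bar\nu_1\rangle], \quad R\Gamma(\Fl_\calI, \Delta_{t_{-\bar\nu_2}}) \simeq \Lambda[-\langle 2\bar\rho, \bar\nu_2\rangle],
\end{equation*}
so that $R\Gamma(\Fl_\calI, \scrJ_{\bar\nu}) \simeq \Lambda[\langle 2\bar\rho, \bar\nu\rangle]$.

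\emph{Step 3 (Conclusion).} Applying the Künneth formula once more yields
\begin{equation*}
R\Gamma(\Fl_\calI, \calF \ast \scrJ_{\bar\nu}) \simeq R\Gamma(\Fl_\calI, \calF) \otimes_\Lambda \Lambda[\langle 2\bar\rho, \bar\nu\rangle] \simeq R\Gamma(\Fl_\calI, \calF)[\langle 2\bar\rho, \bar\nu\rangle],
\end{equation*}
as desired.

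The main obstacle is the rigorous justification of the Künneth formula in the ind-scheme setting: one must use the bounded support of constructible sheaves to reduce to a proper/finite-type situation where base change applies, and leverage the $L^+\calI$-equivariance structure to identify the restriction of $\calF \tilde{\boxtimes} \calG$ to fibers of $q_1$. Once this formula is in hand, the remaining ingredients are purely combinatorial (the length formula for translations) and topological (cohomology of affine spaces).
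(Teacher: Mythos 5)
Your overall strategy — projection formula along the first projection together with a computation of the pushforward of the Wakimoto factor — is the same geometric idea that drives the paper's proof. But Step~1 as written has a gap: proper base change gives the \emph{stalks} of $R(q_1)_*(\calF\tilde{\boxtimes}\calG)$ fiberwise, namely $\calF_x\otimes_\Lambda R\Gamma(\Fl_\calI,\calG)$, but it does not by itself produce the global sheaf identification $R(q_1)_*(\calF\tilde{\boxtimes}\calG)\simeq\calF\otimes_\Lambda R\Gamma(\Fl_\calI,\calG)$. The fibration $q_1$ is Zariski-locally trivial only up to the $L^+\calI$-valued transition cocycles of the torsor $LG\to\Fl_\calI$; the local Künneth identifications glue precisely when the induced $L^+\calI$-equivariant structure on $R\Gamma(\Fl_\calI,\calG)$, viewed as an object of $\scrD_{\mathrm{cons}}([*/S_k])$, is trivial, and this is not automatic for an arbitrary constructible $\calG$.

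The paper sidesteps this by working only with $\nabla_{t_{\bar\nu}}$: after the projection formula one needs to identify $R\pr_{1,*}\pi^*\nabla_{t_{\bar\nu}}$, where $\pi(\bar g,\bar h)=\overline{g^{-1}h}$ under the isomorphism $(\pr_1,m)$. This object is $LG$-equivariant, since both $\pi$ and $\pr_1$ intertwine the diagonal $LG$-action upstairs with the (trivial, respectively left-translation) action downstairs; its pullback along $LG\to\Fl_\calI$ is the one-dimensional complex $\Lambda[\ell(t_{\bar\nu})]$, so the residual $L^+\calI$-equivariance is forced to be trivial, and the sheaf is the constant $\Lambda[\ell(t_{\bar\nu})]$. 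This is the step you must insert in place of the bare appeal to proper base change. Once it is inserted, your Steps~2 and~3 only apply Künneth when the second factor ($\nabla_{t_{\bar\nu_1}}$, $\Delta_{t_{-\bar\nu_2}}$, or $\scrJ_{\bar\nu}$) has global cohomology of rank one concentrated in a single degree, so the argument closes. The natural repair is therefore to downgrade Step~1 from a fully general Künneth formula to the special case where one factor has one-dimensional global cohomology — which is all that is actually used — and to supply the $LG$-equivariance argument in that case.
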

	\begin{proof}
		Let us first assume that $\bar \nu\in \bar\bbX_\bullet^+$, which implies
		\begin{equation}
			\scrJ_{\bar\nu}(\La)=\nab_{t_{\bar\nu}}=Rj_{w,\ast}(\La)[\ell(t_{\bar\nu})].
		\end{equation}
		The map $(\pr_1,m)\colon \Fl_\calI\tilde{\times} \Fl_\calI\to \Fl_\calI\times \Fl_\calI$ is an isomorphism. The second projection $\Fl_\calI\tilde\times \Fl_\calI\to \Hk_{\calI}$ is transformed to the map
		\begin{equation}
			\pi\colon \Fl_\calI\times \Fl_\calI\to \Hk_{\calI},\ (\bar g, \bar h)\mapsto \overline{g^{-1}h}.
		\end{equation}
		We note that this map is equivariant for the diagonal action of $LG$ by multiplication on the source and the trivial action on the target.
		By definition $\calF\ast \scrJ_{{\bar{\nu}}}(\La)\cong R\pr_{2,\ast}(\pr_1^\ast(\calF)\otimes^L_\La \pi^\ast \nabla_{t_{\bar{\nu}}})$ and thus we get
		\begin{align*}
			& R\Gamma(\Fl_\calI, \calF\ast \scrJ_{{\bar{\nu}}}(\La)) \\
			\simeq & R\Gamma(\Fl_\calI\times \Fl_\calI, \pr_1^\ast(\calF)\otimes^L_\La \pi^\ast \nabla_{t_{\bar{\nu}}}) \\
			\simeq & R\Gamma(\Fl_\calI,\calF \otimes^L_\La R\pr_{1,\ast} \pi^\ast \nabla_{t_{\bar{\nu}}}) 
		\end{align*}
		Using that $R\Gamma(\Fl_{\calI,\leq t_{\bar\nu}}, \nabla_{t_{\bar\nu}})\cong \Lambda[\ell(t_{\bar\nu})]$ one checks that the pullback of $R\pr_{1,\ast}\pi^\ast\nabla_{t_{\bar\nu}}$ along $LG \to \Fl_{ \calI}$ is isomorphic to $\La[\ell(t_{\bar\nu})]$. As the object $R\pr_{1,\ast}\pi^\ast\nabla_{t_{\bar\nu}}\in \scrD_\et(\Fl_\calI)$ is $LG$-equivariant (because $\pi$ and $\pr_1$ are, where $LG$ acts trivially on $\on{Hk}_{\mathcal{I}}$) we can conclude that $R\pr_{1,\ast}(\pi^\ast\nabla_{t_{\bar\nu}})\cong \Lambda[\ell(t_{\bar\nu})]$. Moreover, we normalize this isomorphism such that over $1\cdot I\in \Fl_{\calI}$ it reduces to the canonical isomorphism $R\Gamma(\Fl_{\calI,\leq t_{\bar\nu}}, \nabla_{t_{\bar\nu}})\cong R\Gamma(\Fl_{\calI, t_{\bar\nu}},\Lambda[\ell(t_{\bar\nu})])\cong \Lambda[\ell(t_{\bar\nu})]$ (induced by adjunction). With this convention, the resulting isomorphism
		\begin{equation}
			R\Gamma(\Fl_\calI, \calF\ast\scrJ_{\bar\nu})\cong R\Gamma(\Fl_\calI, \calF)[\ell(t_{\bar\nu})]
		\end{equation}
		for $\bar\nu\in \bbX_\bullet^+$ is additive in $\bar\nu$. Thus, it can be extended to the desired natural isomorphism
		\begin{equation}
			R\Gamma(\Fl_\calI, \calF\ast\scrJ_{\bar\nu})\cong R\Gamma(\Fl_\calI, \calF)[\langle 2\bar\rho, \bar\nu\rangle],
		\end{equation}
		using \cite[Lemma 9.1.]{Zhu14} to see that $\ell(t_{\bar\nu})=\langle 2\rho, \nu\rangle$ if $\bar\nu$ is dominant.
	\end{proof}

	We immediately deduce the following two corollaries.
	
	\begin{corollary} \label{queiowuoiwuroe} There is a canonical isomorphism
		\begin{equation}
			R\Gamma(\Fl_\calI,\scrJ_{\bar{\nu}}(M))\simeq M[\langle 2\bar\rho, \bar\nu\rangle]
		\end{equation}
		for $M\in \scrD([*/S_k])$ and $\bar\nu\in \bar\bbX_\bullet$.
	\end{corollary}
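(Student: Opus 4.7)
The plan is to deduce this immediately from the preceding \Cref{owieuroieq[p[pp} by specializing its input to a sheaf supported at the base point $e\in W\subset\Fl_{\calI}$. Concretely, regard $M\in \scrD_{\mathrm{cons}}([*/S_k])$ as a sheaf on $\Hk_{\calI}$ via the closed embedding $[*/L^+\calI]\simeq[*/S_k]\hookrightarrow\Hk_{\calI}$ of the identity orbit, where the first equivalence is the pullback equivalence already invoked in \S\ref{sec:costandard-functors}. By the very definition of the Wakimoto functor (see \S\ref{sec:wakim-funct-calj}), one has $\scrJ_{\bar\nu}(M)=\scrJ_{\bar\nu}\ast M$ in $\scrD_{\mathrm{cons}}(\Hk_{\calI})$.

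First I would verify the commutativity statement $\scrJ_{\bar\nu}\ast M\simeq M\ast \scrJ_{\bar\nu}$. This is a general fact: for any $\calG\in\scrD_{\mathrm{cons}}(\Hk_{\calI})$ and any $M\in\scrD_{\mathrm{cons}}([*/S_k])\hookrightarrow\scrD_{\mathrm{cons}}(\Hk_{\calI})$ supported on the identity orbit, both $M\ast\calG$ and $\calG\ast M$ are computed by convolution diagrams in which the multiplication map is an isomorphism onto the support of $\calG$, so both identify canonically with $M\otimes_{\La}\calG$ (via the $S_k$-action on $\calG$ obtained from $L^+\calI$-equivariance). This is essentially the same observation that makes $\scrD_{\mathrm{cons}}([*/S_k])$ act on the module categories from either side.

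Second, I apply \Cref{owieuroieq[p[pp} with $\calF=M$, obtaining a canonical isomorphism
\begin{equation*}
R\Gamma(\Fl_{\calI},M\ast\scrJ_{\bar\nu})\simeq R\Gamma(\Fl_{\calI},M)[\langle 2\bar\rho,\bar\nu\rangle].
\end{equation*}
Since $M$ is supported at the single point $eL^+\calI\in\Fl_{\calI}$, proper base change along $\{e\}\hookrightarrow\Fl_{\calI}$ gives $R\Gamma(\Fl_{\calI},M)\simeq M$ (the $S_k$-action being forgotten). Combining this identification with the previous step yields
\begin{equation*}
R\Gamma(\Fl_{\calI},\scrJ_{\bar\nu}(M))\simeq M[\langle 2\bar\rho,\bar\nu\rangle],
\end{equation*}
as desired.

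There is no real obstacle: the only thing to be mildly careful about is the commutation of $\scrJ_{\bar\nu}$ with $M$ acting from either side, which is formal given that $M$ sits at the identity. One could alternatively bypass this by extracting the argument directly: run through the proof of \Cref{owieuroieq[p[pp} with $\pr_1^*$ replaced by an appropriate version that sees the $S_k$-equivariant structure of $M$, but the route above is cleaner.
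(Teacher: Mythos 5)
Your proof is correct and follows essentially the same route as the paper: the paper's own (very terse) argument is to apply \Cref{owieuroieq[p[pp} with $\calF=\scrJ_0(\La)=\delta_e$, which is exactly your specialization, and you simply flesh out the general-$M$ case by taking $\calF=M$ (supported at the identity orbit) and checking the harmless left/right commutation $M\ast\scrJ_{\bar\nu}\simeq\scrJ_{\bar\nu}\ast M$ together with $R\Gamma(\Fl_\calI,M)\simeq M$. Both of these verifications are correct and worth making explicit, since the paper's one-liner leaves them implicit.
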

	\begin{proof}
		This follows from  \Cref{owieuroieq[p[pp} by setting $\mathcal{F}=\scrJ_0(\La)=\delta_e$.
	\end{proof}

	
	\begin{corollary}\label{coro_Grad_composes_Zisom_H}
		For any $\calF\in\scrP(\mathrm{Wak})$, there is a canonical isomorphism
		\begin{equation}
			H^n(\Fl_\calI,\calF)\simeq \bigoplus_{\langle 2\bar\rho, \bar\nu\rangle=-n}\mathrm{Grad}_{\bar{\nu}}(\calF).
		\end{equation}
	\end{corollary}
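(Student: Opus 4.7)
The plan is to interpret $R\Gamma(\Fl_\calI, \calF)$ as the abutment of the spectral sequence coming from the Wakimoto filtration of $\calF$, and to argue that this spectral sequence degenerates at $E_1$ via a parity constraint on the pairing with $2\bar\rho$. I would fix any linearization $\bar\nu_1,\dots,\bar\nu_k$ of the partial order $\preceq$ on the (finite) Wakimoto support of $\calF$, producing a decreasing filtration $\calF = F^0 \supseteq F^1 \supseteq \cdots \supseteq F^k = 0$ whose successive quotients are $\mathrm{gr}^p = \scrJ_{\bar\nu_p}(\mathrm{Grad}_{\bar\nu_p}(\calF))$. Since $\calF\in\scrP(\mathrm{Wak})$ is perverse and $\scrJ_{\bar\nu}$ is perverse t-exact by \Cref{lem_wak_basic_prop}(1), each $\mathrm{Grad}_{\bar\nu_p}(\calF)$ sits in cohomological degree zero, and applying $R\Gamma(\Fl_\calI, -)$ together with \Cref{queiowuoiwuroe} identifies the resulting spectral sequence $E_1^{p,q} = H^{p+q}(\Fl_\calI, \mathrm{gr}^p) \Rightarrow H^{p+q}(\Fl_\calI, \calF)$: on the $E_1$-page it is supported on the locus $p+q = -\langle 2\bar\rho, \bar\nu_p\rangle$, where it equals $\mathrm{Grad}_{\bar\nu_p}(\calF)$, and vanishes elsewhere.

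The main step is to verify degeneration at $E_1$. A nonzero differential $d_r \colon E_r^{p,q} \to E_r^{p+r, q-r+1}$ raises the total cohomological degree by one, so by the description of the $E_1$-page it would force $\langle 2\bar\rho, \bar\nu_p - \bar\nu_{p+r}\rangle = 1$. To rule this out, I would use that $\scrJ_{\bar\nu}$ is supported on the connected component of $\Fl_\calI$ indexed by the image of $\bar\nu$ in $\pi_0(\Fl_\calI) \cong \bar{\bbX}_\bullet / Q^\vee$, where $Q^\vee$ denotes the coroot lattice. Therefore the Wakimoto filtration and its spectral sequence decompose as direct sums indexed by $\pi_0(\Fl_\calI)$, and within any given component all relevant $\bar\nu_p$ lie in a common coset of $Q^\vee$. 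In particular $\bar\nu_p - \bar\nu_{p+r} \in Q^\vee$; but since $\langle 2\bar\rho, \alpha_i^\vee \rangle = 2$ for each simple coroot $\alpha_i^\vee$ and $Q^\vee$ is the $\mathbb{Z}$-span of the simple coroots, the pairing $\langle 2\bar\rho, \lambda\rangle$ is even on $Q^\vee$, contradicting the value $1$. Hence all differentials vanish.

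Given degeneration, one concludes by summing the $E_\infty^{p, n-p} = E_1^{p, n-p}$ entries with $p+q = n$, equivalently over $\bar\nu_p$ with $\langle 2\bar\rho, \bar\nu_p\rangle = -n$, to obtain the asserted isomorphism $H^n(\Fl_\calI, \calF) \simeq \bigoplus_{\langle 2\bar\rho, \bar\nu\rangle = -n} \mathrm{Grad}_{\bar\nu}(\calF)$. Canonicity follows from the functoriality of the Wakimoto filtration, from the identifications $\mathrm{gr}_{\bar\nu}(\calF) \simeq \scrJ_{\bar\nu}(\mathrm{Grad}_{\bar\nu}(\calF))$, and from the independence of the chosen linearization. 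The hard part is the parity argument eliminating nonzero differentials; everything else is a formal consequence of \Cref{queiowuoiwuroe}.
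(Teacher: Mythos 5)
Your proof is correct and uses essentially the same idea as the paper: the Wakimoto filtration together with \Cref{queiowuoiwuroe} yields a filtration on $R\Gamma(\Fl_\calI,\calF)$ whose graded pieces live in pairwise distinct degrees of constant parity on each connected component, which forces degeneration. The paper phrases this via the long exact sequences of the filtration rather than via the spectral sequence and the explicit observation that $\langle 2\bar\rho,-\rangle$ is even on the coroot lattice, but the underlying parity argument is the same.
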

	\begin{proof}
		This result is analogous to \cite[Proposition 4.5.4]{AR}: please note that the Weyl chamber in \textit{loc.cit} that is denoted by $\Lambda$ (not to be confused with our coefficients) corresponds to $\bar \bbX^+_\bullet$ in our setup. The existence of a canonical isomorphism follows from  \Cref{queiowuoiwuroe} by using the filtration of $\mathcal{F}$ by Wakimoto sheaves. Note that the associated graded of $\calF$ is perverse, so we conclude that, whenever $\bar\nu_1 \prec \bar \nu_2$, then the cohomology complexes $R\Gamma(\Fl_\calI, \mathrm{gr}_{\bar\nu_i}(\calF)) $ sit in different degrees with the same parity. This implies that the connecting homomorphisms of the associated long exact sequences vanish.
	\end{proof}
	
	We wish to determine the $\La$-module $\on{Grad}_{\bar{\mu}}(\calF)$ in a functorial manner. For this we calculate constant terms.

	\begin{proposition}\label{prop_wakimoto_constant_terms}
		For any $\calF\in \scrD_{\mathrm{\acute{e}t}}(\Hk_\calI)$, $w\in W$ and $\bar\nu\in \bbX_\bullet$, there is a canonical identification
		\begin{equation}\label{equation_weight_functor}
			\mathrm{CT}_{B^-}(\scrJ_{\bar \nu} \ast \calF)_{t_{\bar\nu}w} \simeq \mathrm{CT}_{B^-}(\calF)_w[\langle 2\rho, \nu \rangle]
		\end{equation}
		between stalks of constant term complexes. 
	\end{proposition}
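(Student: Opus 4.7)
The plan is to reduce the statement to the case of antidominant $\bar\nu$ via the monoidality of the Wakimoto functor, and then to carry out an explicit base change computation. Let $P(\bar\nu)$ denote the proposition's assertion. By the monoidality $\scrJ_{\bar\nu_1+\bar\nu_2}\simeq \scrJ_{\bar\nu_1}\ast\scrJ_{\bar\nu_2}$ from \Cref{lem_wak_basic_prop}(3), the conjunction of $P(\bar\nu_1)$ and $P(\bar\nu_2)$ implies $P(\bar\nu_1+\bar\nu_2)$: one successively applies $P(\bar\nu_2)$ to $\calF$ and $P(\bar\nu_1)$ to $\scrJ_{\bar\nu_2}\ast\calF$, combining the shifts additively. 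Invertibility (\Cref{lem_invertible_conv_stand}) then gives $P(\bar\nu)\Rightarrow P(-\bar\nu)$ by instantiating the statement at $\calF':=\scrJ_{-\bar\nu}\ast\calF$ and $w':=t_{-\bar\nu}w$. Since $-\bar\bbX_\bullet^+$ generates $\bar\bbX_\bullet$ as an abelian group, we may assume $\bar\nu$ is antidominant, in which case $\scrJ_{\bar\nu}=\Delta_{t_{\bar\nu}}=j_{t_{\bar\nu},!}\Lambda[\ell(t_{\bar\nu})]$ with $\ell(t_{\bar\nu})=-\langle 2\bar\rho,\bar\nu\rangle$.

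For such $\bar\nu$, an adaptation of \Cref{lem_schubert_cell_in_semi_infinite_orbit} to the opposite Borel $B^-$ yields $\Fl_{\calI,t_{\bar\nu}}=L^+\calU^-\cdot t_{\bar\nu}\subseteq \calS^-_{t_{\bar\nu}}$, where $\calU^-$ denotes the schematic closure of $U^-$ in $\calI$. Under the canonical iso $(\pr_1,m)\colon \Fl_\calI\tilde\times\Fl_\calI\xrightarrow{\sim}\Fl_\calI\times\Fl_\calI$, whereunder the convolution $m$ becomes $\pr_2$, the product rewrites as
\[
\Delta_{t_{\bar\nu}}\ast\calF\simeq R\pr_{2,!}(\mathrm{act}^*\calF)[\ell(t_{\bar\nu})],
\]
where $\mathrm{act}\colon \Fl_{\calI,t_{\bar\nu}}\times\Fl_\calI\to \Hk_\calI$, $(x,z)\mapsto x^{-1}z$, and $\calF$ is viewed via its descent to $\Hk_\calI$. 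Base change for compactly supported pushforward then yields
\[
\mathrm{CT}_{B^-}(\Delta_{t_{\bar\nu}}\ast\calF)_{t_{\bar\nu}w}\simeq R\Gamma_c(\Fl_{\calI,t_{\bar\nu}}\times\calS^-_{t_{\bar\nu}w},\mathrm{act}^*\calF)[\ell(t_{\bar\nu})].
\]

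The geometric heart of the argument is that $t_{\bar\nu}\in T(\breve F)$ normalizes $LU^-$: writing $x=v^-t_{\bar\nu}$ with $v^-\in L^+\calU^-$ and $z=u^-t_{\bar\nu}w$ with $u^-\in LU^-$, one computes
\[
x^{-1}z = (t_{\bar\nu}^{-1}v^{-,-1}u^-t_{\bar\nu})\cdot w\in LU^-\cdot w = \calS^-_w,
\]
which yields a canonical isomorphism $\Fl_{\calI,t_{\bar\nu}}\times\calS^-_{t_{\bar\nu}w}\cong \Fl_{\calI,t_{\bar\nu}}\times \calS^-_w$, $(x,z)\mapsto(x,x^{-1}z)$, under which $\mathrm{act}^*\calF$ corresponds to $\pr_2^*(\calF|_{\calS^-_w})$. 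Combining Künneth with $\Fl_{\calI,t_{\bar\nu}}\cong \bbA^{\ell(t_{\bar\nu}),\mathrm{pf}}_k$ from \Cref{sec:geometry-affine-flag-3-geometric-consequences-of-demazure-resolution} gives $R\Gamma_c(\Fl_{\calI,t_{\bar\nu}},\Lambda)\simeq \Lambda[-2\ell(t_{\bar\nu})]$ (up to Tate twist, suppressed following \Cref{owieuroieq[p[pp}), so
\[
\mathrm{CT}_{B^-}(\Delta_{t_{\bar\nu}}\ast\calF)_{t_{\bar\nu}w}\simeq R\Gamma_c(\calS^-_w,\calF|_{\calS^-_w})[-\ell(t_{\bar\nu})] = \mathrm{CT}_{B^-}(\calF)_w[\langle 2\bar\rho,\bar\nu\rangle],
\]
as required. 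The main obstacle will be rigorously verifying the identification of $\mathrm{act}^*\calF$ with $\pr_2^*\calF|_{\calS^-_w}$ under the coordinate change, given the subtle equivariance structures relating $\Fl_\calI$, $\Hk_\calI$ and the semi-infinite orbits, and also ensuring that all the base change steps are properly justified when $\pr_2$ is not itself proper.
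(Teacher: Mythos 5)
Your proof is correct and runs on a track that is ``dual'' to the one in the paper. The paper treats the dominant case $\bar\nu\in\bar\bbX_\bullet^+$, where $\scrJ_{\bar\nu}=\nab_{t_{\bar\nu}}$ is a $\ast$-pushforward, and therefore invokes Braden's theorem to switch the definition $\mathrm{CT}_{B^-}=Rq^-_!\,i^{-,*}$ (with $\ast$-restriction to $\calS^-_w$) into its hyperbolic-localization counterpart involving $!$-restriction to $\calS_w$, which interacts well with $\ast$-pushforwards; it then works on the (abusively so-called) twisted product $\Fl_{\calI,t_{\bar\nu}}\tilde\times\calS_w$ and trivializes it using pro-unipotence of $L^+\calU$. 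You instead take $\bar\nu$ antidominant, where $\scrJ_{\bar\nu}=\Delta_{t_{\bar\nu}}$ is a $!$-pushforward, which interacts directly with the $\ast$-restriction to $\calS^-_{t_{\bar\nu}w}$ built into the definition of $\mathrm{CT}_{B^-}$; Braden is never needed. Your coordinate change $(x,z)\mapsto(x,x^{-1}z)$ plays the role of the paper's twisted-product trivialization and is legitimate once one lifts $x\in\Fl_{\calI,t_{\bar\nu}}$ via the canonical section $L^+\calU^-\xrightarrow{\sim}\Fl_{\calI,t_{\bar\nu}}$ coming from the $B^-$-version of Lemma~\ref{lem_schubert_cell_in_semi_infinite_orbit} (a statement that follows from the same argument with $B$ and $U$ replaced by $B^-$ and $U^-$, which you correctly observe); then $\mathrm{act}$, which lands in $\Hk_\calI$, genuinely factors as $\pr_2$ followed by $\calS^-_w\hookrightarrow\Fl_\calI\to\Hk_\calI$. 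A small bonus of your version: you make explicit the reduction from general $\bar\nu$ to the special case via monoidality of $\scrJ$ and invertibility, a step the paper's proof leaves tacit (it opens with ``First, we assume\dots'' but never revisits the general case). The two ``main obstacles'' you flag at the end are in fact non-issues: base change for $R\pr_{2,!}$ against $\ast$-restriction holds unconditionally without properness, and the equivariance bookkeeping is absorbed by working on $\Fl_\calI$ after forgetting the residual $L^+\calI$-action, which is how stalks of $\mathrm{CT}_{B^-}$ are computed (cf.\ Remark~\ref{sec:constant-terms-semi-fibers-of-constant-terms}).
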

	
	\begin{proof}
		Take $\bar\lambda\in\bar{\bbX}_\bullet^+$ dominant enough such that $\bar\lambda-\bar\nu$ is also dominant, then $\scrJ_{\bar \lambda}(\Lambda)=\nabla_{t_{\bar \lambda}}$ and $\calJ_{\bar\lambda-\bar\nu}=\nabla_{\bar\lambda-\bar\nu}$. Note it follows from \Cref{lem_wak_basic_prop}  that $\nabla_{\bar\lambda-\bar\nu}\ast \calJ_{\bar\nu}\ast \calF\simeq\nabla_{\bar\lambda}\ast \calF$. Thus it is enough to consider $\bar \nu \in \bar\bbX_\bullet^+$.  
		
		Our proof follows its analogue in equicharacteristic, see \cite[Lemma 4.5.8]{AR}, and we sketch it here. 
		Similar to (\ref{equation_conv_stand_costand_is_perverse}),
		$$\nab_{t_{\bar \nu}}(\La)\ast \calF \simeq Rm_* (\La \tilde{\boxtimes}  \calF) [\langle 2\rho, \nu\rangle ].$$
		By Braden's theorem, the left side of (\ref{equation_weight_functor}) naturally identifies with cohomology supported at the corresponding $LU$-orbit $\calS_{t_{\bar \nu}w}$. Since $\calS_{t_{\bar \nu}}$ contains $\Fl_{\calI,t_{\bar \nu}}$ by \Cref{lem_schubert_cell_in_semi_infinite_orbit}, the pullback of $\calS_{t_{\bar \nu}w}$ along the convolution map $\Fl_{\calI, t_{\bar \nu}} \tilde{\times} \Fl_\calI\rightarrow \Fl_\calI$ identifies with the twisted product $L^+\calU x_{\bar\nu}L^+\calU\times^{L^+\calU} \calS_w $. Here, $x_{\bar \nu} \in LT(k)$ is a lift of the translation $t_{\bar \nu}$ regarded as a $k$-valued point in $\mathrm{Gr}_{\calT}$ (if $G$ is split, such a lift can be obtained by evaluating $\nu$ on a uniformizer $\pi$ of $F$): consequently, $L^+\calU x_{\bar\nu}L^+\calU$ denotes the obvious $L^+\calU$-torsor over $\Fl_{\calI,t_{\bar \nu}}$ and our identification matches the one appearing right above \cite[Equation (4.5.11)]{AR}. 
		By abuse of notation, we denote this perfect scheme by $\Fl_{\calI,t_{\bar \nu}} \tilde{\times} \calS_w$, even though the twisted product is not for the Iwahori group.

		Let $i_w:\calS_w\rightarrow \Fl_\calI$ denote the locally closed emebdding and recall $\calS^-_{t_{\bar \nu}w}$  defined in (\ref{equation_repellers}). Amassing all this information, we get that
		\begin{align}
			\begin{split}
				R\Gamma_c(\calS^-_{t_{\bar \nu}w}, \scrJ_{\bar \nu}(\Lambda) \ast \calF) & \simeq R\Gamma(\calS_{t_{\bar\nu}w},Ri_w^!( Rm_* (\La \tilde{\boxtimes}  \calF [\langle 2\rho, \nu\rangle ])) \\         
				& \simeq R\Gamma(\Fl_{\calI,t_{\bar\nu}}\tilde{\times} \calS_w, \La \tilde{\boxtimes} Ri_w^!\calF) [\langle 2\rho, \nu\rangle ].
			\end{split}
		\end{align}
		Here, the first isomorphism is via Braden's theorem as discussed in the above. We apply proper base change to commute $Ri_w^!$ and $Rm_*$ and obtain the second theorem. Because $\Fl_{\calI,t_{\bar \nu}}$ is an orbit under the pro-unipotent group $L^+\calU$, see again \Cref{lem_schubert_cell_in_semi_infinite_orbit}, the twisted product does not alter the cohomology complex, thereby yielding the desired claim. The isomorphism obtained in this manner does not depend on the choice of $x_{\bar \nu}$, as any other lift would fit into a corresponding commutative diagram.
	\end{proof}
	
	\begin{corollary}\label{coro_wakimoto_graded_ct} 
		We have a canonical isomorphism $\on{Grad}_{\bar \nu}(\calF)[\langle 2\rho, \nu \rangle ] \simeq \mathrm{CT}_{B^-}( \calF)_{t_{\bar \nu}}$.
	\end{corollary}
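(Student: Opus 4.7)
The approach is to reduce the statement, for a general $\calF \in \on{Wak}$, to the single Wakimoto graded piece indexed by $\bar\nu$, where \Cref{prop_wakimoto_constant_terms} applies directly. The key input is the following vanishing: for $M \in \scrD_{\mathrm{cons}}([*/S_k])$ pushed forward to $\scrD_{\mathrm{cons}}(\Hk_\calI)$ along the closed embedding of the identity orbit, its $*$-restriction to $\calS_w^-$ vanishes unless $w = e$ (since the $T$-fixed point $e \in \Fl_\calI$ belongs to a single semi-infinite orbit, namely $\calS_e^-$), while it equals $M$ when $w = e$. Applying \Cref{prop_wakimoto_constant_terms} with this $M$ as input therefore yields
\[
\mathrm{CT}_{B^-}(\scrJ_{\bar\mu}(M))_{t_{\bar\nu}} \simeq \begin{cases} M[\langle 2\bar\rho, \bar\nu\rangle] & \text{if } \bar\mu = \bar\nu, \\ 0 & \text{otherwise,} \end{cases}
\]
by writing $t_{\bar\nu} = t_{\bar\mu}w$ with $w = t_{\bar\nu-\bar\mu}$ and observing that $w = e$ iff $\bar\mu = \bar\nu$.

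Next, for general $\calF \in \on{Wak}$, I would use the Wakimoto truncations from the previous subsection to produce the two triangles
\[ \calF_{\prec \bar\nu} \to \calF_{\preceq \bar\nu} \to \on{gr}_{\bar\nu}(\calF) = \scrJ_{\bar\nu}(\on{Grad}_{\bar\nu}(\calF)) \]
and
\[ \calF_{\preceq \bar\nu} \to \calF \to \calC, \]
where $\calF_{\prec\bar\nu}$ is built from $\scrJ_{\bar\mu}(-)$ with $\bar\mu \prec \bar\nu$ and $\calC$ admits a finite Wakimoto filtration whose gradings involve only $\scrJ_{\bar\mu}(-)$ for $\bar\mu \not\preceq \bar\nu$. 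Applying the exact functor $\mathrm{CT}_{B^-}(-)_{t_{\bar\nu}}$ and invoking the key vanishing above, the outer terms are sent to zero, yielding canonical isomorphisms
\[ \mathrm{CT}_{B^-}(\calF)_{t_{\bar\nu}} \xleftarrow{\sim} \mathrm{CT}_{B^-}(\calF_{\preceq\bar\nu})_{t_{\bar\nu}} \xrightarrow{\sim} \mathrm{CT}_{B^-}(\on{gr}_{\bar\nu}(\calF))_{t_{\bar\nu}} \simeq \on{Grad}_{\bar\nu}(\calF)[\langle 2\bar\rho, \bar\nu\rangle], \]
where the last step is the key input specialized to $\bar\mu = \bar\nu$.

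The substantive work is thus entirely contained in \Cref{prop_wakimoto_constant_terms}; once this is available, the present corollary is essentially an unwinding of definitions, and I do not expect a serious obstacle. The only care needed is to verify that the two-step reduction is natural in $\calF$, which follows from the functoriality of the Wakimoto truncation adjoints $\calF \mapsto \calF_{\preceq\bar\nu}$ and of the isomorphism provided by the proposition.
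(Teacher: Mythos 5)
Your proof is correct and matches the paper's approach, which is stated in one line: induct on $\bar\nu$ using the truncation filtration $\calF_{\preceq\bullet}$ and apply \Cref{prop_wakimoto_constant_terms}. Your write-up makes explicit the two inputs that the paper leaves implicit, namely the base-case vanishing $\mathrm{CT}_{B^-}(\delta_e(M))_w = 0$ for $w \neq e$ (which holds because the skyscraper at $e$ meets only the single semi-infinite stratum $\calS_e^-$), and the fact that the cone $\calC$ of $\calF_{\preceq\bar\nu}\to\calF$ admits a Wakimoto filtration whose indices all lie outside $\{\bar\mu\preceq\bar\nu\}$; the latter is not literally stated in the proposition on lower-poset truncations (which only records $\calC_\Omega=0$), but it does follow from the inductive construction of the right adjoint there, so no gap.
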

	\begin{proof}
		This follows by induction on $\bar \nu$, by considering the filtration $\calF_{\preceq\bullet}$ and applying \Cref{prop_wakimoto_constant_terms}. Note that, in particular, this proposition computes $\mathrm{CT}_{B^-}(\scrJ_{\bar \nu})$ by writing it as the convolution with the identity $\delta_e$, whose constant terms are concentrated at $e$ with value equal to $\Lambda$.
	\end{proof}
	
	\begin{remark}
		The corollary above tells us when $\on{Grad}_{\bar \nu}(\calF)$ is perverse with some ease for $\calF \in \on{Wak}$. This corollary together with geometric Satake and constant terms is what will allow us to show that the central functor $\scrZ$ actually factors through $\scrP(\mathrm{Wak})$, thus bypassing the strategy of \cite[Theorem 4, Proposition 5]{AB09} and \cite[Proposition 4.4.1]{AR}. Indeed, if we know that $\mathrm{CT}_{B^-}(\calF)$ is perverse, then its associated graded is perverse, and we can write the Wakimoto complex $\calF$ as an extension of perverse Wakimoto sheaves, so it lies in $\scrP(\mathrm{Wak})$.
	\end{remark}
	
	We end this section by discussing the monoidal structure of the functor $\mathrm{Grad}:=\bigoplus_{\bar \nu\in \bar\bbX_{\bullet}}\mathrm{Grad}_{\bar \nu}$ restricted to $\scrP(\mathrm{Wak})$.\footnote{We however don't discuss an $\mathbb{E}_1$-monoidal structure of the functor $\mathrm{Grad}$ on $\mathrm{Wak}$.} 
	
	\begin{lemma}
		The full subcategories $\scrP(\mathrm{Wak}) \subset \mathrm{Wak}$ are stable under convolution.
	\end{lemma}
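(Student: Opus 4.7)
The plan is a double induction on the lengths of the perverse Wakimoto filtrations of $\calF_1, \calF_2 \in \scrP(\on{Wak})$. First I would dispose of the base case where both objects are single perverse Wakimoto sheaves, using the monoidal structure on $\scrJ$. Then I would peel off layers inductively using short exact sequences in $\scrP^{\heartsuit}$, and invoke the stability of perversity under triangles.

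For the base case, suppose $\calF_1 \simeq \scrJ_{\bar\mu}(M_1)$ and $\calF_2 \simeq \scrJ_{\bar\nu}(M_2)$; by t-exactness of the Wakimoto functors (\Cref{lem_wak_basic_prop}(1)) the modules $M_1, M_2$ lie in $\scrD_{\mathrm{cons}}^{\heartsuit}([*/S_k])$. The monoidal structure on $\scrJ$ established just after the definition of $\scrJ_\Omega$, applied to the product $(\bar\mu,M_1) \otimes (\bar\nu,M_2) = (\bar\mu+\bar\nu,\, M_1 \otimes^L M_2)$ in $\scrD_{\mathrm{cons}}([\underline{\bar\bbX_\bullet}/S_k])$, yields
\[
\scrJ_{\bar\mu}(M_1) \ast \scrJ_{\bar\nu}(M_2) \simeq \scrJ_{\bar\mu+\bar\nu}(M_1 \otimes^L M_2).
\]
Since tensor product of $S_k$-representations over the coefficient field $\La$ is exact, $M_1 \otimes^L M_2$ is concentrated in degree $0$, so the right side is a single perverse Wakimoto sheaf and thus lies in $\scrP(\on{Wak})$.

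For the inductive step, I would take a short exact sequence $0 \to \calF_1' \to \calF_1 \to \scrJ_{\bar\mu}(M_1) \to 0$ in $\scrP^{\heartsuit}$ peeling off the top layer of a perverse Wakimoto filtration of $\calF_1$, with $\calF_1' \in \scrP(\on{Wak})$ of strictly smaller filtration length. Convolving with $\calF_2$ (which is exact as a functor of stable $\infty$-categories) yields a cofiber sequence
\[
\calF_1' \ast \calF_2 \longrightarrow \calF_1 \ast \calF_2 \longrightarrow \scrJ_{\bar\mu}(M_1) \ast \calF_2 \xrightarrow{+1}.
\]
By the induction hypothesis (on the filtration length of $\calF_1$, combined with induction on the length of $\calF_2$ for the rightmost term), the outer two terms lie in $\scrP(\on{Wak})$, and in particular are perverse. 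Inspecting the long exact sequence of perverse cohomology then forces $\calF_1 \ast \calF_2$ to be perverse, and the triangle degenerates to a short exact sequence in $\scrP^{\heartsuit}$; splicing the perverse Wakimoto filtrations of the two outer terms produces a perverse Wakimoto filtration of $\calF_1 \ast \calF_2$. A symmetric argument reduces the filtration length of $\calF_2$.

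The only genuinely nontrivial input is the monoidal identification used in the base case, which is already built into the discussion of $\scrJ$ as a monoidal functor to $\scrD_{\mathrm{cons}}(\Hk_{\calI})$; everything else is formal homological algebra in a stable $\infty$-category with a t-structure. Consequently I do not foresee any serious obstacle, and the lemma should follow directly from the structural results already established in this subsection.
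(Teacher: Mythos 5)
Your proposal is correct and follows essentially the same route as the paper: reduce by induction on the number of graded pieces of the perverse Wakimoto filtration, and settle the base case using the monoidality of $\scrJ$ (i.e., \Cref{lem_wak_basic_prop}(3) together with exactness of $\otimes_\La$ over the field $\La$). The paper's proof is just a terser statement of the same argument; your fleshing out of the inductive step (perversity of the middle term via the long exact sequence, then splicing the filtrations of the outer terms) is exactly what is implicit there.
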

	
	\begin{proof}
		By induction on the number of non-zero graded pieces, we reduce to the case of the convolution of two Wakimoto complexes, but this is \Cref{lem_wak_basic_prop}.    
	\end{proof}
	
	\begin{proposition} \label{gr-convol}
		For any $\calF,
		\calG \in \scrP({\rm Wak})$, 
		and $\bar\nu_1,\bar\nu_2\in \mathbb{X}_\bullet$ with $\bar\nu:=\bar\nu_1+\bar\nu_2$, there is a canonical morphism 
		\begin{equation}
			\beta_{\bar\nu_1,\bar\nu_2}:\mathrm{gr}_{\bar\nu_1}(\mathcal{F})\ast\mathrm{gr}_{\bar\nu_2}(\calG)\rightarrow \mathrm{gr}_{\bar\nu}(\calF*\calG).
		\end{equation}
		such that $\bigoplus_{\bar\nu_1+\bar\nu_2=\bar\nu}\beta_{\bar\nu_1,\bar\nu_2}$ is an isomorphism. 
	\end{proposition}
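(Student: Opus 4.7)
The plan is to construct the morphism $\beta_{\bar\nu_1, \bar\nu_2}$ via the bifiltration on $\calF \ast \calG$ induced by the Wakimoto filtrations of $\calF$ and $\calG$, then to verify that the assembled direct sum is an isomorphism by invoking the constant-term interpretation of $\mathrm{Grad}_{\bar\nu}$ from \Cref{coro_wakimoto_graded_ct}.

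For the construction, the monoidality of $\mathrm{Wak}$ (\Cref{lem_wak_basic_prop}(3)) ensures that the subfunctors
\begin{equation*}
(\calF \ast \calG)_{\preceq \bar\nu} := \colim_{\bar\mu_1 + \bar\mu_2 \preceq \bar\nu} \mathrm{im}\bigl(\calF_{\preceq \bar\mu_1} \ast \calG_{\preceq \bar\mu_2} \to \calF \ast \calG\bigr)
\end{equation*}
define an exhaustive Wakimoto subfiltration of $\calF \ast \calG$, using $\scrJ_{\bar\mu_1} \ast \scrJ_{\bar\mu_2} \simeq \scrJ_{\bar\mu_1 + \bar\mu_2}$ to check that the support conditions propagate along the filtration. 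For each decomposition $\bar\nu = \bar\mu_1 + \bar\mu_2$, the composition
\begin{equation*}
\calF_{\preceq \bar\mu_1} \ast \calG_{\preceq \bar\mu_2} \longrightarrow (\calF \ast \calG)_{\preceq \bar\nu} \longrightarrow \mathrm{gr}_{\bar\nu}(\calF \ast \calG)
\end{equation*}
kills the two subobjects $\calF_{\prec \bar\mu_1} \ast \calG_{\preceq \bar\mu_2}$ and $\calF_{\preceq \bar\mu_1} \ast \calG_{\prec \bar\mu_2}$, and hence factors through $\mathrm{gr}_{\bar\mu_1}(\calF) \ast \mathrm{gr}_{\bar\mu_2}(\calG)$, producing $\beta_{\bar\mu_1, \bar\mu_2}$.

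For the isomorphism statement, full faithfulness of $\scrJ_{\bar\nu}$ reduces the problem to the level of $\mathrm{Grad}_{\bar\nu}$, and \Cref{coro_wakimoto_graded_ct} reduces it further to showing
\begin{equation*}
\mathrm{CT}_{B^-}(\calF \ast \calG)_{t_{\bar\nu}} \simeq \bigoplus_{\bar\nu_1 + \bar\nu_2 = \bar\nu} \mathrm{CT}_{B^-}(\calF)_{t_{\bar\nu_1}} \otimes \mathrm{CT}_{B^-}(\calG)_{t_{\bar\nu_2}},
\end{equation*}
with cohomological shifts matching by additivity of $\langle 2\rho, - \rangle$. This is the monoidality of $\mathrm{CT}_{B^-}$, which follows from Braden's theorem and proper base change along the convolution diagram: on the fixed-point locus $\Fl_\calI^0 \simeq \underline{W}$, convolution is given by the group law on $W$, and the support of $\mathrm{CT}_{B^-}(\calF)$ and $\mathrm{CT}_{B^-}(\calG)$ on translations (inherited from $\calF, \calG \in \scrP(\mathrm{Wak})$) forces the decomposition to be indexed exactly by factorizations $t_{\bar\nu_1} t_{\bar\nu_2} = t_{\bar\nu}$, i.e.\ by $\bar\nu_1 + \bar\nu_2 = \bar\nu$.

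The hardest part will be making the compatibility of the two natural constructions of $\beta_{\bar\nu_1, \bar\nu_2}$ --- one from the filtration-theoretic definition above and one induced by the monoidality isomorphism for $\mathrm{CT}_{B^-}$ --- rigorous in the stable $\infty$-categorical setting. One option is to define $\beta$ entirely through constant terms, using full faithfulness of $\scrJ_{\bar\nu}$, and then verify a posteriori that it agrees with the filtration-theoretic description by unwinding the proof of \Cref{prop_wakimoto_constant_terms}. Either way, the coherence of the Wakimoto filtration with the hyperbolic localization isomorphism needed to deduce monoidality of $\mathrm{CT}_{B^-}$ requires care.
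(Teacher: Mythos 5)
Your proposal takes a genuinely different route from the paper's. The paper defers to the equicharacteristic arguments (AB09, Proposition 6(a); AR, Lemma 4.7.4 and Proposition 4.7.5) and proceeds by induction on the number of non-vanishing Wakimoto graded pieces, with the constant-term factorization as an ingredient of the inductive step. You instead build $\beta_{\bar\nu_1,\bar\nu_2}$ in one stroke via the bifiltration of $\calF\ast\calG$, and verify the isomorphism via \Cref{coro_wakimoto_graded_ct}. The construction part is essentially sound: convolution with an object of $\scrP(\mathrm{Wak})$ is perverse t-exact (from \Cref{lem_wak_basic_prop}(1) together with a d\'evissage along the Wakimoto filtration), so your images are genuine subobjects; the orthogonality of \Cref{lem_wak_basic_prop}(4) shows the cone of $(\calF\ast\calG)_{\preceq\bar\nu}\to\calF\ast\calG$ has no graded pieces at $\bar\mu\preceq\bar\nu$, identifying your filtration with the canonical one; and the factoring through $\mathrm{gr}_{\bar\mu_1}(\calF)\ast\mathrm{gr}_{\bar\mu_2}(\calG)$ is correct. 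What your approach buys is an explicit, non-inductive description of $\beta$ and of the two-variable filtration, which the paper leaves implicit.

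The one genuine gap is the step from the constant-term isomorphism of \emph{objects} to the conclusion that your specific map $\bigoplus\beta$ is an isomorphism --- abstractly isomorphic objects need not be matched by a given morphism. You flag this at the end but do not resolve it, and the two options you sketch are heavier than necessary. A shorter fix: $\bigoplus\beta$ is surjective by construction --- the images of $\calF_{\preceq\bar\mu_1}\ast\calG_{\preceq\bar\mu_2}$ with $\bar\mu_1+\bar\mu_2=\bar\nu$ generate $(\calF\ast\calG)_{\preceq\bar\nu}$ modulo $(\calF\ast\calG)_{\prec\bar\nu}$ --- and a surjection between perverse sheaves with the same class in $K_0$ over a field is an isomorphism; the constant-term computation delivers the class equality. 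Also, ``convolution is given by the group law on $W$'' over $\Fl_\calI^0$ is imprecise: what actually happens, as in \Cref{prop_wakimoto_constant_terms}, is that Braden plus proper base change computes the stalk of $\mathrm{CT}_{B^-}(\calF\ast\calG)$ at $t_{\bar\nu}$ by pulling $\calS^-_{t_{\bar\nu}}$ back along the convolution morphism and stratifying, and the Wakimoto support hypothesis on $\calF,\calG$ kills all non-translation strata, leaving exactly the decompositions $t_{\bar\nu_1}t_{\bar\nu_2}=t_{\bar\nu}$, i.e.\ $\bar\nu_1+\bar\nu_2=\bar\nu$.
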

	
	\begin{proof}
		The statement is proved by induction on the number of non-vanishing $\on{Grad}_{\bar \nu}$, similarly to the equicharacteristic case \cite[Proposition 6a)]{AB09} and \cite[Lemma 4.7.4, Proposition 4.7.5]{AR}. We remind the reader that the main tool used in those proofs is the existence for $\mathcal{F}$ and $\mathcal{G}$ of a canonical filtration in $\mathcal{P}(\mathrm{Hk}_{\mathcal{I}})$ indexed by Wakimoto sheaves. We have produced such a filtration at the derived level, i.e., for objects in the $\infty$-category $\mathrm{Wak}$ in \Cref{prop_wak_filtr}, and it passes to $\mathscr{P}(\mathrm{Wak})$ essentially by definition of this full subcategory.
	\end{proof}

	\begin{corollary}\label{prop_grad_monoidal}
		For any $\calF,
		\calG \in \scrP({\rm Wak})$, 
		and $\bar \nu_1,\bar \nu_2\in \bar\bbX_\bullet$ with $\bar\nu:=\bar\nu_1+\bar\nu_2$, there is a canonical morphism 
		\begin{equation}
			\alpha_{\bar\nu_1,\bar\nu_2}:\mathrm{Grad}_{\bar\nu_1}(\mathcal{F})\otimes_\La\mathrm{Grad}_{\bar\nu_2}(\calG)\rightarrow \mathrm{Grad}_{\bar\nu}(\calF*\calG).
		\end{equation}
		such that $\bigoplus_{\bar\nu_1+\bar\nu_2=\bar\nu}\alpha_{\bar\nu_1,\bar\nu_2}$ is an isomorphism. Consequently, for any such $\calF$ and $\calG$, there is a canonical isomorphism \begin{equation}\mathrm{Grad}(\calF)\otimes_\La \mathrm{Grad}(\calG)\cong \mathrm{Grad}(\calF*\calG)
		\end{equation}  inside the category $\scrD_{\mathrm{\acute{e}t}}(\underline{\bar\bbX_\bullet})$.
	\end{corollary}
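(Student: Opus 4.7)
The plan is to reduce the corollary to Proposition \ref{gr-convol} by transporting the isomorphisms $\beta_{\bar\nu_1,\bar\nu_2}$ along the tautological identifications $\mathrm{gr}_{\bar\mu}(\calH) = \scrJ_{\bar\mu}(\mathrm{Grad}_{\bar\mu}(\calH))$ of \Cref{sec:wakimoto-sheaves-graded-wakimoto-functor} and exploiting the monoidality and full faithfulness of the Wakimoto functors.

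By the $\bbE_1$-monoidal structure of $\scrJ$ discussed in the paragraph preceding \Cref{lem_wak_basic_prop}, for any $\bar\nu_1, \bar\nu_2$ with sum $\bar\nu$, I get a canonical isomorphism
\begin{equation*}
\mathrm{gr}_{\bar\nu_1}(\calF) \ast \mathrm{gr}_{\bar\nu_2}(\calG) \cong \scrJ_{\bar\nu}\bigl(\mathrm{Grad}_{\bar\nu_1}(\calF) \otimes^L_\La \mathrm{Grad}_{\bar\nu_2}(\calG)\bigr),
\end{equation*}
while $\mathrm{gr}_{\bar\nu}(\calF \ast \calG) = \scrJ_{\bar\nu}(\mathrm{Grad}_{\bar\nu}(\calF \ast \calG))$ tautologically. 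Thus $\beta_{\bar\nu_1,\bar\nu_2}$ from \Cref{gr-convol} lives between two objects in the essential image of $\scrJ_{\bar\nu}$.

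To extract the canonical morphism $\alpha_{\bar\nu_1,\bar\nu_2}$ from $\beta_{\bar\nu_1,\bar\nu_2}$, I need the functor $\scrJ_{\bar\nu}\colon \scrD_{\mathrm{cons}}([\ast/S_k]) \to \scrD_{\mathrm{cons}}(\Hk_\calI)$ to be fully faithful. This follows from the invertibility isomorphism $\scrJ_{-\bar\nu} \ast \scrJ_{\bar\nu} \cong \delta_e$ in \Cref{lem_wak_basic_prop}(3), since
\begin{equation*}
\Hom(\scrJ_{\bar\nu}(M),\scrJ_{\bar\nu}(N)) \cong \Hom(M, \scrJ_{-\bar\nu} \ast \scrJ_{\bar\nu} \ast N) \cong \Hom(M,N)
\end{equation*}
for all $M, N \in \scrD_{\mathrm{cons}}([\ast/S_k])$.

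For the isomorphism claim, $\bigoplus_{\bar\nu_1+\bar\nu_2=\bar\nu} \beta_{\bar\nu_1,\bar\nu_2}$ is an isomorphism by \Cref{gr-convol}. Since $\scrJ_{\bar\nu}$ is a functor between stable $\infty$-categories, it preserves finite direct sums, so this map identifies with $\scrJ_{\bar\nu}\bigl(\bigoplus \alpha_{\bar\nu_1,\bar\nu_2}\bigr)$, and full faithfulness forces $\bigoplus \alpha_{\bar\nu_1,\bar\nu_2}$ to be an isomorphism as well. The global statement in $\scrD_{\et}(\underline{\bar\bbX_\bullet})$ is then a repackaging, where $\otimes_\La^L$ denotes the convolution product induced by the addition law on $\bar\bbX_\bullet$, whose $\bar\nu$-component equals $\bigoplus_{\bar\nu_1+\bar\nu_2=\bar\nu} \mathrm{Grad}_{\bar\nu_1}(\calF) \otimes_\La^L \mathrm{Grad}_{\bar\nu_2}(\calG)$ by the above. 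No step here is genuinely delicate: all the geometric substance is already absorbed in \Cref{gr-convol}, and the present corollary is essentially bookkeeping carried across the fully faithful functor $\scrJ_{\bar\nu}$.
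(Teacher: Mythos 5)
Your proposal is correct and follows essentially the same route as the paper's proof, which simply cites Proposition \ref{gr-convol} together with Lemma \ref{lem_wak_basic_prop}(3): you unpack that citation by transporting $\beta_{\bar\nu_1,\bar\nu_2}$ across the identification $\mathrm{gr}_{\bar\mu}=\scrJ_{\bar\mu}\circ\mathrm{Grad}_{\bar\mu}$ and the monoidality $\scrJ_{\bar\nu_1}\ast\scrJ_{\bar\nu_2}\cong\scrJ_{\bar\nu}$. The only cosmetic divergence is that you deduce full faithfulness of $\scrJ_{\bar\nu}$ directly from invertibility (again Lemma \ref{lem_wak_basic_prop}(3)), whereas the paper had earlier remarked, after \Cref{sec:wakimoto-sheaves-graded-wakimoto-functor}, that this follows from the explicit inverse via constant terms in \Cref{queiowuoiwuroe}; both are valid and interchangeable here.
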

	\begin{proof}
		This follows directly from  \Cref{gr-convol}  and  \Cref{lem_wak_basic_prop} (3). 
	\end{proof}

	\section{Central functor}
	
	\subsection{Background}
	We introduce the spaces that underlie the construction of the Gaitsgory's central functor $\scrZ$ in mixed characteristic. The spaces are not of classical nature, and live in the world of v-stacks created by Scholze \cite{Sch17,SW20}. We recall their basic properties, following \cite{FS21,AGLR22}.  
	
	We use the notation introduced in \Cref{section-notation}, but additionally assume that $G$ is residually split, which implies that each $L^+\calI$-orbit in $\Fl_\calG$ is already defined over $\Spec(k)$, cf.\ \Cref{sec:geometry-affine-flag-1-residually-split}.	
	\begin{definition}
		The \textit{Hecke stack} $\Hk_{\calG,O}$ is the v-stack sending a characteristic $p$ affinoid perfectoid space $\Spa(R,R^+)$ 
		to the groupoid of 
		
		$\bullet$
		untilts $\Spa(R^{\sharp},R^{\sharp,+})$  of  $\Spa(R,R^+)$ over $O$,
		
		$\bullet$
		$\mathcal{G}$-torsors $\mathcal{P}_1$ and $\mathcal{P}_2$ on $\Spec(B_{\on{dR}}^+(R^{\sharp}))$ together with an isomorphism  
		\begin{equation}
			\gamma: 
			\mathcal{P}_1|_{\text{Spec}(B_{\text{dR}}(R^{\sharp}))} \cong 
			\mathcal{P}_2|_{\text{Spec}(B_{\text{dR}}(R^{\sharp}))}.
		\end{equation}
		We refer to \cite[Section 20.3]{SW20} for the definition of the rings $B_{\on{dR}}^+(R^\sharp)$ and $B_{\mathrm{dR}}(R^\sharp)$.
	\end{definition}

	An alternative way to define $\Hk_{\calG,O}$ is as the v-stack quotient
	\begin{equation}
		\Hk_{\calG,O} = [L_{O}^+\mathcal{G} \backslash \Gr_{\calG,O}]
	\end{equation}
	where $L_{O}^+\mathcal{G}$ is the jet group over $O$, i.e., the v-group sheaf over $\Spd(O)$ with value
	$\mathcal{G}(B_{\on{dR}}^+(R^{\sharp}))$
	on untilts $\Spa(R^{\sharp},R^{\sharp,+})$ over $O$, and 
	\begin{equation}
		\Gr_{\calG,O} := L_{O}\mathcal{G} /L_{O}^+\mathcal{G}
	\end{equation}
	with $L_{O}\mathcal{G}$ the loop group over $O$, i.e., the v-group sheaf over Spd$(O)$ with value $\mathcal{G}(B_{\text{dR}}(R^{\sharp}))$ on 
	Spa$(R^{\sharp},R^{\sharp,+})$, cf.\  \cite[Lemma 4.10]{AGLR22}. 
	
	Note that over the generic fiber  $\eta = \Spd(F)$, $\Gr_{\calG,O}$ identifies with the $B_{\on{dR}}$-affine Grassmannian $\Gr_{G,F}$. On the other hand, over the special fiber $s=\Spd(k)$, $\Gr_{\calG,O}$  becomes isomorphic to $\Fl_{\calG}^\diamondsuit$, the analytification\footnote{For more discussion on the analytification functor $\diamondsuit$ and other variants, we refer the reader to \cite[Subsection 2.2]{AGLR22}.} of the Witt vector affine flag variety $\Fl_\calG$, as defined in \cite[Section 27]{Sch17}. 
	
	Now we pick a complete algebraically closed extension $C$ of $F$ with residue field $\bar k$, and let $\bar{\eta} = \Spd(C)$, $\bar{s}= \Spd(\bar k)$. 
	Consider the natural diagram
	\begin{equation}
		\Hk_{\calG,C} \overset{j}{\hookrightarrow} \Hk_{\calG,O_C}
		\overset{i}{\hookleftarrow} \Hk_{\calG,\bar k},
	\end{equation}
	where $j$ is the open immersion of the generic fiber and $i$ the closed immersion of the special fiber.
	This induces a nearby cycles functor, see \cite[Section 6.5]{AGLR22},
	\begin{equation}
		R\Psi := i^*Rj_* :  \scrD_{\mathrm{\acute{e}t}}(\Hk_{\calG, C}) \longrightarrow 
		\scrD_{\mathrm{\acute{e}t}}(\Hk_{\calG,\bar k})
	\end{equation}
	between the stable $\infty$-categories of derived étale sheaves in the sense of Scholze \cite[Definition 14.13, Lemma 17.1]{Sch17} with bounded support as in Fargues--Scholze \cite[Chapter VI]{FS21}. For $\ell$-adic coefficients, we follow the same conventions of passing to the limit as in \cite[Section 27]{Sch17} and then inverting $\ell$, compare with \cite[Section 6.5]{AGLR22}. 
	
	An important property of $R\Psi$ is that it preserves universal local acyclicity in the sense of \cite[Section IV.2.1]{FS21}, see also \cite[Section 6]{AGLR22} for our convention for non-torsion coefficients $\La$. Below, we denote by $\scrD_{\mathrm{ula}}(X/S) \subset \scrD_{\mathrm{\acute{e}t}}(X)$ the full subcategory of universally locally acyclic sheaves (or, if the base is understood, simply $\scrD_{\mathrm{ula}}(X)$).
	
	\begin{proposition}\label{prop_nearby_cycles_ula}
		Nearby cycles $R\Psi$ restrict to a functor
		\begin{equation}		\scrD_{\mathrm{ula}}(\Hk_{\calG,C}) \rightarrow 
			\scrD_{\mathrm{ula}}(\Hk_{\calG, \bar k}).
		\end{equation}
	\end{proposition}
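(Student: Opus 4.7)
The plan is to factor $R\Psi = i^* Rj_*$ and analyze each factor separately using the standard six-functor yoga for v-stacks. First, I would observe that universal local acyclicity is stable under arbitrary base change of the base (see \cite[Proposition IV.2.13]{FS21} or the analogous discussion in \cite[Section 6]{AGLR22}), so the closed pullback $i^*$ automatically preserves ULA. This reduces the problem to showing that $Rj_*$ sends the subcategory $\scrD_{\mathrm{ula}}(\Hk_{\calG,C}/\Spd(C))$ into $\scrD_{\mathrm{ula}}(\Hk_{\calG,O_C}/\Spd(O_C))$.

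Second, by the bounded-support convention imposed on $\scrD_{\mathrm{\acute{e}t}}(\Hk_{\calG})$, any such $\calF$ is supported on some closed Schubert substack $\Hk_{\calG,\leq\mu,C}$, whose integral model $\Hk_{\calG,\leq\mu,O_C}$ is proper over $\Spd(O_C)$ and admits a local-model presentation in terms of kimberlites, in the sense of \cite[Section 4]{AGLR22}. In such bounded proper situations, the preservation of ULA under $Rj_*$ can be reduced to the classical preservation of ULA by schematic nearby cycles (Gabber), once one descends to the local model and applies the compatibility between v-sheaf and schematic nearby cycles functors established in \cite[Section 6.5]{AGLR22}. Combined with the first step, this will yield the claim.

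The main obstacle is the compatibility check between the v-sheaf nearby cycles functor and its schematic analogue on local models, which is needed to transfer Gabber's preservation result into the v-stack framework; this is not formal, since local models only provide an étale-local, rather than global, presentation of the Hecke stack, and one must propagate ULA across this descent. This step has essentially been handled in \cite{AGLR22} at the level of torsion coefficients, and the passage to $\ell$-adic coefficients is carried out using the standard limit procedure of \cite[Section 27]{Sch17} combined with the conventions of \cite[Chapter VI]{FS21}, introducing no genuinely new issue beyond the torsion case.
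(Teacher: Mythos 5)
The paper's own ``proof'' is a one-line citation to \cite[Corollary 6.14]{AGLR22}, so the question is whether your reconstruction matches the content of that reference. Your overall shape is right: factor $R\Psi = i^* R j_*$, note that $i^*$ preserves ULA by base-change stability, and reduce to showing that $Rj_*$ carries $\scrD_{\mathrm{ula}}(\Hk_{\calG,C})$ into $\scrD_{\mathrm{ula}}(\Hk_{\calG,O_C})$. The cleanest packaging of this last step, and the one the paper itself implicitly uses elsewhere (see the proof of \Cref{prop_Z_monoidal}), is the stronger statement of \cite[Proposition 6.12]{AGLR22} that $j^*$ is an \emph{equivalence} $\scrD_{\mathrm{ula}}(\Hk_{\calG,O_C}) \xrightarrow{\sim} \scrD_{\mathrm{ula}}(\Hk_{\calG,C})$; given this, for $A$ ULA over $C$ write $A = j^*B$ with $B$ ULA over $O_C$, observe $Rj_*A \simeq B$, and apply base-change stability to $i^*B$. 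Your route is essentially the same statement repackaged (proving $Rj_*$ preserves ULA is, modulo full faithfulness of $j^*$, precisely essential surjectivity of $j^*$), but you should be aware of this formulation since it is the one the rest of the paper invokes.

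One place you should be more careful: the phrase ``the classical preservation of ULA by schematic nearby cycles (Gabber)'' does not quite name the correct input. Over a trait, what Gabber's results give is commutation of nearby cycles with Verdier duality and preservation of constructibility; the statement one actually needs here is that a complex on the generic fiber which is ULA over $\Spd(C)$ extends to a complex over $\Spd(O_C)$ which is ULA for the \emph{whole} adic base in the Fargues--Scholze (categorical/dualizability) sense. This is not a fiberwise condition and does not literally follow from ``$R\Psi(A)$ is ULA on the special fiber''; one needs a criterion identifying ULA over $\Spd(O_C)$ with compatible ULA data on both fibers, which is part of what makes \cite[Proposition 6.12]{AGLR22} nontrivial. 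The local-model/kimberlite comparison you invoke is indeed the mechanism that makes this work (you are right that it provides the boundedness, representability, and scheme-theoretic comparison needed), but the reduction is to that comparison theorem together with the characterization of ULA over $\Spd(O_C)$, not to a classical nearby-cycles theorem alone.
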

	
	\begin{proof}
		This is \cite[Corollary 6.14]{AGLR22}.
	\end{proof}
	
	Recall that in the previous sections of the paper, we introduced a Hecke stack $\Hk_{\calG}^{\mathrm{sch}}$ as a perfect $k$-stack. Its associated v-sheaf under the analytification functor $\diamondsuit$ of \cite[Section 27]{Sch17} is the fiber over $\mathrm{Spd} k$ of the analytic Hecke stack $\Hk_{\calG}^{\mathrm{an}}$ that we defined over $\mathrm{Spd}O$. There is a natural comparison map of sheaves due to \cite[Section 27]{Sch17}
	
	\begin{proposition} \label{092840dijf}
		The natural comparison functor
		\begin{equation}
			c \colon \scrD_{\mathrm{\acute{e}t}}(\Hk_{\calG,\bar k}^{\mathrm{sch}} ) \to \scrD_{\mathrm{\acute{e}t}}(\Hk_{\calG, \bar k}^{\mathrm{an}})
		\end{equation}
		is an equivalence carrying $\scrD_{\mathrm{cons}}(\Hk_{\calG,\bar k}^{\mathrm{sch}} ) $ to $\scrD_{\mathrm{ula}}(\Hk_{\calG, \bar k}^{\mathrm{an}})$.
	\end{proposition}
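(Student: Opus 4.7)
The plan is to invoke Scholze's comparison theorem \cite[Section 27]{Sch17} between étale sheaves on qcqs perfect schemes over $\bar k$ and their associated diamonds, and then descend to the Hecke stack. Both $\scrD_{\mathrm{\acute{e}t}}$-categories behave well under descent, étale on the scheme side and v-descent on the v-sheaf side, and both presentations arise as quotients of the same atlas. So the problem reduces to constructing an $L^+\calG$-equivariant comparison equivalence $\scrD_{\mathrm{\acute{e}t}}(\Fl_{\calG,\bar k}^{\mathrm{sch}}) \simeq \scrD_{\mathrm{\acute{e}t}}(\Fl_{\calG,\bar k}^{\mathrm{an}})$. Because the pro-unipotent radical of $L^+\calG$ acts trivially on our sheaves (by \cite[Proposition VI.4.1]{FS21}, as already exploited earlier in the paper), equivariance only needs to be tracked for the smooth reductive quotient $S_k$, which is a common geometric group scheme on both sides.

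Second, I would use our standing boundedness hypothesis: any sheaf under consideration is supported on a finite union of Schubert strata, hence on some Schubert variety $\Fl_{(\calI,\calG), \leq w}^{\mathrm{sch}}$, which is a perfected projective $\bar k$-scheme. For such a perfectly finitely presented scheme of finite type, \cite[Proposition 27.2]{Sch17} yields an equivalence between the étale topoi (and hence the derived categories), compatibly with open and closed immersions. Writing $\Fl_{\calG,\bar k}^{\mathrm{sch}}$ as the filtered colimit of its Schubert subvarieties and using that $\scrD_{\mathrm{\acute{e}t}}$ with bounded support commutes with such colimits, one obtains the equivalence on the full ambient ind-scheme. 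Combining with Step~1 delivers the stacky equivalence.

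Third, for the identification of subcategories, I would match stratification-theoretic descriptions. On the scheme side, an object of $\scrD_{\mathrm{cons}}(\Fl_{(\calI,\calG), \leq w}^{\mathrm{sch}})$ is by definition locally constant with perfect stalks along the Schubert cell stratification (each cell being a perfected affine space $\bbA_{\bar k}^{n,\rm{pf}}$). On the v-sheaf side, the ula sheaves on $\Fl_{(\calI,\calG),\leq w}^{\mathrm{an}}$ over $\Spd(\bar k)$ admit the analogous description via \cite[Section IV.2]{FS21}, since each Schubert cell stratum is cohomologically smooth over $\Spd(\bar k)$ and its underlying diamond is the same affine-space diamond on both sides. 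Because $c$ is compatible with the six-functor formalism and thus with pullbacks along such strata, it exchanges the two notions of local constancy, yielding the desired restriction of the equivalence.

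I expect the main obstacle to be the careful bookkeeping rather than any single mathematical ingredient: the bounded-support convention, the ind-scheme colimit, the quotient by $L^+\calG$ (with its pro-unipotent issues), and the constructible/ula identification all need to be assembled in one package. Each individual step, however, is established in \cite{Sch17} and \cite{FS21}, so once one fixes the compatibility along the smooth atlas $\Fl_\calG \to \Hk_\calG$, the rest follows formally.
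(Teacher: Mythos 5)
Your proposal is correct and reconstructs the argument behind the cited propositions \cite[Propositions 6.7 and A.5]{AGLR22}, to which the paper's proof simply defers; the ingredients you assemble --- Scholze's scheme/diamond comparison, reduction to Schubert subvarieties via the bounded-support convention, discarding the pro-unipotent radical via \cite[Proposition VI.4.1]{FS21}, and matching constructibility against the ula condition along the Schubert stratification --- are precisely what those references use. The only imprecision is cosmetic: for a general parahoric $\calG$ the relevant reductive quotient is $\calG_k^{\mathrm{red}}$ rather than $S_k$, but that does not affect the argument.
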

	\begin{proof}
		For the definition of the comparison functor, we refer to \cite[Section 27]{Sch17} and \cite[Appendix A]{AGLR22}. The above result is \cite[Propositions 6.7 and A.5]{AGLR22}.  
	\end{proof}
	
	This result also highlights the importance of ula sheaves as singling out constructible sheaves over a base field.
	From now on, we will not make a stark distinction between $\Hk_{\calG}^{\mathrm{sch}}$ and $\Hk_{\calG,k}^{\mathrm{an}}$ and will simply omit the superscript when writing down its derived category of sheaves. 
	
	\begin{definition}
		\label{sec:background-definition-central-functor}
		We define the central functor (for the Witt vector affine flag variety) as the composition
		\begin{equation}
			\scrZ\colon \mathrm{Rep}(\hat{G}) {\xrightarrow{\sim}} \scrP_{\mathrm{ula}}(\Hk_{G,C}) {\xrightarrow{R\Psi}} \scrD_{\mathrm{ula}}(\Hk_{\calG, \bar k}) 
		\end{equation}
	\end{definition}
	\noindent Here, the first arrow comes from the geometric Satake equivalence of \cite[Chapter VI]{FS21}, with the Satake category consisting of ula perverse sheaves on $\Hk_{G,C}$. The second arrow is just nearby cycles which respect the ula property by \Cref{prop_nearby_cycles_ula}. 
	Often below, we will find it convenient to still abusively denote by $\scrZ$ the nearby cycles functor $R\Psi \colon \scrD_{\mathrm{ula}}(\Hk_{G,C})\to \scrD_{\mathrm{ula}}(\Hk_{\calG,\bar k})$. 
	
	\begin{remark}
		\label{sec:background-central-functor-with-galois-action} As explained in \cite[Section 8]{AGLR22} the nearby cycles functor is Galois equivariant. More precisely, given $A\in \scrP_{\mathrm{ula}}(\Hk_{G,E})$ for some finite extension $E/F$, then $R\Psi(A_C)$ has a natural $\Gamma_E$-action that is equivariant with respect to the residual action of $\Gamma_{k_E}$. Here, $\Gamma_E\subseteq \Gamma$ denotes the Galois group of $E$, and $\Gamma_{k_E}$ the one for the residue field $k_E$ of $E$.
	\end{remark}
	
	\subsection{Convolution and fusion}\label{sec:conv_fusion}
	
	In this section, we are going to discuss in detail the convolution and fusion products.
	
	\begin{definition}
		\label{sec:convolution-fusion-definition-convolution-hecke-stack}
		Given a finite linearly ordered set $J=\{j_1<\dots <j_n\}$, we define the convolution Hecke stack $\Hk_\calG^{J}$ to be the v-sheaf over $\Spd O$ which classifies successive modifications of $\calG$-torsors over $B_{\mathrm{dR}}^+$, indexed by the elements $j_i \in J$. More precisely, for a given $f\colon S\to \Spd O$ the groupoid $\Hk_\calG^J(S)$ is given by $\calG$-torsors $\calP_{j_1}, \ldots, \calP_{j_n}$ on $B^+_{\mathrm{dR}}(S)$ with modifications $\calP_{j_1}\dashrightarrow \calP_{j_2},\ldots, \calP_{j_{n-1}}\dashrightarrow\calP_{j_n}$ defined on $B_{\mathrm{dR}}(S)$.
	\end{definition}
	
	One often finds the expression $\Hk_\calG^{J}=\Hk_\calG^{j_1} \tilde{\times} \dots \tilde{\times} \Hk_\calG^{j_n}$ to denote the convolution Hecke stack. We have already seen that there is a natural correspondence with $n=2$ inducing the convolution product $\ast : \scrD_{\mathrm{\acute{e}t}}(\Hk_{\calG,S}) \times \scrD_{\mathrm{\acute{e}t}}(\Hk_{\calG,S}) \to \scrD_{\mathrm{\acute{e}t}}(\Hk_{\calG,S})$ for any $S\to \Spd O$, see \Cref{sec:costandard-functors}.  We wish to enhance this operation to a monoidal structure of $\infty$-categories in the sense of \cite[Definition 4.1.1.10]{Lur17}. This will be quite technical, and we recommend the unaccustomed reader to try to ignore the heavy language at first, and focus on the geometry at hand. After each categorical proof, we also provide an explanation of our constructions at the level of $1$-morphisms of correspondences, which should prove helpful.
	
	Let us recall some of the notions from \cite[Section 4.1]{Lur17}. First, we have the (nerve of) the 1-category 
	$\mathrm{Comm}^{\otimes}$ (also denoted by $\mathrm{Fin}_*$ or $\bbE^\otimes_\infty$ in \cite[Notation 2.0.0.2, Notation 5.1.1.6]{Lur17}) whose objects are finite pointed sets $\langle n \rangle=\{0,1,\dots, n\}$ with base point $0$ and whose morphisms $\langle n \rangle \to \langle m \rangle$ preserve $0$. A symmetric monoidal $\infty$-category $\scrC$ is given by a cocartesian fibration $\scrC^\otimes \to \mathrm{Comm}^\otimes$ of $\infty$-operads, see \cite[Example 2.1.2.18, Definition 2.1.1.10]{Lur17}, in particular, $\scrC^\otimes_{[n]} \simeq \scrC^n$ in a natural manner.
	Similarly, we have the $\infty$-operad $\mathrm{Assoc}^{\otimes}$ (which is equivalent to some other common $\infty$-operads denoted by $\bbA_\infty^\otimes$ or $\bbE_1^\otimes$ in \cite[Definition 4.1.1.3, Example 5.1.0.7]{Lur17}) given as the 1-category whose objects are pointed finite sets $\langle n \rangle$ and morphisms $\langle n \rangle \to \langle m \rangle$ are pointed maps equipped with a total order on the non-pointed fibers and composition is given by the lexicographical order, see \cite[Remark 4.1.1.4]{Lur17}. A monoidal $\infty$-category is a cocartesian fibration $\scrC^\otimes \to \mathrm{Assoc}^\otimes$ of $\infty$-operads and its underlying $\infty$-category $\scrC$ is the fiber of the fibration over $\langle 1\rangle$. When the cocartesian fibration is clear from the context, we will often abuse language and refer to $\scrC$ as a monoidal $\infty$-category. Such a datum induces by \cite[Propositions 2.4.1.7, 2.4.2.5]{Lur17} a map $\mathrm{Assoc}^\otimes \to \mathrm{Cat}_\infty^\times$ that preserves inert morphisms in the sense of \cite[Definitions 2.1.1.8, 2.1.2.3]{Lur17} and also cocartesian morphisms, in particular $\scrC^\otimes \to \mathrm{Assoc}^\otimes$ induces an associative algebra in $\mathrm{Cat}_{\infty}$. 
	Here, $\mathrm{Cat}^\times_\infty$ denotes $\mathrm{Cat}_\infty$ with its cartesian symmetric monoidal structure, cf.~\cite[Construction 2.4.1.4, Proposition 2.4.1.5]{Lur17}. Note that in general maps of $\infty$-operads are not necessarily monoidal, but rather only lax-monoidal, see \cite[Definition 2.1.3.7]{Lur17}.
	
	In order to produce the desired map that will induce a monoidal structure on $\scrD_{\mathrm{\acute{e}t}}(\Hk_{\calG,S})$, we recall that following \cite[Definition A.5.2]{Man22} we dispose of a symmetric monoidal $\infty$-category $\mathrm{Corr}(\mathrm{vSt})$ of correspondences on v-stacks. The 6-functor formalism defined in \cite{Sch17} for torsion coefficients can be reinterpreted as in \cite[Definition A.5.6]{Man22} thanks to \cite[Theorem 5.11]{Man22b} via an operadic map \begin{equation}\scrD_{\mathrm{\acute{e}t}}^\otimes\colon \mathrm{Corr}^\otimes(\mathrm{vSt})_{\ell\text{-}\mathrm{fine}} 
		\to \mathrm{Cat}_\infty^\times,\end{equation} where the $\ell$-fine subscript indicates that we restrict to the full subcategory of $\mathrm{Corr}(\mathrm{vSt})$ whose correspondences have $\ell$-fine maps to the right. We extend it to $\ell$-adic coefficients via the naive construction of taking limits and tensoring with $\bbQ$, instead of using nuclear $\ell$-adic sheaves, compare with \cite[Section 26]{Sch17} and \cite[page 6]{Man22b}. Note also that $\scrD_{\mathrm{\acute{e}t}}$ is a map of $\infty$-operads, and not symmetric monoidal (only lax symmetric monoidal). Despite this, all the maps obtained below between $\infty$-operads of either correspondences or sheaves will turn out to be monoidal.
	
	Now, we are going to enhance $\ast$ to a monoidal structure on the $\infty$-category, by constructing a map $\scrH^\otimes_S \colon \mathrm{Assoc}^\otimes \to \mathrm{Corr}^{\otimes}(\mathrm{vSt})$ of $\infty$-operads that commutes with the maps towards $\mathrm{Comm}^\otimes$ and recovers the convolution $\ast$ on $\Hk_{\calG,S}$ via evaluation on the active morphism $\langle 2 \rangle \to \langle 1 \rangle$ (with order $1<2$). We were crucially assisted in this task by discussions with Heyer, Mann, and Zhao. A similar construction of the monoidal structure appeared later in \cite[Example 8.7 and Corollary 8.11]{Zhu25}. Note that there is an obvious isomorphism
	\begin{equation}
		\Hk_{\calG,S}:= L_S^+\calG \backslash L_S\calG /L_S^+\calG \simeq [*/L_S^+\calG] \times_{[*/L_S\calG]} [*/L^+_S\calG]
	\end{equation}
	One can therefore realize $\Hk_{\calG,S}$ as the internal endomorphism object of $[*/L_S^+\calG] $ in the symmetric monoidal\footnote{Note that the symmetric monoidal structure of  $\mathrm{Corr}(\mathrm{vSt}_{/[*/L_S\calG]})$ is induced by finite products in $\mathrm{vSt}_{/[*/L_S\calG]}$, i.e. fiber products in $\mathrm{vSt}$ over the base $*/L_S\calG$.}
	$\infty$-category $\mathrm{Corr}(\mathrm{vSt}_{/[*/L_S\calG]})$ (see \cite[Proposition 2.4.1]{HM24}),\footnote{To enhance it to an internal endomorphism in $\mathrm{Corr}^\otimes(\mathrm{vSt}_{/[*/L_S\calG]})_{\ell\text{-}\mathrm{fine}}$, 
		we need to replace it with the finite dimensional truncations as in \cite[Definition 5.1.2, Subsection 5.1.7]{XZ17}, and apply \cite[Proposition 2.3.9]{HM24}.
	}
	equipping it with a monoid structure in the category of correspondences over $[*/L_S\calG]$ and hence it inherits a natural $\infty$-monoidal structure in the category of correspondences over $*$ by forgetting the slice over $[*/L_S\calG]$.
	In fact, the functor $\mathrm{Corr}(\mathrm{vSt}_{/[*/L_S\calG]})\to \mathrm{Corr}(\mathrm{vSt})$ is naturally lax symmetric monoidal\footnote{See \cite[Definition 4.1.3.(b)]{HM24}. Another way of checking that the forgetful functor $\mathrm{Corr}(\mathrm{vSt}_{/X})\to \mathrm{Corr}(\mathrm{vSt}_{/Y})$ for any map $X\to Y$ of v-stacks is lax symmetric monoidal is to realize it as the right adjoint of the base change map $\mathrm{Corr}(\mathrm{vSt}_{/Y})\to \mathrm{Corr}(\mathrm{vSt}_{/X})$ (as explained in \ref{lem_ula_conv}), which is clearly monoidal, so the claim follows from \cite[Corollary 7.3.2.7]{Lur17}.
	}, i.e., it is a map of $\infty$-operads in the sense of \cite[Definition 2.1.2.7]{Lur17}, so it preserves associative algebras essentially by definition, compare with \cite[Definition 2.1.3.1]{Lur17}. Note that the corresponding monoid object in the sense of \cite[Definition 4.1.2.5]{Lur17} is nothing other than the \v{C}ech nerve of the natural map $[*/L_S^+\calG] \to [*/L_S\calG]$.

	Let us try to understand more closely what the map $\scrH^\otimes_S\colon \mathrm{Assoc}^\otimes \to \mathrm{Corr}^\otimes(\mathrm{vSt})$ induced by the above monoidal structure on $\Hk_{\calG,S}$ looks like. We send an object $\langle n \rangle$ to the fiber product $\Hk_{\calG,S}^n$ over $S$ and the morphism $\alpha \colon \langle n \rangle \to \langle m \rangle$ to the correspondence \begin{equation}
		\Hk_{\calG,S}^n \leftarrow \Hk_{\calG,S}^{\alpha} \to \Hk_{\calG,S}^m,
	\end{equation} where the middle term is the $m$-indexed product of the convolution Hecke stacks in the sense of \Cref{sec:convolution-fusion-definition-convolution-hecke-stack} with superscripts ranging over the ordered fibers of $\alpha$, the left map is the natural projection and the right map is the product of the natural multiplication. Note that the left map is a torsor for a power of $L_S^+\calG$, and thus pro-smooth, whereas the right map is fibered in powers of $\Gr_{\calG,S}$ and hence it is ind-proper. One can also write down the image under $\scrH^\otimes_S$ of arbitrary $n$-morphisms of the $1$-category $\mathrm{Assoc}^\otimes$, which are in bijection with sequences of composable morphisms. 
	
	In order to be able to apply the functor $\scrD_{\et}^{\otimes}$, we have to replace the convolution Hecke stacks by finite-dimensional truncations so that the maps to the right become $\ell$-fine, but here we will ignore this subtlety and refer to \cite[Definition 5.1.2, Subsection 5.1.7]{XZ17} for a detailed treatment.
	We denote by $\scrD^\otimes_\mathrm{\acute{e}t}(\Hk_{\calG,S})$ the monoidal $\infty$-category obtained from composing $\scrH_S^\otimes$ and $\scrD_{\et}^\otimes$ (after taking appropriate truncations, so that this becomes legitimate). This clearly refines the convolution product $\ast$, as seen by taking one of the two active maps $\langle 2 \rangle \to \langle 1 \rangle $.
	
	\begin{lemma}
		The full subcategory $\scrD_{\mathrm{ula}}(\Hk_{\calG,S})$ is stable under convolution.
	\end{lemma}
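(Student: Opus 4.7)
The plan is to use the explicit description of the convolution via the correspondence
\begin{equation}
\Hk_{\calG,S} \times_S \Hk_{\calG,S} \xleftarrow{p} \Hk_{\calG,S} \tilde{\times} \Hk_{\calG,S} \xrightarrow{m} \Hk_{\calG,S},
\end{equation}
so that for $\calF_1,\calF_2 \in \scrD_{\mathrm{ula}}(\Hk_{\calG,S})$ we have $\calF_1 \ast \calF_2 = Rm_!\, p^*(\calF_1 \boxtimes_S \calF_2)$. The strategy is to show that each of the three operations entering this formula (external tensor product over $S$, pullback along $p$, and pushforward along $m$) preserves the ULA property.

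First, since $p$ is a torsor under a (truncation of a) positive loop group $L_S^+\calG$, it is cohomologically smooth in the sense of \cite{Sch17,FS21}. Smooth pullback preserves universal local acyclicity, so $p^*(\calF_1 \boxtimes_S \calF_2)$ is ULA provided $\calF_1 \boxtimes_S \calF_2$ is. The latter follows because the external tensor product (relative to $S$) of two ULA sheaves is ULA over $S$; this is a general property of ULA in any reasonable six-functor formalism and is part of the package proved for small v-stacks by Fargues--Scholze and Mann. Next, the convolution map $m$ is ind-proper, as is standard and recalled in \ref{sec:schub-vari-conv}. Since proper pushforward preserves ULA sheaves over $S$, we conclude $Rm_!\,p^*(\calF_1 \boxtimes_S \calF_2) \in \scrD_{\mathrm{ula}}(\Hk_{\calG,S})$.

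The only technical wrinkle to address is that $\Hk_{\calG,S}$ and $\Hk_{\calG,S}\tilde{\times}\Hk_{\calG,S}$ are ind-v-stacks, so the statements about smoothness of $p$ and properness of $m$ must be invoked after restricting to a finite-dimensional closed substack containing the support of $p^*(\calF_1\boxtimes_S \calF_2)$, and correspondingly truncating $L_S^+\calG$. This is already implicit in the construction of the monoidal structure and the six-functor formalism we use (it is the reason for passing to the $\ell$-fine subcategory). With these truncations in place, the three operations above preserve ULA and the lemma follows. The main (very minor) obstacle is thus simply the bookkeeping of these truncations; no new geometric input is required beyond the standard preservation properties of ULA under smooth pullback, proper pushforward, and external tensor product.
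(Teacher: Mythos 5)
Your proof is correct and follows exactly the same route as the paper's (one-sentence) argument: convolution is built from external tensor product, smooth pullback along $p$, and proper pushforward along $m$, and each of these operations preserves universal local acyclicity. You merely flesh out the truncation bookkeeping for ind-v-stacks, which the paper delegates to the surrounding discussion of $\ell$-fine maps.
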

	\begin{proof}
		The reason is that ula sheaves are preserved under smooth pullback by \cite[Proposition IV.2.13]{FS21}, exterior products by \cite[Lemma IV.2.14]{FS21}, and proper pushforward by \cite[Proposition IV.2.11]{FS21}. These are exactly the operations involved in the convolution product.
	\end{proof}
	
	By \cite[Proposition 2.2.1.1]{Lur17} on full subcategories of $\infty$-operads, we have a monoidal $\infty$-category $\scrD_{\mathrm{ula}}(\Hk_{\calG,S})$ giving rise to convolution.
	
	\begin{lemma}\label{lem_ula_conv}
		Given a map $f\colon T\to S$, the pullback functor $f^*$ is monoidal, i.e., it enhances essentially uniquely to a $\bbE_1$-monoidal map $f^{*,\otimes}\colon \scrD^{\otimes}_{\mathrm{\acute{e}t}}(\Hk_{\calG,S}) \to \scrD^{\otimes}_{\mathrm{\acute{e}t}}(\Hk_{\calG,T})$ (and similarly for ula sheaves).
	\end{lemma}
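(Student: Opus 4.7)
The plan is to upgrade $f^*$ at the level of correspondences first, where all the higher coherences come automatically from base change, and then transport this data along the sheaf theory functor $\scrD^\otimes_\et$. Concretely, I would construct an extension
\begin{equation*}
\scrH^\otimes_f \colon \mathrm{Assoc}^\otimes \times \Delta^1 \longrightarrow \mathrm{Corr}^\otimes(\mathrm{vSt})
\end{equation*}
restricting to $\scrH^\otimes_S$ over $\{0\}$ and to $\scrH^\otimes_T$ over $\{1\}$, whose component on the unique non-degenerate edge $0 \to 1$ at the object $\langle n \rangle$ is the pullback correspondence
\begin{equation*}
\Hk^n_{\calG, S} \xleftarrow{\pi_S} \Hk^n_{\calG, S} \times_S T \xrightarrow{\sim} \Hk^n_{\calG, T}.
\end{equation*}
The crucial geometric input making this assembly coherent is that the convolution Hecke stacks $\Hk^J_{\calG, -}$ are stable under arbitrary base change, so every structure map of $\scrH^\otimes_S$ pulls back along $f$ to the corresponding structure map of $\scrH^\otimes_T$.

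Next I would apply $\scrD^\otimes_\et$ (after taking the usual finite-dimensional truncations needed for $\ell$-fineness, as in \cite[Subsection 5.1.7]{XZ17}) to obtain a map of monoidal $\infty$-categories whose underlying functor on the fiber over $\langle 1 \rangle$ is precisely $f^*$. The fact that $f^*$ genuinely intertwines the two convolution products at the level of homotopy categories is encoded by the cartesian squares
\begin{equation*}
\begin{tikzcd}
\Hk^n_{\calG, T} \tilde{\times} \Hk^n_{\calG, T} \arrow[r] \arrow[d, "m_T"'] & \Hk^n_{\calG, S} \tilde{\times} \Hk^n_{\calG, S} \arrow[d, "m_S"] \\
\Hk_{\calG, T} \arrow[r] & \Hk_{\calG, S}
\end{tikzcd}
\end{equation*}
together with proper base change along the ind-proper right leg and smooth base change along the pro-smooth left leg of each correspondence, which together yield a natural isomorphism $f^*(A \ast B) \simeq f^* A \ast f^* B$. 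Essential uniqueness follows because the space of lifts of $f^*$ to a monoidal morphism is controlled by a space of higher compatibilities, all of which are rigidified by the universal property of $\mathrm{Corr}^\otimes(\mathrm{vSt})$ as a symmetric monoidal $\infty$-category.

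The restriction to ula sheaves is then immediate, since $f^*$ preserves universal local acyclicity, so the induced $\bbE_1$-monoidal structure descends to the full subcategory $\scrD_\mathrm{ula}$. The main technical obstacle is the bookkeeping involved in choosing the truncations of the convolution Hecke stacks uniformly across the simplicial levels of $\mathrm{Assoc}^\otimes \times \Delta^1$, so that $\scrH^\otimes_f$ actually lands in the $\ell$-fine subcategory of $\mathrm{Corr}^\otimes(\mathrm{vSt})$ on which $\scrD^\otimes_\et$ is available. This is routine but notationally cumbersome, and can be accomplished exactly as in the treatment of $\scrH^\otimes_S$ itself by choosing compatible exhaustive systems of finite-dimensional Schubert strata and invoking stability of $\scrD_\et$ under the corresponding colimits.
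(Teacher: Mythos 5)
Your proposal is correct and takes essentially the same route as the paper: lift $f^*$ to the level of correspondences, where base change of the convolution Hecke stacks furnishes all the higher coherences automatically, and then transport along $\scrD^\otimes_{\mathrm{\acute{e}t}}$. The paper compresses this into a single line --- that $f^*\colon \mathrm{Corr}(\mathrm{vSt}_S)\to \mathrm{Corr}(\mathrm{vSt}_T)$ is symmetric monoidal --- whereas your explicit $\mathrm{Assoc}^\otimes\times\Delta^1$ construction and proper/smooth base-change check unfold precisely the content behind that assertion.
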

	\begin{proof}
		Recall that we have adjoint functors between $\mathrm{Corr}(\mathrm{vSt}_S)$ and $\mathrm{Corr}(\mathrm{vSt}_T)$ given by base change and forgetting the base, respectively. Indeed, the data of a correspondence $X\times_S T \leftarrow Z \to Y$ over $T$ is equivalent to that of $X \leftarrow Z \to Y$ over $S$ (note that $Z$ lives canonically over $T$ through its morphism to $Y$). We have already noticed that the left adjoint $-\times_S T$ is symmetric monoidal, while the right adjoint (the forgetful functor) is only lax symmetric monoidal. The latter lax monoidal structure is given at the level of the active multiplication map $\langle 2\rangle\to \langle1\rangle$ by the correspondence $X \times_S Y\leftarrow X\times_T Y \to X \times_T Y$. The adjunction unit is a natural transformation between functors of $\infty$-operads, so upon evaluation it yields a map of associative algebras $\scrH_{S}^{\otimes} \to \scrH_T^{\otimes}$ (because the inclusion of maps of $\infty$-operads into the functor category is full, compare with \cite[Definition 2.1.3.1]{Lur17}). Applying now the lax monoidal functor $\scrD_{\mathrm{\acute{e}t}}$, we essentially get by construction that the pullback functor $f^{\ast}\colon \scrD_{\mathrm{\acute{e}t}}(\Hk_{\calG,S}) \to \scrD_{\mathrm{\acute{e}t}}(\Hk_{\calG,T})$ is equipped with a $\bbE_1$-monoidal structure. Pullback also clearly preserves ula sheaves with respect to the base, so the last claim is clear.
	\end{proof}
	
	Ultimately, what is happening in the previous lemma is that we are showing a sort of functoriality for the \v{C}ech nerves. This could have been achieved by appealing to \cite[Lemma 8.10]{Zhu25} instead, but the proof above is conceptually simpler and self-contained.
	
	\begin{proposition}\label{prop_Z_monoidal}
		The functor $\scrZ \colon \scrD_{\mathrm{ula}}(\Hk_{G,C}) \to \scrD_{\mathrm{ula}}(\Hk_{\calG,k})$ is monoidal, i.e., it enhances essentially uniquely to a $\bbE_1$-monoidal map $\scrZ^\otimes \colon \scrD_{\mathrm{ula}}^{\otimes}(\Hk_{G,C}) \to \scrD_{\mathrm{ula}}^{\otimes}(\Hk_{\calG,k})$.
	\end{proposition}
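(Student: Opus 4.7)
The plan is to realize $\scrZ$ as the composition $i^* \circ Rj_*$, where $j\colon \Hk_{G,C}\hookrightarrow \Hk_{\calG,O_C}$ and $i\colon \Hk_{\calG,\bar k}\hookrightarrow \Hk_{\calG,O_C}$ are the inclusions of the generic and special fibers, and to upgrade each of these functors to an $\bbE_1$-monoidal map.

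For $i^*$, this is immediate from \Cref{lem_ula_conv} applied to $\mathrm{Spd}\,\bar k\to \mathrm{Spd}\,O_C$. The nontrivial step is to show that $Rj_*$, restricted to $\scrD_{\mathrm{ula}}(\Hk_{G,C})$, lifts to an $\bbE_1$-monoidal functor into $\scrD_{\mathrm{ula}}(\Hk_{\calG,O_C})$ (that the right-hand side lands in ULA sheaves is \Cref{prop_nearby_cycles_ula}). As the right adjoint to the monoidal functor $j^*$, the functor $Rj_*$ carries a canonical lax monoidal structure. The core of the proof is to verify that this lax structure is in fact strict on the ULA subcategory: that is, for $\scrF,\scrG \in \scrD_{\mathrm{ula}}(\Hk_{G,C})$, the comparison map
\begin{equation*}
Rj_*(\scrF) * Rj_*(\scrG) \longrightarrow Rj_*(\scrF * \scrG)
\end{equation*}
is an equivalence. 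Writing the convolution as $\scrF * \scrG = Rm_* p^*(\scrF\boxtimes \scrG)$ for the projection $p$ (a torsor under a power of $L^+\calG$) and the ind-proper multiplication $m$, the comparison map decomposes into three standard base-change equivalences: smooth base change between $Rj_*$ and $p^*$, proper base change between $Rj_*$ and $Rm_*$, and the Künneth formula for $Rj_*$ on exterior products of ULA sheaves, the last of which follows from the ULA hypothesis in the six-functor formalism of \cite[Section IV.2]{FS21} and \cite{Man22b}.

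The upgrade of this pointwise statement to an $\bbE_1$-monoidal structure is then formal. Working at the level of correspondences as set up in the paragraph preceding \Cref{lem_ula_conv}, the functors $j^*$ and $i^*$ arise from morphisms of the $\infty$-operads $\scrH_C^\otimes, \scrH_{O_C}^\otimes, \scrH_{\bar k}^\otimes\colon \mathrm{Assoc}^\otimes\to \mathrm{Corr}^\otimes(\mathrm{vSt})$. The lax monoidal enhancement of $Rj_*$ comes from the standard adjoint construction in the $\infty$-categorical setting, and the object-wise compatibility verified above promotes it to a strict monoidal structure on the ULA subcategory. Composing with the $\bbE_1$-monoidal $i^{*,\otimes}$ yields the desired $\scrZ^\otimes$. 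Essential uniqueness follows from the universal property of monoidal enhancements of functors between monoidal $\infty$-categories.

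The main obstacle will be ensuring that the base-change and Künneth compatibilities organize into a coherent datum at the level of $\infty$-operads, not just on individual objects. The right framework for this is Mann's abstract six-functor formalism \cite{Man22,Man22b}: every isomorphism needed above is an instance of a structural identity built into the $\infty$-operad map $\scrD_\et^\otimes\colon \mathrm{Corr}^\otimes(\mathrm{vSt})_{\ell\text{-}\mathrm{fine}}\to \mathrm{Cat}_\infty^\times$, so the required coherence is automatic once we phrase the argument entirely inside that formalism, using only operations (smooth pullback, proper pushforward, exterior products) under which the ULA property is preserved.
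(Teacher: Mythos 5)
Your argument is correct in spirit but takes a noticeably longer and more technically loaded route than the paper's own proof. The paper simply observes that, on ULA sheaves, the monoidal functor $j^*\colon \scrD_{\mathrm{ula}}(\Hk_{\calG,O_C})\to\scrD_{\mathrm{ula}}(\Hk_{G,C})$ is an \emph{equivalence}; this is \cite[Proposition 6.12]{AGLR22}, and by \cite[Remark 2.1.3.8]{Lur17} an $\infty$-operad map is an equivalence as soon as it is one on underlying $\infty$-categories. With that in hand, $\scrZ = i^* \circ (j^*)^{-1}$ is a composition of $\bbE_1$-monoidal functors and nothing more needs to be verified. Your approach instead treats $Rj_*$ as the lax-monoidal right adjoint of $j^*$ and checks by hand that the lax structure maps are equivalences, factoring the comparison map through smooth base change, proper base change, and a K\"unneth isomorphism. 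Each of these steps is plausible in the abstract six-functor framework, but the K\"unneth step for $Rj_*$ on ULA exterior products is precisely the same non-trivial input that goes into \cite[Proposition 6.12]{AGLR22}, so you are not avoiding the key lemma, only obscuring it. You also lean on \Cref{prop_nearby_cycles_ula} for the assertion that $Rj_*$ lands in $\scrD_{\mathrm{ula}}(\Hk_{\calG,O_C})$, but that proposition concerns the composite $R\Psi=i^*Rj_*$ rather than $Rj_*$ alone; the correct reference is again \cite[Proposition 6.12]{AGLR22}. Finally, your remark that ``the upgrade to an $\bbE_1$-monoidal structure is then formal'' should be spelled out: a lax monoidal functor whose structure maps are equivalences is indeed strict monoidal by general $\infty$-categorical nonsense, but this is exactly the sort of statement made rigorous by \cite[Remark 2.1.3.8]{Lur17}, which is the coherence device the paper actually invokes. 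In short, the route via $(j^*)^{-1}$ gives you the coherences for free, whereas your base-change decomposition buys nothing and re-proves the same fact.
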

	
	\begin{proof}
		Recall that $\scrZ=i^*Rj_*$, where $j$ and $i$ denote the inclusion of the generic and special fibers of $\Hk_{\calI, O_C}$. We have seen that both pullback functors $j^*$ and $i^*$ are monoidal, thanks to \Cref{lem_ula_conv}. We claim that on ula objects, $j^*$ induces an isomorphism of $\infty$-operads. This can be checked at the level of underlying $\infty$-categories by \cite[Remark 2.1.3.8]{Lur17}, and that statement is \cite[Proposition 6.12]{AGLR22}.
	\end{proof}
	
	There is a more general version of the Hecke stack that can be obtained by not taking $\Spd O$ as the base, but allowing products with itself over $\Spd k$.
	
	\begin{definition}
		Let $S_i \to \Spd O$, $i=1,\dots,d$ be finitely many v-sheaves over $O$. We define the Hecke stack $\Hk_{\calG,S}$ with $S=S_1\times \dots \times S_d$ as the classifying stack of modifications of $\calG$-bundles over the completion of the relative curve $\calY_S$ at the union of the $d$ Cartier divisors specified by the $d$ projections $S\to S_i$, see \cite[Definition VI.1.6]{FS21}.
	\end{definition}
	
	A similar variant exists for the convolution Hecke stacks, where one allows compositions of several modifications instead of modifying simultaneously at several divisors. We are now able to recall the fusion interpretation from \cite[Section VI.9]{FS21} that refines the convolution product and induces symmetry constraints on perverse sheaves. Recall that a perverse t-structure on $\scrD_{\mathrm{\acute{e}t}}(\Hk_{G,C})$ was defined by Fargues--Scholze in \cite[Definition/Proposition VI.7.1]{FS21}.
	
	During the rest of this subsection and the next one, we are going to abbreviate the categories $\scrD_{\mathrm{ula}}(\Hk_{\calG, S})$ by $\scrC_S$, where $S$ is some v-sheaf over $(\Spd O)^n$. If $S$ is the product of the v-sheaf associated with Huber rings $(R_i,R_i^\circ)$ over $O$, then we will write $\scrC_{R_1\times \dots \times R_n}$ for $\scrC_S$ so as to highlight each of the factors. The full subcategory of perverse sheaves will be abbreviated by $\scrP_S$ and $\scrP_{R_1\times \dots \times R_n}$, respectively.
	
	\begin{proposition}\label{prop_fusion}
		The full subcategory $\scrP_C \subset \scrC_C$ of perverse sheaves is stable under convolution and it extends to a symmetric monoidal $\infty$-category.
	\end{proposition}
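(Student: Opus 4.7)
The plan is to import the Mirković--Vilonen fusion construction to the $p$-adic setting, using the Beilinson--Drinfeld Hecke stack $\Hk_{G, C\times C}$, which classifies pairs of modifications of $G$-bundles along two varying $B_{\mathrm{dR}}^+$-divisors on the Fargues--Fontaine curve. Over the disjoint locus $U \subset (\Spd C)^2$ where the two divisors stay apart, this stack identifies with the pullback of $\Hk_{G,C} \times \Hk_{G,C}$; over the diagonal $\Delta \simeq \Spd C$, it identifies with the convolution stack $\Hk_{G,C} \tilde{\times} \Hk_{G,C}$, which is ind-proper onto $\Hk_{G,C}$ via multiplication. The convolution product on $\scrP_C$ will be realized geometrically as extension from $U$ through the whole Beilinson--Drinfeld stack, restriction to $\Delta$, and pushforward along multiplication.

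For the stability of $\scrP_C$ under convolution, given $A, B \in \scrP_C$, I would form the shifted exterior product $A \boxtimes B$ on $\Hk_{G,U}$, which is ULA and perverse. Using the ULA property along $(\Spd C)^2$, this extends to a ULA-perverse sheaf $\calF$ on $\Hk_{G, C\times C}$ via intermediate extension along $j$. Its $*$-restriction to the diagonal stays perverse, since restriction of ULA perverse sheaves to a Cartier divisor is $t$-exact up to shift by \cite[Section VI.7]{FS21}. Pushforward along the ind-proper multiplication is then perverse $t$-exact on such ULA sheaves by the usual semismallness statement on Schubert strata, yielding a perverse sheaf on $\Hk_{G,C}$ naturally identified with $A \ast B$.

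For the symmetric monoidal enhancement, the involution $\sigma$ swapping the two factors of $(\Spd C)^2$ acts on $\Hk_{G,C\times C}$ and restricts over $U$ to the canonical commutativity isomorphism on Cartesian products of Hecke stacks, which transports $A \boxtimes B$ to $B \boxtimes A$. To propagate this braiding to the diagonal and hence to convolution, the essential ingredient is the full faithfulness of $j^*\colon \scrP^{\mathrm{ula}}_{C\times C} \to \scrP^{\mathrm{ula}}_U$ on the subcategory produced by the fusion construction above. Higher coherences follow formally by running the analogous fusion setup over $(\Spd C)^n$ for every $n \geq 1$ and packaging the outputs via the $\bbE_\infty$-monoidal enhancement of the correspondence formalism of \cite{Man22}, in the spirit of the argument for \Cref{prop_Z_monoidal}.

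The principal obstacle is the full faithfulness of $j^*$ above: away from the diagonal one only sees the external product and loses the convolution structure, so uniqueness of a perverse extension across $\Delta$ is highly nontrivial. In equal characteristic one invokes Artin vanishing and smoothness of a base curve, but neither tool is available in mixed characteristic; instead, I would rely on the kimberlite-theoretic computation of nearby cycles from \cite{GL22} to rule out spurious perverse extensions along the diagonal, which is exactly what is advertised in the introduction.
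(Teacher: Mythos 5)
Your high-level plan matches the one the paper uses — run the fusion trick over the Beilinson--Drinfeld Hecke stack $\Hk_{G,C^2}$, split the correspondence $\Hk_{G,C}^2 \leftarrow \Hk_{G,C}\tilde\times\Hk_{G,C} \to \Hk_{G,C^2}$, and get the braiding from the swap involution plus full faithfulness of pullback away from the diagonal — but you have misidentified the source of the decisive full faithfulness, and that misidentification turns into a genuine gap in the argument.

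Over $(\Spd C)^n$, the statement you need — that $j^*\colon \scrP_{C^n} \to \scrP_{C^n}^{\neq}$ is fully faithful on ULA perverse sheaves — is already proved by Fargues--Scholze, namely \cite[Proposition VI.9.3]{FS21}, and this is the reference the paper cites. The kimberlite nearby-cycles computation of \cite{GL22} is a different statement: it concerns the map $(\Spd C)^n \times (\Spd k)^m \hookrightarrow (\Spd O_C)^n \times (\Spd k)^m$, i.e.\ the inclusion of the analytic locus into a formal model with a genuine special fiber. The sentence in the introduction you quote refers to ``the disjoint locus of $(\mathrm{Spd}(O_C))^2$'', not of $(\Spd C)^2$; the $O_C$ is essential. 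That kimberlite input enters only in the proof of \Cref{thm_assoc_center} (via \Cref{lem_fully_faithful_generic_pullback} and \Cref{lemma_nearby_cycles_prod_points_new}), where one compares ULA sheaves on $\Hk_{\calG,O_C^n\times k^m}$ with their pullbacks to the generic fiber. It has no bearing on the purely analytic question over $(\Spd C)^2$ that \Cref{prop_fusion} requires, and trying to apply it there would be a category error: there is no kimberlite reduction at the diagonal of $(\Spd C)^2$.

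A secondary remark: for stability of $\scrP_C$ under convolution, you propose to reprove it via intermediate extension from the disjoint locus, restriction to the diagonal, and a stratified semismallness argument for the multiplication map. The paper simply cites \cite[Proposition VI.8.1]{FS21}, whose proof goes via hyperbolic localization/constant terms rather than semismallness; and the fusion construction in \cite{FS21} is organized via nearby cycles across the diagonal, not IC-extension. Your alternative route is plausible --- the geometry of Schubert strata in the $B_{\mathrm{dR}}$-Grassmannian is the same as in equal characteristic, so the convolution morphism should still be stratified-semismall, and for ULA sheaves the IC-extension from the disjoint locus should agree with the nearby-cycles extension --- but you would need to verify those two identifications, which are nontrivial and are precisely the work that \cite{FS21} spares you. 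The paper's proof is deliberately a citation plus a short diagram chase showing how to package the braiding $\infty$-coherently via the correspondence formalism; the real content of the fusion statement over $C$ is already in \cite{FS21}.
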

	
	\begin{proof}
		Stability under convolution can be found in \cite[Proposition VI.8.1]{FS21} and the symmetric monoidal structure follows from \cite[Definition/Proposition VI.9.4]{FS21}. We explain the second part, which will prove useful later on. 
		We have to prove that the bifunctor
		\begin{equation}
			\scrP_C \times \scrP_C \to \scrP_C
		\end{equation}
		is monoidal, where the left side carries the monoidal structure. This will endow $\scrP_C$ with the structure of a braided monoidal category (but we will not check explicitly that it is symmetric). To get the braiding, we extend the map into two commutative triangles in $\mathrm{Cat}_\infty$:
		\begin{equation}\label{eqn_diagram_fusion}\begin{tikzcd}
				& \scrP_{C^2}^{\neq} \\
				\scrP_C \times \scrP_C& \scrP_{C^2}\\
				& \scrP_C
				\arrow[from=2-1, to=2-2]
				\arrow[from=2-1, to=1-2]
				\arrow[from=2-1, to=3-2]
				\arrow[from=2-2, to=1-2]
				\arrow[from=2-2, to=3-2]
			\end{tikzcd}
		\end{equation}
		where the vertical arrows are the pullbacks to the obvious strata of $(\mathrm{Spd}C)^2$ given by the diagonal and its complement, and the middle map is induced by the fusion correspondence
		\begin{equation}
			\Hk_{G,C}^2 \leftarrow \Hk_{G,C} \tilde{\times} \Hk_{G,C} \to \Hk_{G,C^2}.
		\end{equation} The vertical maps are clearly monoidal and the upper one is fully faithful by \cite[Proposition VI.9.3]{FS21}, so it suffices to see that the upper diagonal map is monoidal.
		Indeed, the loop groups $L_{C^2}^+G$ and $L_{C^2}G$ naturally factor into a product away from the diagonal, so we get an induced map $\scrH_C^\otimes \times \scrH_C^\otimes \leftarrow \scrH_{C^2}^{\otimes,\neq} $ of functors $\mathrm{Assoc}^\otimes \to \mathrm{Corr}^\otimes(\mathrm{vSt})$ regarded as a correspondence to the left by functoriality of endomorphism objects.
		We note here that the discussion in \Cref{lem_ula_conv} on the monoidal structure of the adjunction between base change and the forgetful functor implies that for any associative algebra $A$ in $\mathrm{Corr}(\mathrm{vSt}_{C^2})$, e.g., $\scrH_C^\otimes \times \scrH_C^\otimes$, the natural map $A\to A_{\neq}$ is a morphism of associative algebra.
		Indeed, the unit of the adjunction is lax-symmetric monoidal for general reasons.
		Here, $A_{\neq}$ denotes the restriction of $A$ to the complement of the diagonal, but seen as an object over $C^2$.
		This yields our desired monoidal map upon applying $\scrD^\otimes$ and restricting to the monoidal subcategories of perverse sheaves.
	\end{proof}
	
	For the reader's convenience, let us explain more informally what is happening in the above proof. Let $\alpha \colon \langle n \rangle \to \langle m \rangle$ be a $1$-morphism in $\mathrm{Assoc}^\otimes$. Notice that we have a composition of two correspondences, namely the fusion and the diagonal ones:
	\begin{equation}
		\Hk_{G, C}^n \leftarrow \Hk^\alpha_{G,C^n} \rightarrow \Hk_{G,C^n}^m \leftarrow \Hk_{G,C}^m
	\end{equation}
	where the first two maps are the natural pro-smooth projection and ind-proper multiplication, and the last is a diagonal closed immersion. It is clear that the fiber product is the usual correspondence defining the monoidal structure on $\scrD_{\mathrm{ula}}(\Hk_{\calG,S})$. Now the advantage of the first correspondence lies in the fact that, after excluding the partial diagonals, the stack $\Hk_{\calG, S}^\alpha$ decomposes as a product of regular Hecke stacks, so that the order of the modifications (in other words the ordering on the fibers of $\alpha$) no longer matters. If we restrict to the full subcategory $\scrP_C \subset \scrC_C$ of perverse sheaves, then pullback $\scrP_{C^n} \to \scrP_{C^n}^{\neq}$ away from the union of the partial diagonals of $(\Spd C)^n$ is fully faithful, see \cite[Proposition VI.9.3]{FS21}. This yields the various symmetry constraints, as desired.

	\subsection{Associative center}
	
	Let $\scrC$ be a monoidal $\infty$-category. One may attach to $\scrC$ another monoidal $\infty$-category called its associative center and denoted $\scrZ(\scrC)$. Observe that the $\infty$-category $\mathrm{End}(\mathrm{\scrC})=\mathrm{Fun}(\scrC,\scrC)$ is left-tensored over $\scrC$ via the latter's monoidal structure. We define $\scrZ(\scrC):=\mathrm{End}_{\scrC\times \scrC}(\scrC)$ of $\scrC$-bilinear endomorphisms in the sense of \cite[Definition 4.6.2.7]{Lur17}. Since these are monoidal $\infty$-categories with tensor structure given by composition, and $\scrC$-bilinearity is stable under composition, we see that the full subcategory $\scrZ(\scrC)$ inherits a monoidal structure. It comes equipped with a natural monoidal map $\mathrm{ev}_1\colon \scrZ(\scrC) \to \scrC $ given by evaluation at the monoidal unit. Also note that this definition coincides by \cite[Theorem 4.4.1.28, Theorem 5.3.1.30]{Lur17} with the center of an associative algebra of $\mathrm{Cat}_\infty$ in the sense of \cite[Definition 5.3.1.12]{Lur17}.

	\begin{theorem}\label{thm_assoc_center}
		The monoidal functor $\scrZ: \scrC_C \to \scrC_k$ lifts monoidally to the center $\scrZ(\scrC_k)$.
	\end{theorem}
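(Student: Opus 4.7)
The plan is to use the universal property of the $\bbE_1$-center from \cite[Theorem 5.3.1.30]{Lur17}: a monoidal lift $\scrC_C \to \scrZ(\scrC_k)$ of $\scrZ$ is the same data as a $\scrC_C$-action on $\scrC_k$ that commutes with the standard left $\scrC_k$-action on itself. Equivalently, I must extend the bimodule structure $\scrC_k \otimes \scrC_k \to \scrC_k$ to a monoidal action of $\scrC_C \otimes \scrC_k$ on $\scrC_k$ whose restriction to the first factor is $\scrZ$ followed by convolution. In other words, the task reduces to witnessing the centrality of $\scrZ$ with all higher coherences.

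Geometrically, this commuting structure will come from the mixed convolution Hecke stack classifying pairs of successive modifications, one at a point of $\Spd O_C$ and the other at a point of $\Spd k$. Following the internal-endomorphism-object construction from Subsection~\ref{sec:conv_fusion}, I would encode the resulting functorialities as a map of $\infty$-operads
\begin{equation*}
\mathrm{Assoc}^\otimes \times \mathrm{Assoc}^\otimes \to \mathrm{Corr}^\otimes(\mathrm{vSt})
\end{equation*}
covering $\scrH^\otimes_{O_C}$ on the first factor and $\scrH^\otimes_k$ on the second. The key geometric input is fusion in the sense of Proposition~\ref{prop_fusion}: on the generic fiber $\Spd C$ of the first coordinate, the Beilinson--Drinfeld Grassmannian decomposes away from the diagonal as a product, so the $C$-modification can be placed either before or after the $k$-modification without changing the underlying sheaf. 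For ula sheaves this supplies the commutation isomorphism required for centrality.

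After composing with $\scrD_\et^\otimes$ and the $O_C$-nearby cycles functor (and invoking Proposition~\ref{prop_nearby_cycles_ula} to stay inside ula sheaves), one extracts from this data a monoidal map $\scrC_C \otimes \scrC_k \to \scrC_k$, factoring through the appropriate category of $\scrC_k$-bimodules, that restricts to $\scrZ$ on the first factor and to $\mathrm{id}$ on the second, with the fusion swap descending to the centrality datum. Feeding the latter into the universal property recalled above then yields the sought-after monoidal lift $\scrZ \colon \scrC_C \to \scrZ(\scrC_k)$.

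The main obstacle will be the homotopy-coherent bookkeeping: at the level of underlying natural isomorphisms, centrality $\scrZ(V) \ast F \cong F \ast \scrZ(V)$ is already essentially present in \cite[Section 8]{AGLR22}, but promoting it to an $\bbE_1$-monoidal functor into $\scrZ(\scrC_k)$ requires a coherent operadic assembly of the mixed Hecke stacks that fits into Mann's six-functor formalism of correspondences. A secondary subtlety is that the fusion decomposition on the $C$-side is strictly available only for perverse sheaves (Proposition~\ref{prop_fusion}), so one must use the full-faithfulness of pullback from $\Hk_{G,C^n}$ to the complement of the diagonals together with the monoidality of $\scrZ$ from Proposition~\ref{prop_Z_monoidal} to propagate the commutativity from $\scrP_C$ to all of $\scrC_C$.
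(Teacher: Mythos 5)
Your high-level strategy is the same as the paper's: reduce via the universal property of the $\bbE_1$-center to the monoidality of the bilinear action $\scrC_C \times \scrC_k \to \scrC_k$, geometrize this via mixed convolution Hecke stacks, and then exploit the "BD-Grassmannian splits away from the diagonal" phenomenon to get the centrality datum. Where you go astray is in identifying the precise geometric mechanism and the key technical lemma.

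The centrality isomorphism needed here does \emph{not} rest on the fusion property of Proposition~\ref{prop_fusion}, which concerns $(\Spd C)^2$ away from its diagonal and only holds for \emph{perverse} sheaves. The relevant fact for this theorem is much more elementary: the two modification loci $\Spd C$ and $\Spd k$ have \emph{automatically disjoint} images inside $\Spd O_C$, so the loop groups $L^+_{C\times k}\calG$ and $L_{C\times k}\calG$ split as a direct product without any genericity condition or perversity restriction. The order of the $C$-modification and the $k$-modification is therefore interchangeable on all of $\scrC_{C\times k}$, not just on perverse sheaves. Your closing paragraph, where you argue you must propagate commutativity from $\scrP_C$ to all of $\scrC_C$ using full-faithfulness of pullback to the complement of diagonals in $C^n$, is therefore a detour that isn't needed: the perversity restriction only becomes relevant when one upgrades the lift to an $\bbE_2$-map (Theorem~\ref{thm_E2_mor}, where two $C$-modifications have to be swapped).

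The technical lemma you actually need — and do not mention — is the full faithfulness of the pullback $j^* \colon \scrC_{O_C^n \times k^m} \to \scrC_{C^n \times k^m}$ (\Cref{lem_fully_faithful_generic_pullback}), which in turn reduces to the computation $Rj_*\Lambda = \Lambda$ on products of $\Spd O_C$ and $\Spd k$ (\Cref{lemma_nearby_cycles_prod_points_new}). This replaces the $C^n$-diagonal-complement full-faithfulness you invoke: the point is to compare the mixed convolution Hecke stacks over $O_C\times k$, where the geometry is nontrivial, with those over $C\times k$, where everything splits cleanly; it is not about avoiding diagonals among the $C$-points. Once this full faithfulness is in hand, the rest of your plan (assemble the correspondences into a map covering $\scrH^\otimes_{O_C}$ and $\scrH^\otimes_k$, apply $\scrD^\otimes_\et$, compose with the equivalence $\scrC_C \simeq \scrC_{O_C}$ from \Cref{prop_Z_monoidal} and with $i^*$) goes through essentially as you outline it.
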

	
	\begin{proof}
		According to \cite[Definition 5.3.1.12]{Lur17}, this amounts to showing that the left action morphism $\scrZ_{\mathrm{lc}}:=\ast \circ (\scrZ,\mathrm{id})\colon \scrC_C\times \scrC_k \to \scrC_k$ is monoidal, where the left side is an associative algebra in $\mathrm{Cat}_\infty$ by multiplying coordinates separately.\footnote{Indeed, in \cite[Definition 5.3.1.12]{Lur17} we set $\mathcal{O}^\otimes=\mathbb{E}_1^\otimes$, $\mathcal{D}^\otimes=N(\mathrm{Fin}_\ast)$ and $\mathcal{C}^\otimes=\mathrm{Cat}_\infty$ with the cartesian symmetric monoidal structure (in particular, we are in the situation of \cite[Remark 5.3.1.13]{Lur17}). The morphism $q'$ in \cite[Definition 5.3.1.12]{Lur17} expresses the fact that the cartesian symmetric monoidal structure on $\mathrm{Alg}_{\mathcal{O}}(\mathcal{C})$ exhibits it as left-tensored over itself. Then considering the universal property of centers in \cite[Definition 5.3.1.6]{Lur17} (with $q$ given by the previous $q'$) unravels to the given claim.} We consider the following union of two commutative squares in $\mathrm{Cat}_\infty$:
		\begin{equation}\label{eq_diagram_central}\begin{tikzcd}
				\scrC_C\times \scrC_k& \scrC_{C\times k}\\
				\scrC_{O_C} \times \scrC_{k}& \scrC_{O_C\times k}\\
				\scrC_{k} \times \scrC_k & \scrC_{k^2} 
				\arrow[from=2-2, to=1-2]
				\arrow[from=2-1, to=1-1]
				\arrow[from=1-1, to=1-2]
				\arrow[from=2-1, to=2-2]
				\arrow[from=2-1, to=3-1]
				\arrow[from=2-2, to=3-2]
				\arrow[from=3-1, to=3-2]
			\end{tikzcd}
		\end{equation}
		where the vertical maps are pullback functors and hence clearly monoidal by \Cref{lem_ula_conv}, and the horizontal maps are given by the fusion product (and therefore are not a priori monoidal). Note also that $\scrC_{k^2}=\scrC_k$ as we take products over $k$ itself and so the lower horizontal map is simply convolution. Since the left upper morphism is an equivalence, we recover $\scrZ_{\mathrm{lc}}$ by taking an inverse and composing across the left lower edge of the diagram. Since the right upper map is fully faithful by \Cref{lem_fully_faithful_generic_pullback} below, it suffices to monoidally enhance the upper horizontal map. But this follows as in the case of the fusion map $\scrP_C^2 \to \scrP_{C^2}^{\neq}$ of perverse sheaves away from the diagonal: indeed, the loop groups $L^+_{C\times k}\calG$ and $L_{C\times k}\calG$ split as a direct product of the loop groups over $C$ and $k$, so we get an equivalence by functoriality of endomorphism objects (again, this relies on the discussion in \Cref{lem_ula_conv} regarding monoidality of pullback functors).
	\end{proof}
	
	Again for the reader's convenience, we repeat our explanation of our reasoning in terms of $1$-morphisms of $\mathrm{Assoc}^\otimes$. We have to see that morphisms $\alpha\colon \langle n \rangle \to \langle m \rangle$ in $\mathrm{Assoc}^\otimes$ are naturally intertwined with $\scrZ_{\mathrm{lc}}$, i.e., that the diagram below
	\begin{equation}\begin{tikzcd}
			\scrC_C^n\times \scrC_k^n& \scrC_C^m\times \scrC_k^m\\
			\scrC_k^n & \scrC_k^m
			\arrow[from=1-2, to=2-2]
			\arrow[from=1-1, to=2-1]
			\arrow[from=1-1, to=1-2]
			\arrow[from=2-1, to=2-2]
		\end{tikzcd}
	\end{equation} commutes, where the vertical maps are powers of $\scrZ_{\mathrm{lc}}$ and the horizontal ones are induced by $\alpha$.
	
	Notice that the composition across the right arises from the composition of correspondences
	\begin{equation}
		\Hk_{\calG,O_C}^n \times \Hk_{\calG,k}^n \leftarrow \Hk_{\calG,O_C^n \times k^n}^{\gamma[m]\circ\alpha[2]} \to \Hk_{\calG,O_C^n\times k^n}^m
	\end{equation}
	where $\gamma: \langle 2 \rangle \to \langle 1 \rangle$ is active carrying the usual order, $\gamma[m]\colon \langle 2m \rangle \to \langle m \rangle$ denotes its concatenation, and similarly for $\alpha[2]\colon \langle 2n \rangle \to \langle 2m \rangle$. The composition across the left arises instead from the correspondence
	\begin{equation}
		\Hk_{\calG,O_C}^n \times \Hk_{\calG,k}^n
		\leftarrow \Hk_{\calI, O_C^n \times k^n}^{\alpha\circ \gamma[n]} \to \Hk_{\calG, O_C^n \times k^n}^m.
	\end{equation}
	Indeed, we can invoke the monoidal equivalence $\scrC_{C} \simeq \scrC_{O_C}$ proved in \Cref{prop_Z_monoidal} and \cite[Proposition 6.12]{AGLR22}, apply the monoidal functor $\scrD$ to the previous correspondences, and then compose with the pullback $i^*$. 
	
	In order to verify that these maps are naturally isomorphic, we must be able to swap the contribution of each $O_C$-factor adjacent to a $k$-factor. 
	Thanks again to the equivalence $\scrC_{O_C} \simeq \scrC_C$ of \cite[Proposition 6.12]{AGLR22} and the fully faithful embeddings $\scrC_{O_C^n\times k^n} \subset \scrC_{C^n \times k^n}$ of \Cref{lem_fully_faithful_generic_pullback} proved below, we are reduced to comparing the maps after taking the pullback functor $j^*$. But since $\Spd C$ and $\Spd k$ map disjointly to $\Spd O$, both convolution Hecke stacks become isomorphic to $\Hk_{G,C^n}^{\alpha}\times \Hk_{\calG,k^n}^{\alpha}$, so the result is clear. 
	
	The following lemmas were used in the proof of \Cref{thm_assoc_center}:
	
	\begin{lemma}\label{lem_fully_faithful_generic_pullback}
		The natural map $j^*\colon \scrC_{O_C^n\times k^m} \to \scrC_{C^n\times k^m}$ is fully faithful for any $n,m\geq 0$.
	\end{lemma}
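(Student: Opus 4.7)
The plan is to reduce to the case $n=1$, $m=0$, which is exactly the content of Proposition~6.12 of \cite{AGLR22} that was already invoked in the proof of \Cref{prop_Z_monoidal}. First, I would factor $j$ as a composition $j_1 \circ \cdots \circ j_n$, where $j_i$ denotes the partial generic-fiber inclusion that replaces only the $i$-th $O_C$-factor by $C$, leaving all other factors alone. Since a composition of fully faithful functors is fully faithful, it suffices to show each $j_i^*$ is fully faithful on ULA sheaves.

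For a fixed $i$, let $T$ denote the product of the remaining factors (a mixture of copies of $C$, $O_C$, and $k$). The inclusion $j_i$ is the pullback of the absolute inclusion $j\colon \Spd(C) \to \Spd(O_C)$ along the projection $\Spd(O_C) \times T \to \Spd(O_C)$, and the Hecke stack $\Hk_{\calG,O_C\times T}$ is a variant of $\Hk_{\calG,O_C}$ parametrizing modifications along the Cartier divisor furnished by the $i$-th $O_C$-factor, relative to the family of transverse divisors/base points coming from $T$. Full faithfulness of $j_i^*$ amounts to showing that for any ULA sheaf $\calF \in \scrC_{O_C\times T}$, the adjunction unit
\begin{equation}
\calF \longrightarrow Rj_{i,*}\, j_i^*\calF
\end{equation}
is an isomorphism. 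Since an ULA sheaf is detected on geometric points of the base, and the construction of the Hecke stack is compatible with passage to such points in the factor $T$, one can check this pointwise over $T$, which exactly reduces the claim to the one-variable equivalence $\scrC_{O_C} \xrightarrow{\sim} \scrC_C$ of \cite[Proposition~6.12]{AGLR22}.

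The only genuine subtlety is in justifying the reduction to the absolute case, i.e., verifying that being ULA relative to the whole base $\Spd(O_C^n\times k^m)$ implies ULA relative to the single factor one wishes to collapse, so that Proposition~6.12 of \cite{AGLR22} can be applied cleanly. This is a formal consequence of the preservation of ULA under smooth base change and under restriction to geometric fibers of the remaining coordinates, together with the fact that these remaining coordinates contribute transverse or collapsed data in the Cartier divisor datum defining $\Hk_{\calG,O_C^n \times k^m}$. I expect this bookkeeping with the ULA property across a product base to be the main (though routine) technical point; the geometric heart of the argument is entirely contained in the single-factor equivalence of \cite{AGLR22}.
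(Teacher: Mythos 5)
Your plan diverges significantly from the paper's, and I think it runs into genuine trouble in two places.

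First, to "check this pointwise over $T$" you need the $*$-pushforward $Rj_{i,*}$ to commute with restriction to a geometric fiber $\bar s \hookrightarrow T$. This is a base change statement for a non-proper (open) immersion in the v-sheaf world, and it is not automatic. The ULA hypothesis on $\calF$ gives base change compatibilities relative to the structure map $\Hk_{\calG, O_C\times T}\to O_C\times T$, but not for the "internal" open immersion $j_i$; you would have to produce an argument for it, and that argument would likely end up reproving some version of what the paper does directly. Second, and more fundamentally, even granting this commutation, the restricted situation is $\Hk_{\calG, C\times\bar s}\to \Hk_{\calG, O_C\times\bar s}$, and $\Hk_{\calG, O_C\times\bar s}$ is still a multi-legged convolution Hecke stack — the leg coming from $\bar s$ is a fixed Cartier divisor which can collide with the moving $O_C$-leg (this is exactly the fusion phenomenon), so the stack does not split as a product of $\Hk_{\calG, O_C}$ with something over $\bar s$. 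Thus \cite[Proposition 6.12]{AGLR22} does not directly apply; you have not actually reduced to the one-variable case. The subtlety you flag at the end (passing from ULA over the product to ULA over a single factor) is not where the real difficulty lies.

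The paper's route is different in structure. Using the ULA property, it reduces $\calF = Rj_*j^*\calF$ not to a claim about a smaller Hecke stack, but to the claim $Rj_*\Lambda=\Lambda$ on the \emph{base} $*_{O_C^n\times k^m}$ itself (\Cref{lemma_nearby_cycles_prod_points_new}). That base computation is genuinely multi-variable: it uses a stratification of $*_{O_C^J}$ by the pieces $*_{C^K}$, the vanishing of partially compactly supported cohomology $R\Gamma(*_{O_C\times C}, j_!\Lambda)=0$ from \cite[Theorem IV.5.3]{FS21} to control the analytic strata, and the kimberlite nearby cycles theorem \cite[Theorem 4.7]{GL22} for the non-analytic point. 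None of these inputs is "just the single-factor case"; in particular the appeal to \cite[Theorem IV.5.3]{FS21} has no counterpart in your outline. So your claim that the geometric heart is entirely contained in the single-factor equivalence of \cite{AGLR22} is not correct, and the missing inputs are precisely what makes the proposed reduction fail.
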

	
	\begin{proof}
		We must show that the unit $A= Rj_*j^*A$ for every ula sheaf. This follows from the ula base change, see \cite[Corollary IV.2.29]{FS21}, and the next lemma.
	\end{proof}
	
	Recall our shorthand notation for the various functors and categories defined over products of $O_C$, $C$, and $k$. In order to avoid cumbersome notation below involving $\mathrm{Spd}$ and lots of brackets, we apply this convention now to the point functor, so that $*_{O_C^n}:=(\Spd O_C)^n$
	
	\begin{lemma}\label{lemma_nearby_cycles_prod_points_new}
		If $j\colon *_{C^n}:=(\Spd C)^n \to (\Spd O_C)^n=: *_{O_C^n}$, then $Rj_*\La=\La$.
	\end{lemma}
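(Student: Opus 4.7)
The strategy is to induct on $n$, with the case $n=1$ encoding the absence of monodromy (since $C$ is algebraically closed) and the inductive step following from base change along coordinate projections.

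For the base case $n=1$, let $i\colon *_{\bar k} \hookrightarrow *_{O_C}$ be the closed complement of $j$. The unit $\Lambda \to Rj_*\Lambda$ is tautologically an isomorphism after restriction via $j^*$, so it suffices to check that $i^*Rj_*\Lambda \simeq \Lambda$. But by definition this restriction is the nearby cycles $R\Psi\Lambda$ for the trait $\Spd(O_C)$, i.e., the functor used throughout the paper to define $\scrZ$. Since the absolute Galois group of $C$ is trivial, the inertia action vanishes and $R\Psi \Lambda = \Lambda$, as desired.

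For the inductive step with $n \geq 2$, factor $j$ as a composition
\begin{equation*}
*_{C^n} \xrightarrow{j_1} *_{O_C} \times *_{C^{n-1}} \xrightarrow{j_2} *_{O_C^n},
\end{equation*}
where $j_1 = \mathrm{id}_{*_{O_C}} \times j'$ for $j'\colon *_{C^{n-1}} \to *_{O_C^{n-1}}$ the inclusion from the induction hypothesis, and $j_2 = j \times \mathrm{id}_{*_{C^{n-1}}}$ comes from the $n=1$ case. By base change along the projection $*_{O_C} \times *_{C^{n-1}} \to *_{O_C^{n-1}}$ onto the last $n-1$ coordinates, together with the inductive hypothesis, we obtain $R(j_1)_*\Lambda \simeq \Lambda$. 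Similarly, base change along the projection $*_{O_C^n} \to *_{O_C}$ onto the first coordinate, combined with the base case, gives $R(j_2)_*\Lambda \simeq \Lambda$. Composing, $Rj_*\Lambda \simeq R(j_2)_*R(j_1)_*\Lambda \simeq \Lambda$.

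\noindent\textbf{Main obstacle.} The delicate point is justifying the two base change isomorphisms, since the relevant projections are neither proper nor smooth in the usual sense. However, the maps are simply powers of the structure map $\Spd(O_C) \to *$, and the sheaves in question are constant, so this reduces to a Künneth-type statement for constant coefficients on products of $\Spd(O_C)$. For torsion $\Lambda$ one can instead argue directly at the level of stalks: a geometric point of $(\Spd O_C)^n$ partitions the coordinates into those landing in $\Spd C$ and those landing in $\Spd \bar k$, and the stalk of $Rj_*\Lambda$ factors accordingly into a tensor product of local nearby-cycles contributions, each of which equals $\Lambda$ by the $n=1$ case. The $\ell$-adic statement then follows by the limit-and-invert procedure recalled in the preamble.
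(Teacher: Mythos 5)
Your base case and inductive step both paper over genuine difficulties that the paper's own proof has to confront.

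For $n=1$: the assertion that $R\Psi\Lambda=\Lambda$ because the absolute Galois group of $C$ is trivial is not an argument in the v-sheaf world. Unlike the classical setting of a strictly henselian trait, the nearby cycles $i^*Rj_*\Lambda$ along $\Spd C \hookrightarrow \Spd O_C$ are computed in the étale site of the v-sheaf $\Spd O_C$, whose neighborhoods of the closed point are not a priori controlled by the formal scheme $\Spf O_C$; there is no reason, without a theorem, that this stalk can't be large. The paper cites precisely such a theorem, namely the kimberlite algebraicity result \cite[Theorem 4.7]{GL22}, which identifies the nearby-cycles stalk with cohomology in the (trivial) étale site of the kimberlite $*_{O_C}$. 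That is what replaces your "no monodromy" slogan and it is a genuine input, not a formality.

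For $n\geq 2$: your two base change isomorphisms are not available. Pushforward $Rj_*$ does not commute with pullback along the coordinate projections $*_{O_C}^n \to *_{O_C}^{n-1}$: these maps are neither smooth nor proper, and you correctly flag this yourself. But your fallback, the "Künneth-type statement" or equivalently the claim that the stalk of $Rj_*\Lambda$ at a point with some coordinates in $\Spd C$ and others in $\Spd\bar k$ factors into a tensor product of $n$ local nearby-cycles contributions, is exactly the nontrivial content, not a route around it. The paper never asserts such a factorization. Instead, to compute the stalk on a mixed stratum $*_{C^K}$, it invokes the partial compactly supported cohomology vanishing of \cite[Theorem IV.5.3]{FS21} to kill the $j_!$-term of the open-closed triangle one coordinate at a time, plus \cite[Theorem 19.5]{Sch17} for $R\Gamma(*_{C^K},\Lambda)=\Lambda$, and reserves the kimberlite theorem for the remaining non-analytic point $*_k$. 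Both of those are deep inputs that your sketch omits. As written, the proposal assumes the conclusion in disguised form at both steps.
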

	
	\begin{proof}For reasons that will become clear during our induction argument, we replace the exponent $n$ by a finite set $J$ during our proof.
		If $\lvert J\rvert =1$, this follows already from \cite[Theorem 4.7]{GL24} applied to the kimberlite $*_{O_C}$, since its reduction equals $*_{k}$ and hence nearby cycles are per definition algebraic, so they can be calculated via the étale site for kimberlites, which is trivial.
		
		If $\lvert J\rvert =2$, then we first compute the stalk of $Rj_*\La$ at $*_{ k\times C}$. We know that partially compactly supported cohomology vanishes by \cite[Theorem IV.5.3]{FS21}, so $R\Gamma(*_{O_C \times C}, j_!\La)=0$, compare with \cite[Proposition V.4.2, Remark V.4.3]{FS21}. This means that our sought stalk is given by $R\Gamma(*_{C^2},\La)$ which coincides with $\La$ thanks to \cite[Theorem 19.5]{Sch17}. It remains to compute the stalk at the reduction $*_k$ of the kimberlite $*_{O_C^2}$, so we apply \cite[Theorem 4.7]{GL24} once again.
		
		Finally, in the general case, we can stratify $*_{O_C^J}$ by locally closed subsets of the form $*_{C^K}$ where $K\subset J$. We prove the equality $Rj_*\La=\La$ on the analytic strata (i.e., with $K$ being non-empty) by descending induction on the cardinality of $K$. If $K=J$, there is nothing to show. Otherwise, consider the open set $*_{O_C^{J\setminus K} \times C^K}$ and observe again by \cite[Theorem IV.5.3]{FS21} that $R\pi_*i_!\La=0$ where \begin{equation}
			*_{O_C^{J \setminus K'} \times C^{K'}} \xrightarrow{i} *_{O_C^{J \setminus K} \times C^K} \xrightarrow{\pi}  *_{O_C^{J \setminus K'} \times C^K},
		\end{equation} and $K \subset K'$ has singleton complement. This implies the claim regarding the stratum $*_{C^K}$ again thanks to \cite[Theorem 19.5]{Sch17}. As for the non-analytic point $*_k$ of $*_{O_C^J}$, we invoke \cite[Theorem 4.7]{GL24} again for the last time.   
	\end{proof}
	
	Next, we prove that the symmetry constraints that appear in the full subcategory $\scrP_C  \subset \scrC_C$ of perverse sheaves are compatible with the braidings in the associative center $\scrZ(\scrC_k)$. While $\scrP_C$ is symmetric monoidal, $\scrZ(\scrC_k)$ is not. Instead, the associative center carries a structure over the $\infty$-operad $\bbE_2^\otimes$ of little squares, see \cite[Definition 5.1.0.2]{Lur17}. This arises more formally as the tensor product in $\mathrm{Op}_\infty$ of $\bbE_1^\otimes$ with itself, see \cite[Theorem 5.1.2.2]{Lur17}. Here, we identify $\bbE_1^\otimes$ with $\mathrm{Assoc}^\otimes$ via \cite[Example 5.1.0.7]{Lur17}. Our assertion that associative centers carry an $\bbE_2^\otimes$-structure is \cite[Remark 5.3.1.13]{Lur17}, which explains that they can be regarded as associative algebras in the category of associative algebras of $\mathrm{Cat}_{\infty}$, the extra associative structure arising by bilinearity.
	
	\begin{theorem}\label{thm_E2_mor}
		The composite $\scrP_C \subset \scrC_C\xrightarrow{\scrZ} \scrZ(\scrC_k)$ is an $\bbE_2$-monoidal map.
	\end{theorem}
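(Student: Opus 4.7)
The plan is to upgrade the proof of Theorem \ref{thm_assoc_center} by exploiting the fusion symmetry on $\scrP_C$ from Proposition \ref{prop_fusion}. By Dunn additivity $\bbE_2^\otimes \simeq \bbE_1^\otimes \otimes \bbE_1^\otimes$, see \cite[Theorem 5.1.2.2]{Lur17}, an $\bbE_2$-enhancement of the $\bbE_1$-monoidal map $\scrP_C\to \scrZ(\scrC_k)$ already produced in Theorem \ref{thm_assoc_center} amounts to providing a second $\bbE_1$-monoidal structure that commutes with the first. The first $\bbE_1$-structure on the target $\scrZ(\scrC_k)$ is the outer one coming from composition of bilinear endofunctors, and the second is the inner one arising from $\scrC_k$-bilinearity itself; the task is to show that the fusion braiding on $\scrP_C$ transports under $\scrZ$ to this inner $\bbE_1$-structure.

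The geometric input is the doubled Hecke stack over $(\Spd O)^2$. I would replace the diagram \eqref{eq_diagram_central} from Theorem \ref{thm_assoc_center} by its doubled analogue, in which the two leftmost columns arise from the fusion correspondence $\Hk_{\calG,O_C} \times \Hk_{\calG,O_C}\leftarrow \Hk_{\calG,O_C}\tilde{\times} \Hk_{\calG,O_C}\to \Hk_{\calG,O_C^2}$ before the usual collapse to the special fiber. Over $(\Spd C)^2 \setminus \Delta$, the loop groups $L^+_{C^2}\calG$ and $L_{C^2}\calG$ split as products, yielding the monoidality of the upper fusion row and the resulting braiding, exactly as in the last part of the proof of Proposition \ref{prop_fusion}. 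Over $\Spd k$, the fusion correspondence degenerates to the usual convolution on a single copy of $\Hk_{\calG,k}$, and nearby cycles transports the fusion braiding from the generic to the special fiber.

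At the technical level, one extends $\scrH^\otimes$ from \S\ref{sec:conv_fusion} to a map $\bbE_1^\otimes \otimes \bbE_1^\otimes \to \mathrm{Corr}^\otimes(\mathrm{vSt})$ out of the tensor-product operad, using $(\Spd O)^2$-indexed correspondences for the two tensor factors. Composing with $\scrD_{\mathrm{\acute{e}t}}^\otimes$, using the fully faithful embeddings provided by Lemmas \ref{lem_fully_faithful_generic_pullback} and \ref{lemma_nearby_cycles_prod_points_new}, and evaluating at $\scrP_C \subset \scrC_C$ should yield two commuting $\bbE_1$-maps into $\scrZ(\scrC_k)$. The main obstacle is the operadic bookkeeping: one must verify rigorously that the fusion braiding on $\scrP_C$ coincides with the inner braiding in $\scrZ(\scrC_k)$ after applying $\scrZ$, which is the fusion-centrality counterpart of the compatibility checks already performed implicitly in the proofs of Proposition \ref{prop_fusion} and Theorem \ref{thm_assoc_center}.
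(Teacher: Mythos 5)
Your proposal is essentially the paper's proof: invoke Dunn additivity, build the fusion correspondence over $O_C^2$, and observe that it degenerates both to the fusion triangles over $C^2$ (yielding the braiding on $\scrP_C$) and to the centrality diagram over $O_C\times k$ (yielding the lift to $\scrZ(\scrC_k)$); the two $\bbE_1$-structures are then compared via full faithfulness of pullback away from the diagonal.

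One caveat worth flagging: the lemmas you invoke (\Cref{lem_fully_faithful_generic_pullback} and \Cref{lemma_nearby_cycles_prod_points_new}) give full faithfulness of pullback from $O_C^n$ to $C^n$, i.e., to the generic fiber, but the step that actually closes the $\bbE_2$ argument is full faithfulness of pullback from $\scrC_{O_C^2}$ to $\scrC_{O_C^2}^{\neq}$, that is, away from the diagonal while staying integral. This map is \emph{not} fully faithful on all of $\scrC_{O_C^2}$. It is only so after restricting to the subcategory $\scrE_{O_C^2} = \scrC_{O_C^2}\times_{\scrC_{C^2}}\scrP_{C^2}$ of ULA complexes that are perverse over $C^2$, which is the content of \Cref{lem_fully_faithful_integers} (whose proof does combine the two lemmas you cite). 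Your instinct to evaluate at $\scrP_C$ is pointing in this direction, but the restriction to $\scrE_{O_C^2}$ must be made explicit at the level of the intermediate diagram over $O_C^2$; without it, the upper vertical arrow $\scrC_{O_C^2}\to\scrC_{O_C^2}^{\neq}$ is not fully faithful and the two $\bbE_1$-structures into $\scrZ(\scrC_k)$ would not demonstrably commute.
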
 
	
	\begin{proof}
		Our goal is verifying that the monoidal map $\scrP_C \to \scrZ(\scrC_k)$ actually respects the extra monoidal structures on both sides in the $\infty$-category of associative algebras. By the fact that $\mathbb{E}_2$-algebras are $\mathbb{E}_1$-algebras in $\mathbb{E}_1$-algebras (\cite[5.1.2.2]{Lur17}), this amounts to checking
		that the following commutative square 
		\begin{equation}\begin{tikzcd}
				\scrP_C\times \scrP_C& \scrP_C \times \scrC_{k}\\
				\scrP_C & \scrC_k
				\arrow[from=1-2, to=2-2]
				\arrow[from=1-1, to=2-1]
				\arrow[from=1-1, to=1-2]
				\arrow[from=2-1, to=2-2]
			\end{tikzcd}
		\end{equation}
		in $\mathrm{Cat}_\infty$ is actually a commutative square in $\mathrm{Alg}_{\bbE_1^\otimes}(\mathrm{Cat}^\times_\infty)$, where the maps are the obvious ones induced by convolution or $\scrZ$ and their monoidal enhancements were defined in \Cref{prop_fusion} and \Cref{thm_assoc_center}.

		Let us recapitulate how the braiding isomorphisms were constructed. For $\scrP_C$, we saw during \Cref{prop_fusion} how to define a monoidal structure on the left vertical map via a pair of commuting triangles.
		In the special fiber, we saw during \Cref{thm_assoc_center} how to define a monoidal structure on the right vertical map (actually, we took the larger category $\scrC_C$ instead of just $\scrP_C$) via a pair of commuting squares.
		
		We must now perform these constructions at once in such a way that they are intertwined. Indeed, we have the following pair of commuting triangles
		\begin{equation}\begin{tikzcd}
				& \scrC_{O_C^2}^{\neq} \\
				\scrC_{O_C} \times \scrC_{O_C}& \scrC_{O_C^2}\\
				& \scrC_{O_C}
				\arrow[from=2-1, to=2-2]
				\arrow[from=2-1, to=1-2]
				\arrow[from=2-1, to=3-2]
				\arrow[from=2-2, to=1-2]
				\arrow[from=2-2, to=3-2]
			\end{tikzcd}
		\end{equation}
		which relate to the previously constructed diagrams via natural pullback functors and passing to certain full subcategories. More precisely, restricting to $C^2$ and to perverse sheaves recovers the diagram \eqref{eqn_diagram_fusion}, while restricting to $O_C \times k$ recovers the diagram \eqref{eq_diagram_central} up to composing across the upper left and the lower left corners. Now, the upper vertical map is not fully faithful, and so we need to restrict to a full subcategory of sheaves where that happens. It suffices to take the category $\scrE_{O_C^2}$ of sheaves which are perverse over $C^2$ by \Cref{lem_fully_faithful_integers} below. 
	\end{proof}
	
	\begin{lemma}\label{lem_fully_faithful_integers}
		Denote by $\scrE_{O_C^n}$ the $\infty$-category given as the fiber product $ \scrC_{O_C^n} \times_{ \scrC_{C^n}} \scrP_{C^n}$. Then, the pullback functor $\scrE_{O_C^n}\to \scrE_{O_C^n}^{\neq}$ is fully faithful.
	\end{lemma}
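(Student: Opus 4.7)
The plan is to reduce the full faithfulness of $j^\ast\colon \scrE_{O_C^n} \to \scrE_{O_C^n}^{\neq}$ to the analogous statement on the generic fiber, which is already at our disposal. Write $j\colon U_O \hookrightarrow (\Spd O_C)^n$ for the open immersion of the complement of the partial diagonals and $i\colon D \hookrightarrow (\Spd O_C)^n$ for the complementary closed immersion. A key observation is that $D$ is supported entirely in the generic fiber $(\Spd C)^n$, since coincidences of untilts only make sense there; this is how $\scrE_{O_C^n}^{\neq}$ is implicitly defined via the $\neq$-superscript on $(\Spd C)^n$.

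Given $A, B \in \scrE_{O_C^n}$, by the adjunction $j^\ast \dashv Rj_\ast$ it suffices to show that the unit $B \to Rj_\ast j^\ast B$ is an isomorphism in $\scrC_{O_C^n}$, or equivalently that $i^! B = 0$. I will factor $i$ as $i = j_\eta \circ i'$, where $j_\eta\colon (\Spd C)^n \hookrightarrow (\Spd O_C)^n$ is the open immersion of the generic fiber and $i'\colon D \hookrightarrow (\Spd C)^n$ is the closed immersion of the partial diagonals inside $C^n$. Since $j_\eta$ is an open immersion, $j_\eta^! = j_\eta^\ast$, and hence $i^! B \simeq (i')^!(B|_{(\Spd C)^n})$.

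By the fiber product definition $\scrE_{O_C^n} = \scrC_{O_C^n} \times_{\scrC_{C^n}} \scrP_{C^n}$, the restriction $B|_{(\Spd C)^n}$ is perverse, so the desired vanishing $(i')^!(B|_{(\Spd C)^n}) = 0$ is precisely the content of the full faithfulness of the pullback $\scrP_{C^n} \to \scrP_{C^n}^{\neq}$, which is \cite[Proposition VI.9.3]{FS21} and was already invoked in the proof of \Cref{prop_fusion}. This yields $i^! B = 0$ and concludes the argument, once one also checks that $j^\ast B$ really lies in $\scrE_{O_C^n}^{\neq}$, which follows because open immersions are t-exact for the perverse t-structure.

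The main obstacle here is purely bookkeeping around the definition of $\scrE_{O_C^n}^{\neq}$: once one confirms that $D$ sits inside the generic fiber and that the fiber-product structure is compatible with restriction to $U_O$, the argument is formal and simply transports the perverse-sheaf vanishing across the fiber product. No further input from the theory of ULA sheaves or nearby cycles beyond \cite[Proposition VI.9.3]{FS21} is required.
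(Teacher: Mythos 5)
Your factorization $i = j_\eta \circ i'$ is where the argument breaks down. You claim that the union $D$ of partial diagonals of $(\Spd O_C)^n$ is supported entirely in the generic fiber $(\Spd C)^n$, on the grounds that ``coincidences of untilts only make sense there.'' This is false. In this setting, two untilts that both lie over the residue field $\bar k$ (i.e.\ both are the degenerate untilt $R^\sharp = R$) give rise to the \emph{same} Cartier divisor $\{\varpi = 0\}$ on the relative curve, and hence to a point of the partial diagonal $\Delta_{ij}$. Concretely, the entire locus where at least two of the $n$ coordinates are special lies inside $D$, and in particular the special fiber of $(\Spd O_C)^n$ is contained in $D$ and is certainly non-empty. (This is implicit in the paper's discussion of $\scrC_{O_C^2}^{\neq}$: restricting that category to $\Spd O_C \times \Spd k$ lands in $\scrC_{C\times k}$, precisely because $(\Spd O_C \times \Spd k)^{\neq} = \Spd C \times \Spd k$.) Therefore $i$ does not factor through $j_\eta$, the identification $i^! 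B \simeq (i')^!(B|_{(\Spd C)^n})$ is unjustified, and the reduction to \cite[Proposition VI.9.3]{FS21} does not go through.

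The paper sidesteps exactly this difficulty by not attempting to compute $i^! B$ directly. Instead, it forms the commutative square with vertices $\scrE_{O_C^n}$, $\scrE_{O_C^n}^{\neq}$, $\scrP_{C^n}$, $\scrP_{C^n}^{\neq}$, where the vertical arrows are restriction to the generic fiber $C^n$ and the horizontal arrows are restriction away from the diagonals. Full faithfulness of the left vertical arrow is \Cref{lem_fully_faithful_generic_pullback}; of the bottom horizontal arrow is [FS, Prop.\ VI.9.3]; and of the right vertical arrow reduces, by the ULA property, to the statement that the derived pushforward of the constant sheaf along $*_{C^n}^{\neq} \to *_{O_C^n}^{\neq}$ is constant, which is the content of \Cref{lemma_nearby_cycles_prod_points_new}. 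Full faithfulness of the top arrow then follows formally. This reduction to a point calculation is essential precisely because the diagonal extends into the special fiber, where one cannot directly invoke perversity over $C^n$.
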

	
	\begin{proof}
		We have a commutative diagram
		\begin{equation}\begin{tikzcd}
				\scrE_{O_C^n}& \scrE_{O_C^n}^{\neq} \\
				\scrP_{C^n} & \scrP_{C^n}^{\neq}
				\arrow[from=1-2, to=2-2]
				\arrow[from=1-1, to=2-1]
				\arrow[from=1-1, to=1-2]
				\arrow[from=2-1, to=2-2]
			\end{tikzcd}
		\end{equation} 
		The left arrow is fully faithful, because it is base changed from $\scrC_{O_C^n} \subset \scrC_{C^n}$ along $\scrP_{C^n} \to \scrC_{C^n}$ as proved in \Cref{lem_fully_faithful_generic_pullback}. The bottom arrow is fully faithful by \cite[Lemma VI.9.3]{FS21}. To show full faithfulness of the right arrow, it suffices to handle the map $\scrC_{O_C^n}^{\neq} \to \scrC_{C^n}^{\neq}$. By the ula property, we are reduced to showing that the derived pushforward of the constant sheaf along $*_{C^n}^{\neq} \to *_{O_C^n}^{\neq}$ is constant, which is also a consequence of \Cref{lemma_nearby_cycles_prod_points_new}. In particular, the top arrow is fully faithful.
	\end{proof}
	
	This concludes the part of our work most intensely concerned with $\infty$-categorical matters. It is natural to wonder if these arguments can be equally carried out in equicharacteristic, e.g., in the setting of \cite{AR}. Certainly, it is possible to pass to the corresponding v-sheaves and perform our arguments again, but this seems unsatisfactory. Some care would be needed when replacing $O_C^n$ by the corresponding scheme-theoretic product, as it does not appear so well-behaved to us. On the other hand, using the nearby cycle construction of \cite[Definition 3.1]{CvdHS24} that goes via pushforward along the original curve $\mathbb{A}^1_k$ and then identifying the right component via $\mathbb{G}_{m,k}$-equivariance, we think our arguments should generalize in a straightforward manner.
	
	\subsection{Perversity}
	
	Recall that for every algebraically closed field $C$, we have a perverse t-structure on $\scrD_{\mathrm{\acute{e}t}}(\Hk_{\calG,C})$ given by strata dimension, see \cite[Section VI.7]{FS21} and \cite{AGLR22}. 
	This restricts to a t-structure on the full subcategory $\scrD_{\mathrm{ula}}(\Hk_{\calG,C})$ of ula sheaves, since $\La$ is a field\footnote{Otherwise, the truncation functors do not generally preserve perfect complexes, an issue that already arises for the natural t-structure.}. It would be possible to define a relative perverse t-structure as in \cite[Definition/Proposition VI.7.1]{FS21}, at least after restricting to ula sheaves, but we will not pursue this avenue here.

	Our main result is the perverse t-exactness of $\scrZ$ at Iwahori level.
	
	\begin{theorem}\label{thm_perversity_new}
		Assume $\calG=\calI$ is Iwahori. Let $B \subset G$ be an arbitrary Borel subgroup. The complex $\scrZ(V)$ is a Wakimoto-filtered perverse sheaf with graded isomorphic to $\scrJ(V|_{\hat{T}^\Gamma})$ for any $V\in \mathrm{Rep}(\hat{G})$.    
	\end{theorem}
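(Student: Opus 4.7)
The plan is to combine the centrality of $\scrZ$ from \Cref{thm_E2_mor} with the Wakimoto criterion \Cref{prop_wak_category_central}, and then to identify the graded pieces via constant terms and geometric Satake. The argument has three movements.

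\textbf{(1) Landing in $\on{Wak}$.} Since $\scrZ$ factors monoidally through the $\bbE_1$-center $\scrZ(\scrC_k)$ by \Cref{thm_E2_mor}, there are canonical centrality isomorphisms $\scrZ(V)\ast\scrJ_{\bar\nu}\simeq\scrJ_{\bar\nu}\ast\scrZ(V)$ for every $\bar\nu\in\bar\bbX_\bullet$. The last clause of \Cref{prop_wak_category_central} then forces $\scrZ(V)\in\on{Wak}$.

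\textbf{(2) Computing the gradeds.} By \Cref{coro_wakimoto_graded_ct}, the Wakimoto gradeds are recovered via $\on{Grad}_{\bar\nu}(\scrZ(V))[\langle 2\rho,\bar\nu\rangle]\simeq \mathrm{CT}_{B^-}(\scrZ(V))_{t_{\bar\nu}}$. To evaluate the right side, I would show that $\mathrm{CT}_{B^-}$ commutes with $R\Psi$. This requires having an integral $\bbG_{m,O}$-action on $\Gr_{\calI,O}$ whose attractor and repeller loci specialize correctly on both fibers, together with a version of Braden's hyperbolic localization compatible with nearby cycles in the ula setting. On the generic fiber, geometric Satake identifies $\mathrm{CT}_{B^-}(\mathrm{Sat}(V))_{t_{\bar\nu}}$ with $V_{\bar\nu}[\langle 2\rho,\bar\nu\rangle]$, where $V_{\bar\nu}$ is the $\bar\nu$-weight space of $V|_{\hat T^\Gamma}$; it vanishes at non-translation $w\in W$ because $\mathrm{Sat}(V)$ is supported on the spherical part of $\Gr_{G,C}$. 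Since $R\Psi$ acts trivially on these geometrically constant stalks, one concludes $\on{Grad}_{\bar\nu}(\scrZ(V))\simeq V_{\bar\nu}$ in perverse degree zero.

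\textbf{(3) Perversity and conclusion.} Each Wakimoto graded $\scrJ_{\bar\nu}(V_{\bar\nu})$ is perverse by \Cref{lem_wak_basic_prop}(1), and only finitely many gradeds are nonzero since $V$ is finite-dimensional. Therefore $\scrZ(V)$ is an iterated extension inside $\on{Wak}$ of perverse sheaves, hence perverse itself, and $\on{Grad}(\scrZ(V))\simeq V|_{\hat T^\Gamma}$ as asserted.

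The main obstacle is step (2), the compatibility of $\mathrm{CT}_{B^-}$ with $R\Psi$ in mixed characteristic. This follows the philosophy of \cite{AGLR22}, but requires a careful integral hyperbolic localization argument to cleanly interchange the two functors; once in hand, the rest of the argument is formal and crucially bypasses the strategy of \cite{AB09,AR}, where perversity had to be established before one could build the Wakimoto filtration.
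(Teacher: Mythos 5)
Your proposal follows essentially the same route as the paper: centrality (\Cref{thm_E2_mor}/\Cref{thm_assoc_center}) together with \Cref{prop_wak_category_central} places $\scrZ(V)$ in $\on{Wak}$, and then \Cref{coro_wakimoto_graded_ct} with the commutation of $\mathrm{CT}_{B^-}$ and nearby cycles computes the gradeds via geometric Satake in the generic fiber. The compatibility you flag as the main obstacle is in fact already available and is cited by the paper from \cite[Corollary 6.14, Equation (6.32)]{AGLR22}; the only small discrepancy is that the graded comes out as $V(w_0\bar\nu)$ rather than $V_{\bar\nu}$, an identification subtlety the paper addresses in the remark immediately following the theorem.
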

	\begin{proof}
		By \Cref{thm_assoc_center}, we see that $\scrZ(V)$ lies in the essential image of the obvious evaluation functor coming from the associative center $\scrZ(\scrD_{\mathrm{ula}}(\Hk_{\calI, k})) $. By \Cref{prop_wak_category_central}, this implies that $\scrZ(V)$ lies in the full subcategory $\mathrm{Wak}$ for our choice of Borel subgroup $B \subset G$. It remains to see that there is a canonical isomorphism \begin{equation}
			\label{eq:1-equation-for-graded-of-central-sheaf_new}
			\mathrm{CT}_{B^-}(\scrZ(V))_{\bar \nu}\simeq V(w_0\bar \nu)[\langle 2\rho, \nu \rangle],
		\end{equation}
		where $w_0$ denotes the longest element of the finite absolute Weyl group of $G$. Indeed, we would then know by \Cref{coro_wakimoto_graded_ct} that $\scrZ(V)$ is a perverse sheaf, because the same would hold for its Wakimoto grading. But notice that constant terms of $\scrZ(V)$ can be calculated applying geometric Satake in the generic fiber, see \cite[Corollary 6.14, Equation (6.32)]{AGLR22}, which yields the desired answer. 
	\end{proof}
	
	\begin{remark}
		There appears to be a discrepancy between the isomorphism $\mathrm{grad}\circ\scrZ(V) \simeq V|_{\hat{T}^\Gamma}$ and \eqref{eq:1-equation-for-graded-of-central-sheaf_new} due to the appearance of the longest element $w_0$ in the latter formula. However, this is due to the fact that we were implicitly using an identification of $T$ with the universal Cartan of $G$, compare with \cite[Remark 1.3.7]{AR}. Conjugating the identification by $w_0$ will not change the $\hat{T}^I$-grading coming from geometric Satake, but will change the one coming from the Wakimoto filtration, thereby fixing the issue.
	\end{remark}
	
	Next, we deduce a few important consequences from this theorem. We start by proving that $\scrZ(V)$ is perverse for general parahorics. This is based on a suggestion of Achar to Cass--van den Hove--Scholbach, see \cite[Theorem 5.30]{CvdHS24}.
	
	\begin{corollary}
		Let $\calG$ be an arbitrary parahoric. Then, $\scrZ(V)$ is a perverse sheaf.
	\end{corollary}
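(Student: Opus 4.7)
The plan is to reduce the parahoric case to the Iwahori case handled by \Cref{thm_perversity_new}. Fix an Iwahori $\calI \subseteq \calG$, denote its central functor by $\scrZ_\calI$, and write $\scrZ_\calG$ for the functor $\scrZ$ appearing in the statement. Consider the projection
\[
	\pi \colon \Fl_{\calI, \bar k} \longrightarrow \Fl_{\calG, \bar k},
\]
which is a smooth proper Zariski-locally trivial $\calG_{\bar k}/\calI_{\bar k}$-bundle of relative dimension $d := \ell(w_0^{\calG})$. Since $\pi$ is surjective, the shifted pullback $\pi^*[d]$ is perverse $t$-exact and conservative on $L^+\calI$-equivariant complexes, so it suffices to verify that $\pi^*\scrZ_\calG(V)[d]$ is perverse on $\Fl_{\calI, \bar k}$.

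The first ingredient is the identity $R\pi_*\scrZ_\calI(V) \simeq \scrZ_\calG(V)$, which I would prove via proper base change. On the generic fiber the positive loop groups $L^+_O\calI$ and $L^+_O\calG$ both reduce to $L^+G$ over $\Spd F$ via the $B^+_{\mathrm{dR}}$-construction, so the corresponding projection of Hecke stacks is the identity there; nearby cycles then commute with the proper pushforward along the integral map $\Gr_{\calI, O_C} \to \Gr_{\calG, O_C}$, giving the desired formula on the special fiber.

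Combining this identity with the projection formula for the flag bundle $\pi$, I would deduce a canonical isomorphism
\[
	\pi^*\scrZ_\calG(V)[d] \simeq \scrZ_\calI(V) \ast A_\calG,
\]
where $A_\calG := \Lambda_{X_0}[d]$ is the perverse IC sheaf of the fiber $X_0 := \pi^{-1}(e_\calG) \subset \Fl_{\calI, \bar k}$, a copy of the smooth partial flag variety $\calG_{\bar k}/\calI_{\bar k}$ of dimension $d$. The sanity check for this identification is the trivial-representation case $V = \Lambda$, where both sides reduce to $\Lambda_{X_0}[d]$.

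Finally, I would verify perversity of the convolution $\scrZ_\calI(V) \ast A_\calG$ by exploiting the Wakimoto filtration of $\scrZ_\calI(V)$ from \Cref{thm_perversity_new}: it suffices to show that each graded piece $\scrJ_{\bar\nu}(V_{\bar\nu}) \ast A_\calG$ is perverse, which in turn reduces to an explicit computation using the closure relations for Schubert varieties and the affineness statements of \Cref{sec:geometry-affine-flag-4-convolution-is-affine}. The main obstacle is making the convolution identification $\pi^*\scrZ_\calG(V)[d] \simeq \scrZ_\calI(V) \ast A_\calG$ rigorous, which requires a careful analysis of the diagrams defining $\scrZ$ and the convolution product at the v-stack level, unwinding the compatibility of nearby cycles with the fibration structure of $\pi$.
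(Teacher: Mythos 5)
Your reduction to the Iwahori case via $\pi\colon \Fl_{\calI,\bar k}\to \Fl_{\calG,\bar k}$ and the identity $R\pi_*\scrZ_\calI(V)\simeq \scrZ_\calG(V)$ are sound and match the paper's strategy, and the claimed identification $\pi^*\scrZ_\calG(V)[d]\simeq \scrZ_\calI(V)\ast A_\calG$ is indeed correct (it follows from proper base change on the Cartesian square $\Fl_\calI\times_{\Fl_\calG}\Fl_\calI\rightrightarrows \Fl_\calI$ together with properness of the fibres of $\pi$, and the $1$-morphism swap identifying the two compositions). However, this detour is strictly equivalent to checking perversity of $R\pi_*\scrZ_\calI(V)$ directly, which is what the paper does, so it is not an actual simplification.

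The genuine gap lies in the last step, which you defer to ``an explicit computation using the closure relations for Schubert varieties and affineness.'' For an arbitrary Borel $B$ that reduction simply fails: if $t_{\bar\nu_1}$ is not right $W_\bbf$-minimal, then $\pi$ restricted to $\Fl_{\calI,t_{\bar\nu_1}}$ has positive-dimensional fibres and $R\pi_*\scrJ^B_{\bar\nu}$ is not perverse. Concretely, for $G=\SL_2$, $\calG$ hyperspecial and $B$ the standard Borel, $\scrJ_{\alpha^\vee}=\nabla_{t_{\alpha^\vee}}$ lives on a $2$-cell, while the target $L^+\calI$-orbit $\Fl_{(\calI,\calG),t_{\alpha^\vee}}$ has dimension $1$ (the $W_\bbf$-minimal representative is $s_0$), so $R\pi_*\scrJ_{\alpha^\vee}$ acquires extra cohomology. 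The crucial insight in the paper's proof, credited to a suggestion of Achar transmitted via Cass--van den Hove--Scholbach \cite{CvdHS24}, is to choose a Borel $B$ adapted to $\bbf$ so that $t_{\bar\mu}$ is right $W_\bbf$-minimal for all $B$-dominant $\bar\mu$; then $\Fl_{\calI,t_{\bar\mu}}\to\Fl_{(\calI,\calG),t_{\bar\mu}}$ is an isomorphism, $R\pi_*\scrJ^B_{\bar\nu}=\Delta_{\calI,t_{\bar\nu_2}}\ast\nabla_{(\calI,\calG),t_{\bar\nu_1}}$, and \Cref{fn:1-convolution-of-standard-costandard-objects-perverse} gives only the bound $\in{}^p\scrD^{\geq 0}$. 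Replacing $B$ by its opposite gives the other bound, and the two combine to give perversity. Note in particular that one does \emph{not} show each graded piece is perverse for a single Borel, contrary to your plan; the two-Borel trick is essential. Without spelling out this choice of Borel, your argument does not close.
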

	
	\begin{proof}
		Let $\pi:\Fl_\calI\rightarrow\Fl_\calG$ denote the natural projection. Pick a Borel subgroup $T\subset B\subset G$ such that the underlying euclidean roots of the affine roots vanishing on the facet $\bbf$ fixed by $\calG(O)$ are positive with respect to $B$. One can easily check that $t_{\bar \mu}$ is right $W_\bbf$-minimal for all $B$-dominant $\bar \mu$, compare with \cite[Lemma 5.28]{CvdHS24}, so the map $\Fl_{ \calI,t_{\bar \mu}}\to \Fl_{(\calI,\calG),t_{\bar \mu}}$ is an isomorphism under the same assumption. Let now $\bar \nu$ be an arbitrary coweight and write it as the difference $\bar\nu_1-\bar\nu_2$ of two $B$-dominant coweights. Collecting the previous facts, we deduce that $R\pi_* \scrJ^B_{\bar{\nu}}=\Delta_{\calI,t_{\bar \nu_2}} \ast \nabla_{(\calI,\calG),t_{\bar \nu_1}}$ is left $t$-exact by \Cref{fn:1-convolution-of-standard-costandard-objects-perverse}.
		
		Since nearby cycle functors commute with proper pushforward, $\scrZ^\calG=R\pi_*\circ \scrZ^\calI$ where the superscripts are clear from content. Thus $\scrZ^\calG(V)$ lies in non-negative perverse degrees by the $t$-exactness of Wakimoto filtration of $\scrZ^\calI$ and the above discussion. Similarly, after replacing $B$ by the opposite Borel, we can see that $\scrZ(V)$ lies in non-positive degrees.
	\end{proof}
	
	From now on, we always assume that the parahoric level $\calG=\calI$ is Iwahori. We say that a central perverse sheaf $A$ is convolution exact if its left (equivalently right) convolution functor $\ell_A \colon \scrD_{\mathrm{ula}}(\Hk_{\calI,k}) \to \scrD_{\mathrm{ula}}(\Hk_{\calI,k})  $ is t-exact for the perverse t-structure.
	
	\begin{corollary}\label{cor_Z_conv_exact}
		The central perverse sheaf $\scrZ(V)$ is convolution exact.
	\end{corollary}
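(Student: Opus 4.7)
The plan is to exploit the Wakimoto filtration of $\scrZ(V)$ provided by \Cref{thm_perversity_new}: as a perverse sheaf in $\scrP(\mathrm{Wak})$, $\scrZ(V)$ is an iterated extension of the perverse Wakimoto sheaves $\scrJ_{\bar\nu}(V(w_0\bar\nu))$ indexed by the weights $\bar\nu$ of $V$, and each of these is in turn a direct sum of copies of $\scrJ_{\bar\nu}(\Lambda)$ since $V(w_0\bar\nu)$ is a finite-dimensional $\Lambda$-vector space concentrated in degree zero. Because the class of perverse $t$-exact convolution functors is stable under extensions (via the long exact sequence of perverse cohomology, a triangle of perverse sheaves forces the middle term to be perverse), it suffices to verify the $t$-exactness of left convolution with each individual Wakimoto sheaf $\scrJ_{\bar\nu}(\Lambda)$.

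For a fixed $\bar\nu = \bar\nu_1 - \bar\nu_2$ with $\bar\nu_1, \bar\nu_2 \in \bar{\bbX}_\bullet^+$, the monoidality of the Wakimoto functor from \Cref{lem_wak_basic_prop}(3) provides two isomorphisms
\begin{equation}
\scrJ_{\bar\nu}(\Lambda) \simeq \nabla_{t_{\bar\nu_1}} * \Delta_{t_{-\bar\nu_2}} \simeq \Delta_{t_{-\bar\nu_2}} * \nabla_{t_{\bar\nu_1}}.
\end{equation}
The first decomposition is used to show that $\ell_{\scrJ_{\bar\nu}(\Lambda)}$ preserves $\scrD^{\leq 0}$ and the second that it preserves $\scrD^{\geq 0}$. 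Concretely, for any $w \in W$,
\begin{equation}
\scrJ_{\bar\nu}(\Lambda) * \nabla_w \simeq \nabla_{t_{\bar\nu_1}} * (\Delta_{t_{-\bar\nu_2}} * \nabla_w) \in \scrD^{\leq 0},
\end{equation}
using the perversity of $\Delta_{t_{-\bar\nu_2}} * \nabla_w$ from \Cref{wersd;kfdl;} together with the right exactness of $\nabla_{t_{\bar\nu_1}} * -$ from \Cref{fn:1-convolution-of-standard-costandard-objects-perverse}. Dually, applying the second decomposition gives $\scrJ_{\bar\nu}(\Lambda) * \Delta_w \in \scrD^{\geq 0}$ via the left exactness of $\Delta_{t_{-\bar\nu_2}} * -$.

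To upgrade these pointwise bounds on standards and costandards to the full $t$-exactness statement, I would perform a triangulated devissage along the Iwahori Schubert stratification. Every perverse sheaf $\calF \in \scrP(\Hk_{\calI})$ can be built from costandards (resp.\ standards) by iteratively peeling off the maximal stratum in the support of $\calF$ via the adjunction maps $\calF \to Rj_{w,*}j_w^*\calF$ (resp.\ $Rj_{w,!}j_w^*\calF \to \calF$), which are perverse $t$-exact thanks to the affineness of $j_w$ from \Cref{sec:geometry-affine-flag-4-convolution-is-affine}. Passing the pointwise bounds through the triangulated functor $\ell_{\scrJ_{\bar\nu}(\Lambda)}$ yields $\scrJ_{\bar\nu}(\Lambda) * \calF \in \scrD^{\leq 0} \cap \scrD^{\geq 0} = \scrP$ for every perverse $\calF$, hence the $t$-exactness of $\ell_{\scrJ_{\bar\nu}(\Lambda)}$ and, by the initial reduction, of $\ell_{\scrZ(V)}$. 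The main subtlety in this plan is the devissage step: one must control how the shifts appearing in the recollement triangles interact with the perverse $t$-structure bounds so that the iteration remains within the correct ranges, which ultimately follows from the affineness of the Iwahori Schubert cells but deserves careful formulation in the $\infty$-categorical v-sheaf framework.
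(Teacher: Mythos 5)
Your high-level reduction — stability of the class of t-exact convolution functors under extensions, reducing perverse t-exactness of $\ell_{\scrZ(V)}$ to statements about Wakimoto sheaves via the filtration from \Cref{thm_perversity_new} — is sensible in spirit, but the execution has two genuine problems, and the second one makes the approach unsalvageable in the form you propose.

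First, the bounds you derive point the wrong way. To conclude that a convolution functor preserves ${}^p\scrD^{\geq 0}$, one uses that this half of the t-structure is spanned under extensions by $\nabla_w[n]$ with $n \le 0$, so one needs $\scrJ_{\bar\nu}(\Lambda)*\nabla_w \in {}^p\scrD^{\geq 0}$. Similarly ${}^p\scrD^{\leq 0}$ is spanned by $\Delta_w[n]$ with $n \geq 0$, so one needs $\scrJ_{\bar\nu}(\Lambda)*\Delta_w \in {}^p\scrD^{\leq 0}$. What you prove instead, from the factorizations $\scrJ_{\bar\nu}(\Lambda) \simeq \nabla_{t_{\bar\nu_1}}\ast\Delta_{t_{-\bar\nu_2}} \simeq \Delta_{t_{-\bar\nu_2}}\ast\nabla_{t_{\bar\nu_1}}$ together with right exactness of $\nabla_{t_{\bar\nu_1}}\ast(-)$ and left exactness of $\Delta_{t_{-\bar\nu_2}}\ast(-)$, is $\scrJ_{\bar\nu}(\Lambda)\ast\nabla_w \in {}^p\scrD^{\leq 0}$ and $\scrJ_{\bar\nu}(\Lambda)\ast\Delta_w \in {}^p\scrD^{\geq 0}$ — exactly the two halves that carry no information, and which do not propagate through the devissage (with pieces $\nabla_w[n]$, $n\le 0$, the bound $\scrJ_{\bar\nu}\ast\nabla_w[n] \in {}^p\scrD^{\leq -n}$ degrades as $n$ decreases, so the iterated cones land in ${}^p\scrD^{\leq N}$ for some positive $N$ depending on $\calF$, not in ${}^p\scrD^{\leq 0}$).

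Second, and more fundamentally, the target of your reduction — that $\ell_{\scrJ_{\bar\nu}(\Lambda)}$ is perverse t-exact for a \emph{fixed} Borel $B$ — is false when $\bar\nu$ is not central. For $\bar\nu$ $B$-dominant one has $\scrJ_{\bar\nu}(\Lambda) = \nabla_{t_{\bar\nu}}$, and $\nabla_{t_{\bar\nu}}\ast\nabla_w$ fails to be perverse whenever $\ell(t_{\bar\nu})+\ell(w) > \ell(t_{\bar\nu}w)$ (cf.~\Cref{djfejkjd;kjkel;}, \Cref{lemma_convolution_stds_and_costds}); left convolution with a costandard is only right exact. So the statement you are trying to establish for each $\scrJ_{\bar\nu}$ cannot be proved because it does not hold. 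The crucial extra ingredient in the paper's proof is the freedom — supplied by the fact that \Cref{thm_perversity_new} allows an arbitrary Borel — to choose, for each fixed $w$, a Borel $B$ for which $\ell(t_{\bar\nu}w) = \ell(t_{\bar\nu}) + \ell(w)$ for all $\bar\nu \gg 0$ with respect to $B$. With this $w$-adapted $B$, one writes $\bar\nu = \bar\nu'' - \bar\nu'$ with $\bar\nu'' \gg 0$ and gets $\scrJ^B_{\bar\nu}\ast\nabla_w = \Delta_{t_{-\bar\nu'}}\ast(\nabla_{t_{\bar\nu''}}\ast\nabla_w) = \Delta_{t_{-\bar\nu'}}\ast\nabla_{t_{\bar\nu''}w}$, which is perverse by \Cref{wersd;kfdl;}; summing over the Wakimoto filtration of $\scrZ(V)$ with respect to this $B$ then shows $\scrZ(V)\ast\nabla_w$ is perverse, and one argues dually for $\Delta_w$. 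Note that one does \emph{not} thereby prove that each Wakimoto sheaf has t-exact convolution: the Borel (hence the Wakimoto functor) changes with $w$, and it is only after assembling the full central sheaf $\scrZ(V)$, whose Wakimoto filtration exists uniformly in the Borel, that the statement becomes t-exactness of a single functor.
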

	
	\begin{proof}
		Given an element $w$ of the affine Weyl group, we can find a Borel $B \subset G$ such that $\ell(t_{\bar\nu} w)=\ell(t_{\bar\nu}) +\ell(w)$ for all $\bar \nu \gg 0$ with respect to $B$. Indeed, we can consider a minimal gallery from the $\calI(O)$-stable alcove $\bba$ to its Weyl translate $w\bba$, and simply take $B$ as the Borel corresponding to a Weyl chamber containing the vector given as the difference of the barycenters of $\bba$ and $w\bba$. 
		
		Now consider the complexes $\scrJ_{\bar \nu} \ast \nabla_w$ for arbitrary $\bar \nu$, and notice that it equals the perverse sheaf $\Delta_{t_{-\bar \nu'}} \ast \nabla_{t_{\bar \nu''} w}$ if we choose $\bar \nu=\bar{\nu}''-\bar{\nu}' $ and $\bar{\nu}''\gg 0$. Here, we applied \Cref{lem_wak_basic_prop} and \Cref{lemma_convolution_stds_and_costds}. Now, \Cref{thm_perversity_new} states that the perverse sheaf $\scrZ(V)$ admits a filtration with subquotients isomorphic to a direct sum of $\scrJ_{\bar \nu}$, hence implying that $\scrZ(V)\ast \nabla_w$ is perverse for any $V$. By a dual argument, the same result holds for $\Delta_w$. Finally, we apply the fact that the iterated extensions of the non-positive shifts of $\nabla_w$ (resp.~non-negative shifts of $\Delta_w$) span the non-negative part ${}^p\scrD_{\mathrm{ula}}^{\geq 0}(\Hk_{\calI,k})$ (resp. the non-positive part ${}^p\scrD_{\mathrm{ula}}^{\leq 0}(\Hk_{\calI,k})$) of the perverse t-structure to deduce that $\ell_{\scrZ(V)}$ is indeed perverse t-exact. 
	\end{proof}
	
	In the following, we say that an endomorphism $\varphi: A \to A$ of an object $A$ in an abelian category $\scrC$ is unipotent if $(\varphi-1)^n=0$ for some positive integer $n$. 
	We say $\varphi$ is quasi-unipotent if a power of $\varphi$ is unipotent.
	Recall that $\scrZ(V)$ carries a natural $I_E$-action, where $E$ is the reflex field of the representation $V$ and $I_E\subseteq \Gamma_E$ the inertia subgroup, see \Cref{sec:background-central-functor-with-galois-action}.
	
	\begin{corollary} \label{unipotent_new}
		The $I_E$-action on the perverse sheaf $\scrZ(V)$ is given by quasi-unipotent automorphisms. Moreover, there exists a finite index subgroup $I'\subset I_E$ such that the action factors through its maximal pro-$\ell$ quotient. If $G$ is split, then $I'=I_E=I$ acts unipotently on $\scrZ(V)$.
	\end{corollary}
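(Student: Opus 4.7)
The strategy is to deduce the claim from the Wakimoto filtration established in \Cref{thm_perversity_new}. Recall from \Cref{sec:background-central-functor-with-galois-action} that $\scrZ(V)$ carries a continuous $\Gamma_E$-action, and observe that this action preserves the Wakimoto filtration: indeed, the filtration steps $\scrZ(V)_{\preceq \bar\mu}$ are characterized by the supports of their constant terms $\mathrm{CT}_{B^-}$ via \Cref{coro_wakimoto_graded_ct}, and $\mathrm{CT}_{B^-}$ is defined entirely on the special fiber, hence commutes with the $I_E$-action on nearby cycles.

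On each associated graded piece, the isomorphism (\ref{eq:1-equation-for-graded-of-central-sheaf_new}) from the proof of \Cref{thm_perversity_new} identifies $\mathrm{gr}_{\bar\nu}(\scrZ(V))$ with $\scrJ_{\bar\nu}$ applied to $V(w_0\bar\nu)[\langle 2\rho,\bar\nu\rangle]$. Since each Wakimoto sheaf $\scrJ_{\bar\nu}$ is already defined over $k$ with trivial Galois action, the induced $I_E$-action on the graded comes entirely from its linear action on the finite-dimensional space $V(w_0\bar\nu)$. The Galois action on the pinned dual group $\hat G$ factors through the finite quotient $\mathrm{Gal}(F_{\mathrm{spl}}/F)$ of the splitting extension, and consequently the $\Gamma_E$-action on any algebraic representation of $\hat G \rtimes \Gamma_E$ factors through the finite quotient $\mathrm{Gal}(E F_{\mathrm{spl}}/E)$; the same then holds for the restricted $I_E$-action.

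Let $I' \subseteq I_E$ be the kernel of the resulting finite-image action on the associated graded, which is necessarily of finite index in $I_E$. For every $\sigma \in I'$, the operator $\sigma - \mathrm{id}$ strictly lowers the Wakimoto filtration step on $\scrZ(V)$, hence $(\sigma - \mathrm{id})^N = 0$ where $N$ is the length of the filtration (bounded by the number of $\hat T^\Gamma$-weights of $V$). Thus $\sigma$ acts unipotently on $\scrZ(V)$, and since the unipotent radical of $\mathrm{GL}(\scrZ(V))$ over $\ql$ is a pro-$\ell$ group, the action of $I'$ factors through its maximal pro-$\ell$ quotient. An arbitrary $\sigma \in I_E$ has a power lying in $I'$, which acts unipotently, so $\sigma$ itself is quasi-unipotent.

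Finally, if $G$ is split, the Galois group $\Gamma$ acts trivially on $\hat G$ and hence on every representation $V$. In that case the full inertia $I_E = I$ already acts trivially on the graded, so we may take $I' = I_E = I$, and the inertia acts unipotently on $\scrZ(V)$.
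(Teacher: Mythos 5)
Your proof is correct and follows essentially the same approach as the paper: preserve the Wakimoto filtration, observe that the $I_E$-action on the gradeds $V(w_0\bar\nu)$ factors through the finite Galois group of a splitting field, deduce that the kernel $I'$ acts unipotently, and then conclude pro-$\ell$ factorization. Two small imprecisions worth noting: \Cref{coro_wakimoto_graded_ct} identifies the Wakimoto \emph{gradeds} with constant-term stalks, not the filtration steps $\scrZ(V)_{\preceq\bar\mu}$ themselves, so the cleaner justification for filtration preservation is that the filtration is produced by adjunctions in the full subcategory $\mathrm{Wak}$ (for a Borel $B$ fixed by $I_E$, as the paper notes) and is therefore stable under any automorphism; and the phrase ``the unipotent radical of $\mathrm{GL}(\scrZ(V))$ over $\ql$ is a pro-$\ell$ group'' is not literally what you want---the correct statement is that $\sigma\in I'$ lands in the unipotent group $U$ of filtered automorphisms acting trivially on gradeds, and any compact (hence profinite) subgroup of $U(\ql)$ is pro-$\ell$, which amounts to the same dichotomy the paper invokes via wild and tame inertia.
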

	
	\begin{proof}
		Since $I_E$ fixes a Borel subgroup $B \subset G$ defined over $F$, we conclude the $I_E$-action on $\scrZ(V)$ preserves the Wakimoto filtration and it acts on $\on{Grad}_{\bar \nu}(\scrZ(V))\simeq V(w_0\bar \nu)$, compare with \Cref{thm_perversity_new}, via its natural action on the given weight space. Since $V(w_0\bar \nu)$ equals the sum of the $V(w_0 \nu)$ for all lifts $\nu$ of $\bar \nu$, we see that $I_E$ acts on the Wakimoto sheaves by permuting those weight spaces. Let $F'$ be a splitting field of $G$ and note that its absolute Galois group $I'$ acts trivially on $V$. In particular, the $I'$-action on $\scrZ(V)$ is unipotent. Note, moreover, that both the pro-$p$ wild inertia, and the remaining prime-to-$\ell$ tame quotient must map trivially to an unipotent $\ell$-adic group, so the $I'$-action factors through its maximal pro-$\ell$ quotient.
	\end{proof}

	In particular, if $\La$ is an algebraic extension of $\bbQ_\ell$ and given an isomorphism between $\bbZ_\ell$ and the maximal pro-$\ell$ quotient of $I'$, we deduce the existence of a canonical nilpotent morphism
	\begin{equation}
		\mathbf{n}_V: \scrZ(V) \longrightarrow \scrZ(V)
	\end{equation}
	such that the action of $\gamma' \in I'$ on $\scrZ(V)$ 
	is given by $\on{exp}(t_\ell(\gamma') \mathbf{n}_V)$,
	where $t_\ell: I' \rightarrow \mathbb{Z}_\ell$ is the natural quotient map.

	\begin{corollary}
		\label{sec:perversity-graded-monoidal}
		The isomorphism of functors
		$\mathrm{Grad}\circ \scrZ(V)\simeq V|_{\hat{T}^I}$ is monoidal.
	\end{corollary}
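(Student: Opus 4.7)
The plan is to observe that both the source and target of the natural transformation $\mathrm{Grad}\circ \scrZ \simeq (-)|_{\hat{T}^I}$ carry monoidal structures, and then to track the construction of the isomorphism through three intermediate monoidal identifications. The monoidality of $\mathrm{Grad}\circ\scrZ$ on $\mathrm{Rep}(\hat{G})$ follows by combining two results already established: $\scrZ$ is monoidal by \Cref{prop_Z_monoidal} and lands in $\scrP(\mathrm{Wak})$ by \Cref{thm_perversity_new}, while $\mathrm{Grad}$ restricted to $\scrP(\mathrm{Wak})$ is monoidal by \Cref{prop_grad_monoidal}. The functor $V\mapsto V|_{\hat T^I}$ is tautologically monoidal as a restriction functor between representation categories.

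Next I would recall that the explicit isomorphism of functors was constructed in the proof of \Cref{thm_perversity_new} as a composition of three natural isomorphisms:
\begin{equation}
\mathrm{Grad}_{\bar\nu}(\scrZ(V))\;\xrightarrow{\sim}\; \mathrm{CT}_{B^-}(\scrZ(V))_{t_{\bar\nu}}[-\langle 2\rho,\nu\rangle]\;\xrightarrow{\sim}\;\mathrm{CT}_{B^-}(\mathrm{Sat}(V))_{t_{\bar\nu}}[-\langle 2\rho,\nu\rangle]\;\xrightarrow{\sim}\; V(w_0\bar\nu).
\end{equation}
The first arrow is \Cref{coro_wakimoto_graded_ct}, the second is commutation of constant terms and nearby cycles from \cite[Corollary 6.14]{AGLR22}, and the third is the identification of hyperbolic localization with the weight functor under Fargues--Scholze geometric Satake. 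I would upgrade each of these to a monoidal isomorphism. For the first, the proof of \Cref{coro_wakimoto_graded_ct} reduces the computation to the formula $\mathrm{CT}_{B^-}(\scrJ_{\bar\nu}(M))_{t_{\bar\nu}}\simeq M[\langle 2\rho,\nu\rangle]$, and this identification is compatible with convolution of Wakimoto sheaves by \Cref{lem_wak_basic_prop}(3) together with the multiplicativity statement in \Cref{gr-convol}. For the third, the monoidal enhancement is precisely the core of the Satake equivalence, since $\oplus_{\bar\nu}\mathrm{CT}_{B^-}(-)_{t_{\bar\nu}}$ realises the $\hat T$-graded fiber functor underlying $\mathrm{Sat}$.

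The main obstacle is the second arrow: the monoidal compatibility of $\mathrm{CT}_{B^-}$ with $R\Psi$. To handle this, I would replay the fusion/center argument from \S\ref{sec:conv_fusion}. Concretely, $\mathrm{CT}_{B^-}$ is itself realised through a correspondence $\Fl_\calI\leftarrow \Fl_\calI^-\to \Fl_\calI^0$, and by Braden's theorem may equivalently be computed as $Rq^-_*\circ Ri^{-,!}$. This correspondence spreads in families over $\mathrm{Spd}(O)$, and preserves the ULA property, so one obtains a morphism of correspondence $\infty$-operads that slots into the diagrams of \Cref{prop_Z_monoidal} and \Cref{thm_assoc_center}. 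In particular, constant terms commute with both fusion and convolution, yielding a monoidal natural isomorphism $\mathrm{CT}_{B^-}\circ R\Psi \simeq R\Psi\circ \mathrm{CT}_{B^-}$ on $\mathrm{Rep}(\hat G)$. Combined with the fact that $R\Psi$ on the fixed-point locus $[L^+\calT\backslash\Fl_\calI^0]$ is constant along the special fiber (since the underlying scheme is already defined over $k$), this gives the required monoidal enhancement of the second arrow.

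Concatenating these three monoidal isomorphisms and summing over $\bar\nu\in\bar\bbX_\bullet$ produces the desired monoidal identification $\mathrm{Grad}\circ\scrZ(V)\simeq V|_{\hat{T}^I}$. I expect that the bookkeeping of $\mathbb{E}_1$-coherence is the only real technical point, but it is forced by the already established monoidality of each factor together with the uniqueness of lifts across fully faithful monoidal functors, much as in the proof of \Cref{thm_E2_mor}.
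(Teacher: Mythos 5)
Your three-arrow decomposition is essentially the same strategy as the paper's proof, and each of the first two arrows is handled correctly in spirit, if with one deviation: for the second arrow (commutation of $\mathrm{CT}_{B^-}$ with $R\Psi$) you propose to replay the entire fusion/center machinery, whereas the paper only needs the underlying natural isomorphism from \cite[Proposition IV.6.12]{FS21} and then \emph{transports} the monoidal structure of $\scrZ_T \circ \mathrm{CT}_{B^-}$ across it; your heavier construction is not wrong, but it is unnecessary for the argument.

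The genuine gap is in your third arrow. You assert that the monoidal enhancement of $\mathrm{CT}_{B^-}(\mathrm{Sat}(V))_{t_{\bar\nu}} \simeq V(w_0\bar\nu)$ is ``precisely the core of the Satake equivalence,'' but there are two a~priori distinct monoidal structures in play that must be reconciled. The explicit semi-infinite-orbit computation of the monoidal structure on constant terms (the one you need in order to match the inductive construction of the Wakimoto grading in \Cref{prop_grad_monoidal}) is established in the Witt-vector affine Grassmannian setting following \cite{Zhu17,Yu22,ALRR24}, while the fiber-functor monoidal structure of the geometric Satake equivalence you invoke is the one of Fargues--Scholze on the $B_{\mathrm{dR}}^+$-affine Grassmannian over $C$. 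There is no a~priori reason these agree, and bridging them is exactly the technical point the paper has to confront: it requires combining the equivalence of Satake categories over $C$ and $\bar{k}$ for the split group from \cite[VI.6.7]{FS21} with Bando's theorem \cite{Ban22,Ban23} showing that the resulting identification of Satake categories is monoidal. Without this input your concatenation of arrows does not yet assemble into the claimed monoidal isomorphism, even though each individual arrow carries some monoidal structure.
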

	\begin{proof}
		We first explain how to construct the monoidal structure of the restriction functor $V\mapsto V_{|\hat{T}^I}$ geometrically using constant terms following \cite[Section 6]{Yu22} and \cite[Section 4]{ALRR24}. Namely, for any $\calA,\calB\in \scrP(\mathrm{Hk}_{G,C})$, we obtain isomorphisms
		\begin{align*}
			\mathrm{CT}_{B^-}(\calA*\calB)_{\nu} & \cong R\Gamma_c(S^-_{\nu},\calA*\calB)\\   
			&\cong R\Gamma_c(m^{-1}(S^-_\nu),\calA\tilde{\boxtimes}\calB)\\
			& \cong \bigoplus_{\nu_1+\nu_2=\nu}R\Gamma_c(S^-_{\nu_1}\tilde{\times}S^-_{\nu_2},\calA\tilde{\boxtimes}\calB)\\
			& \cong \bigoplus_{\nu_1+\nu_2=\nu}R\Gamma_c(S^-_{\nu_1},\calA)\otimes R\Gamma_c(S^-_{\nu_2},\calB))\\
			& \cong \bigoplus_{\nu_1+\nu_2=\nu} \mathrm{CT}_{B^-}(\calA)_{\nu_1}\otimes \mathrm{CT}_{B^-}(\calB)_{\nu_2}.
		\end{align*}
		The second isomorphism above follows directly from the construction of the convolution. Then we are in a similar situation considered in \cite[Lemma~6.1]{Yu22} and \cite[Corollary~4.16]{ALRR24}, and may thus adapt arguments in \textit{loc.cit} to deduce the remaining isomorphisms. This monoidal structure coincides with the natural one on $V\mapsto V_{|\hat{T}^I}$ under geometric Satake. However, strictly speaking, this construction is not quite complete, because on the one hand \cite{Zhu17,Yu22} works with the Witt vector affine Grassmannian instead of the $B_{\mathrm{dR}}^+$-affine Grassmannian, and moreover it would not be immediate that this monoidal structure is compatible with that of \cite{FS21}. In order to fill this gap, we must use the equivalence  between the Satake categories over $C$ and $\overline{k}$ for the split group $G_C$ in \cite[VI.6.7]{FS21} and a theorem of Bando, see \cite[Theorem 1.1]{Ban22} showing that it is monoidal, i.e., that pulling back the Satake category of \cite{Zhu17,Yu22} to the context of \cite{FS21} yields the same monoidal structure on perverse sheaves.
		
		As nearby cycles commute with constant terms \cite[Proposition IV.6.12]{FS21}, we get an equivalence $\scrZ_T \circ \mathrm{CT}_{B^-}\simeq \mathrm{CT}_{B^-}\circ \scrZ_G$, where the indices $T$ and $G$ denote the underlying group of the Hecke stack for which we take nearby cycles. In particular, we obtain a monoidal structure on the functor $\mathrm{CT}_{B^-}\circ \scrZ_G$ by composing the above monoidal structure on $\mathrm{CT}_{B^-}$ with the one on $\mathscr{Z}_T$ (which clearly coincides with the restriction along $\hat{T}^I\subseteq \hat{T}$ under geometric Satake for $T$ resp.\ $\mathcal{T}$). Looking back to the construction of the monoidal structure on Wakimoto graded pieces in \Cref{prop_grad_monoidal}, it made inductive use of the isomorphisms from \Cref{coro_Grad_composes_Zisom_H} and \Cref{coro_wakimoto_graded_ct}. These resulted as well from decomposing twisted products of semi-infinite orbits, and so this monoidal structure on $\mathrm{CT}_{B^-}\circ \mathscr{Z}_G$ must coincide with the previous one above, which was constructed using geometric Satake.		
	\end{proof}

	\subsection{Highest weight arrows}
	\label{sec:high-weight-arrows}
	Let $\mu\in \bar{\bbX}_\bullet$ be a dominant coweight with respect to $B$. For a $\hat{G}_\Lambda$-representation $V$ with a single highest weight $\mu$, we see that $\scrZ(V)$ is supported on the $\mu$-admissible locus $\calA_{\calI,\mu}$, cf.\ \cite[Theorem 6.16]{AGLR22}, which equals the union of the $\calI(O)$-orbit closures of the translations $t_{\bar \nu}$ associated with weights $\nu$ of $V$. We are going to define a canonical map
	\begin{equation}
		\mathfrak{f}_{V}\colon \scrZ(V) \to \on{gr}_{\bar \mu} \scrZ(V)
	\end{equation}
	called the \textit{highest weight arrow}, which geometrizes the projection onto the $\bar{\mu}$-weight space.

	First, observe that we have the adjunction unit \begin{equation}\scrZ(V) \to Rj_{\bar \mu,*}j_{\bar \mu}^* \scrZ(V).\end{equation} But the restriction of $\scrZ(V)$ to the $I$-orbit $\Fl_{\calI, \bar \mu}$ is isomorphic to the local system with value 
	$R\Gamma(\Fl_{\calI,\bar \mu},\scrZ(V))$. 
	On the other hand, we know by \Cref{lem_schubert_cell_in_semi_infinite_orbit} that $\Fl_{\calI,\bar \mu}$ coincides with the intersection $\calA_{\calI,\mu}\cap \calS_{t_{\bar\mu}}=\Fl_{\calI,\leq \bar\mu}\cap \calS_{t_{\bar\mu}}$. Therefore, \Cref{coro_wakimoto_graded_ct} tells us that $j^*_{\bar \mu} \scrZ(V)\simeq \on{Grad}_{\bar \mu} \scrZ(V)[\langle 2\rho, \mu \rangle]$ 
	in natural fashion. In particular, we get $Rj_{\bar \mu,*}j^*_{\bar \mu} \scrZ(V)\simeq \on{gr}_{\bar \mu} \scrZ(V)$ and we obtain the desired highest weight arrow.
	
	\begin{proposition}
		The highest weights arrows are symmetric monoidal, i.e., for $V$ (resp. $W$) a representation of $\hat{G}_{\Lambda}$ with a single highest weight $\mu$ (resp. $\nu$),  there are natural identifications $\mathfrak{f}_{V} \ast \mathfrak{f}_{W} \simeq \mathfrak{f}_{V\otimes W} \simeq \mathfrak{f}_{W} \ast \mathfrak{f}_{V}$ in the sense that the diagram
		\begin{equation}
			\begin{tikzcd}[row sep=huge]
				\scrZ(V) * \scrZ(W) \arrow[r,"\sim"]	
				\arrow[d,"\mathfrak{f}_V *\mathfrak{f}_W"]
				&
				\scrZ(V\otimes W)
				\arrow[d,"\mathfrak{f}_{V\otimes W}"]
				&
				\scrZ(W) * \scrZ(V)
				\arrow[l,"\sim"']
				\arrow[d,"\mathfrak{f}_W *\mathfrak{f}_V"]
				\\
				\on{gr}_{\bar \mu} \scrZ(V)	* \on{gr}_{\bar \nu} \scrZ(W)
				\arrow[r,"\sim"]
				&
				\on{gr}_{\overline{ \mu +\nu}} \scrZ(V\otimes W)
				&
				\on{gr}_{\bar \nu} \scrZ(W)	* \on{gr}_{\bar \mu} \scrZ(V)
				\arrow[l,"\sim"']
			\end{tikzcd}
		\end{equation}
		is commutative, 
		where the horizontal isomorphisms in the first row stem from \Cref{prop_Z_monoidal}, and the isomorphisms in the second row are given by 
		\begin{equation}
			\on{gr}_{\bar \mu} \scrZ(V)	* \on{gr}_{\bar \nu} \scrZ(W)
			\simeq 
			\on{gr}_{\overline{ \mu+\nu}}( \scrZ(V)	*  \scrZ(W))
			\simeq 
			\on{gr}_{\overline{ \mu+\nu}}( \scrZ(V\otimes W))
		\end{equation}
		with the first isomorphism given by \Cref{gr-convol}. 
	\end{proposition}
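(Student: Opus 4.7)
The plan is to recognise both compositions in the diagram as factoring through the canonical projection of $\scrZ(V)*\scrZ(W)$ onto the top piece of its Wakimoto filtration, and to compare the induced maps between top graded pieces.

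First, by the monoidality of $\scrZ$ (\Cref{prop_Z_monoidal}) together with \Cref{thm_perversity_new}, the convolution $\scrZ(V)*\scrZ(W)\simeq \scrZ(V\otimes W)$ is Wakimoto-filtered with unique maximal weight $\overline{\mu+\nu}$. Its top graded piece is a pure Wakimoto at weight $\overline{\mu+\nu}$ corresponding to the Cartan component of $V\otimes W$. Under this identification, the highest weight arrow $\mathfrak{f}_{V\otimes W}$ is the canonical quotient $\scrZ(V\otimes W)\twoheadrightarrow \on{gr}_{\overline{\mu+\nu}}\scrZ(V\otimes W)$.

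Second, I would analyse the left--bottom composition. Applying \Cref{gr-convol} and using that $\bar\mu,\bar\nu$ are the maximal Wakimoto weights of $\scrZ(V),\scrZ(W)$ respectively, the only nonzero summand of $\on{gr}_{\overline{\mu+\nu}}(\scrZ(V)*\scrZ(W))$ corresponds to $(\bar\nu_1,\bar\nu_2)=(\bar\mu,\bar\nu)$, so the bottom horizontal arrow is precisely the inverse of this decomposition composed with the monoidal equivalence. Moreover, \Cref{lem_wak_basic_prop}(4) implies $R\Hom(\scrJ_{\bar\lambda}(M),\scrJ_{\overline{\mu+\nu}}(N))=0$ whenever $\bar\lambda\prec\overline{\mu+\nu}$; since the sub-Wakimoto of $\scrZ(V)*\scrZ(W)$ of weights $\prec\overline{\mu+\nu}$ is an iterated cone of such objects, any map from $\scrZ(V)*\scrZ(W)$ to $\on{gr}_{\overline{\mu+\nu}}\scrZ(V\otimes W)$ factors uniquely, up to coherent homotopy, through the projection $\scrZ(V)*\scrZ(W)\twoheadrightarrow \on{gr}_{\overline{\mu+\nu}}(\scrZ(V)*\scrZ(W))$. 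This applies both to $\mathfrak{f}_{V\otimes W}$ composed with the top horizontal arrow and to the composition of $\mathfrak{f}_V*\mathfrak{f}_W$ with the bottom horizontal arrow.

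Third, once both compositions are factored through this common projection, there remain two induced maps $\on{gr}_{\overline{\mu+\nu}}(\scrZ(V)*\scrZ(W))\to \on{gr}_{\overline{\mu+\nu}}\scrZ(V\otimes W)$, each obtained by applying the monoidal equivalence at the top Wakimoto piece. By full faithfulness of the Wakimoto functor $\scrJ_{\overline{\mu+\nu}}$ (implicit in the proof of \Cref{queiowuoiwuroe}), this reduces to a comparison of maps between the underlying coefficient spaces, which coincide by the monoidality of $\on{Grad}\circ \scrZ$ established in \Cref{sec:perversity-graded-monoidal}. The right-hand square of the proposition then follows by the symmetric argument swapping the roles of $V$ and $W$. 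The main obstacle I anticipate is executing the factorisation step rigorously at the $\infty$-categorical level: the vanishing in \Cref{lem_wak_basic_prop}(4) must be used as a statement about $R\Hom$-complexes rather than only $\Hom$-groups, and propagated through the iterated cone construction of the sub-Wakimoto of weights $\prec\overline{\mu+\nu}$ in order to obtain a lift that is unique up to coherent homotopy.
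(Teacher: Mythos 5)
Your proposal is correct, and it follows a genuinely different route from the paper. The paper's own proof is brief: having established in \Cref{sec:perversity-graded-monoidal} that $\on{Grad}\circ\scrZ$ is a symmetric monoidal functor, it asserts that the adjunction unit $\scrZ(V)\to Rj_{\bar\mu,*}j_{\bar\mu}^*\scrZ(V)$ and the identification $Rj_{\bar\mu,*}j_{\bar\mu}^*\scrZ(V)\simeq\on{gr}_{\bar\mu}\scrZ(V)$ are themselves naturally symmetric monoidal, so the square commutes because every step of the construction of $\mathfrak{f}_V$ is monoidal. Your argument instead exploits the rigidity of the Wakimoto filtration: since the target $\on{gr}_{\overline{\mu+\nu}}\scrZ(V\otimes W)$ is a pure Wakimoto object at the top weight, and $R\Hom(\scrJ_{\bar\lambda}(M),\scrJ_{\overline{\mu+\nu}}(N))=0$ for every $\bar\lambda\prec\overline{\mu+\nu}$ by \Cref{lem_wak_basic_prop}(4), every morphism from $\scrZ(V)*\scrZ(W)$ to the top piece factors contractibly through the projection onto $\on{gr}_{\overline{\mu+\nu}}(\scrZ(V)*\scrZ(W))$. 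This collapses the commutativity statement to a comparison of two induced maps between top gradeds, which is settled by the monoidality of $\on{Grad}\circ\scrZ$ from \Cref{sec:perversity-graded-monoidal} (or in fact already by \Cref{gr-convol}/\Cref{prop_grad_monoidal}, applied to $\on{gr}_{\overline{\mu+\nu}}(\mathfrak{f}_V*\mathfrak{f}_W)=\on{gr}_{\bar\mu}(\mathfrak{f}_V)*\on{gr}_{\bar\nu}(\mathfrak{f}_W)=\id$). What this buys you is a cleaner reduction that sidesteps the somewhat delicate claim that the adjunction unit for $j^*\dashv Rj_*$ is itself symmetric monoidal in the required sense; the trade-off is that you need the extra input of the $R\Hom$-vanishing, which the paper's proof does not invoke. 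As a side remark, the ``obstacle'' you flag at the end is not actually an obstacle: \Cref{lem_wak_basic_prop}(4) is already stated as a vanishing of $R\Hom$-complexes, and propagating a zero $R\Hom$ through a finite filtration of $\scrF_{\prec\overline{\mu+\nu}}$ by Wakimoto sheaves is immediate by the long exact sequence/fiber sequence, so the contractibility of the space of factorizations is already secured.
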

	
	\begin{proof}
		By  \Cref{sec:perversity-graded-monoidal} we know that the composition $\on{Grad} \circ \scrZ$ identifies with the restriction functor from $\mathrm{Rep}_\La (\hat{G})$ to $\mathrm{Rep}_\La (\hat{T}^I)$ as a tensor functor, so it is symmetric monoidal. Indeed, the monoidal structure of $\on{Sat}$ comes from the monoidality of constant terms in the generic fiber $\Gr_G$ which is compatible with the one in the special fiber $\Fl_{\calI}$, which was used in \Cref{prop_grad_monoidal}. Finally, we just have to remark that the adjunction unit is naturally symmetric monoidal as are the isomorphisms $Rj_{\bar \mu,*}j^*_{\bar \mu} \scrZ(V)\simeq \on{gr}_{\bar \mu} \scrZ(V)$.
	\end{proof}

	We also have the relation of $\mathfrak{f}_V$ with the monodromy operator.  
	
	\begin{lemma}
		\label{sec:high-weight-arrows-1-monodromy-is-zero-on-highest-weight-vector}
		Let $V$ be a representation of $\hat{G}_{\Lambda}$, then we have
		$$
		\mathfrak{f}_V \circ \mathbf{n}_V =0. 
		$$
	\end{lemma}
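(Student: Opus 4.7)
The plan is to show that the monodromy $\mathbf{n}_V$ strictly lowers the Wakimoto filtration on $\scrZ(V)$, which combined with the fact that $\mathfrak{f}_V$ is the canonical projection onto the top Wakimoto piece will give the vanishing.

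First, I would unpack the construction of $\mathfrak{f}_V$: it identifies, via the isomorphism $Rj_{\bar\mu,*}j_{\bar\mu}^*\scrZ(V)\simeq\on{gr}_{\bar\mu}\scrZ(V)$ used in its definition, with the natural projection in the distinguished triangle $\scrZ(V)_{\prec\bar\mu}\to\scrZ(V)\to\on{gr}_{\bar\mu}\scrZ(V)$. Here we use that $\bar\mu$ is maximal among the weights of $V$ for the semi-infinite order, so that $\scrZ(V)_{\preceq\bar\mu}=\scrZ(V)$. In particular, $\mathfrak{f}_V$ annihilates any endomorphism of $\scrZ(V)$ that factors through $\scrZ(V)_{\prec\bar\mu}$, and it suffices to check the inclusion $\mathbf{n}_V(\scrZ(V))\subseteq\scrZ(V)_{\prec\bar\mu}$.

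Second, I would invoke the analysis carried out in the proof of \Cref{unipotent_new}: the inertia $I_E$ preserves the Wakimoto filtration of $\scrZ(V)$ and acts on each graded piece $\on{gr}_{\bar\nu}\scrZ(V)\simeq\scrJ_{\bar\nu}(V(w_0\bar\nu))$ by the Galois permutation of weight spaces. Restricting to the subgroup $I'$ fixing a splitting field of $G$, this permutation action is trivial, while the $I'$-action on $\scrZ(V)$ as a whole is unipotent. Hence for any $\gamma'$ in the pro-$\ell$ quotient of $I'$, the operator $\gamma'-1$ preserves the Wakimoto filtration and vanishes on each graded piece, so it carries $\scrZ(V)_{\preceq\bar\nu}$ into $\scrZ(V)_{\prec\bar\nu}$ for every $\bar\nu$. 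The same holds for $\mathbf{n}_V$, which is a finite linear combination of iterated powers of $\gamma'-1$. Specialising to $\bar\nu=\bar\mu$ yields $\mathbf{n}_V(\scrZ(V))\subseteq\scrZ(V)_{\prec\bar\mu}$, as required.

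The main technical obstacle is giving rigorous meaning to the interplay of $\mathbf{n}_V$ with the Wakimoto filtration, since the latter is defined via full subcategories rather than honest subobjects. Thanks to \Cref{thm_perversity_new}, however, $\scrZ(V)$ lies in the abelian subcategory $\scrP(\on{Wak})\subseteq\scrP(\Hk_\calI)$ and carries a genuine finite filtration by perverse Wakimoto subobjects preserved by inertia through honest perverse sheaf automorphisms, so the computation reduces to a manipulation in an honest abelian setting where the standard argument for logarithms of unipotent filtered operators applies without difficulty.
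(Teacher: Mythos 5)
Your proposal is correct and takes essentially the same route as the paper's (very terse) proof: $\mathfrak{f}_V$ is the projection onto the top graded piece of the Wakimoto filtration, $I'$ preserves that filtration and acts trivially on each graded piece by \Cref{unipotent_new}, so $\mathbf{n}_V$ (being a polynomial in $\gamma'-1$) sends $\scrZ(V)=\scrZ(V)_{\preceq\bar\mu}$ into $\scrZ(V)_{\prec\bar\mu}$. Your write-up makes explicit the intermediate steps — in particular that $\mathbf{n}_V$ strictly lowers the filtration and why the Wakimoto filtration can be treated as an honest filtration of perverse subobjects via $\scrP(\mathrm{Wak})$ — which the paper compresses into one sentence.
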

	
	\begin{proof}
		By definition, $\mathfrak{f}_V$ is the quotient map of $\scrZ(V)$ towards the final subquotient of the Wakimoto filtration, upon which $I'$ acts trivially by geometric Satake, see \Cref{unipotent_new}.
	\end{proof}

	Moreover, we also have that 
	$\mathbf{n}_V$ is monoidal with respect to the monoidal structure in $\mathrm{Rep}(\hat{G})$. Note that here the tensor product of two nilpotent operators $\mathbf{n}_A$ and $\mathbf{n}_B$ of objects $A$ and $B$ of a monoidal category is given by $\mathbf{n}_A\otimes 1 +1\otimes \mathbf{n}_B$.

	\begin{lemma}\label{D_P_for_n_V}
		The nilpotent endomorphisms $\mathbf{n}_V$ for $V \in \mathrm{Rep}_\La (\hat{G})$ form a nilpotent monoidal endomorphism $\mathbf{n}$ of $\scrZ\colon \mathrm{Rep}_\La (\hat{G})\to \scrP(\Hk_{\calI,k})$ i.e. the following diagram commutes
		$$
		\begin{tikzcd}[row sep=huge]
			\scrZ(V\otimes W) \arrow [r,"\simeq"] \arrow[d,"\mathbf{n}_{V\otimes W}",swap] & \scrZ(V)\ast \scrZ(W) \arrow[d,"\mathbf{n}_V\otimes \mathrm{id}_W+\mathrm{id}_V\otimes \mathbf{n}_W"] \\
			\scrZ(V\otimes W) \arrow[r,"\simeq "] &\scrZ(V)\ast \scrZ(W). 
		\end{tikzcd}
		$$
	\end{lemma}
	
	\begin{proof}
		Recall the $I'$-action on $\scrP(\Hk_{\calI,k})$ constructed in \Cref{unipotent_new}. It is enough to observe that the monoidal structure of $\scrZ\colon \mathrm{Rep}_\La (\hat{G})\to \scrP(\Hk_{\calI,k})$ induced from \Cref{prop_Z_monoidal} is $I'$-equivariant, but this follows directly from the construction of $\scrZ$.
	\end{proof}

	\subsection{Mixed variant}
	
	In this subsection, we are going to upgrade our previous work to the setting of mixed sheaves. We consider a $p$-adic field $F$ with ring of integers $O$, a finite residue field $k$ of cardinality $q$, and an absolute Galois group $\Gamma$. We continue to fix a quasi-split and residually split $F$-group $G$ with an Iwahori $O$-model $\calI$. In this subsection, we assume furthermore that $\La$ is an algebraic extension of $\bbQ_\ell$ and contains a preferred choice of square-root $\sqrt{q}$.
	
	We need to introduce the $\Gamma$-equivariant derived category of étale sheaves on our preferred spaces. Note that the Deligne topos $X\times_s \eta$ for a finite type $k$-scheme $X$ with compatible $\Gamma$-action defined in \cite{SGA7.2}, see also \cite[Definition A.1.3]{HZ23}, is the same as the étale topos of the stack $[\Gamma \backslash X_{\bar k}]$. Indeed, the latter identifies by descent with the category of étale sheaves on $X_{\bar k}$ with continuous compatible $\Gamma$-action as in \cite[Definition A.1.2]{HZ23}, so the conclusion follows from \cite[Lemma A.1.4]{HZ23}.  We usually consider the stable derived category $\scrD_{\mathrm{\acute{e}t}}([\Gamma\backslash X_{\bar k}])$ which is equivalent to the stable derived category $\scrD_{\mathrm{\acute{e}t}}(X\times_s \eta)$ of the Deligne topos, compare with \cite[Construction A.1.6 and Definition B.1.1]{HZ23}. Recall that we have a decisive notion of a mixed complex $A \in \scrD_{\mathrm{\acute{e}t}}([\Gamma \backslash X_{\bar k}] )$ of weight $\leq w$ (resp. $\geq w$) in the sense of \cite[Definition 2.4.4]{HZ23}. The condition $\leq w$ is defined by requiring that $\calH^i(\sig^*A)$ have weights bounded by $w$ in the sense of Deligne, where $\sig \colon X \to [\Gamma \backslash X_{\bar k}]$ is induced by a section of the morphism $\Gamma \to \Gal_k$. The weight bound is ultimately independent from $\sig$, see \cite[Section 2.4]{HZ23} for a discussion. The condition $\geq w$ is defined in terms of $\leq w$ and Verdier duality for $X_{\bar k}$.
	
	Again, we can define the mixed standard functor from $\scrD_{\mathrm{\acute{e}t}}([\Gamma \backslash S_{\bar k}])$ towards $\scrD_{\mathrm{\acute{e}t}}([\Gamma \backslash\Hk_{\calI,\bar k}])$
	\begin{equation}
		\del_w^{\mathrm{mix}}\colon M \mapsto j_{w!}M\langle \ell(w)\rangle,
	\end{equation}
	where $\langle d \rangle$ denotes the shift-twist operator $[d](\frac{d}{2})$, and the mixed costandard functor
	\begin{equation}
		\nab_w^{\mathrm{mix}} \colon M \mapsto Rj_{w*}M\langle \ell(w)\rangle.
	\end{equation}
	both of which preserve mixed perverse sheaves by Weil II. \Cref{djfejkjd;kjkel;} and \Cref{lem_invertible_conv_stand} generalize to the current setting, so that we can define the mixed Wakimoto functor $\scrJ_{\bar \nu}^{\mathrm{mix}} \colon\scrD_{\mathrm{\acute{e}t}}([\Gamma \backslash S_{\bar k}]) \to \scrD_{\mathrm{\acute{e}t}}([\Gamma \backslash\Hk_{\calI,\bar k}]) $ mapping a weighted complex $M$ of $\La$-modules to the object representing
	\begin{equation}
		\calF \mapsto \on{colim}\Hom(\nab^{\mathrm{mix}}_{t_{\bar\nu_1}}(M),\calF \ast \nab^{\mathrm{mix}}_{t_{\bar\nu_2}}(\La)),
	\end{equation}
	where $\bar \nu_1, \bar \nu_2 \in \bar \bbX_\bullet^+$ run over all those elements such that $\bar \nu=\bar \nu_1-\bar \nu_2 $. Again this sends a mixed weighted $\La$-module to a mixed perverse sheaf, since the mixedness property is preserved under derived pushforward and pullback, whereas perversity was already verified in \Cref{wersd;kfdl;}. We define the full subcategory \begin{equation}\mathrm{Wak}^\mathrm{mix}\subset \scrD_{\mathrm{\acute{e}t}}([\Gamma\backslash \Hk_{\calI,\bar k}])
	\end{equation} of mixed Wakimoto complexes, as the span under cones and shifts of the essential image of $\scrJ^{\mathrm{mix}}_{\bar \nu}$ for all $\bar \nu \in \bar \bbX_{\bullet}$. We also refer the reader to \cite[Definition 5.7]{CvdHS24} in the motivic equicharacteristic setting, which discusses weights in greater detail. Similarly, we can generalize \Cref{prop_wakimoto_constant_terms} to the mixed setting, in such a way that it allows us to determine the Wakimoto grading of such an object. The full subcategory $\scrP(\mathrm{Wak}^{\mathrm{mix}})$ consists of the objects in $\mathrm{Wak}^\mathrm{mix}$ whose gradeds are all perverse.
	
	Note that by \cite[Section 8]{AGLR22}, the functor of nearby cycles upgrades to the mixed setting
	\begin{equation}
		R\Psi^{\mathrm{mix}}:=(i^{\mathrm{mix}})^\ast R(j^{\mathrm{mix}})_\ast \colon \scrD_{\mathrm{\acute{e}t}}([\Gamma \backslash\Hk_{G,C}]) \to  \scrD_{\mathrm{\acute{e}t}}([\Gamma \backslash\Hk_{\calI,\bar k}]) 
	\end{equation}
	and composition with the functor $\mathrm{Rep}_\Lambda(^LG)\to \scrP(\Hk_{G,F})$ defines the mixed central functor $\scrZ^{\mathrm{mix}}(-)$. 
	\begin{theorem}
		The mixed central functor $\scrZ^{\mathrm{mix}}$ lands in $\scrP(\mathrm{Wak}^{\mathrm{mix}})$. Concretely, the Wakimoto graded pieces of $\scrZ^{\mathrm{mix}}(V)$ are canonically isomorphic to $\scrJ_{\bar \nu}^{\mathrm{mix}}(V(w_0 \bar \nu))$,\ $\bar \nu\in \bar \bbX_{\bullet}$.
	\end{theorem}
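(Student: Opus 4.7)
The plan is to mimic the proof of \Cref{thm_perversity_new} in the mixed setting, upgrading each ingredient equivariantly for $\Gamma$. Concretely, I need three inputs: an $\bbE_2$-monoidal enhancement of $\scrZ^{\mathrm{mix}}$ landing in the associative center of $\scrD_{\mathrm{\acute{e}t}}([\Gamma\backslash\Hk_{\calI,\bar k}])$; the mixed version of \Cref{prop_wak_category_central} characterizing Wakimoto-filtered complexes by their support after convolution with extremal Wakimotos; and the compatibility $\mathrm{CT}_{B^-}\circ R\Psi^{\mathrm{mix}}\simeq R\Psi^{\mathrm{mix}}\circ \mathrm{CT}_{B^-}$ combined with mixed geometric Satake to identify the graded pieces.

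First, I would set up the monoidal formalism. The construction of $\scrH^\otimes_S$ and the resulting $\bbE_1$-monoidal convolution structure are Galois-equivariant because the correspondences defining them descend from $\bar k$ to the stack $[\Gamma\backslash -]$; all the pro-smooth and ind-proper maps appearing are canonically equivariant. The fusion argument of \Cref{thm_E2_mor} then upgrades to give that $\scrZ^{\mathrm{mix}}$ factors $\bbE_2$-monoidally through the center $\scrZ(\scrD_{\mathrm{\acute{e}t}}([\Gamma\backslash\Hk_{\calI,\bar k}]))$, since the only nontrivial analytic inputs (full faithfulness of generic pullback, and the nearby-cycles computation of \Cref{lemma_nearby_cycles_prod_points_new}) are intrinsically about $v$-sheaves on products of $\Spd O$, and the Galois structure is transported automatically.

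Next, I verify the mixed analog of \Cref{prop_wak_category_central}. The characterization reduces to a statement about supports and cosupports after convolution with $\scrJ^{\mathrm{mix}}_{-\bar\nu}$ for $\bar\nu\ll 0$, and these supports are detected on the underlying non-mixed sheaf. Since $\scrZ^{\mathrm{mix}}(V)$ is central, the same argument as in the proof of \Cref{prop_wak_category_central} shows that $\scrJ^{\mathrm{mix}}_{-\bar\nu}\ast \scrZ^{\mathrm{mix}}(V)$ has support in $\{t_{-\bar\mu}\colon \bar\mu\in\bbX_\bullet^+\}$ for $\bar\nu\ll 0$, so $\scrZ^{\mathrm{mix}}(V)$ lies in $\mathrm{Wak}^{\mathrm{mix}}$. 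It remains to compute $\mathrm{Grad}_{\bar\nu}\scrZ^{\mathrm{mix}}(V)$. The mixed version of \Cref{coro_wakimoto_graded_ct} identifies it, up to a shift-twist $\langle \langle 2\rho,\nu\rangle\rangle$, with the stalk $\mathrm{CT}_{B^-}(\scrZ^{\mathrm{mix}}(V))_{t_{\bar\nu}}$. By the Galois-equivariant commutation of nearby cycles with constant terms, this equals $R\Psi^{\mathrm{mix}}(\mathrm{CT}_{B^-}(\mathrm{Sat}(V)))_{t_{\bar\nu}}$, which by mixed geometric Satake is $V(w_0\bar\nu)$ placed in the correct perverse degree and weight, so the graded piece is perverse and canonically isomorphic to $\scrJ^{\mathrm{mix}}_{\bar\nu}(V(w_0\bar\nu))$. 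Since every Wakimoto graded is perverse, $\scrZ^{\mathrm{mix}}(V)$ itself lies in $\scrP(\mathrm{Wak}^{\mathrm{mix}})$.

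The main obstacle I anticipate is bookkeeping of weights and Tate twists: one must check that the shift-twist conventions baked into $\del^{\mathrm{mix}}_w$, $\nab^{\mathrm{mix}}_w$, and $\scrJ^{\mathrm{mix}}_{\bar\nu}$ are compatible with the purity/weight normalization of $\mathrm{Sat}(V)$ on the generic fiber (which is pure of weight $0$ with the standard normalization) and with the Tate twists that appear in Braden's theorem for the constant term functor. Up to this careful normalization check, all other steps are straightforward Galois-equivariant extensions of arguments already carried out in the non-mixed case.
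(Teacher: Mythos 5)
Your proposal is essentially the paper's own argument, which is stated in one sentence: ``the arguments of \Cref{thm_perversity_new} apply in this case as well.'' You have unpacked that sentence into its three ingredients — centrality, the support characterization of Wakimoto complexes, and the constant-term computation — and correctly identified where each needs a Galois-equivariant upgrade, including the weight/twist bookkeeping that you flag at the end. One small remark: your step 1 is more than is strictly needed. To verify the support/cosupport condition of \Cref{prop_wak_category_central} for $\scrZ^{\mathrm{mix}}(V)$, it suffices to observe that the underlying non-mixed sheaf is $\scrZ(V)$, which is already known to satisfy it by \Cref{thm_perversity_new}; supports are detected on the underlying sheaf. The only thing actually needed from the mixed structure is that the recollement-type argument producing the filtration by (co)standards runs through with the mixed standard and costandard functors $\del^{\mathrm{mix}}_w, \nab^{\mathrm{mix}}_w$, and that the orthogonality $R\Hom(\scrJ^{\mathrm{mix}}_{\bar\mu}(M),\scrJ^{\mathrm{mix}}_{\bar\nu}(N))=0$ for $t_{\bar\nu}\npreceq t_{\bar\mu}$ holds — both of which follow immediately by passing to Galois cohomology of the non-mixed $R\Hom$. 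So the full $\bbE_2$-monoidal enhancement in the mixed setting, while true, is not required here.
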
 
	
	\begin{proof}
		The arguments of \Cref{thm_perversity_new} apply in this case as well.
	\end{proof}
	
	Recall that there exists a unique exhaustive and separated filtration $\mathrm{Fil}_{i}^{\mathrm{M}}\scrZ(V)$ (called the monodromy filtration) on the perverse sheaf $\scrZ(V)$ such that $\mathbf{n}_V$ is a filtered operator of degree $-2$ inducing isomorphisms $\mathbf{n}_V^i \colon \mathrm{Gr}_{i}^{\mathrm{M}}\scrZ(V) \simeq \mathrm{Gr}_{-i}^{\mathrm{M}}\scrZ(V) $.
	This filtration descends by functoriality to the corresponding mixed object $\scrZ^{\mathrm{mix}}(V)$. Indeed, the unicity of $\mathbf{n}_V$ implies that it defines a $\Gamma$-equivariant morphism $\mathcal{Z}(V)\to \mathcal{Z}(V)(-1)$, where the Tate twist accomodates a trivialization of the tame inertia.
	On the other hand, the mixed perverse sheaf $\scrZ^{\mathrm{mix}}(V)$ admits a filtration $\mathrm{Fil}_{i}^{\mathrm{W}}\scrZ^{\mathrm{mix}}(V) $ in mixed perverse sheaves whose weights are at most $i$ and whose gradeds $\mathrm{Gr}_{i}^{\mathrm{W}}\scrZ^{\mathrm{mix}}(V)$ are purely of weight $i$, see \cite[Théorème 5.3.5]{BBDG18} and \cite[Theorem 2.6.8]{HZ23}. We say following \cite{HZ23} that $\scrZ^{\mathrm{mix}}(V)$ is monodromy-pure of weight $0$ if these two filtrations coincide. We have the following local weight-monodromy conjecture:
	
	\begin{conjecture}\label{wt_monodr_conjecture}
		The mixed perverse sheaf $\scrZ^{\mathrm{mix}}(V)$ is monodromy-pure of weight $0$.
	\end{conjecture}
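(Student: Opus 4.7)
The plan is to adapt the classical strategy of Beilinson--Bernstein \cite{BB93} (combined with the global weight-monodromy results of Gabber), suitably modified for the mixed characteristic setting via the results collected throughout this paper. First, I would reduce the conjecture to verifying an estimate of stalks: namely, for each $w \in W$, one needs to show that the trace of (powers of) Frobenius on the stalk $j_w^\ast \scrZ^{\mathrm{mix}}(V)$ equals the corresponding Kazhdan--Lusztig polynomial value up to the expected normalization. Once this is established, \cite[Théorème 5.1.14]{BBDG18} (stable purity under Frobenius trace matching the expected polynomial recipe) would yield the monodromy-weight equality on the associated gradeds, since the monodromy filtration is a refinement of the weight filtration by \Cref{D_P_for_n_V} and standard theory.

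The key input to carry out this reduction is the Wakimoto filtration of \Cref{thm_perversity_new}, which by \Cref{sec:high-weight-arrows-1-monodromy-is-zero-on-highest-weight-vector} is compatible with the nilpotent monodromy operator: the graded pieces $\scrJ_{\bar\nu}^{\mathrm{mix}}(V(w_0\bar\nu))$ are manifestly pure, each concentrated in the expected Frobenius weight after the Tate twists inherent in the Wakimoto normalization, and the monodromy $\mathbf{n}_V$ acts by shifting among Wakimoto pieces. Thus the Wakimoto filtration provides an upper bound for the monodromy filtration and exhibits $\scrZ^{\mathrm{mix}}(V)$ as an iterated extension of pure objects of weight $0$; the issue is that $\mathbf{n}_V$ might fail to enter a sufficiently deep Wakimoto step, i.e., one must verify that the \emph{induced} monodromy operator on the Wakimoto graded realises the symmetry $\mathrm{Gr}^{\mathrm{M}}_i \cong \mathrm{Gr}^{\mathrm{M}}_{-i}$.

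To establish this last point, I would first treat the minuscule case, which follows from \cite{HZ23} combined with the representability of the local model proved in \cite{AGLR22}; here the admissible locus is smooth, $\scrZ^{\mathrm{mix}}(V)$ is a local system up to shift on a single Schubert cell, and the conjecture is immediate. For the general case, the strategy is to propagate purity along convolution using the $\bbE_2$-monoidal upgrade of \Cref{thm_E2_mor}: if $V$ occurs as a subquotient of $V_1\otimes\cdots\otimes V_n$ with each $V_i$ minuscule (as happens in type $A$, and more generally after passing to a large enough tensor power), then $\scrZ^{\mathrm{mix}}(V)$ is a direct summand of $\scrZ^{\mathrm{mix}}(V_1)\ast\cdots\ast\scrZ^{\mathrm{mix}}(V_n)$, and Deligne's theorem on the purity of convolutions of pure perverse sheaves together with the tensor structure of $\mathbf{n}$ (also from \Cref{D_P_for_n_V}) transfers the monodromy-weight property. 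For groups outside type $A$ admitting no such minuscule embedding, I would replace minuscule representations by quasi-minuscule ones and handle the extra weight-zero piece by direct computation of $\mathrm{CT}_{B^-}$ at the origin, using \Cref{coro_wakimoto_graded_ct}.

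The main obstacle I expect is precisely the absence of the $\mathbb{G}_{m,k}$-loop-rotation action available in equicharacteristic, already flagged repeatedly in the introduction: in \cite{AB09,BB93} this action is what forces semisimplicity of the Frobenius action on each Wakimoto graded piece and ensures that the induced monodromy is non-degenerate in the appropriate sense. Without it, one must either find a substitute (perhaps via the diamond-theoretic automorphisms of $\mathrm{Spd}(O)$, or via the spectral action of Fargues--Scholze on $\scrZ^{\mathrm{mix}}(V)$), or else argue indirectly using the representability of local models and known purity results for their cohomology. For this reason, I view the convolution-reduction step as realistically unconditional only modulo the minuscule (and type $A$) case, and expect the full general case to require genuinely new input beyond what has been developed so far in this paper.
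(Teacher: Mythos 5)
You are attempting to prove a statement that the paper itself presents as a \emph{conjecture} (\Cref{wt_monodr_conjecture}), not a theorem: the paper only establishes the minuscule case in \Cref{prop_wtmon_minuscule}, by showing that the local model $M_{\calI,\mu}$ maps to the smooth hyperspecial local model $\calG/\calP^-_\mu$, producing an étale cover by rigid-analytic tubes so that \cite[Theorem 4.4.4]{HZ23} applies. Your treatment of the minuscule case agrees with this in substance, and your concluding honest assessment — that the full general case needs new input beyond what the paper develops — is also accurate.

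The genuine gap is in your convolution-propagation step. You invoke ``Deligne's theorem on the purity of convolutions of pure perverse sheaves'' to transfer the conjecture from the $V_i$ to $V$, but that theorem concerns ordinary \emph{weight}-purity (Weil II), not \emph{monodromy}-purity. Monodromy-purity is a strictly stronger assertion involving the comparison of two filtrations, and it is not preserved under convolution by Weil II alone: one needs a ``Lefschetz theory of tensor products'' type statement (à la Steenbrink--Zucker/Kashiwara, or Saito for Hodge modules) saying that the monodromy filtration of a tensor product of filtered nilpotent pairs, each already satisfying hard Lefschetz, is the convolution of the monodromy filtrations. Even granting that, identifying the monodromy operator of a convolution $\scrZ^{\mathrm{mix}}(V_1)\ast\scrZ^{\mathrm{mix}}(V_2)$ with the split tensor nilpotent $\mathbf{n}_{V_1}\otimes 1 + 1\otimes\mathbf{n}_{V_2}$ on the level of perverse sheaves is not automatic, because convolution involves $Rm_*$ of an external product, not a literal tensor product, and the Galois action interleaves with the pushforward. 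The paper sidesteps all of this: when it needs to propagate purity via convolution (in the regularity argument for $n_0$), it does so only after projecting to the semisimple quotient $\scrP_0(\Hk_\calI)\simeq\Rep(H)$, where the objects are finite-dimensional vector spaces with commuting Frobenius and nilpotent actions and the tensor-decomposition of filtrations is elementary. Your opening reduction via Kazhdan--Lusztig trace estimates is also not in the paper and would re-encounter these same issues at the level of stalks.
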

	
	For finite-type schemes over a field, it is known that nearby cycles send pure sheaves of weight $0$ to monodromy-pure sheaves of weight $0$, by a theorem of Gabber \cite[Theorem 5.1.2]{BB93}. In mixed characteristic, this was partially generalized by Hansen--Zavyalov \cite{HZ23} assuming the existence of an étale cover by rigid-analytic tubes that admit an étale map to a disk.
	
	
	\begin{proposition}\label{prop_wtmon_minuscule}
		If $G$ is split and every non-zero weight of $V$ is minuscule, then \Cref{wt_monodr_conjecture} holds true for $\scrZ(V)$. 
	\end{proposition}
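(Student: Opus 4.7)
The plan is to reduce the conjecture to the case of an irreducible minuscule representation and then appeal to the local weight-monodromy theorem of Hansen--Zavyalov \cite{HZ23} in combination with the representability theorem of \cite{AGLR22}. First, by the functoriality of $\scrZ^{\mathrm{mix}}$ and of both the weight and monodromy filtrations, we may reduce to the case where $V$ is irreducible. A representation of $\hat{G}$ whose non-zero weights are all minuscule splits as a direct sum of minuscule irreducibles together with a trivial summand, so we are reduced to $V = \mathbf{1}$ or $V = V_\mu$ with $\mu$ minuscule. The trivial case is immediate: $\scrZ^{\mathrm{mix}}(\mathbf{1})$ is the skyscraper at the base point of $\Fl_\calI$, which is pure of weight $0$ and has vanishing monodromy, so both filtrations are concentrated in degree zero and trivially coincide.

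For $V = V_\mu$ with $\mu$ minuscule, I would invoke the representability theorem \cite[Theorem 6.16]{AGLR22} to identify $\calA_{\calI, \mu}$ with the reduced special fibre of a flat kimberlite $\calM_\mu$ over $\Spd(O)$ whose generic fibre is the Schubert variety $\Gr_{G, \leq \mu, C}$. Minusculity forces $\Gr_{G, \leq \mu, C}$ to coincide with its open stratum $\Gr_{G,\mu, C}$, and this is smooth; the geometric Satake sheaf $\mathrm{Sat}(V_\mu)$ is therefore, up to shift and Tate twist, just the constant sheaf on a smooth proper rigid-analytic variety. One then applies the Hansen--Zavyalov weight-monodromy theorem to $\calM_\mu$ to conclude that $\scrZ^{\mathrm{mix}}(V_\mu) = R\Psi^{\mathrm{mix}}(\mathrm{IC}_\mu)$ is monodromy-pure of weight $0$.

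The main obstacle is verifying that $\calM_\mu$ satisfies the geometric hypotheses required in \cite{HZ23}, namely the existence of an étale cover by suitable tubes admitting étale maps to polydiscs. For minuscule $\mu$, one expects this to follow from the smoothness of the generic fibre together with the explicit combinatorial structure of the $\mu$-admissible set and, where needed, a Demazure-type resolution of $\calA_{\calI, \mu}$ providing the required local coordinates; compatibility of the Galois action then has to be tracked to ensure that purity is preserved. Note that the argument genuinely breaks once some non-zero weight of $V$ ceases to be minuscule: then $\Gr_{G, \leq \mu, C}$ acquires singularities, the relevant local model is no longer smooth in the generic fibre, and the Hansen--Zavyalov hypotheses fail, so a substantially different input (e.g.\ along the lines of \cite{BRR20}) would be required to extend the result.
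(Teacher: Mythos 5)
Your reduction to the irreducible minuscule case and the identification of Hansen--Zavyalov as the tool are exactly as in the paper, but there is a genuine gap at the step you flag as ``the main obstacle,'' and your proposed fix would not close it. To apply \cite[Theorem 4.4.4]{HZ23} one needs an étale cover of the rigid generic fibre by tubes (rigid-analytic open subsets of the generic fibre of an integral model) admitting étale maps to a polydisc. Smoothness of the generic fibre $\Gr_{G,\mu,C}\cong (G/P_\mu^-)^\diamondsuit$ as an abstract rigid space is not enough: the tubes must come from a formal/integral model, and the Iwahori local model $M_{\calI,\mu}^{\mathrm{sch}}$ is flat and projective but its special fibre $\calA_{\calI,\mu}$ is generally singular, so one cannot produce such tubes directly from it. Neither the combinatorics of the admissible set (which concern the special fibre) nor a Demazure-type resolution (which changes the space, and in particular the generic fibre) supplies the required input.

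The missing idea in the paper's proof is a comparison with the hyperspecial local model: for $\mu$ minuscule, there is a $\hat{G}$-equivariant map $M_{\calI,\mu}\to\calG/\calP_\mu^-$ to the smooth projective $O$-scheme $\calG/\calP_\mu^-$, which is an isomorphism on generic fibres. Since $\calG/\calP_\mu^-$ is smooth over $O$, it has (Zariski, hence étale) local coordinates, yielding the desired étale cover of its analytic generic fibre by tubes étale over a polydisc; pulling this back along the generic-fibre isomorphism gives the cover needed for $M_{\calI,\mu}$, and one can then apply \cite[Theorem 4.4.4]{HZ23}. Without this detour through the smooth hyperspecial model the hypothesis of \cite{HZ23} is not verified, so your argument as written does not go through; the rest of your structure (semi-simplicity, the easy trivial summand, the eventual breakdown beyond the minuscule case) is fine.
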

	
	\begin{proof}
		By semi-simplicity of the Satake category in characteristic $0$, we may assume $V=V_\mu$ is the simple representation with highest weight $\mu$. 
		In particular, we know by the proof of \cite[Theorem 7.21, 7.23]{AGLR22}, that the local model $M_{\calI,\mu}$ -defined as the $v$-sheaf closure of the Schubert cell for $\mu$- is representable by a flat projective scheme $M_{\calI,\mu}^{\mathrm{sch}}$ over $O$. By functoriality, it also maps to the local model $\calG/\calP_\mu^-$ at hyperspecial level $\calG$, which is smooth over $O$. Since the transition map is an isomorphism in the generic fiber, we deduce by pull-back an étale cover of $G/P_\mu^-$ by rigid-analytic tubes admitting étale maps to a disk. Therefore, we can apply \cite[Theorem 4.4.4]{HZ23}.
	\end{proof}
	
	\section{Coherent functor}
	In this section, we assume that $G$ is split, that $\Lambda$ is an algebraic extension of $\bbQ_\ell$, and that $\calI$ is the Iwahori $O$-model obtained as the dilatation of a split model $G_O$ along the closed subgroup $B_k \to G_k$. Consider the Springer resolution \begin{equation}\label{Springer_resolution}
		p_\mathrm{Spr}:\hat{\calN}_{\mathrm{Spr}}=\hat{G} \times^{\hat{B}} \mathrm{Lie}\,\hat{U}\rightarrow \hat{\calN} \subset \mathrm{Lie}\, \hat{G}
	\end{equation}
	of the nilpotent cone $\hat{\calN}$ defined over $\La$.
	Let $\mathrm{Coh}([\hat{G}\backslash \hat{\calN}_{\mathrm{Spr}}]) $ denote the abelian category of  of coherent sheaves on the quotient stack $[\hat{G}\backslash \hat{\calN}_{\mathrm{Spr}}]$ .  Observe that there are natural functors 
	$$  
	\Rep_\La (\hat{G}) \rightarrow  \mathrm{Coh}([\hat{G}\backslash \hat{\calN}_{\mathrm{Spr}}]),~  V\mapsto \mathcal{O}\boxtimes V
	$$    
	
	and 
	$$
	\Rep_\La (\hat{T}) \rightarrow  \mathrm{Coh}([\hat{G}\backslash \hat{\calN}_{\mathrm{Spr}}]),~ \nu\mapsto \calO(\nu),
	$$
	where $\mathcal{O}(\nu)$ denotes the line bundle which is the pullback along the natural projection $[\hat{G}\backslash \hat{\calN}_{\mathrm{Spr}}]\rightarrow [\hat{B}\backslash\mathrm{pt}]$ of the line bundle corresponding to $\nu$. We aim to construct a monoidal functor \begin{equation}
		\scrF \colon \mathrm{Perf}([\hat{G} \backslash \hat{\calN}_{\mathrm{Spr}}]) \to \scrD_{\mathrm{\acute{e}t}}(\Hk_{\calI})
	\end{equation}
	of monoidal, stable $\infty$-categories. Here, the domain of $\scrF$ is the category of perfect complex on a smooth Artin $\La$-stack, thus equivalently, the $\infty$-derived category  (in fact the bounded derived category cf.\ (\ref{equation_coh_VS_perf})) of coherent sheaves, and the target of $\scrF$ is the $\infty$-derived category of \'etale $\La$-sheaves on the perfect Artin $k$-stack. The functor $\scrF$ is supposed to extend both the Wakimoto functor $\scrJ$ and the central functor $\scrZ$ in the sense that the composition of $\scrF$ with the functor $V \mapsto V \boxtimes \calO$ on $\mathrm{Rep}_{\Lambda}(\hat{G})$ resp.~ the functor $\nu \mapsto \calO(\nu)$ on $\mathrm{Rep}_\Lambda(\hat{T})$ is equivalent to $\scrZ$ resp.~$\scrJ$.
	\subsection{Generalities on coherent sheaves}
	
	Throughout this section, we continue to assume $\La$ is an algebraic extension of $\bbQ_\ell$ and we let $X=Y/H$ be the quotient stack of a finitely presented quasi-affine $\La$-scheme acted upon by a reductive group $H$ over $\La$. Let us recall how to define the derived category $\scrD_{\mathrm{qc}}(X)$ of quasi-coherent sheaves on $X$. Recall that the category $\mathrm{Mod}_Y$ of $\calO_Y$-module sheaves is Grothendieck abelian in the sense of \cite[Definition 1.3.5.1]{Lur17}. By \cite[Definition 1.3.5.8]{Lur17}, this abelian category induces a stable $\infty$-category $\scrD(\mathrm{Mod}_Y)$ of $\calO_Y$-modules on $Y$. It is naturally endowed with a t-structure in the sense of \cite[Definition 1.2.1.4]{Lur17} defined by non-vanishing degrees of its cohomology functors, see \cite[Definition 1.3.5.16]{Lur17}. Hence, we can define $\scrD_{\mathrm{qc}}(Y)$ (resp.~$\scrD_{\mathrm{coh}}(Y)$)  as the full subcategory spanned by complexes whose cohomologies are quasi-coherent (resp.~coherent) $\calO_Y$-modules. We now define $\scrD_{\mathrm{qc}}(X)$ (resp.~$\scrD_{\mathrm{coh}}(X)$) as the limit of the simplicial object in $\infty$-categories $[n]\mapsto \scrD_{\mathrm{qc}}(Y^{n}_X)$ (resp.~$[n]\mapsto \scrD_{\mathrm{coh}}(Y^n_X)$). The resulting $\infty$-categories are stable and carry natural t-structures, whose hearts will be denoted $\mathrm{QCoh}(X)$, resp.~$\mathrm{Coh}(X)$. 
	
	We will decorate the derived categories by the superscripts $^b,^+,^-$ to denote the full subcategories of bounded, left-bounded, and right-bounded complexes. The full subcategory $\mathrm{Perf}(Y)$ of perfect complexes is spanned by bounded complexes with finite Tor-amplitude (i.e., those which are represented by finite complexes of vector bundles as $Y$ is quasi-affine), and we define $\mathrm{Perf}(X)$ again by descent.
	
	Notice that the Grothendieck abelian category $\mathrm{QCoh}(X)$ induces a stable $\infty$-category $\scrD(\mathrm{QCoh}(X))$ again by an application of \cite[Definition 1.3.5.8]{Lur17} with a natural t-structure. It will be often useful to relate this to $\scrD_\mathrm{qc}(X)$. There is an induced t-exact functor $\scrD(\mathrm{QCoh}(X)) \to \scrD_{\mathrm{qc}}(X)$ and under our assumptions, we get:
	
	\begin{proposition}\label{prop_isom_der_coh}
		The functor $\scrD(\mathrm{QCoh}(X)) \to \scrD_{\mathrm{qc}}(X)$ is an equivalence.
	\end{proposition}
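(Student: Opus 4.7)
The plan is to prove this via descent along the smooth cover $Y \to X$, combined with the classical equivalence for quasi-affine schemes. Since $Y$ is quasi-affine Noetherian and $H$ is affine, each term $Y^n_X \simeq Y \times H^{n-1}$ of the Čech nerve of $Y\to X$ is itself quasi-affine and Noetherian. For such a scheme $Z$, the natural functor $\scrD(\mathrm{QCoh}(Z)) \to \scrD_{\mathrm{qc}}(Z)$ is an equivalence: this is classical, following from the Bökstedt-Neeman criterion applied to the Grothendieck abelian category $\mathrm{QCoh}(Z)$, using that the cohomological dimension of quasi-coherent sheaves on a Noetherian quasi-affine scheme is bounded by its Krull dimension.

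By the definition of $\scrD_{\mathrm{qc}}(X)$ as the totalization of the cosimplicial $\infty$-category $[n]\mapsto \scrD_{\mathrm{qc}}(Y^n_X)$, the previous step already identifies $\scrD_{\mathrm{qc}}(X) \simeq \lim_{[n]\in\Delta}\scrD(\mathrm{QCoh}(Y^n_X))$. It therefore remains to identify $\scrD(\mathrm{QCoh}(X))$ with the very same limit. Here the reductivity of $H$ in characteristic zero is essential: the pushforward $\pi_*\colon \mathrm{QCoh}(Y) \to \mathrm{QCoh}(X)$ along $\pi\colon Y\to X$ is exact (it amounts to forming $H$-invariants on representation-valued data, which is exact by reductivity in characteristic $0$). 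Consequently, comonadic descent along $\pi^*$ identifies $\mathrm{QCoh}(X)$ with the equalizer of $\mathrm{QCoh}(Y)\rightrightarrows \mathrm{QCoh}(Y\times H)$ in the abelian sense, and this abelian identification lifts to the $\infty$-derived level without obstruction precisely because of the exactness of $\pi_*$.

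The main obstacle will be promoting the abelian descent to a descent statement in the \emph{unbounded} $\infty$-derived category. This boils down to verifying uniform cohomological boundedness across the simplicial diagram, which holds because (a) $Y$ and each $Y\times H^{n-1}$ are Noetherian quasi-affine schemes of bounded Krull dimension, and (b) representations of a reductive group in characteristic zero have no higher cohomology, so the $H^n$-factors contribute nothing. Combined with the exactness of $\pi_*$, this yields the required uniform bound and allows the descent argument to run on all of $\scrD$ rather than only on the bounded-below part, giving the desired equivalence.
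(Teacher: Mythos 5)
Your route is genuinely different from the paper's. The paper reduces to an equivalence of bounded subcategories via \cite[Remark 1.2.1.18]{Lur17}, then cites compact generation of $\scrD_{\mathrm{qc}}(X)$ from \cite[Theorem B]{HR17} and the comparison theorem of \cite[Theorem 1.2]{HNR19}. You instead try to run smooth descent along the cover $Y\to X$, reducing to the classical Bökstedt--Neeman statement for quasi-affine schemes on each term of the \v{C}ech nerve. This would be a more self-contained argument if it went through, and you correctly identify where the difficulty lies.

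However, the step where you pass from the abelian comonadic description of $\mathrm{QCoh}(X)$ to a statement at the level of unbounded derived $\infty$-categories is not actually established. Exactness of $\pi_*$ and faithful exactness of $\pi^*$ do give that $\pi^*\colon \scrD(\mathrm{QCoh}(X))\to\scrD(\mathrm{QCoh}(Y))$ is conservative (cohomology commutes with the exact $\pi^*$, and $\pi^*$ detects vanishing on hearts). But comonadicity of $\pi^*$ in the sense of the Barr--Beck--Lurie theorem also requires that $\pi^*$ preserve totalizations of $\pi^*$-split cosimplicial objects, and that is a statement about certain \emph{limits} that a left adjoint like $\pi^*$ has no a priori reason to preserve. ``Uniform cohomological boundedness across the simplicial diagram'' is not a condition that plugs into any theorem you invoke; it is the correct intuition (finite cohomological dimension of $X$ is indeed what makes the equivalence hold here and what fails in the counterexamples of \cite{HNR19}), but it does not by itself yield the identification $\scrD(\mathrm{QCoh}(X))\simeq\lim_\Delta \scrD(\mathrm{QCoh}(Y^n_X))$. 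If you wanted to complete this, you would essentially have to prove left-completeness of $\scrD(\mathrm{QCoh}(X))$ from the bound on cohomological dimension and then compare with $\scrD_{\mathrm{qc}}(X)$ on bounded-below objects --- at which point you are reproducing the content of the results the paper cites rather than avoiding them. Also, (b) in your last paragraph conflates vanishing of rational group cohomology of reductive $H$ with the sheaf-cohomological bound you actually need; these are related (exactness of $H$-invariants), but as written the inference is too quick.
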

	\begin{proof}
		Essential surjectivity can be tested at the triangulated level, i.e., by taking homotopy categories. Similarly, full faithfulness amounts to checking isomorphism of homotopy groups of mapping spaces, which can be expressed in terms of Ext groups by \cite[Notation 1.1.2.17]{Lur17}, so we can also verify it at the triangulated level. Since $\scrD_{\mathrm{qc}}(X)$ is compactly generated by \cite[Theorem B]{HR17}, the claim now follows from \cite[Theorem 1.2]{HNR19}.
		Alternatively, one can invoke \cite[Proposition A.1.2]{Man22}, which applies as $\mathrm{QCoh}(X)$ is Grothendieck, $\scrD(\mathrm{QCoh}(X))$ is left-complete (this uses that reductive groups in characteristic $0$ have a semisimple representation theory, and in particular $\mathrm{QCoh}(X)$ has finite cohomological dimension), and the morphisms for the \v{C}ech nerve for the covering $Y\to X$ are flat and affine.
	\end{proof}
In order to understand right-bounded complexes in the affine case, the following lemma is decisive.

\begin{lemma}\label{lem_enough_proj}
	If $Y$ is affine, then the abelian category $\mathrm{Coh}(X)$ has enough projectives. In particular, the t-exact functor $\scrD^-(\mathrm{Coh}(X))\to \scrD^-_{\mathrm{coh}}(X)$ is an equivalence.
\end{lemma}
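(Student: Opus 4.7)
The plan is to produce enough projectives in $\mathrm{Coh}(X)$ by an explicit induction functor from $\mathrm{Rep}_\Lambda(H)$, using that $H$ is linearly reductive since $\mathrm{char}(\Lambda)=0$. Writing $Y=\mathrm{Spec}(A)$ with its $H$-action, a coherent sheaf on $X=[Y/H]$ is the same as a finitely generated $H$-equivariant $A$-module. Given such an $M$, locally finite $H$-representations decompose into sums of finite-dimensional subrepresentations, so I may choose a finite-dimensional $H$-subrepresentation $V\subseteq M$ that generates $M$ as an $A$-module; the resulting surjection $A\otimes_\Lambda V\twoheadrightarrow M$ of equivariant $A$-modules is the candidate projective cover.

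The second step is to verify that $A\otimes_\Lambda V$ is actually projective in $\mathrm{Coh}(X)$ for every finite-dimensional $H$-representation $V$. By the tensor-forgetful adjunction, for any $H$-equivariant $A$-module $N$ one has $\mathrm{Hom}_{\mathrm{Coh}(X)}(A\otimes_\Lambda V,N)=\mathrm{Hom}_{\mathrm{Rep}_\Lambda(H)}(V,N)$, so projectivity of $A\otimes_\Lambda V$ in $\mathrm{Coh}(X)$ reduces to projectivity of $V$ in $\mathrm{Rep}_\Lambda(H)$. But in characteristic zero, every representation of the reductive group $H$ is semisimple, hence every finite-dimensional representation is projective. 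Combining the two steps shows that $\mathrm{Coh}(X)$ has enough projectives, as claimed.

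For the derived-category equivalence, I would invoke the usual mechanism: once an abelian category $\calA$ has enough projectives, any bounded-above complex in $\calA$ admits a quasi-isomorphism from a bounded-above complex of projectives (built term by term in the standard way). Applied here to $\calA=\mathrm{Coh}(X)$, this shows that the natural t-exact functor $\scrD^-(\mathrm{Coh}(X))\to\scrD^-_{\mathrm{coh}}(X)$ is essentially surjective: every object of $\scrD^-_{\mathrm{coh}}(X)$ is quasi-isomorphic to an object coming from $\scrD^-(\mathrm{Coh}(X))$, thanks to \Cref{prop_isom_der_coh} identifying the target with a full subcategory of $\scrD(\mathrm{QCoh}(X))=\scrD_{\mathrm{qc}}(X)$. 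Full faithfulness then follows from the standard computation of Hom sets between bounded-above complexes of projectives via chain-homotopy classes, i.e.\ from the fact that projective resolutions compute derived Homs on either side in the same way.

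The only genuinely delicate point is the existence of the generating finite-dimensional $H$-subrepresentation $V\subseteq M$, which uses that the $H$-action on $M$ is locally finite (standard since $H$ is an affine algebraic group) together with finite generation over $A$; all other steps are formal consequences of linear reductivity and standard homological algebra. I do not expect any obstacle beyond bookkeeping once the induction functor $V\mapsto A\otimes_\Lambda V$ and its adjunction are in place.
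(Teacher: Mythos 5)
Your proof is correct and follows essentially the same route as the paper's. Both arguments hinge on the same two facts: that the induced modules $\calO(Y)\otimes_\Lambda V$ (for $V$ a finite-dimensional $H$-representation) are projective by linear reductivity of $H$ in characteristic zero, and that local finiteness of the $H$-action supplies enough of them. The only cosmetic difference is in producing the generating projective: you locate a finite-dimensional generating subrepresentation $V\subseteq M$ directly, whereas the paper first surjects $\calO(Y)^n\twoheadrightarrow M$ non-equivariantly, induces up to $(\calO(H)\otimes\calO(Y))^n$, and then cuts down to a finite-dimensional $V\subset\calO(H)$; these are interchangeable. One point worth making slightly more explicit in the derived-category step (both in your write-up and, implicitly, in the paper's) is that the same adjunction shows $\calO(Y)\otimes_\Lambda V$ is projective in all of $\mathrm{QCoh}(X)$, not just in $\mathrm{Coh}(X)$; combined with $\scrD_{\mathrm{qc}}(X)\simeq\scrD(\mathrm{QCoh}(X))$ from \Cref{prop_isom_der_coh}, this is exactly what guarantees that mapping complexes out of a bounded-above complex of such projectives are computed by chain homotopy classes on both sides, giving full faithfulness.
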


\begin{proof}
	Let $R$, resp.~$A$ be the ring of global sections on $Y$ resp.~$H$. The category of finitely generated $R$-modules (which is equivalent to $\mathrm{Coh}(Y)$) has enough projectives by considering the collection of free $R$-modules. Notice that the functor of taking $H$-invariants on $R$-modules is exact by assumption on $H$. We can deduce that the $H$-equivariant free $R$-module $V\otimes M$ with $V$ being a finite dimensional representation of $H$ is projective in $\mathrm{Coh}(X)$. This collection of projectives is enough, as each coherent sheaf on $X$ is surjected upon by the $n$-fold sum of the regular representation $A\otimes R$ for $n\gg 0$, and we can find a finite representation $V \subset A$ which completes the job, by finiteness of the underlying $R$-module of the initial coherent sheaf on $X$.
\end{proof}


If the stack $X$ is smooth, then we actually get an equality 
\begin{equation}\label{equation_coh_VS_perf}
	\scrD^b_{\mathrm{coh}}(X)=\mathrm{Perf}(X)    
\end{equation}
of full subcategories. This motivates our construction of the AB functor via the following equivariant analogue of the localization theorem originally due to Thomason--Trobaugh \cite{TT90} and Neeman \cite{Nee92}.

\begin{proposition}\label{prop_thomason_quotient}
	Let $U \subset X$ be an open immersion with closed complement $Z$. Then $\mathrm{Perf}(U)$ is the idempotent-completion of the quotient $\mathrm{Perf}(X)/\mathrm{Perf}(X)_Z$,
	\footnote{The quotient $\mathrm{Perf}(X)/\mathrm{Perf}(X)_Z$ 
		denotes the Verdier quotient as defined in \cite[Definition A.1.4]{CDH+20} or \cite[Theorem I.3.3]{NS18}. Another way to phrase it is that $\mathrm{Perf}(U)$ is the Karoubi quotient of $\mathrm{Perf}(X)$ 
		by $\mathrm{Perf}(X)_Z$ in the sense of \cite[Definition A.3.6]{CDH+20}.}
	where the denominator indicates the full subcategory spanned by complexes supported in $Z$.
\end{proposition}

\begin{proof}
	This is \cite[Theorem 3.4, Equation (3.6)]{KR18} for the underlying triangulated categories, which implies the statement in general. Let us explain how one obtains the result. First, it is clear that $\scrD_{\mathrm{qc}}(U)$ is a localization of $\scrD_{\mathrm{qc}}(X)$ with kernel $\scrD_{\mathrm{qc}}(X)_Z$, because restriction admits a right adjoint given by pushforward with unit being an equivalence. Finally, since each of the categories involved are compactly generated by \cite[Theorem B]{HR17} with compact objects given exactly by perfect complexes by \cite[Lemma 4.4]{HR17}, we can apply the localization theorem, see \cite[Theorem 3.12]{HR17}, to obtain the claim.
\end{proof}



\subsection{Coherent sheaves on the Springer variety}
Recall that the variety $\hat{G}/\hat{U}$, which is a $\hat{T}$-torsor over $\hat{G}/\hat{B}$, is quasi-affine\footnote{By the construction of quotients via fixed vectors in representations, any quotient of an affine scheme of finite type over a field by a unipotent group scheme is quasi-affine. For details on $\hat{G}/\hat{U}$ see \cite[Subsection 6.2.1]{AR}.} , so it embeds openly in the spectrum $\hat{\calX}$ of its global sections $\calO(\hat{G}/\hat{U})$. In turn, these admit the following explicit description as a graded $\Lambda$-algebra
\begin{equation}
	\calO(\hat{G}/\hat{U})=\bigoplus_{\mu \in \bbX_\bullet^+} V_\mu
\end{equation}
where $V_\mu$ denotes the highest weight representation of highest weight $\mu$ and multiplication is given by the obvious maps $V_{\mu_1} \otimes V_{\mu_2} \to V_{\mu_1+\mu_2}$, see \cite[Lemma 1.5.1,~Section 1.5.2,~Lemma 6.2.1]{AR}. In addition, the above $\Lambda$-algebra is finitely generated cf.\ \cite[Proposition 1.2.1]{Ful93}. 

Similarly, we can define the following $\hat{T}$-torsor
\begin{equation}
	\hat{\calN}_{\mathrm{Spr}}^{\mathrm{qaf}}=\hat{G}\times^{\hat{U}}\mathrm{Lie}(\hat{U})
\end{equation}
over the Springer resolution, which is a quasi-affine scheme with an action of $\hat{G}^\prime:=\hat{G}\times\hat{T}$. 
The Lie algebra $\hat{\frakg}$ of $\hat{G}$ acts naturally via derivations on the structure sheaf of $\hat{G}/\hat{U}$, see \cite[Equation (6.2.9)]{AR} and we can associate to it the so-called infinitesimal universal stabilizer $\hat{\calN}_{\mathrm{Spr}}^{\mathrm{af}} $ as the closed subscheme of $\hat{\frakg} \times \hat{\calX}$ given by the image of the derivation map cf.\ \cite[Section 6.2.2]{AR}. Note that, even though the intersection of $\hat{\calN}_{\mathrm{Spr}}^{\mathrm{af}}$ with $ \hat{\frakg} \times \hat{G}/\hat{U}$ is exactly $\hat{\calN}^{\mathrm{qaf}}_{\mathrm{Spr}}$, it is not generally true that $\hat{\calN}_{\mathrm{Spr}}^{\mathrm{af}}$ coincides with the scheme-theoretic closure of the locally closed immersion $ \hat{\calN}^{\mathrm{qaf}}_{\mathrm{Spr}} \to \hat{\frakg} \times \hat{\calX}$. 
The latter is an integral variety admitting $\hat{\calN}_{\mathrm{Spr}}^{\mathrm{qaf}}$ as a dense open subset, 
with ideal of definition given by the kernel of $\calO(\hat{\frakg})\times \calO(\hat{\calX}) \to \calO(\hat{\calN}^{\mathrm{qaf}}_{\mathrm{Spr}})$.

We have two distinguished collections of generators for the derived category of $[\hat{G}\backslash \hat{\calN}_{\mathrm{Spr}}]$.

\begin{lemma}
	The derived category $\scrD^b_{\rm{coh}}([\hat{G}\backslash \hat{\calN}_{\mathrm{Spr}}])$ is spanned by the set of the line bundles $\calO(\nu)$ for $\nu \in \bbX_\bullet$, or by the set of the vector bundles $V\otimes \calO(\nu)$ for $V\in {\rm{Rep}}\,\hat{G}$ and $\nu \in \bbX_\bullet^+$.   
\end{lemma}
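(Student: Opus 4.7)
The plan is to switch to the equivalent presentation $[\hat{G}\backslash \hat{\calN}_{\mathrm{Spr}}]\simeq [\hat{B}\backslash \mathrm{Lie}(\hat{U})]$ afforded by the contracted product definition $\hat{\calN}_{\mathrm{Spr}}=\hat{G}\times^{\hat{B}}\mathrm{Lie}(\hat{U})$. Since $\mathrm{Lie}(\hat{U})$ is a $\hat{B}$-stable affine space and $\hat{B}$ is smooth, this quotient stack is smooth of finite type over $\Lambda$, so $\scrD^b_{\mathrm{coh}}=\mathrm{Perf}$. By \Cref{lem_enough_proj}, applied to the affine chart $\mathrm{Lie}(\hat{U})$, there are enough $\hat{B}$-equivariant projectives of the form $\calO\otimes W$ with $W$ a finite-dimensional $\hat{B}$-representation: any coherent $\calF$ is a quotient of such a module upon taking $W$ to be a $\hat{B}$-stable finite-dimensional generating subspace of the global sections. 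Iterating and truncating at a sufficiently high step (finite global dimension following from smoothness) yields a finite resolution of any coherent sheaf by such free modules, reducing both assertions to showing that each $\calO\otimes W$ lies in the stable subcategory in question.

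For the first collection, I would apply Lie--Kolchin to produce a composition series of $W$ whose one-dimensional subquotients correspond to characters $\nu_1,\dots,\nu_n\in \bbX_\bullet$; this lifts to a filtration of $\calO\otimes W$ whose associated graded is $\bigoplus_i\calO(\nu_i)$, and iterated cones give $\calO\otimes W\in \langle \calO(\nu):\nu\in \bbX_\bullet\rangle$, concluding the first case.

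For the second collection $\{V\otimes \calO(\nu):V\in \Rep\hat{G},\,\nu\in \bbX_\bullet^+\}$, it suffices by the previous step to show that every $\calO(\mu)$ with $\mu\in\bbX_\bullet$ lies in the stable subcategory $\calT$ generated by this collection. If $\mu$ is dominant there is nothing to prove. Otherwise, I would decompose $\mu=\lambda_1-\lambda_2$ with $\lambda_1,\lambda_2\in \bbX_\bullet^+$ and exploit the dual Weyl module $V_{\lambda_2}^\ast$, which admits the character $(-\lambda_2)$ as a $\hat{B}$-equivariant quotient, with kernel admitting a Lie--Kolchin filtration by the remaining weights $-w\lambda_2$ of $V_{\lambda_2}^\ast$ for $w\in W_{\mathrm{fin}}\setminus\{e\}$. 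Tensoring with $\calO(\lambda_1)$ produces a short exact sequence
\begin{equation}
0\to K\to V_{\lambda_2}^\ast\otimes \calO(\lambda_1)\to \calO(\mu)\to 0
\end{equation}
whose middle term lies in $\calT$ and whose kernel $K$ is filtered by line bundles $\calO(\lambda_1-w\lambda_2)$ with $w\neq e$. Since $w\lambda_2<\lambda_2$ strictly for $w\ne e$, these intermediate weights lie strictly closer to the dominant chamber than $\mu$, so an induction on a suitable monovariant (for instance the sum of the strictly negative coordinates of $\mu$ in the basis of fundamental coweights) finishes the argument.

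The main obstacle is designing this induction rigorously so that it terminates: the weights $\lambda_1-w\lambda_2$ are not dominant in an obvious partial order, and one must pin down a well-founded invariant that strictly decreases. A likely cleaner route would be to upgrade the $K_0$-level statement (namely that the given collection generates the antispherical module of the affine Hecke algebra) to the derived level using finite global dimension and explicit Koszul-type resolutions for the zero section $B\hat{B}\hookrightarrow [\hat{B}\backslash \mathrm{Lie}(\hat{U})]$, thereby reducing the question to a combinatorial statement about weights which can be checked directly.
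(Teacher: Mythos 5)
The paper gives no proof of this lemma --- it is cited to \cite[Lemma 21]{Bez09} and \cite[Lemma 6.2.7]{AR} --- so you are supplying a self-contained argument. Your plan (pass to $[\hat{B}\backslash\mathrm{Lie}(\hat{U})]$, resolve by free modules $\calO\otimes W$, filter $W$ via Lie--Kolchin) has the right flavor, but two steps as written do not go through.

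First, you invoke \Cref{lem_enough_proj} to claim that the $\calO\otimes W$ are $\hat{B}$-equivariant \emph{projectives}. That lemma requires the acting group $H$ to be reductive --- its proof crucially uses exactness of the $H$-invariants functor --- and $\hat{B}$ is solvable and non-reductive, so the $\calO(\mathrm{Lie}\,\hat{U})\otimes W$ are not projective objects of $\mathrm{Coh}([\hat{B}\backslash\mathrm{Lie}(\hat{U})])$. Surjections $\calO\otimes W_i\twoheadrightarrow K_{i-1}$ still exist (take $W_i$ a finite-dimensional $\hat{B}$-stable generating subspace), but then the phrase ``iterating and truncating at a sufficiently high step yields a finite resolution'' needs justification: after $\dim\mathrm{Lie}(\hat{U})$ steps the kernel $K_d$ is locally free as an $\calO(\mathrm{Lie}\,\hat{U})$-module, but it is not a priori of the form $\calO\otimes W$. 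One can patch this by a graded-Nakayama argument exploiting that the $\hat{T}$-weights of $\mathrm{Lie}(\hat{U})$ are positive roots (so the grading on $\calO(\mathrm{Lie}\,\hat{U})$ is bounded below), or, more efficiently, by quoting compact generation: the $\calO(\nu)$ form a set of compact generators of $\scrD_{\mathrm{qc}}([\hat{B}\backslash\mathrm{Lie}(\hat{U})])$ because $\mathrm{Lie}(\hat{U})$ is affine, and then \cite[Theorem B, Lemma 4.4]{HR17} identify $\mathrm{Perf}=\scrD^b_{\mathrm{coh}}$ as the thick subcategory they generate.

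Second, the induction for the collection $V\otimes\calO(\nu)$, $\nu$ dominant, is not set up to terminate with the invariant you suggest. Your short exact sequence $0\to K\to V_{\lambda_2}^\ast\otimes\calO(\lambda_1)\to\calO(\mu)\to 0$ is correct (the lowest weight $-\lambda_2$ of $V_{\lambda_2}^\ast$ is a $\hat{B}$-equivariant quotient), but the kernel $K$ is filtered by \emph{all} weights of $V_{\lambda_2}^\ast$ other than $-\lambda_2$, not merely the $W_{\mathrm{fin}}$-conjugates $-w\lambda_2$; and while these weights are strictly $\succ\mu$ in the dominance order, that is not the same as being ``closer to dominant.'' Concretely, for $\hat{G}=\mathrm{SL}_3$ and $\mu=-\rho=(-1,-1)$, taking $\lambda_1=0$, $\lambda_2=\rho$ the kernel weights include $(-2,1)$, whose ``sum of strictly negative fundamental-weight coordinates'' equals $-2$, the same as for $\mu$ itself. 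The induction can be repaired: every weight encountered lies in the finite interval $\{\nu:\mu\preceq\nu\preceq -w_0\mu\}$ (using that if $\theta\preceq\lambda$ with $\lambda$ dominant then also $\theta^+\preceq\lambda$), so one can induct downward on $\#\{\nu:\theta\prec\nu\preceq -w_0\mu\}$; alternatively one uses the rank-1/minimal-parabolic reductions as in \cite[Lemma 6.2.7]{AR}. Either fix is needed to make the argument close.
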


\begin{proof}
	This is \cite[Lemma 21]{AB09}. See also \cite[Lemma 6.2.8]{AR}.
\end{proof}

Originally, it was claimed in \cite[Lemma 20]{AB09} that the triangulated category $\scrD^b_{\rm{coh}}([\hat{G}\backslash \hat{\calN}_{\mathrm{Spr}}])$ is a Verdier quotient of ${\mathrm{Perf}}([\hat{G}'\backslash \hat{\calN}_{\mathrm{Spr}}^{\mathrm{af}}])$. Upon expanding the argument in \cite[Proposition 6.2.8]{AR}, we noticed that it seemed to rely on density of $\hat{\calN}_{\mathrm{Spr}}^{\mathrm{qaf}} \subset \hat{\calN}_{\mathrm{Spr}}^{\mathrm{af}} $, which unfortunately fails in general. Instead, we will argue below via \cite[Remark 6.3.10]{AR}.

\subsection{Construction of the AB functor}
As in this whole section, we assume that $\calI$ is the standard Iwahori attached to the fixed Borel $B$ of the pinned split group $G$. We recall also the notation $G'=G\times T$ and $\hat{G}'=\hat{G}\times \hat{T}$. First, we start with the functor
\begin{equation}
	\scrZ':=\scrZ \times \scrJ \colon \on{Rep}_\Lambda (\hat{G}') \to \scrD_{\mathrm{ula}}(\Hk_{\calI})
\end{equation}
which has a natural monoidal structure\footnote{Even in the $\mathbb{E}_1$-monoidal sense: The functors $\scrZ$ and $\scrJ$ are $\mathbb{E}_1$-monoidal, and $\scrZ$ is central, see \Cref{thm_assoc_center}. This implies the existence of $\scrZ'$ by the definition of $\mathbb{E}_1$-centers, see \cite[Definition 5.3.1.2]{Lur17}.}and factors through the full subcategory of $ \scrP(\mathrm{Wak})$ consisting of Wakimoto-filtered perverse sheaves. However, this is still not good enough, because the convolution of Wakimoto-filtered perverse sheaves is \textit{not} {symmetric} in general.

In order to fix this, we consider the (non-full!) subcategory $\scrC$ of $ \scrP(\Hk_{\calI})$ whose objects are those in the image of $\scrZ'$ and whose morphisms commute with the images along $\scrZ'$ of the symmetry isomorphisms of $\Rep_\La(\hat{G}')$. This is a symmetric monoidal category
by definition, see \cite[Lemma 6.3.3]{AR}. Consider the following $\La$-algebra
\begin{equation}
	A=\mathrm{Hom}_{\mathrm{Ind}(\scrC)}(1_\scrC,\scrZ'(\calO(\hat{G}')))
\end{equation}
where the multiplication is induced by that of the group $\hat{G}'$ and the monoidal structure of $\scrZ'$, and $\calO(\hat{G}^\prime)$ is a $\hat{G}^\prime$-representation via left multiplication (see the discussion before \cite[Proposition 6.3.5]{AR}). By \cite[Proposition 6.3.5]{AR}, this defines an identification between $\scrC$ and the category of free $A$-modules with $\hat{G}'$-equivariant structure of the form $V\otimes_\Lambda A$ where $V$ is a finite dimensional $\Lambda$-representation of $\hat{G}'$.

Next, we construct a $\La$-algebra homomorphism
\begin{equation}
	\calO(\hat{\calN}_{\mathrm{Spr}}^{\mathrm{af}}) \to A
\end{equation}
that is equivariant with respect to the $\hat{G}'$-module structures. Via the $\hat{G}^\prime$-equivariant embedding  $\hat{\calN}_{\mathrm{Spr}}^{\mathrm{af}}\to \frakg \times \hat{\calX}$, we start by handling each of these two factors separately (following closely the respective part in \cite[Section 6.3]{AR}).

For any $\hat{G}$-representation $V$, we extend it to a $\hat{G}'$-representation $V'=V\boxtimes 1$ by letting $\hat{T}$ act trivially and consider the logarithm of the monodromy $n_V$ acting on $\scrZ(V)=\scrZ'(V')$. The collection of these endomorphisms defines a map of $\La$-algebras $\calO(\hat{\frakg}) \to A$ which is $\hat{G}'$-equivariant. For details we refer to \cite[Example 6.3.1]{AR} and \Cref{D_P_for_n_V}.

Next, we need to define a map of $\hat{G}'$-modules $V_\nu \boxtimes -\nu\to A $ and the natural source for this is the highest weight arrow $f_\nu$ provided by the Wakimoto filtration, see \Cref{sec:high-weight-arrows}. It defines a morphism in $\scrC$ by the already checked compatibilities, so applying the description of $\scrC$ in terms of $A$ yields a map $V_\nu' \otimes A \to 1\boxtimes \nu\otimes A$ which corresponds to our goal after twisting by $\nu$ and restricting the domain on the left.

In total, we have thus constructed a $\hat{G}'$-module homomorphism
\begin{equation}\label{eq_equiv_hom_lie_afflag}
	\calO(\hat{\frakg} \times \hat{\calX}) \to A.
\end{equation}
However, we are still left with the task of showing that this factors over the coordinate ring of the affine enlargement $\hat{\calN}_{\mathrm{Spr}}^{\mathrm{af}}$, which is a closed subscheme of $\hat{\frakg}\times \hat{\calX}$ of the Springer bundle.

\begin{lemma}
	The $\hat{G}'$-equivariant map \eqref{eq_equiv_hom_lie_afflag} factors uniquely through a $\hat{G}'$-equivariant map
	\begin{equation}     \calO(\hat{\calN}_{\mathrm{Spr}}^{\mathrm{af}}) \to A.
	\end{equation}
\end{lemma}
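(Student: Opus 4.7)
The plan is to identify the defining ideal of $\hat{\calN}_{\mathrm{Spr}}^{\mathrm{af}} \subset \hat{\frakg} \times \hat{\calX}$ explicitly, and then to verify that each generator vanishes in $A$ using the identity $\mathfrak{f}_V \circ \mathbf{n}_V = 0$ from \Cref{sec:high-weight-arrows-1-monodromy-is-zero-on-highest-weight-vector}.

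First, I would unwind $\hat{\calN}_{\mathrm{Spr}}^{\mathrm{af}}$ as the kernel of the action map $\alpha \colon \hat{\frakg} \times \hat{\calX} \to T\hat{\calX}$, a morphism of vector bundles over $\hat{\calX}$. The ideal of $\ker(\alpha)$ in $\calO(\hat{\frakg}) \otimes \calO(\hat{\calX})$ is generated, as a module, by $\alpha^\ast$ applied to the linear-in-fibers functions on $T\hat{\calX}$; concretely, this is the image of the derivation map $d\colon \calO(\hat{\calX}) \to \hat{\frakg}^\ast \otimes \calO(\hat{\calX})$, $f \mapsto (X \mapsto X \cdot f)$. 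Exploiting the $\hat{G}\times\hat{T}$-equivariant decomposition $\calO(\hat{\calX}) = \bigoplus_\nu V_\nu$ and the fact that $\hat{\frakg}$ preserves each summand, the ideal breaks up as generated by the images of the $\hat{G}$-equivariant maps $d_\nu \colon V_\nu \to \hat{\frakg}^\ast \otimes V_\nu$, $v \mapsto \sum_i X_i^\ast \otimes X_i v$, for a basis $\{X_i\}$ of $\hat{\frakg}$ with dual basis $\{X_i^\ast\}$.

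Next, I would compute the image of $d_\nu(v)$ in $A$ under \eqref{eq_equiv_hom_lie_afflag}. Since this map is the product of $\mathbf{n}\colon \calO(\hat{\frakg}) \to A$ and the highest weight arrow $\calO(\hat{\calX}) \to A$, this image is $\sum_i \mathbf{n}(X_i^\ast) \cdot \mathfrak{f}_{V_\nu}(X_i v)$. Tracing through the description of morphisms in $\scrC$ as $A$-linear $\hat{G}^\prime$-equivariant maps of free $A$-modules $V \otimes A$, one sees that $\mathbf{n}_{V_\nu}$ acts on $V_\nu \otimes A$ as $\rho_{V_\nu}(\mathbf{n})$, so that $\mathbf{n}_{V_\nu}(v \otimes 1) = \sum_i (X_i v) \otimes \mathbf{n}(X_i^\ast)$. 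Applying the $A$-linear arrow $\mathfrak{f}_{V_\nu}$, the above sum becomes exactly $\mathfrak{f}_{V_\nu}(\mathbf{n}_{V_\nu}(v))$, which vanishes by \Cref{sec:high-weight-arrows-1-monodromy-is-zero-on-highest-weight-vector}.

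Since our map is a ring homomorphism and the ideal is killed on its generators, we obtain the desired factorization $\calO(\hat{\calN}_{\mathrm{Spr}}^{\mathrm{af}}) \to A$; uniqueness follows from surjectivity of $\calO(\hat{\frakg} \times \hat{\calX}) \twoheadrightarrow \calO(\hat{\calN}_{\mathrm{Spr}}^{\mathrm{af}})$, and $\hat{G}^\prime$-equivariance is inherited from the two ingredient maps. The main technical point, though not a serious obstacle, is step one: verifying that the ideal of $\ker(\alpha)$ is indeed generated by elements of degree one in $\calO(\hat{\frakg})$, which is a formal consequence of $\alpha$ being a morphism of vector bundles over $\hat{\calX}$.
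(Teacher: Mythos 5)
Your proof is correct and follows the same approach as the paper: the defining ideal of $\hat{\calN}_{\mathrm{Spr}}^{\mathrm{af}}$ is generated by the image of the derivation map, and these generators are killed in $A$ precisely because $\mathfrak{f}_{V} \circ \mathbf{n}_{V}=0$ (Lemma \ref{sec:high-weight-arrows-1-monodromy-is-zero-on-highest-weight-vector}). The paper delegates the explicit identification of the ideal generators and the computation that $\sum_i \mathbf{n}(X_i^\ast)\cdot\mathfrak{f}_{V_\nu}(X_i v)=\mathfrak{f}_{V_\nu}(\mathbf{n}_{V_\nu}(v))$ to \cite[Example 6.3.1, Lemma 6.3.7]{AR}, which you have spelled out directly.
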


\begin{proof}
	Here, we follow \cite[Lemma 6.3.7]{AR}. We know that the highest weight arrow is equivariant with respect to the monodromy operator. Passing to the logarithm, we see as in \Cref{sec:high-weight-arrows-1-monodromy-is-zero-on-highest-weight-vector} that $f_\nu \circ \mathbf{n}_\nu=0$. This equality holds true in the auxiliary category $\scrC$ (in fact, the monodromy action on $\scrZ$ factors through $\scrC$). 
	Comparing with \cite[Example 6.3.1]{AR}, we conclude from this identity that the definition ideal of $\hat{\calN}_{\mathrm{Spr}}^{\mathrm{af}}$ inside $\frakg \times \hat{\calX}$ vanishes under the map to $A$.
\end{proof}

So far, we have arrived at a functor \begin{equation}
	\tilde{F}\colon\mathrm{Coh}_{\mathrm{fr}}([\hat{G}'\backslash \hat{\calN}_{\mathrm{Spr}}^{\mathrm{af}}]) \to \scrC,
\end{equation} where $\mathrm{Coh}_{\mathrm{fr}}$ denotes the full subcategory of $\mathrm{Coh}([\hat{G}'\backslash \hat{\calN}_{\mathrm{Spr}}^{\mathrm{af}}])$ spanned by the objects $V\otimes \mathcal{O}$ for $V\in \mathrm{Rep}_{\Lambda}(\hat{G})$. Now we are going to show that this functor passes to the actual Springer resolution $\hat{\calN}_{\mathrm{Spr}}$.

\begin{lemma}\label{lem_gr_circ_F_restr}
	The functor $\tilde{F}$ composed with the Wakimoto grading functor $\mathrm{gr}$ from \Cref{sec:wakimoto-sheaves-graded-wakimoto-functor} identifies with the pullback functor of coherent sheaves along the morphism
	\[
	[\hat{T}\backslash e]\to  [\hat{G}\backslash \hat{\calN}_{\mathrm{Spr}}]\cong [\hat{G}'\backslash \hat{\calN}_{\mathrm{Spr}}^{\mathrm{qaf}}]\subseteq [\hat{G}'\backslash \hat{\calN}_{\mathrm{Spr}}^{\mathrm{qaf}}].
	\]
\end{lemma}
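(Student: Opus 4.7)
The plan is to exploit the monoidal structure present on both sides of the desired equivalence and then to reduce the verification to a computation on generators. Both $\iota^{\ast}$ (symmetric monoidal as a pullback of quasi-coherent sheaves) and $\mathrm{gr}\circ\tilde{F}$ are $\La$-linear monoidal functors out of $\mathrm{Coh}_{\mathrm{fr}}([\hat{G}'\backslash\hat{\calN}^{\mathrm{af}}_{\mathrm{Spr}}])$ into the category of $\bar{\bbX}_\bullet$-graded $\La$-modules $\simeq \mathrm{Rep}(\hat{T})$. Indeed, $\tilde{F}$ lands in the symmetric monoidal category $\scrC$ by construction, and $\mathrm{gr}$ is monoidal by \Cref{prop_grad_monoidal} combined with \Cref{sec:perversity-graded-monoidal}. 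Since $\mathrm{Coh}_{\mathrm{fr}}([\hat{G}'\backslash\hat{\calN}^{\mathrm{af}}_{\mathrm{Spr}}])$ is monoidally generated by the objects $V\otimes\calO$ for $V\in\mathrm{Rep}(\hat{G})$ and the line bundles $\calO(\nu)$ for $\nu\in\bbX_\bullet$, it will suffice to identify the two functors on this collection of objects and on the structural morphisms relating them.

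For the values on objects, the comparisons come directly from our earlier work. On $V\otimes\calO$ we have $\tilde{F}(V\otimes\calO)=\scrZ(V)$, and by \Cref{thm_perversity_new} combined with \Cref{sec:perversity-graded-monoidal} the graded $\mathrm{gr}\circ\scrZ(V)$ is monoidally identified with $V|_{\hat T}$, which matches $\iota^{\ast}(V\otimes\calO)$ since the composition $[\hat{T}\backslash e]\to[\hat{B}\backslash e]\to[\hat{G}\backslash\hat{\calN}_{\mathrm{Spr}}]$ amounts to restriction along $\hat{T}\subset \hat{B}\subset \hat{G}$. On the line bundles we have $\tilde{F}(\calO(\nu))=\scrJ_\nu(\La)$, whose Wakimoto grading is $\La$ in degree $\nu$ by construction, matching $\iota^{\ast}(\calO(\nu))=\La_\nu$.

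For the compatibility on morphisms, the key point is that the morphisms between objects in $\mathrm{Coh}_{\mathrm{fr}}$ are controlled by the $\hat{G}'$-equivariant algebra structure on $\calO(\hat{\calN}^{\mathrm{af}}_{\mathrm{Spr}})$, which was itself built from the inclusion $\calO(\hat{\calN}^{\mathrm{af}}_{\mathrm{Spr}})\hookrightarrow\calO(\hat{\frakg}\times\hat{\calX})$. Thus I would verify the matching separately for the two factors. For the $\calO(\hat{\frakg})$-part: the structure map $\calO(\hat{\frakg})\to A$ used to build $\tilde{F}$ is the logarithm-of-monodromy map $\mathbf{n}_V$, and \Cref{sec:high-weight-arrows-1-monodromy-is-zero-on-highest-weight-vector} together with the monoidality of $\mathbf{n}$ from \Cref{D_P_for_n_V} forces $\mathbf{n}_V$ to act trivially on $\mathrm{gr}\,\scrZ(V)$; on the coherent side, the pullback along the zero section $[\hat{B}\backslash e]\to[\hat{G}\backslash\hat{\calN}_{\mathrm{Spr}}]$ kills precisely the image of $\calO(\hat{\frakg})$ into $\calO(\hat{\calN}^{\mathrm{af}}_{\mathrm{Spr}})$. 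For the $\calO(\hat{\calX})$-part: the structural maps $V_\nu\boxtimes(-\nu)\to A$ are by definition the highest-weight arrows $\mathfrak{f}_\nu$, which tautologically coincide with the projection onto the top Wakimoto graded $\mathrm{gr}_\nu \scrZ(V_\nu)\simeq \La_\nu$; on the coherent side, the pullback $\iota^{\ast}$ of the tautological section $V_\nu\to \calO(\hat\calX)\otimes V_\nu(-\nu)$ is precisely the projection of $V_\nu$ onto the highest-weight line at $e\hat{U}$, which has weight $\nu$ for $\hat{T}$.

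The main obstacle will be assembling the above piece-by-piece matching into a coherent natural isomorphism of monoidal functors, rather than just a pointwise agreement; in particular one must be careful with sign and duality conventions (the $w_0$-twist in \Cref{thm_perversity_new} versus the natural grading in \Cref{coro_Grad_composes_Zisom_H}, and the identification of $V|_{\hat{T}^{\Gamma}}$ with $V|_{\hat{T}}$ in the split case). Once these normalizations are pinned down, monoidality reduces everything to the generator computations above, and one concludes by the universal property of $\mathrm{Coh}_{\mathrm{fr}}([\hat{G}'\backslash\hat{\calN}^{\mathrm{af}}_{\mathrm{Spr}}])$ as a monoidal category generated under tensor products by $V\otimes\calO$ and $\calO(\nu)$ with relations encoded by $\calO(\hat{\calN}^{\mathrm{af}}_{\mathrm{Spr}})$.
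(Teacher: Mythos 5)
Your proposal follows the same basic skeleton as the paper's proof: both reduce the matter to checking how the two generating pieces of $\calO(\hat{\calN}_{\mathrm{Spr}}^{\mathrm{af}})\hookrightarrow\calO(\hat{\frakg}\times\hat{\calX})$ interact with the Wakimoto grading, and both conclude that the answer is ``evaluation at the origin.'' The paper, following \cite[Lemma 6.3.8]{AR}, frames this more crisply by observing that since $\tilde F$ is encoded by the single $\hat{G}'$-equivariant $\La$-algebra map $\calO(\hat{\calN}_{\mathrm{Spr}}^{\mathrm{af}})\to A$, its postcomposition with $\mathrm{gr}$ is likewise controlled by a single $\hat{T}$-equivariant algebra map $\calO(\hat{\calN}_{\mathrm{Spr}}^{\mathrm{af}})\to\La$; this sidesteps entirely your worry about assembling pointwise agreements into a coherent monoidal natural isomorphism.

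There is, however, a genuine gap on the $\calO(\hat{\frakg})$-side. You claim that \Cref{sec:high-weight-arrows-1-monodromy-is-zero-on-highest-weight-vector} (which says $\mathfrak{f}_V\circ\mathbf{n}_V=0$, i.e.\ monodromy dies on the \emph{highest} Wakimoto graded) combined with monoidality of $\mathbf{n}$ forces $\mathbf{n}_V$ to act trivially on \emph{all} gradeds $\mathrm{gr}_\nu\scrZ(V)$. This inference does not go through: the monoidality of $\mathbf{n}$ relates $\mathbf{n}_{V\otimes W}$ on $\mathrm{gr}_{\alpha}(\scrZ(V\otimes W))$ to the $\mathbf{n}_V$, $\mathbf{n}_W$ on the various $\mathrm{gr}_{\alpha_1}\otimes\mathrm{gr}_{\alpha_2}$ with $\alpha_1+\alpha_2=\alpha$, but the top graded of a tensor product only ever sees the product of the \emph{top} gradeds of the factors, so you never probe a lower weight space of $V$ this way. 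The paper instead appeals directly to \Cref{unipotent_new}, which already establishes the stronger fact that (for split $G$, as assumed in this section) the inertia acts on $\mathrm{Grad}_{\bar\nu}(\scrZ(V))\simeq V(w_0\bar\nu)$ by its natural action on that weight space, hence trivially; so $\mathbf{n}_V$ vanishes on every graded. Replacing your monoidality detour with this citation repairs the argument and aligns it with the paper's.
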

Here, $e:=\Spec(\Lambda)\to \hat{\calN}_{\mathrm{Spr}}=\hat{G}\times^{\hat{B}} \mathrm{Lie}(\hat{U})$ denotes the point $[(1,0)]$.
\begin{proof}
	We follow the proof in \cite[Lemma 6.3.8]{AR}. It suffices to understand the corresponding $\hat{T}$-equivariant map of $\La$-algebras $\calO(\hat{\calN}_{\mathrm{Spr}}^{\mathrm{af}})\to \La$. But the monodromy acts trivially on the Wakimoto grading as we saw in \Cref{unipotent_new}, and the highest weight arrow is projects to $V_\la$ to the highest weight space $V_\la(\la)$. Hence the sought homomorphism is just evaluation at the origin $e$.
\end{proof}

\begin{proposition}\label{prop_constr_AB_funct_F}
	There is a unique monoidal functor of stable $\infty$-categories up to equivalence \begin{equation}
		\scrF\colon \mathrm{Perf}([\hat{G}\backslash \hat{\calN}_{\mathrm{Spr}}]) \to \scrD_{\mathrm{cons}}(\Hk_{\calI})
	\end{equation} extending $\tilde{F}$.
\end{proposition}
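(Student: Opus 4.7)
The plan is to derive $\tilde{F}$, then restrict to perfect complexes, and finally descend along the open immersion $\hat{\calN}_{\mathrm{Spr}}^{\mathrm{qaf}}\hookrightarrow \hat{\calN}_{\mathrm{Spr}}^{\mathrm{af}}$ via the Thomason--Neeman localization theorem. The subcategory $\mathrm{Coh}_{\mathrm{fr}}$ is exactly the collection of projective generators identified in the proof of \Cref{lem_enough_proj}, so every object of $\mathrm{Coh}([\hat{G}'\backslash \hat{\calN}_{\mathrm{Spr}}^{\mathrm{af}}])$ admits a projective resolution by such objects. Applying $\tilde{F}$ termwise to such resolutions produces a functor $\scrD^{-}(\mathrm{Coh}([\hat{G}'\backslash \hat{\calN}_{\mathrm{Spr}}^{\mathrm{af}}])) \to \scrD_{\mathrm{cons}}(\Hk_{\calI})$; via \Cref{lem_enough_proj} and \Cref{prop_isom_der_coh} this identifies with a functor on $\scrD^{-}_{\mathrm{coh}}([\hat{G}'\backslash \hat{\calN}_{\mathrm{Spr}}^{\mathrm{af}}])$, which restricts to a functor $F$ on the stable subcategory $\mathrm{Perf}([\hat{G}'\backslash \hat{\calN}_{\mathrm{Spr}}^{\mathrm{af}}])$.

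Next I would apply \Cref{prop_thomason_quotient} to factor $F$ through $\mathrm{Perf}([\hat{G}'\backslash \hat{\calN}_{\mathrm{Spr}}^{\mathrm{qaf}}])$. For this one must verify that $F(\calE)=0$ whenever $\calE$ is supported on the closed complement $Z\subset [\hat{G}'\backslash \hat{\calN}_{\mathrm{Spr}}^{\mathrm{af}}]$ of $[\hat{G}'\backslash \hat{\calN}_{\mathrm{Spr}}^{\mathrm{qaf}}]$. The key input is \Cref{lem_gr_circ_F_restr}: the composition $\mathrm{gr}\circ F$ identifies with derived pullback along the map $[\hat{T}\backslash e]\to [\hat{G}'\backslash \hat{\calN}_{\mathrm{Spr}}^{\mathrm{af}}]$. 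Since $e\in \hat{\calN}_{\mathrm{Spr}}^{\mathrm{qaf}}$, the derived pullback to $e$ of any complex supported on $Z$ vanishes, so $\mathrm{gr}(F(\calE))=0$. To upgrade this to $F(\calE)=0$ one uses conservativity of the grading on Wakimoto-filtered complexes: by construction $F$ lands in $\mathrm{Wak}$, and on $\scrP(\mathrm{Wak})$ the grading is manifestly conservative because the Wakimoto filtration is finite with gradeds in the essential image of $\scrJ$; for complexes this passes to perverse cohomology sheaves, which are themselves Wakimoto-filtered thanks to \Cref{prop_wak_category_central}.

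Having obtained the descent, \Cref{prop_thomason_quotient} together with the idempotent completeness of $\scrD_{\mathrm{cons}}(\Hk_{\calI})$ yields an essentially unique functor $\mathrm{Perf}([\hat{G}'\backslash \hat{\calN}_{\mathrm{Spr}}^{\mathrm{qaf}}]) \to \scrD_{\mathrm{cons}}(\Hk_{\calI})$. Composing with the identification $[\hat{G}'\backslash \hat{\calN}_{\mathrm{Spr}}^{\mathrm{qaf}}]\cong [\hat{G}\backslash \hat{\calN}_{\mathrm{Spr}}]$ coming from the fact that $\hat{\calN}_{\mathrm{Spr}}^{\mathrm{qaf}}$ is the total space of the canonical $\hat{T}$-torsor over $\hat{\calN}_{\mathrm{Spr}}$ delivers $\scrF$. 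Monoidality is inherited, since $\tilde{F}$ is monoidal on $\mathrm{Coh}_{\mathrm{fr}}$ (tensor products of free modules are free), and each subsequent step — derivation via projective resolutions, Thomason localization, the torsor identification — is compatible with the tensor structures. Uniqueness up to equivalence follows from the fact that $\mathrm{Perf}([\hat{G}\backslash \hat{\calN}_{\mathrm{Spr}}])$ is generated under cones, shifts and retracts by the bundles $V\otimes \calO(\nu)$, whose images are pinned down by the definition of $\tilde{F}$.

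The main obstacle, in my view, lies not in any single step but in executing these constructions coherently at the level of stable $\infty$-categories rather than merely triangulated ones, and in propagating the $\bbE_1$-monoidal structure through each stage. In practice I would sidestep this by realising $\mathrm{Perf}([\hat{G}'\backslash \hat{\calN}_{\mathrm{Spr}}^{\mathrm{af}}])$ as perfect modules over an $\bbE_\infty$-ring spectrum built from the $\hat{G}'$-equivariant endomorphisms of a projective generator (namely $\calO(\hat{\calN}_{\mathrm{Spr}}^{\mathrm{af}})$ with its natural algebra structure), so that the functor is determined by a map of ring spectra into the analogous endomorphism algebra computed by $\tilde{F}$; this is the $\infty$-categorical incarnation of the tannakian reconstruction underlying \cite[Proposition 6.3.5]{AR}.
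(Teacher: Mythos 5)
Your overall route matches the paper's: derive $\tilde F$ via \Cref{lem_enough_proj} and \Cref{prop_isom_der_coh}, restrict to perfect complexes, invoke \Cref{prop_thomason_quotient} to reduce to the vanishing of $\tilde\scrF$ on perfect complexes supported on the closed complement, and verify this via \Cref{lem_gr_circ_F_restr} together with conservativity of the Wakimoto grading. However, the aside that conservativity ``passes to perverse cohomology sheaves, which are themselves Wakimoto-filtered thanks to \Cref{prop_wak_category_central}'' is incorrect: that proposition characterizes when a given complex lies in $\mathrm{Wak}$, not whether its perverse cohomology does, and the remark following it explicitly warns that perverse objects of $\mathrm{Wak}$ can misbehave. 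Fortunately this detour is unnecessary: $\mathrm{gr}$ is conservative directly on all of $\mathrm{Wak}$, because for any $\calF\in\mathrm{Wak}$ the filtration $\calF_{\preceq\bar\mu}$ stabilizes at $0$ for small $\bar\mu$ and at $\calF$ for large $\bar\mu$, so vanishing of all graded pieces forces $\calF=0$ by a finite induction.

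There is also a gloss in asserting outright that $\mathrm{gr}\circ F$ on $\mathrm{Perf}([\hat{G}'\backslash\hat{\calN}_{\mathrm{Spr}}^{\mathrm{af}}])$ identifies with \emph{derived} pullback along $[\hat{T}\backslash e]$ --- \Cref{lem_gr_circ_F_restr} only gives the underived statement on $\mathrm{Coh}_{\mathrm{fr}}$, and one must still argue that $\mathrm{gr}$ is compatible with the deriving process. The paper avoids claiming such a derived identification by representing $\calE$ as a finite complex in $\mathrm{Coh}_{\mathrm{fr}}$, applying $\tilde F$ termwise to land in $\scrP(\mathrm{Wak})$, taking the Wakimoto filtration termwise, and then inducting so that the underived lemma suffices for the vanishing. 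Relatedly, your monoidality discussion is lighter than the paper's, which explicitly factors $\tilde\scrF$ through the realization functor $\scrC^b(\scrP(\mathrm{Wak}))\to\scrD_{\mathrm{cons}}(\Hk_{\calI})$, monoidal because $\scrP(\mathrm{Wak})$ is a monoidal full subcategory of the target. Your closing suggestion of modules over an $\bbE_\infty$-ring spectrum is a plausible alternative framework for the $\infty$-categorical bookkeeping, but it is not what the paper does.
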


\begin{proof}
	This is \cite[Proposition 6.3.9, Remark 6.3.10]{AR} in the triangulated setting and we follow their argument. 
	
	Since $\mathrm{Coh}_{\mathrm{fr}}([\hat{G}'\backslash \hat{\calN}_{\mathrm{Spr}}^{\mathrm{af}}])$ consists of compact, projective generators of $\mathrm{Coh}([\hat{G}'\backslash \hat{\calN}_{\mathrm{Spr}}^{\mathrm{af}}])$ by \Cref{lem_enough_proj}, left Kan extension of the composition $\mathrm{Coh}_{\mathrm{fr}}([\hat{G}'\backslash \hat{\calN}_{\mathrm{Spr}}^{\mathrm{af}}])\overset{\tilde{F}}{\to} \mathcal{C}\to \mathcal{D}_\et(\Hk_\calI)$ yields the exact ``left-derived'' functor  $L\tilde{F}\colon \scrD^{\leq 0}(\mathrm{Coh}([\hat{G}'\backslash \hat{\calN}_{\mathrm{Spr}}^{\mathrm{af}}]))\to \scrD_{\et}(\Hk_\calI)$ as in \cite[Theorem 1.3.3.2]{Lur17}. This functor formally extends to an exact functor on $\scrD^-(\mathrm{Coh}([\hat{G}'\backslash \hat{\calN}_{\mathrm{Spr}}^{\mathrm{af}}]))$. Thanks to the equivalence from \Cref{prop_isom_der_coh} and after restricting to perfect complexes, we get a functor
	\begin{equation}
		\tilde{\scrF}: \mathrm{Perf}([\hat{G}'\backslash \hat{\calN}_{\mathrm{Spr}}^{\mathrm{af}} ])\to \scrD_{\mathrm{cons}}(\Hk_{\calI}).
	\end{equation}
	This functor is monoidal because it can be written as the composition of the monoidal functor $\scrC^b(\tilde{F})$, where $\scrC^b$ denotes the associated $\infty$-category of complexes, followed by the restricted realization functor $\scrC^b(\scrP(\mathrm{Wak}) ) \to \scrD_{\mathrm{cons}}(\Hk_{\calI})$, which is monoidal because $\scrP(\mathrm{Wak})$ is a full subcategory of $\scrD_{\mathrm{cons}}(\Hk_{\calI})$ closed under convolution.
	\Cref{prop_thomason_quotient} implies that the category $\mathrm{Perf}([\hat{G}\backslash \hat{\calN}_{\mathrm{Spr}}])$ is the idempotent-completion of the quotient of $\mathrm{Perf}([\hat{G}'\backslash \hat{\calN}^{\mathrm{af}}_{\mathrm{Spr}}])$ by the full subcategory of those perfect complexes supported on the complement. 
	Since $\scrD_{\mathrm{cons}}(\Hk_{\calI})$ is idempotent-complete, we are reduced to showing that such a perfect complex lies in the kernel of $\tilde{\scrF}$. 
	Since the image of $\tilde{\scrF}$ lies in $\mathrm{Wak}$ (this reduces to the same statement for $\tilde{F}$ as $\mathrm{Wak}$ is idempotent-complete), we can check acyclicity after passing to gradeds by first taking Wakimoto filtrations termwise and then inducting. But the grading functor corresponds at the coherent level to restriction to the origin of $\hat{\calN}_{\mathrm{Spr}}$ by \Cref{lem_gr_circ_F_restr}, hence the desired vanishing holds. 
\end{proof}
\section{Iwahori--Whittaker averaging}

We continue to work with a pinned split $F$-group $G$ with a fixed maximal torus $T$ and a Borel $B$ containing $T$ (in particular, we do not regard them as being defined over $O$, unless indexed by $O$). We let $\calI$ be the standard Iwahori $O$-model of $G$, i.e., such that $\calI(O)$ fixes the origin of the apartment $\scrA(G,T)$ induced by the pinning and is contained in the $B$-dominant Weyl chamber.

We let $\calI^{\mathrm{op}}$ denote the parahoric $O$-model 
opposite to $\calI$ with respect to the origin of the apartment and the Borel, and simply call it the \textit{opposite Iwahori}. In other words, $\calI^{\mathrm{op}}(O)$ fixes the alcove opposite to the one fixed by $\calI(O)$. We define likewise the \textit{pro-$p$ Iwahori} $\calI_{\mathrm{u}}$ as the unique smooth affine $O$-model of $G$ with connected geometric fibers whose $O$-valued points are the kernel of $\calI(O)\to \calI_k^{\mathrm{red}}(k) $, where the $\calI_k^{\mathrm{red}}$ is the reductive quotient of the special fiber of $\calI$. We have a natural action of $L^+\calI_\mathrm{u}^\mathrm{op}$ on $\Fl_\calI$ be left multiplication.

Our next task is to choose a Whittaker datum. Assume $\Lambda$ is an algebraic field extension of $\bbQ_\ell(\zeta)$ where $\zeta \in \bar \bbQ_\ell$ is a choice of a primitive $p$-th root of unity. We get the Artin--Schreier étale sheaf $\calL_{\mathrm{AS}}$ on $\bbG_{a,k}$: this is the rank $1$ direct summand of the pushforward $\pi_*\La$ of the constant sheaf along the Artin--Schreier cover $\pi\colon \bbG_{a,k} \to \bbG_{a,k}$ arising as the $\zeta$-eigenspace for the Galois action of $\bbZ/p\bbZ$. It is a character sheaf in the sense of Lusztig--Yun \cite{LY20}, i.e., we have isomorphisms $m^*\calL_{\mathrm{AS}} \simeq \calL_{\mathrm{AS}}\boxtimes \calL_{\mathrm{AS}}$ and $e^*\calL_{\mathrm{AS}}\cong \La$ with respect to the multiplication $m$ and unit $e$ of $\bbG_{a,k}$, that satisfy associativity constraints (this is equivalent to the corresponding $\infty$-enhancement, because $\scrP(\bbG_{a,k})$ is an abelian category). 
Besides, it satisfies the following cohomological vanishing cf.\ \cite[Theorem 2.7]{Del77} \begin{equation}
	R\Gamma(\bbG_{a,k}, \calL_{\mathrm{AS}})=R\Gamma_c(\bbG_{a,k}, \calL_{\mathrm{AS}})=0,
\end{equation}
which will turn out to be important later on.

Let $U^{\mathrm{op}}$ be the unipotent radical of the opposite Borel $B^{\mathrm{op}}$. Consider the homomorphism $U_k^{\mathrm{op}} \to \bbG_{a,k}$ induced by the 
sum of the negative simple roots and let $\chi: L^+\calI^{\mathrm{op}}_u \to \bbG_{a,k}$ be the homomorphism resulting from pre-composing the first one with the natural projection $L^+\calI^{\mathrm{op}}_u \to U_k^{\mathrm{op}}$. Taking the pullback of $\calL_{\mathrm{AS}}$ along $\chi$, we get a character sheaf on $\calL_{\calI\calW} \in \scrD_{\mathrm{cons}}(L^+\calI_u^{\mathrm{op}})$. Indeed, this is the character sheaf attached to the cover $\pi_\chi \colon (L^+\calI_u^{\mathrm{op}})_{\mathrm{AS}} \to L^+\calI_u^{\mathrm{op}}$ deduced from $\pi\colon \bbG_{a,k} \to \bbG_{a,k}$ by pullback along $\chi$.

\begin{definition}
	The derived category $\scrD_{\mathrm{\acute{e}t}}(\Hk_{\calI\calW})$ of \textit{Iwahori-Whittaker sheaves} is the $\zeta$-isotypical component of the stable $\infty$-category $\scrD_{\mathrm{\acute{e}t}}([(L^+\calI_u^{\mathrm{op}})_{\mathrm{AS}}\backslash \Fl_{\calI}])$.
\end{definition}

In the above definition, we use the fact that the $\La$-linear stable $\infty$-category $\scrD_{\mathrm{\acute{e}t}}([(L^+\calI_u^{\mathrm{op}})_{\mathrm{AS}}\backslash \Fl_{\calI}])$ has a $\bbZ/p\bbZ$-action coming from $\bbZ/p\bbZ \simeq \mathrm{ker}(\pi_\chi)$ and that it decomposes as a direct sum of full subcategories where $\bbZ/p\bbZ$ acts via a $\La^\times$-valued character, since $\La$ has characteristic $0$ and $\mu_p\subseteq \La$.
Note that no underlying stack $\mathrm{Hk}_{\calI\calW}$ seems to exist, but we find this shorthand notation useful, and hope it does not cause any confusion to the reader. We could also define $\scrD_{\mathrm{\acute{e}t}}(\Hk_{\calI\calW})$ as the $\infty$-category of $(L^+\calI_{\mathrm{u}}^{\mathrm{op}},\calL_{\calI\calW})$-equivariant étale sheaves on $\Fl_{ \calI}$,
obtained by twisting by the character sheaf $\calL_{\calI\calW}$ the cosimplicial diagram obtained by applying $\scrD_{\mathrm{\acute{e}t}}$ to the \v{C}ech complex of $\Fl_{ \calI} \to [L^+\calI_{\mathrm{u}}^{\mathrm{op}}\backslash \Fl_{ \calI}]$. In the end, it turns out that all of this is unnecessarily complicated, because:

\begin{proposition}
	The forgetful functor $\scrD_{\mathrm{\acute{e}t}}(\Hk_{\calI\calW})\to \scrD_{\mathrm{\acute{e}t}}(\Fl_{\calI})$ is fully faithful.
\end{proposition}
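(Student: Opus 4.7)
The plan is to show that pullback along the natural map $\Fl_\calI \to \Hk_{\calI\calW}$ is fully faithful by reducing to the $\Lambda$-cohomological triviality of the classifying v-stack of $(L^+\calI_{\mathrm{u}}^{\mathrm{op}})_{\mathrm{AS}}$. First I would observe that for any two objects $\calF, \calG \in \scrD_{\mathrm{\acute{e}t}}(\Hk_{\calI\calW})$, the internal $R\mathrm{Hom}(\calF,\calG)$ on $\Fl_\calI$ inherits a \emph{genuine} (untwisted) equivariance for $(L^+\calI_{\mathrm{u}}^{\mathrm{op}})_{\mathrm{AS}}$: the $\calL_{\calI\calW}$-twists on $\calF$ and $\calG$ cancel in $R\mathrm{Hom}$ because $\calL_{\calI\calW}$ is invertible, and likewise the $\zeta$-eigenvalues on the $\bbZ/p$-direction cancel since $\zeta\cdot\zeta^{-1}=1$.

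Next, by descent along the $(L^+\calI_{\mathrm{u}}^{\mathrm{op}})_{\mathrm{AS}}$-torsor $\Fl_\calI \to [(L^+\calI_{\mathrm{u}}^{\mathrm{op}})_{\mathrm{AS}}\backslash \Fl_\calI]$, combined with the fact that the $\zeta$-isotypic projector on $\scrD_{\mathrm{\acute{e}t}}([(L^+\calI_{\mathrm{u}}^{\mathrm{op}})_{\mathrm{AS}}\backslash \Fl_\calI])$ is a direct summand and hence preserves Homs between $\zeta$-isotypic objects, one obtains an identification
\[
R\Hom_{\Hk_{\calI\calW}}(\calF,\calG) \simeq R\Gamma\bigl(B(L^+\calI_{\mathrm{u}}^{\mathrm{op}})_{\mathrm{AS}},\, R\Hom_{\Fl_\calI}(\calF,\calG)\bigr),
\]
where the complex inside carries the trivial (untwisted, trivial on $\bbZ/p$) action constructed in the previous step.

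Then I would argue that the right-hand side simplifies to $R\Hom_{\Fl_\calI}(\calF,\calG)$ because $B(L^+\calI_{\mathrm{u}}^{\mathrm{op}})_{\mathrm{AS}}$ is $\Lambda$-cohomologically trivial. Indeed, $(L^+\calI_{\mathrm{u}}^{\mathrm{op}})_{\mathrm{AS}}$ is a $\bbZ/p$-extension of $L^+\calI_{\mathrm{u}}^{\mathrm{op}}$, and the latter is connected pro-unipotent in the v-sheaf sense, since the reductive quotient of the special fibre of $\calI_{\mathrm{u}}^{\mathrm{op}}$ vanishes by the very definition of the pro-$p$ Iwahori. Hence \cite[Proposition VI.4.1]{FS21} yields $\scrD_{\mathrm{\acute{e}t}}(\ast) \simeq \scrD_{\mathrm{\acute{e}t}}(BL^+\calI_{\mathrm{u}}^{\mathrm{op}})$ via pullback, while $B\bbZ/p$ is $\Lambda$-cohomologically trivial because $p$ is invertible in $\Lambda$. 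Composing these two reductions proves the identity of mapping spaces, hence fully faithfulness.

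The main obstacle I anticipate is the $\infty$-categorical bookkeeping: one has to promote the pointwise twist cancellation in $R\mathrm{Hom}$ to a coherent datum compatible with the \v{C}ech resolution of the torsor, so that the descent identification above is actually functorial and not just valued on individual pairs. This should be a formal consequence of working inside the six-functor formalism for v-stacks \`a la Mann \cite{Man22,Man22b} — in particular using that $R\mathrm{Hom}$ is the internal hom of a closed symmetric monoidal $\infty$-category, whose construction commutes with smooth pullbacks along the torsor — but it is the step that requires the most technical care.
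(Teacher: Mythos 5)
Your argument is correct and follows essentially the same route as the paper's proof, which simply states that $(L^+\calI_u^{\mathrm{op}})_{\mathrm{AS}}$ is pro-unipotent and invokes \cite[Proposition VI.4.1]{FS21} (along with the equicharacteristic precedent \cite[Proposition A.5]{ARW16}). The one genuine difference is in the handling of the Artin--Schreier cover: the paper asserts directly that the cover of a pro-unipotent group is again pro-unipotent and applies the cited proposition in one stroke, whereas you factor the classifying stack as a $B\bbZ/p$-gerbe over $BL^+\calI_u^{\mathrm{op}}$ and treat the two pieces separately --- the pro-unipotent base via \cite[Proposition VI.4.1]{FS21} and the $B\bbZ/p$-fibre via invertibility of $p$ in $\La$. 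Your decomposition is slightly more careful, since $\bbZ/p$ is not an iterated extension of $\bbG_a$ in the narrow sense, and so it sidesteps any ambiguity in what ``pro-unipotent'' means; combined with your observation that the twists on $R\mathrm{Hom}(\calF,\calG)$ cancel so the $\bbZ/p$-action there is genuinely trivial, it gives exactly the chain of reductions the paper is gesturing at. The worry you flag about $\infty$-categorical coherence is resolved by working inside the six-functor formalism for v-stacks, which is precisely why the paper cites \cite[Proposition VI.4.1]{FS21} rather than spelling out the Čech computation.
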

\begin{proof}
	This is essentially \cite[Proposition A.5]{ARW16} and follows from the fact that $L^+\calI_u^{\mathrm{op}}$ is pro-unipotent and hence so is its Artin--Schreier cover. Thus, we can apply \cite[Proposition VI.4.1]{FS21}.
\end{proof}
The category of Iwahori--Whittaker sheaves inherits a perverse t-structure from its fully faithful embedding into $\scrD_{\mathrm{\acute{e}t}}(\Fl_{\calI})$, so that one can consider its heart $\scrP(\Hk_{\calI\calW})$, called the category of Iwahori--Whittaker perverse sheaves. 		
The $\infty$-category $\scrD_{\mathrm{\acute{e}t}}(\Hk_{\calI\calW})$ does not appear to be monoidal, but it is a right module of $\scrD_{\mathrm{\acute{e}t}}(\Hk_{\calI})$ in the sense of \cite[Definition 4.2.1.13]{Lur17}. Indeed, we invoke the natural isomorphism
\begin{equation}
	[(L^+\calI_{\mathrm{u}}^{\mathrm{op}})_{\mathrm{AS}}\backslash \Fl_{ \calI}] \simeq [*/(L^+\calI_{\mathrm{u}}^{\mathrm{op}})_{\mathrm{AS}}]\times_{[*/LG]} [*/L^+\calI] 
\end{equation} to identify our stack as a homomorphism object in $\mathrm{Corr}(\mathrm{vSt}_{/[*/LG]})$ with a natural right module structure under the endomorphism object $\Hk_{\calI}\cong [*/L^+\calI]\times_{[*/LG]} [*/L^+\calI]$. Taking the symmetric monoidal forgetful functor $\mathrm{Corr}(\mathrm{vSt}_{[\ast/LG]})\to \mathrm{Corr}(\mathrm{vSt})$ and applying $\scrD_{\mathrm{\acute{e}t}}^{\otimes}$, we deduce a right module structure on $\scrD_{\mathrm{\acute{e}t}}([L^+\calI_{\mathrm{u}}^{\mathrm{op}}\backslash \Fl_{ \calI}])$ under $\scrD_{\mathrm{\acute{e}t}}(\Hk_{\calI})$ and this module structure is preserved under passing to direct summands.
We begin our study of this $\infty$-category by classifying Iwahori--Whittaker i.e. $(L^+\calI_{\mathrm{u}}^{\mathrm{op}},\calL_{\calI\calW})$-equivariant, local systems on $L^+\mathcal{I}^{\mathrm{op}}_u$-orbits in $\Fl_\calI$. Note that the latter are exactly the $L^+\calI^{\mathrm{op}}$-orbits and hence are in bijection with the Iwahori--Weyl group $W$ as in \Cref{sec:geometry-affine-flag-1-stratification-of-affine-flag-variety}. We will denote the corresponding $L^+\calI^{\mathrm{op}}$-orbit of $w$ by $\Fl_{\calI,w}^{\mathrm{op}}:=L^+\calI^{\mathrm{op}}wL^+\calI/L^+\calI$.

\begin{lemma}
	\label{sec:iwah-whitt-aver-support-of-iwahori-whittaker-sheaves}
	The orbit $\Fl^{\mathrm{op}}_{\calI,w} $ carries an Iwahori--Whittaker local system if and only if $w$ has minimal length in its $W_{\mathrm{fin}}$-left coset, i.e., $\ell(w_{\mathrm{fin}}w)\geq \ell(w)$ for all $w_{\mathrm{fin}}\in W_{\mathrm{fin}}$. If the latter condition holds, then the rank $1$ Iwahori--Whittaker local system on $\Fl^{\mathrm{op}}_{\calI,w} $ is unique up to isomorphism.
\end{lemma}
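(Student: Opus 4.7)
The plan is to translate the existence of an Iwahori--Whittaker local system on $\Fl^{\mathrm{op}}_{\calI,w}$ into the vanishing of $\chi$ restricted to the stabilizer $\mathrm{Stab}_w := L^+\calI^{\mathrm{op}}_{\mathrm{u}}\cap w L^+\calI w^{-1}$ of the point of $\Fl_\calI$ corresponding to $w$. Writing $\Fl^{\mathrm{op}}_{\calI,w}\simeq L^+\calI^{\mathrm{op}}_{\mathrm{u}}/\mathrm{Stab}_w$, a rank~$1$ $(L^+\calI^{\mathrm{op}}_{\mathrm{u}},\calL_{\calI\calW})$-equivariant local system on the orbit amounts to a group-compatible trivialization of $(\chi|_{\mathrm{Stab}_w})^\ast\calL_{\mathrm{AS}}$, and since $\calL_{\mathrm{AS}}$ is a nontrivial character sheaf on $\bbG_{a,k}$ and $\mathrm{Stab}_w$ will turn out to be connected and pro-unipotent, the existence of such a trivialization is equivalent to the identity $\chi|_{\mathrm{Stab}_w}=0$ as a morphism of $k$-group sheaves.

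Next, I would decompose $\mathrm{Stab}_w$ via affine root subgroups. The pro-algebraic group $L^+\calI^{\mathrm{op}}_{\mathrm{u}}$, respectively $L^+\calI$, is pro-generated by the affine root subgroups $U_\beta$ for $\beta$ ranging over the negative, respectively positive, affine roots (modulo the pro-unipotent torus part in the second case, which plays no role after intersecting with $L^+\calI^{\mathrm{op}}_{\mathrm{u}}$). Consequently $\mathrm{Stab}_w$ is pro-generated by those $U_\beta$ with $\beta$ negative affine such that $w^{-1}(\beta)$ is a positive affine root. On the other hand, $\chi$ factors through $L^+\calI^{\mathrm{op}}_{\mathrm{u}}\twoheadrightarrow U_k^{\mathrm{op}}\to \bbG_{a,k}$, where the first surjection annihilates every $U_\beta$ at positive affine level, and the composition is nontrivial (indeed an isomorphism) on $U_{-\alpha}$ precisely when $\alpha$ is a simple positive finite root, and zero on the remaining finite negative root subgroups since they lie in $[U^{\mathrm{op}}_k,U^{\mathrm{op}}_k]$. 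Hence $\chi|_{\mathrm{Stab}_w}=0$ if and only if no $U_{-\alpha}$ with $\alpha$ a simple positive finite root is contained in $\mathrm{Stab}_w$.

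Finally, I invoke the standard Bruhat characterization: $U_{-\alpha}\subseteq \mathrm{Stab}_w$ exactly when $w^{-1}(-\alpha)$ is a positive affine root, equivalently when $\ell(s_\alpha w)<\ell(w)$. Thus the vanishing criterion becomes $\ell(s_\alpha w)>\ell(w)$ for every simple positive finite root $\alpha$, which is the minimality of $w$ inside its left coset $W_{\mathrm{fin}}w$. For uniqueness, the quotient of two such IW local systems is a rank~$1$ $(L^+\calI^{\mathrm{op}}_{\mathrm{u}},\underline{\Lambda})$-equivariant local system descending to an $\ell$-adic character of $\mathrm{Stab}_w$; but $\mathrm{Stab}_w$ is connected and pro-unipotent (being a product of connected root subgroups $U_\beta\simeq\bbG_{a,k}^{\mathrm{pf}}$), so $\pi_1^{\ell}(\mathrm{Stab}_w)=0$ forces the character to be trivial. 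The main technical care needed is justifying the affine root decomposition of $\mathrm{Stab}_w$ in the perfect Witt-vector setting, but this is standard and follows from the description of orbits recalled in \S\ref{sec:geometry-affine-flag} together with the Iwahori factorization of $L^+\calI^{\mathrm{op}}_{\mathrm{u}}$.
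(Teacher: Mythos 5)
Your proof is correct and follows essentially the same strategy as the paper's: both reduce the existence of an Iwahori--Whittaker local system to the containment $\mathrm{Stab}_w\subseteq\ker\chi$, analyze this via root subgroups, and translate into the Coxeter-combinatorial condition $\ell(sw)>\ell(w)$ for all finite simple $s$. You fill in considerably more detail than the paper (the explicit affine-root decomposition of $\mathrm{Stab}_w$, the identification of exactly which root groups $\chi$ sees, and the uniqueness argument via connectedness of $\mathrm{Stab}_w$, which the paper leaves implicit). One small imprecision worth noting: $L^+\calI^{\mathrm{op}}_{\mathrm{u}}$ is \emph{not} pro-generated by $U_\beta$ with $\beta$ literally a negative affine root (those would involve negative powers of the uniformizer, which do not live in the positive loop group); the correct family consists of the $U_\beta$ lying in the opposite Iwahori, i.e.\ affine roots $\gamma+n\delta$ with $\gamma<0,\ n\geq 0$ or $\gamma>0,\ n\geq 1$. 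Since your actual computation only uses that $\chi$ is nontrivial precisely on the level-$0$ simple root groups $U_{-\alpha}$ and vanishes on all higher congruence levels and on nonsimple root groups, this slip does not affect the argument.
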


\begin{proof}
	The Iwahori--Whittaker equivariant condition forces the stabilizer of the point $\dot{w}$ to be contained in the kernel of $\chi$, and conversely such a containment would allow us to pullback the Artin--Schreier sheaf along $\chi$ to the desired orbit. This inclusion happens if and only if $w(\alpha_s)$ is a positive affine root where $\alpha_s$ is the positive simple affine root attached to any positive simple reflection $s \in W_{\mathrm{fin}}$. But this is equivalent to $sw>w$, i.e., that $w$ is the minimal length representative of its $W_{\mathrm{fin}}$-left coset.
\end{proof}

Since the set of left $W_\mathrm{fin}$-cosets of the Iwahori--Weyl group is enumerated by $\bbX_\bullet$, we will call $\calL_{\nu}$ the unique Iwahori--Whittaker local system supported on the $L^+\mathcal{I}_u^{\mathrm{op}}$-orbit of $w_\nu$, the minimal length element in $W_\mathrm{fin}t_{\nu}$, according to the preceding statement. 
We also obtain the standard Iwahori--Whittaker equivariant sheaf
\begin{equation}
	\Delta_\nu^{\calI\calW} := (j^{\mathrm{op}}_{w_\nu})_!\calL_{\nu}[\ell(w_\nu)],
\end{equation}
where $j^{\mathrm{op}}_{w_\nu}$ is the inclusion of the $L^+\calI^{\mathrm{op}}$-orbit and likewise the costandard Iwahori--Whittaker equivariant sheaf
\begin{equation}
	\nabla_\nu^{\calI\calW} := R(j^{\mathrm{op}}_{w_\nu})_*\calL_{\nu}[\ell(w_\nu)],
\end{equation}
both of which are supported on $\Fl_{\calI,\leq w_\nu}^{\mathrm{op}}$ and are perverse because orbits of solvable groups are affine, so we can invoke Artin vanishing, compare with \cite[Corollaire 4.1.10]{BBDG18}. We also have access to $\mathrm{IC}$ sheaves $\mathrm{IC}_\nu^{\calI\calW}$ by taking the image of the natural map $\Delta_\nu^{\calI\calW} \to \nabla_\nu^{\calI\calW}$. Recall that in \cite[Subsection 3.2]{BGS96} a sufficient criterion for the existence of projective covers, injective hulls and tilting modules was given. We call an abelian category satisfying these axioms a highest weight category, see also \cite[Subsection 1.12.3]{BR18}.

\begin{proposition}\label{prop_IW_category_is_highest_weight}
	The category $\scrP(\Hk_{\calI\calW})$ of Iwahori--Whittaker equivariant perverse sheaves is a highest weight category, whose underlying poset equals $\bbX_\bullet \simeq W_{\mathrm{fin}}\backslash W$ ordered by the quotient Bruhat order.
\end{proposition}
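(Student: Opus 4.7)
The plan is to verify the axioms of a highest weight category in the sense of \cite{BGS96} (see also \cite[Subsection 1.12.3]{BR18}), following the equicharacteristic script from \cite{AR}. By \Cref{sec:iwah-whitt-aver-support-of-iwahori-whittaker-sheaves}, the only $L^+\calI^{\mathrm{op}}$-orbits supporting a rank one Iwahori--Whittaker local system are the $\Fl^{\mathrm{op}}_{\calI,w_\nu}$ for $\nu \in \bbX_\bullet$, so the simple objects of $\scrP(\Hk_{\calI\calW})$ are the IC extensions $\mathrm{IC}_\nu^{\calI\calW}$ and are indexed by $\bbX_\bullet$. The standard and costandard objects $\Delta_\nu^{\calI\calW}$, $\nabla_\nu^{\calI\calW}$ have already been constructed and are perverse because the orbit inclusion $j^{\mathrm{op}}_{w_\nu}$ is affine (orbit of a solvable group). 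The poset structure comes from the Bruhat order on the distinguished representatives $w_\nu$, which under $\bbX_\bullet\simeq W_{\mathrm{fin}}\backslash W$ agrees with the quotient Bruhat order.

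The key computational input is the Ext orthogonality $R\mathrm{Hom}(\Delta_\mu^{\calI\calW}, \nabla_\nu^{\calI\calW}) \simeq \delta_{\mu\nu}\,\Lambda$ concentrated in degree $0$. This rests on the following vanishing: for $w\in W$ not of minimal length in its $W_{\mathrm{fin}}$-left coset, there exists a negative simple root subgroup $U_{-\alpha}\cong \bbG_{a,k}\subset L^+\calI_{\mathrm{u}}^{\mathrm{op}}$ stabilising $\dot w$ on which $\chi$ restricts to an isomorphism (this refines the proof of \Cref{sec:iwah-whitt-aver-support-of-iwahori-whittaker-sheaves}). Combined with the Artin--Schreier vanishing $R\Gamma(\bbG_{a,k},\calL_{\mathrm{AS}})=R\Gamma_c(\bbG_{a,k},\calL_{\mathrm{AS}})=0$, this yields
\[
R\Gamma(\Fl^{\mathrm{op}}_{\calI,w},\calF|_{\Fl^{\mathrm{op}}_{\calI,w}})=R\Gamma_c(\Fl^{\mathrm{op}}_{\calI,w},\calF|_{\Fl^{\mathrm{op}}_{\calI,w}})=0
\]
for every $\calF \in \scrD_{\mathrm{\acute{e}t}}(\Hk_{\calI\calW})$. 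An induction on the stratification of $\overline{\Fl^{\mathrm{op}}_{\calI,w_\nu}}$ using the usual adjunctions for $j^{\mathrm{op}}_{w_\mu,!}$ and $Rj^{\mathrm{op}}_{w_\nu,*}$ then reduces the Ext computation to the single stratum $\Fl^{\mathrm{op}}_{\calI, w_\nu}$, where one computes $R\mathrm{End}(\calL_\nu)=\Lambda$. The same argument shows that every Jordan--Hölder factor of $\Delta_\nu^{\calI\calW}$ other than its top $\mathrm{IC}_\nu^{\calI\calW}$ is of the form $\mathrm{IC}_\mu^{\calI\calW}$ with $\mu<\nu$ in the quotient Bruhat order, and dually for $\nabla_\nu^{\calI\calW}$.

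It remains to produce enough projective covers. The natural device is to restrict to the Serre subcategory $\scrP_{\leq\lambda}(\Hk_{\calI\calW})$ of objects supported on the closed union $\overline{\Fl^{\mathrm{op}}_{\calI,w_\lambda}}$, which is Artinian of finite length by the preceding analysis, and to construct projective hulls via right convolution of $\Delta_\nu^{\calI\calW}$ with suitable big tilting sheaves in $\scrP(\Hk_\calI)$. This uses the right $\scrD_{\mathrm{\acute{e}t}}(\Hk_\calI)$-module structure together with the convolution-exactness and Wakimoto filtrations of Section~\ref{sec:cohom-affine-flag}; the Ext orthogonality then guarantees that the result is standard-filtered with the correct simple quotient.

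The principal technical obstacle is this last step: transporting the convolution-based construction of projective covers from \cite[Part III]{AR} into our setting of derived étale sheaves on perfect $v$-stacks requires care with finiteness and the six-functor formalism of \cite{Sch17}. Nothing is conceptually new beyond what is already developed in Section~\ref{sec:cohom-affine-flag}, but one must leverage the finite-length statements for $\scrP_{\leq\lambda}(\Hk_{\calI\calW})$ against the ambient $\infty$-category of étale sheaves to genuinely extract projective objects rather than merely pro-objects.
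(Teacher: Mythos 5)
Your verification of the axioms via Artin--Schreier vanishing and affine strata inclusions is correct and is exactly the kind of input the paper hands off to \cite[Theorems 3.2.1, 3.3.1]{BGS96}. The place where you diverge — and where there is a gap — is the construction of projective covers.

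You propose to build projectives by right convolution of standard objects with ``big tilting sheaves'' in $\scrP(\Hk_{\calI})$ and flag this as the principal technical obstacle. This is a detour that is both undeveloped (the existence of suitable tilting kernels in $\scrP(\Hk_{\calI})$ is not established at this point in the paper) and unnecessary. The BGS machinery constructs projective covers purely internally: in each truncated Serre subcategory $\scrP_{\leq\lambda}(\Hk_{\calI\calW})$ (finite length, finitely many simple objects), one builds the projective cover of $\mathrm{IC}_\nu^{\calI\calW}$ by induction on the poset, starting from $\Delta_\nu^{\calI\calW}$ on the open stratum of $\overline{\Fl^{\mathrm{op}}_{\calI,w_\lambda}}$ and extending along the closed complement using the finite Ext groups you have computed. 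No convolution or six-functor gymnastics with tilting kernels are needed, and the pro-object issue you worry about does not arise because the highest weight structure is declared on the $\scrP_{\leq\lambda}$ rather than on the full category. In short, once you have (a) simples indexed by $\bbX_\bullet$, (b) perverse $\Delta$, $\nabla$ coming from affine orbit inclusions, and (c) Ext orthogonality, the citation of BGS finishes the proof.

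On the poset claim you should also be a touch more careful: the closure order on $L^+\calI^{\mathrm{op}}$-orbits is not definitionally the quotient Bruhat order, and the way the paper identifies the two is by observing that $\Fl^{\mathrm{op}}_{\calI,\leq w_\nu}$ coincides with the $(G,\calI)$-Schubert variety $\Fl_{(G,\calI),\leq\nu}$ of \cite[Section 3]{AGLR22}, whose closure relations are governed by the quotient Bruhat order. Asserting that ``the Bruhat order on distinguished representatives agrees with the quotient Bruhat order'' is true combinatorially, but the geometric input (matching the opposite-Iwahori orbit closure to that Schubert variety) is what actually needs to be checked.
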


\begin{proof}
	The first part is a standard consequence of arguments by Beilinson--Ginzburg--Soergel, see \cite[Theorems 3.2.1 and 3.3.1]{BGS96}. As for the second claim, it suffices to identify the closure relations. It can be easily checked that the opposite Schubert variety $\Fl_{\calI,\leq w_\nu}^{\mathrm{op}}$ equals the $(G,\calI)$-Schubert variety $\Fl_{ (G,\calI),\leq \nu}$ in the notation of \cite[Section 3]{AGLR22} (up to the order of action), which in turn coincides with $\Fl_{ \calI,\leq w_0w_\nu}$, where $w_0 \in W_{\mathrm{fin}}$ is the longest element (there is a notational clash here, because $w_\nu$ evaluated at $\nu=0$ is simply the identity). Indeed, they have the same dimension and there is an obvious inclusion $\Fl_{\calI,\leq w_\nu}^{\mathrm{op}} \subset \Fl_{ (G,\calI),\leq \nu}$, because $L^+G \supset L^+\calI^{\mathrm{op}}W_{\mathrm{fin}}$. The closure relations follow then from the usual combinatorics of flag varieties as in \cite[Section 3]{AGLR22}. Indeed, the variety $\Fl_{ (G,\calI),\leq \nu_1}$ is contained in $\Fl_{ (G,\calI),\leq \nu_2}$ if and only if $\nu_1\leq \nu_2$ for the quotient Bruhat order.
\end{proof}

Notice that $\Fl_{\calI,\leq 1}^{\mathrm{op}}=\Fl_{ \calI,\leq w_0}=G/B \subset \Fl_\calI$, with opposite cells indexed by $W_{\mathrm{fin}}$. This implies the equalities $\Delta_0^{\calI\calW}=\nabla_0^{\calI\calW}=\mathrm{IC}_0^{\calI\calW}$ and we will denote this simple Iwahori-Whittaker perverse sheaf by $\Xi$. It allows us to define the Iwahori--Whittaker averaging functor
\begin{equation}
	\mathrm{av}_{\calI\calW}\colon \scrD_{\mathrm{\acute{e}t}}(\Hk_{\calI}) \to \scrD_{\mathrm{\acute{e}t}}(\Hk_{\calI\calW})
\end{equation}
given by $A \mapsto \Xi\ast A$. To get a better understanding of this functor, we start by the following calculation:

\begin{lemma} \label{Lemma_IW_averaging_kills}
	The sheaf $\mathrm{av}_{\calI\calW}(\mathrm{IC}_w)$ equals $\mathrm{IC}_\nu^{\calI\calW}$ if $w= w_\nu$ for some $\nu\in \bbX_\bullet$ and vanishes otherwise.
\end{lemma}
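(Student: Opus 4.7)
The plan is to compute $\mathrm{av}_{\calI\calW}(\mathrm{IC}_w) = \Xi \ast \mathrm{IC}_w$ via a case analysis on whether $w$ is minimal in its left $W_{\mathrm{fin}}$-coset, reducing everything to standard and costandard objects by exploiting the perverse t-exactness of $\mathrm{av}_{\calI\calW}$. The crucial technical input is the vanishing
\begin{equation*}
\Xi \ast \Delta_s \simeq 0 \simeq \Xi \ast \nabla_s
\end{equation*}
for every finite simple reflection $s \in W_{\mathrm{fin}}$. I would argue this geometrically: the convolution morphism $m \colon \Fl_{\calI} \tilde{\times} \Fl_{\calI,\leq s} \to \Fl_{\calI}$ has fibers isomorphic to $\bbP^1_k = P_s/B$, with the open affine $\bbA^1_k = \Fl_{\calI, s}$ matching the support of $\Delta_s$. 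By the $L^+\calI_{\mathrm{u}}^{\mathrm{op}}$-equivariance of $\Xi$ and the fact that the sum of negative simple roots is non-trivial on the $-\alpha_s$-root subgroup, the twisted external product $\Xi \, \ttw \, \Delta_s$ restricts along each such fiber to the pullback along a non-trivial additive character of the Artin--Schreier local system $\calL_{\mathrm{AS}}$. The cohomological vanishing $R\Gamma_c(\bbG_{a,k},\calL_{\mathrm{AS}}) = 0$ then forces the fiberwise pushforward to vanish, and hence $\Xi \ast \Delta_s = 0$; the argument for $\nabla_s$ is symmetric.

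Granted this, the vanishing propagates to all $w$ not minimal in $W_{\mathrm{fin}} w$. In that case there exists a simple $s \in W_{\mathrm{fin}}$ with $sw < w$, so the length-additive decomposition $w = s \cdot (sw)$ together with \Cref{djfejkjd;kjkel;} gives $\Delta_w \simeq \Delta_s \ast \Delta_{sw}$ and $\nabla_w \simeq \nabla_s \ast \nabla_{sw}$. Associativity of convolution combined with the previous step yield $\Xi \ast \Delta_w = \Xi \ast \nabla_w = 0$. Since $\mathrm{IC}_w$ is the image of the canonical map $\Delta_w \to \nabla_w$ in the abelian category $\scrP(\Hk_\calI)$, the perverse t-exactness of $\mathrm{av}_{\calI\calW}$ forces $\mathrm{av}_{\calI\calW}(\mathrm{IC}_w) = 0$.

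For the minimal case $w = w_\nu$, I would compute $\Xi \ast \Delta_{w_\nu}$ and $\Xi \ast \nabla_{w_\nu}$ explicitly. The minimality of $w_\nu$ guarantees, via the geometry of the convolution diagram, that the support lies in the opposite Schubert variety $\Fl_{\calI,\leq w_\nu}^{\mathrm{op}}$, and restricting to the open dense cell $\Fl_{\calI,w_\nu}^{\mathrm{op}}$ one computes the stalk to be the unique IW local system $\calL_\nu$, shifted appropriately. By perverse t-exactness and the classification in \Cref{sec:iwah-whitt-aver-support-of-iwahori-whittaker-sheaves}, this identifies $\Xi \ast \Delta_{w_\nu} \simeq \Delta_\nu^{\calI\calW}$ and, symmetrically, $\Xi \ast \nabla_{w_\nu} \simeq \nabla_\nu^{\calI\calW}$. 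Applying the t-exact functor $\mathrm{av}_{\calI\calW}$ to the factorization $\Delta_{w_\nu} \twoheadrightarrow \mathrm{IC}_{w_\nu} \hookrightarrow \nabla_{w_\nu}$ and taking images yields $\mathrm{av}_{\calI\calW}(\mathrm{IC}_{w_\nu}) \simeq \mathrm{IC}_\nu^{\calI\calW}$, completing the argument.

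The main obstacle is the first step: verifying that the Artin--Schreier twist induced on the $\bbA^1_k$-fibers of $m$ is indeed governed by a non-trivial character requires a careful analysis of how the $L^+\calI_{\mathrm{u}}^{\mathrm{op}}$-equivariance of $\Xi$ propagates through the $\bbP^1_k$-bundle structure $\Fl_{\calI} \to \Fl_{\calJ_s}$ and interacts with the convolution diagram. The minimal case also relies on transversality-type facts about the interaction of $\Fl_{\calI,w_\nu}^{\mathrm{op}}$ with the support of $\Xi \ast \Delta_{w_\nu}$, which are standard but must be verified.
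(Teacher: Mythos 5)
The key step in your argument is false: $\Xi \ast \Delta_s \neq 0$ for a finite simple reflection $s$. Apply $\Xi \ast (-)$ to the distinguished triangle $\mathrm{IC}_e \to \Delta_s \to \mathrm{IC}_s \to{}$; since $\mathrm{IC}_e = \delta_e$ is the unit and $\Xi \ast \mathrm{IC}_s = 0$ (the genuine content of the lemma), one gets $\Xi \ast \Delta_s \simeq \Xi \neq 0$. Indeed, \Cref{aviw_is_t_exact} establishes $\mathrm{av}_{\calI\calW}(\Delta_w) \simeq \Delta^{\calI\calW}_\nu$ for \emph{all} $w$, so $\mathrm{av}_{\calI\calW}$ never kills a standard object. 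The geometric error is that the $\bbA^1$-locus where the twisted product $\Xi\,\widetilde{\boxtimes}\,\Delta_s$ lives inside a $\bbP^1$-fiber of $m$ (the complement of the ``diagonal'' point $gL^+\calI$) does not coincide with the $\bbA^1$ where $\Xi$ has its Artin--Schreier structure (the intersection of the $\pi_s$-fiber with the big opposite cell); generically those two affine lines have \emph{different} complementary points in $\bbP^1$, so the pushforward is taken over a $\bbG_m$, where the cohomology of $\calL_{\mathrm{AS}}$ does not vanish. The cancellation only occurs when the sheaf being convolved is already $L^+\calP_s$-equivariant, so that the whole $\bbP^1$ contributes: this is exactly why the paper convolves with $\mathrm{IC}_w$ for $sw<w$ (which is $\calP_s$-equivariant) and factors the convolution through $\Fl_{\calP_s}$, rather than with $\Delta_s$ (which is only $\calI$-equivariant).

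Secondarily, your argument invokes perverse t-exactness of $\mathrm{av}_{\calI\calW}$, but that is the content of \Cref{aviw_is_t_exact}, whose proof explicitly uses the present lemma to kill $\mathrm{IC}_y \ast \Delta_{w_\nu}$ for non-trivial $y \in W_{\mathrm{fin}}$. As written this is circular; the paper's proof deliberately avoids t-exactness by a direct geometric computation in the minimal case (showing $\Fl_{\calI,1}^{\mathrm{op}} \tilde{\times}\Fl_{\calI,w_\nu} \to \Fl^{\mathrm{op}}_{\calI,w_\nu}$ is an isomorphism) and the parahoric-equivariance argument above in the non-minimal case.
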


\begin{proof}
	If $sw<w$ for some simple reflection $s\in W_{\mathrm{fin}}$, we conclude that $\mathrm{IC}_w$ is equivariant for the left action of $L^+\calP_s$, where $\calP_s$ is the minimal standard parahoric with respect to $\calI$ and the simple reflection $s$. Notice that by \cite[Lemma 2.5]{BGMRR19} we have a natural isomorphism
	\begin{equation}\Xi \ast^{\calI} \mathrm{IC}_w \simeq (R\pi_{s,*}\Xi)\ast^{\calJ_s} \mathrm{IC}_w
	\end{equation} where the exponent stands for the fact that the first convolution is induced by contracting the $L^+\calI$-action, and the second one by contracting the $L^+\calJ_s$-action. Here, $\pi_s:\Fl_\calI\to \Fl_{\calP_s}$ is the projection. In particular, it suffices to check the vanishing of $R\pi_{s,*}\Xi$. Note that $\Xi$ is supported on $U^{\mathrm{op}}w_0 \subset G/B$ as the complement cannot support a non-zero Iwahori-Whittaker sheaf by \Cref{sec:iwah-whitt-aver-support-of-iwahori-whittaker-sheaves}. We can now see that the fiber of $\pi_s$ over the image of $\mathrm{supp}\,\Xi$ is isomorphic to $U_{a}^{\mathrm{op}}$, where $a$ is the positive root associated with $s$. Since $\chi$ does not vanish on $U_a^{\mathrm{op}}$, it follows by proper base change that $R\pi_{s,*}\Xi$ identifies with the cohomology $R\Gamma(\bbG_{a,k},\calL_{\mathrm{AS}})$ of the Artin--Schreier sheaf, i.e., it vanishes. 
	
	If $w=w_\nu$ is the minimal length representative of $W_{\mathrm{fin}}t_\nu$, we can check that the map $\Fl_{ \calI,1}^{\mathrm{op}}\tilde{\times}\Fl_{\calI,w} \to \Fl_{\calI,w}^{\mathrm{op}}$ is an isomorphism. Indeed, both are affine spaces with the same dimension, and thus the given map is universally bijective by basic properties of Tits systems. In particular, we conclude that $\mathrm{av}_{\calI\calW}(\mathrm{IC}_{w})$ identifies with $\mathrm{IC}_{\nu}^{\calI\calW}$.
\end{proof}

\begin{proposition}\label{aviw_is_t_exact}
	The functor $\mathrm{av}_{\calI\calW}$ is perverse t-exact.
\end{proposition}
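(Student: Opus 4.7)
The plan is to establish right and left t-exactness of $\mathrm{av}_{\calI\calW} = \Xi \ast -$ separately. Since ${}^p\scrD^{\leq 0}_{\et}(\Hk_{\calI})$ is generated under extensions and non-negative shifts by the standard sheaves $\del_w$, $w \in W$, and dually ${}^p\scrD^{\geq 0}_{\et}(\Hk_{\calI})$ is generated by the costandards $\nab_w$, it suffices to show that $\Xi \ast \del_w$ and $\Xi \ast \nab_w$ are perverse for every $w \in W$.

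Fix $w \in W$ and decompose $w = w_1 \cdot w_\nu$, where $w_\nu$ is the minimal length representative of the left $W_{\mathrm{fin}}$-coset of $w$ and $w_1 \in W_{\mathrm{fin}}$, with length additivity $\ell(w) = \ell(w_1) + \ell(w_\nu)$. By \Cref{djfejkjd;kjkel;} this gives $\del_w \simeq \del_{w_1} \ast \del_{w_\nu}$ and $\nab_w \simeq \nab_{w_1} \ast \nab_{w_\nu}$. The first convolution $\Xi \ast \del_{w_1}$ is computed by filtering $\del_{w_1}$ by its Jordan--Hölder series, whose simple subquotients are all of the form $\mathrm{IC}_{w'}$ with $w' \leq w_1$ in $W_{\mathrm{fin}}$. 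By \Cref{Lemma_IW_averaging_kills}, $\Xi \ast \mathrm{IC}_{w'}$ vanishes for every $w' \in W_{\mathrm{fin}} \setminus \{1\}$ (since such $w'$ is never the minimum of its coset $W_{\mathrm{fin}} w' = W_{\mathrm{fin}}$), while $\Xi \ast \mathrm{IC}_{1} \simeq \Xi$. Using the socle structure of $\del_{w_1}$ from \Cref{lem_socle_top_stand_costand} and applying $\Xi \ast -$ to the composition-series triangles inductively yields $\Xi \ast \del_{w_1} \simeq \Xi$, and dually (via the top of $\nab_{w_1}$) we get $\Xi \ast \nab_{w_1} \simeq \Xi$. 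Hence by associativity $\Xi \ast \del_w \simeq \Xi \ast \del_{w_\nu}$ and $\Xi \ast \nab_w \simeq \Xi \ast \nab_{w_\nu}$.

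It remains to identify $\Xi \ast \del_{w_\nu}$ with the Iwahori--Whittaker standard $\Delta_\nu^{\calI\calW}$ and $\Xi \ast \nab_{w_\nu}$ with the costandard $\nabla_\nu^{\calI\calW}$; both are perverse by construction. For this, examine the convolution map
\[
m \colon \Fl^{\mathrm{op}}_{\calI, 1} \tilde{\times} \Fl_{\calI, w_\nu} \longrightarrow \Fl_{\calI}.
\]
A Tits system argument exploiting the minimality of $w_\nu$ in its left $W_{\mathrm{fin}}$-coset identifies $m$ with an isomorphism onto the opposite Iwahori orbit $\Fl^{\mathrm{op}}_{\calI, w_\nu}$ (the dimensions match, and the factorization $L^+\calI^{\mathrm{op}} \cdot w_\nu \cdot L^+\calI = L^+\calI^{\mathrm{op}}_u \cdot L^+\calI \cdot w_\nu \cdot L^+\calI$ is unique by the usual combinatorics of Tits systems, as in \Cref{sec:geometry-affine-flag-2-properties-of-convolution}). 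Under this isomorphism, the pullback of the Artin--Schreier character sheaf from the first factor matches the rank $1$ Iwahori--Whittaker local system $\calL_\nu$ on the image, since both are pulled back from $\bbG_{a,k}$ via $\chi$. Combining proper base change with the shift and twist conventions that make $\Delta_\nu^{\calI\calW}$ and $\nabla_\nu^{\calI\calW}$ perverse completes the identification.

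The main obstacle will be the final geometric identification, namely checking that $m$ is an isomorphism onto $\Fl^{\mathrm{op}}_{\calI, w_\nu}$ and that the Whittaker equivariance of the resulting local system matches $\calL_\nu$. The former reduces to a coset decomposition which should follow from standard Tits system manipulations, using that $w_\nu$ is the minimal representative, while the latter is a matter of verifying compatibility of the character $\chi \colon L^+\calI_u^{\mathrm{op}} \to \bbG_{a,k}$ with the translation action on the opposite Schubert cell. Once this geometric step is in place, the perversity of $\Delta_\nu^{\calI\calW}$ and $\nabla_\nu^{\calI\calW}$ established after \Cref{sec:iwah-whitt-aver-support-of-iwahori-whittaker-sheaves} yields the desired t-exactness.
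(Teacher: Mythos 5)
Your proof is correct and reaches the paper's conclusion along essentially the same lines, but the reduction from a general $w$ to the minimal representative $w_\nu$ is organized somewhat differently. The paper invokes [AB09, Lemma 3b] directly to produce the injection $\Delta_{w_\nu} \hookrightarrow \Delta_w$ together with the assertion that its cone is spanned by $\mathrm{IC}_y \ast \Delta_{w_\nu}$ for nontrivial $y \in W_{\mathrm{fin}}$, whereas you derive the same consequence internally: you factor $\Delta_w \simeq \Delta_{w_1} \ast \Delta_{w_\nu}$ via \Cref{djfejkjd;kjkel;}, compute $\Xi \ast \Delta_{w_1} \simeq \Xi$ by running $\Xi \ast (-)$ through the Jordan--Hölder filtration of $\Delta_{w_1}$ (using \Cref{lem_socle_top_stand_costand} and \Cref{Lemma_IW_averaging_kills} to see that every subquotient but $\mathrm{IC}_e$ dies), and then conclude by associativity. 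This makes the argument a bit more self-contained than the paper's, which delegates the structure of the cone to the cited lemma; the tradeoff is that your version carries out a small induction that the citation compresses. The key geometric input (the identification $m\colon \Fl^{\mathrm{op}}_{\calI,1}\tilde{\times}\Fl_{\calI,w_\nu} \xrightarrow{\sim} \Fl^{\mathrm{op}}_{\calI,w_\nu}$ via a Tits-system argument, and that the resulting rank-one local system is the Whittaker one) is exactly the one used in the proof of \Cref{Lemma_IW_averaging_kills}, as the paper also remarks; you correctly flag this as the main thing to verify. One small simplification worth noting: for the step $\Xi \ast \Delta_{w_1} \simeq \Xi$ you only need that $\mathrm{IC}_e$ appears with multiplicity one in the composition series (not its position as socle), since all other composition factors are annihilated by $\Xi \ast (-)$ regardless of where they sit in the filtration.
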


\begin{proof}
	Since each half of the t-structure on $\scrD_{\mathrm{\acute{e}t}}(\Hk_{\calI})$ is spanned under extensions by appropriate shifts of the standard or costandard sheaves, it will suffice by symmetry to show that 
	\begin{equation} \label{equation_av_IW_of_std_objects}
		\mathrm{av}_{\calI\calW}(\Delta_w)=\Delta_\nu^{\calI\calW}
	\end{equation}
	for all $w\in W$, where $\nu \in \bbX_\bullet$ is in the same left $W_{\mathrm{fin}}$-coset. When $w=w_\nu$ is the minimal length representative, this follows from the same argument of the previous lemma for IC sheaves. In general, consider an injection of perverse sheaves $\Delta_{w_{\nu}} \to \Delta_w$ as in \cite[Lemma 3b]{AB09}, whose cone is spanned under extensions by $\mathrm{IC}_y \ast \Delta_{w_\nu}$ for non-trivial $y \in W_{\mathrm{fin}}$. The latter sheaves vanish under $\mathrm{av}_{\calI\calW}$ by the previous lemma and we get the desired conclusion.
\end{proof}

We are now ready to prove the main result regarding Iwahori--Whittaker averaging. 
\begin{theorem}\label{avIW_factos_through_pervas}
	The averaging functor restricted to perverse sheaves factors through a fully faithful functor \begin{equation}\mathrm{av}_{\calI\calW}^{\mathrm{as}}\colon \scrP_{\mathrm{as}}(\Hk_{\calI})\to \scrP(\Hk_{\calI\calW})\end{equation}
	where the left side is the \textit{anti-spherical category} which isthe Serre quotient of $\scrP(\Hk_{\calI})$ obtained by modding out the sheaves $\mathrm{IC}_{w}$ for all $w \in W$ which are not minimal in their left $W_{\mathrm{fin}}$-cosets.
\end{theorem}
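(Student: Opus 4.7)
The existence of $\mathrm{av}_{\calI\calW}^\mathrm{as}$ on the Serre quotient $\scrP_\mathrm{as}(\Hk_\calI)$ is immediate from Proposition \ref{aviw_is_t_exact} and Lemma \ref{Lemma_IW_averaging_kills}: the functor $\mathrm{av}_{\calI\calW}$ is perverse t-exact and annihilates each simple $\mathrm{IC}_w$ with $w$ non-minimal in $W_\mathrm{fin}w$, hence kills the Serre subcategory $\scrS \subset \scrP(\Hk_\calI)$ they generate, and so descends to the quotient.

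For full faithfulness, I would first establish the dual of equation \eqref{equation_av_IW_of_std_objects}, namely $\mathrm{av}_{\calI\calW}(\nabla_w) \simeq \nabla_\nu^{\calI\calW}$ for $w \in W_\mathrm{fin}w_\nu$. This proceeds analogously to the proof of Proposition \ref{aviw_is_t_exact}: one uses a surjection $\nabla_w \twoheadrightarrow \nabla_{w_\nu}$ whose kernel is filtered by the sheaves $\mathrm{IC}_y \ast \nabla_{w_\nu}$ with $y \neq 1 \in W_\mathrm{fin}$, each of which vanishes under $\mathrm{av}_{\calI\calW}$ thanks to the parahoric-equivariance argument at the heart of Lemma \ref{Lemma_IW_averaging_kills}.

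Having identified $\mathrm{av}_{\calI\calW}^\mathrm{as}$ on standards, costandards, and simples, I would then exploit the highest weight structure of the target. By Proposition \ref{prop_IW_category_is_highest_weight}, $\scrP(\Hk_{\calI\calW})$ is a highest weight category with poset $\bbX_\bullet \simeq W_\mathrm{fin}\backslash W$, and its morphism groups are determined by the one-dimensional pairings $\mathrm{Hom}(\Delta_\mu^{\calI\calW}, \nabla_\nu^{\calI\calW}) = \delta_{\mu\nu}\La$ via standard-filtered and costandard-filtered resolutions in the truncations given by finite down-closed subsets of the poset. Matching both sides at the level of these pairings then propagates to arbitrary morphism groups, since $\mathrm{av}_{\calI\calW}^\mathrm{as}$ sends the images of $\Delta_{w_\nu}$, $\nabla_{w_\nu}$ and $\mathrm{IC}_{w_\nu}$ to their Iwahori--Whittaker counterparts.

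The main technical hurdle will be transferring this machinery rigorously to the Serre quotient: while $\scrP(\Hk_\calI)$ is a highest weight category at each finite truncation, the Serre ideal $\scrS$ is generated by simples attached to a set of $w$'s which is \emph{not} downward closed in the Bruhat order on $W$, so $\scrP_\mathrm{as}$ need not inherit a highest weight structure in the strict sense. I expect to sidestep this by computing $\mathrm{Hom}$-groups in $\scrP_\mathrm{as}$ directly from the Gabriel roof calculus for the Serre quotient: since the simples $\mathrm{IC}_{w_\nu}$ for $w_\nu$ minimal admit no non-trivial sub- or quotient objects lying in $\scrS$, and since standards/costandards have no $\scrS$-composition factors strictly above their socle/top respectively, the colimits defining Hom in $\scrP_\mathrm{as}$ collapse to finite computations matching those on the Iwahori--Whittaker side.
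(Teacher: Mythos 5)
Your verification that $\mathrm{av}_{\calI\calW}$ descends to the Serre quotient, and your sketch of the dual formula $\mathrm{av}_{\calI\calW}(\nabla_w)\simeq\nabla^{\calI\calW}_\nu$, are both fine, and your reduction of $\mathrm{Hom}_{\scrP_\mathrm{as}}(\Pi\Delta_{w_\mu},\Pi\nabla_{w_\nu})$ via the Gabriel calculus is essentially correct: the simple top of $\Delta_{w_\nu}$ and simple socle of $\nabla_{w_\nu}$ are both $\mathrm{IC}_{w_\nu}\notin\scrS$, so $\Delta_{w_\nu}$ has no nonzero $\scrS$-quotient and $\nabla_{w_\nu}$ no nonzero $\scrS$-subobject, collapsing the colimit. (Your phrase \emph{``standards/costandards have no $\scrS$-composition factors strictly above their socle/top''} is the wrong invariant, though --- what you actually need, and what holds, is the claim about tops and socles of the whole object, not about composition factors in intermediate layers, which may well lie in $\scrS$.)

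The genuine gap is the propagation step. You announce that matching the pairings $\mathrm{Hom}(\Delta^{\calI\calW}_\mu,\nabla^{\calI\calW}_\nu)$ ``propagates to arbitrary morphism groups,'' but you give no mechanism for this. In a highest weight category this would go through projective or tilting resolutions; you have already noted, correctly, that $\scrP_\mathrm{as}(\Hk_\calI)$ is not manifestly highest weight (the set of minimal coset representatives is not Bruhat-downward-closed in $W$). To run a d\'evissage on composition series and conclude full faithfulness of $\mathrm{av}^{\mathrm{as}}_{\calI\calW}$ via the five lemma, you would need \emph{injectivity on $\mathrm{Ext}^1$}, not merely agreement on $\mathrm{Hom}$'s between standards and costandards, and your outline does not produce that.

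The paper takes a different and cleaner route: it constructs an explicit functor $\mathrm{ind}_{\calI\calW}$ in the opposite direction (via pushforward from the first congruence level and truncation in a fixed perverse degree), shows $\mathrm{ind}^{\mathrm{as}}_{\calI\calW}\circ\mathrm{av}^{\mathrm{as}}_{\calI\calW}\simeq\mathrm{id}$, and deduces injectivity of $\mathrm{av}^{\mathrm{as}}_{\calI\calW}$ on all $\mathrm{Ext}$ groups at once. Combined with the bijection on simples from \Cref{Lemma_IW_averaging_kills}, the five lemma then gives fully faithfulness by induction on the length of the objects. The one-sided inverse is precisely the ingredient your plan is missing; if you want to avoid constructing it, you will instead have to establish independently that $\mathrm{av}^{\mathrm{as}}_{\calI\calW}$ is injective on $\mathrm{Ext}^1$ (for instance by lifting extensions in $\scrP(\Hk_{\calI\calW})$ back to $\scrP_\mathrm{as}(\Hk_\calI)$), which looks at least as hard as the inverse construction itself.
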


\begin{proof}
	Let $\calG_{\geq 1} \to G$ be the dilatation along the identity subgroup in $G_k$ so that $L^+\calG_{\geq 1} = L^{\geq 1}G_O\subset L^+G_O$ is the first congruence subgroup of $G$. We have a natural map $\alpha\colon [L^+\calG_{\geq 1} \backslash \Fl_{ \calI}] \to \Hk_{\calI}$ and similarly a forgetful functor between stable $\infty$-categories $\beta^*\colon \scrD_{\mathrm{\acute{e}t}}(\Hk_{\calI\calW}) \to \scrD_{\mathrm{\acute{e}t}}([L^+\calG_{\geq 1} \backslash \Fl_{ \calI}] )$ because the kernel of $\chi$ contains $L^{\geq 1}G_O$. We consider the induced functor 
	\begin{equation}
		\mathrm{ind}_{\calI\calW}:= {}^pH^{\minus\mathrm{rk}G}\circ R\alpha_*\circ \beta^*\colon \scrP(\Hk_{\calI\calW}) \to \scrP(\Hk_{\calI})
	\end{equation}
	and claim that its composition with the quotient map $\scrP(\Hk_{\calI}) \to \scrP_{\mathrm{as}}(\Hk_{\calI})$, which we denote by $\mathrm{ind}_{\calI\calW}^{\mathrm{as}}$, yields a right inverse to $\mathrm{av}^{\mathrm{as}}_{\calI\calW}$. First notice that $ \mathrm{ind}_{\calI\calW}(\Xi)$ is an extension of negative shifts of $\mathrm{IC}_w$ for $w\in W_{\mathrm{fin}}$, with the local system $\mathrm{IC}_1$ appearing with multiplicity $1$ (see \cite[Lemma 6.4.8]{AR}). If $w$ is non-trivial and $\mathcal{F}\in \calP(\Hk_\calI)$, we can show that $\mathrm{IC}_w[n] \ast \calF$ is $L^+\calP_s$-equivariant for some simple reflection $s$, hence its perverse cohomology groups die under the quotient map $\scrP(\Hk_{\calI}) \to \scrP_{\mathrm{as}}(\Hk_{\calI})$. If $w=1$ and $n\neq 0$, then $\mathrm{IC}_1[n]\ast \calF$ sits in non-zero perverse degree. In total, this yields an equivalence of functors $\mathrm{ind}_{\calI\calW}^{\mathrm{as}} \circ \mathrm{av}_{\calI\calW}^{\mathrm{as}} \simeq \mathrm{id}$. This implies that $\mathrm{av}_{\calI\calW}^{\mathrm{as}}$ is injective on $\mathrm{Ext}$ groups. To see that $\mathrm{av}_{\calI\calW}^{\mathrm{as}}$ is fully faithful, we argue by induction on the length of the objects being considered: the simple case is a consequence of \Cref{Lemma_IW_averaging_kills}, implying bijectivity of simple objects along $\mathrm{av}_{\calI\calW}^{\mathrm{as}}$; the induction step follows from the 5-lemma and the injectivity on $\mathrm{Ext}$-groups.
\end{proof}

\section{Tilting modules}	

We proved in \Cref{prop_IW_category_is_highest_weight} that the category of Iwahori-Whittaker perverse sheaves admits a highest weight category structure. It then makes sense to discuss \textit{tilting objects} in this category. The aim of the current section is to show that the functor
\begin{equation}
	\scrZ_{\calI\calW}:=\mathrm{av}_{\calI\calW}\circ\scrZ: \mathrm{Rep}_\Lambda(\hat{G})\rightarrow \scrP(\Hk_{\calI\calW})
\end{equation}
lands on the full subcategory of tilting objects. This is related to the 'parabolic-singular' Koszul duality phenomenon studied by Beilinson--Ginzburg--Soergel \cite{BGS96} for finite flag varieties and Bezrukavnikov--Yun \cite{BY13} for Kac--Moody flag varieties. We continue to assume that $\Lambda$ is an algebraic field extension of $\mathbb{Q}_\ell(\zeta_\ell)$ for a fixed primitive $p$-th root of unity $\zeta \in \bar{\mathbb{Q}_\ell}$.

We recall the following useful property.
\begin{proposition}\label{prop_stalk_and_costalk_of_tilting_objects}
	For any $\calF\in \scrD_{\mathrm{cons}}(\Hk_{\calI\calW})$, $\calF$ is tilting if and only if $(j_{w_\nu}^{\mathrm{op}})^*\calF$ and $R(j_{w_\nu}^{\mathrm{op}})^!\calF$ are both concentrated in degree $\minus\ell(w_\nu)$ for all $\nu\in\mathbb{X}_\bullet$.
\end{proposition}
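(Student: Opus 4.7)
The plan is to derive the equivalence from the Ext-vanishing characterization of tilting objects in a highest weight category, combined with the standard recollement adjunctions.

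First I would observe that the two hypotheses automatically force $\calF$ to be perverse, since the conditions $(j_{w_\nu}^{\mathrm{op}})^*\calF \in \scrD^{\leq -\ell(w_\nu)}$ and $R(j_{w_\nu}^{\mathrm{op}})^!\calF \in \scrD^{\geq -\ell(w_\nu)}$ for all $\nu \in \bbX_\bullet$ precisely cut out $\scrP(\Hk_{\calI\calW})$ relative to the stratification by the orbits $\Fl^{\mathrm{op}}_{\calI, w_\nu}$ used in \Cref{prop_IW_category_is_highest_weight}. So I may assume throughout that $\calF \in \scrP(\Hk_{\calI\calW})$.

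Second, I would invoke the standard criterion from the theory of highest weight categories (see e.g.\ \cite[Theorem 3.2.1]{BGS96}): $\calF$ is tilting if and only if it admits both a $\Delta$- and a $\nabla$-filtration, equivalently $\mathrm{Ext}^{>0}(\calF, \nabla_\mu^{\calI\calW}) = 0$ and $\mathrm{Ext}^{>0}(\Delta_\mu^{\calI\calW}, \calF) = 0$ for every $\mu \in \bbX_\bullet$. Applying the $(j^*, Rj_*)$- and $(j_!, Rj^!)$-adjunctions along $j_{w_\mu}^{\mathrm{op}}$ together with the definitions of $\Delta_\mu^{\calI\calW}$ and $\nabla_\mu^{\calI\calW}$, these transform into
\[
\mathrm{Ext}^i\bigl((j_{w_\mu}^{\mathrm{op}})^*\calF,\, \calL_\mu[\ell(w_\mu)]\bigr) \quad \text{and} \quad \mathrm{Ext}^i\bigl(\calL_\mu[\ell(w_\mu)],\, R(j_{w_\mu}^{\mathrm{op}})^!\calF\bigr),
\]
computed on the orbit $\Fl^{\mathrm{op}}_{\calI,w_\mu}$, which is a perfected affine space of dimension $\ell(w_\mu)$.

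Third, I would carry out the decisive cohomological computation. By \Cref{sec:iwah-whitt-aver-support-of-iwahori-whittaker-sheaves}, the unique (up to isomorphism) rank-one Iwahori--Whittaker local system on $\Fl^{\mathrm{op}}_{\calI,w_\mu}$ is $\calL_\mu$, so every IW local system on the orbit is a direct sum of copies of $\calL_\mu$, and $\mathcal{H}om$ between any two such is a constant sheaf, which on a perfected affine space has étale cohomology concentrated in degree zero. A hypercohomology spectral sequence, pinched by the perversity bounds on cohomological degrees of stalks and costalks of a perverse sheaf, then shows that the displayed Ext groups vanish in positive degrees exactly when $(j_{w_\mu}^{\mathrm{op}})^*\calF$, respectively $R(j_{w_\mu}^{\mathrm{op}})^!\calF$, is concentrated in cohomological degree $-\ell(w_\mu)$. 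Combining with the second step yields the proposition.

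The main subtlety I anticipate is ensuring that these adjunctions and spectral sequences pass cleanly to the $(L^+\calI_{\mathrm{u}}^{\mathrm{op}}, \calL_{\calI\calW})$-equivariant setting; however, this is essentially automatic thanks to the fully faithful forgetful functor $\scrD_{\mathrm{\acute{e}t}}(\Hk_{\calI\calW}) \to \scrD_{\mathrm{\acute{e}t}}(\Fl_\calI)$ established earlier, which allows all relevant Hom and Ext computations to be carried out in the ambient constructible derived category.
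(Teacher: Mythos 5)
Your proof is correct and takes essentially the same route as the paper, which simply cites \cite[Proposition 1.3]{BBM04}; the argument there---the $\mathrm{Ext}$-vanishing characterization of $\Delta$- and $\nabla$-filtered objects in a highest weight category, reduced via the recollement adjunctions $(j_!,j^*)$ and $(j^*,Rj_*)$ to cohomology computations on individual strata---is exactly what you have unpacked. The inputs you supply (uniqueness of the rank-one Iwahori--Whittaker local system on each $L^+\calI^{\mathrm{op}}_u$-orbit from \Cref{sec:iwah-whitt-aver-support-of-iwahori-whittaker-sheaves}, and acyclicity of the perfected affine cells) are precisely what is needed to verify the hypotheses of that proposition in the present twisted-equivariant setting.
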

\begin{proof}
	This follows from \cite[Proposition 1.3]{BBM04}.
\end{proof}
\subsection{Multiplicities of tilting objects}Let $\calF\in \scrP(\Hk_{{\calI\calW}})$ be a tilting object. Recall that the multiplicity of the standard (resp.~costandard) objects $\Delta^{\calI\calW}_\nu$ (resp.~$\nabla^{\calI\calW}_\nu$) in $\calF$ is well-defined and we denote it by $(\mathcal{F}:\Delta^{\calI\calW}_\nu)$(resp. $(\calF: \nabla^{\calI\calW}_\nu$)). It follows by orthogonality of $\Delta_\nu^{\calI\calW}$ and $\nab_\nu^{\calI\calW}$ that
\begin{align}
	(\mathcal{F}:\Delta^{\calI\calW}_\nu)= \dim\mathrm{Hom} (\calF, \nabla^{\calI\calW}_\nu),\\
	(\mathcal{F}:\nabla^{\calI\calW}_\nu)= \dim\mathrm{Hom} ( \Delta^{\calI\calW}_\nu,\calF).
\end{align}
where the Hom spaces are taken inside $\scrD_{\mathrm{\acute{e}t}}(\Hk_{\calI\calW})$, compare with \cite[Theorem 3.2.1]{BGS96}. We use the same notation for $\calF\in \calD_\et(\Hk_{\calI\calW})$ as well.

\begin{proposition}
	For any $V\in \mathrm{Rep}_\Lambda (\hat{G})$ and any $\mu\in \mathbb{X}_\bullet$, we have 
	\begin{align}
		&  \sum_{i\geq 0}  (-1)^i \dim (\Hom_{\scrD_{\mathrm{\acute{e}t}}(\Hk_{\calI\calW})} (\scrZ_{\calI\calW}(V)[i],\nabla_\mu^{\calI\calW}))=\dim (V(\mu))\\
		& \sum_{i\leq 0} (-1)^i  \dim (\Hom_{\scrD_\mathrm{\acute{e}t}(\Hk_{\calI\calW})} (\Delta_\mu^{\calI\calW},\scrZ_{\calI\calW}(V)[i])=\dim (V(\mu)),
	\end{align}
	where $V(\mu)$ denotes the $\mu$-weight space of $V$.
\end{proposition}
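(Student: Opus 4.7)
The plan is to unify both identities into a single K-theoretic statement and then reduce it to a structural computation of how the averaging functor interacts with Wakimoto sheaves. Both sums are naturally interpreted as Euler characteristics: the first as $\chi R\mathrm{Hom}(\Delta_\nu^{\calI\calW}, \scrZ_{\calI\calW}(V))$, the second as $\chi R\mathrm{Hom}(\scrZ_{\calI\calW}(V), \nabla_\nu^{\calI\calW})$. Since $\scrP(\Hk_{\calI\calW})$ is a highest weight category with poset $\bbX_\bullet$ by \Cref{prop_IW_category_is_highest_weight}, the classes $\{[\nabla_\nu^{\calI\calW}]\}$ form a basis of $K_0(\scrP(\Hk_{\calI\calW}))$, and BGG-reciprocity ($\mathrm{Ext}^\bullet(\Delta_\nu^{\calI\calW},\nabla_\mu^{\calI\calW})=\delta_{\nu\mu}\La[0]$) implies that both Euler characteristics read off the same coefficient of $[\nabla_\nu^{\calI\calW}]$ in $[\scrZ_{\calI\calW}(V)]$. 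Hence the proposition is equivalent to the K-class identity
\[
[\scrZ_{\calI\calW}(V)] = \sum_{\nu\in\bbX_\bullet} \dim V(\nu)\, [\nabla_\nu^{\calI\calW}] \in K_0(\scrP(\Hk_{\calI\calW})).
\]

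The next step is to apply the Wakimoto filtration. By \Cref{thm_perversity_new}, $\scrZ(V)\in \scrP(\mathrm{Wak})$ admits a filtration whose associated graded is $\scrJ(V|_{\hat T})$, so $\mathrm{gr}_\nu \scrZ(V)\simeq \scrJ_\nu(V(w_0\nu))$. By \Cref{aviw_is_t_exact}, the averaging functor $\mathrm{av}_{\calI\calW}$ is perverse $t$-exact, hence preserves filtrations with perverse subquotients and yields an induced filtration of $\scrZ_{\calI\calW}(V)$ by perverse sheaves with gradeds $\mathrm{av}_{\calI\calW}(\scrJ_\nu)(V(w_0\nu))$. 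Passing to K-theory and using $\dim V(w_0\nu)=\dim V(\nu)$, we obtain
\[
[\scrZ_{\calI\calW}(V)] = \sum_{\nu\in\bbX_\bullet} \dim V(\nu)\, [\mathrm{av}_{\calI\calW}(\scrJ_\nu)].
\]

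The remaining step, and the main obstacle, is to establish the identification $[\mathrm{av}_{\calI\calW}(\scrJ_\nu)] = [\nabla_\nu^{\calI\calW}]$ in the Grothendieck group. Writing $\scrJ_\nu = \Delta_{t_{-\nu_2}}\ast \nabla_{t_{\nu_1}}$ with $\nu_1,\nu_2\in\bbX_\bullet^+$ and $\nu=\nu_1-\nu_2$, and using that $\mathrm{av}_{\calI\calW} = \Xi\ast(-)$ is left convolution (hence commutes with right convolution by the $\scrD_\et(\Hk_\calI)$-module structure of \Cref{sec:conv_fusion}), we get
\[
\mathrm{av}_{\calI\calW}(\scrJ_\nu) \simeq \mathrm{av}_{\calI\calW}(\Delta_{t_{-\nu_2}}) \ast \nabla_{t_{\nu_1}} \simeq \Delta_{-\nu_2}^{\calI\calW} \ast \nabla_{t_{\nu_1}},
\]
where the last isomorphism applies equation \eqref{equation_av_IW_of_std_objects} after choosing $\nu_2$ sufficiently dominant so that $t_{-\nu_2}$ is the minimal length element of its left $W_{\mathrm{fin}}$-coset. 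One must then verify that right convolution with $\nabla_{t_{\nu_1}}$ carries $\Delta_{-\nu_2}^{\calI\calW}$ to a complex whose K-class equals $[\nabla_\nu^{\calI\calW}]$. This is essentially the geometric shadow of the Bernstein relation $T_{w_{-\nu_2}}\cdot T_{t_{\nu_1}}$ in the affine Hecke algebra acting on the antispherical module: for $\nu_1$ sufficiently dominant, one has length-additivity $\ell(w_{-\nu_2})+\ell(t_{\nu_1})=\ell(w_\nu)$, and then \Cref{sec:geometry-affine-flag-2-properties-of-convolution} together with proper base change identifies the convolution with the (co)standard sheaf indexed by $w_\nu$, up to pieces supported on strictly smaller opposite cells which cancel in K-theory. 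The hard part is the combinatorial bookkeeping of cosets in $W_{\mathrm{fin}}\backslash W\simeq \bbX_\bullet$ and the geometric verification that no additional IC-contributions spoil the K-class; once $[\mathrm{av}_{\calI\calW}(\scrJ_\nu)] = [\nabla_\nu^{\calI\calW}]$ is established, combining with the Wakimoto expansion above completes the proof.
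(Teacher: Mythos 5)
Your overall strategy matches the paper's intended one: pass to $K_0$, use the Wakimoto filtration of $\scrZ(V)$ (with gradeds $\scrJ_\nu(V(w_0\nu))$ from \Cref{thm_perversity_new}, which rests on \Cref{coro_wakimoto_graded_ct}), push through the $t$-exact $\mathrm{av}_{\calI\calW}$, and invoke \eqref{equation_av_IW_of_std_objects}. The structure is sound, but two points deserve attention.

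First, the claim that ``both Euler characteristics read off the same coefficient of $[\nabla_\nu^{\calI\calW}]$'' is not quite what the orthogonality $\Ext^\bullet(\Delta_\nu^{\calI\calW},\nabla_\mu^{\calI\calW})=\delta_{\nu\mu}\La$ gives you: $\chi R\Hom(\Delta_\nu^{\calI\calW},-)$ extracts the coefficient of $[\nabla_\nu^{\calI\calW}]$ in the $\nabla$-basis, while $\chi R\Hom(-,\nabla_\nu^{\calI\calW})$ extracts the coefficient of $[\Delta_\nu^{\calI\calW}]$ in the $\Delta$-basis. These coincide only because $[\Delta_\nu^{\calI\calW}]=[\nabla_\nu^{\calI\calW}]$ in $K_0$, which you should note: it follows by applying $K_0(\mathrm{av}_{\calI\calW})$ to the identity $[\Delta_{t_\nu}]=[\nabla_{t_\nu}]$ coming from \Cref{sec:k_0-i-equivariant-1-description-of-k-0-of-i-equivariant-sheaves}, together with $\mathrm{av}_{\calI\calW}(\Delta_w)=\Delta_\nu^{\calI\calW}$ and its costandard twin $\mathrm{av}_{\calI\calW}(\nabla_w)=\nabla_\nu^{\calI\calW}$.

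Second, your step (5) takes a needless detour, and the hypothesis in it is false: for $\nu_2$ dominant, $t_{-\nu_2}$ is \emph{not} the minimal-length element of its left $W_{\mathrm{fin}}$-coset (that would be $t_{\nu_2}$ for dominant $\nu_2$); but you don't need it, since \eqref{equation_av_IW_of_std_objects} applies to \emph{every} $w$. More to the point, the ``Bernstein relation bookkeeping'' you flag as the hard part is entirely unnecessary. The isomorphism $\theta$ in \Cref{sec:k_0-i-equivariant-1-description-of-k-0-of-i-equivariant-sheaves} is a ring homomorphism sending $[\Delta_w]$ and $[\nabla_w]$ both to $w$, so $\theta([\scrJ_\nu])=t_{-\nu_2}t_{\nu_1}=t_\nu=\theta([\Delta_{t_\nu}])$, i.e.\ $[\scrJ_\nu]=[\Delta_{t_\nu}]$ already in $K_0(\Hk_\calI)$. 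Applying $K_0(\mathrm{av}_{\calI\calW})$ and \eqref{equation_av_IW_of_std_objects} then gives $[\mathrm{av}_{\calI\calW}(\scrJ_\nu)]=[\Delta_\nu^{\calI\calW}]$ directly, with no convolution geometry or cancellation of boundary IC terms required. With these two corrections the argument is the same as, and no longer than, the paper's.
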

\begin{proof}
	The proof follows the strategy of \cite[Lemma 27]{AB09} and \cite[Proposition 6.5.4]{AR} by \Cref{coro_wakimoto_graded_ct}, and (\ref{equation_av_IW_of_std_objects}).
\end{proof}

\begin{corollary}\label{coro_mult_support_of_Z_IW}
	For any $V\in\ \mathrm{Rep}_\Lambda (\hat{G})$,
	\begin{itemize}
		\item [(1)]if $\scrZ_{{\calI\calW}}(V)$ is tilting, then
		\begin{equation}\label{equation_dimension_estimate_tilting_sheaves}
			(\scrZ_{{\calI\calW}}(V):\Delta_\mu^{\calI\calW})=(\scrZ_{{\calI\calW}}(V):\nabla_\mu^{\calI\calW})=\dim (V_\mu);
		\end{equation}
		\item[(2)] if $V$ is the highest weight representation of highest weight $\mu$, then $\iwcentral(V)$ is supported on $\Fl^{\mathrm{op}}_{\calI,w_\mu}$.
	\end{itemize}

\end{corollary}
\begin{proof}
	Statements follow from \cite[Remark 6.5.5, Corollary 6.5.6]{AR}.
\end{proof}

The work that now follows will eventually lead to proving that $\mathscr{Z}_{\mathcal{I}\mathcal{W}}(V)$ is tilting for almost all groups $G$, which we explain below in \Cref{thm_tilting}. But first, we handle minuscule representations, and for that we require the next lemma.

\begin{lemma}\label{lemma_multiplicity_ZIW}
	For any $V\in\Rep(\hat{G})$, $\nu\in\mathbb{X}_\bullet$, $x\in W_\mathrm{fin}$, and $n\in\mathbb{Z}$, we have isomorphisms
	\begin{align*}
		\mathrm{Ext}^n(\Delta_\nu^{\calI\calW},\scrZ_{\calI\calW}(V))\simeq  	\mathrm{Ext}^n(\Delta_{x(\nu)}^{\calI\calW},\scrZ_{\calI\calW}(V)),\\
		\mathrm{Ext}^n(\scrZ_{\calI\calW}(V),\nabla_\nu^{\calI\calW})\simeq  	\mathrm{Ext}^n(\scrZ_{\calI\calW}(V),\nabla_{x(\nu)}^{\calI\calW}).
	\end{align*}
\end{lemma}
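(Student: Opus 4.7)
The strategy exploits the centrality of $\scrZ(V)$ together with the fact that the averaging functor $\Xi \ast -$ collapses the action of $W_{\mathrm{fin}}$. We first record the identity $\Xi \ast \Delta_s \simeq \Xi$ for every simple reflection $s \in W_{\mathrm{fin}}$, which follows immediately from \eqref{equation_av_IW_of_std_objects} since $s$ belongs to the coset $W_{\mathrm{fin}} t_0 = W_{\mathrm{fin}}$. Convolving this isomorphism on the right with $\nabla_s$ and using $\Delta_s \ast \nabla_s \simeq \delta_e$ from \Cref{lem_invertible_conv_stand}, we also deduce $\Xi \ast \nabla_s \simeq \Xi$.

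By induction on the length of $x \in W_{\mathrm{fin}}$, it suffices to treat the case $x = s$ a simple reflection. Using $s^2=e$ we have $t_{s(\nu)} s = st_\nu$, and since $s$ is a simple reflection $\ell(t_{s(\nu)}s) = \ell(t_{s(\nu)}) \pm 1$. In the case $+1$, \Cref{djfejkjd;kjkel;} provides $\Delta_{t_{s(\nu)}} \ast \Delta_s \simeq \Delta_{t_{s(\nu)}s} = \Delta_{st_\nu}$; applying $\mathrm{av}_{\calI\calW}$ and using that $st_\nu \in W_{\mathrm{fin}}t_\nu$ together with \eqref{equation_av_IW_of_std_objects} yields $\Delta^{\calI\calW}_{s(\nu)} \ast \Delta_s \simeq \Delta^{\calI\calW}_\nu$, hence $\Delta^{\calI\calW}_{s(\nu)} \simeq \Delta^{\calI\calW}_\nu \ast \nabla_s$ by invertibility. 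In the case $-1$, a symmetric application of \Cref{djfejkjd;kjkel;} to the decomposition $t_{s(\nu)} = (t_{s(\nu)}s) \cdot s$ yields instead $\Delta^{\calI\calW}_\nu \ast \Delta_s \simeq \Delta^{\calI\calW}_{s(\nu)}$. Thus in both cases there exists an invertible object $X_s \in \{\Delta_s, \nabla_s\} \subseteq \scrD_\et(\Hk_\calI)$ with $\Delta^{\calI\calW}_{s(\nu)} \simeq \Delta^{\calI\calW}_\nu \ast X_s$, and a parallel analysis (using the costandard analogue of \eqref{equation_av_IW_of_std_objects}, which follows by Verdier duality or by symmetrically repeating the proof of \Cref{aviw_is_t_exact}) handles the IW-costandards.

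The lemma now follows formally. Since $X_s$ is invertible, convolution on the right by $X_s^{-1}$ is an auto-equivalence of $\scrD_\et(\Hk_{\calI\calW})$. Combining this with the centrality of $\scrZ(V)$ from \Cref{thm_assoc_center} and noting that $X_s^{-1}$ is again of the form $\Delta_s$ or $\nabla_s$, so that $\Xi \ast X_s^{-1} \simeq \Xi$ by the first paragraph, we compute
\begin{equation*}
\scrZ_{\calI\calW}(V) \ast X_s^{-1} = \Xi \ast \scrZ(V) \ast X_s^{-1} \simeq \Xi \ast X_s^{-1} \ast \scrZ(V) \simeq \Xi \ast \scrZ(V) = \scrZ_{\calI\calW}(V).
\end{equation*}
Therefore
\begin{equation*}
\mathrm{Ext}^n(\Delta^{\calI\calW}_{s(\nu)}, \scrZ_{\calI\calW}(V)) \simeq \mathrm{Ext}^n(\Delta^{\calI\calW}_\nu, \scrZ_{\calI\calW}(V) \ast X_s^{-1}) \simeq \mathrm{Ext}^n(\Delta^{\calI\calW}_\nu, \scrZ_{\calI\calW}(V)),
\end{equation*}
and symmetrically for the costandards. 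The entire argument is essentially formal once the identifications in the second paragraph are set up; the only delicate point is the length bookkeeping that distinguishes the two cases, and correctly interpreting centrality at the level of the right $\scrD_\et(\Hk_\calI)$-module structure on $\scrD_\et(\Hk_{\calI\calW})$.
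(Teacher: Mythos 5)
Your argument is correct and mirrors the paper's proof: both establish $\Delta^{\calI\calW}_{x(\nu)} \simeq \Delta^{\calI\calW}_\nu \ast X$ for an invertible (co)standard object $X\in\scrD_{\et}(\Hk_{\calI})$ supported on $W_{\mathrm{fin}}$, observe that $\Xi \ast X \simeq \Xi$, and conclude by transporting along centrality of $\scrZ(V)$. The only cosmetic difference is that you reduce to simple reflections with a case split on lengths, whereas the paper works in a single step with a carefully chosen $y\in W_{\mathrm{fin}}$ after first reducing to $\nu$ dominant; the underlying inputs (\Cref{djfejkjd;kjkel;}, \Cref{lem_invertible_conv_stand}, the averaging formula, centrality) are the same, and your version is, if anything, slightly more careful about whether the invertible factor is a standard or a costandard sheaf.
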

\begin{proof}
	The proof is similar to the arguments in \cite[Lemma 6.5.11]{AR} and we sketch it here. Without loss of generality, we assume $\nu$ to be dominant so that $w_\nu=t_\nu$, as the statement only depends on its $W_{\mathrm{fin}}$-orbit. We can find $y\in W_\mathrm{fin}$ with minimal length such that $t_\nu=w_{x(\nu)}y^{-1}$ is a minimal length decomposition,  and $xy(\nu)=\nu$. Then
	$$
	\Delta_{\nu}^{\calI\calW}=\Delta_0^{\calI\calW}*\Delta_{w_\nu}\simeq \Delta_0^{\calI\calW}*\Delta_{w_{x(\nu)}}*\Delta_{y^{-1}}\simeq \Delta_{x(\nu)}^{\calI\calW}*\Delta_{y^{-1}},
	$$
	by \Cref{aviw_is_t_exact} and \Cref{djfejkjd;kjkel;}. Thus
	\begin{align*}
		\mathrm{Ext}^n(\Delta_\nu^{\calI\calW},\scrZ_{\calI\calW}(V)) & \simeq 	\mathrm{Ext}^n(\Delta_{\nu}^{\calI\calW}*\Delta_{y},\scrZ_{\calI\calW}(V)*\Delta_{y})\\
		&\simeq 	\mathrm{Ext}^n(\Delta_{x(\nu)}^{\calI\calW},\scrZ_{\calI\calW}(V)*\Delta_y) \\
		& \simeq 	\mathrm{Ext}^n(\Delta_{x(\nu)}^{\calI\calW}, \Delta_0^{\calI\calW}*\scrZ(V)*\Delta_y)\\
		&\simeq 	\mathrm{Ext}^n(\Delta_{x(\nu)}^{\calI\calW}, \Delta_0^{\calI\calW}*\Delta_y*\scrZ(V))\\
		&\simeq	\mathrm{Ext}^n(\Delta_{x(\nu)}^{\calI\calW},\scrZ_{\calI\calW}(V))
	\end{align*}
	again by \Cref{aviw_is_t_exact} and \Cref{djfejkjd;kjkel;}. The second isomorphism is proved analogously.
\end{proof}

\begin{proposition}\label{Z_IW_is_tilting_for_minuscule_coweights}
	Let $V$ be a simple representation of $\hat{G}$ with highest weight $\mu$ being a minuscule dominant coweight of $G$, then $\scrZ_{\calI\calW}(V)$ is tilting.
\end{proposition}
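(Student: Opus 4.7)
The strategy is to verify the criterion of \Cref{prop_stalk_and_costalk_of_tilting_objects}: I need to show that both $(j^{\mathrm{op}}_{w_\nu})^* \scrZ_{\calI\calW}(V)$ and $R(j^{\mathrm{op}}_{w_\nu})^! \scrZ_{\calI\calW}(V)$ are concentrated in cohomological degree $-\ell(w_\nu)$ for every $\nu \in \bbX_\bullet$.

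First, since $\scrZ_{\calI\calW}(V)$ is perverse by \Cref{aviw_is_t_exact} and \Cref{thm_perversity_new}, the stalk and costalk concentration conditions are equivalent to the vanishing of $\mathrm{Ext}^i(\Delta_\nu^{\calI\calW}, \scrZ_{\calI\calW}(V))$ and $\mathrm{Ext}^i(\scrZ_{\calI\calW}(V), \nabla_\nu^{\calI\calW})$ for all $i > 0$. By \Cref{lemma_multiplicity_ZIW}, these Ext groups only depend on the $W_{\mathrm{fin}}$-orbit of $\nu$, so it suffices to check concentration at a single representative per $W_{\mathrm{fin}}$-orbit in $\bbX_\bullet$.

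For $\nu \notin W_{\mathrm{fin}} \cdot \mu$, I would combine the fact that $\scrZ_{\calI\calW}(V)$ is supported on $\Fl^{\mathrm{op}}_{\calI, \leq w_\mu}$ from \Cref{coro_mult_support_of_Z_IW}(2) with the classical combinatorial fact that, for minuscule $\mu$, the set of $\nu \in \bbX_\bullet$ with $w_\nu \leq w_\mu$ in the Bruhat order coincides with the Weyl orbit $W_{\mathrm{fin}} \cdot \mu$. Consequently, the cell $\Fl^{\mathrm{op}}_{\calI, w_\nu}$ lies outside the support, and both stalk and costalk vanish trivially.

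For $\nu \in W_{\mathrm{fin}} \cdot \mu$, I take $\nu = \mu$ as the representative. Then $\Fl^{\mathrm{op}}_{\calI, w_\mu}$ is open inside the support closure, so the stalk of the perverse sheaf $\scrZ_{\calI\calW}(V)$ there is an Iwahori--Whittaker equivariant local system concentrated in degree $-\ell(w_\mu)$ of rank $\dim V(\mu) = 1$. Since $j^!_{w_\mu} = j^*_{w_\mu}$ for an open immersion, the costalk coincides with the stalk and is concentrated in the same degree. By \Cref{lemma_multiplicity_ZIW}, this concentration propagates to all $\nu \in W_{\mathrm{fin}} \cdot \mu$. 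The main obstacle will be pinning down the combinatorial claim used in the preceding paragraph, which—although classical for minuscule weights—requires care to extract in the form needed here, by relating the weight polytope of the minuscule representation $V_\mu$ with the quotient Bruhat order on $W_{\mathrm{fin}} \backslash W \simeq \bbX_\bullet$ that indexes the $L^+\calI^{\mathrm{op}}$-orbits on $\Fl_\calI$.
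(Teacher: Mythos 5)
Your proof takes essentially the same approach as the paper: use the stalk/costalk criterion for tilting, reduce by $W_{\mathrm{fin}}$-equivariance of $\mathrm{Ext}$ groups via \Cref{lemma_multiplicity_ZIW}, then use the support calculation from \Cref{coro_mult_support_of_Z_IW} plus the open-orbit observation at $\nu=\mu$.

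The one place where you depart from the paper's route is the disposal of the orbits $W_{\mathrm{fin}}\nu \neq W_{\mathrm{fin}}\mu$. You invoke the claim that $\{\nu \in \bbX_\bullet : w_\nu \leq w_\mu\} = W_{\mathrm{fin}}\mu$ in the quotient Bruhat order, and you rightly flag this as requiring care. You can avoid this heavier combinatorial input by exploiting \Cref{lemma_multiplicity_ZIW} more fully: it allows you to pick the \emph{dominant} representative $\nu^+$ of each $W_{\mathrm{fin}}$-orbit, and for $\nu^+$ dominant one has $w_{\nu^+}=t_{\nu^+}$, so $w_{\nu^+}\leq w_\mu = t_\mu$ becomes $t_{\nu^+}\leq t_\mu$, which for dominant translations is equivalent to $\nu^+\leq\mu$ in the dominance order. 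For $\mu$ minuscule there is no dominant $\nu^+<\mu$, so the cell lies outside the support and the $\mathrm{Ext}$ groups vanish. This sidesteps any analysis of the Bruhat order on non-dominant minimal coset representatives and is the reduction the paper actually performs. Your argument for $\nu=\mu$ (openness of $j^{\mathrm{op}}_{w_\mu}$ in the support closure, and $j^* \cong j^!$ after restricting to that closed set) is correct and matches the paper.
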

\begin{proof}
	Because of \Cref{prop_stalk_and_costalk_of_tilting_objects}, it suffices to show that $(j_{w_\nu}^{\mathrm{op}})^*(\scrZ_{\calI\calW}(V))$ and $(j_{w_\nu}^{\mathrm{op}})^!(\scrZ_{\calI\calW}(V))$ are both perverse sheaves. Since $\mu$ is minuscule, the only weights we have to check are in the $W_{\mathrm{fin}}$-orbit of $\mu$. By adjunction and \Cref{lemma_multiplicity_ZIW}, we are reduced to showing the statements above for $\nu=\mu$. Note that the support of $\scrZ(V)$ equals the $\mu$-admissible locus $A_{I,\mu}$ by \cite[Theorem 6.16]{AGLR22}, whose open $L^+\calI$-orbits are indexed by $W_{\mathrm{fin}}\mu$. We deduce that the support of $\scrZ_{\calI\calW}(V)$ equals $\Fl_{\mathcal{I},\leq w_0t_\mu}$ and hence the locally closed immersion $j_{w_\mu}^{\mathrm{op}}$ is actually open and dense. In particular, it is clear that $(j_{w_\mu}^{\mathrm{op}})^*(\scrZ_{\calI\calW}(V))$ and $(j_{w_\mu}^{\mathrm{op}})^!(\scrZ_{\calI\calW}(V))$ are both perverse.
\end{proof}

Now, we deduce \Cref{thm_tilting} by propagating the result via convolution.

\begin{proposition}\label{lem_prop_tilting_tensor_prod}
	Let $V,W\in\Rep(\hat{G})$ such that $\iwcentral (V)$ and $\iwcentral (W)$ are both tilting. Then so is $\iwcentral (V\otimes W)$.
\end{proposition}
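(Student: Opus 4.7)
The plan is to combine the $\bbE_2$-monoidal structure of $\scrZ$ (\Cref{thm_E2_mor}), the Wakimoto filtration of central sheaves (\Cref{thm_perversity_new}), and the perverse exactness of convolution with $\scrZ(V)$ (\Cref{cor_Z_conv_exact}). Applying $\scrZ$ to the symmetry isomorphism $V\otimes W\simeq W\otimes V$ in $\mathrm{Rep}_\La(\hat{G})$, together with the definition $\iwcentral = \mathrm{av}_{\calI\calW}\circ \scrZ$, yields a canonical isomorphism
\begin{equation}
\iwcentral(V\otimes W)\simeq \iwcentral(W)\ast \scrZ(V),
\end{equation}
where we use the right $\scrD_{\mathrm{\acute{e}t}}(\Hk_\calI)$-module structure on $\scrD_{\mathrm{\acute{e}t}}(\Hk_{\calI\calW})$. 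The question is thus reduced to showing that right-convolution with $\scrZ(V)$ preserves the class of tilting objects in $\scrP(\Hk_{\calI\calW})$.

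Since a tilting object is characterized by admitting both a $\Delta^{\calI\calW}$-filtration and a $\nab^{\calI\calW}$-filtration, and the functor $-\ast\scrZ(V)$ is perverse t-exact by \Cref{cor_Z_conv_exact}, we can convolve the standard, resp.~costandard, filtration of $\iwcentral(W)$ termwise. This reduces us to proving that $\Delta^{\calI\calW}_\mu\ast \scrZ(V)$ admits a $\Delta^{\calI\calW}$-filtration and that $\nab^{\calI\calW}_\mu\ast \scrZ(V)$ admits a $\nab^{\calI\calW}$-filtration, for every $\mu\in\bbX_\bullet$. Invoking now the Wakimoto filtration of $\scrZ(V)$ from \Cref{thm_perversity_new} and once more convolution exactness, the problem further boils down to establishing canonical identifications
\begin{equation}
	\Delta^{\calI\calW}_\mu\ast\scrJ_\nu\simeq \Delta^{\calI\calW}_{\mu+\nu} \quad\text{and}\quad \nab^{\calI\calW}_\mu\ast\scrJ_\nu\simeq \nab^{\calI\calW}_{\mu+\nu}
\end{equation}
for all $\mu,\nu\in\bbX_\bullet$, in parallel with the equicharacteristic setup of \cite{AR}. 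The multiplicities then match by \Cref{coro_mult_support_of_Z_IW}, so having established the identifications above, the resulting filtrations automatically have the right subquotients on the nose.

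To verify these identifications, I would use the presentation $\scrJ_\nu\simeq \Delta_{t_{-\nu_2}}\ast \nab_{t_{\nu_1}}$ for a decomposition $\nu=\nu_1-\nu_2$ with $\nu_1,\nu_2\in\bbX_\bullet^+$, together with the description $\Delta^{\calI\calW}_\mu\simeq \Xi\ast \Delta_{w_\mu}$ extracted from the proof of \Cref{aviw_is_t_exact}. After choosing $\nu_1,\nu_2$ sufficiently regular so that the Bruhat products $w_\mu t_{-\nu_2}$ and $w_\mu t_{-\nu_2}t_{\nu_1}$ become reduced, \Cref{djfejkjd;kjkel;} allows one to collapse the convolution $\Xi\ast \Delta_{w_\mu}\ast \Delta_{t_{-\nu_2}}\ast \nab_{t_{\nu_1}}$ down to $\Xi\ast \Delta_{w_{\mu+\nu}\cdot \tau}$ for some $\tau\in \Omega_{\bba}$, which by \Cref{equation_av_IW_of_std_objects} equals $\Delta^{\calI\calW}_{\mu+\nu}$. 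The analogue for costandards follows by a dual argument using $\nab$'s in place of $\Delta$'s. The main obstacle will be the combinatorial bookkeeping of minimal-length left-$W_{\mathrm{fin}}$-coset representatives and the interplay between translations and finite Weyl elements; fortunately, since this argument depends only on the quasi-Coxeter structure of $W$ and the length additivity in \Cref{djfejkjd;kjkel;}, the proof transports directly from the equicharacteristic setup of \cite[Lemma 4.7.1, Proposition 4.7.5]{AR} to our mixed-characteristic situation.
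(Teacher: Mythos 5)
Your proposal takes a genuinely different route from the paper's proof: you use the Wakimoto filtration of $\scrZ(V)$ from \Cref{thm_perversity_new} to construct explicit $\Delta^{\calI\calW}$- and $\nab^{\calI\calW}$-flags, whereas the paper uses the tilting property of \emph{both} $\iwcentral(V)$ and $\iwcentral(W)$ together with centrality and \Cref{lemma_convolution_stds_and_costds}, and verifies the stalk criterion of \Cref{prop_stalk_and_costalk_of_tilting_objects} directly rather than exhibiting a flag.

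Unfortunately, the hinge of your argument is flawed. You assert that \emph{both} identifications $\Delta^{\calI\calW}_\mu\ast\scrJ_\nu\simeq\Delta^{\calI\calW}_{\mu+\nu}$ and $\nab^{\calI\calW}_\mu\ast\scrJ_\nu\simeq\nab^{\calI\calW}_{\mu+\nu}$ hold for all $\mu,\nu\in\bbX_\bullet$. These are mutually inconsistent: at $\mu=0$ we have $\Delta^{\calI\calW}_0=\nab^{\calI\calW}_0=\Xi$, so the two claims together would force $\Delta^{\calI\calW}_\nu\simeq\nab^{\calI\calW}_\nu$ for all $\nu$, which fails since $\scrP(\Hk_{\calI\calW})$ is not semisimple. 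Concretely, with the paper's convention $\scrJ_\nu\simeq\nab_{t_\nu}$ for $\nu$ dominant, and the formula $\mathrm{av}_{\calI\calW}(\Delta_w)\simeq\Delta^{\calI\calW}_\eta$ from \ref{equation_av_IW_of_std_objects} together with its Verdier dual $\mathrm{av}_{\calI\calW}(\nab_w)\simeq\nab^{\calI\calW}_\eta$, one gets $\Xi\ast\scrJ_\nu\simeq\nab^{\calI\calW}_\nu$ for dominant $\nu$ and $\Xi\ast\scrJ_\nu\simeq\Delta^{\calI\calW}_\nu$ for antidominant $\nu$ --- so each of your two identities fails on half the weight lattice. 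The problem propagates to your closing paragraph: \Cref{djfejkjd;kjkel;} only merges $\Delta$ with $\Delta$ and $\nab$ with $\nab$, so the product $\Xi\ast\Delta_{w_\mu}\ast\Delta_{t_{-\nu_2}}\ast\nab_{t_{\nu_1}}$ leaves a residual $\nab_{t_{\nu_1}}$ that cannot be absorbed into a single $\Xi\ast\Delta_{w_{\mu+\nu}\cdot\tau}$. (Your appeal to \cite[Lemma 4.7.1]{AR} carries a convention for $\scrJ$ opposite to the one used here, so it does not transfer verbatim.) The strategy might be salvageable by exploiting the freedom in \Cref{thm_perversity_new} to use two Wakimoto filtrations of $\scrZ(V)$ for \emph{opposite} Borels --- one for the $\Delta$-flag, the other for the $\nab$-flag --- but as written there is a genuine gap, which the paper avoids by never attempting to compute $\Delta^{\calI\calW}_\mu\ast\scrJ_\nu$ exactly: it only needs the one-sided degree bound on $*$-stalks coming from \Cref{lemma_convolution_stds_and_costds} and then pairs this with perversity for the opposite bound.
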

\begin{proof}
	Our proof is similar to \cite[Proposition 6.5.7]{AR}. It suffices to prove that $(j_{w_\nu}^{\mathrm{op}})^*\iwcentral (V\otimes W)$ and $(j_{w_\nu}^{\mathrm{op}})^!\iwcentral (V\otimes W)$ are both perverse for any $\nu\in\mathbb{X}_\bullet$. Since $\iwcentral (V\otimes W)$ is perverse, $(j_{w_\nu}^{\mathrm{op}})^*\iwcentral (V\otimes W)$ concentrates in perverse degress $\leq 0$, and $(j_{w_\nu}^{\mathrm{op}})^!(\iwcentral (V\otimes W))$ concentrates in perverse degrees $\geq 0$. Note that if $\iwcentral(V)$ is tilting, the object $\iwcentral(V\otimes W)\cong \iwcentral(V)*\scrZ(W)$ admits a filtration with subquotients $\Delta_\mu^{\calI\calW}*\scrZ(W)$. By \Cref{aviw_is_t_exact} and \Cref{thm_assoc_center}, $\Delta_\mu^{{\calI\calW}}*\scrZ(W)\cong \iwcentral(W)*\Delta_{w_\mu}$. Since $\iwcentral(W)$ is tilting, then $\Delta_\mu^{{\calI\calW}}*\scrZ(W)$ admits a filtration with subquotients $\mathrm{av}_{{\calI\calW}}(\Delta_{w_\nu}*\Delta_{w_\mu})$. \Cref{lemma_convolution_stds_and_costds} and \Cref{aviw_is_t_exact} imply that $(j_{w_\nu}^{\mathrm{op}})^*(\iwcentral(V\otimes W))$ concentrates in non-negative perverse degrees. The statement for $(j_{w_\nu}^{\mathrm{op}})^!(\iwcentral(V\otimes W))$ is proved similarly.
\end{proof}

Now, we need to describe when minuscule representations form a class of Karoubi generators for the symmetric monoidal category $\mathrm{Rep}(\hat{G})$. It will be enough for us to restrict to adjoint $G$. The argument below was partially suggested to us by Jeremy Taylor.

\begin{lemma}\label{lem_minusc_gener}
	Assume $G$ is adjoint. The following are equivalent:
	\begin{enumerate}
		\item \label{lem_minusc_gener_cat} $\mathrm{Rep}(\hat{G})$ is generated by minuscule representations under sums, retracts and tensor products;
		\item \label{lem_minusc_gener_rep}$\hat{G}$ admits a faithful minuscule representation;
		\item \label{lem_minusc_gener_dyn} Every simple adjoint factor of $G$ contains a minuscule coweight (i.e., it is not of type $E_8$, $F_4$ nor $G_2$).
	\end{enumerate}
\end{lemma}

\begin{proof}
	Recall that the generation property when including quasi-minuscules was observed by Ngô--Polo, see \cite[Lemma 10.3]{NP01}, and we could read this off their proof. It is clear that (\ref{lem_minusc_gener_cat}) implies (\ref{lem_minusc_gener_dyn}). Also, (\ref{lem_minusc_gener_rep}) implies (\ref{lem_minusc_gener_cat}) because we are working over a characteristic $0$ field $\Lambda$ and hence we can invoke the Peter--Weyl theorem: explicitly, taking global sections along the faithful representation $\rho \colon \hat{G}\to \mathrm{GL}(V)$ over $\Lambda$, we see that the regular $\hat{G}$-representation is spanned by minuscule representations under sums, retracts and tensor products, so by semisimplicity the same holds for any representation of $G$.
	Finally, we show that (\ref{lem_minusc_gener_dyn}) implies (\ref{lem_minusc_gener_rep}). It is enough to assume $G$ is almost simple and let $\mu$ be a minuscule coweight. The representation $\hat{G} \to \mathrm{GL}(V_\mu)$ has finite central kernel, which is shared by any non-trivial representation whose weights differ from $\mu$ by an element of the coroot lattice. Varying the minuscule $\mu$, we get every single element in $\bbX_\bullet$ by \cite[Exercices 24--25, p.~225]{Bou68}, which implies that the direct sum of all minuscule representations is faithful.
\end{proof}

\begin{definition}\label{def_enough_minusc}
	We say that $G$ has {\it enough minuscules} if its adjoint quotient satisfies the equivalent conditions of \Cref{lem_minusc_gener}.
\end{definition}

Now, we can prove the tilting property under a mild assumption involving exceptional groups. As explained during the introduction, this stems from the lack of the rotation $\mathbb{G}_{m,k}^{\mathrm{pf}}$-action in our $p$-adic context.

\begin{theorem}\label{thm_tilting}
	If $G$ has enough minuscules, then $\mathscr{Z}_{\mathcal{I}\mathcal{W}}(V)$ is tilting for all $V \in \mathrm{Rep}(\hat{G})$.
\end{theorem}

\begin{proof}
	First of all, we perform a reduction to the adjoint case. Recall that the adjoint map $M_{\calI,\mu}\to M_{\calI_{\mathrm{ad}},\mu_{\mathrm{ad}}}$ is an isomorphism, so if $V$ is simple (which we may and do assume), we can naturally identify the central sheaf $\scrZ(V)$ with the central sheaf $\scrZ(V_{\hat{G}_{\mathrm{sc}}})$ constructed in $\Hk_{\calI_{\mathrm{ad}}}$, see also \cite[Section VI.11]{FS21} to see that geometric Satake is compatible with adjoint quotients. The natural functor $\mathscr{P}(\mathrm{Hk}_{\mathcal{I}\mathcal{W}})\to \mathscr{P}(\mathrm{Hk}_{\mathcal{I}\mathcal{W}_{\mathrm{ad}}})$ also becomes an equivalence when restricted to a single connected component of the Hecke stack, identifying standard and costandard objects in the obvious way. In particular, the assertion can be read off the adjoint case.
	
	Now, if $G$ is adjoint, we apply \Cref{Z_IW_is_tilting_for_minuscule_coweights}, \Cref{lem_prop_tilting_tensor_prod} and \Cref{lem_minusc_gener} in combination to arrive at the desired result.
\end{proof}
\section{Regular quotient}
During this section, we assume $\La=\bar \bbQ_\ell$ is algebraically closed. Consider the Serre subcategory $\scrP_{>0}(\Hk_{\calI}) \subset \scrP(\Hk_{\calI})$ generated by IC sheaves with positive dimensional support and denote by $\Pi^0$ the natural quotient functor  
\begin{equation}
	\label{eq:2-functor-pi-0}
	\Pi^0: \scrP(\Hk_{\calI})\rightarrow \scrP_0(\Hk_{\calI}):=\scrP(\Hk_{\calI})/\scrP_{>0}(\Hk_{\calI})
\end{equation}
to the Serre quotient. 

Therefore the simple objects in $\scrP_0(\Hk_{\calI})$ are precisely given by the $\mathrm{IC}_\tau$ where $\tau \in \Omega_{\bba}$ stabilizes the base alcove. In particular, if $G$ is semi-simple,  $\scrP_0(\Hk_{\calI})$ has only finitely many simple objects.
\begin{proposition}\label{prop_monoid_perv_reg}
	The monoidal structure on $\scrP(\Hk_{\calI})$ given by perverse truncated convolution $^pH^0((-)\ast(-))$ descends to an exact monoidal structure $\circledast$ on $\scrP_0(\Hk_\calI)$.
\end{proposition}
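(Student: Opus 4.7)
The plan is to verify three things about the bifunctor $(\mathcal{F}, \mathcal{G}) \mapsto {}^pH^0(\mathcal{F} \ast \mathcal{G})$ on $\scrP(\Hk_{\calI})$: that it descends along the quotient $\Pi^0$ to give a bifunctor $\circledast$ on $\scrP_0(\Hk_{\calI})$; that $\circledast$ is biexact; and that it assembles into a monoidal structure with unit $\IC_e$. The central step is the following tensor ideal property: for $\mathcal{F} \in \scrP_{>0}(\Hk_{\calI})$ and $\mathcal{G} \in \scrP(\Hk_{\calI})$ arbitrary, the perverse cohomology ${}^pH^i(\mathcal{F} \ast \mathcal{G}) \in \scrP_{>0}(\Hk_{\calI})$ for all $i$, and symmetrically on the right. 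By the long exact sequence of perverse cohomology, this reduces to $\mathcal{F} = \IC_w$ with $\ell(w) > 0$. Choosing a simple reflection $s$ with $sw < w$, I note that $\IC_w$ is $L^+\calP_s$-equivariant on the left, where $\calP_s$ is the minimal parahoric containing $\calI$ attached to $s$, because $\Fl_{\calI, \leq w}$ is $L^+\calP_s$-stable and the projection $\pi_s \colon \Fl_{\calI} \to \Fl_{\calP_s}$ is a smooth $\bbP^{1,\mathrm{pf}}$-bundle identifying $\IC_w$ with the pullback of the corresponding IC sheaf downstairs. Convolution preserves left $L^+\calP_s$-equivariance of the outer factor, so every ${}^pH^i(\IC_w \ast \mathcal{G})$ is $L^+\calP_s$-equivariant; its simple JH factors are therefore $\IC_{w'}$ with $sw' < w'$, which forces $\ell(w') \geq 1$ and in particular excludes every $\IC_\tau$ for $\tau \in \Omega_{\bba}$.

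The descent of the bifunctor along $\Pi^0$ is a formal consequence of the ideal property: for a short exact sequence $0 \to \mathcal{F}' \to \mathcal{F} \to \mathcal{F}'' \to 0$ with $\mathcal{F}' \in \scrP_{>0}$, the long exact sequence for $(- \ast \mathcal{G})$ forces ${}^pH^0(\mathcal{F} \ast \mathcal{G}) \to {}^pH^0(\mathcal{F}'' \ast \mathcal{G})$ to have kernel and cokernel in $\scrP_{>0}$, i.e., to become an isomorphism in $\scrP_0$. For biexactness of $\circledast$, I must additionally show that ${}^pH^i(\mathcal{F} \ast \mathcal{G}) \in \scrP_{>0}$ for all $i \neq 0$ and arbitrary perverse $\mathcal{F}, \mathcal{G}$. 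Inducting on the number of Jordan--Hölder factors of $\mathcal{F}$, the long exact sequence reduces to simple $\mathcal{F}$: the case $\mathcal{F} = \IC_w$ with $\ell(w) > 0$ is subsumed by the ideal property in all degrees, and the case $\mathcal{F} = \IC_\tau = \delta_\tau$ follows from \Cref{sec:geometry-affine-flag-2-properties-of-convolution}(3) since $\delta_\tau \ast \mathcal{G}$ is the pushforward of $\mathcal{G}$ along the automorphism of $\Fl_{\calI}$ given by left multiplication by $\tau$, which is perverse with vanishing higher ${}^pH^i$.

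The monoidal structure is then assembled as follows. The unit is $\IC_e = \delta_e$, which plainly satisfies the unit axiom for $\circledast$. Associativity is inherited from the derived convolution: starting from the equivalence $(\mathcal{F} \ast \mathcal{G}) \ast \mathcal{H} \simeq \mathcal{F} \ast (\mathcal{G} \ast \mathcal{H})$ in $\scrD_{\mathrm{cons}}(\Hk_{\calI})$, I apply ${}^pH^0$, then use the perverse truncation triangle for $\mathcal{F} \ast \mathcal{G}$ convolved with $\mathcal{H}$ to identify ${}^pH^0((\mathcal{F} \ast \mathcal{G}) \ast \mathcal{H})$ with ${}^pH^0({}^pH^0(\mathcal{F} \ast \mathcal{G}) \ast \mathcal{H})$ up to objects in $\scrP_{>0}$, and similarly on the right; this descends to the associator for $\circledast$. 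The main obstacle is the exactness step, which hinges on cleanly separating the behavior of the length-zero simples $\IC_\tau$ (invertible under $\ast$ and concentrated in perverse degree zero) from that of the positive-length simples $\IC_w$ (which, together with all their convolutions, fall entirely within the ideal $\scrP_{>0}$ in every perverse degree).
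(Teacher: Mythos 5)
Your proof is correct and follows essentially the same route as the paper's: both arguments hinge on the observation that $\IC_w$ with $\ell(w)>0$ is $L^+\calP_s$-equivariant for a simple reflection $s$ with $sw<w$, that this equivariance propagates through convolution to the perverse cohomology of $\IC_w \ast \calG$ and its Jordan--H\"older constituents, and that an equivariant $\IC_{w'}$ forces $sw'<w'$ and hence $\ell(w')\geq 1$. You are somewhat more explicit than the paper in spelling out the biexactness reduction (induction on JH factors, the two simple cases, the role of the translation $\delta_\tau$) and the assembly of the unit and associator, but these are the same steps the paper's terse proof implicitly invokes.
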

\begin{proof}
	The proof follows the idea in \cite[Proposition 6.5.14]{AR} and we sketch it here. Let $A_1=\IC_w$ for some $w\in W$ with $\ell(w)> 0$. Then there exists a simple reflection $s$ such that $\ell(sw)<\ell(w)$. Let $\calJ_s$ be the minimal parahoric containing $\calI$ associated with $s$. Then $A_1$ is $\calJ_s$-equivariant. It follows that $A_1\ast A_2$ is also $L^+\calJ_s$-equivariant for any $A_2 \in \scrP(\Hk_{\calI})$, and so are its perverse cohomology sheaves. But a $L^+\calJ_s$-equivariant perverse sheaf has equivariant composition factors, hence lies in $\scrP_{>0}(\Hk_{\calI})$. If we have instead $A_2=\mathrm{IC}_w$ for $\ell(w)>0$, then we exploit $L^+\calJ_s$-equivariance on the {\it right}, compare also with the proof of \Cref{sec:geometry-affine-flag-4-convolution-is-affine}: it descends to $\Hk_{(\calI,\calJ_s)}:= L^+\calI\backslash LG/L^+\calJ_s$, and either by arguing with $L^+\calI\backslash LG$ or by applying the inverse map of $LG$, we conclude as in the previous case that $^pH^0(A_1 \ast A_2)$ also lies in $\scrP_{>0}(\Hk_{\calI})$.
	Varying $w$, we conclude that the monoidal structure given by $^pH^0(\ast)$ on $\scrP(\Hk_{\calI})$ descends to a monoidal structure $\circledast$ on $\scrP_0(\Hk_{\calI})$. In order to check exactness of $\circledast$, we must see that for arbitrary $x,y \in W$ the perverse cohomology sheaves in non-zero degree of a convolution product $\IC_x\ast \IC_y$  lie in $\scrP_{>0}(\Hk_{\calI})$. The only remaining case to analyze is when
	both elements have length $0$, but in this case $\IC_x\ast \IC_y=\IC_{xy}$.
	%
	%
\end{proof}

We have the following important result:

\begin{lemma}\label{properties_of_Z_0_and_n_0}
	The functor $\scrZ_0:=\Pi^0\circ \scrZ\colon \mathrm{Rep}(\hat{G}) \to \scrP_0(\Hk_{\calI})$ is monoidal and central.
\end{lemma}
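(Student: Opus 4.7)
The plan is to derive both statements directly from the corresponding properties of $\scrZ$ upstairs, exploiting the fact that the quotient functor $\Pi^0$ is exact monoidal by \Cref{prop_monoid_perv_reg} and that $\scrZ$ is convolution-exact by \Cref{cor_Z_conv_exact}.

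First, for monoidality, I would argue as follows. By \Cref{prop_Z_monoidal} the functor $\scrZ\colon \mathrm{Rep}(\hat{G}) \to \scrD_{\mathrm{ula}}(\Hk_{\calI,k})$ comes equipped with coherent natural isomorphisms $\scrZ(V)\ast \scrZ(W) \xrightarrow{\sim} \scrZ(V\otimes W)$. By \Cref{thm_perversity_new} the right-hand side is perverse, and by \Cref{cor_Z_conv_exact} the left-hand side is perverse as well, so these isomorphisms live in $\scrP(\Hk_{\calI})$ and coincide with their image under ${}^pH^0$. Applying the exact monoidal quotient $\Pi^0$ then yields natural isomorphisms
\begin{equation}
  \scrZ_0(V)\circledast \scrZ_0(W)=\Pi^0\bigl({}^pH^0(\scrZ(V)\ast \scrZ(W))\bigr)\xrightarrow{\sim} \Pi^0(\scrZ(V\otimes W))=\scrZ_0(V\otimes W),
\end{equation}
and the associativity and unit constraints for $\scrZ_0$ are forced by those of $\scrZ$. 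Thus $\scrZ_0$ is monoidal in the $1$-categorical sense appropriate for the target abelian monoidal category $\scrP_0(\Hk_{\calI})$.

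For centrality, I would use \Cref{thm_assoc_center} (or the refined \Cref{thm_E2_mor}), which provides $\scrZ$ with a lift to the $\mathbb{E}_1$-center of $\scrD_{\mathrm{ula}}(\Hk_{\calI,k})$. Concretely, for every $V \in \mathrm{Rep}(\hat G)$ and every $A \in \scrD_{\mathrm{cons}}(\Hk_{\calI})$ we have a natural half-braiding isomorphism $\sigma_{V,A}\colon \scrZ(V)\ast A \xrightarrow{\sim} A\ast \scrZ(V)$ satisfying the usual hexagon compatibilities with the monoidal structures on both sides. Restricting $A$ to $\scrP(\Hk_{\calI})$ and invoking \Cref{cor_Z_conv_exact} once again, both sides are automatically perverse, so $\sigma_{V,A}$ lives in $\scrP(\Hk_{\calI})$; pushing through $\Pi^0$ yields natural isomorphisms
\begin{equation}
  \Pi^0\sigma_{V,A}\colon \scrZ_0(V)\circledast \Pi^0 A \xrightarrow{\sim} \Pi^0 A \circledast \scrZ_0(V),
\end{equation}
which assemble into a central structure on $\scrZ_0$ because $\Pi^0$ is exact and the hexagon diagrams are preserved by any exact monoidal functor.

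The only delicate point, and where I expect to spend a little care, is ensuring that the coherence data for the half-braiding survives the passage to the Serre quotient $\scrP_0(\Hk_{\calI})$: a priori, $\Pi^0$ may not distinguish enough morphisms, and we must check that $\Pi^0\sigma_{V,A}$ only depends on the isomorphism class of $A$ in $\scrP_0$. This is automatic from naturality of $\sigma_{V,-}$ in $A$ together with the fact that $\Pi^0$ is a quotient by a Serre subcategory, but one should spell out that $\Pi^0$ being essentially surjective and full at the level of isomorphism classes lets us transport the central structure. Once this bookkeeping is in place, the rest of the argument is purely formal propagation along $\Pi^0$.
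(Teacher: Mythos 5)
Your proposal is correct and follows essentially the same route as the paper's (one-sentence) proof: project the monoidality isomorphisms of \Cref{prop_Z_monoidal} and the centrality isomorphisms of \Cref{thm_assoc_center} down to $\scrP_0(\Hk_\calI)$ via the exact monoidal quotient $\Pi^0$ furnished by \Cref{prop_monoid_perv_reg}. The only added value in your write-up is the explicit check, via \Cref{thm_perversity_new} and \Cref{cor_Z_conv_exact}, that the relevant convolutions are already perverse so that the truncated convolution $\circledast$ agrees with $\ast$ on them, and the observation that naturality of the half-braiding plus fullness of the Serre quotient functor suffice to transport the coherence data; both points are implicit in the paper and you are right to make them explicit.
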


\begin{proof}
	By \Cref{prop_monoid_perv_reg}, we can construct the monoidality and centrality isomorphisms by applying those of \Cref{prop_Z_monoidal} and \Cref{thm_assoc_center} and then projecting towards $\scrP_0(\Hk_{\calI})$, compare with \cite[Lemma 6.5.15]{AR}.
\end{proof}

Note that for every $\hat{G}$-representation $V$, we have a nilpotent operator $\mathbf{n}^0_V \colon \scrZ_0(V) \to \scrZ_0(V)$ arising from the logarithm of the monodromy of $\scrZ(V)$.
Denote by $\scrP_0^c(\Hk_{\calI})$ the full subcategory of $\scrP_0(\Hk_{\calI})$ whose objects are the subquotients of $\scrZ_0(V), V\in\Rep(\hat{G})$. The exactness of $\circledast$ and monoidality of $\scrZ_0$ imply that $\scrP_0^c(\Hk_{\calI})$ is closed under the monoidal structure. By definition, the functor $\scrZ_0$ naturally factors through a functor $\scrZ_0^c:\Rep(\hat{G})\rightarrow \scrP_0^c(\Hk_{\calI})$.
\begin{proposition}\label{prop_tannakian_for_perv_I0}
	There exists a closed subgroup $H\subset \hat{G}$ such that we have
	\begin{itemize}
		\item [(1)] an equivalence of monoidal categories
		\begin{equation}
			\Phi^0:(\scrP_0^c(\Hk_{\calI}),\circledast)\simeq (\Rep(H),\otimes).
		\end{equation}
		\item[(2)] a nilpotent element $n_0\in\hat{\frakg}$ such that $H\subset Z_{\hat{G}}(n_0)$. 
		\item[(3)] an isomorphism of functors $\alpha:\Phi^0\circ\scrZ^c_0\simeq \mathrm{For}^{\hat{G}}_H$, carrying the monodromy operators $\mathbf{n}_V^0$ to the natural action of $n_0$ on $V$. 

	\end{itemize}
\end{proposition}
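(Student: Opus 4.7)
The strategy is a standard Tannakian reconstruction, carried out in four steps. The main technical point is to upgrade the monoidal structure on $\scrP_0^c(\Hk_{\calI})$ to a symmetric monoidal one and to exhibit a compatible fiber functor.

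\textbf{Step 1: Symmetric monoidal structure on $\scrP_0^c(\Hk_{\calI})$.} By \Cref{thm_E2_mor}, the central functor $\scrZ$ lifts to an $\bbE_2$-monoidal map $\scrP_C \to \scrZ(\scrC_k)$. The image of this functor lies inside the abelian $1$-category $\scrP(\Hk_{\calI})$, and $\bbE_2$-algebra structures on (nerves of) ordinary categories coincide with symmetric monoidal ones. Combined with the exactness of $\circledast$ from \Cref{prop_monoid_perv_reg} and the fact that $\scrZ_0^c$ is essentially surjective by definition of $\scrP_0^c$, the induced braiding on the image propagates uniquely to a symmetric monoidal structure on the abelian category $(\scrP_0^c(\Hk_{\calI}),\circledast)$, and the functor $\scrZ_0^c$ becomes symmetric monoidal.

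\textbf{Step 2: Construction of a fiber functor.} Define the candidate fiber functor $\omega\colon \scrP_0^c(\Hk_{\calI})\to \mathrm{Vect}_{\bar\bbQ_\ell}$ as follows. By \Cref{sec:perversity-graded-monoidal} the composition $\mathrm{Grad}\circ \scrZ \simeq \mathrm{For}^{\hat{G}}_{\hat{T}}$ as symmetric monoidal functors, so the composite $U:=\bigoplus_{\bar\nu}\mathrm{Grad}_{\bar\nu}\circ \scrZ$ coincides with the forgetful functor $\mathrm{Rep}(\hat{G})\to \mathrm{Vect}$, which is in particular exact and faithful. I would first extend the functor $\bigoplus_{\bar\nu}\mathrm{Grad}_{\bar\nu}$ from $\scrP(\mathrm{Wak})$ to the subcategory $\scrP^c(\Hk_\calI)$ generated by the essential image of $\scrZ$, and then check that it kills the Serre subcategory $\scrP_{>0}(\Hk_\calI)\cap\scrP^c(\Hk_\calI)$: on a simple constituent $\IC_w$ with $\ell(w)>0$, the Wakimoto gradings vanish because such an $\IC_w$ appears neither in the socle nor in the cosocle of $\scrZ(V)$ for length reasons, see \Cref{lem_socle_top_stand_costand} and \Cref{coro_wakimoto_graded_ct}. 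Passing to the quotient yields the sought $\omega$, together with a canonical monoidal isomorphism $\omega\circ \scrZ_0^c \simeq \mathrm{For}^{\hat{G}}$. Exactness and faithfulness of $\omega$ follow from those of $\mathrm{For}^{\hat{G}}$ combined with the essential surjectivity of $\scrZ_0^c$. Rigidity of $\scrP_0^c$ is inherited from $\mathrm{Rep}(\hat{G})$ via $\scrZ_0^c$.

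\textbf{Step 3: Tannakian reconstruction.} The symmetric monoidal abelian $\bar\bbQ_\ell$-linear category $\scrP_0^c(\Hk_\calI)$ endowed with the fiber functor $\omega$ is neutral Tannakian, so by Deligne's theorem we obtain an affine group scheme $H:=\mathrm{Aut}^\otimes(\omega)$ together with an equivalence $\Phi^0\colon \scrP_0^c(\Hk_{\calI})\simeq \mathrm{Rep}(H)$. The monoidal functor $\scrZ_0^c$ combined with the compatibility $\omega\circ \scrZ_0^c\simeq \mathrm{For}^{\hat{G}}$ induces a homomorphism of affine group schemes $H\to \hat{G}$. It is a closed immersion because $\scrZ_0^c$ is essentially surjective onto a subcategory closed under subquotients, which is the standard criterion for closed embedding via Tannakian formalism.

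\textbf{Step 4: The nilpotent element.} By \Cref{D_P_for_n_V} the logarithm of the monodromy $\mathbf{n}=\{\mathbf{n}_V\}$ is a monoidal endomorphism of $\scrZ$ by nilpotent operators, which descends to a monoidal nilpotent endomorphism $\mathbf{n}^0$ of $\scrZ_0^c$. Via the Tannakian equivalence, this corresponds to a nilpotent element $n_0\in\mathrm{Lie}(\mathrm{Aut}^\otimes(\mathrm{For}^{\hat{G}}))=\hat{\frakg}$, where we use that $\mathrm{Aut}^\otimes(\mathrm{For}^{\hat{G}})=\hat{G}$. Because $\mathbf{n}^0$ arises by pullback from a monoidal endomorphism on the target $\scrP_0^c\simeq \mathrm{Rep}(H)$, it commutes with the action of $H$, so $n_0$ is centralized by the image of $H\to \hat{G}$, i.e.\ $H\subseteq Z_{\hat{G}}(n_0)$. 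The required compatibilities in (1)--(3) follow by construction.

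The principal obstacle I anticipate is Step 2, specifically the verification that the Wakimoto grading vanishes on the intersection $\scrP_{>0}(\Hk_\calI)\cap\scrP^c(\Hk_\calI)$, so that $\omega$ is well-defined on the Serre quotient. The remaining arguments are essentially formal Tannakian manipulations once this vanishing is secured.
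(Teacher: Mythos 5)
Your proposal diverges substantially from the paper's proof, which follows Bezrukavnikov \cite{Bez04}: rather than constructing a fiber functor directly, the paper builds the bialgebra $\calO(H)$ as the quotient of the ring object $\scrZ_0(\calO(\hat{G}))$ by a maximal left ideal, checks $K=\End_{\calO(H)}(\calO(H))\simeq\bar\bbQ_\ell$, defines $\Phi_G(A)=\Hom(\calO(H),\calO(H)\circledast A)$, and only afterwards establishes commutativity of $\calA(H)$ via \cite[Lemma~3]{Bez04}. Your more direct route has two genuine gaps.

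First, Step~1 rests on the assertion that an $\bbE_2$-structure on a $1$-category coincides with a symmetric monoidal structure. That is false: $\bbE_2$-algebras in $\mathrm{Cat}_1$ are precisely \emph{braided} monoidal categories, and one needs $\bbE_3$ (or higher) to force symmetry. One could salvage this by arguing that the double braiding restricts to the identity on every subquotient of a pair of objects in the image of $\scrZ_0^c$ (using exactness of $\circledast$ and naturality), but that argument needs to be given explicitly and is not the one you wrote; it is also morally what \cite[Lemma~3]{Bez04} encodes.

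Second, and more seriously, Step~2 does not work as stated. The Wakimoto grading $\mathrm{Grad}_{\bar\nu}$ is defined only on $\mathrm{Wak}$, and the thick subcategory $\scrP^c(\Hk_\calI)$ generated by $\scrZ(\mathrm{Rep}\,\hat{G})$ under subquotients is not contained in $\mathrm{Wak}$ (e.g.\ $\IC_\tau$ for $\tau\in\Omega_\bba$ not a translation is not a Wakimoto sheaf). Even if one tries to define $\omega$ via the constant-term formula of \Cref{coro_wakimoto_graded_ct}, it does \emph{not} kill the Serre ideal $\scrP_{>0}(\Hk_\calI)$: for a finite simple reflection $s$, one computes $\calS^-_e\cap\Fl_{\calI,\le s}\cong\bbA^{1,\mathrm{pf}}_k$, whence $\mathrm{CT}_{B^-}(\IC_s)_e\simeq R\Gamma_c(\bbA^1_k,\La[1])\simeq\La[-1]\ne 0$. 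So your $\omega$ neither descends to $\scrP_0^c$ nor lands in $\mathrm{Vect}$, and the appeal to \Cref{lem_socle_top_stand_costand} does not produce the claimed vanishing (the socle/cosocle statement concerns which simples occur at the ends of $\scrZ(V)$, not vanishing of the Wakimoto grading on other simples). The paper's fiber functor $\Phi_G(A)=\Hom(\calO(H),\calO(H)\circledast A)$ avoids this problem entirely, and its exactness and faithfulness come from the maximality of the ideal $\scrI$ and invertibility of the $\Pi^0(\IC_\tau)$, not from a Wakimoto computation. Steps~3 and~4 are fine once a genuine fiber functor is in hand, but as written the proposal does not supply one.
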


\begin{remark} \label{remark_regular_quotient}
	If $G$ has enough minuscules, then $\scrP_0^c(\Hk_{\calI})=\scrP_I^0$, and $H=Z_{\hat{G}}(n_0)$. We do not need this in the proof of the main theorem and will postpone the discussion of this fact to Section10 (cf.  \ \Cref{H_equals_the_full_centralizer}).
\end{remark}

\begin{proof}
	The above proposition is the mixed characteristic analogue of a particular case of \cite[Proposition 1, Theorem 3]{Bez04} and \cite[Proposition 6.5.18]{AR}. We sketch the proof here and refer further details to \textit{loc.cit}. Note that we can regard the regular representation $\calO(\hat{G})$ of the dual group as a ring object in $\mathrm{Ind}(\Rep(\hat{G}))$. 
	Then $\scrZ_0(\calO(\hat{G}))$ is a ring object in $\mathrm{Ind}(\scrP_0^c(\Hk_{\calI}))$. Zorn's lemma implies that there exists a maximal left ideal subobject $ \scrI \subset \scrZ_0(\calO(\hat{G}))$, whose quotient will be denoted by $\calO(H)$. The centrality of $\scrZ_0$ (cf. \ \Cref{properties_of_Z_0_and_n_0}) implies that $\calO(H)$ is also a ring object. Thus, we define $\calO(H)$-$\mathrm{Mod}$ as the category of left $\calO(H)$-modules in $\mathrm{Ind}(\scrP_0^c(\Hk_{\calI}))$. Clearly $\calO(H)$ is a simple object in the abelian category $\calO(H)$-$\mathrm{Mod}$. Hence, its endomorphism ring $K:=\mathrm{End}_{\calO(H) }(\calO(H))$ is a division algebra, and $V\mapsto V\otimes K$ defines an equivalence between the category of right finite $K$-modules and the full subcategory in $\calO(H)$-$\mathrm{Mod}$ generated by $\calO(H)$ under finite direct sums and subquotients. Now, we deduce that 
	\begin{equation}\label{endo_of_ind_objects}
		K\simeq \Hom_{\mathrm{Ind}(\scrP_0^c(\Hk_{\calI}))}(\delta_0,\calO(H))\simeq \bar{\mathbb{Q}}_\ell.
	\end{equation}
	Here, the first isomorphism is obtained by restriction to $\delta_0\subset \calO(H)$, and it implies that $K$ is of countable $\bar \bbQ_{\ell}$-dimension. For the second isomorphism, we observe that every division algebra of countable dimension over $\bar{\mathbb{Q}}_{\ell}$ is isomorphic to $\bar \bbQ_\ell$.
	Now, we construct a monoidal fiber functor to invoke the Tannakian formalism.
	\begin{lemma}\label{prep_Tannakian}
		\begin{itemize}
			\item[(1)] For any $A\in \scrP_0^c(\Hk_{\calI})$, there exists a finite-dimensional vector space $V$ such that $\calO(H)\circledast A\simeq \calO(H)\otimes V$ is an isomorphism of $\calO(H)$-modules, where we endow $V$ with the trivial $\calO(H)$-action.
			\item [(2)] The functor $\Phi_G:\scrP_0^c(\Hk_{\calI})\rightarrow \mathrm{Vect}_{\La}$ defined by $A\mapsto \Hom(\calO(H),\calO(H)\circledast A)$
			is an exact, faithful, and monoidal functor. In addition, $\Phi_G\circ\scrZ^c_0\simeq \mathrm{For}^{\hat{G}}:\Rep(\hat{G})\rightarrow \mathrm{Vect}_{\bar{\mathbb{Q}}_\ell} $.
		\end{itemize}
	\end{lemma}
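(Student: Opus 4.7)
The plan is to follow the classical Tannakian fiber functor construction, in the spirit of \cite{Bez04} and \cite{AR}, exploiting centrality and monoidality of $\scrZ_0^c$ (\Cref{properties_of_Z_0_and_n_0}), exactness of $\circledast$ (\Cref{prop_monoid_perv_reg}), and the computation $\End_{\mathrm{Ind}(\scrP_0^c)}(\calO(H))\simeq \bar{\bbQ}_\ell$ in equation (\ref{endo_of_ind_objects}). For part (1), the starting point is the canonical $\hat G$-equivariant isomorphism of $\calO(\hat G)$-modules $\calO(\hat G)\otimes V\simeq \calO(\hat G)\otimes |V|$, where $|V|$ denotes the underlying vector space of $V$ with the trivial $\hat G$-action (this expresses that any representation becomes free after coaction-base change). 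Applying the monoidal functor $\scrZ_0$ and using centrality to align the two factors yields
\[
\scrZ_0(\calO(\hat G))\circledast \scrZ_0(V)\simeq \scrZ_0(\calO(\hat G))\otimes |V|,
\]
and passing to the quotient by the left ideal $\scrI$ gives $\calO(H)\circledast \scrZ_0(V)\simeq \calO(H)\otimes |V|$. For a general $A\in \scrP_0^c(\Hk_{\calI})$, written as a subquotient of some $\scrZ_0(V)$, exactness of $\circledast$ shows that $\calO(H)\circledast A$ is a subquotient of $\calO(H)\otimes |V|$. By (\ref{endo_of_ind_objects}) the object $\calO(H)$ is absolutely simple in $\calO(H)\text{-}\mathrm{Mod}$, so the full subcategory generated by $\calO(H)$ under finite direct sums and subquotients is semisimple and equivalent via $W\mapsto \calO(H)\otimes W$ to $\mathrm{Vect}_{\bar{\bbQ}_\ell}$; this forces $\calO(H)\circledast A\simeq \calO(H)\otimes W$ for a finite-dimensional $W$, establishing (1).

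For part (2), exactness of $\Phi_G$ is immediate from the exactness of $\calO(H)\circledast(-)$ together with the fact that on its image, $\Hom(\calO(H),-)$ is the inverse to the equivalence $W\mapsto \calO(H)\otimes W$ described above. For faithfulness, I would use the unit $\eta\colon \delta_0\to \calO(H)$ of the ring object $\calO(H)$, which is nonzero: were $\eta$ to factor through $\scrI$, then the left-ideal property would give $\scrI\supseteq \delta_0\circledast \scrZ_0(\calO(\hat G))=\scrZ_0(\calO(\hat G))$, contradicting maximality. Since $\delta_0$ is simple, $\eta$ is then a monomorphism, and convolving with any nonzero $A$ yields an injection $A\hookrightarrow \calO(H)\circledast A\simeq \calO(H)\otimes \Phi_G(A)$, forcing $\Phi_G(A)\neq 0$. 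Monoidality follows from the computation
\[
\calO(H)\circledast (A\circledast B)\simeq (\calO(H)\circledast A)\circledast B\simeq (\calO(H)\otimes \Phi_G(A))\circledast B\simeq \calO(H)\otimes \Phi_G(A)\otimes \Phi_G(B),
\]
using associativity, part (1) applied to $A$, centrality to commute $\calO(H)\circledast B\simeq \calO(H)\otimes \Phi_G(B)$ past the scalar factor, and finally applying $\Hom(\calO(H),-)$ combined with (\ref{endo_of_ind_objects}). The identification $\Phi_G\circ \scrZ_0^c\simeq \mathrm{For}^{\hat G}$ then drops out of (1): functorially in $V$,
\[
\Phi_G(\scrZ_0(V))=\Hom(\calO(H),\calO(H)\otimes |V|)\simeq \End_{\mathrm{Ind}(\scrP_0^c)}(\calO(H))\otimes |V|\simeq |V|.
\]

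The point requiring the most care is the semisimplicity of the subcategory $\langle \calO(H)\rangle \subset \calO(H)\text{-}\mathrm{Mod}$ and the induced equivalence with $\mathrm{Vect}_{\bar{\bbQ}_\ell}$. While this is a formal Schur-type consequence of absolute simplicity, the ambient category is a category of modules over a ring object in an ind-completion, so one must verify that the standard colimit arguments transferring from vector spaces to general $\calO(H)$-modules work in this ind-categorical setting. All the other verifications (associativity of $\circledast$, the ring structure on $\calO(H)$, the central/monoidal compatibilities of $\scrZ_0^c$) have already been recorded earlier and do not introduce additional difficulty.
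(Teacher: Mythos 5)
Your proof is correct and follows essentially the same route as the paper for part (1), the monoidality computation, and the identification $\Phi_G\circ \scrZ_0^c\simeq \mathrm{For}^{\hat G}$. The one place you genuinely diverge is the faithfulness argument. The paper establishes faithfulness by reducing to simple objects $\Pi^0(\IC_\tau)$ for $\tau\in\Omega_{\bba}$, which are invertible under $\circledast$; since $\Phi_G$ is monoidal, $\Phi_G(\Pi^0(\IC_\tau))\otimes\Phi_G(\Pi^0(\IC_{\tau^{-1}}))\simeq\La$ forces each factor to be nonzero. You instead observe that the composite $\delta_0\xrightarrow{\eta}\scrZ_0(\calO(\hat G))\to\calO(H)$ is nonzero (else the image of the ring unit would lie in the maximal left ideal $\scrI$, forcing $\scrI$ to be improper), hence a monomorphism since $\delta_0$ is simple, and then exactness of $(-)\circledast A$ gives an injection $A\hookrightarrow\calO(H)\circledast A\simeq\calO(H)\otimes\Phi_G(A)$. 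Your argument is a bit more robust: it does not rely on the explicit classification of simple objects of $\scrP_0^c(\Hk_\calI)$ or on their invertibility, and works for any simple unit object, whereas the paper's argument needs the fact that every simple in the quotient is one of the invertible $\Pi^0(\IC_\tau)$. The concern you raise at the end about semisimplicity of $\langle\calO(H)\rangle\subset\calO(H)\text{-}\mathrm{Mod}$ in the ind-setting is already addressed in the surrounding proof of \Cref{prop_tannakian_for_perv_I0} — the paper explicitly records the division-algebra argument and the equivalence with finite $K$-modules there — so it is not a genuine gap in your write-up.
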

	\begin{proof}			
		This statement is a mixed-characteristic analogue of \cite[Lemma 6.5.20]{AR}. To prove statement $(1)$ in the above lemma, we first note that there is a canonical isomorphism $\scrZ_0(\calO(\hat{G}))\circledast \scrZ_0(V)\simeq \scrZ_0(\calO(\hat{G}))\otimes_{\La}V$ of $\scrZ_0(\calO(\hat{G}))$-modules for any $V\in \Rep(\hat{G})$. Quotienting out the maximal left idea $\scrI$, we conclude that $\calO(H)\circledast \scrZ_0(V)\simeq \calO(H)\otimes V$. The general situation follows from taking subquotients from both sides and we thus settle statement $(1)$.

		The exactness of $\Phi_G$ follows from that of $\scrZ_0$ and statement $(1)$. Also, \Cref{endo_of_ind_objects} and statement $(1)$ imply that $\calO(H)\circledast A\cong \calO(H)\otimes \Phi_G(A)$ for any $A\in\scrP_0^c(\Hk_{\calI})$. Then for any $A_1,A_2\in\scrP_0^c(\Hk_{\calI})$, we have 
		\begin{align*}
			\Phi_G(A_1\circledast A_2) & \simeq \Hom(\calO(H), \calO(H)\otimes \Phi_G(A_1\circledast A_2)) \simeq \Hom(\calO(H),\calO(H)\circledast (A_1\circledast A_2))\\
			& \simeq \Hom (\calO(H), \Phi_G(A_1)\otimes (\calO(H)\circledast A_2) )\simeq\Phi_G(A_1)\otimes \Phi_G(A_2).
		\end{align*}
		Finally, it suffices to check that $\Phi_G$ sends non-zero objects to non-zero objects since it is exact. This can be checked on all simple objects $\Pi^0(\IC_\tau)$. The faithfulness then follows from the dualizability, in fact invertibility, of $\Pi^0(\IC_\tau)$ and the monoidal structure of $\Phi_G$.
	\end{proof}
	
	\Cref{prep_Tannakian} allows us to apply the Tannakian formalism and obtain an equivalence of monoidal categories
	\begin{equation}
		\Psi:\scrP_0^c(\Hk_{\calI})\simeq \mathrm{Comod}_{\calA(H)},
	\end{equation}
	where $\scrA(H)$ is a $\La$-bialgebra and $\mathrm{Comod}_{\calA(H)}$ is the category of $\La$-finite $\calA(H)$-comodules. In addition, the composition of this equivalence with the natural forgetful functor $\mathrm{Comod}_{\calA(H)}\rightarrow \mathrm{Vect}_{\La}$ equals $\Phi_G$. By the Tannakian construction, the functor $\Psi\circ \scrZ^c_0:\Rep(\hat{G})\rightarrow\mathrm{Comod}_{\calA(H)}$ induces a surjective morphism of bialgebras $\calO(G)\rightarrow \calA(H)$. It follows from \cite[Lemma 3]{Bez04} that $\calA(H)$ is commutative and $\Spec \calA(H)$ is the desired group scheme $H$.

	Now we construct $n_0$. Recall our construction of the nilpotent endomorphism $\mathbf{n}_V^0$ of $\scrZ_0(V)$ for any ${V\in\Rep(\hat{G})}$. By naturality and compatibility with the monoidal structure as in \Cref{D_P_for_n_V}, we deduce a tensor endomorphism of the functor $\Phi_G\circ\scrZ_0^c\simeq\mathrm{For}^{\hat{G}}$. 
	In particular, this gives rise to an element $n_0$ of $\hat{\frakg}$ by the Tannakian formalism. On the other hand, $\Phi_G\circ\scrZ^c_0\simeq \mathrm{For}^{H}\circ \Psi\circ\scrZ^c_0\simeq \mathrm{For}^{H}\circ\mathrm{For}^{\hat{G}}_H $, and $(\mathbf{n}_V^0)_V$ induces an automorphism of $\mathrm{For}^{\hat{G}}_H$. Hence, $H\subset Z_{\hat{G}}(n_0)$.
\end{proof}

\begin{proposition}
	If $G$ has enough minuscules, then the nilpotent element $n_0$ is regular.
\end{proposition}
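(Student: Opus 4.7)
The strategy is to compute the Jordan block pattern of $n_0$ on a judiciously chosen faithful representation of $\hat{G}$ via weight-monodromy, leveraging the purity results available for minuscule coweights in type $A$. In type $A$, the group $\hat{G}$ admits (up to central isogeny) a faithful minuscule representation $V$, for instance the standard representation of $\mathrm{GL}_n$, and a nilpotent element $n\in\hat{\mathfrak{g}}$ is regular precisely when $V$ is cyclic under $n$, i.e.\ when $n$ acts on $V$ as a single Jordan block of size $\dim V$.

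The first step is to pass to the mixed central sheaf $\scrZ^{\mathrm{mix}}(V)$. Because $V$ is minuscule, \Cref{prop_wtmon_minuscule} ensures that $\scrZ^{\mathrm{mix}}(V)$ is monodromy-pure of weight $0$: the monodromy filtration induced by $\mathbf{n}_V$ coincides with the Beilinson--Bernstein--Deligne weight filtration, and the gradeds are semisimple pure perverse sheaves. Consequently, the Jordan block structure of $\mathbf{n}_V$ is entirely encoded in the dimensions of the weight gradeds.

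The second step is to compute these dimensions. Using the mixed Wakimoto filtration from \Cref{thm_perversity_new} together with the constant-term description of its gradeds $\mathrm{Grad}_{\bar\nu}\scrZ^{\mathrm{mix}}(V) \simeq V(w_0 \bar\nu)$ from \Cref{coro_wakimoto_graded_ct}, one can pin down the class of $\scrZ^{\mathrm{mix}}(V)$ in the Grothendieck group and read off its weight spectrum. For the standard minuscule representation in type $A$, the Weyl orbit of the highest weight consists of $n$ distinct elements with one-dimensional weight spaces, linearly ordered by the semi-infinite Bruhat order $\preceq$, producing exactly the single-chain weight pattern. Weight-monodromy then forces $\mathbf{n}_V$ to act as a single Jordan block of size $n$. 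Applying $\Pi^0$ and transporting via the equivalence $\Phi^0\circ\scrZ^c_0\simeq\mathrm{For}^{\hat{G}}_H$ of \Cref{prop_tannakian_for_perv_I0} transfers this single-block conclusion to the action of $n_0$ on $V$, so $n_0$ is regular.

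The main difficulty is the second step: extracting the single-block Jordan pattern from the weight filtration in our mixed $p$-adic setting. In the equicharacteristic case this is rigidified by the loop-rotation $\bbG_m$-action (which is also used to compute the weight filtration via the center of the affine Hecke algebra), but no such action is available in mixed characteristic. Our only substitute is monodromy-purity, which is known (via Hansen--Zavyalov combined with the representability of local models in \cite{AGLR22}) only for minuscule representations; this is precisely why the argument is restricted to type $A$, where a single minuscule already suffices to detect regularity.
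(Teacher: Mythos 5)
Your proof is essentially correct and takes a genuinely, if mildly, different route from the paper's. Both hinge on the same key input: monodromy-purity for $\scrZ^{\mathrm{mix}}(V)$ when $V$ is minuscule (\Cref{prop_wtmon_minuscule}), and the fact that this purity descends through the exact functor $\Pi^0$. Where you diverge is in \emph{which} representation you use to detect regularity. The paper reduces to adjoint $G$, propagates monodromy-purity from the minuscule case to the (non-minuscule) adjoint representation $\hat{\frakg}$ by exploiting monoidality of both the weight and monodromy filtrations, and then reads off $\dim \hat{\frakg}^{n_0}=\operatorname{rk}\hat{G}$ from the parity of the weights $\langle 2\rho,\alpha\rangle$. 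You instead check regularity on a faithful minuscule representation (the standard representation of $\mathrm{SL}_n$, after passing through a central isogeny, which is harmless since regularity is a Lie-algebra condition), observing that the weight gradeds of $\Pi^0(\scrZ^{\mathrm{mix}}(V))$ are one-dimensional and occur in a single chain of $\langle 2\rho,-\rangle$-values spaced by $2$, forcing a single Jordan block. This lets you skip the monoidality/propagation step entirely, which is a small simplification. One point worth flagging: your second step leans on the Wakimoto grading as a proxy for the weight grading; in $\scrP_0$ these do coincide for your $V$ because the values $\langle 2\rho,\varepsilon_i\rangle$ are distinct, but that identification deserves to be made explicit rather than conflated under ``read off its weight spectrum.'' Modulo that, the argument is sound.
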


\begin{proof}
	Our argument is similar to the one in \cite[Section 6.5.8]{AR} and uses weight theory. Recall that in \Cref{wt_monodr_conjecture}, we posited that the mixed sheaves $\scrZ_{\mathrm{mix}}(V)$ ought to be monodromy-pure of weight $0$, as this is the case in equicharacteristic due to a theorem of Gabber whose proof was written up by Beilinson--Bernstein \cite{BB93}, compare with \cite[Theorem 5.1.2]{BB93}. By \Cref{prop_wtmon_minuscule}, we know that this holds for minuscule representations. Note that the functor $\Pi^0\colon \calP(\Hk_\calI)\to \calP_0(\Hk_{\calI})$ in (\ref{eq:2-functor-pi-0}) admits a mixed variant $\Pi^0_{\mathrm{mix}}$, namely the quotient of mixed perverse sheaves by the ones with positive dimensional support.  We claim that the images under $\Pi^0_{\mathrm{mix}}$ are monodromy-pure of weight $0$. In other words, we want to show that the weight filtration obtained on $\scrZ_0(V)$ via push-pull coincides with the monodromy filtration induced by $n_0$. It suffices to prove this when $G$ is adjoint, and then we can check that both filtrations are monoidal on $V$, see \cite[Lemma 4.1.2]{BB93}, and also respect splittings, so we can propagate the claim starting from the minuscule case by \Cref{lem_minusc_gener}.
	
	Now, we can check whether $n_0$ is regular by calculating the dimension of $\hat{\frakg}^{n_0}$. Reading off the weight filtration $\scrZ_0(\hat{\frakg})$ on the Iwahori--Hecke algebra, one sees that its $i$-th graded has dimension equal to that of the sum of the weight spaces $\hat{\frakg}(\nu)$ with $\langle 2\rho,\nu\rangle=i$. Since the weights of $\hat{\frakg}$ are roots of $\hat{G}$, its non-zero gradeds are even integers, and hence $\mathrm{dim}(\hat{\frakg}^{n_0}) =\mathrm{dim}(\hat{\frakg}(0))=\mathrm{rk}(G)$.
\end{proof}

\section{Proof of the AB equivalence}	

At this point, we consider the composition of the two functors
\begin{equation}\label{equation_AB-equivalence}
	\scrF_{\calI\calW}:=\mathrm{av}_{\calI\calW}\circ \scrF\colon \mathrm{Perf}([\hat{G}\backslash \hat{\calN}_{\mathrm{Spr}}])\to \scrD_{\mathrm{cons}}(\Hk_{\calI\calW})
\end{equation}
that we have extensively studied thus far. Our goal is to prove the Arkhipov--Bezrukavnikov equivalence below:

\begin{theorem}\label{thm_A-B}
	If $G$ has enough minuscules, then the functor $\scrF_{\calI\calW}$ is an equivalence.
\end{theorem}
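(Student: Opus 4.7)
The plan is to deduce $\scrF_{\calI\calW}$ is an equivalence by separately verifying essential surjectivity and fully faithfulness on a sufficiently large collection of compact generators of $\mathrm{Perf}([\hat{G}\backslash \hat{\calN}_{\mathrm{Spr}}])$. A natural collection is given by the objects $V\boxtimes \calO(\nu)$ for $V\in\Rep(\hat{G})$ and $\nu\in\bbX_\bullet$. Since $\scrF$ extends $\scrZ\times \scrJ$ and the Wakimoto sheaves $\scrJ_\nu$ are invertible for convolution by \Cref{lem_invertible_conv_stand}, with coherent counterpart $\calO(\nu)\otimes -$ being an autoequivalence of the Springer side, I can normalize and reduce many computations to the case $\nu=0$, i.e.\ to the image of $\scrZ_{\calI\calW}$.

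The first key input is that, in type $A$, each $\scrF_{\calI\calW}(V\boxtimes \calO(\nu)) = \scrZ_{\calI\calW}(V)\ast \scrJ_\nu$ is a tilting object of $\scrP(\Hk_{\calI\calW})$: this is \Cref{zIW_is _tilting_sum_of_minuscule} combined with the fact that convolving with an invertible Wakimoto sheaf preserves the tilting property. Since every indecomposable tilting in $\scrP(\Hk_{\calI\calW})$ (indexed by $\bbX_\bullet$ via \Cref{prop_IW_category_is_highest_weight}) can be exhibited, via multiplicity considerations (\Cref{coro_mult_support_of_Z_IW}) and Karoubi completeness of $\scrD_{\mathrm{cons}}(\Hk_{\calI\calW})$, as a direct summand of such a convolution, essential surjectivity on the heart reduces to the classification of indecomposable tilting objects. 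Since tilting objects generate $\scrD_{\mathrm{cons}}(\Hk_{\calI\calW})$ under shifts and cones, this yields full essential surjectivity.

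For full faithfulness, the goal is to show that for all $V_1, V_2\in\Rep(\hat{G})$, $\nu\in\bbX_\bullet$, and $n\in\bbZ$, the natural map
\[
\mathrm{Ext}^n_{\mathrm{Perf}([\hat{G}\backslash\hat{\calN}_{\mathrm{Spr}}])}(V_1\boxtimes\calO,\, V_2\boxtimes\calO(\nu))\longrightarrow \mathrm{Ext}^n_{\scrD_{\mathrm{cons}}(\Hk_{\calI\calW})}\!\bigl(\scrF_{\calI\calW}(V_1\boxtimes\calO),\, \scrF_{\calI\calW}(V_2\boxtimes\calO(\nu))\bigr)
\]
is a bijection. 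The right-hand side vanishes for $n>0$ whenever $\nu$ is sufficiently dominant by the tilting property, while on the left-hand side the same vanishing holds by relative Kempf vanishing for the Springer resolution; matching Euler characteristics can then be done via Grothendieck groups and the Kazhdan--Lusztig presentation of the antispherical module. The task therefore reduces to controlling the degree-zero comparison and matching it on the nose.

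The main obstacle, and the decisive use of the type $A$ hypothesis, lies in proving injectivity of this degree-zero map. The strategy will be to compose with the regular quotient functor $\Pi^0\colon \scrP(\Hk_\calI)\to \scrP_0(\Hk_\calI)$ from \Cref{prop_tannakian_for_perv_I0}, which in type $A$ identifies with $\Rep(Z_{\hat{G}}(n_0))$ for a \emph{regular} nilpotent $n_0$ (\Cref{H_equals_the_full_centralizer} and its proof via weights). Via the Tannakian isomorphism $\alpha$ and its compatibility with the monodromy endomorphism (\Cref{D_P_for_n_V,sec:high-weight-arrows-1-monodromy-is-zero-on-highest-weight-vector}), the composite $\Pi^0\circ \scrF$ pulls back to restriction along the closed immersion $\{n_0\}/Z_{\hat{G}}(n_0)\hookrightarrow [\hat{G}\backslash \hat{\calN}_{\mathrm{Spr}}]$, which identifies with the Kostant section and in particular is scheme-theoretically dense in its image under $p_{\mathrm{Spr}}$. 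This fact, together with equivariance by the torus action furnished by the Wakimoto gradings in \Cref{sec:perversity-graded-monoidal}, provides an injective factorization of Hom spaces. Injectivity of the coherent pullback to the regular locus, combined with the Euler characteristic match on tilting objects and surjectivity of the highest-weight arrows (which realise all Hom generators on the coherent side, cf.\ \Cref{sec:high-weight-arrows}), closes the argument. The delicate step to carry out carefully is the verification that the regular-centralizer description indeed controls Hom injectivity in all degrees, as this is precisely where additional tools such as the rotational $\bbG_m$-action used in \cite{AB09} would have been convenient but are unavailable in mixed characteristic.
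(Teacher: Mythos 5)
Your strategy matches the paper's in all its key ingredients: the tilting property of $\scrZ_{\calI\calW}(V)$ from \Cref{zIW_is _tilting_sum_of_minuscule} supplies the $\mathrm{Ext}$-vanishing and dimension count, the regular quotient $\scrP_0(\Hk_{\calI})$ from \Cref{prop_tannakian_for_perv_I0} supplies injectivity in degree zero, and Wakimoto invertibility reduces the comparison to $\nu=0$. The paper organizes this as follows: it proves that $\Hom(\calO,V\otimes\calO(\nu))\to\Hom(\Xi,\scrZ_{\calI\calW}(V)\ast\scrJ_\nu)$ is injective by passing through $\scrP_{\mathrm{as}}(\Hk_\calI)$ (\Cref{avIW_factos_through_pervas}) and then $\scrP_0(\Hk_\calI)$, where the map becomes $V^{Z_{\hat G}(n_0)}\to V^H$ for $H\subset Z_{\hat G}(n_0)$, then matches dimensions using tilting and Kempf vanishing, and finally deduces essential surjectivity from full faithfulness plus a direct spanning lemma for $\mathrm{av}_{\calI\calW}(\scrJ_\nu)$.

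Two points where your proposal diverges and needs some care. First, your essential-surjectivity argument asks for a classification of indecomposable tiltings as direct summands of the $\scrZ_{\calI\calW}(V)\ast\scrJ_\nu$; this requires knowing that $T_\mu$ appears for every $\mu\in\bbX_\bullet$ (not just dominant $\mu$) and that convolving a tilting on the right with a Wakimoto sheaf stays tilting. Neither is immediate: the Wakimoto convolution $(-)\ast\scrJ_\nu$ is perverse t-exact, but sending standard and costandard filtrations to standard and costandard filtrations is not something proved in the paper, and it is precisely to avoid this that the paper moves $\scrJ_{-\nu}$ to the other argument of $\mathrm{Ext}$ before invoking tilting of $\scrZ_{\calI\calW}(V)$ itself. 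The paper's spanning lemma for $\mathrm{av}_{\calI\calW}(\scrJ_\nu)$ is both simpler and avoids these issues entirely, since essential surjectivity then follows for free from full faithfulness. Second, your injectivity discussion in terms of the Kostant section is a correct reformulation of what the paper does, but the paper only needs the inclusion $H\subseteq Z_{\hat G}(n_0)$ (which gives $V^{Z_{\hat G}(n_0)}\hookrightarrow V^H$) rather than the stronger $H=Z_{\hat G}(n_0)$ of \Cref{H_equals_the_full_centralizer}; the latter is established afterwards as an application. With those two adjustments your argument closes along the same lines as the paper.
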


Thus, for any $\calG\in \scrP(\Hk_{\mathcal{IW}})$, there exits  $\calF\in \mathrm{Perf}([\hat{G}\backslash \hat{\calN}_{\mathrm{Spr}}])$ such that $\scrF_{\calI\calW}(\calF)=\calG$. One can immediately draw the following conclusion by \Cref{aviw_is_t_exact} and \Cref{avIW_factos_through_pervas}:

\begin{corollary}\label{coro_av_IWas_is_an_equiv}
	If $G$ has enough minuscules, then the functor $\mathrm{av}_{\calI\calW}^{\mathrm{as}}$ from \Cref{avIW_factos_through_pervas} is an equivalence of abelian categories.
\end{corollary}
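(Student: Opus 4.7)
The plan is to bootstrap essential surjectivity of $\mathrm{av}_{\calI\calW}^{\mathrm{as}}$ from the Arkhipov--Bezrukavnikov equivalence \ref{thm_A-B}, since full faithfulness is already available from \ref{avIW_factos_through_pervas}. Given $\calG\in \scrP(\Hk_{\calI\calW})$, the equivalence \ref{thm_A-B} provides a perfect complex $X\in \mathrm{Perf}([\hat{G}\backslash\hat{\calN}_{\mathrm{Spr}}])$ with $\scrF_{\calI\calW}(X)\simeq \calG$. Setting $A:=\scrF(X)\in \scrD_{\mathrm{cons}}(\Hk_{\calI})$, the definition $\scrF_{\calI\calW}=\mathrm{av}_{\calI\calW}\circ \scrF$ yields $\mathrm{av}_{\calI\calW}(A)\simeq \calG$.

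The key point is now to promote $A$ to a \emph{perverse} sheaf whose image is still $\calG$. By the perverse t-exactness of $\mathrm{av}_{\calI\calW}$ (\ref{aviw_is_t_exact}), we have canonical isomorphisms $\mathrm{av}_{\calI\calW}({}^pH^i(A))\simeq {}^pH^i(\mathrm{av}_{\calI\calW}(A))={}^pH^i(\calG)$. Since $\calG$ is perverse, this is zero for $i\neq 0$ and equal to $\calG$ for $i=0$. Thus $B:={}^pH^0(A)\in \scrP(\Hk_{\calI})$ satisfies $\mathrm{av}_{\calI\calW}(B)\simeq \calG$. Passing to the Serre quotient, we deduce $\mathrm{av}_{\calI\calW}^{\mathrm{as}}(\Pi^{\mathrm{as}}(B))\simeq \calG$, which establishes essential surjectivity.

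Combined with the fully faithfulness already established in \ref{avIW_factos_through_pervas}, this proves that $\mathrm{av}_{\calI\calW}^{\mathrm{as}}$ is an equivalence of abelian categories. The proof is essentially formal once \ref{thm_A-B} is available; there is no substantive obstacle, the only ingredients being t-exactness of $\mathrm{av}_{\calI\calW}$ and the fact that $\mathrm{av}_{\calI\calW}$ kills exactly the Serre subcategory $\scrP_{>0}(\Hk_{\calI})\cap\scrP_{\mathrm{as}}(\Hk_{\calI})$ generated by $\mathrm{IC}_w$ for $w$ non-minimal in its left $W_{\mathrm{fin}}$-coset, so that the factorization through $\scrP_{\mathrm{as}}(\Hk_{\calI})$ is automatic.
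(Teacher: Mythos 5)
Your argument is correct and supplies exactly the reasoning the paper leaves implicit behind the phrase ``one can immediately draw the following conclusion'': essential surjectivity follows from \Cref{thm_A-B} by taking $A=\scrF(X)$ and using the perverse t-exactness of $\mathrm{av}_{\calI\calW}$ (\Cref{aviw_is_t_exact}) to replace $A$ by $B={}^pH^0(A)\in\scrP(\Hk_\calI)$, while full faithfulness is \Cref{avIW_factos_through_pervas}. One small caveat: your final parenthetical remark describes the kernel of $\scrP(\Hk_\calI)\to\scrP_{\mathrm{as}}(\Hk_\calI)$ as $\scrP_{>0}(\Hk_{\calI})\cap\scrP_{\mathrm{as}}(\Hk_{\calI})$, which doesn't typecheck ($\scrP_{\mathrm{as}}$ is a quotient, not a subcategory, and $\scrP_{>0}$ is a different Serre subcategory from \S 8); the kernel is simply the Serre subcategory generated by $\mathrm{IC}_w$ for $w$ non-minimal in its left $W_{\mathrm{fin}}$-coset, but since the factorization is already part of the statement of \Cref{avIW_factos_through_pervas}, this remark is dispensable and does not affect the proof.
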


The strategy behind the proof of the theorem is as usual based on generators and relations. We start with the following lemma.

\begin{lemma}
	The $\infty$-category $\scrD_{\mathrm{cons}}(\Hk_{\calI\calW})$ is spanned by $\mathrm{av}_{\calI\calW}(\scrJ_\nu)$ for all $\nu \in \bbX_\bullet$ under cones and extensions.
\end{lemma}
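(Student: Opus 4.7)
The plan is to reduce the claim to showing that each standard Iwahori--Whittaker perverse sheaf $\Delta_\nu^{\calI\calW}$ for $\nu \in \bbX_\bullet$ lies in the stable subcategory $\calT \subseteq \scrD_{\mathrm{cons}}(\Hk_{\calI\calW})$ generated by the collection $\{\mathrm{av}_{\calI\calW}(\scrJ_\mu) : \mu \in \bbX_\bullet\}$ under cones. This reduction is justified by the opposite Schubert stratification of $\Fl_\calI$: any bounded constructible Iwahori--Whittaker sheaf admits a finite filtration whose graded pieces are shifts of the $\Delta_\nu^{\calI\calW}$, since each $L^+\calI^{\mathrm{op}}$-orbit $\Fl_{\calI,w_\nu}^{\mathrm{op}}$ carries a unique rank-one IW local system by \Cref{sec:iwah-whitt-aver-support-of-iwahori-whittaker-sheaves}.

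For the base case, when $\nu$ is antidominant (i.e.~$-\nu \in \bbX_\bullet^+$), the Wakimoto sheaf collapses to $\scrJ_\nu \simeq \del_{t_\nu}$, and \eqref{equation_av_IW_of_std_objects} directly yields $\mathrm{av}_{\calI\calW}(\scrJ_\nu) \simeq \Delta_\nu^{\calI\calW}\in \calT$. For arbitrary $\nu$, write $\nu = \nu_1 - \nu_2$ with $\nu_1, \nu_2 \in \bbX_\bullet^+$. Using the monoidality of $\scrJ$ from \Cref{lem_wak_basic_prop}(3) together with compatibility of $\mathrm{av}_{\calI\calW} = \Xi \ast (-)$ with right convolution by $\scrD_{\mathrm{cons}}(\Hk_\calI)$, we obtain
\[
\mathrm{av}_{\calI\calW}(\scrJ_\nu) \simeq \Xi \ast \del_{t_{-\nu_2}} \ast \nab_{t_{\nu_1}} \simeq \Delta_{-\nu_2}^{\calI\calW} \ast \nab_{t_{\nu_1}}.
\]
This complex is perverse by t-exactness of $\mathrm{av}_{\calI\calW}$ (\Cref{aviw_is_t_exact}) combined with \Cref{fn:1-convolution-of-standard-costandard-objects-perverse}. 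Mimicking the support-and-restriction analysis of \Cref{wersd;kfdl;} in the opposite setting, and using that the product $t_{-\nu_2}\cdot t_{\nu_1}=t_\nu$ lies in the coset $W_\mathrm{fin}\cdot t_\nu$ indexed by $\nu$, we identify the top stratum of the support as $\Fl_{\calI,w_\nu}^{\mathrm{op}}$ with restriction $\calL_\nu[\ell(w_\nu)]$ there. Adjunction then yields a distinguished triangle
\[
\Delta_\nu^{\calI\calW} \longrightarrow \mathrm{av}_{\calI\calW}(\scrJ_\nu) \longrightarrow \calC_\nu \xrightarrow{+1}
\]
where $\calC_\nu$ is a perverse sheaf supported strictly below $\nu$ in the quotient Bruhat order on $\bbX_\bullet\cong W_\mathrm{fin}\backslash W$. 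Running Noetherian induction on this order---with the minimal case $\nu=0$ already covered by $\mathrm{av}_{\calI\calW}(\scrJ_0)=\Xi=\Delta_0^{\calI\calW}$---we conclude that $\calC_\nu\in \calT$, whence $\Delta_\nu^{\calI\calW}\in \calT$.

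The main technical obstacle is the precise support-and-restriction computation for the convolution $\Delta_{-\nu_2}^{\calI\calW}\ast \nab_{t_{\nu_1}}$, i.e.~establishing the opposite-Iwahori-and-IW analogue of \Cref{wersd;kfdl;}. This should follow from the same Tits-system combinatorics of $(LG,L^+\calI,L^+\calI^{\mathrm{op}})$ exploited in \Cref{sec:schub-vari-conv}, together with the semi-infinite orbit analysis of \Cref{sec:semi-infinite-orbits} to control how $L^+\calI^{\mathrm{op}}$- and $L^+\calI$-orbits interact under convolution, but it does require some care since neither $\del_{t_{-\nu_2}}$ nor $\nab_{t_{\nu_1}}$ need have reduced length product.
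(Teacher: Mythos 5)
Your proof follows essentially the same strategy as the paper's: reduce to showing that each $\Delta_\nu^{\calI\calW}$ lies in the subcategory generated under cones, and prove this by induction on the (anti-spherical) Bruhat order after showing $\mathrm{av}_{\calI\calW}(\scrJ_\nu)$ is perverse, supported on $\Fl^{\mathrm{op}}_{\calI,\leq w_\nu}$, and of generic rank one. The ``support-and-restriction analysis'' you flag as the main technical obstacle is exactly what the paper resolves in one stroke via the Grothendieck-group isomorphism $\theta$ of \Cref{sec:k_0-i-equivariant-1-description-of-k-0-of-i-equivariant-sheaves}: since $\theta$ is a ring map, $[\scrJ_\nu]\mapsto t_{-\nu_2}t_{\nu_1}=t_\nu$, and applying $[\mathrm{av}_{\calI\calW}(-)]$ (determined on standards by \eqref{equation_av_IW_of_std_objects}) gives $[\mathrm{av}_{\calI\calW}(\scrJ_\nu)]=[\Delta^{\calI\calW}_\nu]$ in $K_0(\Hk_{\calI\calW})$. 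Perversity then forces the restriction to each open stratum of the support to be a local system concentrated in a single degree, so the Euler characteristic alone determines both the support bound and the generic rank; no opposite-Iwahori analogue of \Cref{wersd;kfdl;} needs to be re-proved from Tits-system combinatorics. Two minor remarks: the cone $\calC_\nu$ of your adjunction map need not be perverse (it lies a priori in ${}^p\scrD^{[-1,0]}$), but this is harmless since the generating subcategory is closed under cones; and the base cases of the Noetherian induction are all $\nu$ with $\ell(w_\nu)=0$, not just $\nu=0$, though again the induction handles them automatically since nothing sits strictly below them.
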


\begin{proof}
	As in \Cref{sec:k_0-i-equivariant-1-description-of-k-0-of-i-equivariant-sheaves,wersd;kfdl;}, we can check that $\mathrm{av}_{\calI\calW}(\scrJ_\nu)$ has the same class in the Grothendieck group as $\Delta_\nu^{\calI\calW}$. Taking its Euler characteristic, we deduce that it is supported on $\Fl_{\calI,\leq w_0w_\nu}$ and has generic rank $1$. A standard induction argument now implies the spanning assertion.
\end{proof}

\begin{lemma}
	For any $V \in \Rep_\La \hat{G}$, the map
	\begin{equation}
		\Hom(\calO_{\hat{\calN}_{\mathrm{Spr}}},V\otimes \calO_{\hat{\calN}_{\mathrm{Spr}}}) \to \Hom(\Xi, \scrZ_{\calI\calW}(V))
	\end{equation}
	induced by $\scrF_{\calI\calW}$ is injective.
\end{lemma}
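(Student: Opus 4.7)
The plan is to exploit the Wakimoto-type gradings on both sides and to verify that $\scrF_{\calI\calW}$ is compatible with them. On the coherent side, Frobenius reciprocity yields
\[
\Hom_{[\hat{G}\backslash \hat{\calN}_{\mathrm{Spr}}]}(\calO, V\otimes \calO) \;\cong\; (V \otimes \calO(\mathrm{Lie}\hat{U}))^{\hat{B}},
\]
which carries a natural $\bbX_\bullet$-grading coming from the $\hat{T}$-action on $\hat{\calN}_{\mathrm{Spr}}^{\mathrm{qaf}}$ (equivalently, pulling back to the enlarged stack $[\hat{G}'\backslash \hat{\calN}_{\mathrm{Spr}}^{\mathrm{af}}]$ of \Cref{prop_constr_AB_funct_F}). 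Every non-zero $\phi$ thus decomposes uniquely as a finite sum $\phi=\sum_\nu \phi_\nu$, with $\phi_\nu$ lifting to a morphism $\calO(-\nu)\to V\otimes \calO$ of $\hat{G}'$-equivariant sheaves. On the étale side, the Wakimoto filtration of $\scrZ(V)$ from \Cref{thm_perversity_new}, together with the t-exactness of $\mathrm{av}_{\calI\calW}$ proved in \Cref{aviw_is_t_exact}, induces a filtration on $\scrZ_{\calI\calW}(V)$ whose associated gradeds are of the form $\mathrm{av}_{\calI\calW}\bigl(\scrJ_\nu(V(w_0\nu))\bigr)$.

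The key step is to match these two gradings through $\tilde F$. By the construction of $\tilde F$ in \Cref{prop_constr_AB_funct_F} via the highest-weight arrows and the monodromy operator, the lift $\phi_\nu\colon \calO(-\nu)\to V\otimes \calO$ is sent to a morphism $\scrJ_{-\nu}\to \scrZ(V)$; after applying \Cref{lem_gr_circ_F_restr}, the projection of this morphism to the $(-\nu)$-th Wakimoto graded piece of $\scrZ(V)$ corresponds exactly to the $\hat T$-equivariant restriction of $\phi_\nu$ to the base point of $\hat{\calN}_{\mathrm{Spr}}$. In particular, if $\phi_\nu\neq 0$, then its image in the Wakimoto graded of $\scrZ(V)$ at weight $-\nu$ is non-zero.

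To conclude, choose $\nu_0$ minimal with $\phi_{\nu_0}\neq 0$ for the semi-infinite order $\preceq$ on $\bbX_\bullet$. Minimality ensures that $\scrF(\phi)$ lies in $\mathrm{Wak}_{\preceq -\nu_0}$, and the projection to $\mathrm{gr}_{-\nu_0}$ equals $\scrF(\phi_{\nu_0})$, which is non-zero by the previous paragraph. Applying $\mathrm{av}_{\calI\calW}$ and using that this functor is exact on the Wakimoto filtration, we get that $\scrF_{\calI\calW}(\phi)$ has a non-vanishing image in $\Hom(\Xi, \mathrm{av}_{\calI\calW}(\scrJ_{-\nu_0}(V(-w_0\nu_0))))$. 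That this last Hom-space detects the non-zero class follows from the description of $\mathrm{av}_{\calI\calW}(\scrJ_{-\nu_0}(-))$ as an Iwahori--Whittaker (co)standard object in the heart attached to the weight $-\nu_0$, together with the orthogonality of $\Xi=\Delta_0^{\calI\calW}$ with respect to the other pieces of the Wakimoto filtration, by an argument analogous to \Cref{equation_av_IW_of_std_objects} and \Cref{Lemma_IW_averaging_kills}.

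The main obstacle will be the precise matching of the $\bbX_\bullet$-grading on the coherent side with the Wakimoto filtration after the Iwahori--Whittaker averaging. This requires an honest bookkeeping of the signs introduced by $w_0$ in the identification $\scrZ(V)\vert_{\hat T^I}\simeq V$ (cf.\ the remark after \Cref{thm_perversity_new}), together with the verification that the highest-weight-arrow construction used to define $\tilde F$ really does send the $\nu$-graded component of $(V\otimes \calO(\mathrm{Lie}\hat{U}))^{\hat B}$ into the claimed Wakimoto piece. Once this compatibility is in place, injectivity of $\scrF_{\calI\calW}$ on $\Hom$-spaces drops out, as the kernel must have trivial support in every graded piece.
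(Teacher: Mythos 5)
Your argument contains a genuine gap at the step where you claim ``if $\phi_\nu\neq 0$, then its image in the Wakimoto graded of $\scrZ(V)$ at weight $-\nu$ is non-zero.'' Via \Cref{lem_gr_circ_F_restr}, that image is precisely the restriction of $\phi_\nu$ to the single closed point $e=[(1,0)]$ of $\hat{\calN}_{\mathrm{Spr}}$, and restriction of a section to one point is very far from injective. A nonzero morphism of $\hat G$-equivariant coherent sheaves on $\hat{\calN}_{\mathrm{Spr}}$ can easily vanish at $e$; for instance, take $\hat G=\mathrm{SL}_2$ and $V=\mathrm{ad}$. Then $\Hom_{[\hat G\backslash\hat{\calN}_{\mathrm{Spr}}]}(\calO, V\otimes\calO)\cong (V\otimes\calO(\mathrm{Lie}\,\hat U))^{\hat B}$ is the one-dimensional space spanned by $v_2\otimes x$, where $v_2$ is the highest weight vector and $x$ is the linear coordinate on $\mathrm{Lie}\,\hat U$; the restriction to $e$, i.e.\ evaluation at $x=0$, kills it. So your argument would erroneously predict the $\Hom$-map is not injective, and choosing the minimal weight in the semi-infinite order does not repair this — the single point $e$ is simply too small to detect nonvanishing, whatever the grading conventions. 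This is an actual obstruction, not merely the $w_0$-bookkeeping issue you flag at the end.

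The paper avoids the problem by restricting not to the closed point $e$ but to the \emph{open dense} regular orbit $\calO_{\mathrm{r}}\subset\hat{\calN}$, where the Springer map is an isomorphism; restriction of sections to a dense open subset of an integral variety is tautologically injective. Concretely, it first uses full faithfulness of $\mathrm{av}_{\calI\calW}^{\mathrm{as}}$ (\Cref{avIW_factos_through_pervas}) to rewrite the target $\Hom$-space inside $\scrP_{\mathrm{as}}(\Hk_\calI)$, then pushes further to the regular quotient $\scrP_0(\Hk_\calI)$ of Section 8 and invokes the Tannakian identification $\scrP_0^c(\Hk_\calI)\cong\Rep_\La(H)$ with $H\subseteq Z_{\hat G}(n_0)$ from \Cref{prop_tannakian_for_perv_I0}. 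Under these identifications, the map becomes the composite of the (injective) restriction $(V\otimes\calO(\hat{\calN}))^{\hat G}\hookrightarrow (V\otimes\calO(\calO_{\mathrm{r}}))^{\hat G}=V^{Z_{\hat G}(n_0)}$ with the tautological inclusion $V^{Z_{\hat G}(n_0)}\hookrightarrow V^H$. If you want to keep a Wakimoto-flavoured argument, you would have to carry out the analysis relative to the regular orbit rather than the origin, which amounts to reconstructing the regular-quotient machinery.
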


\begin{proof}
	Since $\mathrm{av}_{\calI\calW}^{\mathrm{as}}$ is fully faithful, it suffices to check the injectivity on the anti-spherical category $\scrP_{\mathrm{as}}(\Hk_{\calI})$. We can also further reduce to verifying injectivity after passing to the quotient $\scrP_0(\Hk_{\calI})$ defined in the previous section. Now, we use the regular orbit $\hat{G}/Z_{\hat{G}}(n_0)\simeq \calO_r \subset \hat{\calN}$, together with the compatible isomorphism $\scrP_0(\Hk_{\calI})\cong \Rep_\La (H)$ for a certain subgroup $H\subset Z_{\hat{G}}(n_0)$. In terms of these data, the homomorphism of $\Hom$-groups identifies with $ V^{Z_{\hat{G}}(n_0)} \to V^H$, which is clearly injective.
\end{proof}

We deduce our last key calculation:
\begin{corollary}
	For any $V \in \Rep_\La \hat{G}$, any $n \in \bbZ$ and $\nu \in \bbX_+$, the natural map
	\begin{equation}\label{eq_FIW_shift_hom}
		\mathrm{Ext}^n(\calO,V\otimes \calO(\nu)) \to \mathrm{Ext}^n(\Xi, \scrZ_{\calI\calW}(V)\ast \scrJ_\nu)
	\end{equation}
	is injective.
\end{corollary}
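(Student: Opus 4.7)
The map under consideration is induced by the functor $\scrF_{\calI\calW}$ of \eqref{equation_AB-equivalence}, using the identifications $\scrF(\calO) \simeq \delta_e$, $\scrF(V\otimes \calO) \simeq \scrZ(V)$, and $\scrF(\calO(\nu)) \simeq \scrJ_\nu$ coming from the construction in \Cref{prop_constr_AB_funct_F}. The plan is to split the argument into two regimes: higher $\mathrm{Ext}$ (handled via the tilting property) and the $\mathrm{Hom}$ case (handled via reduction to the previous lemma). In either case, the first step is to reformulate the map using invertibility. Since $\calO(\nu)$ is invertible in $\mathrm{Perf}([\hat{G}\backslash \hat{\calN}_{\mathrm{Spr}}])$ with inverse $\calO(-\nu)$, since $\scrJ_\nu$ is invertible in $\scrD_{\mathrm{cons}}(\Hk_{\calI})$ with inverse $\scrJ_{-\nu}$ by \Cref{lem_wak_basic_prop} (so that right convolution by $\scrJ_{-\nu}$ is an autoequivalence of the module $\scrD_{\mathrm{cons}}(\Hk_{\calI\calW})$), and since $\scrF$ is monoidal, the displayed map is naturally isomorphic to
\begin{equation*}
\mathrm{Ext}^n_{\mathrm{Perf}}(\calO(-\nu), V\otimes \calO) \longrightarrow \mathrm{Ext}^n_{\Hk_{\calI\calW}}(\Delta^{\calI\calW}_{-\nu}, \scrZ_{\calI\calW}(V)),
\end{equation*}
where I have used that $\scrJ_{-\nu} \simeq \Delta_{t_{-\nu}}$ (since $-\nu$ is antidominant for $\nu\in \bbX_+$, by \Cref{lem_wak_basic_prop}) together with the identification $\mathrm{av}_{\calI\calW}(\Delta_{t_{-\nu}}) \simeq \Delta^{\calI\calW}_{-\nu}$ coming from \eqref{equation_av_IW_of_std_objects}.

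For $n\neq 0$, the right-hand side of the reformulated map vanishes, making the injectivity statement trivial. Indeed, since $G$ is of type $A$, \Cref{zIW_is _tilting_sum_of_minuscule} asserts that $\scrZ_{\calI\calW}(V)$ is tilting, hence admits a finite filtration whose successive quotients are costandard objects $\nabla^{\calI\calW}_\mu$. The highest weight structure on $\scrP(\Hk_{\calI\calW})$ established in \Cref{prop_IW_category_is_highest_weight} yields $\mathrm{Ext}^n(\Delta^{\calI\calW}_{-\nu}, \nabla^{\calI\calW}_\mu) = 0$ for every $n\neq 0$ and every $\mu$, and a dévissage along the tilting filtration therefore gives $\mathrm{Ext}^n(\Delta^{\calI\calW}_{-\nu}, \scrZ_{\calI\calW}(V)) = 0$ in this range.

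The case $n=0$ is the main obstacle. On the target side, the tilting filtration combined with \Cref{coro_mult_support_of_Z_IW} identifies $\mathrm{Hom}(\Delta^{\calI\calW}_{-\nu}, \scrZ_{\calI\calW}(V))$ as a multiplicity space of dimension $\dim V(-\nu)$. On the source side, Kempf vanishing for the affine bundle $\pi\colon \hat{\calN}_{\mathrm{Spr}} \to \hat{G}/\hat{B}$ together with Frobenius reciprocity computes $\mathrm{Hom}_{\mathrm{Perf}}(\calO(-\nu), V\otimes \calO)$ as the space of $\hat{B}$-fixed vectors in $V \otimes \La_\nu \otimes S^\bullet(\mathrm{Lie}(\hat{U})^*)$. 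My plan is to reduce to the case $\nu = 0$ of the previous lemma by applying it to the auxiliary representation $V \otimes \nabla(\nu)$, where $\nabla(\nu) = \mathrm{Ind}^{\hat{G}}_{\hat{B}}(\La_\nu)$ is the dual Weyl module. The Borel--Weil evaluation surjection $\nabla(\nu)\otimes \calO \twoheadrightarrow \calO(\nu)$ (which holds for dominant $\nu$) yields a surjection $V \otimes \nabla(\nu) \otimes \calO \twoheadrightarrow V \otimes \calO(\nu)$ in $\mathrm{Perf}([\hat{G}\backslash \hat{\calN}_{\mathrm{Spr}}])$, and applying $\scrF_{\calI\calW}$ produces a commutative square relating the $\nu=0$ situation for the representation $V \otimes \nabla(\nu)$ to our desired situation at weight $\nu$. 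The hard part will be extracting the required injectivity from this diagram: the horizontal map on the coherent side is surjective by Kempf vanishing, but a clean diagram chase requires precise control of the kernel of the induced horizontal map on the sheaf side, which I expect to follow from the compatibility with the highest weight arrow from \S\ref{sec:high-weight-arrows} together with the multiplicity computation above.
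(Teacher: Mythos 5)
Your reformulation using invertibility of $\scrJ_{-\nu}$ and $\calO(-\nu)$ is correct, and your treatment of the case $n\neq 0$ — via the tilting property and the vanishing $\mathrm{Ext}^n(\Delta^{\calI\calW}_{-\nu},\nabla^{\calI\calW}_\mu)=0$ — is sound, though the paper gets the same conclusion more directly by observing that the \emph{source} already vanishes: Kempf-type vanishing for the Springer resolution gives $H^n(\hat{\calN}_{\mathrm{Spr}},\calO(\nu))=0$ for dominant $\nu$ and $n>0$, so $\mathrm{Ext}^n(\calO,V\otimes\calO(\nu))=0$ for $n\neq 0$ with no input from the geometry of $\Hk_{\calI\calW}$.

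The $n=0$ case is where your argument has a genuine gap, and you correctly sense it yourself. The map you choose, the Borel--Weil surjection $\nabla(\nu)\otimes\calO\twoheadrightarrow\calO(\nu)$, points the wrong way for a diagram chase. You obtain a commutative square in which the top map is surjective and the \emph{left} vertical map (the previous lemma applied to $V\otimes\nabla(\nu)$) is injective, and you want to conclude that the \emph{right} vertical map is injective; but this implication fails in general, precisely because you have no control over the injectivity of the bottom horizontal map or of the composite. The paper's proof avoids this entirely by using the opposite arrow: there is an equivariant \emph{embedding} $\calO(\nu)\hookrightarrow W\otimes\calO$ for a suitable $W\in\Rep_\La\hat{G}$ (for instance the $\hat{B}$-equivariant inclusion of the highest-weight line $\La_\nu\hookrightarrow V_\nu$ induces $\calO(\nu)\hookrightarrow V_\nu\otimes\calO$). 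Tensoring with $V$ and applying the left-exact functor $\mathrm{Hom}(\calO,-)$ gives an injection $\mathrm{Hom}(\calO,V\otimes\calO(\nu))\hookrightarrow\mathrm{Hom}(\calO,V\otimes W\otimes\calO)$, which composed with the injection from the previous lemma (applied to $V\otimes W$) lands in $\mathrm{Hom}(\Xi,\scrZ_{\calI\calW}(V\otimes W))$; since the monoidal functor $\scrF_{\calI\calW}$ makes this composite factor through the map you want to show is injective, the conclusion follows immediately. Thus the missing idea is simply to choose the embedding $\calO(\nu)\hookrightarrow W\otimes\calO$ rather than a surjection onto $\calO(\nu)$ — with the right direction of arrow, no further control on kernels, highest-weight arrows, or multiplicities is needed.
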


\begin{proof}
	The left side identifies with $(V\otimes H^n(\hat{\calN}_{\mathrm{Spr}},\calO(\nu)))^{\hat{G}}$. The higher cohomology of $\calO_{\hat{\calN}_{\mathrm{Spr}}}(\nu)$ vanishes, meaning we only need to consider the right side when $n=0$. Since there exists an equivariant embedding $\calO_{\hat{\calN}_{\mathrm{Spr}}}(\nu) \to W\otimes\calO_{\hat{\calN}_{\mathrm{Spr}}}$ for a certain $W\in \Rep_\Lambda \hat{G}$, the claim reduces to the preceding lemma.
\end{proof}

Finally, we can prove our main theorem, the AB equivalence.

\begin{proof}[Proof of \Cref{thm_A-B}]
	Applying \cite[Lemma 6.2.8.(2)]{AR} and the 5-lemma, fully faithfulness will follow from seeing that the injection \eqref{eq_FIW_shift_hom} is bijective. Since both sides are finite dimensional $\La$-vector spaces, it will be enough to check their dimensions match. Furthermore, once we know $\scrF_{\calI\calW}$ is fully faithful, we conclude it is an equivalence as its image spans the Iwahori--Whittaker category.
	
	Let us compute the dimension of the right side. After convolution on the right with $\scrJ_{-\nu}(\La)$, it vanishes if $n \neq 0$ by the tilting property of $\scrZ_{\calI\calW}(V)$ , see \Cref{thm_tilting}, and noting that $\mathrm{Ext}(\Delta^{\mathcal{IW}}_\mu,\nabla^\mathcal{IW}_\nu[i])=0$ for all $i\neq 0$ or $\mu\neq \nu$.  Otherwise, the dimension equal to that of the weight space $V(-\nu)$ by (\ref{equation_dimension_estimate_tilting_sheaves}). As for the left side, we have already checked its vanishing if $n\neq 0$ and it has otherwise dimension equal to that of the weight space $V(\nu)$, as one checks via the cohomology of $\calO(\nu)$ on the Springer resolution $\hat{\calN}_{\mathrm{Spr}}$, compare with \cite[Subsection 6.6.3]{AR}.
\end{proof}

\section{Exotic $t$-structure on the Springer resolution}
The equivalence in \Cref{thm_A-B} allows us to transport the perverse $t$-structure on $\scrD_{\mathrm{cons}}(\Hk_{\calI\calW})$ to a $t$-structure which we call the \textit{exotic t-structure} on $\mathrm{Perf}([\hat{G}\backslash \hat{\calN}_{\mathrm{Spr}}])$, at least when $G$ has enough minuscules. The exotic t-structure has been intrinsically studied in \cite{Bez06,MR16}. In this section, we discuss the exotic t-structure obtained via our object $\Fl_\calI$, and explain how it will be used to prove the assertions in \Cref{remark_regular_quotient} for groups with enough minuscules.

Recall the partial order $\leq $ on $\bbX_{\bullet}$ given by $\nu\leq \mu$ if and only if $\mu\minus \nu$ is a linear combination of positive roots. Note that the $\infty$-category $\mathrm{Perf}([\hat{G}\backslash \hat{\calN}_{\mathrm{Spr}}])$ has finite cohomological dimension by either \cite[Corollary 3.2.2]{BGS96} or \cite[Theorem 1.4.2]{DG13}, i.e., for any objects $\calA,\calB$, the vector space $\oplus_i\mathrm{Ext}^i(\calA,\calB)$ is finite-dimensional. Then \cite[Lemma 7.1.2]{AR} (see also \cite[Lemma 5]{Bez06}) implies that the line bundles $\calO(\nu)$ form an \textit{exceptional collection} indexed by $\nu \in \bbX_\bullet$ in the sense of \cite[Section 2.1.2]{Bez06} and generate $\scrD^b_{\mathrm{coh}}([\hat{G}\backslash \hat{\calN}_{\mathrm{Spr}}])$ under shifts and cones.

Choose a refinement of the Bruhat partial order $\leq$ on $\bbX_{\bullet}$ to a total order $\leq'$. Now, we can define the \textit{exotic exceptional collection} 
\begin{equation}
	\{\nabla^\mathrm{ex}_\nu\colon \nu\in \bbX_{\bullet}\}
\end{equation}
of $\scrD^b_\mathrm{coh}([\hat{G}\backslash \hat{\calN}_{\mathrm{Spr}}])$ as the collection of objects produced by \textit{mutation} of $\{\calO(\nu)\vert \nu\in\bbX_{\bullet})\}$ in the sense of \cite[Section 7.1.2]{AR}. By \cite[Proposition 3]{Bez06}, it in turn gives rise to the dual exotic exceptional collection 
\begin{equation}
	\{\Delta_\nu^\mathrm{ex}\colon \nu\in \bbX_{\bullet}\}
\end{equation}
in the sense of \textit{loc.cit}. 
Define $^{\mathrm{ex}}\scrD_\mathrm{coh}^{b,\geq 0}([\hat{G}\backslash \hat{\calN}_{\mathrm{Spr}}])$ (resp. $^{\mathrm{ex}}\scrD_\mathrm{coh}^{b,\leq 0}([\hat{G}\backslash \hat{\calN}_{\mathrm{Spr}}])$) as the full subcategory generated under extensions by objects $\nabla_\nu^\mathrm{ex}[n]$ (resp. $\Delta_\nu^\mathrm{ex}[n]$) with $\nu\in \bbZ_{\geq 0}$ and $n\in \bbZ_{\leq 0}$ (resp. $n\in \bbZ_{\geq 0}$). Then \cite[Proposition 4]{Bez06} shows that the above pair of full subcategories forms a bounded $t$-structure and we call it the {exotic t-structure}. We denote the heart of this $t$-structure by $\mathrm{ExCoh}([\hat{G}\backslash\hat{\calN}_{\mathrm{Spr}}])$.
\begin{proposition} \label{prop_F_IW_is_t_exact}
	If $G$ has enough minuscules, there are isomorphisms
	\begin{equation}
		\scrF_{\calI\calW}(\nabla_\nu^\mathrm{ex})\cong \nabla_\nu^{\calI\calW},
	\end{equation}
	\begin{equation}
		\scrF_{\calI\calW}(\Delta_\nu^\mathrm{ex})\cong \Delta_\nu^{\calI\calW},
	\end{equation}for any $\nu\in \bbX_{\bullet}$.
\end{proposition}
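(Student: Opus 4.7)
The plan is to transport the mutation construction defining $\nabla_\nu^{\mathrm{ex}}$ and $\Delta_\nu^{\mathrm{ex}}$ through the equivalence $\scrF_{\calI\calW}$ and identify the resulting objects. The starting observation is that by construction of $\scrF$ in \Cref{prop_constr_AB_funct_F}, the composition with $\nu\mapsto \calO(\nu)$ is equivalent to the Wakimoto functor $\scrJ$. Applying $\mathrm{av}_{\calI\calW}$ we therefore obtain a canonical equivalence $\scrF_{\calI\calW}(\calO(\nu))\simeq \mathrm{av}_{\calI\calW}(\scrJ_\nu)$ for every $\nu \in \bbX_\bullet$.

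Next, recall that the mutation procedure producing $\nabla_\nu^{\mathrm{ex}}$ and its dual $\Delta_\nu^{\mathrm{ex}}$ from the exceptional collection $\{\calO(\nu)\}_\nu$ (which is exceptional with respect to $\leq'$, as recalled just above the proposition) is intrinsic to the triangulated structure: it depends only on Hom-groups and distinguished triangles. Since $\scrF_{\calI\calW}$ is an equivalence by \Cref{thm_A-B}, it carries $\{\calO(\nu)\}_\nu$ to the exceptional collection $\{\mathrm{av}_{\calI\calW}(\scrJ_\nu)\}_\nu$ in $\scrD_{\mathrm{cons}}(\Hk_{\calI\calW})$ with identical Ext-groups, and hence sends mutations to the corresponding mutations. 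Consequently, $\scrF_{\calI\calW}(\nabla_\nu^{\mathrm{ex}})$ and $\scrF_{\calI\calW}(\Delta_\nu^{\mathrm{ex}})$ coincide, respectively, with the analogously mutated collection and its dual inside $\scrD_{\mathrm{cons}}(\Hk_{\calI\calW})$.

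It remains to identify these mutated objects with $\nabla_\nu^{\calI\calW}$ and $\Delta_\nu^{\calI\calW}$. By the characterization of dual exceptional collections in \cite[Proposition 3]{Bez06}, it suffices to verify the orthogonality
\begin{equation}
\Hom^\bullet\bigl(\mathrm{av}_{\calI\calW}(\scrJ_\mu),\, \nabla_\nu^{\calI\calW}\bigr) \simeq \La \cdot \delta_{\mu\nu}
\end{equation}
concentrated in degree zero (and symmetrically for the $\Delta$-side). Using the invertibility of Wakimoto sheaves under convolution (\Cref{lem_wak_basic_prop}), these Ext-groups reduce to $\Hom^\bullet(\Xi,\, \nabla_\nu^{\calI\calW} \ast \scrJ_{-\mu})$. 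Writing $\scrJ_{-\mu}$ as the convolution of a dominant costandard and an antidominant standard, and invoking the identification of $\mathrm{av}_{\calI\calW}$ on standards and costandards from \Cref{equation_av_IW_of_std_objects} together with the highest-weight orthogonality of \Cref{prop_IW_category_is_highest_weight}, yields the desired computation. The main obstacle will be the combinatorial bookkeeping required to match the refinement $\leq'$ of the natural partial order on $\bbX_\bullet$ with the Bruhat order on $W$ that governs convolution: once this compatibility is in place, the Ext vanishings reduce to standard highest-weight orthogonalities, and the $\Delta$-case follows either by the symmetric argument or by Verdier duality.
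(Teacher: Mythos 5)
Your first two paragraphs, transporting the mutation construction through the equivalence $\scrF_{\calI\calW}$, correctly match the paper's strategy. The gap is in the third paragraph. You claim it suffices to verify
\begin{equation*}
\Hom^\bullet\bigl(\mathrm{av}_{\calI\calW}(\scrJ_\mu),\, \nabla_\nu^{\calI\calW}\bigr) \simeq \La \cdot \delta_{\mu\nu}
\end{equation*}
concentrated in degree zero, citing the characterization of \emph{dual} exceptional collections. But this is both the wrong characterization and a false statement. For $\mu$ dominant one has $\scrJ_\mu = \nabla_{t_\mu}$, so by (the costandard analogue of) \Cref{aviw_is_t_exact} we get $\mathrm{av}_{\calI\calW}(\scrJ_\mu)\simeq \nabla^{\calI\calW}_\mu$, and $\Hom^\bullet(\nabla^{\calI\calW}_\mu,\nabla^{\calI\calW}_\nu)$ has no reason to vanish for $\mu\neq\nu$. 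In fact, if the two-sided orthogonality held, then by uniqueness of dual exceptional collections it would force $\mathrm{av}_{\calI\calW}(\scrJ_\mu)\cong \Delta^{\calI\calW}_\mu$ for all $\mu$, which is visibly false: these objects only share the same class in $K_0$ (as shown in the lemma preceding the proposition), not the same isomorphism class.

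What actually needs to be verified is the \emph{one-sided} orthogonality characterizing mutations (not dual collections): $\Hom^\bullet(\mathrm{av}_{\calI\calW}(\scrJ_\mu), \nabla_\nu^{\calI\calW})=0$ only for $\mu <' \nu$, together with the fact that $\nabla^{\calI\calW}_\nu$ lies in the triangulated span of $\{\mathrm{av}_{\calI\calW}(\scrJ_\mu)\colon \mu\leq' \nu\}$ and agrees with $\mathrm{av}_{\calI\calW}(\scrJ_\nu)$ on the open stratum $\Fl^{\mathrm{op}}_{\calI,w_\nu}$, the difference being supported on lower strata. These are consequences of the support/generic-rank computation of the preceding lemma and the closure relations of \Cref{sec:geometry-affine-flag-1-stratification-of-affine-flag-variety}, which is precisely what the paper's proof invokes. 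Your proposed reduction to $\Hom^\bullet(\Xi, \nabla_\nu^{\calI\calW}\ast\scrJ_{-\mu})$ followed by ``combinatorial bookkeeping'' is aiming to prove a statement that is not true, so that computation cannot close the argument as written. The handling of the $\Delta$-case by uniqueness of dual exceptional collections is fine once the $\nabla$-case is established correctly.
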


\begin{proof}
	The proof follows the idea of \cite[Proposition 7.1.5]{AR}  in the equicharacteristic situation and we sketch it here. In view of \Cref{thm_A-B}, it amounts to prove that the collection $\{\nabla^{\calI\calW}_\nu \vert \nu\in \mathbb{X}_\bullet\}$ coincide with the collection of exceptional objects that come from the mutation of $\{\mathrm{av}_{\calI\calW}(\scrJ_\nu)\colon\nu\in \bbX_\bullet\}$ with respect to the Bruhat order on $\bbX_\bullet$. This is shown by the closure relation of affine Schubert varieties proved in \Cref{sec:geometry-affine-flag-1-stratification-of-affine-flag-variety}. The second isomorphism follows from the uniqueness of the dual exceptional collection.		
\end{proof}

We have the following immediate corollary.
\begin{corollary}\label{coro_F_IW_is_t_exact}Assume $G$ has enough minuscules. Then, the following hold:
	\begin{enumerate}
		\item The functor $\scrF_{\calI\calW}$ is $t$-exact with respect to the exotic $t$-structure on $\scrD_\mathrm{coh}^b([\hat{G}\backslash \hat{\calN}_\mathrm{Spr}])$ and the perverse $t$-structure on $\scrD_{\mathrm{cons}}(\Hk_{\calI\calW})$.  
		\item In addition, the functor $\scrF_{\calI\calW}$ restricts to an equivalence of abelian categories
		\begin{equation}
			^pH^0(\scrF_{\calI\calW})\colon\mathrm{ExCoh}([\hat{G}\backslash\hat{\calN}_{\mathrm{Spr}}])\xrightarrow{\sim} \scrP(\Hk_{\calI\calW}).
		\end{equation}
	\end{enumerate}
\end{corollary}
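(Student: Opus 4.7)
The plan is to derive this corollary directly from the preceding proposition together with \Cref{thm_A-B}, using only formal properties of t-structures generated by exceptional collections.

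For part (1), recall that by definition $^{\mathrm{ex}}\scrD_\mathrm{coh}^{b,\geq 0}([\hat{G}\backslash \hat{\calN}_{\mathrm{Spr}}])$ is generated under extensions by the $\nabla^{\mathrm{ex}}_\nu[n]$ with $n\leq 0$, and $^{\mathrm{ex}}\scrD_\mathrm{coh}^{b,\leq 0}([\hat{G}\backslash \hat{\calN}_{\mathrm{Spr}}])$ is generated by the $\Delta^{\mathrm{ex}}_\nu[n]$ with $n\geq 0$. By \Cref{prop_F_IW_is_t_exact}, the functor $\scrF_{\calI\calW}$ sends these generators to $\nabla^{\calI\calW}_\nu[n]$ and $\Delta^{\calI\calW}_\nu[n]$ respectively. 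Since both $\nabla^{\calI\calW}_\nu$ and $\Delta^{\calI\calW}_\nu$ are perverse (the relevant inclusions $j^{\mathrm{op}}_{w_\nu}$ being affine, so standards and costandards are perverse), they lie in the heart of the perverse t-structure on $\scrD_{\mathrm{cons}}(\Hk_{\calI\calW})$. As $\scrF_{\calI\calW}$ is an exact functor between stable $\infty$-categories, and the two halves of the perverse t-structure on the target are closed under extensions and non-positive (resp.~non-negative) shifts, exactness follows immediately.

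For part (2), \Cref{thm_A-B} gives that $\scrF_{\calI\calW}$ is an equivalence of stable $\infty$-categories. Combined with t-exactness from part (1), this means $\scrF_{\calI\calW}$ restricts to an equivalence of the hearts
\begin{equation}
  {}^pH^0(\scrF_{\calI\calW})\colon \mathrm{ExCoh}([\hat{G}\backslash\hat{\calN}_{\mathrm{Spr}}])\xrightarrow{\sim} \scrP(\Hk_{\calI\calW}),
\end{equation}
since a t-exact equivalence of stable $\infty$-categories induces an equivalence on hearts.

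There is no real obstacle here: the content of the corollary is entirely transported from \Cref{prop_F_IW_is_t_exact} and \Cref{thm_A-B}. The only point to be careful about is checking that the generating prescription of the exotic t-structure really does reduce the verification to the two classes of generators, but this is immediate from the definition (and standard for t-structures arising from exceptional collections, cf.~\cite{Bez06}). Hence the proof is essentially a one-line consequence of the preceding results.
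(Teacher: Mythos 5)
Your argument is correct and is exactly the one the paper has in mind: the paper merely labels this an "immediate corollary," and your spelling out that \Cref{prop_F_IW_is_t_exact} carries exotic (co)standard generators to perverse (co)standard generators, plus the closure properties of the two halves of the t-structures under extensions and the appropriate shifts, is precisely how that immediacy is justified. Part (2) then follows formally from \Cref{thm_A-B} since a t-exact equivalence of stable $\infty$-categories induces an equivalence of hearts.
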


We have already seen in Section6 that the simple objects of $\scrP(\Hk_{\calI\calW})$ denoted $\mathrm{IC}^{\calI\calW}_\nu$ are in bijection with $\bbX_{\bullet}$. On the coherent side, the space $\mathrm{Hom}(\Delta_\nu^\mathrm{ex},\nabla_\nu^\mathrm{ex})$ is one-dimensional and the image of $\Delta_\nu^\mathrm{ex}$ under any non-zero map is a simple object in $\mathrm{ExCoh}([\hat{G} \backslash\hat{\calN}_{\mathrm{Spr}}])$. We denote this simple object by $L_\nu^\mathrm{ex}$. 
\begin{lemma} \label{lemma_props_of_std_cstd_simple_ex_sheaves}
	\begin{itemize}
		\item[(1)] The realization functor defined in \cite[Section 3.1]{BBDG18}
		$$
		\scrD^b(\mathrm{ExCoh}([\hat{G}\backslash\hat{\calN}_\mathrm{Spr}]))\rightarrow \mathrm{Perf}([\hat{G}\backslash\hat{\calN}_\mathrm{Spr}])
		$$
		is an equivalence of $\infty$-categories. 
		\item [(2)] For any $\nu\in \bbX_\bullet^+$, there are isomorphisms
		\begin{equation}
			\nabla_\nu^\mathrm{ex}\simeq \calO(\nu),
		\end{equation}
		\begin{equation}
			\Delta_{-\nu}^\mathrm{ex}\simeq \calO(-\nu)
		\end{equation}  
		\item[(3)] For any $\nu\in \bbX_\bullet$, there are isomorphisms
		\begin{equation}
			\nabla_\nu^\mathrm{ex}|_{\tilde{\calO}_{\mathrm{r}}}\simeq \calO(\nu^+)|_{\tilde{\calO}_{\mathrm{r}}},
		\end{equation}
		\begin{equation}
			\Delta_\nu^\mathrm{ex}|_{\tilde{\calO}_{\mathrm{r}}}\simeq \calO(\nu^-)|_{\tilde{\calO}_{\mathrm{r}}},
		\end{equation} where $\tilde{\calO}_{\mathrm{r}} \subset \hat{\calN}_{\mathrm{Spr}}$ is the isomorphic preimage of the regular orbit $\calO_\mathrm{r}\subset \hat{N}$ under the Springer map, and $\nu^+$ (resp.~$\nu^-$) is the dominant (resp.~anti-dominant) $W_\mathrm{fin}$-conjugate of $\nu$.
		\item[(4)] Assume $G$ has enough minuscules. Then, for any $\nu\in \bbX_\bullet$, there is an isomorphism
		\begin{equation}
			\scrF_{\calI\calW}(L_\nu^\mathrm{ex})\cong \mathrm{IC}_\nu^{\calI\calW}.
		\end{equation}
	\end{itemize}
\end{lemma}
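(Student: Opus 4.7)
The plan is to handle the four parts of the lemma in succession, leveraging both the formal machinery of exceptional collections and the geometry of the Springer resolution.

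For (1), the starting point is that $\{\calO(\nu)\}_{\nu \in \bbX_\bullet}$ generates $\mathrm{Perf}([\hat{G}\backslash \hat{\calN}_{\mathrm{Spr}}])$ under shifts and cones, as recalled just before the lemma. Since mutation is invertible, the same holds for the exotic exceptional collection $\{\nabla_\nu^{\mathrm{ex}}\}$. Combined with the finite cohomological dimension of $\mathrm{Perf}([\hat{G}\backslash \hat{\calN}_{\mathrm{Spr}}])$ and the existence of a bounded t-structure whose heart is $\mathrm{ExCoh}$, a standard application of Beilinson's realization theorem (in the $\infty$-categorical formulation, cf.\ the discussion in \cite[\S 7.1.3]{AR}) gives the desired equivalence; the key verification is that $\mathrm{Ext}$-groups computed in $\scrD^b(\mathrm{ExCoh})$ agree with those in $\mathrm{Perf}([\hat{G}\backslash \hat{\calN}_{\mathrm{Spr}}])$, which follows from the exceptional collection property of the $\nabla_\nu^{\mathrm{ex}}$.

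For (2), we observe that the mutation defining $\nabla_\nu^{\mathrm{ex}}$ from $\calO(\nu)$ is trivial precisely when $\mathrm{Ext}^\bullet_{[\hat{G}\backslash \hat{\calN}_{\mathrm{Spr}}]}(\calO(\nu), \calO(\mu)) = 0$ for all $\mu <' \nu$ with $\mu \ne \nu$. These Ext-groups are computed as $\hat{G}$-invariants of $R\Gamma(\hat{\calN}_{\mathrm{Spr}}, \calO(\mu - \nu))$; when $\nu$ is dominant and $\mu$ is smaller in the refined order, Kempf/Andersen--Jantzen type vanishing for line bundles on the cotangent bundle of the flag variety (see \cite[Lemma 6.6.1]{AR}) forces these spaces to vanish, giving $\nabla_\nu^{\mathrm{ex}} \cong \calO(\nu)$. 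The anti-dominant case and the statement for $\Delta_{-\nu}^{\mathrm{ex}}$ follow dually by reversing arrows. For (3), the Springer map restricts to an isomorphism $\tilde{\calO}_{\mathrm{r}} \xrightarrow{\sim} \calO_{\mathrm{r}}$ since each regular nilpotent lies in a unique Borel. We apply the restriction functor $(-)|_{\tilde{\calO}_{\mathrm{r}}}$ to the inductive construction of $\nabla_\nu^{\mathrm{ex}}$ via mutations and note that on the open regular stratum, mutations using $\calO(w\nu)$ for $w \in W_{\mathrm{fin}}$ collapse: the relevant Ext-groups between $\calO(w_1 \nu)$ and $\calO(w_2\nu)|_{\tilde{\calO}_{\mathrm{r}}}$ that enter the mutation process are controlled by the structure of $Z_{\hat{G}}(n_0)$ and the total ordering $\leq'$ lifting the Bruhat order. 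Combining with part (2) applied on the dense open locus identifies $\nabla_\nu^{\mathrm{ex}}|_{\tilde{\calO}_{\mathrm{r}}}$ with $\calO(\nu^+)|_{\tilde{\calO}_{\mathrm{r}}}$, and similarly for $\Delta_\nu^{\mathrm{ex}}$.

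For (4), by \Cref{coro_F_IW_is_t_exact}, the functor $\scrF_{\calI\calW}$ induces an equivalence of abelian categories $\mathrm{ExCoh}([\hat{G}\backslash \hat{\calN}_{\mathrm{Spr}}]) \xrightarrow{\sim} \scrP(\Hk_{\calI\calW})$, and by \Cref{prop_F_IW_is_t_exact} it sends $\Delta_\nu^{\mathrm{ex}}$ to $\Delta_\nu^{\calI\calW}$ and $\nabla_\nu^{\mathrm{ex}}$ to $\nabla_\nu^{\calI\calW}$. Since $L_\nu^{\mathrm{ex}}$ is defined as the image of the essentially unique non-zero map $\Delta_\nu^{\mathrm{ex}} \to \nabla_\nu^{\mathrm{ex}}$ and $\mathrm{IC}_\nu^{\calI\calW}$ is defined analogously as the image of $\Delta_\nu^{\calI\calW} \to \nabla_\nu^{\calI\calW}$, applying the exact functor $\scrF_{\calI\calW}$ to the defining short exact sequences yields the identification $\scrF_{\calI\calW}(L_\nu^{\mathrm{ex}}) \cong \mathrm{IC}_\nu^{\calI\calW}$. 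The main technical obstacle in the plan is (3), since controlling the mutation procedure on an open substack requires a careful bookkeeping of which Ext-contributions are supported on the complement; all other parts are essentially formal once the right vanishing theorems are invoked.
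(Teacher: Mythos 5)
Your approach to parts (1)--(3) is genuinely different from the paper's: you argue intrinsically on the coherent side via exceptional collections, mutation, and cohomology vanishing, whereas the paper (for type~$A$ groups) transports everything through the equivalence $\scrF_{\calI\calW}$ of \Cref{thm_A-B} and the corresponding facts about (co)standard Iwahori--Whittaker sheaves, citing \cite{AR} for the general statements. Your route is actually closer to what the cited references \cite{Bez06,AR} do and has the advantage of not requiring the type~$A$ hypothesis (which the lemma does not assume for items (1)--(3)). Your treatment of (1) and (2) is essentially sound, and your treatment of (4) coincides with the paper's: apply \Cref{prop_F_IW_is_t_exact} and \Cref{coro_F_IW_is_t_exact} to the defining short exact sequences.

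However, your sketch for (3) has a genuine gap, which you yourself flag. The statement ``mutations collapse on the open regular stratum'' is not a mechanism: mutation is a global operation built out of $\Hom$-complexes computed on the whole stack, and restriction to an open substack does not commute with it. What actually makes the argument work in \cite[Lemma~7.2.2, Proposition~7.1.4]{AR} is a wall-crossing short exact sequence relating $\nabla_\nu^{\mathrm{ex}}$ and $\nabla_{s\nu}^{\mathrm{ex}}$ for each finite simple reflection $s$, whose ``error term'' is supported away from the regular locus. Restricting that sequence to $\tilde{\calO}_{\mathrm{r}}$ kills the error term and gives $\nabla_\nu^{\mathrm{ex}}|_{\tilde{\calO}_{\mathrm{r}}} \simeq \nabla_{s\nu}^{\mathrm{ex}}|_{\tilde{\calO}_{\mathrm{r}}}$; induction on the length of the minimal $w\in W_{\mathrm{fin}}$ with $\nu = w\nu^+$, combined with your part (2), then finishes the argument. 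This inductive use of the wall-crossing triangles is precisely what the paper invokes (``a standard induction argument \ldots using $(2)$ and \cite[Proposition~7.1.4]{AR}''), and it is the missing ingredient in your plan. Without producing those exact triangles and verifying that their cones die on the regular orbit, the claim that ``the mutation collapses'' has no content.
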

\begin{proof}
	The first three properties appear in \cite[Corollary 7.1.6, Lemmas 7.2.1 and 7.2.2]{AR}. We will explain how they follow from \Cref{prop_F_IW_is_t_exact} for groups with enough minuscules and prove the last claim. It is well-known that $\scrD^b(\scrP(\Hk_{\calI\calW}))\cong \scrD_{\mathrm{cons}}(\Hk_{\calI\calW})$. Assertion  $(1)$ then follows from the equivalence (\ref{equation_AB-equivalence}).   The first isomorphism in assertion $(2)$ can be easily deduced from (\ref{equation_av_IW_of_std_objects}) and \Cref{prop_F_IW_is_t_exact}, and the second isomorphism follows analogously. The statement $(3)$ follows from a standard induction argument on the length of the minimal element
	$w \in W_\mathrm{fin}$ such that $\nu=w\nu^+$(resp. $\nu=w\nu^{\minus}$) using $(2)$ and \cite[Proposition 7.1.4]{AR}.
	Assertion $(4)$ follows directly from \Cref{prop_F_IW_is_t_exact} and \Cref{coro_F_IW_is_t_exact}.
\end{proof}

\begin{corollary} \label{Corollary_excoh_restricts_to_equiv_vbs}
	For any $A\in \mathrm{ExCoh}(\hat{\calN}_{\mathrm{Spr}})$, $A\vert_{\calO_\mathrm{r}}$ is a $\hat{G}$-equivariant vector bundle on $\calO_\mathrm{r}$.
\end{corollary}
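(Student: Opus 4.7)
The plan is to proceed in two main steps, each largely formal given the results established in Section~10. First, I would observe that $\calO_{\mathrm{r}}$ is a single conjugation orbit of $\hat{G}$ inside $\hat{\calN}$, and hence $\calO_{\mathrm{r}}\simeq \hat{G}/Z_{\hat{G}}(n_0)$ as $\hat{G}$-varieties. Consequently, every $\hat{G}$-equivariant coherent sheaf on $\calO_{\mathrm{r}}$ is automatically a vector bundle, corresponding to a finite-dimensional representation of the stabilizer. Since $p_{\mathrm{Spr}}$ is an isomorphism over $\calO_{\mathrm{r}}$, the restriction of $A$ to $\tilde{\calO}_{\mathrm{r}}$ defines a well-posed object of $\mathrm{Perf}([\hat{G}\backslash \calO_{\mathrm{r}}])$, and it suffices to show that this restriction lies in the standard heart.

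For the second step, let $i\colon \tilde{\calO}_{\mathrm{r}} \hookrightarrow \hat{\calN}_{\mathrm{Spr}}$ denote the open immersion. Then $i^*$ is a t-exact triangulated functor for the standard t-structures on both sides, and the standard subcategories $\scrD^{\leq 0}$ and $\scrD^{\geq 0}$ on $[\hat{G}\backslash \calO_{\mathrm{r}}]$ are closed under extensions. By the definition of the exotic t-structure recalled in Section~10, $^{\mathrm{ex}}\scrD^{b,\leq 0}_{\mathrm{coh}}$ (resp.\ $^{\mathrm{ex}}\scrD^{b,\geq 0}_{\mathrm{coh}}$) is generated under extensions by the shifted objects $\Delta_\nu^{\mathrm{ex}}[n]$ for $n\geq 0$ (resp.\ $\nabla_\nu^{\mathrm{ex}}[n]$ for $n\leq 0$). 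It therefore suffices to verify that $i^*$ sends these generators into $\scrD^{\leq 0}$ (resp.\ $\scrD^{\geq 0}$). But \Cref{lemma_props_of_std_cstd_simple_ex_sheaves}(3) gives $i^*\Delta_\nu^{\mathrm{ex}}[n]\simeq \calO(\nu^-)|_{\calO_{\mathrm{r}}}[n]$, concentrated in standard degree $-n\leq 0$, and dually $i^*\nabla_\nu^{\mathrm{ex}}[n]\simeq \calO(\nu^+)|_{\calO_{\mathrm{r}}}[n]\in \scrD^{\geq 0}$ whenever $n\leq 0$. Intersecting the two containments and invoking the first step yields the claim.

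The argument is essentially formal, and no serious obstacle is anticipated: everything reduces to combining the explicit description of the $\Delta^{\mathrm{ex}}$ and $\nabla^{\mathrm{ex}}$ on the regular orbit from \Cref{lemma_props_of_std_cstd_simple_ex_sheaves}(3) with the extension-generation definition of the exotic t-structure, plus the t-exactness of the open pullback $i^*$. The only point worth double-checking is the transitivity of the $\hat{G}$-action on $\calO_{\mathrm{r}}$ and the resulting equivalence of $\hat{G}$-equivariant coherent sheaves with representations of the stabilizer, which is standard.
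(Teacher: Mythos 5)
Your proof is correct and fills in precisely the details that the paper's one-line proof leaves implicit: combining the identification $\calO_{\mathrm{r}}\simeq \hat{G}/Z_{\hat{G}}(n_0)$ with \Cref{lemma_props_of_std_cstd_simple_ex_sheaves}(3) and the t-exactness of the open restriction $i^*$, using that the exotic $\scrD^{\leq 0}$ (resp.~$\scrD^{\geq 0}$) is generated under extensions by the $\Delta_\nu^{\mathrm{ex}}[n]$, $n\geq 0$ (resp.~$\nabla_\nu^{\mathrm{ex}}[n]$, $n\leq 0$). This is exactly the argument the paper intends, only spelled out.
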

\begin{proof}
	The result follows from $\hat{G}/Z_{\hat{G}}(n_0)\simeq {\calO}_\mathrm{r}$ and  \Cref{lemma_props_of_std_cstd_simple_ex_sheaves}.  
\end{proof}
\begin{proposition}\label{prop_rest_of_simple_ex_sheaves}
	For $\nu\in \bbX_\bullet$, we have
	\begin{equation}   L_\nu^\mathrm{ex}\vert_{\tilde{\calO}_\mathrm{r}}\simeq \left \{ \begin{aligned}  & \calO & \mathrm{ if }\,\ell(w_\nu)=0 \\
			& 0 & \mathrm{otherwise}
		\end{aligned}\right.
	\end{equation}
\end{proposition}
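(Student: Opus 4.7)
The plan is to transfer the statement across the Arkhipov--Bezrukavnikov equivalence of \Cref{thm_A-B} to the Iwahori--Whittaker side, where it becomes a question about when $\mathrm{IC}_\nu^{\calI\calW}$ survives a ``regular quotient'' compatible with the Tannakian formalism of \Cref{prop_tannakian_for_perv_I0}. Concretely, I would identify the restriction functor $(-)|_{\calO_{\mathrm{r}}} \colon \mathrm{ExCoh}([\hat{G}\backslash \hat{\calN}_{\mathrm{Spr}}]) \to \mathrm{Coh}^{\hat{G}}(\calO_{\mathrm{r}})$ with the composition of $\scrF_{\calI\calW}^{-1}$ and the Iwahori--Whittaker analogue of the regular quotient $\Pi^0_{\calI\calW}$ that kills those IC sheaves whose support has positive dimension. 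The commutativity of the corresponding square should follow by tracing through the construction of $\scrF$ in \Cref{prop_constr_AB_funct_F}, which is built on the same highest weight arrows and monodromy operators used in the Tannakian identification of Section~9, together with the fact that restriction to the open stratum $\tilde{\calO}_{\mathrm{r}} \subset \hat{\calN}_{\mathrm{Spr}}$ factors $\mathrm{Perf}$ through the Verdier quotient by objects supported on the non-regular locus (\Cref{prop_thomason_quotient}).

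With this bridge in place, the case $\ell(w_\nu) > 0$ is immediate: by the support analysis in the proof of \Cref{prop_IW_category_is_highest_weight}, $\mathrm{IC}_\nu^{\calI\calW}$ has support of positive dimension, hence is annihilated by $\Pi^0_{\calI\calW}$, so $L_\nu^{\mathrm{ex}}|_{\calO_{\mathrm{r}}}$ vanishes. For the case $\ell(w_\nu) = 0$, the element $w_\nu$ lies in $\Omega_{\bba}$ and the IC sheaf $\mathrm{IC}_\nu^{\calI\calW}$ is concentrated on a zero-dimensional opposite Schubert cell, so $\Delta_\nu^{\calI\calW} = \nabla_\nu^{\calI\calW} = \mathrm{IC}_\nu^{\calI\calW}$ and correspondingly $L_\nu^{\mathrm{ex}} = \nabla_\nu^{\mathrm{ex}} = \Delta_\nu^{\mathrm{ex}}$ via $\scrF_{\calI\calW}$. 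Using \Cref{Corollary_excoh_restricts_to_equiv_vbs}, the restriction is a rank-one $\hat{G}$-equivariant bundle; to identify it with the structure sheaf $\calO$, I would exploit the base case $\nu = 0$, where $L_0^{\mathrm{ex}} = \calO$ visibly restricts to $\calO|_{\calO_{\mathrm{r}}} = \calO$, and propagate this via monoidality (\Cref{gr-convol}, \Cref{prop_grad_monoidal}) together with the fact that the subcollection $\{L_\nu^{\mathrm{ex}} : \ell(w_\nu) = 0\}$ consists of invertible objects stable under the tensor product.

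The main obstacle will be the precise formulation of the correspondence between $(-)|_{\calO_{\mathrm{r}}}$ and $\Pi^0_{\calI\calW}\circ \scrF_{\calI\calW}^{-1}$. It requires a compatibility statement between the Tannakian equivalence $\scrP_0^c(\Hk_\calI) \simeq \mathrm{Rep}(Z_{\hat{G}}(n_0))$ from \Cref{prop_tannakian_for_perv_I0}, the Iwahori--Whittaker averaging functor, and the highest weight arrows of \ref{sec:high-weight-arrows} used to build $\scrF$. Once this compatibility is established, the remainder of the argument is essentially bookkeeping with \Cref{lemma_props_of_std_cstd_simple_ex_sheaves}(3), since that result pins down the restrictions of the costandard and standard exotic sheaves and the simples are squeezed between them via the natural map $\Delta_\nu^{\mathrm{ex}} \to \nabla_\nu^{\mathrm{ex}}$.
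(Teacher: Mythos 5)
Your proposed route has two genuine problems, one conceptual and one structural.

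The conceptual error is in your characterization of the Iwahori--Whittaker ``regular quotient'' $\Pi^0_{\calI\calW}$. You propose to kill those $\mathrm{IC}_\nu^{\calI\calW}$ with positive-dimensional support and conclude that $L_\nu^{\mathrm{ex}}|_{\calO_\mathrm{r}}=0$ for $\ell(w_\nu)>0$ from that. But this criterion does not distinguish the two cases: the proof of \Cref{prop_IW_category_is_highest_weight} shows that the opposite Schubert variety $\Fl^{\mathrm{op}}_{\calI,\le w_\nu}$ coincides with $\Fl_{\calI,\le w_0w_\nu}$, which has dimension $\ell(w_0)+\ell(w_\nu)$. In particular $\Xi=\mathrm{IC}_0^{\calI\calW}$, supported on $G/B$, also has positive-dimensional support, so your $\Pi^0_{\calI\calW}$ would wrongly kill the case $\ell(w_\nu)=0$ as well. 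The correct Iwahori--Whittaker image of the regular quotient, transported along $\mathrm{av}^{\mathrm{as}}_{\calI\calW}$ from \Cref{avIW_factos_through_pervas}, kills precisely the $\mathrm{IC}_\nu^{\calI\calW}$ with $\ell(w_\nu)>0$, and the bookkeeping is in terms of $\ell(w_\nu)$, not the dimension of the support on $\Fl_\calI$.

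The structural problem is circularity: the ``bridge'' you flag as the main obstacle — identifying $(-)|_{\calO_\mathrm{r}}$ with $\Pi^0\circ\scrF_{\calI\calW}^{-1}$ — is in substance \Cref{prop_equivalence_F^r}, which appears later in the paper. Its proof relies on \Cref{lemma_coh_cat_generated_by_ex_IC}, and that lemma's proof in type $A$ is itself deduced from \Cref{Corollary_excoh_restricts_to_equiv_vbs} together with the very proposition you are trying to prove. So invoking that compatibility here would be circular; it is one of the downstream consequences, not an available input.

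The paper's argument sidesteps both issues. For $\ell(w_\nu)=0$ one has $\nabla^{\calI\calW}_\nu\cong\mathrm{IC}^{\calI\calW}_\nu$, hence $L_\nu^{\mathrm{ex}}=\nabla_\nu^{\mathrm{ex}}$ and \Cref{lemma_props_of_std_cstd_simple_ex_sheaves}(3) gives the restriction directly. For $\ell(w_\nu)>0$, one takes the unique $\mu$ with $\ell(w_\mu)=0$ in the same connected component and observes, via \Cref{Lemma_IW_averaging_kills} and \Cref{aviw_is_t_exact}, that $\mathrm{IC}^{\calI\calW}_\mu$ is a composition factor of $\nabla^{\calI\calW}_\nu$; translating through $\scrF_{\calI\calW}$, $L_\mu^{\mathrm{ex}}$ is a composition factor of $\nabla_\nu^{\mathrm{ex}}$. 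Since both $L_\mu^{\mathrm{ex}}$ and $\nabla_\nu^{\mathrm{ex}}$ restrict to line bundles on $\tilde\calO_\mathrm{r}$ by \Cref{Corollary_excoh_restricts_to_equiv_vbs} and \Cref{lemma_props_of_std_cstd_simple_ex_sheaves}(3), and $L_\mu^{\mathrm{ex}}$ already accounts for the full rank, every other composition factor — in particular $L_\nu^{\mathrm{ex}}$ — must restrict to zero. This is a self-contained composition-factor count with \Cref{lem_socle_top_stand_costand}, and does not presuppose any compatibility with the Tannakian formalism of \Cref{prop_tannakian_for_perv_I0}. Finally, your proposed ``propagation via monoidality'' in the $\ell(w_\nu)=0$ case is also incomplete on its own terms: knowing $L_0^{\mathrm{ex}}|_{\calO_\mathrm{r}}=\calO$ and that the length-zero simples form a group under tensor shows only that $\nu\mapsto L_\nu^{\mathrm{ex}}|_{\calO_\mathrm{r}}$ is a group homomorphism from $\Omega_{\bba}$ to $\hat G$-equivariant line bundles on $\calO_\mathrm{r}$; one still needs an argument that this homomorphism is trivial, which is precisely what \Cref{lemma_props_of_std_cstd_simple_ex_sheaves}(3) is used for.
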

\begin{proof}
	This is \cite[Proposition 7.2.4]{AR} and we sketch it here for groups with enough minuscules. If $\ell(w_\nu)=0$, then $\nabla_\nu^{\calI\calW}\cong \mathrm{IC}_\nu^{\calI\calW}$, and we conclude the proof by \Cref{lemma_props_of_std_cstd_simple_ex_sheaves}. In general, there exists a unique $\mu\in\bbX_\bullet$ such that $\ell(w_\mu)=0$ and $\Fl_{\calI,w_\nu}$ and $\Fl_{\calI,w_\mu}$ belong to the same connected component. Then the proof of \Cref{Lemma_IW_averaging_kills} and \Cref{aviw_is_t_exact} imply that $\mathrm{IC}_{w_\mu}^{\calI\calW}$ is a composition factor of $\nabla_\mu^{\calI\calW}$. Then \Cref{lemma_props_of_std_cstd_simple_ex_sheaves} yields that $L_\mu^\mathrm{ex}$ is a composition factor of $\nabla_\nu^\mathrm{ex}$. Combining \Cref{lem_socle_top_stand_costand}, \Cref{lemma_props_of_std_cstd_simple_ex_sheaves}, \Cref{Corollary_excoh_restricts_to_equiv_vbs} and the previous discussion, both $L_\mu^\mathrm{ex}$ and $\nabla_\nu^\mathrm{ex}$ restricts to an equivariant $\hat{G}$-line bundle on $\tilde{\calO}_\mathrm{r}$. In particular,  as a composition factor of $\nabla_\nu^\mathrm{ex}$, $L_\nu^\mathrm{ex}$ restricts to $0$.
\end{proof}
Denote by $ \mathrm{Perf}([\hat{G}\backslash \hat{\calN}_{\mathrm{Spr}}])_{\mathrm{nr}}$ the full subcategory of perfect complexes supported on the complement of $\tilde{\calO}_\mathrm{r}$. 
\begin{lemma} \label{lemma_coh_cat_generated_by_ex_IC}
	The category $ \mathrm{Perf}([\hat{G}\backslash \hat{\calN}_{\mathrm{Spr}}])_{\mathrm{nr}}$ is generated by $\{L_\nu^\mathrm{ex}\vert \nu\in \bbX_\bullet, \ell(w_\nu)>0\}$ under cones and shifts.
\end{lemma}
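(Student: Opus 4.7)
Let $\calN$ denote the full triangulated subcategory of $\mathrm{Perf}([\hat{G}\backslash \hat{\calN}_{\mathrm{Spr}}])$ generated under cones and shifts by the simple exotic sheaves $L_\nu^{\mathrm{ex}}$ with $\ell(w_\nu)>0$. The inclusion $\calN \subseteq \mathrm{Perf}([\hat{G}\backslash \hat{\calN}_{\mathrm{Spr}}])_{\mathrm{nr}}$ is immediate from \Cref{prop_rest_of_simple_ex_sheaves}, since each generator restricts to $0$ on $\calO_{\mathrm{r}}$ and the nr-subcategory is thick.

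For the reverse inclusion, I first reduce to a statement inside the exotic heart. By \Cref{lemma_props_of_std_cstd_simple_ex_sheaves}(1), every $\calF\in \mathrm{Perf}([\hat{G}\backslash \hat{\calN}_{\mathrm{Spr}}])$ is a finite iterated extension of shifts of its exotic cohomology objects ${}^{\mathrm{ex}}H^{i}(\calF)\in \mathrm{ExCoh}$, and $\calN$ is closed under cones and shifts, so it suffices to handle each ${}^{\mathrm{ex}}H^{i}(\calF)$ separately. By \Cref{Corollary_excoh_restricts_to_equiv_vbs}, the restriction to $\calO_{\mathrm{r}}$ of any object of $\mathrm{ExCoh}$ is a $\hat{G}$-equivariant vector bundle, in particular concentrated in standard degree $0$. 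Consequently, the spectral sequence attached to the exotic filtration degenerates at $E_{1}$ and yields a canonical identification $H^{n}(\calF_{|\calO_{\mathrm{r}}})\cong {}^{\mathrm{ex}}H^{n}(\calF)_{|\calO_{\mathrm{r}}}$ for all $n$. Thus $\calF_{|\calO_{\mathrm{r}}}=0$ forces every exotic cohomology sheaf of $\calF$ to restrict trivially to $\calO_{\mathrm{r}}$.

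It therefore remains to prove that any $A\in\mathrm{ExCoh}$ with $A_{|\calO_{\mathrm{r}}}=0$ lies in $\calN$. Since $\mathrm{ExCoh}$ is noetherian and artinian (see \cite{Bez06}), I proceed by induction on the length of a Jordan--H\"older filtration of $A$. For simple $A=L_\nu^{\mathrm{ex}}$, the statement is precisely \Cref{prop_rest_of_simple_ex_sheaves}. For the inductive step, choose any short exact sequence $0 \to A' \to A \to L_\nu^{\mathrm{ex}} \to 0$ in $\mathrm{ExCoh}$. Restricting to $\calO_{\mathrm{r}}$ produces a distinguished triangle in which, by \Cref{Corollary_excoh_restricts_to_equiv_vbs}, all three terms lie in standard degree $0$; hence it is a short exact sequence of $\hat{G}$-equivariant vector bundles. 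Vanishing of $A_{|\calO_{\mathrm{r}}}$ then forces $A'_{|\calO_{\mathrm{r}}}=0$ and $L_\nu^{\mathrm{ex}}{}_{|\calO_{\mathrm{r}}}=0$. The second equality yields $\ell(w_\nu)>0$ via \Cref{prop_rest_of_simple_ex_sheaves}, while the first feeds into the inductive hypothesis to give $A'\in\calN$. Hence $A\in\calN$ as an extension, finishing the argument.

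The only substantive point is the reduction to the abelian heart, which amounts to checking that restriction to $\calO_{\mathrm{r}}$ interacts tamely with the exotic $t$-structure; this follows directly from \Cref{Corollary_excoh_restricts_to_equiv_vbs}, so I anticipate no genuine obstacle.
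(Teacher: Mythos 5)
Your proof is correct and carries out exactly the route the paper itself indicates ("we could also prove it for type $A$ groups via \Cref{Corollary_excoh_restricts_to_equiv_vbs} and \Cref{prop_rest_of_simple_ex_sheaves}"), where the paper otherwise just cites \cite[Lemma~7.2.7]{AR}. The key observation you make explicit — that the Corollary forces $t$-exactness of restriction to $\calO_{\mathrm{r}}$ from the exotic to the standard $t$-structure, so that the exotic cohomology of an object of $\mathrm{Perf}_{\mathrm{nr}}$ again lies in $\mathrm{Perf}_{\mathrm{nr}}$ — is precisely the reduction needed, and the finite-length induction in $\mathrm{ExCoh}$ then closes the argument cleanly.
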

\begin{proof}
	This is \cite[Lemma 7.2.7]{AR} and we could also prove it for groups with enough minuscules via \Cref{Corollary_excoh_restricts_to_equiv_vbs} and \Cref{prop_rest_of_simple_ex_sheaves}.
\end{proof}

For the rest of this section, we apply the previous discussion to study the relation between $\scrP_0(\Hk_{\calI})$ and its full subcategory $\scrP^c_0(\Hk_{\calI})$, culminating in the proof that they coincide assuming the existence of enough minuscules and so do $H \subset Z_{\hat{G}}(n_0)$ as promised in \Cref{remark_regular_quotient}.		
Recall we define the functor
\begin{equation}
	\Pi^0:\scrP(\Hk_{\calI})\rightarrow \scrP_0(\Hk_{\calI})
\end{equation}
in \S8. By definition, it factors through the anti-spherical category 
and we will denote by 
$\Pi^0_\mathrm{as}$ the resulting functor $\scrP_\mathrm{as}(\Hk_{\calI})\rightarrow \scrP_0(\Hk_{\calI})$. \Cref{thm_A-B} and \Cref{coro_av_IWas_is_an_equiv} show that 
\begin{equation}
	\scrF^\mathrm{as}:=\scrD^b(\Pi^0_\mathrm{as})\circ \scrF:\mathrm{Perf}([\hat{G}\backslash \hat{\calN}_{\mathrm{Spr}}])\rightarrow \scrD^b(\scrP_\mathrm{as}(\Hk_{\calI}))
\end{equation}
is an equivalence for groups $G$ with enough minuscules.
\begin{proposition}\label{prop_equivalence_F^r}
	Assume $G$ has enough minuscules. Then, there exists a unique t-exact equivalence of $\infty$-categories
	\begin{equation}
		\scrF^\mathrm{r}:\mathrm{Perf}([\hat{G}\backslash \calO_\mathrm{r}])\rightarrow \scrD^b(\scrP_0(\Hk_{\calI})),
	\end{equation}
	fitting into the commutative diagram
	\begin{equation}
		\begin{tikzcd}[row sep=huge]
			\mathrm{Perf}([\hat{G}\backslash \hat{\calN}_{\mathrm{Spr}}])\arrow[r,"\scrF^\mathrm{as}"] \arrow[d] & \scrD^b(\scrP_\mathrm{as}(\Hk_{\calI}))\arrow[d,"\scrD^b(\Pi^0_\mathrm{as})"]\\
			\mathrm{Perf}([\hat{G}\backslash \calO_\mathrm{r}])\arrow[r,"\scrF^\mathrm{r}"] & \scrD^b(\scrP_0(\Hk_{\calI})),
		\end{tikzcd}
	\end{equation}
	where the left vertical arrow is induced by restriction.
\end{proposition}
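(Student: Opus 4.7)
The plan is to exploit the AB equivalence $\scrF^{\mathrm{as}}$ together with the localization theorem of Thomason–Neeman (\Cref{prop_thomason_quotient}) to produce $\scrF^{\mathrm{r}}$ by factoring the composition $\scrD^b(\Pi^0_{\mathrm{as}})\circ \scrF^{\mathrm{as}}$ through the quotient corresponding to restriction along the open embedding $[\hat{G}\backslash \calO_{\mathrm{r}}]\hookrightarrow [\hat{G}\backslash \hat{\calN}_{\mathrm{Spr}}]$. The first step will be to show that this composition kills the full subcategory $\mathrm{Perf}([\hat{G}\backslash \hat{\calN}_{\mathrm{Spr}}])_{\mathrm{nr}}$ of perfect complexes supported away from the regular orbit. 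By \Cref{lemma_coh_cat_generated_by_ex_IC} this subcategory is generated under cones and shifts by the exotic simples $L^{\mathrm{ex}}_{\nu}$ with $\ell(w_{\nu})>0$; the analogue of \Cref{lemma_props_of_std_cstd_simple_ex_sheaves}(4) for $\scrF^{\mathrm{as}}$ (obtained by composing with the inverse of $\mathrm{av}^{\mathrm{as}}_{\calI\calW}$, which is an equivalence by \Cref{coro_av_IWas_is_an_equiv}) sends these to the images of $\mathrm{IC}_{w_{\nu}}$ in $\scrP_{\mathrm{as}}(\Hk_{\calI})$, and these are annihilated by $\Pi^0_{\mathrm{as}}$ since $\ell(w_{\nu})>0$ means the corresponding Schubert variety is positive-dimensional.

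The second step is to apply \Cref{prop_thomason_quotient} (noting that $\scrD^b(\scrP_0(\Hk_{\calI}))$ is idempotent-complete because $\scrP_0(\Hk_{\calI})$ has finitely many simple objects when $G$ is semi-simple) to obtain $\scrF^{\mathrm{r}}$ fitting into the asserted square. Uniqueness is automatic from the universal property. For essential surjectivity, I would use that $\Pi^0_{\mathrm{as}}$ is essentially surjective (being a Serre quotient) and that $\scrF^{\mathrm{as}}$ is an equivalence; for full faithfulness, the decisive ingredient is that the equivalence $\scrF^{\mathrm{as}}$ identifies $\mathrm{Perf}([\hat{G}\backslash \hat{\calN}_{\mathrm{Spr}}])_{\mathrm{nr}}$ with the full triangulated subcategory of $\scrD^b(\scrP_{\mathrm{as}}(\Hk_{\calI}))$ generated by $\scrP_{>0}(\Hk_{\calI})\cap \scrP_{\mathrm{as}}(\Hk_{\calI})$, which is precisely the kernel of the Verdier localization $\scrD^b(\Pi^0_{\mathrm{as}})$. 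This identification follows from \Cref{prop_rest_of_simple_ex_sheaves} combined with \Cref{lemma_props_of_std_cstd_simple_ex_sheaves}(4), and is the technical heart of the argument.

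The third step is the t-exactness. Here I would note that on the coherent side, the restriction functor carries $L^{\mathrm{ex}}_{\nu}$ to either $\mathcal{O}$ (when $\ell(w_{\nu})=0$) or $0$ (otherwise) by \Cref{prop_rest_of_simple_ex_sheaves}, and likewise \Cref{Corollary_excoh_restricts_to_equiv_vbs} ensures that each exotic coherent sheaf becomes a vector bundle on $\calO_{\mathrm{r}}$; consequently, the heart $\mathrm{ExCoh}([\hat{G}\backslash \hat{\calN}_{\mathrm{Spr}}])$ maps to $\mathrm{Coh}([\hat{G}\backslash \calO_{\mathrm{r}}])$ sitting in degree $0$, so the exotic t-structure descends to the standard one on the open stratum. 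On the constructible side, the heart $\scrP_0(\Hk_{\calI})$ is by construction the heart of the quotient t-structure induced by $\Pi^0_{\mathrm{as}}$. Since $\scrF^{\mathrm{as}}$ is t-exact for these t-structures (\Cref{coro_F_IW_is_t_exact}), the induced functor $\scrF^{\mathrm{r}}$ is t-exact as well.

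The main obstacle I anticipate is the second step: carefully identifying, inside the derived category $\scrD^b(\scrP_{\mathrm{as}}(\Hk_{\calI}))$, the kernel of $\scrD^b(\Pi^0_{\mathrm{as}})$ with the essential image under $\scrF^{\mathrm{as}}$ of $\mathrm{Perf}([\hat{G}\backslash \hat{\calN}_{\mathrm{Spr}}])_{\mathrm{nr}}$. This requires knowing that the Verdier quotient of the bounded derived category by the thick subcategory generated by a Serre subcategory (here $\scrP_{>0}(\Hk_{\calI})\cap \scrP_{\mathrm{as}}(\Hk_{\calI})$) is computed by the bounded derived category of the Serre quotient — a fact which is reasonable in our finite cohomological dimension setting, but needs to be invoked carefully to ensure a clean $\infty$-categorical statement rather than merely a triangulated one. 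Everything else is a direct transport of structure via the already-established AB equivalence.
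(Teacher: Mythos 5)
Your proposal follows essentially the same route as the paper's proof: kill $\mathrm{Perf}([\hat{G}\backslash \hat{\calN}_{\mathrm{Spr}}])_{\mathrm{nr}}$ via the generation by exotic simples (\Cref{lemma_coh_cat_generated_by_ex_IC}, \Cref{lemma_props_of_std_cstd_simple_ex_sheaves}(4)), factor through the open restriction using Thomason--Neeman together with Miyachi's identification of $\scrD^b(\Pi^0_{\mathrm{as}})$ as a Verdier quotient, and then check t-exactness. The only small variation is in the t-exactness step: the paper verifies that every simple of $\mathrm{Coh}([\hat{G}\backslash\calO_\mathrm{r}])$ is the restriction of an exotic coherent sheaf, whereas you argue from the one-directional heart inclusion of \Cref{Corollary_excoh_restricts_to_equiv_vbs} — which indeed suffices once one also invokes the general fact that two bounded t-structures on the same triangulated category whose hearts are nested must coincide, forcing the descended exotic t-structure to equal the standard one; you are also somewhat more explicit than the paper about where Miyachi's theorem is needed, which is a genuine (if minor) clarification.
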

\begin{proof}
	The proof follows from the idea of \cite[Proposition 7.2.6]{AR}. We sketch the proof here and refer to \textit{loc.cit} for details.
	We first observe that $\scrD^b(\Pi^0_\mathrm{as})\circ \scrF^\mathrm{as}(L_\nu^\mathrm{ex})=\Pi^0(\mathrm{IC}_{w_{\nu}})$ by \Cref{lemma_props_of_std_cstd_simple_ex_sheaves} and \Cref{coro_av_IWas_is_an_equiv}. Then it follows from \Cref{lemma_coh_cat_generated_by_ex_IC} that
	$\scrD^b(\Pi^0_\mathrm{as})\circ \scrF^\mathrm{as}$ restricts to zero on $\mathrm{Perf}(\hat{G}\backslash \hat{\calN}_\mathrm{Spr})_{\mathrm{nr}}$. Note that $\mathrm{Perf}(\hat{G}\backslash \calO_{\mathrm{r}})$ is the quotient in $\mathrm{Cat}_\infty$ of $\mathrm{Perf}(\hat{\calN}_{\mathrm{Spr}})$ by the non-regular full subcategory by \Cref{prop_thomason_quotient} (idempotent completions are not necessary as the Springer variety and the regular orbit are smooth). On the other hand, as noticed before this proposition, the functor
	\begin{equation}
		\scrD^b(\Pi^0_\mathrm{as}):\scrD^b(\scrP_{\mathrm{as}}(\Hk_\calI))\rightarrow \scrD^b(\scrP_0(\Hk_{\calI}))
	\end{equation}			
	is a quotient map in $\mathrm{Cat}_\infty$ with kernel given by the full subcategory generated by $\Pi^{\mathrm{as}}(\IC_w)$ with $\ell(w)>0$. Thus $\scrF^\mathrm{r}$ is an equivalence. 
	
	By \Cref{coro_F_IW_is_t_exact}, the restriction functor $\scrF^\mathrm{as}: \mathrm{Perf}([\hat{G}\backslash\hat{\calN}_{\mathrm{Spr}}])\rightarrow \scrD^b(\scrP_\mathrm{as}(\Hk_{\calI}))$ is $t$-exact with respect to the exotic $t$-structure on the source and the tautological
	$t$-structure on the target. Then, to prove $\scrF^\mathrm{r}$ is $t$-exact, it suffices to show that every simple object in $\mathrm{Coh}([\hat{G}\backslash \calO_\mathrm{r}])$ is the restriction of an exotic coherent sheaf. The verification of the latter assertion can be argued entirely on the coherent side as in \cite[Proposition 7.2.6]{AR}.
\end{proof}

\begin{proposition}\label{H_equals_the_full_centralizer}
	With notations in \Cref{prop_tannakian_for_perv_I0}, we have $\scrP_0^c(\Hk_{\calI})=\scrP_0(\Hk_{\calI})$, and $H=Z_{\hat{G}}(n_0)$.  
\end{proposition}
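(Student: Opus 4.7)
The plan is to combine the exotic coherent equivalence $\scrF^{\mathrm{r}}$ of \Cref{prop_equivalence_F^r} with the Tannakian description of $\scrP_0^c(\Hk_\calI)$ from \Cref{prop_tannakian_for_perv_I0} to identify both $\scrP_0^c$ and $\scrP_0$ with the representation category of the same group, via functors all compatible with $\scrZ_0$. First I would observe that since $\calO_{\mathrm{r}}\cong \hat{G}/Z_{\hat{G}}(n_0)$ as a $\hat{G}$-scheme, passing to hearts of the t-exact equivalence $\scrF^{\mathrm{r}}$ yields an equivalence of abelian categories
\begin{equation}
\Psi^{\mathrm{r}}\colon \mathrm{Coh}^{\hat{G}}(\calO_{\mathrm{r}})\simeq \mathrm{Rep}(Z_{\hat{G}}(n_0))\xrightarrow{\sim} \scrP_0(\Hk_{\calI}),
\end{equation}
endowed with a tensor structure inherited from $\circledast$ via \Cref{prop_equivalence_F^r} (the monoidal structures on $\scrF$ and $\Pi^0$ descend through $\scrF^{\mathrm{r}}$).

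The key intermediate step is to verify compatibility with $\scrZ_0$: the composition $\Psi^{\mathrm{r}}\circ \mathrm{For}^{\hat{G}}_{Z_{\hat{G}}(n_0)}\colon \mathrm{Rep}(\hat{G})\to \scrP_0(\Hk_\calI)$ is canonically isomorphic to $\scrZ_0$ as monoidal functors. This follows from chasing the construction: on the coherent side $\scrF$ sends $V\boxtimes\calO$ to $\scrZ(V)$, and its restriction to the regular orbit corresponds to $\mathrm{For}^{\hat G}_{Z_{\hat G}(n_0)}$; the projection $\Pi^0$ is exactly what produces $\scrZ_0$ from $\scrZ(V)$. Granting this compatibility, the essential image of $\scrZ_0$ in $\scrP_0(\Hk_\calI)$ corresponds to the essential image of the forgetful functor $\mathrm{For}^{\hat{G}}_{Z_{\hat{G}}(n_0)}$ in $\mathrm{Rep}(Z_{\hat{G}}(n_0))$. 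Since the regular representation $\calO(Z_{\hat{G}}(n_0))$ is the quotient of $\calO(\hat{G})|_{Z_{\hat{G}}(n_0)}$ along the closed embedding $Z_{\hat{G}}(n_0)\hookrightarrow \hat{G}$, every object of $\mathrm{Rep}(Z_{\hat{G}}(n_0))$ arises as a subquotient of a restricted $\hat{G}$-representation. Therefore the subcategory $\scrP_0^c(\Hk_\calI)$ spanned by subquotients of the image of $\scrZ_0$ coincides with the full $\scrP_0(\Hk_\calI)$.

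Finally, for the second assertion, the Tannakian equivalence $\Phi^0\colon \scrP_0^c(\Hk_\calI)\simeq \mathrm{Rep}(H)$ satisfies $\Phi^0\circ \scrZ_0^c \simeq \mathrm{For}^{\hat{G}}_H$. Composing with the identification $\scrP_0^c=\scrP_0\simeq \mathrm{Rep}(Z_{\hat{G}}(n_0))$ just established, we obtain a tensor equivalence $\mathrm{Rep}(Z_{\hat{G}}(n_0))\simeq \mathrm{Rep}(H)$ intertwining the two restriction functors from $\mathrm{Rep}(\hat{G})$. By Tannakian reconstruction applied to the composition $\mathrm{Rep}(\hat G)\to \mathrm{Rep}(Z_{\hat G}(n_0))\simeq \mathrm{Rep}(H)$, the inclusions $H\hookrightarrow \hat G$ and $Z_{\hat G}(n_0)\hookrightarrow\hat G$ represent the same subfunctor of the Tannakian dual of $\mathrm{Rep}(\hat G)$, hence $H=Z_{\hat G}(n_0)$ inside $\hat G$.

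The main obstacle I anticipate is the rigorous verification of the compatibility $\Psi^{\mathrm{r}}\circ \mathrm{For}^{\hat{G}}_{Z_{\hat{G}}(n_0)}\simeq \scrZ_0$ as monoidal functors: while morally clear from how $\scrF$ extends $\scrZ\times \scrJ$ and how the restriction to $\calO_{\mathrm{r}}$ was engineered, one must track this isomorphism through the construction of $\scrF$ in \Cref{prop_constr_AB_funct_F} and through the anti-spherical quotient; carelessness here could lead to an isomorphism up to a twist by a character or an automorphism of $Z_{\hat{G}}(n_0)$, which would make the final identification only hold up to inner automorphism of $\hat G$ rather than on the nose.
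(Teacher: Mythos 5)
Your proposal is correct and follows essentially the same strategy as the paper, routing through the equivalence $\scrF^{\mathrm{r}}$ of \Cref{prop_equivalence_F^r} and a Tannakian comparison. The one place where you genuinely diverge is the first assertion $\scrP_0^c(\Hk_{\calI})=\scrP_0(\Hk_{\calI})$: you invoke the general fact that for a closed subgroup $K\hookrightarrow\hat G$, every finite-dimensional $K$-representation is a subquotient of a restriction $V|_K$ (since $\calO(K)$ is a quotient of $\calO(\hat G)|_K$), and then transport this across $\Psi^{\mathrm{r}}$ using $\Psi^{\mathrm{r}}\circ\mathrm{For}^{\hat G}_{Z_{\hat G}(n_0)}\simeq\scrZ_0$, which indeed falls out of the commutative square in \Cref{prop_equivalence_F^r} applied to $V\otimes\calO_{\hat\calN_{\mathrm{Spr}}}$. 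The paper instead argues via projective objects in $\mathrm{Coh}([\hat G\backslash\hat\calN_{\mathrm{Spr}}])$ mapping to $\scrP_0^c$ and every coherent sheaf on $\calO_{\mathrm{r}}$ being a quotient of such a projective (quoting [AR, Lemma 7.2.9]). Your route is slightly more self-contained; the paper's hews closer to the framework inherited from [AR]. The second assertion is handled identically in spirit: both reduce to matching the tensor equivalence $\mathrm{Rep}(Z_{\hat G}(n_0))\simeq\scrP_0\simeq\mathrm{Rep}(H)$ against the forgetful functors from $\mathrm{Rep}(\hat G)$.

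Your closing caveat is well-placed and worth recording: a purely abstract tensor-functor comparison only produces conjugacy of the two subgroups, so one must check that the fiber functor $\Phi_G=\mathrm{Hom}(\calO(H),\calO(H)\circledast-)$ from \Cref{prop_tannakian_for_perv_I0} is identified with the forgetful functor on $\mathrm{Rep}(Z_{\hat G}(n_0))$ under the equivalence $\Psi^{\mathrm{r}}$, not merely with a twist of it. The paper does not spell this out but defers to [AR, Proposition 7.2.8]; if you expand your proof you should likewise either verify this compatibility directly or cite the same reference. You should also note that \Cref{prop_equivalence_F^r} as stated only asserts that $\scrF^{\mathrm{r}}$ is a t-exact equivalence, not that it is monoidal; the monoidal enhancement you use (for both the subquotient argument and the Tannakian comparison) is believable since the ambient square consists of monoidal functors, but strictly speaking it is an extra claim that needs a sentence of justification.
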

\begin{proof}
	The proof is completely analogous to \cite[Proposition 7.2.8]{AR} by our previous preparations. By construction, the projective objects in
	$\mathrm{Coh}([\hat{G}\backslash\hat{\calN}_\mathrm{Spr}])$ map to $\scrP_0^c(\Hk_{\calI})$ under $\Pi_{\mathrm{as}}^0\circ \scrF^{\mathrm{as}}$. Also every coherent sheaf on the regular orbit is a quotient of a projective object in 	$\mathrm{Coh}([\hat{G}\backslash\hat{\calN}_\mathrm{Spr}])$ by \cite[Lemma 7.2.9]{AR}. Then it follows from \Cref{prop_equivalence_F^r} that the first assertion holds. Recall the equivalence $\Phi^0$ from \Cref{prop_tannakian_for_perv_I0} between $\mathrm{Rep}H$ and $\scrP_0(\Hk_{\calI})$. On the coherent side, we have an equivalence $\Psi: \mathrm{Coh}(\hat{G}\backslash \calO_\mathrm{r})\simeq \mathrm{Rep}(Z_{\hat{G}}(n_0))$ induced by the isomorphism $\calO_{\mathrm{r}}\simeq \hat{G}/Z_{\hat{G}}(n_0)$ by the definition of the regular orbit itself. The second statement follows by showing that $\mathrm{For}^{Z_{\hat{G}}(n_0)}_{H}\circ \Psi$ is equivalent to $ \Phi^0\circ F^\mathrm{r}$ and we refer to the end of the proof of \cite[Proposition 7.2.8]{AR} for details.		
\end{proof}

\section{Equivariant coherent sheaves on the nilpotent cone}
Recall the Springer resolution 
\begin{equation}
	p_\mathrm{Spr}:\hat{\calN}_{\mathrm{Spr}}=\hat{G} \times^{\hat{B}} \mathrm{Lie}(\hat{U})\rightarrow \hat{\calN}
\end{equation}
of the nilpotent cone of the dual group $\hat{G}$, defined over the coefficient field $\La=\bar \bbQ_\ell$. In this section, we study the category $\mathrm{Coh}([\hat{G}\backslash \hat{\calN}])$ by establishing a connection with a certain quotient of $\scrP(\Hk_{\calI})$ and proving main results of \cite{Bez09} in the mixed-characteristic setting for groups with enough minuscules.

Let $\scrP_{\mathrm{bas}}(\Hk_{\calI})$ denote the quotient of $\scrP(\Hk_\calI)$ by the Serre subcategory spanned by the IC sheaves of $\Fl_{ \calI,\leq w}$ for non-minimal $w$ in its $W_{\mathrm{fin}}$-double coset. Recall the anti-spherical category $\scrP_\mathrm{as}(\Hk_{\calI})$ in \Cref{avIW_factos_through_pervas}. The natural functor $\scrP(\Hk_{\calI})\rightarrow\scrP_{\mathrm{bas}}(\Hk_{\calI})	$
factors through the quotient  
\begin{equation}
	\Pi^{\mathrm{as}}_{\mathrm{bas}}:\scrP_\mathrm{as}(\Hk_{\calI})\rightarrow \scrP_{\mathrm{bas}}(\Hk_{\calI}).
\end{equation}
In the sequel, we will relate this category to equivariant coherent sheaves on the nilpotent cone.	
\begin{theorem}\label{perverse_description_of_coherent_sheaves}
	Assume $G$ has enough minuscules. Then, there exists a unique equivalence of $\infty$-categories:
	\begin{equation}
		\scrF_{\mathrm{bas}}\colon \scrD^b_\mathrm{coh}([\hat{G}\backslash\hat{\calN}])\rightarrow \scrD^b(\scrP_{\mathrm{bas}}(\Hk_{\calI})) ,
	\end{equation}
	making the following diagram commute
	\begin{equation}
		\begin{tikzcd}[row sep=huge]
			\scrD^b_\mathrm{coh}([\hat{G}\backslash\hat{\calN}_{\mathrm{Spr}}]) \arrow[r, "\scrF_\mathrm{as}"] \arrow [d,"Rp_{\mathrm{Spr}*}"] & \scrD^b(\scrP_\mathrm{as}(\Hk_{\calI})) \arrow [d,"\scrD^b({\Pi^{\mathrm{as}}_{\mathrm{bas}}})"] \\
			\scrD^b_\mathrm{coh}([\hat{G}\backslash\hat{\calN}])\arrow[r,"\scrF_\mathrm{bas}"] &\scrD^b(\scrP_{\mathrm{bas}}(\Hk_{\calI})),
		\end{tikzcd}
	\end{equation}
	where $\scrF_\mathrm{as}$ is the composition of $\scrF$ with the functor $\scrD^b(\scrP(\Hk_{\calI}))\rightarrow \scrD^b(\scrP_\mathrm{as}(\Hk_{\calI}))$ induced by the quotient functor $\scrP(\Hk_{\calI})\rightarrow \scrP_\mathrm{as}(\Hk_{\calI})$.
\end{theorem}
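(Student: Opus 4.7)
The plan is to promote the equivalence $\scrF^{\mathrm{as}}$, which is already available in type $A$ via \Cref{thm_A-B} and \Cref{coro_av_IWas_is_an_equiv}, to the equivalence $\scrF_{\mathrm{bas}}$ by recognising each of the two vertical functors as a Verdier localisation and showing that these localisations have matching kernels under $\scrF^{\mathrm{as}}$. This mirrors the strategy used in \Cref{prop_equivalence_F^r} to pass from the anti-spherical equivalence to the regular-orbit one, and follows the template of the equicharacteristic case in \cite[Section 7]{AR}.

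On the right, the Serre quotient $\Pi^{\mathrm{as}}_{\mathrm{bas}}$ kills, by definition, the simple objects $\Pi^{\mathrm{as}}(\IC_w)$ for $w \in W$ not minimal in its $W_{\mathrm{fin}}$-double coset; using the highest-weight structure on $\scrP_{\mathrm{as}}(\Hk_{\calI})$, standard arguments show that its derived functor realises $\scrD^b(\scrP_{\mathrm{bas}}(\Hk_{\calI}))$ as the Verdier quotient of $\scrD^b(\scrP_{\mathrm{as}}(\Hk_{\calI}))$ by the thick subcategory $\scrK_{\mathrm{as}}$ generated by these simples. On the left, $p_{\mathrm{Spr}}$ is a resolution of singularities and $\hat{\calN}$ has rational singularities, so $Rp_{\mathrm{Spr},*}\calO_{\hat{\calN}_{\mathrm{Spr}}} \simeq \calO_{\hat{\calN}}$; combined with the projection formula this implies that $Lp_{\mathrm{Spr}}^{*}$ is fully faithful, and hence that $Rp_{\mathrm{Spr},*}$ identifies $\scrD^b_{\mathrm{coh}}([\hat{G}\backslash \hat{\calN}])$ with the Verdier quotient of $\scrD^b_{\mathrm{coh}}([\hat{G}\backslash \hat{\calN}_{\mathrm{Spr}}])$ by the thick subcategory $\scrK$ of objects annihilated by $Rp_{\mathrm{Spr},*}$.

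The central step is then to check that $\scrF^{\mathrm{as}}$ carries $\scrK$ onto $\scrK_{\mathrm{as}}$. Both subcategories are thick and admit a generating set of simples, so it suffices to identify generators. The simple objects of $\scrK_{\mathrm{as}}$ correspond under $\scrF^{\mathrm{as}}$, via the dictionary recalled in \Cref{lemma_props_of_std_cstd_simple_ex_sheaves}(4), to the exotic simples $L^{\mathrm{ex}}_\nu$ for $\nu \in \bbX_{\bullet}$ such that $w_\nu$ is not $W_{\mathrm{fin}}$-double coset minimal. The hard part will be to verify that these are precisely the simple exotic coherent sheaves whose derived pushforward along $p_{\mathrm{Spr}}$ vanishes. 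I expect to do this by refining the orbit-by-orbit restriction calculation of \Cref{prop_rest_of_simple_ex_sheaves}: each $W_{\mathrm{fin}}$-double coset has a distinguished nilpotent $\hat{G}$-orbit attached to it, and the non-minimal cosets should correspond exactly to the $L^{\mathrm{ex}}_\nu$ whose restriction already vanishes on the corresponding enlargement of $\calO_{\mathrm{r}}$.

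Once this matching of kernels is in place, the universal property of Verdier quotients produces a unique (up to equivalence) functor $\scrF_{\mathrm{bas}}$ making the square commute, and the same universal property forces it to be an equivalence. In particular, the proof reduces the statement in type $A$ to the kernel-matching computation above, which is the genuine content of the theorem.
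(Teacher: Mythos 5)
Your approach diverges from the paper's in a way that leaves a genuine gap. The paper does \emph{not} try to realize $Rp_{\mathrm{Spr},*}$ as a Verdier localization with kernel $\ker(Rp_{\mathrm{Spr},*})$; instead it introduces an auxiliary category $\scrD$, the Verdier quotient of $\scrD^b_{\mathrm{coh}}([\hat{G}\backslash\hat{\calN}_{\mathrm{Spr}}])$ by the thick subcategory generated by the $L^{\mathrm{ex}}_\nu$ with $\nu\notin\bbX_\bullet^-$, shows that $Rp_{\mathrm{Spr},*}$ factors through $\scrD$ because those simples are killed (citing \cite[Lemma 1]{Bez09}, \cite[Lemma 7.3.3]{AR}), and then proves the induced functor $\alpha\colon\scrD\to\scrD^b_{\mathrm{coh}}([\hat{G}\backslash\hat{\calN}])$ is an equivalence \emph{directly}: essential surjectivity from \cite[Lemma 7]{Bez03}, full faithfulness from the abstract \cite[Lemma 7.3.13]{AR}. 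It never needs to identify $\ker(Rp_{\mathrm{Spr},*})$ in advance.

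Your reformulation via rational singularities (so that $Lp_{\mathrm{Spr}}^*$ is fully faithful and $Rp_{\mathrm{Spr},*}$ is a reflective localization) is a legitimate repackaging, but it transfers all the content to the ``matching of kernels'' step, and this is where the gap is. Even if you succeed in listing exactly which simple exotic sheaves $L^{\mathrm{ex}}_\nu$ satisfy $Rp_{\mathrm{Spr},*}L^{\mathrm{ex}}_\nu=0$, that does not by itself show that $\ker(Rp_{\mathrm{Spr},*})$ equals the thick subcategory they generate: you would need an argument that any bounded complex $X$ with $Rp_{\mathrm{Spr},*}X=0$ has all of its exotic Jordan--H\"older constituents in $\ker(Rp_{\mathrm{Spr},*})$, which is not automatic since $\ker(Rp_{\mathrm{Spr},*})$ is not a priori compatible with the exotic $t$-structure. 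Your plan to characterize the killed simples by an orbit-by-orbit restriction computation also does not address this, and in fact would not suffice even granting the characterization, since vanishing of the restriction to some open locus does not imply vanishing of the pushforward. In short, the crux — full faithfulness of the induced comparison functor, handled in the paper via \cite[Lemma 7.3.13]{AR} plus ``input calculations'' — is missing from your argument, while the parts you spell out (the Verdier picture, and the translation of the right-hand kernel in terms of $L^{\mathrm{ex}}_\nu$ with $\nu\notin\bbX_\bullet^-$ via \Cref{lemma_props_of_std_cstd_simple_ex_sheaves}) are correct.
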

\begin{proof}
	The proof follows the idea of \cite[Theorem 1]{Bez09} and \cite[Theorem 7.3.1]{AR} in equicharacteristic and we sketch it here. Recall that $\scrD^b_\mathrm{coh}([\hat{G}\backslash\hat{\calN}_{\mathrm{Spr}}])$ is the bounded derived category of its abelian heart for the exotic t-structure, see \Cref{lemma_props_of_std_cstd_simple_ex_sheaves}. Let $\scrD$ be the Verdier quotient of $\scrD_\mathrm{coh}^b([\hat{G}\backslash\hat{\calN}_{\mathrm{Spr}}])$ by the full subcategory spanned by the $L_\nu^\mathrm{ex}$ with $\nu\not\in \bbX_{\bullet}^-$ under cones and extensions. Then $Rp_{\mathrm{Spr}*}$ factors as the composition of the quotient  $\Pi:\scrD^b_\mathrm{coh}([\hat{G}\backslash\hat{\calN}_{\mathrm{Spr}}])\rightarrow\scrD$ and a functor $\alpha:\scrD\rightarrow \scrD^b_\mathrm{coh}([\hat{G}\backslash{\calN}])$ since $Rp_{\mathrm{Spr*}}(L^\mathrm{ex}_\nu)=0$ for any $\nu\not\in \bbX_\bullet^-$, compare with \cite[Lemma 1]{Bez09} and \cite[Lemma 7.3.3]{AR}. 
	
	Similarly, let $\scrD'$ be the Verdier quotient of $\scrD^b(\scrP_\mathrm{as}(\Hk_{\calI}))$ by the full subcategory spanned by the IC sheaves of the form $\mathrm{IC}_{w_\nu}$ with $\nu \not\in \bbX_{\bullet}^-$. Then $\scrD^b(\Pi^{\mathrm{as}}_{\mathrm{bas}})$ factors through $\scrD'$ via a functor $\alpha' \colon \scrD'\to \scrD^b(\scrP_{\mathrm{bas}}(\Hk_{\calI}))$. We know by \cite[Theorem 3.2]{Miy91} that $\alpha'$ is an equivalence.  The equivalence $\scrF_\mathrm{as}$ induces an equivalence $\scrD\simeq \scrD'$ by  \Cref{coro_av_IWas_is_an_equiv} and \Cref{lemma_props_of_std_cstd_simple_ex_sheaves}. Hence, it suffices to show $\alpha$ is an equivalence. The essential surjectivity follows from \cite[Lemma 7]{Bez03}  
	and full faithfulness follows from the abstract \cite[Lemma 7.3.13]{AR} together with a few input calculations.
\end{proof}

In unpublished notes, Deligne introduces an analogue of the perverse $t$-structure \cite{BBDG18} on the derived category of coherent sheaves on a Noetherian scheme with a dualizing complex. This $t$-structure has been studied and extended by Arinkin--Bezrukavnikov \cite{AB10}. In this subsection, we compare the perverse $t$-structure on $\scrD^b_\mathrm{coh}([\hat{G}\backslash\calN])$ with the exotic $t$-structure transported from the equivalence of \Cref{perverse_description_of_coherent_sheaves} and prove \cite[Theorem 2, Corollary 1]{Bez09} in our setting.

The following lemma is due to Bezrukavnikov \cite{Bez03}.
\begin{lemma}\label{lemma_perverse_coh_t_strucutre}
	The perverse coherent $t$-structure corresponding to the perversity function $p(O)=\mathrm{codim} (O)/2$ is the unique $t$-structure which has all $p_{\mathrm{Spr}*}(\calO(\nu))$ lie in its heart.
\end{lemma}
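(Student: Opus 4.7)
The plan is to split the assertion into two parts, an existence claim (the middle perverse coherent $t$-structure actually contains these pushforwards in its heart) and a uniqueness claim (no other $t$-structure does).

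For the existence part, I would invoke the general framework of perverse coherent sheaves developed by Bezrukavnikov and Arinkin--Bezrukavnikov \cite{AB10}. The classical fact that every $\hat{G}$-orbit $O \subset \hat{\calN}$ has even codimension ensures that the middle perversity $p(O)=\mathrm{codim}(O)/2$ is integer-valued and defines a bounded $t$-structure on $\scrD^b_{\mathrm{coh}}([\hat{G}\backslash\hat{\calN}])$. The key geometric input is the semi-smallness of the Springer resolution: for each orbit $O$, the fiber of $p_{\mathrm{Spr}}$ above a point of $O$ has dimension exactly $\mathrm{codim}(O)/2$. By a dimension count on fibers, $Rp_{\mathrm{Spr}*}$ sends the standard heart of coherent sheaves on $[\hat{G}\backslash\hat{\calN}_{\mathrm{Spr}}]$ into the middle perverse heart on $[\hat{G}\backslash\hat{\calN}]$, so in particular each line bundle $\calO(\nu)$ (a coherent sheaf concentrated in degree $0$) maps into the perverse heart.

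For the uniqueness part, the idea is to exhibit the family $\{Rp_{\mathrm{Spr}*}(\calO(\nu)) : \nu\in\bbX_\bullet\}$ as a classical tilting generator of $\scrD^b_{\mathrm{coh}}([\hat{G}\backslash\hat{\calN}])$. Generation follows from the already recorded fact that the line bundles $\calO(\nu)$ classically generate $\scrD^b_{\mathrm{coh}}([\hat{G}\backslash\hat{\calN}_{\mathrm{Spr}}])$, together with the projection formula and the $(Lp_{\mathrm{Spr}}^*,Rp_{\mathrm{Spr}*})$-adjunction: any perfect complex on $[\hat{G}\backslash\hat{\calN}]$ can be built up from pushforwards of line bundles by shifts and extensions. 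One then verifies vanishing of the negative $\mathrm{Ext}$-groups between members of the family; by adjunction this reduces to a cohomological computation of line bundles on $\hat{\calN}_{\mathrm{Spr}}$, controlled by the description $\calO(\hat{G}/\hat{U}) = \bigoplus_{\mu\in\bbX_\bullet^+} V_\mu$ already used in the construction of the AB functor. Given these two properties, any bounded $t$-structure $\tau$ whose heart contains the family is pinned down: $\tau^{\geq 1}$ must equal the right-orthogonal of the family in positive shifts, and $\tau^{\leq -1}$ the left-orthogonal in negative shifts, so $\tau$ coincides with the middle perverse $t$-structure.

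The main obstacle is the tilting/Ext-vanishing computation in the uniqueness step; while the generation argument is fairly formal, controlling negative $\mathrm{Ext}$'s between $Rp_{\mathrm{Spr}*}(\calO(\nu))$ and $Rp_{\mathrm{Spr}*}(\calO(\mu))$ for arbitrary (not necessarily dominant) $\nu,\mu$ requires input from representation theory of $\hat{G}$ and a Borel--Weil--Bott-type analysis on $\hat{\calN}_{\mathrm{Spr}}$. This is essentially the content of Bezrukavnikov's original arguments in \cite{Bez03, Bez09}; since the Springer side involves only finite-type algebraic geometry over $\Lambda$, which is identical in the mixed-characteristic setting, the argument transfers verbatim once we grant the formal properties of $\scrD^b_{\mathrm{coh}}([\hat G\backslash\hat{\calN}])$ recorded above.
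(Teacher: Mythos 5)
The paper's own proof is the one-line citation \cite[Corollary 3]{Bez03}, so you have correctly located the source, but your sketch of its content has two genuine gaps. For existence, the claim that a fiber-dimension count shows $Rp_{\mathrm{Spr}*}$ carries the whole standard heart into the middle perverse heart is false: the pushforward of a skyscraper supported at a point of $p_{\mathrm{Spr}}^{-1}(0)$ is a skyscraper at the origin of $\hat{\calN}$ in cohomological degree $0$, whereas middle perversity at the closed orbit demands degree $\operatorname{codim}(\{0\})/2 = \dim\hat{\calN}/2$. What is actually proved is perversity of the specific objects $Rp_{\mathrm{Spr}*}\calO(\nu)$ for $\nu$ dominant, and this uses the vanishing of higher direct images of $\calO(\nu)$, the semi-smallness bound for $*$-restrictions, and Serre duality (triviality of $\omega_{\hat{\calN}_{\mathrm{Spr}}}$ and Gorenstein-ness of $\hat{\calN}$) for the dual $!$-restriction bound. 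Note also that the statement implicitly concerns dominant $\nu$: for non-dominant regular $\nu$ the pushforward is a nontrivial shift of a dominant one, and for singular $\nu$ it vanishes, so these cannot all lie in a common heart.

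For uniqueness, the ``tilting generator'' principle you invoke --- that classical generation together with $\mathrm{Ext}^{<0}$-vanishing among a family forces any bounded $t$-structure containing it in its heart to be unique --- is not a true general statement. Already in $D^b(\bbZ\text{-mod})$ the object $\bbZ$ classically generates and has no negative self-extensions, yet it lies in the heart of both the standard $t$-structure and of the $(-1)$-shift of the Happel--Reiten--Smal\o{} tilt at the torsion pair $(\text{torsion},\text{torsion-free})$; these two $t$-structures differ, since $\bbZ/n$ belongs to the first heart while $\bbZ/n[-1]$ belongs to the second. The underlying problem is that membership in a heart only controls $\mathrm{Hom}(A_\nu[n],-)$ for $n\geq 0$, while classical generation needs all integer shifts, so the aisle generated by the $A_\nu$ is merely contained in $\tau^{\leq 0}$ and need not exhaust it. The uniqueness in \cite[Corollary 3]{Bez03} is extracted from the finer structure of $\{Rp_{\mathrm{Spr}*}\calO(\nu)\}_{\nu\in\bbX_\bullet^+}$ as a \emph{graded dualizable quasi-exceptional set}: a partial order on dominant weights controlling $\mathrm{Hom}^0(A_\nu,A_\mu)$, scalar endomorphism rings, and a dual collection $\{\nabla_\nu\}$ with the complementary orthogonality. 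This datum produces a filtration of the triangulated category by thick subcategories along which the $t$-structure is built by gluing, and it is the induction through that filtration that forces uniqueness. Your proposal does not bring this extra structure to bear, so the uniqueness step does not close as written.
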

\begin{proof}
	This is \cite[Corollary 3]{Bez03}.
\end{proof}
Note that we have an exotic t-structure on $\scrD^b_\mathrm{coh}([\hat{G}\backslash \hat{\calN}])$ inherited from the derived category of the Springer variety in virtue of our realization of the former as a Verdier quotient of the latter in \Cref{thm_perverse_coh_compare_with_exotic_perverse}.

\begin{corollary}\label{thm_perverse_coh_compare_with_exotic_perverse}
	The exotic $t$-structure on $\scrD^b_\mathrm{coh}([\hat{G}\backslash \hat{\calN}])$ identifies with the perverse coherent $t$-structure with perversity function $p(O)=\mathrm{codim} (O)/2$.
\end{corollary}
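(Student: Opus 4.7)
The plan is to apply \Cref{lemma_perverse_coh_t_strucutre} by checking that every sheaf $p_{\mathrm{Spr}*}(\calO(\nu))$, for $\nu \in \bbX_\bullet^+$, lies in the heart of the exotic $t$-structure on $\scrD^b_{\mathrm{coh}}([\hat{G}\backslash\hat{\calN}])$, and then invoking uniqueness. Along the way I would first observe that for dominant $\nu$ one has $Rp_{\mathrm{Spr}*}\calO(\nu)=p_{\mathrm{Spr}*}\calO(\nu)$: via the projection formula this reduces to the vanishing of the higher cohomology of $\calO(\nu)\otimes \mathrm{Sym}^\bullet(\mathrm{Lie}(\hat{U})^\vee)$ on $\hat{G}/\hat{B}$, which follows from Kempf vanishing in characteristic zero applied to each Jordan--Hölder constituent of the symmetric algebra.

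Next I would combine \Cref{lemma_props_of_std_cstd_simple_ex_sheaves}(2) with the diagram of \Cref{perverse_description_of_coherent_sheaves}. Since $\calO(\nu) \simeq \nabla^{\mathrm{ex}}_\nu$ for $\nu$ dominant is in the heart of the exotic $t$-structure on $\scrD^b_{\mathrm{coh}}([\hat{G}\backslash\hat{\calN}_{\mathrm{Spr}}])$, and $\scrF_{\mathrm{as}}$ is $t$-exact by \Cref{coro_F_IW_is_t_exact} composed with the equivalence \Cref{coro_av_IWas_is_an_equiv}, the image $\scrF_{\mathrm{as}}(\calO(\nu))$ lands in $\scrP_{\mathrm{as}}(\Hk_\calI)$. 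The quotient functor $\Pi^{\mathrm{as}}_{\mathrm{bas}}$ is exact, so its derived extension preserves hearts, whence $\scrF_{\mathrm{bas}}(Rp_{\mathrm{Spr}*}\calO(\nu)) = \scrD^b(\Pi^{\mathrm{as}}_{\mathrm{bas}})\scrF_{\mathrm{as}}(\calO(\nu))$ lies in the tautological heart of $\scrD^b(\scrP_{\mathrm{bas}}(\Hk_\calI))$. Since the exotic $t$-structure on $\scrD^b_{\mathrm{coh}}([\hat{G}\backslash\hat{\calN}])$ is by definition the transport along the equivalence $\scrF_{\mathrm{bas}}$, this shows $p_{\mathrm{Spr}*}\calO(\nu)$ lies in the exotic heart.

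Finally, \Cref{lemma_perverse_coh_t_strucutre} states that the perverse coherent $t$-structure with perversity $p(O)=\mathrm{codim}(O)/2$ is characterized by containing all $p_{\mathrm{Spr}*}(\calO(\nu))$ in its heart, so the exotic $t$-structure must coincide with it. The genuinely substantive ingredients are already in place: the equivalence of \Cref{perverse_description_of_coherent_sheaves}, the $t$-exactness of $\scrF_{\mathrm{as}}$, and the identification of dominant line bundles with exotic costandards. The one technical point requiring care is the uniqueness statement \Cref{lemma_perverse_coh_t_strucutre}, which should be understood as asserting that a bounded $t$-structure on $\scrD^b_{\mathrm{coh}}([\hat{G}\backslash\hat{\calN}])$ in which the dominant $p_{\mathrm{Spr}*}\calO(\nu)$ lie in the heart is uniquely determined; this in turn rests on the fact that these objects generate the derived category together with their Serre duals (which are again perverse coherent by the Grauert--Riemenschneider analogue established by Bezrukavnikov), and this is the only part of the argument where one must be careful to cite or adapt the correct form of Bezrukavnikov's characterization.
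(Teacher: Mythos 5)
Your proof is correct and fills in the details that the paper compresses into a single sentence; it takes essentially the same route as the paper. You correctly identify that the job is to verify the hypothesis of \Cref{lemma_perverse_coh_t_strucutre} for the exotic $t$-structure, you restrict to dominant $\nu$ (which is the right reading of the lemma, since the notation $p_{\mathrm{Spr}*}$ rather than $Rp_{\mathrm{Spr}*}$ signals the underived pushforward and matches the corresponding statement in \cite{Bez03}), you note that $Rp_{\mathrm{Spr}*}\calO(\nu) = p_{\mathrm{Spr}*}\calO(\nu)$ for dominant $\nu$ by Kempf vanishing, you use $\calO(\nu)\simeq\nabla^{\mathrm{ex}}_\nu$ from \Cref{lemma_props_of_std_cstd_simple_ex_sheaves}(2), and you push this through the commutative square of \Cref{perverse_description_of_coherent_sheaves} using the $t$-exactness coming from \Cref{coro_F_IW_is_t_exact} and the exactness of the Serre quotient functor $\Pi^{\mathrm{as}}_{\mathrm{bas}}$. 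The final caveat you raise about the uniqueness statement is not really a gap: the paper treats \Cref{lemma_perverse_coh_t_strucutre} as a citation to \cite[Corollary 3]{Bez03}, so you need not re-derive the characterization in terms of the objects and their Serre duals; it suffices to apply it.
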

\begin{proof}
	The statement follows directly from \Cref{perverse_description_of_coherent_sheaves} in light of \Cref{lemma_perverse_coh_t_strucutre}.
\end{proof}

\bibliography{biblio.bib}
\bibliographystyle{alpha}

\end{document}